\definecolor{winered}{rgb}{0.6,0,0}
\definecolor{lessblue}{rgb}{0,0,0.7}
\newcommand{\myitem}[3]{\item[#2]\def\@currentlabel{#3}\label{#1}}
\def\@tocline#1#2#3#4#5#6#7{
\begingroup
  \par
    \parindent\z@ \leftskip#3 \relax \advance\leftskip\@tempdima\relax
                  \rightskip\@pnumwidth plus 4em \parfillskip-\@pnumwidth
    \ifcase #1 
       \vskip 0.6em \hskip 0em 
       \or
       \or \hskip 0em 
       \or \hskip 1em 
    \fi%
    %
    #6
    %
    \nobreak\relax{\leavevmode\leaders\hbox{\,.}\hfill}
    \hbox to\@pnumwidth {\@tocpagenum{#7}}
  \par
\endgroup
}
 \def\l@section{\@tocline{0}{0pt}{0pc}{}{}}
\renewcommand{\tocsection}[3]{%
  \indentlabel{\@ifnotempty{#2}{ 
    \ignorespaces\bfseries{#2. #3}}}
  \indentlabel{\@ifempty{#2}{\ignorespaces\bfseries{#3}}{}} 
    \vspace{1.5pt}}
\renewcommand{\tocsubsection}[3]{%
  \indentlabel{\@ifnotempty{#2}{
    \ignorespaces#2. #3}}
  \indentlabel{\@ifempty{#2}{\ignorespaces #3}{}}
    \vspace{1.5pt}}
\renewcommand{\tocsubsubsection}[3]{%
  \indentlabel{\@ifnotempty{#2}{
    \ignorespaces#2. #3}}
  \indentlabel{\@ifempty{#2}{\ignorespaces #3}{}}
    \vspace{1.5pt}}
\def\@nomenstarted{0}
\newlength{\@nomenoldtabcolsep}
\newcommand{\nomenstart}
  {%
    \def\@nomenstarted{1}%
    \setlength{\@nomenoldtabcolsep}{\tabcolsep}%
    \setlength{\tabcolsep}{3.5pt}%
    \begin{longtable}{p{0.11\textwidth} p{0.86\textwidth}}
  }
\newcommand{\nomenitem}[2]{%
    \ifcase\@nomenstarted%
      \or 
      \or \\ 
    \fi%
    #1\,{\leavevmode\leaders\hbox{\,.}\hfill} & #2%
    \def\@nomenstarted{2}%
  }%
\newcommand{\nomenend}
  {\\%
      \end{longtable}%
      \setlength{\tabcolsep}{\@nomenoldtabcolsep}%
      \def\@nomenstarted{0}%
  }
\newcommand{\vast}{\bBigg@{4}}
\newcommand{\Vast}{\bBigg@{5}}
\newcommand{\VAST}[1]{\bBigg@{#1}}
\numberwithin{equation}{section}
\numberwithin{figure}{section}
\newtheorem{thm}{Theorem}[section]
\newtheorem{prop}[thm]{Proposition}
\newtheorem{lemma}[thm]{Lemma}
\newtheorem{cor}[thm]{Corollary}
\newtheorem*{thm*}{Theorem}
\newtheorem*{prop*}{Proposition}
\newtheorem*{cor*}{Corollary}
\newtheorem*{conj*}{Conjecture}
\theoremstyle{definition}
\newtheorem{definition}[thm]{Definition}
\newtheorem{notation}[thm]{Notation}
\theoremstyle{remark}
\newtheorem{rmk}[thm]{Remark}
\newcommand{\fakephantomsection}{%
  \Hy@MakeCurrentHref{\@currenvir.\the\Hy@linkcounter}
  \Hy@raisedlink{\hyper@anchorstart{\@currentHref}\hyper@anchorend}%
  \Hy@GlobalStepCount\Hy@linkcounter%
}
\newcommand{\mc}{\mathcal}
\newcommand{\cA}{\mc A}
\newcommand{\cC}{\mc C}
\newcommand{\cD}{\mc D}
\newcommand{\cE}{\mc E}
\newcommand{\cF}{\mc F}
\newcommand{\cG}{\mc G}
\newcommand{\cH}{\mc H}
\newcommand{\cI}{\mc I}
\newcommand{\cJ}{\mc J}
\newcommand{\cN}{\mc N}
\newcommand{\cO}{\mc O}
\newcommand{\cR}{\mc R}
\newcommand{\cT}{\mc T}
\newcommand{\cU}{\mc U}
\newcommand{\cV}{\mc V}
\newcommand{\ms}{\mathscr}
\newcommand{\sD}{\ms D}
\newcommand{\sS}{\ms S}
\newcommand{\C}{\mathbb{C}}
\newcommand{\N}{\mathbb{N}}
\newcommand{\R}{\mathbb{R}}
\newcommand{\Sph}{\mathbb{S}}
\newcommand{\sfs}{\mathsf{s}}
\newcommand{\fp}{\mathfrak{p}}
\newcommand{\ran}{\operatorname{ran}}
\newcommand{\ann}{\operatorname{ann}}
\newcommand{\codim}{\operatorname{codim}}
\renewcommand{\Re}{\operatorname{Re}}
\renewcommand{\Im}{\operatorname{Im}}
\newcommand{\ind}{{\operatorname{ind}}}
\newcommand{\supp}{\operatorname{supp}}
\newcommand{\rank}{\operatorname{rank}}
\newcommand{\diag}{\operatorname{diag}}
\newcommand{\eps}{\epsilon}
\newcommand{\ftrans}{\;\!\wh{\ }\;\!}
\newcommand{\hra}{\hookrightarrow}
\newcommand{\la}{\langle}
\newcommand{\extcup}{\mathrel{\ol\cup}}
\newcommand{\ol}{\overline}
\newcommand{\pa}{\partial}
\newcommand{\dd}{{\mathrm d}}
\newcommand{\ra}{\rangle}
\newcommand{\specb}{\operatorname{spec}_\bop}
\newcommand{\Specb}{\operatorname{Spec}_\bop}
\newcommand{\tot}{\mathrm{tot}}
\newcommand{\wh}{\widehat}
\newcommand{\wt}{\widetilde}
\newcommand{\xra}{\xrightarrow}
\newcommand{\pfstep}[1]{$\bullet$\ \underline{\textit{#1}}}
\newcommand{\bop}{{\mathrm{b}}}
\newcommand{\cop}{{\mathrm{c}}}
\newcommand{\scop}{{\mathrm{sc}}}
\newcommand{\schop}{{\mathrm{sc,\semi}}}
\newcommand{\chop}{{\mathrm{c}\semi}}
\newcommand{\scbtop}{{\mathrm{sc}\text{-}\mathrm{b}}}
\newcommand{\scl}{{\mathrm{sc}}}
\newcommand{\ebop}{{\mathrm{e,b}}}
\newcommand{\eop}{{\mathrm{e}}}
\newcommand{\tbop}{{3\mathrm{b}}}
\newcommand{\cuop}{{\mathrm{cu}}}
\newcommand{\semi}{\hbar}
\newcommand{\ff}{\mathrm{ff}}
\newcommand{\lb}{{\mathrm{lb}}}
\newcommand{\rb}{{\mathrm{rb}}}
\newcommand{\tlb}{{\mathrm{tlb}}}
\newcommand{\trb}{{\mathrm{trb}}}
\newcommand{\bface}{{\mathrm{bf}}}
\newcommand{\cface}{{\mathrm{cf}}}
\newcommand{\dface}{{\mathrm{df}}}
\newcommand{\eface}{{\mathrm{ef}}}
\newcommand{\iface}{{\mathrm{if}}}
\newcommand{\lface}{{\mathrm{lf}}}
\newcommand{\mface}{{\mathrm{mf}}}
\newcommand{\rface}{{\mathrm{rf}}}
\newcommand{\scface}{{\mathrm{scf}}}
\newcommand{\sface}{{\mathrm{sf}}}
\newcommand{\tface}{{\mathrm{tf}}}
\newcommand{\zface}{{\mathrm{zf}}}
\newcommand{\cp}{{\mathrm{c}}}
\newcommand{\Diff}{\mathrm{Diff}}
\DeclareMathOperator{\Op}{Op}
\newcommand{\Vb}{\cV_\bop}
\newcommand{\Diffb}{\Diff_\bop}
\newcommand{\Psib}{\Psi_\bop}
\newcommand{\Diffch}{\Diff_\chop}
\newcommand{\Psich}{\Psi_\chop}
\newcommand{\Psisc}{\Psi_\scop}
\newcommand{\Psiscbt}{\Psi_\scbtop}
\newcommand{\Diffscbt}{\Diff_\scbtop}
\newcommand{\Psisch}{\Psi_{\scop,\semi}}
\newcommand{\Vtb}{\cV_\tbop}
\newcommand{\Vtsc}{\cV_{3\scl}}
\newcommand{\Difftb}{\Diff_\tbop}
\newcommand{\Psitb}{\Psi_\tbop}
\newcommand{\Veb}{\cV_\ebop}
\newcommand{\Diffeb}{\Diff_\ebop}
\newcommand{\Psieb}{\Psi_\ebop}
\newcommand{\Vsc}{\cV_\scop}
\newcommand{\Vsch}{\cV_\schop}
\newcommand{\Diffsc}{\Diff_\scop}
\newcommand{\Vch}{\cV_\chop}
\newcommand{\Omegach}{{}^\chop\Omega}
\newcommand{\Psih}{\Psi_\semi}
\newcommand{\Diffh}{\Diff_\semi}
\newcommand{\Omegab}{{}^{\bop}\Omega}
\newcommand{\Omegaeb}{{}^{\ebop}\Omega}
\newcommand{\Omegasc}{{}^{\scop}\Omega}
\newcommand{\Omegasch}{{}^{\scop,\semi}\Omega}
\newcommand{\Omegatb}{{}^{\tbop}\Omega}
\newcommand{\Tb}{{}^{\bop}T}
\newcommand{\Tch}{{}^{\chop}T}
\newcommand{\Tscbt}{{}^\scbtop T}
\newcommand{\Sscbt}{{}^\scbtop S}
\newcommand{\Tsc}{{}^\scop T}
\newcommand{\Tsch}{{}^{\scop,\semi}T}
\newcommand{\Teb}{{}^{\ebop}T}
\newcommand{\Ttb}{{}^{\tbop}T}
\newcommand{\Sb}{{}^{\bop}S}
\newcommand{\Stb}{{}^{\tbop}S}
\newcommand{\half}{{\tfrac{1}{2}}}
\newcommand{\sigmab}{{}^\bop\upsigma}
\newcommand{\sigmaeb}{{}^\ebop\upsigma}
\newcommand{\sigmasc}{{}^\scop\upsigma}
\newcommand{\sigmascbt}{{}^\scbtop\upsigma}
\newcommand{\sigmah}{{}^\semi\upsigma}
\newcommand{\sigmasch}{{}^{\scop,\semi}\upsigma}
\newcommand{\sigmatb}{{}^\tbop\upsigma}
\newcommand{\sigmach}{{}^\chop\upsigma}
\newcommand{\loc}{{\mathrm{loc}}}
\newcommand{\CI}{\cC^\infty}
\newcommand{\CIdot}{\dot\cC^\infty}
\newcommand{\CIc}{\cC^\infty_\cp}
\newcommand{\Hb}{H_{\bop}}
\newcommand{\Heb}{H_{\ebop}}
\newcommand{\Htb}{H_\tbop}
\newcommand{\Hsc}{H_{\scop}}
\newcommand{\phg}{{\mathrm{phg}}}
\newcommand{\openbigpmatrix}[1]
  {%
    \def\@bigpmatrixsize{#1}%
    \addtolength{\arraycolsep}{-#1}%
    \begin{pmatrix}%
  }
\newcommand{\closebigpmatrix}
  {%
    \end{pmatrix}%
    \addtolength{\arraycolsep}{\@bigpmatrixsize}%
  }
\newlength{\enummargin}\setlength{\enummargin}{1.5em}
\newcommand{\usref}[1]{{\upshape\ref{#1}}}
\newcommand*{\fwbw}[1]{\expandafter\@fwbw\csname c@#1\endcsname}
\newcommand*{\@fwbw}[1]{\ifcase #1 \or {\rm fw}\or {\rm bw}\fi}
\AddEnumerateCounter{\fwbw}{\@fwbw}
\begin{document}

\title[3b-calculus]{Microlocal analysis of operators with asymptotic translation- and dilation-invariances}

\date{\today}

\begin{abstract}
  On a suitable class of non-compact manifolds, we study (pseudo)differential operators which feature an asymptotic translation-invariance along one axis and an asymptotic dilation-invariance, or asymptotic homogeneity with respect to scaling, in all directions not parallel to that axis. Elliptic examples include generalized 3-body Hamiltonians at zero energy such as $\Delta_x+V_0(x')+V(x)$ where $\Delta_x$ is the Laplace operator on $\R^n_x=\R^{n-1}_{x'}\times\R_{x''}$, and $V_0$ and $V$ are potentials with at least inverse quadratic decay: this operator is approximately translation-invariant in $x''$ when $|x'|\lesssim 1$, and approximately homogeneous of degree $-2$ with respect to scaling in $(x',x'')$ when $|x'|\gtrsim|x''|$. Hyperbolic examples include wave operators on nonstationary perturbations of asymptotically flat spacetimes.

  We introduce a systematic framework for the (microlocal) analysis of such operators by working on a compactification $M$ of the underlying manifold. The analysis is based on a calculus of pseudodifferential operators which blends elements of Melrose's b-calculus and Vasy's 3-body scattering calculus. For fully elliptic operators in our 3b-calculus, we construct precise parametrices whose Schwartz kernels are polyhomogeneous conormal distributions on an appropriate resolution of $M\times M$. We prove the Fredholm property of such operators on a scale of weighted Sobolev spaces, and show that tempered elements of their kernels and cokernels have full asymptotic expansions on $M$.
\end{abstract}

\subjclass[2010]{Primary 35J75, Secondary 35A17, 35C20}

\author{Peter Hintz}
\address{Department of Mathematics, ETH Z\"urich, R\"amistrasse 101, 8092 Z\"urich, Switzerland}
\email{peter.hintz@math.ethz.ch}

\maketitle

\setlength{\parskip}{0.00in}
\tableofcontents
\setlength{\parskip}{0.05in}

\section{Introduction}
\label{SI}

Consider the wave operator
\[
  \Box = -D_t^2 + \Delta_{\R^{n-1}},\qquad (t,x)\in\R\times\R^{n-1}=\R^n,\quad\Delta_{\R^{n-1}}=\sum_{j=1}^{n-1}D_{x^j}^2,\quad D=\frac{1}{i}\pa,
\]
on the Minkowski spacetime. We focus on two symmetries of $\Box$:
\begin{enumerate}
\item The operator $\Box$ is invariant under time translations $t\mapsto t+a$, $a\in\R$. Therefore, one can study it using the Fourier transform in $t$, which means analyzing the spectral family $\Delta_{\R^{n-1}}-\sigma^2$ and proving estimates for its inverse, the resolvent.
\item The operator $\Box$ is also homogeneous of degree $-2$ under spacetime dilations $(t,x)\mapsto(\lambda t,\lambda x)$, $\lambda>1$. (Equivalently, $(t^2+|x|^2)\Box$ is dilation-invariant.) Thus, one can analyze it using the Mellin-transform in $|(t,x)|^{-1}=(t^2+x^2)^{-1/2}$.
\end{enumerate}

There are many interesting classes of operators generalizing $\Box$ which retain time translation invariance. For the purposes of this introduction, we restrict attention to operators
\[
  P=\Box+V,
\]
where $V=V(x)$ is a stationary potential (which is typically required to decay as $|x|\to\infty$). Passing to the Fourier transform in $t$ gives the spectral family $\wh{N_\cT}(P,\sigma):=\Delta_{\R^{n-1}}+V-\sigma^2$. Precise information about the asymptotic behavior of solutions of $P$ can then be deduced from properties of the resolvent $\hat P(\sigma)^{-1}$ via the inverse Fourier transform. (We mention that wave operators on stationary asymptotically flat spacetimes, such as Schwarzschild or Kerr black hole spacetimes, are also time-translation-invariant, and their analysis via the Fourier transform has reached a rather refined state, see \cite{TataruDecayAsympFlat,DonningerSchlagSofferPrice,MorganDecay,MorganWunschPrice,HintzPrice}.) However, as soon as exact time translation invariance of $P$ is broken (e.g.\ when the spacetime metric or the potential depend on time, no matter how mildly), the Fourier transform by itself is no longer sufficient for the analysis of $P$.

Generalizations $P$ of $\Box$ which retain \emph{exact} homogeneity under dilations in $(t,x)$, at least for large $|(t,x)|$, rarely appear in nature. (A somewhat artificial example would be $P=\Box+t^{-2}W(x/t)$ in $t>\half|x|$, where $W$ is a smooth function.) The analysis of such $P$ would be most naturally effected by means of the Mellin transform in $|(t,x)|$, which transforms $P$ into a family of operators $\wh{N_\cD}(P,\lambda)$, $\lambda\in\C$, on the cross section $\{|(t,x)|=1\}=\Sph^{n-1}_\varpi$; the poles of $\wh{N_\cD}(P,\lambda)^{-1}$ (acting on appropriate function spaces) then correspond to contributions $|(t,x)|^{-i\lambda} a(\varpi)$ (with $a\in\ker\wh{N_\cD}(P,\lambda)$) to the large scale asymptotics of solutions $u$ of $P u=f$. Operators which are merely \emph{approximately} homogeneous with respect to dilations (roughly speaking, $[t\pa_t+x\pa_x,P]=-2 P$ plus an operator which is an error term in that its coefficients decay relative to those of $P$) are quite natural: they arise e.g. as wave operators on appropriate generalizations of Minkowski space, such as the Lorentzian scattering spaces considered in \cite{BaskinVasyWunschRadMink,BaskinVasyWunschRadMink2}. A systematic framework for the analysis of operators with approximate dilation-invariance is provided by Melrose's b-analysis; in a nutshell, one can control the regularity of solutions of $P$ with respect to vector fields such as $t\pa_t$, $t\pa_{x^i}$, $x^j\pa_t$, $x^i\pa_{x^j}$ using symbolic analysis (i.e.\ high frequency analysis, involving estimates which only use the principal or subprincipal symbol of $P$, such as elliptic estimates, propagation of singularities \cite{DuistermaatHormanderFIO2}, and radial point estimates), while the Mellin transform applied to an exactly dilation-invariant (or -homogeneous) model $N_\cD(P)$ for $P$ at $|(t,x)|^{-1}=0$ provides sharp control of asymptotics. Note that time-translation-invariant operators such as $\Box+V(x)$, for $0\neq V\in\CIc(\R^{n-1})$, behave well under dilations only in regions $|x|>\eps t$, $\eps>0$, but not globally on $\R^n$.

A central aim of the present work is to lay the conceptual groundwork for a systematic analysis of operators on $\R_t\times\R^{n-1}_x$ which feature both an approximate invariance under time translations for $|x|\lesssim 1$ as well as an approximate invariance (or homogeneity) under spacetime dilations in $|x|\gtrsim t$, with an appropriate transition between these two in the region $1\ll |x|\ll t$. As a concrete example, consider in $|x|\lesssim t$ the operator
\begin{equation}
\label{EqIHypEx}
  P=\Box+V\bigl(\tfrac{1}{t},x,\tfrac{x}{t}\bigr) = -D_t^2 + \Delta_{\R^{n-1}} + V\bigl(\tfrac{1}{t},x,\tfrac{x}{t}\bigr),
\end{equation}
where $V=V(T,x,X)$ is a smooth function of its arguments, $|V(T,x,X)|=\cO(\la x\ra^{-2})$, and $V(T,x,X)=|x|^{-2}V_0(T,\frac{1}{|x|},\frac{x}{|x|},X)$ with $V_0$ smooth down to $|x|^{-1}=0$. For $|x|\lesssim 1$, the operator $P$ is equal to a time-translation invariant operator,
\[
  P \approx N_\cT(P):=\Box+V(0,x,0)\qquad (|x|\lesssim 1),
\]
up to decaying (in $t$) errors, whereas for $|X|=|\frac{x}{t}|>\eps$, the operator $P$ is equal to a dilation-homogeneous operator,
\[
  P \approx N_\cD(P):=\Box+|x|^{-2}V_0\bigl(0,0,\tfrac{x}{|x|},\tfrac{x}{t})\qquad (|\tfrac{x}{t}|\gtrsim 1),
\]
up to decaying (in $|(t,x)|$) errors. (As far as the transition between the two asymptotic regimes is concerned, we note that these two model operators match up in their own asymptotic regimes $|x|\to\infty$, resp.\ $|\frac{x}{t}|\to 0$: there, they tend to the operator
\[
  -D_t^2+\Delta_{\R^{n-1}} + |x|^{-2}V_0\bigl(0,0,\tfrac{x}{|x|},0\bigr)
\]
which is both translation-invariant and dilation-homogeneous.) The analysis of such operators thus involves both the spectral family $\wh{N_\cT}(P,\sigma)=-\sigma^2+\Delta_{\R^{n-1}}+V(0,x,0)$ as well as the Mellin-transformed normal operator family\footnote{This is the formal conjugation of $N_\cD(|x|^2 P)$ by the Mellin transform in a homogeneous degree $-1$ function on $\R^n$, which in the region $|x|\lesssim t$ we are currently considering can e.g.\ be taken to be $\frac{1}{t}$. The rescaling of $P$ by $t^2$ ensures the dilation-\emph{invariance} of the resulting operator (rather than merely dilation-homogeneity), as required for passing to the Mellin transform.} $\wh{N_\cD}(t^2 P,\lambda)$ to control the asymptotic behavior of solutions of $P u=f$ (for rapidly decaying $f$, say), and an appropriate symbolic analysis to control their regularity.

\begin{rmk}[Geometric hyperbolic examples]
  Wave operators on spacetimes which, in a certain sense, settle down to a Kerr spacetime at a rate $t^{-\eps}$ as $t\to\infty$ provide, at least in a region $|x|<\frac12 t$ away from the light cone, further examples of operators with such approximate invariances. Typically, on asymptotically flat spacetimes, a neighborhood $|x/t|\approx 1$, $|t|\gg 1$, of null infinity has a yet different structure however. A singular geometry perspective for this near-light-cone region is given in \cite{HintzVasyMink4}, and a fully microlocal point of view is introduced in \cite{HintzVasyScrieb}. The 3b-perspective introduced in the present paper is then only of importance in $|x/t|\leq v<1$.)
\end{rmk}

One may similarly consider elliptic operators with approximate translation- and dilation-in\-var\-i\-an\-ces (e.g.\ those which arise from the Minkowskian examples above by switching the sign of $D_t^2$). The translation-invariant models are Schr\"odinger operators with potentials that invariant under translations in one coordinate, i.e.\ $D_t^2+D_x^2+V(x)$; approximately dilation-invariant/-homogeneous examples are Laplace operators on $\R^n$ equipped with an asymptotically Euclidean (or conic) metric. We note that the operator
\begin{equation}
\label{EqIOp3b}
  D_t^2+D_x^2+V(x)=\Delta_{\R^n}+\pi^*V,
\end{equation}
with $\pi\colon\R^n_{t,x}\to\R^{n-1}_x$ the projection to a subspace, is an example of a (generalized) reduced 3-body Hamiltonian.\footnote{A reduced 3-body Hamiltonian on $\R^{n-1}$ would be the operator $\Delta_{\R^{2(n-1)}}+V_1(y^1)+V_2(y^2)+V_3(y^1-y^2)$ where $\R^{2(n-1)}=y=(y^1,y^2)$, with $y^1$, resp.\ $y^2$ the relative position of the first and second, resp.\ first and third particle. For $V_2,V_3\equiv 0$, writing $x=y^1$, and taking $y^2=t$ to be a real variable---hence the qualifier `generalized'---this gives the operator~\eqref{EqIOp3b}.} The study of the spectral and scattering theory of 3- (or more general $N$-)body Hamiltonians at \emph{nonzero} real energies has a long history. We refer the reader to \cite{VasyThreeBody,VasyManyBody} for context and references. Here, we only note that the operator $\Delta_{\R^n}+\pi^*V-\varsigma$, where $0\in\varsigma\in\R$, is a 3-body-scattering operator in the terminology of Vasy \cite{VasyThreeBody}, and indeed Vasy gives a detailed description of the asymptotic behavior of outgoing solutions of more general operators which in particular only feature an approximate translation-invariance along the fibers of $\pi$. The 3-body-scattering analysis involves a symbolic part to control regularity (and decay in $|x|\gtrsim|t|$) of solutions, and a spectral family to control asymptotics and decay for bounded $|x|$ as $|t|\to\infty$. However, we stress that the presence of $\varsigma\neq 0$ destroys the dilation-homogeneity in $|x|\gtrsim|t|$, and indeed leads to entirely different asymptotics of solutions there (oscillatory when $\varsigma>0$, Schwartz when $\varsigma<0$), cf.\ the considerably different regularity and asymptotic properties of solutions of $\Delta_{\R^n}u=f$ as compared to those of solutions of $(\Delta_{\R^n}-\varsigma)u=f$ when $\varsigma\neq 0$.

The main novelty of the present paper is the introduction of algebras of \emph{3b-differential} and \emph{3b-pseudodifferential} operators which are tailor-made to precisely capture approximate translation- and dilation-invariances; here `3b' is short for `3-body/b'. Correspondingly, the analysis of a 3b-operator $P$ uses three models:
\begin{enumerate}
\item the $\cT$-normal operator $N_\cT(P)$ of $P$ which is an exactly translation-invariant operator on $\R_t\times\R^{n-1}_x$ and is thus analyzed via the Fourier transform in $t$;
\item the $\cD$-normal operator $N_\cD(P)$ of $P$, which is an exactly dilation-invariant/-ho\-mo\-ge\-neous operator on $\R^n_{t,x}$ (whose coefficients typically become singular at the axis $x=0$) and is thus analyzed via the Mellin transform in $|(t,x)|^{-1}$;
\item the principal symbol $\sigmatb(P)$ of $P$, which is a symbol on an appropriate uniform (as $|(t,x)|\to\infty$) version of the cotangent bundle.
\end{enumerate}

In this paper, we shall not prove any estimates for non-elliptic operators such as~\eqref{EqIHypEx}; in the particular setting of wave operators on nonstationary asymptotically flat spacetimes, a detailed analysis (which also takes into account the different structure at null infinity) is instead given in \cite{HintzNonstat}. We do however develop a general and rather refined theory for fully elliptic 3b-(pseudo)differential operators. In order to give the reader an impression of this, we consider the example from the abstract (which is an elliptic version of a special case of~\eqref{EqIHypEx}). To wit, write $z=(t,x)\in\R\times\R^{n-1}$, put
\[
  (\rho,\omega) := \bigl(|x|^{-1},\tfrac{x}{|x|}\bigr),\qquad
  (\varrho,\varpi) := \bigl(|z|^{-1},\tfrac{z}{|z|}\bigr)
\]
(where $|z|=(t^2+|x|^2)^{1/2}$), and consider
\begin{equation}
\label{EqIExOp}
\begin{split}
  &P = \Delta_{\R^n} + V(z) + V_\cT(x) = D_t^2+\Delta_{\R_x^{n-1}} + V(t,x) + V_\cT(x), \\
  &\qquad V\in\CI(\R^n_z),\quad V_\cT\in\CI(\R^{n-1}_x).
\end{split}
\end{equation}
We allow for the potentials $V$, $V_\cT$ to be complex-valued. Assume that $[0,1)\times\Sph^{n-2}\ni(\rho,\omega)\mapsto V_\cT(\rho^{-1}\omega)$ is smooth and vanishes at least cubically at $\rho=0$;\footnote{One can also allow for $V_\cT$ to have inverse quadratic decay; this however necessitates modifications of the ranges of weights for which Theorem~\ref{ThmIEx} below is valid.} and that $[0,1)\times\Sph^{n-1}\ni(\varrho,\varpi)\mapsto V(\varrho^{-1}\varpi)$ is smooth and vanishes at least quadratically at $\varrho=0$. Write
\[
  V_\cD:=(\varrho^{-2}V)|_{\varrho=0}\in\CI(\Sph^{n-1})
\]
for the leading order part of $V$. We analyze $P$ on \emph{weighted 3b-Sobolev spaces}
\begin{equation}
\label{EqIH3b}
  \Htb^{k,\alpha_\cD,\alpha_\cT} = \la x\ra^{-\alpha_\cD}\Bigl(\frac{\la z\ra}{\la x\ra}\Bigr)^{-\alpha_\cT}\Htb^k = \Bigl\{ \la x\ra^{-\alpha_\cD}\Bigl(\frac{\la z\ra}{\la x\ra}\Bigr)^{-\alpha_\cT}u \colon u\in\Htb^k \Bigr\},
\end{equation}
where $\Htb^k$ ($k\in\N_0$) consists of all $u\in L^2(\R^n)$ so that $(\la x\ra\pa_z)^\alpha u\in L^2(\R^n)$ for all $|\alpha|\leq k$. Note that $P\colon\Htb^{k,\alpha_\cD,\alpha_\cT}\to\Htb^{k-2,\alpha_\cD+2,\alpha_\cT}$ is a bounded linear operator.

\begin{thm}[An example of a fully elliptic 3b-differential operator]
\label{ThmIEx}
  Let $n\geq 4$, and let $\alpha_\cD,\alpha_\cT\in\R$ be such that $\alpha_\cD-\alpha_\cT\in\bigl(-\frac{n-1}{2},\frac{n-1}{2}-2\bigr)$. We make the following assumptions:
  \begin{enumerate}
  \item\label{ItIExNT} The operator
    \begin{equation}
    \label{EqIExNT}
      \wh{N_\cT}(P,\sigma) := \Delta_{\R^{n-1}} + \sigma^2 + V_\cT
    \end{equation}
    has no $\sS(\R^{n-1})$-nullspace for $0\neq\sigma\in\R$ (here $\sS(\R^{n-1})$ is the space of Schwartz functions). Assume moreover that a smooth function $u=u(x)$ which satisfies $|u|=\cO(|x|^{-\eps})$ for some $\eps>0$ as $|x|\to\infty$ and which lies in $\ker\wh{N_\cT}(P,0)$ or $\ker\wh{N_\cT}(P,0)^*=\ker(\Delta_{\R^{n-1}}+\ol{V_\cT})$ vanishes identically.\footnote{In the special case that $V_\cT$ is real-valued, assumption~\eqref{ItIExNT} of Theorem~\ref{ThmIEx} has an equivalent formulation in terms of classical spectral theory; see Corollary~\ref{CorEX}.}
  \item\label{ItIExND} The operator\footnote{This is the conjugation by the Mellin transform in $\varrho$ of the dilation-invariant model operator $\varrho^{-2}(\Delta_{\R^n}+\varrho^2 V_\cD)=(\varrho D_\varrho)^2+i(n-2)\varrho D_\varrho+\Delta_{\Sph^{n-1}}+V_\cD$ of $\varrho^{-2}P$.}
    \[
      \wh{N_\cD}(\varrho^{-2}P,\lambda) := \lambda^2 + i(n-2)\lambda + \Delta_{\Sph^{n-1}} + V_\cD \colon \CI(\Sph^{n-1})\to\CI(\Sph^{n-1})
    \]
    is invertible for all $\lambda\in\C$ with $\Im\lambda=-\alpha_\cD-\frac{n}{2}$.
  \end{enumerate}
  Under these assumptions, the operator
  \begin{equation}
  \label{EqIExMap}
    \la x\ra^2 P\colon\Htb^{k,\alpha_\cD,\alpha_\cT}\to\Htb^{k-2,\alpha_\cD,\alpha_\cT}
  \end{equation}
  is Fredholm. Any element $u$ in the kernel or cokernel (orthogonal complement of the range) of $P$ is pointwise bounded by a constant times $\la x\ra^{-\alpha_\cD-\frac{n}{2}}(\frac{\la z\ra}{\la x\ra})^{-\alpha_\cT-\frac12}$, as are all its derivatives along any number of powers of $\la x\ra\pa_z$ and $\la z\ra\pa_t$.\footnote{Note that regularity, without loss of decay, of $u$ under application of $\la z\ra\pa_t$ is significantly stronger than infinite order 3b-regularity in the region $\la x\ra\ll\la t\ra$. Thus, we prove stronger regularity than what one might naively expect from the structure of the operator.} Finally, if $u\in\sS'(\R^n)$ satisfies $P u=0$, then $u$ is necessarily smooth; and there exist $\alpha_\cD,\alpha_\cT\in\R$ so that $u$ and all its derivatives of this type satisfy these pointwise bounds.
\end{thm}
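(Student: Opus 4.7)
The plan is to realize $\la x\ra^2 P$ as a fully elliptic 3b-differential operator on a suitable compactification $M$ of $\R^n$, and then invoke the Fredholm property and polyhomogeneous parametrix theorems of the 3b-calculus developed in the rest of this paper. \textbf{First}, the compactification $M$ has two boundary hypersurfaces $\cT$ (corresponding to $|t|\to\infty$ with $x$ bounded) and $\cD$ (corresponding to $|z|\to\infty$ with $|x|/|z|$ bounded below), and the generating 3b-vector fields include $\la x\ra\pa_{x^j}$ and $\la x\ra\pa_t$, with the appropriate scaling near $\cD$ that turns these into scattering-type vector fields. Consequently $\la x\ra^2\Delta_{\R^n}\in\Diff^2_\tbop(M)$, and the vanishing assumptions on $V$ (quadratic at $\varrho=0$) and on $V_\cT$ (cubic at $\rho=0$) make $\la x\ra^2 V$ and $\la x\ra^2 V_\cT$ admissible 3b-coefficients, so that $\la x\ra^2 P\in\Diff^2_\tbop(M)$.

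\textbf{Next}, I verify the three conditions for full ellipticity of $\la x\ra^2 P$. The 3b-principal symbol $\la x\ra^2(|\tau|^2+|\xi|^2)$ is nonvanishing off the zero section, giving symbolic ellipticity. The $\cD$-normal operator is, after Mellin transform in a defining function of $\cD$, the family $\wh{N_\cD}(\varrho^{-2}P,\lambda)$, invertible on the line $\Im\lambda=-\alpha_\cD-\tfrac{n}{2}$ by hypothesis~\eqref{ItIExND}. The $\cT$-normal operator is, after Fourier transform in $t$, the family $\wh{N_\cT}(P,\sigma)=\Delta_{\R^{n-1}}+\sigma^2+V_\cT$, which must be shown invertible on appropriate weighted scattering Sobolev spaces uniformly in $\sigma\in\R$. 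The weight range $\alpha_\cD-\alpha_\cT\in(-\tfrac{n-1}{2},\tfrac{n-1}{2}-2)$ is precisely the strip between consecutive indicial roots of $\Delta_{\R^{n-1}}$, so the flat Laplacian is itself invertible between the corresponding spaces. For $\sigma\neq 0$ real, the operator is a 3-body scattering operator at nonzero energy in the sense of Vasy, and the hypothesis of no Schwartz nullspace in~\eqref{ItIExNT} (together with its dual for the cokernel) yields invertibility. At $\sigma=0$, $V_\cT$ is a compact perturbation in the scattering-b sense, and the second part of~\eqref{ItIExNT} (no polynomially decaying nullspace elements) rules out threshold resonances and zero modes, giving invertibility there too. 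Uniformity at large $|\sigma|$ is automatic from semiclassical ellipticity with parameter.

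\textbf{Then}, with full ellipticity in hand, the general parametrix construction of the paper yields polyhomogeneous left and right parametrices on the 3b-double-space whose remainders are compact on the scale $\Htb^{k,\alpha_\cD,\alpha_\cT}$, establishing the Fredholm property~\eqref{EqIExMap}. Tempered kernel and cokernel elements are automatically smooth and polyhomogeneous on $M$, with index sets governed by the poles of $\wh{N_\cT}(P,\sigma)^{-1}$ in the appropriate $\sigma$-strip and of $\wh{N_\cD}(\varrho^{-2}P,\lambda)^{-1}$ in the appropriate $\lambda$-strip. Pointwise bounds then follow from $L^2$-to-$L^\infty$ b-Sobolev embedding: the shifts $\tfrac{n}{2}$ at $\cD$ and $\tfrac{1}{2}$ at $\cT$ arise from comparing the Lebesgue volume form on $\R^n$ with the b-volume form transverse to each boundary face. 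The stronger regularity under arbitrary powers of $\la z\ra\pa_t$ (which in the region $\la x\ra\ll\la z\ra$ is genuinely longer than any 3b-vector field) reflects the exact $t$-translation invariance of the operator $N_\cT(P)$: commuting $\pa_t$ through $P$ produces a remainder $\pa_t V$ that is smooth on $M$ and vanishes at $\cD$, so iterated commutator arguments combined with the 3b Fredholm estimate (and the scattering structure of the $\cD$-face) propagate regularity under any number of $\pa_t$'s without loss of weight. Finally, any $u\in\sS'(\R^n)$ solving $P u=0$ automatically lies in some $\Htb^{k,\alpha_\cD,\alpha_\cT}$ (take $\alpha_\cD=\alpha_\cT$ sufficiently negative so that $u$ sits in the weighted $L^2$-space, which is compatible with the admissible weight range thanks to $n\geq 4$); the previous conclusions then apply.

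\textbf{The main obstacle} is the verification of the $\cT$-normal operator invertibility. The hypothesis~\eqref{ItIExNT} is tailored to rule out exactly those obstructions that can arise: $L^2$-eigenvalues at positive energies at nonzero $\sigma$, and threshold resonances or zero eigenvalues at $\sigma=0$. Navigating between the zero-energy b-type theory and the nonzero-energy 3-body-scattering theory in a uniform fashion along the real line, and matching these with the prescribed weight range so as to thread between consecutive indicial roots of $\Delta_{\R^{n-1}}$, is the technical heart of the argument; the remaining Fredholm, parametrix, and expansion statements are then produced by the abstract 3b-machinery of this paper.
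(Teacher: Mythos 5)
Your proposal follows essentially the same route as the paper: realize $\la x\ra^2 P$ as an element of $\Difftb^2(M)$, verify full ellipticity (the weight interval $(-\tfrac{n-1}{2},\tfrac{n-1}{2}-2)$ is indeed the shift by $\tfrac{n-1}{2}$ of the gap $(0,n-3)$ between consecutive indicial roots of $N_{\pa\cT}(P)$, and hypothesis (1) supplies exactly the zero-energy and nonzero-energy invertibility), and then invoke the general Fredholm/parametrix theorems and the polyhomogeneity of tempered nullspace elements. Two small points. First, your choice ``$\alpha_\cD=\alpha_\cT$ sufficiently negative'' for the tempered-solution statement is inadmissible when $n=4,5$, since then $0\notin(-\tfrac{n-1}{2},\tfrac{n-1}{2}-2)$; the correct move is to fix any difference $\alpha_\cD-\alpha_\cT$ in the admissible interval and translate \emph{both} weights downward (choosing $\alpha_\cD$ outside the discrete set where hypothesis (2) fails), which is what the paper does. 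Second, the regularity under arbitrary powers of $\la z\ra\pa_t$ does not require an iterated commutator argument: once kernel and cokernel elements are known to be conormal (indeed polyhomogeneous) on $M$, stability under $\la z\ra\pa_t$ is immediate, since this is a b-vector field on $M$ and conormality is by definition stability under $\Diffb(M)$ with fixed weights. Your commutator route could be made to work but is unnecessary given the strength of the parametrix output.
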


This is a special case of Theorem~\ref{ThmPFred} and Corollary~\ref{CorPKer}, as verified in Lemma~\ref{LemmaEX} and Remark~\ref{RmkEXIntro}. Our general machinery gives more still: elements of the kernel and cokernel are polyhomogeneous on an appropriate compactification of $\R^n$ to a manifold with corners; and the generalized inverse of $P$ is an element of the \emph{large 3b-pseudodifferential calculus}. Furthermore, assumption~\eqref{ItIExND} holds when $\alpha_\cD\in\R\setminus D$ where $D$ is a discrete subset of $\R$; we show that when $\alpha_\cD$ crosses a value $a\in D$, then the index of~\eqref{EqIExMap} jumps by the sum of the dimensions of the generalized nullspaces of $\wh{N_\cD}(P,\lambda)$ where $\Im\lambda=-a-\frac{n}{2}$. See Theorems~\ref{ThmEPx} and \ref{ThmPRel}. Finally, one can show that the operator~\eqref{EqIExMap} cannot be Fredholm unless $\alpha_\cD-\alpha_\cT\in[-\frac{n-1}{2},\frac{n-1}{2}-2]$; see Remark~\ref{RmkPWeights}.

The remainder of the introduction is structured as follows: in~\S\ref{SsI3}, we give an overview of 3b-geometry and 3b-analysis; in~\S\ref{SsIE} we discuss elements of our detailed elliptic theory in the 3b-setting. After giving pointers to the literature in~\S\ref{SsIL}, we end with an outline of the rest of the paper in~\S\ref{SsIO}.

\subsection{Overview of 3b-geometry and 3b-analysis}
\label{SsI3}

In the main part of this work, we follow the time-honored tradition of doing analysis on non-compact spaces such as $\R_t\times\R^{n-1}_x$ by compactifying the space to a manifold $M$ with corners; the operators of interest then feature appropriate degenerations at the boundary hypersurfaces of $M$. A detailed discussion that is fully based on this perspective is given in~\S\ref{SG}; see also~\S\ref{SssI3Cpt} below. For now, it is simpler to proceed in a more hands-on fashion. Thus, we work on $M^\circ:=\R_t\times\R^{n-1}_x$ and postpone the specification of its compactification $M$ until the end of this section. In $|x|>1$, we introduce polar coordinates
\[
  r = |x|,\qquad \omega=\frac{x}{|x|}\in\Sph^{n-2},
\]
on $\R^{n-1}_x$; we shall use the schematic notation $\pa_\omega$ to denote a vector field on $\Sph^{n-2}$ (or the collection $(\pa_{\omega^1},\ldots,\pa_{\omega^{n-2}})$ of coordinate vector fields), or its lift to $\R_t\times(1,\infty)_r\times\Sph^{n-2}\subset M^\circ$. The basic 3b-vector fields are then
\begin{equation}
\label{EqI3bVF}
  r\pa_t,\quad r\pa_r,\quad\pa_\omega;
\end{equation}
note indeed that they are invariant under $t$-translations and $(t,r)$-dilations (i.e.\ they Lie-commute with $\pa_t$ and $t\pa_t+x\pa_x=t\pa_t+r\pa_r$). As coefficients, we allow functions
\begin{equation}
\label{EqI3CI}
  a = a(\rho_\cD,\rho_\cT,\omega) = a\Bigl(\frac{1}{r},\frac{r}{t},\omega\Bigr),\qquad a \in \CI\bigl( [0,1)_{\rho_\cD} \times [0,1)_{\rho_\cT} \times \Sph^{n-2} \bigr);
\end{equation}
such functions will precisely be the elements of $\CI(M)$, with $[0,1)_{\rho_\cD}\times[0,1)_{\rho_\cT}\times\Sph^{n-2}$ a local coordinate chart near the boundary of $M$ (which one should think of as the boundary of $M^\circ$ at infinity). In~\eqref{EqI3CI}, we write
\[
  \rho_\cD:=r^{-1},\qquad
  \rho_\cT:=\frac{r}{t}.
\]
Note that such a function $a$ can be restricted to $\rho_\cT=0$ to give a smooth function $a|_\cT:=a(r^{-1},0,\omega)$ which is translation-invariant in $t$; and we can restrict $a$ to $\rho_\cD=0$ and obtain a smooth function $a|_\cD:=a(0,\frac{r}{t},\omega)$ which is dilation-invariant in $(t,r)$. The space $\Vtb(M)$ of 3b-vector fields consists of all vectors fields on $M^\circ$ which are of the form $a r\pa_t+b r\pa_r+c\pa_\omega$ where $a,b,c$ are smooth in the sense of~\eqref{EqI3CI}; this is a Lie algebra. A typical element of the space $\Difftb^m(M)$ of $m$-th order 3b-differential operators is then locally of the form
\begin{equation}
\label{EqI3bOp}
  P = \sum_{j+k+|\alpha|\leq m} a_{j k\alpha}\Bigl(\frac{1}{r},\frac{r}{t},\omega\Bigr) (r\pa_t)^j (r\pa_r)^k \pa_\omega^\alpha,\qquad a_{j k\alpha}\in\CI(M).
\end{equation}
For example, since $\Delta_{\R\times\R^{n-1}}=-(\pa_t^2+\pa_r^2+\frac{n-2}{r}\pa_r+r^{-2}\pa_\omega^2)$, we have $\la r\ra^2\Delta\in\Difftb^2(M)$.

\textbf{Principal symbol.} We define fiber-linear coordinates on $T^*M^\circ$ by writing covectors as $\sigma\frac{\dd t}{r}+\xi\frac{\dd r}{r}+\eta$ where $\sigma,\xi\in\R$, $\eta\in T^*\Sph^{n-2}$; the principal symbol of the operator $P$ given by~\eqref{EqI3bOp} is then
\[
  \sigmatb^m(P)(\rho_\cD,\rho_\cT,\omega;\sigma,\xi,\eta) = \sum_{j+k+|\alpha|=m} a_{j k\alpha}(\rho_\cD,\rho_\cT,\omega) \sigma^j\xi^k\eta^\alpha.
\]
This is a polynomial in $(\sigma,\xi,\eta)$ with smooth coefficients all the way down to the boundary of $M^\circ$ at infinity, and thus captures globally and in a nondegenerate manner the principal part of $P$.

\textbf{$\cT$-normal operator; spectral family.} Restricting the coefficients of $P$ (as a 3b-operator) to $\rho_\cT=0$ gives the translation-invariant operator
\[
  N_\cT(P) = \sum_{j+k+|\alpha|\leq m} a_{j k\alpha}(r^{-1},0,\omega) (r\pa_t)^j (r\pa_r)^k \pa_\omega^\alpha,
\]
which thus only involves the restrictions $a_{j k\alpha}|_\cT$. Its spectral family is obtained by formally replacing $\pa_t$ by $-i\sigma$:
\begin{equation}
\label{EqINTNonzero}
  \wh{N_\cT}(P,\sigma) = \sum_{j+k+|\alpha|\leq m} a_{j k\alpha}(r^{-1},0,\omega) (-i\sigma r)^j (r\pa_r)^k \pa_\omega^\alpha.
\end{equation}

The zero energy operator $\wh{N_\cT}(P,0)$ (in which only those terms with $j=0$ survive) is itself approximately dilation-invariant in $r$, with exactly dilation-invariant model at $r=\infty$ given by $N_{\pa\cT}(P)=\sum_{k+|\alpha|\leq m}a_{j k\alpha}(0,0,\omega)(r\pa_r)^k\pa_\omega^\alpha$. More precisely, the operator $\wh{N_\cT}(P,0)$ is a totally characteristic, or in the terminology of Melrose \cite{MelroseAPS} a b-differential, operator on $\cT:=\ol{\R^{n-1}}$, the radial compactification of $\R^{n-1}$ to a closed ball; this means that it is constructed from the vector fields $\rho_\cD\pa_{\rho_\cD}$ (where $\rho_\cD=r^{-1}$) and $\pa_\omega$, with smooth (in $\rho_\cD\in[0,1)$ and $\omega\in\Sph^{n-2}$) coefficients. As a consequence, the asymptotic behavior of its solutions is---at least in sufficiently nice, e.g.\ elliptic, settings, and ignoring the possibility of higher multiplicities---controlled by the set
\begin{equation}
\label{EqINpacT}
  \specb(N_{\pa\cT}(P)) \subset \C
\end{equation}
of complex numbers $\xi$ for which the operator $\wh{N_{\pa\cT}}(P,\xi)=\sum_{k+|\alpha|\leq m} a_{0 k\alpha}(0,0,\omega)(-i\xi)^k\pa_\omega^\alpha$ on $\CI(\Sph^{n-2})$ is not invertible (corresponding to the possibility of $r^{-i\xi}u(\omega)$ asymptotics where $u\in\CI(\Sph^{n-2})$ is in the kernel of $\wh{N_{\pa\cT}}(P,\xi)$). Closely related to this is the fact that the invertibility of $\wh{N_\cT}(P,0)$ on appropriate (b-)Sobolev spaces requires an appropriate choice of polynomial weight at $r=\infty$.

For real $\sigma\neq 0$ on the other hand, $\wh{N_\cT}(P,\sigma)$ has a rather different character (much as the Euclidean Laplacian $\Delta$ is quite different from $\Delta+\sigma^2$ for $\sigma\neq 0$): it is a (weighted) scattering differential operator in the terminology of \cite{MelroseEuclideanSpectralTheory}. Indeed, the operator $r^{-m}\wh{N_\cT}(P,\sigma)$ is constructed from $\pa_r$, $r^{-1}\pa_\omega$ with smooth (in $r^{-1},\omega$) coefficients, or in Cartesian coordinates $x=r\omega$ from $\pa_x$.\footnote{While the same is true when $\sigma=0$, the b-perspective for the zero energy operator is not only more precise, but analytically better behaved: the zero energy operator does not have good mapping properties on scattering function spaces (which here are standard weighted Sobolev spaces on $\R^{n-1}$), and indeed typically fails to have closed range, an example being $\Delta\colon H^2(\R^n)\to L^2(\R^n)$.} In elliptic situations such as~\eqref{EqIExNT}, kernel and cokernel (on tempered distributions) are automatically Schwartz, and the invertibility of $\wh{N_\cT}(P,\sigma)$, $\sigma\neq 0$, on standard weighted Sobolev spaces is less delicate than for $\sigma=0$ in that it does not depend on any choice of weight (or regularity). We briefly mention that when considering large real $\sigma$, one can regard $|\sigma|^{-1}$ as a semiclassical parameter, and $\wh{N_\cT}(P,\sigma)$ becomes a semiclassical scattering operator \cite{VasyZworskiScl}.

In light to the disparate behavior of the spectral family at zero and nonzero energies, the limit of $\wh{N_\cT}(P,\sigma)$ as $\sigma\searrow 0$ is a singular one; roughly speaking, at a small nonzero frequency $\sigma\in\R$, the behavior of solutions changes from the b-regime to the scattering regime at the scale $r\simeq|\sigma|^{-1}$. Thus, in $\sigma>0$ we introduce $\hat r=\sigma r$ in~\eqref{EqINTNonzero} and drop terms of size $r^{-1}$; this gives the operator
\begin{equation}
\label{EqINTtf}
  N_{\cT,\tface}^+(P) := \sum_{j+k+|\alpha|\leq m} a_{j k\alpha}(0,0,\omega) (-i\hat r)^j(\hat r\pa_{\hat r})^k\pa_\omega^\alpha
\end{equation}
governing the transition from positive to zero frequencies. (There is an analogous operator $N_{\cT,\tface}^-(P)$ for the other choice of sign of $\sigma$.) In the setting of Theorem~\ref{ThmIEx}, the operators $N_{\cT,\tface}^\pm(P)$ are both equal to $\hat\Delta+1$ where $\hat\Delta$ is the Laplacian on the exact cone $([0,\infty)_{\hat r}\times\Sph^{n-2},\dd\hat r^2+\hat r^2 g_{\Sph^{n-2}})$. One can define a general class of parameter-dependent operators which contains $\pm[0,1)\ni\sigma\mapsto\wh{N_\cT}(P,\sigma)$: this is the scattering-b-transition-algebra defined originally (under a different name) in \cite{GuillarmouHassellResI} for detailed low energy spectral theory, and used more recently in \cite{HintzKdSMS}.

Altogether then, estimating solutions of $P$ in the approximately translation-invariant regime ($r/t\ll 1$) requires the inversion of $\wh{N_\cT}(P,0)$ as well as of $\wh{N_\cT}(P,\sigma)$, and also of the transition model operators $N_{\cT,\tface}^\pm(P)$ for the purpose of uniform low energy control.

\textbf{$\cD$-normal operator; Mellin-transformed normal operator family.} In order to exhibit the approximate dilation-invariance of $P$, we pass to coordinates $T=t^{-1}$, $R=r/t$, $\omega$, with the dilation action given by scaling $T$. Restricting the coefficients of $P$ in~\eqref{EqI3bOp} to $\rho_\cD=0$ thus produces
\begin{equation}
\label{EqINDP}
\begin{split}
  N_\cD(P) &= \sum_{j+k+|\alpha|\leq m} a_{j k\alpha}(0,R,\omega) \bigl(-R(T\pa_T+R\pa_R)\bigr)^j (R\pa_R)^k \pa_\omega^\alpha \\
    &= \sum_{j+k+|\alpha|\leq m} \tilde a_{j k\alpha}(R,\omega) (R T\pa_T)^j (R\pa_R)^k \pa_\omega^\alpha
\end{split}
\end{equation}
for suitable $\tilde a_{j k\alpha}$; this expression only involves the restrictions $a_{j k\alpha}|_\cD$. The operator $N_\cD(P)$ is dilation-invariant (in $T$) on $[0,\infty)_T\times[0,1)_R\times\Sph^{n-2}$, and it degenerates at $R=0$ as an edge operator in the sense of Mazzeo \cite{MazzeoEdge}: the basic vector fields $R T\pa_T$, $R\pa_R$, and $\pa_\omega$ from which $N_\cD(P)$ is constructed are precisely those smooth vector fields which at $R=0$ are tangent to the fibers of the fibration $R^{-1}(0)=[0,\infty)_T\times\Sph^{n-2}\to[0,\infty)_T$, and which are moreover tangent to $T=0$. Thus, $N_\cD(P)$ is an \emph{edge-b-operator}. This class of operators appeared previously in \cite{MelroseVasyWunschDiffraction}, where its analysis was restricted to exploiting the principal symbol; in the present paper, we shall develop the fully elliptic theory in detail.

Controlling solutions of $P$ in the approximately dilation-invariant regime ($r^{-1}\ll 1$) requires the inversion of $N_\cD(P)$ on appropriate Sobolev spaces with polynomial weights in $T$ and $R$. The weight in $T$ arises from Fuchsian (or b-) arguments: taking advantage of the dilation-invariance of $N_\cD(P)$ in $T$, we define the Mellin-transformed normal operator family by formally replacing $R\pa_R$ by $i\lambda$, giving
\begin{equation}
\label{EqINDPlambda}
  \wh{N_\cD}(P,\lambda) = \sum_{j+k+|\alpha|\leq m} \tilde a_{j k\alpha}(R,\omega) (i R\lambda)^j (R\pa_R)^k\pa_\omega^\alpha,\qquad \lambda\in\C.
\end{equation}
This is a family of operators on $(0,1)_R\times\Sph^{n-2}$ (which is the set of endpoints at infinity of rays $r/t=\mathrm{const.}>0$, $t\nearrow\infty$, within our coordinate chart), each of which is a b-operator at (i.e.\ approximately dilation-invariant near) $R=0$. As such, its inversion on b-Sobolev spaces requires a choice of weight in $R$ which is informed by the set $\specb(N_{\pa\cT}(P))$ from~\eqref{EqINpacT}. Since the inversion of $\wh{N_\cD}(P,\lambda)$ is a global problem, our present local coordinate description is inadequate; glossing over this issue, one can define the set
\[
  \specb(N_\cD(P)) \subset \C
\]
of $\lambda\in\C$ for which $\wh{N_\cD}(P,\lambda)$ is not invertible (acting between appropriate weighted b-Sobolev spaces). As soon as $\wh{N_\cD}(P,\lambda)$ is invertible for all $\lambda\in\C$ on a line $\Im\lambda=-\alpha$, one can then invert $N_\cD(P)$ (via the inverse Mellin transform) on function spaces with $T$-weight $T^\alpha$.

An interesting technical aspect is that the high energy ($|\Re\lambda|\gg 1$) behavior of $\wh{N_\cD}(P,\lambda)$ is somewhat delicate due to the competition of $R$ (which may be small) and $\lambda$ (which may be large). Analogously to the discussion of the low energy spectral family, one introduces, say for large real $\lambda$, the rescaling $\tilde R=R\lambda$ and lets $\lambda\to\infty$ while keeping $\tilde R$ fixed; this produces
\[
  N_{\cD,\tface}^+(P) = \sum_{k+|\alpha|\leq m} \tilde a_{0 k\alpha}(0,\omega) (i\tilde R)^j (\tilde R\pa_{\tilde R})^k\pa_\omega^\alpha,
\]
which is in fact the operator~\eqref{EqINTtf} but in different coordinates. As a family of b-differential operators depending on the large parameter $|\lambda|$, or equivalently on the small parameter $|\lambda|^{-1}$, the family $\wh{N_\cD}(P,\lambda)$ is then a large parameter or semiclassical cone differential operator in the terminology of \cite{LoyaConicResolvent,HintzConicPowers}. In particular, in the elliptic setting, the problem of constructing an operator $Q$ with $\wh{N_\cD}(Q,\lambda)=\wh{N_\cD}(P,\lambda)^{-1}$ (thus $Q$ is an element of the large edge-b-pseudodifferential calculus, see~\S\ref{SssAebD}), necessarily involves, despite its classical appearance, ps.d.o.\ algebras which were developed only much after \cite{MazzeoEdge,MelroseAPS}.

\subsubsection{Compactification}
\label{SssI3Cpt}

3b-analysis on $\R^n$ takes on a particularly clean form on an appropriate compactification of $\R^n=\R_t\times\R^{n-1}_x$ to a manifold with corners. We give the general definition in~\S\ref{SG}, which in the present special case amounts to passing to the radial compactification $M_0=\ol{\R^n}$ of $\R^n$ to a closed ball and blowing up the north and south poles (i.e.\ the end points at infinity of the $t$-axis), which produces the manifold $M$. We refer the reader to~\S\ref{SA} for a definition of these notions, and here only mention two coordinate charts near $\pa M$; see Figure~\ref{FigI3b}. 

\begin{enumerate}
\item One chart covers the compactification of $(1,\infty)_t\times\bar B(0,r_0)$ where $\bar B(0,r_0)\subset\R^{n-1}_x$ is the closed ball of radius $r_0$, and is given by
  \[
    [0,1)_T \times \bar B(0,r_0)_x, \qquad T = t^{-1}.
  \]
  The spectral family $\wh{N_\cT}(P,\sigma)$ of the translation-invariant model of a 3b-operator $P$ then lives on the boundary hypersurface $\cT\subset M$ which is locally given by $T=0$.
\item Another chart covers the compactification of $r>1$, $0\leq t/r<1$, and is given by
  \[
    [0,1)_{\rho_\cD}\times[0,1)_{\rho_\cT}\times\Sph^{n-2},\qquad \rho_\cD=r^{-1},\quad \rho_\cT=\frac{r}{t}.
  \]
  This was already introduced in~\eqref{EqI3CI}. In this chart, $\cT=\rho_\cT^{-1}(0)$, while the Mellin-transformed normal operator family $\wh{N_\cD}(P,\lambda)$ of the dilation-invariant model of $P$ lives on the boundary hypersurface $\cD\subset M$ which is locally given by $\rho_\cD=0$.
\end{enumerate}

The basic 3b-vector fields~\eqref{EqI3bVF} can be replaced by $T^2\pa_T$, $\pa_x$ in the first chart, and by $\rho_\cT\rho_\cD\pa_{\rho_\cD}$, $\rho_\cD\pa_{\rho_\cD}-\rho_\cT\pa_{\rho_\cT}$, $\pa_\omega$ in the second chart. It is unavoidable that generators of $\Vtb(M)$ near the corner $\cT\cap\cD$ include derivatives such as $\rho_\cD\pa_{\rho_\cD}-\rho_\cT\pa_{\rho_\cT}$ which mix several coordinates; this is a manifestation of the 3-body (i.e.\ non-product) nature of 3b-geometry. Note moreover that near $\cT^\circ$, 3b-vector fields are, in the terminology of \cite{MazzeoMelroseFibred}, the same as cusp vector fields with respect to the boundary defining function $T$; however, the function $T$ is not a defining function of $\cT$, but rather a joint defining function of $\cT\cup\cD$, which is again a familiar feature of 3-body geometries. In particular, the 3b-algebra is markedly different from the b-cusp algebra on $M$, with b-, resp.\ cusp behavior at $\cD$, resp.\ $\cT$. See also Remark~\ref{RmkGComparison}.

\begin{rmk}[Geometry of the $\cD$-normal operator]
\label{RmkI3GeoD}
  The fact that $\cD$ arises from the boundary at infinity $\Sph^{n-1}$ of $M_0$ by blowing up points (the north and south pole) explains why the operators $\wh{N_\cD}(P,\lambda)$ have a conic structure at $\pa\cD$ (i.e.\ $R=0$ in the coordinates used in~\eqref{EqINDPlambda}), and why the dilation-invariant operator $N_\cD(P)$ has a full line $R=0$ of cone points; see \cite{MelroseWunschConic,MelroseVasyWunschEdge} for more on the relationship between timelike lines of cone points and edge analysis.
\end{rmk}

The principal symbol $\sigmatb^m(P)$ of $P\in\Difftb^m(M)$ is a homogeneous polynomial on a smooth vector bundle $\Ttb^*M\to M$ which over the interior $M^\circ=\R^n$ is identified with $T^*\R^n$; if $\sigmatb^m(P)$ vanishes, then $P\in\Difftb^{m-1}(M)$. Similarly, the spectral family captures $P$ to leading order at $\cT$ in the sense that $\wh{N_\cT}(P,\sigma)=0$ for all $\sigma\in\R$ implies that $P\in\rho_\cT\Difftb^m(M)$, i.e.\ the coefficients of $P$ vanish at $\cT$; likewise, $\wh{N_\cD}(P,\lambda)=0$ for all $\lambda\in\C$ implies that $P\in\rho_\cD\Difftb^m(M)$. Thus, these three models associated with $P\in\Difftb^m(M)$ are sufficient to capture $P$ to leading order in all three asymptotic senses; and this is the reason why control (in the elliptic setting meaning: invertibility, in the case of $\wh{N_\cD}(P,\lambda)$ for $\lambda$ on a line of constant $\Im\lambda$) of all three models gives the invertibility of $P$ up to compact errors, i.e.\ the Fredholm property of $P$.

\begin{figure}[!ht]
\centering
\includegraphics{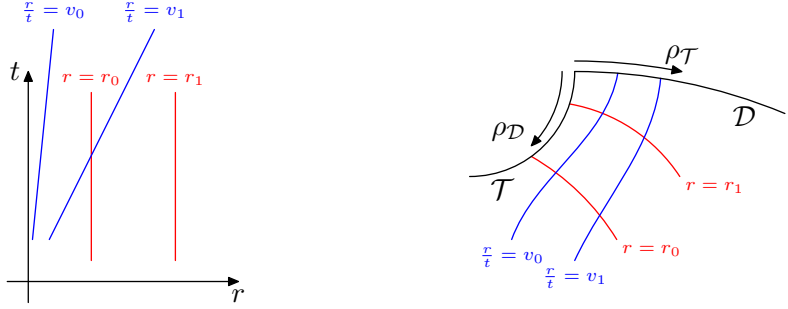}
\caption{Relationship between coordinates $(t,x)=(t,r\omega)$ on $\R_t\times\R^{n-1}_x$ (on the left) and local coordinates $\rho_\cT=r/t$, $\rho_\cD=1/r$ on its compactification $M$ to a manifold with corners and boundary hypersurfaces $\cT$ and $\cD$ (on the right).}
\label{FigI3b}
\end{figure}

\subsubsection{3b-pseudodifferential operators} If one formally writes a 3b-differential operator $P$ in $r>1$ as $P=p(1/r,r/t,\omega;r D_t,r D_r,D_\omega)$, then a 3b-pseudodifferential operator arises by allowing the symbol $p=p(\rho_\cD,\rho_\cT,\omega;\sigma,\xi,\eta)$ here to be an $m$-th order symbol ($m\in\R$) in $(\sigma,\xi,\eta)$ rather than a polynomial. Following a long tradition in singular microlocal analysis, starting with \cite{MelroseTransformation,MazzeoMelroseHyp,MazzeoEdge,EpsteinMelroseMendozaPseudoconvex,MazzeoMelroseFibred}, we make sense of this by geometric microlocal means. We define an appropriate resolution $M^2_\tbop$ (blow-up) of the double space $M\times M$, where $M$ is the compactification of $\R^n$ introduced above, and define the space $\Psitb^s(M)$ of $s$-th order 3b-ps.d.o.s via their Schwartz kernels: they are distributions on $M^2_\tbop$ which are conormal to the closure $\diag_\tbop\subset M^2_\tbop$ of the diagonal $\diag_{M^\circ}\subset M^\circ\times M^\circ$ (whereas differential operators are those which are Dirac distributions supported at $\diag_\tbop$). The proof that $\bigcup_{s\in\R}\Psitb^s(M)$ is an algebra, i.e.\ closed under composition, is based on the construction of a triple space and the application of pullback and pushforward theorems \cite{MelrosePushfwd,MelroseDiffOnMwc}.

One can furthermore define two exactly invariant normal operators $N_\cT(P)$ and $N_\cD(P)$ also for pseudodifferential $P$. The spectral family $\wh{N_\cT}(P,\sigma)$ of $P\in\Psitb^s(M)$ is a family of weighted scattering ps.d.o.s \cite{MelroseEuclideanSpectralTheory} on $\cT$ for nonzero $\sigma$ (with semiclassical behavior \cite{VasyZworskiScl} for large $\sigma$), and a b-ps.d.o.\ at zero frequency \cite{MelroseAPS}, with uniform behavior near zero energy captured by the $\scbtop$-algebra introduced in \cite{GuillarmouHassellResI} (based on the unpublished note \cite{MelroseSaBarretoLow}). Similarly, the Mellin-transformed normal operator family $\wh{N_\cD}(P,\lambda)$ is a holomorphic family of b-ps.d.o.s on $\cD$ which, for large $|\Re\lambda|$, is a weighted semiclassical cone ps.d.o.\ \cite{HintzConicPowers}. The precise definitions of $M^2_\tbop$, $\Psitb^s(M)$, and of the various normal operators are given in~\S\ref{S3}; the composition law is proved in~\S\ref{SsLC}.

One can also define 3b-ps.d.o.s (modulo the space $\Psitb^{-\infty}(M)$ of residual operators) as bounded geometry pseudodifferential operators \cite{ShubinBounded} on $M^\circ$ relative to the covering of $M^\circ$ by unit balls with respect to a Riemannian 3b-metric (schematically: $\frac{\dd t^2}{r^2}+\frac{\dd r^2}{r^2}+\dd\omega^2$). While this perspective immediately gives a composition law and suffices for the purposes of symbolic analysis (i.e.\ anything concerned with the 3b-symbol), the leading order behavior at $\cT$ and $\cD$ is no longer cleanly encoded in this manner. We do not pursue this point of view further here.

\subsection{Elliptic theory in the 3b-setting; overview of the main results}
\label{SsIE}

As an application of the basic 3b-machinery developed in~\S\S\ref{SG}--\ref{S3}, one can prove the Fredholm property of a 3b-(pseudo)differential operator $P\in\Psitb^m(M)$ as a map between weighted 3b-Sobolev spaces, provided $P$ is \emph{fully elliptic with weights $\alpha_\cD$, $\alpha_\cT$}. This notion is introduced in Definition~\ref{DefE}; roughly speaking, it demands, besides the ellipticity of the principal symbol of $P$, the invertibility of $\wh{N_\cT}(P,\sigma)$, $\sigma\neq 0$, also that of $\wh{N_\cT}(P,0)$ on a b-Sobolev space with weight $\alpha_\cD-\alpha_\cT$, and also that of the operators $N_{\cT,\tface}^\pm(P)$; finally, $\wh{N_\cD}(P,\lambda)$ is required to be invertible for $\Im\lambda=-\alpha_\cD$. The validity of Theorem~\ref{ThmIEx} is then due to the fact that $\la x\ra^2 P$ is fully elliptic with weights $\alpha_\cD,\alpha_\cT$ (up to dimension-dependent shifts in $\alpha_\cD,\alpha_\cT$ relative to Definition~\ref{DefE}, caused by a different choice of density).

\textbf{A priori estimates.} One proof of the Fredholm property proceeds via a priori estimates on weighted 3b-Sobolev spaces; it is given in~\S\ref{SF}. We consider only $L^2$-based spaces in this work. Weighted 3b-Sobolev spaces $\Htb^{s,\alpha_\cD,\alpha_\cT}(M)$, $s\in\N_0$, were already introduced in~\eqref{EqIH3b}; they can be defined also for real $s$ (or even for suitable variable orders) via testing with 3b-pseudodifferential operators instead of 3b-vector fields. Corresponding to the translation-invariant aspect of 3b-operators, one can express the $\Htb^{s,\alpha_\cD,\alpha_\cT}(M)$-norm of a function $u$ with support in $|x|/t\leq C$ in the special case $\alpha_\cT=0$ in terms of the $L^2(\R_\sigma;H_\sigma)$-norm of the Fourier transform $\hat u(\sigma,x)$ of $u(t,x)$ in $t$, where the $H_\sigma$ are spaces of distributions with a $\sigma$-dependent norm matching the structural properties of the spectral family discussed previously---namely, they are (semiclassical) scattering and scattering-b-transition Sobolev spaces; see Proposition~\ref{Prop3SobFT}. Similarly, corresponding to the dilation-invariant aspect, one can express the $\Htb^{s,\alpha_\cD,\alpha_\cT}(M)$-norm of $u$ with support in $|x|\geq C$ in terms of an $L^2$-type norm of its Mellin-transform in $T$ (in the coordinates $T,R,\omega$ used above) using b- and semiclassical cone Sobolev spaces; see Proposition~\ref{Prop3SobMT}.

The a priori estimates are then proved in the standard fashion: one estimates $\|u\|_{\Htb^{s,\alpha_\cD,\alpha_\cT}}$ by $\|P u\|_{\Htb^{s-m,\alpha_\cD,\alpha_\cT}}$ plus an error term $\|u\|_{\Htb^{s-\eps,\alpha_\cD-\eps,\alpha_\cT-\eps}}$ where $\Htb^{s,\alpha_\cD,\alpha_\cT}\hra\Htb^{s-\eps,\alpha_\cD-\eps,\alpha_\cT-\eps}$ is a compact inclusion. Here, the gain in the three orders is obtained via symbolic elliptic estimates (to control 3b-regularity) and estimates for the two normal operators (to control $u$ to leading order in the sense of decay at $\cD$ and $\cT$). Slightly more precisely, one controls $u$ near $\cD$, resp.\ $\cT$ by passing to the Mellin, resp.\ Fourier transform and using estimates for the (elliptic) Mellin-transformed normal operator family, resp.\ spectral family on the appropriate (b- and semiclassical cone, resp.\ semiclassical scattering and scattering-b-transition) function spaces. See Theorem~\ref{ThmF} and its proof. Similar estimates for the adjoint $P^*$ give the Fredholm property.

This approach is attractive in that 3b-ps.d.o.s (and the pseudodifferential algebras related to the 3b-algebra via the various normal operator maps) are only used as \emph{tools} to deduce precise mapping properties for a given 3b-operator $P$ (which in applications is typically a \emph{differential} operator). In particular, it generalizes in a conceptually clear manner to non-elliptic problems, as we discuss in detail in a wave equation context in \cite{HintzNonstat}. However, it does not give much information on the structure of the (generalized) inverse of $P$.

\textbf{Parametrix construction.} A second proof of the Fredholm property of a fully elliptic 3b-operator $P$ proceeds via the construction of very precise parametrices (approximate left or right inverses of $P$). This approach gives much more information than just the Fredholm property, but does not generalize easily to non-elliptic settings. To start, we enlarge the 3b-algebra to the large 3b-calculus by adding operators of class $\Psitb^{-\infty,\cE}(M)$, where $\cE$ is a collection of index sets associated with the boundary hypersurfaces of $M^2_\tbop$. Here, an index set governs the asymptotic behavior of the Schwartz kernel at a boundary hypersurface; roughly speaking, given $A\in\Psitb^{-\infty,\cE}(M)$, one index set governs the asymptotics of $A u$ at $\cD$ when $u\in\CIc(M^\circ)$, another one governs the asymptotics at $\cT$; yet another index set describes the asymptotics of $A u$ at $\cT$ when $u$ vanishes near $\cT$ but has an expansion near $\cD$; and so on. The large 3b-calculus is developed in~\S\ref{SL}. We then show in~\S\ref{SE} that the large 3b-calculus contains a right parametrix $Q$, i.e.\ an operator so that $P Q$ is equal to the identity operator up to an error term which is smoothing and has range contained in the space $\CIdot(M)$ of functions vanishing to infinite order at $\cT$ and $\cD$ (this is equal to $\sS(\R^n)$ when $M$ is the compactification of $\R^n$ discussed in~\S\ref{SssI3Cpt}). We also construct a precise left parametrix. See Theorem~\ref{ThmEPx}.

Equipped with such parametrices, one can also show that the generalized inverse of $P$ is itself an element of the large 3b-calculus; see Theorem~\ref{ThmPFred}. One can moreover prove that elements of the nullspace of $P$ are automatically polyhomogeneous (have generalized Taylor expansions) at $\cT$ and $\cD$; see Corollary~\ref{CorPKer}. It seems difficult to deduce this regularity information from the estimate-based approach explained above. The relative index theorem is stated as Theorem~\ref{ThmPRel}.

\subsection{Related literature and future directions}
\label{SsIL}

The transformation of problems of uniform analysis on noncompact spaces to singular analysis on compact spaces (typically manifolds with corners whose boundary hypersurfaces are equipped with additional structures) has a long history, with \cite{MazzeoMelroseHyp,MazzeoEdge,MelroseAPS,MazzeoMelroseSurgery,MazzeoMelroseFibred} being among the early examples. Vasy \cite{VasyThreeBody,VasyManyBody} followed this approach in his treatment of (generalized) many-body Hamiltonians, and the present work is closely related in particular to \cite{VasyThreeBody}. For example, the underlying manifold with corners defined in~\S\ref{SssI3Cpt} is a special case of Vasy's construction; furthermore, spectral families associated with the collision planes (here: at $\cT$) play a key role. However, since 3-body scattering geometry has an asymptotic full translation symmetry away from the collision planes, whereas 3b-geometry has an asymptotic dilation symmetry, the setting studied here is fundamentally different from \cite{VasyThreeBody}.

Among the many pseudodifferential calculi developed over the years, we mention in particular Loya's work \cite{LoyaConicResolvent} on resolvents on conic manifolds, including at high frequencies; this work is closely related to analysis of the Mellin-transformed normal operator family $\wh{N_\cD}(P,\lambda)$ at high frequencies, although we opt here for the semiclassical version \cite{HintzConicPowers}. Furthermore, we recall that Albin--Gell-Redman \cite{AlbinGellRedmanDirac} generalize the edge- and b-calculi to the setting of manifolds of corners equipped with iterated fibration structures; they also develop a large calculus (as well as a heat calculus). Their setting in particular includes edge-b-operators such as $N_\cD(P)$ in~\eqref{EqINDP}. The authors study Dirac-type operators for which the normal operators, due to their special form, can be inverted explicitly (see \cite[\S4.2]{AlbinGellRedmanDirac}). They can thus construct precise parametrices without having to pass through the Mellin transform; in particular, they avoid the use of large parameter or semiclassical cone calculi altogether.

As mentioned previously, the analysis of the spectral family $\wh{N_\cT}(P,\sigma)$ at low energy required for the Fredholm analysis of 3b-operators is easily performed using the scattering-b-transition calculus \cite{GuillarmouHassellResI}. The low energy analysis of Guillarmou--Hassell \cite{GuillarmouHassellResI,GuillarmouHassellResII}, with \cite{CarronCoulhonHassellRiesz} as a precursor, is used for the study of the Riesz transform as well as for long-time asymptotics of solutions of Schr\"odinger and wave equations; see also \cite{GuillarmouSherConicLow,StrohmaierWatersHodge} for the case of the Hodge Laplacian, and \cite{HintzPrice,HintzKdSMS} (based on \cite{VasyLowEnergyLag}) for recent applications to wave equations. For work in the more general setting of fibered cusp metrics, we mention \cite{GrieserTalebiVertmanPhiLowEnergy} (building on the pseudodifferential calculus developed in \cite{GrieserHunsickerQrank1,GrieserHunsickerQrank1Px}) and \cite{KottkeRochonPhiResolvent}.

\begin{rmk}[Further directions I: uniform low energy analysis]
\label{RmkIEFurtherI}
  One may attempt to mirror the recent progress on uniform low energy resolvent estimates and study the uniform behavior of (generalized) 3-body type Hamiltonians \cite{VasyThreeBody} near zero energy. (Without appropriate conditions on the Hamiltonian at zero energy, the behavior of the low energy resolvent is considerably more complicated than in the 2-body case, as studied e.g.\ in \cite{JensenKatoResolvent,GuillarmouHassellResII}, due to the Efimov effect: an accumulation of eigenvalues at the bottom of the essential spectrum from below. See \cite{WangNbodyResolvent,WangEfimov} and references therein for results in this direction.)
\end{rmk}

\begin{rmk}[Further directions II: more general geometries]
\label{RmkIEFurtherII}
  The compactified space for 3b-analysis is the blow-up of a compact manifold with boundary (such as $\ol{\R^n}$) at (a finite set of) point(s). One may wish to study geometrically more complicated situations, e.g.\ blowing up higher-dimensional boundary submanifolds (as in~\cite{VasyThreeBody}), or even families of intersecting boundary submanifolds \cite{VasyManyBody,GeorgescuSpectrumElliptic,AmmannMougelNistorNbody}. The corresponding generalization of the present paper would then be related to the study of (generalized) $N$-body Hamiltonians at zero energy.
\end{rmk}

\subsection{Guide to the paper}
\label{SsIO}

In~\S\ref{SA}, we collect background material on geometric singular analysis and the various algebras and large calculi of differential and pseudodifferential operators on manifolds with boundaries or corners which appear as models of 3b-operators. The differential operator algebras are then used extensively in~\S\ref{SG}, the pseudodifferential algebras in~\S\ref{S3}, and the large calculi in~\S\ref{SL}.

Next, \S\ref{SG} is required reading, as it introduces 3b-geometry and 3b-analysis (for differential operators only) in detail. Even to the reader interested only in differential operators, we recommend reading~\S\ref{Ss3H} on weighted 3b-Sobolev spaces. (We invite such a reader to prove Proposition~\ref{Prop3SobFT} and \ref{Prop3SobMT} for integer orders $s$ only using differential operators.) One can then jump to~\S\ref{SF} and prove the Fredholm property of fully elliptic 3b-operators (upon specializing to the case of differential operators there).

The algebra of 3b-pseudodifferential operators, introduced in~\S\ref{S3}, is the key tool in the paper \cite{HintzNonstat} in which 3b-tools are applied to wave equations on non-stationary spacetimes. We reiterate that if one uses the 3b-algebra solely as a tool, one does not need any of the large calculi discussed in~\S\ref{SA}; see also~\S\ref{SF}.

Finally, the large 3b-calculus is defined in~\S\ref{SL}, and its main purpose is to contain precise parametrices of fully elliptic 3b-operators. The elliptic parametrix construction is presented in~\S\ref{SE}, and it is based on elliptic parametrix constructions in the various model algebras in~\S\ref{SA}. The reader interested only in 3b-operators as tools, as in \cite{HintzNonstat}, may skip these parts.

\subsection*{Acknowledgments}

This work grew out of a collaboration with Andr\'as Vasy on linear and nonlinear wave equations on asymptotically Kerr spacetimes, and I would like to thank him for numerous helpful discussions. Part of this research was conducted during the time I served as a Clay Research Fellow. I also gratefully acknowledge support from a Sloan Research Fellowship and from the U.S.\ National Science Foundation under Grant No.\ DMS-1955614. This material is also based upon work supported by the NSF under Grant No.\ DMS-1440140 while I was in residence at the Mathematical Sciences Research Institute in Berkeley, California, during the Fall 2019 semester.

\section{Manifolds with corners, Fourier transforms, and pseudodifferential operators}
\label{SA}

We first recall elements of geometric singular analysis which are used throughout this work, beginning with basic notions for manifolds with corners and real blow-ups; see also \cite{MelroseDiffOnMwc,GrieserBasics}, and \cite[\S2]{MazzeoMelroseSurgery}, \cite[Appendix~A]{HintzKdSMS}, \cite[\S2A]{MazzeoEdge}. We then recall the semiclassical (pseudo)differential operator algebras in~\ref{SsAh}, and continue with the b-algebra in~\S\ref{SsAb}, the scattering algebra (including its semiclassical version) in~\S\ref{SsAsc}, the scattering-b-transition algebra in~\S\ref{SsAscbt}, and the semiclassical cone algebra in~\S\ref{SsAch}. Following an intermezzo on Fourier transforms of non-product type families of distributions in~\S\ref{SsAF}, we finally discuss the edge-b-algebra in~\S\ref{SsAeb}.

Some of the material in~\S\S\ref{SsAh}--\ref{SsAeb} is a variation on a theme, e.g.\ the construction of elliptic inverses in the scattering-b-transition algebra in~\S\ref{SsAscbt}, even if it was not available in this form in the literature prior to the present work. Other material is new, in particular in~\S\S\ref{SsAF}--\ref{SsAeb}. Lastly, some classical material (especially as far as the semiclassical algebras in~\S\S\ref{SsAh} and \ref{SssAsch} are concerned) is presented in a somewhat non-standard form in order to fit the needs of the present paper.

\bigskip

\textbf{Manifolds with corners; blow-ups.} Let $M$ be an $n$-dimensional manifold with corners; we require its boundary hypersurfaces to be embedded submanifolds. We write $M^\circ=M\setminus\pa M$ for the manifold interior of $M$. By $M_1(M)$ we denote the collection of boundary hypersurfaces of $M$; a \emph{boundary face} of $M$ is a non-empty intersection of boundary hypersurfaces. Given $H\in M_1(M)$, we say that $\rho\in\CI(M)$ is a defining function of $H$ if $\rho\geq 0$ on $M$, further $H=\rho^{-1}(0)$, and $\dd\rho(p)\neq 0$ for all $p\in H$. We often write $\rho_H\in\CI(M)$ for a defining function of $H\in M_1(M)$. Given a collection $\cH\subset M_1(M)$, a function $\rho\in\CI(M)$ is a \emph{joint defining function} of $\cH$ if $\rho=\prod_{H\in\cH}\rho_H$; a \emph{total defining function} on $M$ is a joint defining function of $M_1(M)$. For $p\in M$, we write ${}^+T_p M\subset T_p M$ for the closed subset of (non-strictly) inward pointing tangent vectors. For a boundary face $F\subset M$, we write ${}^+N F={}^+T_F M/T F$ for the (non-strictly) inward pointing normal bundle. We moreover write ${}^+S N F=({}^+N F\setminus o)/\R_+$ for the (inward pointing) spherical normal bundle; here $o\subset{}^+N F$ is the zero section, and $\R_+$ acts by dilations in the fibers of the (strictly) inward pointing normal bundle ${}^+N F\setminus o$.

A closed submanifold $S\subset M$ is called a \emph{p-submanifold} if around each point $p\in S$ there exist local coordinates $x=(x^1,\ldots,x^k)\in[0,\infty)^k$ and $y=(y^1,\ldots,y^{n-k})\in\R^{n-k}$ (with $k$ the codimension of the smallest boundary face containing $p$) such that $S$ is locally given by the vanishing of a subset of these coordinates. If $S$ is given by the vanishing of a subset of the $y$-coordinates (thus $S\cap M^\circ\neq\emptyset$), we call $S$ an \emph{interior p-submanifold}, otherwise it is a \emph{boundary p-submanifold}. The \emph{blow-up} of $M$ along $S$ is
\[
  [M;S] := (M\setminus S) \sqcup {}^+S N S,
\]
with $S$ called the \emph{center} of the blow-up; ${}^+S N S$ is the \emph{front face} of the blow-up. The map $\beta\colon[M;S]\to M$, given by the identity on $M\setminus S$ and by the base projection ${}^+S N S\to S$ on the front face, is called the \emph{blow-down map}. The space $[M;S]$ can be given a unique structure of a smooth manifold with corners by declaring polar coordinates around $S$ to be smooth down to the front face. (The key example is $[\R^n;\{0\}]\cong[0,\infty)_r\times\Sph^{n-1}_\omega$, the blow-down map being the polar coordinate map $(r,\omega)\mapsto r\omega$, and the front face being $r^{-1}(0)\cong\Sph^{n-1}$.) If $T\subset M$ is another p-submanifold, we define the \emph{lift} $\beta^*T$ of $T$ to $[M;S]$ as $\beta^{-1}(T)$ when $T\subset S$, and as the closure of $\beta^{-1}(T\setminus S)$ inside of $[M;S]$ otherwise. If $\beta^*T\subset[M;S]$ is a p-submanifold (which in particular happens when at each point $p\in T\cap S$ there exists a single coordinate system on $M$ so that $S$ and $T$ are simultaneously given by the vanishing of some subsets of these coordinates), then one can consider its blow-up $[[M;S];\beta^*T]$; this iterated blow-up is denoted $[M;S;T]$. The definition of more deeply iterated blow-ups is analogous.

It may happen that the identity map on $M\setminus(S\cup T)$ extends, by continuity, to a diffeomorphism $[M;S;T]\to[M;T;S]$; that is, the order of blow-ups is immaterial. In this case we may simply write $[M;\{S,T\}]$ or $[M;S,T]$ for the iterated blow-up, and we say that the blow-ups of $S$ and $T$ \emph{commute}. This happens in particular when $S$ and $T$ are transversal (or disjoint), or when $S\subset T$ or $T\subset S$. For brevity, when commuting the blow-ups of two adjacent submanifolds $S_1,S_2$ in an iterated blow-up
\begin{equation}
\label{EqAItBlowup}
  [\ldots;S_0;\ldots;S_1;S_2;\ldots],
\end{equation}
we shall say that we can commute `$S_2$ through $S_1$ ($\supset$; $S_0$)' when $S_1\supset S_2$, and $S_0$ is the first element blown up prior to $S_1$ which contains $S_2$ but not $S_1$; this commutation of blow-ups is allowed when $S_0\subset S_1$ (and thus the second part of \cite[Proposition~5.11.2]{MelroseDiffOnMwc} applies), and the $S_0$ we write down will always satisfy this condition. We analogously say we can commute `$S_2$ through $S_1$ ($\subset$; $S_0$)' when these conditions, with the roles of $S_1,S_2$ reversed, are satisfied. When there is no submanifold $S_0$ containing $S_1\cap S_2$ (i.e.\ the smaller one of $S_1$ and $S_2$) but not $S_1\cup S_2$ (i.e.\ the bigger one of $S_1$ and $S_2$), we write `($\supset$)' or `($\subset$)' simply, i.e.\ we omit $S_0$. Moreover, we shall say that we can commute `$S_2$ through $S_1$ (intersection $\subset S_0$)' when $S_0$ is the first submanifold prior to $S_1$ that is blown up and contains $S_1\cap S_2$; this commutation is allowed when $S_0$ contains neither $S_1$ nor $S_2$ (and thus the third part of \cite[Proposition~5.11.2]{MelroseDiffOnMwc} applies).

\textbf{b-vector fields and maps between manifolds with corners.} We write $\cV(M)=\CI(M,T M)$ for the Lie algebra of smooth vector fields, and $\Vb(M)\subset\cV(M)$ for the Lie algebra of \emph{b-vector fields}, i.e.\ the space of all vector fields which are tangent to $\pa M$. In local coordinates $x\in[0,\infty)^k$ and $y\in\R^{n-k}$ as above, $\Vb(M)$ is spanned over $\CI(M)$ by $x^j\pa_{x^j}$ ($j=1,\ldots,k$) and $\pa_{y^j}$ ($j=1,\ldots,n-k$); these vector fields are a local frame of the \emph{b-tangent bundle} $\Tb M\to M$. In terms of the natural map $\Tb M\to T M$, we therefore have $\Vb(M)=\CI(M;\Tb M)$. The dual bundle $\Tb^*M\to M$ is the \emph{b-cotangent bundle}, with local frame $\frac{\dd x^j}{x^j}$ ($j=1,\ldots,k$) and $\dd y^j$ ($j=1,\ldots,n-k$). For $k\in\N_0$, we write $\Diffb^k(M)$ for the space of b-differential operators (of order $k$): these are locally finite sums of up to $k$-fold compositions (for $k=0$: multiplications by elements of $\CI(M)$) of b-vector fields. We write $\Diffb(M)=\bigoplus_{k\in\N_0}\Diffb^k(M)$ for the algebra of b-differential operators. The \emph{b-principal symbol} of $V\in\Vb(M)$ is $\sigmab^1(V)(\xi)=i\xi(V)$, $\xi\in\Tb^*M$; by linearity and multiplicativity, this also defines the b-principal symbol of b-differential operators, with $\sigmab^m(A)$, $A\in\Diffb^m(M)$, valued in the space $P^{[m]}(\Tb^*M)$ of smooth functions on $\Tb^*M$ which on each fiber are homogeneous polynomials of degree $m$.

If $M,M'$ are two manifolds with corners, with boundary defining functions denoted $\rho_H$ and $\rho'_{H'}$ for $H\in M_1(M)$ and $H'\in M_1(M')$, then we call a smooth map $F\colon M\to M'$ an \emph{interior b-map} if $F^*\rho_{H'}'=a_{H'}\prod_{H\in M_1(M)}\rho_H^{e(H,H')}$ for some $0<a_{H'}\in\CI(M)$ and $e(H,H')\in\N_0$. Defining the b-differential ${}^\bop F_*\colon\Tb_p M\to\Tb_{F(p)}M'$ of an interior b-map by continuous extension (from $M^\circ$) of the differential $F_*\colon T_p M^\circ\to T_{F(p)}(M')^\circ$, we say that an interior b-map $F$ is a \emph{b-submersion} is the b-differential is everywhere surjective; this is equivalent to the requirement that for any $p\in M$, the restriction of $F$ in domain and range to the interior of the smallest boundary faces of $M$ and $M'$ containing $p$ and $F(p)$ is a submersion (of open manifolds). A b-submersion $F$ which does not map any boundary hypersurface of $M$ into a codimension $\geq 2$ boundary face of $M'$ is called a \emph{b-fibration} (equivalently, for all $H\in M_1(M)$, there is at most one $H'\in M_1(M')$ with $e(H,H')\neq 0$). We call $F$ a \emph{simple b-fibration} if $e(H,H')\in\{0,1\}$ for all $H,H$'; all b-fibrations arising in the present paper will be simple b-fibrations, and we shall thus commit an abuse of terminology and call them `b-fibrations' simply. Finally, we say that an interior b-map $F\colon M\to M'$ is b-transversal to a p-submanifold $S\subset M$ if for each $p\in S$, the subspaces $\ker({}^\bop F_*|_p)$ and $\Tb_p S\subset\Tb_p M$ are transversal, where $\Tb_p S$ is the space of all b-tangent vectors $V(p)\in\Tb_p M$ where $V\in\Vb(M)$ is tangent to $S$. An equivalent definition is that for all $p\in S$, the restriction of $F$ to the interior of the smallest boundary face of $M$ containing $p$ is transversal (in the standard sense) to the intersection of $S$ with this boundary face.

The b-density bundle on a manifold $M$ with corners is the density bundle associated with the b-tangent bundle; thus, in local coordinates $x,y$ as above, a smooth positive section of $\Omegab M\to M$ is $|\frac{\dd x^1}{x^1}\cdots\frac{\dd x^k}{x^k}\dd y^1\cdots\dd y^{n-k}|$. We then note the following relationship between the b-density bundles on $M$ and its blow-up $[M;S]$ along a p-submanifold $S$: if $F\subset M$ denotes the smallest boundary face containing $S$, and if $\beta\colon[M;S]\to M$ denotes the blow-down map, then
\begin{equation}
\label{EqADensity}
  \beta^*\Omegab M = \rho_\ff^{\codim_F S}\,\Omegab[M;S],
\end{equation}
in the sense that the $\CI([M;S])$-span of $\beta^*\CI(M;\Omegab M)$ is $\rho_\ff^{\codim_F S}\CI([M;S];\Omegab[M;S])$. (Here $\codim_F S=\dim F-\dim S$ is the codimension of $S$ inside of $F$, and $\rho_\ff\in\CI([M;S])$ is a defining function of the front face.) For the proof, note that the subbundle of $\Tb_S M$ given by the values of b-vector fields on $M$ that are tangent to $S$ has codimension $\codim_F S$ (and these are exactly the vector fields that lift to smooth vector fields on $[M;S]$), whereas the elements of a local frame of a complementary subbundle of $\Tb M$, extended to an open neighborhood of $S$, blow up simply at the front face when lifted to $[M;S]$. This gives~\eqref{EqADensity}. (One can also check this directly in local coordinates.)

\textbf{Conormality and polyhomogeneity at boundary hypersurfaces.} We denote the space of functions vanishing to infinite order at $\pa M$ by $\CIdot(M)\subset\CI(M)$. Given a collection $\alpha=(\alpha_H)_{H\in M_1(M)}$ of weights $\alpha_H\in\R$, we write
\[
  \cA^\alpha(M)
\]
for the space of all conormal functions on $M$ with weight $\alpha_H$ at $H$: these are all smooth functions $u$ on $M^\circ$ for which $A u\in(\prod_{H\in M_1(M)}\rho_H^{\alpha_H})L^\infty_\loc(M)$ for all $A\in\Diffb(M)$ (that is, $(\prod\rho_H^{-\alpha_H})u\in L^\infty_\loc(M)$). Choosing the boundary defining functions so that $\rho_H<\half$ everywhere, we shall also consider the more general space
\[
  \cA^{\alpha,k}(M),\qquad \alpha=(\alpha_H)_{H\in M_1(M)}\in\R^{M_1(M)},\quad k=(k_H)_{H\in M_1(M)}\in\N_0^{M_1(M)},
\]
consisting of all functions $u$ for which $A u\in(\prod_{H\in M_1(M)}\rho_H^{\alpha_H}|\log\rho_H|^{k_H})L^\infty_\loc(M)$.

Next, an \emph{index set} $\cE$ is a subset $\cE\subset\C\times\N_0$ so that $(z,k)\in\cE$ implies $(z+j,k')\in\cE$ for all $j\in\N_0$ and $k'\leq k$, and so that for all $C$ the set $\{(z,k)\in\cE\colon\Re z<C\}$ is finite. Given a collection $\cE=(\cE_H)_{H\in M_1(M)}$ of index sets, we define the space $\cA_\phg^\cE(M)$ of polyhomogeneous conormal distributions via induction over the dimension of $M$ to consist of all conormal functions $u$ on $M$ which, in  collar neighborhood $[0,1)_{\rho_H}\times H$ of $H\in M_1(M)$, are asymptotic sums
\begin{equation}
\label{EqAAsyExp}
  u(\rho_H,q) \sim \sum_{(z,k)\in\cE_H} \rho_H^z|\log\rho_H|^k a_{(z,k)}(q),\qquad a_{(z,k)}\in\cA_\phg^{\cE^H}(H),
\end{equation}
where $\cE^H=(\cE_{H'}\colon H'\in M_1(M)\setminus\{H\},\ H'\cap H\neq\emptyset)$. We shall also consider mixed spaces
\[
  \cA_{\phg(H)}^{(\cE,\alpha')}(M)
\]
which are polyhomogeneous (with index set $\cE$) at $H\in M_1(M)$ but only conormal (with weight $(\alpha'_{H'}$ at $H'\in M_1(M)\setminus\{H\}$) at the other boundary hypersurfaces; thus an element of this space has an asymptotic expansion~\eqref{EqAAsyExp} at $H$ but with $a_{(z,k)}\in\cA^{\alpha'}(H)$.

Given two index sets $\cE,\cF\subset\C\times\N_0$, we set
\begin{align*}
  \cE+\cF &:= \{ (z+z',k+k') \colon (z,k)\in\cE,\ (z',k')\in\cF \}, \\
  \cE\extcup\cF &:= \cE \cup \cF \cup \{ (z,k+k'+1) \colon (z,k)\in\cE,\ (z,k')\in\cF \}, \\
  \cE + j &:= \{ (z+j,k) \colon (z,k)\in\cE \}.
\end{align*}
Furthermore, we write $\N_0$ for the index set $\{(z,0)\colon z\in\N_0\}$, and similarly $\N=\N_0+1$. For (logarithmic) weights, we set
\begin{equation}
\label{EqAConExtcup}
  (\alpha_H,k_H) \extcup (\beta_H,l_H) :=
  \begin{cases}
    (\alpha_H,k_H), & \alpha_H<\beta_H, \\
    (\beta_H,l_H), & \alpha_H>\beta_H, \\
    (\alpha_H,k_H+l_H+1), &\alpha_H=\beta_H,
  \end{cases}
\end{equation}
and we write $\alpha_H$ for $(\alpha_H,0)$. (Thus, for example, $0\extcup 0=(0,1)$.) Furthermore, we write
\[
  \Re\cE = \{\Re z\colon (z,k)\in\cE\};
\]
and given $\alpha\in\R$, we say that $\Re\cE>\alpha$, resp.\ $\Re\cE\geq\alpha$ if $\Re z>\alpha$, resp.\ $\Re z\geq\alpha$ for all $(z,k)\in\cE$. (We caution that, say, on a manifold $M$ with boundary, the inclusion $\cA_\phg^\cE(M)\subset\cA_\phg^\alpha(M)$ requires $\Re\cE>\alpha$, whereas $\Re\cE\geq\alpha$ is sufficient if and only if $k=0$ for all $(z,k)\in\cE$ with $\Re z=\alpha$.)

\textbf{Conormal distributions at interior submanifolds.} When $S\subset M$ is an interior p-submanifold of codimension $l$, we denote by $I^s(M,S)$ the space of conormal distributions of order $s$ at $S$: its elements are smooth away from $S$, and in local coordinates $x\in[0,\infty)^k$, $y=(y',y'')\in\R^{n-k-l}\times\R^l$ in which $S$ is given by $y''=0$, they are given as inverse Fourier transforms
\[
  \frac{1}{(2\pi)^l} \int_{\R^l} e^{i y''\eta''} a(x,y',\eta'')\,\dd\eta''
\]
where $a$ is a symbol of order $s+\frac{n}{4}-\frac{l}{2}$ in $\eta''$.

\textbf{Radial compactification.} Given a real vector bundle $E\to M$, we write $S^m(E)$ for the space of symbols of order $m$ on $E$, and $P^m(E)$ for the space of fiber-wise polynomials of order $m$; further $P^{[m]}(E)\subset P^m(E)$ denotes the subspace of homogeneous degree $m$ polynomials. Finally, $\bar E\to M$ denotes the (fiber-wise) radial compactification of $E$. This is a closed ball bundle, defined on the level of an individual fiber $\R^k$ by
\[
  \ol{\R^k} := \Bigl(\R^k \sqcup \bigl([0,\infty)_\rho\times\Sph^{n-1}\bigr) \Bigr) / \sim,
\]
where a point $x\neq 0$, expressed in polar coordinates as $x=r\omega$, is identified with $(\rho,\omega)=(r^{-1},\omega)$. By $S E\to M$ we denote the $\Sph^{k-1}$-bundle given fiber-wise by the boundary of $\bar E\to M$ at fiber infinity.

When $E\to M$ is a half-line bundle, with typical fiber $[0,\infty)$, we denote by $\ol E\to M$ its fiber-wise compactification to a $[0,\infty]$-bundle; here $[0,\infty]\subset\ol\R$ is the closure of $[0,\infty)$. We then have:

\begin{lemma}[Identification of compactified inward pointing normal bundles]
\label{LemmaAId}
  Let $M$ be a manifold with corners, and suppose $\cC=H_1\cap H_2$ is a codimension 2 corner between two embedded boundary hypersurfaces $H_1,H_2\subset M$. Then a choice of total boundary defining function $\rho_0\in\CI(M)$ for $H_1$ and $H_2$ (that is $\rho_0=\rho_1\rho_2$ where $\rho_j\in\CI(M)$, $j=1,2$, is a defining function for $H_j$) induces an isomorphism of fiber bundles
  \[
    \phi\colon\ol{{}^+N_\cC}H_1 \cong \ol{{}^+N_\cC}H_2
  \]
  as follows: given any defining function $\rho_1\in\CI(M)$ of $H_1$, the map $\phi$ maps the point $(\dd\rho_1)^{-1}(s)\in\ol{{}^+N_p}H_1$ (where $p\in\cC$ and $s\in[0,\infty]$), into $(\dd(\frac{\rho}{\rho_1}))^{-1}(1/s)\in\ol{{}^+N_p}H_2$, where we set $s^{-1}=\infty,0$ for $s=0,\infty$, respectively.
\end{lemma}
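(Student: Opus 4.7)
The plan is to verify the lemma in three stages: construct the map pointwise on each fiber, check that the resulting fiberwise bijection depends only on $\rho_0$ and not on the auxiliary choice of $\rho_1$, and verify smoothness locally.

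First I would set up the fiberwise picture. For each $p \in \cC$ and a defining function $\rho_j$ of $H_j$, the differential $\dd\rho_j|_p \in T_p^* M$ vanishes on $T_p H_j$ and is strictly positive on ${}^+N_p H_j \setminus \{0\}$; hence it descends to a linear isomorphism ${}^+N_p H_j \to [0,\infty)$, and after compactification an identification $\ol{{}^+N_p H_j} \cong [0,\infty]$. Given a defining function $\rho_1$ of $H_1$ on a neighborhood $U$ of $\cC$, the quotient $\rho_2 := \rho_0/\rho_1$ extends smoothly across $H_1 \cap U$ and restricts to a defining function of $H_2$ on $U$, since locally $\rho_0$ is the product of $\rho_1$ with a positive smooth multiple of any defining function of $H_2$. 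The bijection $s \mapsto 1/s$ on $[0,\infty]$ (with the endpoint convention stated in the lemma) then furnishes a fiberwise bijection $\phi_p \colon \ol{{}^+N_p H_1} \to \ol{{}^+N_p H_2}$.

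The crux is verifying that $\phi_p$ depends only on $\rho_0$, not on $\rho_1$. With $\rho_0$ fixed, any other defining function $\tilde\rho_1 = a\rho_1$ of $H_1$, with $0 < a \in \CI(U)$, forces $\tilde\rho_2 = \rho_0/\tilde\rho_1 = a^{-1}\rho_2$. Since $\rho_1(p) = \rho_2(p) = 0$ for $p \in \cC$, the Leibniz rule gives $\dd\tilde\rho_1|_p = a(p)\,\dd\rho_1|_p$ and $\dd\tilde\rho_2|_p = a(p)^{-1}\,\dd\rho_2|_p$. A vector $v \in {}^+N_p H_1$ of $\rho_1$-coordinate $s$ thus has $\tilde\rho_1$-coordinate $a(p)s$, and the construction based on $\tilde\rho_1$ sends it to the element of ${}^+N_p H_2$ of $\tilde\rho_2$-coordinate $(a(p)s)^{-1}$, i.e.\ of $\rho_2$-coordinate $1/s$; this agrees with $\phi_p(v)$ built from $\rho_1$.

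Finally, for smoothness and invertibility I would work in local product coordinates $(x_1, x_2, y^1, \ldots, y^{n-2})$ near a point of $\cC$ in which $\rho_1 = x_1$ and (after absorbing a smooth positive factor, permissible by the preceding paragraph) $\rho_2 = x_2$. Then $\ol{{}^+N_\cC H_j}$ is trivialized by $(y, s_j)$ with $s_j \in [0,\infty]$ the $\pa_{x_j}$-component, and $\phi$ becomes $(y, s_1) \mapsto (y, 1/s_1)$: this is manifestly smooth on $[0,\infty]$ (smoothness near $s_1 = \infty$ being given by the chart $1/s_1$) and covers the identity on $\cC$. The symmetric construction with the roles of $H_1$ and $H_2$ exchanged (and the same $\rho_0$) yields the two-sided inverse, so $\phi$ is a diffeomorphism of $[0,\infty]$-bundles over $\cC$. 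The main obstacle I anticipate is the compatibility argument in the third paragraph: its correctness hinges on the constraint $\rho_1\rho_2 = \rho_0$, which ties a rescaling of $\rho_1$ to a compensating rescaling of $\rho_2$ and pins down $s \mapsto 1/s$ as the right normalization; once this is in hand, smoothness and invertibility are routine.
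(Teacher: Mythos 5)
Your proof is correct and follows essentially the same route as the paper's own argument: the well-definedness check via $\tilde\rho_1=a\rho_1$, $\tilde\rho_2=a^{-1}\rho_2$ and the cancellation of the factors $a(p)$ and $a(p)^{-1}$ at $\cC$ is exactly the paper's computation. The added local-coordinate verification of smoothness and invertibility is routine and consistent with what the paper leaves implicit.
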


Thus, $\phi$ is homogeneous of degree $-1$ in the fibers. See Figure~\ref{FigAId}.

\begin{figure}[!ht]
\centering
\includegraphics{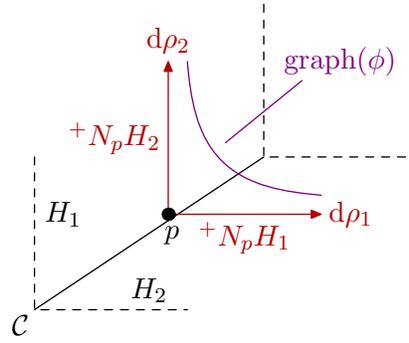}
\caption{Illustration of Lemma~\ref{LemmaAId}.}
\label{FigAId}
\end{figure}

\begin{proof}[Proof of Lemma~\usref{LemmaAId}]
  We need to prove that $\phi$ is well-defined. Thus, if $\rho_1'=a\rho_1$, $0<a\in\CI(M)$, is another defining function of $H_1$, then we obtain a map $\phi'$ mapping $(\dd\rho_1')^{-1}(s')$ into $(\dd(\frac{\rho}{\rho_1'}))^{-1}(1/s')$. But at $p\in\cC$, we have $\dd\rho_1'=a(p)\dd\rho_1\colon N_p H_1\to\R$, and similarly $\dd(\frac{\rho}{\rho_1'})=a(p)^{-1}\dd(\frac{\rho}{\rho_1})$ as a linear map on $N_p H_2$; thus $(\dd\rho_1')^{-1}(s')=(\dd\rho_1)^{-1}(s'/a(p))$ and $(\dd(\frac{\rho}{\rho_1'}))^{-1}(1/s')=(\dd(\frac{\rho}{\rho_1}))^{-1}(a(p)/s')$, which shows that $\phi'=\phi$, as desired.
  
  A pictorial proof can be given as follows: consider the blow-up
  \[
    \tilde M := \bigl[ [0,1)_\eps\times M; \{0\}\times\cC \bigr]
  \]
  The front face $F\subset\tilde M$ is naturally diffeomorphic to the radial compactification of ${}^+N\cC$, and the level set $R:=\{\rho_0=\eps^2\}\subset\tilde M$ intersects the interior $F^\circ=({}^+N\cC)^\circ$ (i.e.\ the strictly inward pointing normal bundle) in a smooth submanifold. Moreover, the natural map $T_\cC H_1\oplus T_\cC H_2\to T_\cC M$ induces an isomorphism $N_\cC H_1\oplus N_\cC H_2\cong N\cC$. One can then check that $R\cap({}^+N\cC)^\circ$ is the graph of the restriction $({}^+N_\cC H_1)^\circ\to({}^+N_\cC H_2)^\circ$ of the desired map to the interiors of $\ol{{}^+N_\cC}H_1$ and $\ol{{}^+N_\cC}H_2$.
\end{proof}

\subsection{The semiclassical algebra}
\label{SsAh}

Semiclassical analysis is treated in depth in Zworski's monograph \cite{ZworskiSemiclassical}; see also \cite{GrigisSjostrandBook,DyatlovZworskiBook}. Here, we describe semiclassical operators in a somewhat non-standard fashion. Given a closed (compact without boundary) manifold $M$, consider on
\begin{equation}
\label{EqAhMh}
  M_\semi := [0,1)_h\times M
\end{equation}
the Lie algebra $\cV_\semi(M)$ of \emph{semiclassical vector fields}, which consists of all smooth vector fields which are horizontal (i.e.\ annihilate $h$) and vanish at $h=0$. Thus, $\cV_\semi(M)$ is spanned over $\CI(M_\semi)$ by $h V$ where $V\in\cV(M)$, and in local coordinates $x=(x^1,\ldots,x^n)$ on $M$ by
\[
  h\pa_{x^j},\quad j=1,\ldots,n.
\]
These vector fields are a frame of the \emph{semiclassical tangent bundle}
\[
  {}^\semi T M \to M_\semi,
\]
and their duals $\frac{\dd x^j}{h}$, $j=1,\ldots,n$, are a frame of the \emph{semiclassical cotangent bundle} ${}^\semi T^*M$. Thus, smooth (\emph{down to $h=0$}) fiber-linear coordinates on ${}^\semi T^*M\to M_\semi$ are defined by writing the canonical 1-form on $T^*M\cong{}^\semi T^*_{h_0}M:={}^\semi T^*M\cap h^{-1}(h_0)$ as
\[
  \xi_\semi\cdot\frac{\dd x^j}{h}\qquad (h=h_0).
\]

Since $[\cV_\semi(M),\cV_\semi(M)]\subset h\cV_\semi(M)$, the principal symbol of an operator $P=(P_h)_{h\in(0,1)}\in\Diffh^m(M)$ (i.e.\ a finite sum of up to $m$-fold compositions of semiclassical vector fields) is a well-defined element of $(P^m/h P^{m-1})({}^\semi T^*M)$. A semiclassical pseudodifferential operator $P\in\Psih^{s,b}(M)$ of order $(s,b)\in\R\times\R$ is then a smooth family $P=(P_h)_{h\in(0,1)}$ of elements of $\Psi^s(M)$ whose Schwartz kernels, as distributions on $(0,1)\times M^2$, are distributions on the \emph{semiclassical double space}
\[
  M^2_\semi := \bigl[ [0,1)_h \times M^2; \{0\}\times\diag_M \bigr]
\]
(with $\diag_M\subset M\times M$ denoting the diagonal) which are conormal distributions of order $s-\frac14$ at the lift $\diag_\semi\subset M^2_\semi$ of $[0,1)\times\diag_M$ which vanish to infinite order at the lift of $\{0\}\times M^2$, which are conormal with weight $-b$ down to the front face, and which are valued in the lift along $[0,1)\times M^2\ni(h,p,p')\mapsto(h,p')\in M_\semi$ of the density bundle ${}^\semi\Omega M\to M_\semi$ associated with ${}^\semi T M\to M_\semi$. That is, in local coordinates $x,x'$ on $M^2$, an element of $\Psih^{s,b}(M)$ has Schwartz kernel $\Op_h(a)=\Op_h(a)(x,x')$ at the $h$-level set of $M^2_\semi$, where
\[
  \Op_h(a)(x,x') := (2\pi)^{-n}\int_{\R^n} \exp\Bigl(i\frac{x-x'}{h}\cdot\xi_\semi\Bigr) a(h,x,\xi_\semi)\,\dd\xi_\semi\,\Bigl|\frac{\dd x'{}^1\cdots\dd x'{}^n}{h^n}\Bigr|;
\]
here $a$ is a symbol of order $s$ in $\xi_\semi$ which is conormal of order $b$ at $h=0$, to wit,
\begin{equation}
\label{EqAhSymbol}
  | (h\pa_h)^j\pa_x^\alpha\pa_{\xi_\semi}^\beta a(h,x,\xi)| \leq C_{j\alpha\beta} h^{-b}\la\xi_\semi\ra^{s-|\beta|}
\end{equation}
for all $j\in\N_0$ and $\alpha,\beta\in\N_0^n$. The principal symbol map is
\[
  0 \to \Psih^{s-1,b-1}(M) \hra \Psih^{s,b}(M) \xra{\sigmah^{s,b}} (h^{-b}S^s/h^{-(b-1)}S^{s-1})({}^\semi T^*M) \to 0,
\]
where $S^{s,b}({}^\semi T^*M)$ denotes the space of symbols of order $s$ in the fiber variables which are conormal with weight $-b$ down to $h=0$; and this map is multiplicative. For $b=0$, we write $\Psih^{s,0}(M)=\Psih^s(M)$.

There is a corresponding scale of Sobolev spaces
\[
  H_h^{s,b}(M),\qquad H_h^s(M)=H_h^{s,0}(M).
\]
For each $h>0$, we have $H_h^{s,b}(M)=H^s(M)$ as sets, but the squared norm for $s\geq 0$ is
\begin{equation}
\label{EqAhSobNorm}
  \|u\|_{H_h^s(M)}^2 := \|u\|_{L^2(M)}^2 + \|A u\|_{L^2(M)}^2,
\end{equation}
where $A\in\Psih^s(M)$ is any fixed operator with elliptic principal symbol. For $s<0$, the Hilbert space $H_h^b(M)$ can be defined as the dual of $H_h^{-s}(M)$ with respect to $L^2(M)$. For $s,b\in\R$, we then set $\|u\|_{H_h^{s,b}(M)}=\|h^{-b}u\|_{H_h^s(M)}$. Any $P=(P_h)_{h\in(0,1)}\in\Psih^{s,b}(M)$ defines a uniformly bounded (in $h\in(0,1)$) family of operators $P_h\colon H_h^{s',b'}(M)\to H_h^{s'-s,b'-b}(M)$ for any $s',b'\in\R$.

The ellipticity of the semiclassical principal symbol of an operator $P\in\Psih^{s,b}(M)$ implies, via the usual elliptic parametrix construction (which only makes use of the principal symbol map), the existence of $Q\in\Psih^{-s,-b}(M)$ so that $P Q=I-R$ and $Q P=I-R'$ with $R,R'\in\Psih^{-\infty,\infty}(M)$, i.e.\ the Schwartz kernels of $R,R'$ are smooth right densities on $[0,1)_h\times M^2$ which vanish to infinite order at $h=0$. As such, $R_h$ and $R'_h$ have small operator norms on $L^2(M)$ for $h\in(0,h_0)$ with $h_0>0$ sufficiently small, and therefore $I-R_h$ and $I-R'_h$ are invertible on $L^2(M)$. Therefore, $P_h$ is invertible as a map $H^{s'}(M)\to H^{s'-s}(M)$ for $h\in(0,h_0)$, and $P^{-1}=(P_h^{-1})_{h\in(0,h_0)}\in\Psih^{-s,-b}(M)$.

\subsection{The b-algebra}
\label{SsAb}

For a detailed account of microlocal analysis in the b-setting, originating in the work of Melrose \cite{MelroseTransformation} and Melrose--Mendoza \cite{MelroseMendozaB}, we refer the reader to \cite{MelroseAPS}; see also \cite{GrieserBasics}.

Let $M$ be a compact $n$-dimensional manifold with (embedded, non-empty) boundary. The \emph{b-double space} of $M$ is the real blow-up
\[
  M^2_\bop := [M^2; (\pa M)^2].
\]
(When $\pa M$ has more than one connected component, this is the `overblown' b-double space; typically one defines the b-double space in this case more economically as $[M^2;\cH]$ where $\cH=\{H^2\colon H\in M_1(M)\}$.) We write $\lb_\bop$, $\ff_\bop$, $\rb_\bop$ for the lifts of $\pa M\times M$, $(\pa M)^2$, $M\times\pa M$, respectively. Furthermore, $\diag_\bop$ denotes the lift of the diagonal $\diag_M\subset M\times M$; it is a p-submanifold. Recall the notation $\Omegab M\to M$ for the b-density bundle, i.e.\ the density bundle associated with $\Tb M\to M$; write $\pi_R\colon M^2_\bop\to M$ for the lift of the right projection. Then
\[
  \Psib^s(M)
\]
is the space of all operators whose Schwartz kernels, as distributions on $M^2_\bop$, are elements of $I^s(M^2_\bop,\diag_\bop;\pi_R^*\Omegab M)$ which vanish to infinite order at $\lb_\bop$ and $\rb_\bop$. (For $m\in\N_0$, the space $\Diffb^m(M)\subset\Psib^m(M)$ is characterized as the subspace of Schwartz kernels which are Dirac distributions at $\diag_\bop$.) Elements of $\Psib^s(M)$ are bounded linear maps on $\CI(M)$ and $\CIdot(M)$, and $\Psib(M)=\bigoplus_{s\in\R}\Psib^s(M)$ is an algebra under composition; the principal symbol map $\sigmab^s\colon\Psib^s(M)\to(S^s/S^{s-1})(\Tb^*M)$ is multiplicative. With $\rho$ denoting the left lift of a boundary defining function on $M$, we also define the space of weighted operators
\[
  \Psib^{s,\alpha}(M) := \rho^{-\alpha}\Psib^s(M).
\]
By $\Diffb^{m,\alpha}(M)=\rho^{-\alpha}\Diffb^m(M)\subset\Psib^{m,\alpha}(M)$ we similarly denote the space of weighted b-differential operators.

\begin{rmk}[Noncompact manifolds]
\label{RmkAbNoncompact}
  In this section as well as in all pseudodifferential calculi recalled below, one can allow the underlying manifold $M$ to be noncompact. As long as one requires the Schwartz kernels of pseudodifferential operators to be properly supported, the basic properties of the calculi (principal symbol, normal operators, composition) continue to hold. However, only local or compactly supported versions of Sobolev spaces are well-defined then, and results on the invertibility of operators do not apply anymore.
\end{rmk}

Writing $P\in\Diffb^m(M)$ in local coordinates $x\geq 0$, $y\in\R^{n-1}$ near a boundary point as
\[
  P = \sum_{k+|\alpha|\leq m} a_{k\alpha}(x,y)(x D_x)^k D_y^\alpha,
\]
the \emph{b-normal operator} of $P$ is defined by freezing coefficients at $x=0$ as a b-operator, so
\[
  N(P) := \sum_{k+|\alpha|\leq m} a_{k\alpha}(0,y)(x D_x)^k D_y^\alpha.
\]
This can be defined invariantly as a b-differential operator on ${}^+N\pa M$ which is invariant with respect to the $\R_+$-action by dilations in the fibers of ${}^+N\pa M$; that is,
\[
  N(P) \in \Diff_{\bop,I}^m({}^+N\pa M).
\]
The Schwartz kernel of $N(P)$ is invariant under the (lift to $({}^+N\pa M)^2_\bop$ of the) joint dilation action in both factors of ${}^+N\pa M\times{}^+N\pa M$, and is indeed given by the unique dilation-invariant extension of the restriction $K_P|_{\ff_\bop}$ of the Schwartz kernel $K_P$ of $P$ to $\ff_\bop$. More generally then, we can thus define the b-normal operator
\[
  N(P)\in\Psi_{\bop,I}^s({}^+N\pa M)
\]
also for pseudodifferential $P\in\Psib^s(M)$. Given a choice of boundary defining function $\rho\in\CI(M)$ (which induces a trivialization ${}^+N\pa M\cong[0,\infty)\times\pa M$ via the fiber-linear function $\dd\rho$, which we immediately rename $\rho$ by an abuse of notation), one can define the \emph{Mellin-transformed normal operator family}
\[
  \wh N(P,\lambda) \in \Psi^s(\pa M),\qquad \lambda\in\C,
\]
by setting $\wh N(P,\lambda)u:=(\rho^{-i\lambda}N(P)(\rho^{i\lambda}u))|_{\rho=0}$, $u\in\CI(\pa M)$. Equivalently, $\wh N(P,\lambda)u=(\rho^{-i\lambda}P(\rho^{i\lambda}\tilde u))|_{\pa M}$ where $\tilde u\in\CI(M)$ is any function with $\tilde u|_{\pa M}=u$.

The Schwartz kernel of $\wh N(P,\lambda)$ is the Mellin transform, in the projective coordinate $s:=\rho/\rho'$ on $\ff_\bop$, of the Schwartz kernel $K_P$ of $P$; here $\rho$, resp.\ $\rho'$ is the lift to the left, resp.\ right factor of $M^2_\bop$ of the chosen boundary defining function $\rho\in\CI(M)$.

\begin{lemma}[Properties of the Mellin-transformed normal operator family]
\label{LemmaAbNorm}
  Fix a boundary defining function $\rho\in\CI(M)$. For $P\in\Psib^s(M)$, the operator $\wh N(P,\lambda)$ depends holomorphically on $\lambda\in\C$. The principal symbol $\upsigma^s(\wh N(P,\lambda))$ is independent of $\lambda$; it is equal to the pullback of $\sigmab^s(P)$ along the inclusion $T^*\pa M\hra\Tb^*_{\pa M}M$ (dual to the map $\Tb_{\pa M}M\to T\pa M$). Moreover, for $\mu\in\R$, the family
  \[
    (0,1)\ni h\mapsto\wh N(P,\pm h^{-1}-i\mu)
  \]
  defines an element of $\Psih^{s,s}(\pa M)$ which depends smoothly on $\mu$. A representative of its principal symbol (i.e.\ an element of $h^{-s}S^s({}^\semi T^*\pa M)$) is given at $h>0$ and $\eta_\semi\in{}^\semi T^*_h\pa M$ by $\sigmab^s(P)(\pm h^{-1}\frac{\dd\rho}{\rho}+h^{-1}\eta_\semi)$.
\end{lemma}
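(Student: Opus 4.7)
All three claims I would handle via the Schwartz-kernel characterization of $\wh N(P,\lambda)$ recorded just before the lemma: it is the Mellin transform in the projective coordinate $s=\rho/\rho'$ on $\ff_\bop$ of $K:=K_P|_{\ff_\bop}$. Denote the order of $P$ by $m$ to avoid notational clash with the projective variable $s$. Because $K_P$ vanishes to infinite order at $\lb_\bop\cup\rb_\bop$, the restriction $K(s,y,y')$ is Schwartz as $s\to 0$ and $s\to\infty$, while being a conormal distribution of order $m$ at $\diag_\bop\cap\ff_\bop=\{s=1,\,y=y'\}$. Holomorphy of $\lambda\mapsto\wh N(P,\lambda)$ follows at once from absolute convergence of
\[
  K_{\wh N(P,\lambda)}(y,y') \;=\; \int_0^\infty s^{i\lambda}\,K(s,y,y')\,\frac{\dd s}{s}
\]
for every $\lambda\in\C$.

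\textbf{Principal symbol.} In local coordinates $x=\rho$, $y\in\R^{n-1}$ and writing $P=\sum_{k+|\alpha|\leq m}a_{k\alpha}(x,y)(xD_x)^k D_y^\alpha$ for $P\in\Diffb^m(M)$, the definition of $\wh N$ gives $\wh N(P,\lambda)=\sum_{k+|\alpha|\leq m}a_{k\alpha}(0,y)\lambda^k D_y^\alpha$, whose principal symbol $\sum_{|\alpha|=m}a_{0\alpha}(0,y)\eta^\alpha$ is manifestly independent of $\lambda$ and equals $\sigmab^m(P)(\xi\tfrac{\dd\rho}{\rho}+\eta)|_{\xi=0}$, i.e.\ the pullback along $T^*\pa M\hra\Tb^*_{\pa M}M$. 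For $P\in\Psib^m(M)$ the same computation works symbolically: represent $K$ near the diagonal as $(2\pi)^{-n}\int e^{i\tau\log s+i(y-y')\cdot\eta}a(\tau,y,\eta)\,\dd\tau\,\dd\eta$ with $a$ a classical symbol of order $m$ whose principal part $a_m$ represents $\sigmab^m(P)(\tau\tfrac{\dd\rho}{\rho}+\eta)$. The Mellin transform in $s$ is the Fourier transform in $\log s$ at frequency $-\lambda$, which collapses $\dd\tau$ onto $\tau=-\lambda$; the resulting amplitude $a(-\lambda,y,\eta)$ has principal part $a_m(0,y,\eta)$ modulo $S^{m-1}$, giving the pullback formula.

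\textbf{Semiclassical structure.} Setting $t=\log s$ and $\tilde K(t,y,y'):=K(e^t,y,y')$, which is Schwartz in $t$ and conormal at $\{t=0,\,y=y'\}$, one has
\[
  K_{\wh N(P,\pm h^{-1}-i\mu)}(y,y') \;=\; \int_{-\infty}^{\infty} e^{\pm it/h}\bigl(e^{\mu t}\tilde K(t,y,y')\bigr)\,\dd t.
\]
The factor $e^{\mu t}$ is smooth in $\mu$ and bounded on any fixed neighborhood of $t=0$, so $e^{\mu t}\tilde K$ remains conormal at $\{t=0,\,y=y'\}$ with smooth $\mu$-dependence; localising in $t$ near $0$, I would represent the integrand as $\int e^{it\tau+i(y-y')\cdot\eta}a_\mu(\tau,y,\eta)\,\dd\tau\,\dd\eta$ with $a_\mu$ classical of order $m$, smooth in $\mu$, and with principal part still $\sigmab^m(P)(\tau\tfrac{\dd\rho}{\rho}+\eta)$ (the $\mu$-dependence showing up only in subprincipal terms via the Taylor expansion of $e^{\mu t}$ at $t=0$). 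The tail $|t|\gtrsim 1$ contributes a Schwartz-in-$h$ remainder by repeated integration by parts in $\tau$. Carrying out the $t$-integral yields $(2\pi)^{-(n-1)}\int e^{i(y-y')\cdot\eta}a_\mu(\mp h^{-1},y,\eta)\,\dd\eta$; rescaling $\eta=h^{-1}\eta_\semi$ identifies this as the semiclassical quantization of the amplitude $(h,\mu,y,\eta_\semi)\mapsto a_\mu(\mp h^{-1},y,h^{-1}\eta_\semi)$. The classical symbol bounds on $a_\mu$ in $(\tau,\eta)$ translate into the semiclassical bounds \eqref{EqAhSymbol} at order $(s,b)=(m,m)$, uniformly on compact $\mu$-sets; by homogeneity of $a_m$ the leading part is $h^{-m}a_m(\mp 1,y,\eta_\semi)=\sigmab^m(P)(\pm h^{-1}\tfrac{\dd\rho}{\rho}+h^{-1}\eta_\semi)$, the claimed principal symbol representative.

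\textbf{Main obstacle.} The decisive step is the semiclassical one: one must verify that after the Mellin-to-Fourier change of variables and the $\mu$-shift, the conormal singularity at $t=0$ is captured by an amplitude that is classical of order $m$, smooth in $\mu$, and that the subsequent rescaling $\eta=h^{-1}\eta_\semi$ produces precisely order $(s,s)=(m,m)$ with the advertised leading symbol. The subtlety lies in matching the $\Omegab M$-valued density conventions built into $\Psib^s(M)$ with the semiclassical density normalisation in $\Psih^{s,b}(\pa M)$, so that the $h^{-(n-1)}$ from the inverse Fourier transform and the $h$-powers from $\eta=h^{-1}\eta_\semi$ combine correctly; once this accounting is done, the remaining claims are routine.
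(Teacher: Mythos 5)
Your proposal is correct and follows essentially the same route as the paper: restrict the Schwartz kernel to the b-front face, split off the part away from the diagonal (which contributes an $h^\infty\Psih^{-\infty}$ remainder), write the near-diagonal part as an oscillatory integral with amplitude $a(\omega,\lambda,\eta)$ entire in $\lambda$ and symbolic in $(\Re\lambda,\eta)$, and observe that the Mellin transform evaluates the amplitude at the dual frequency, so that the substitution $\eta=h^{-1}\eta_\semi$ exhibits the semiclassical order $(s,s)$ and the claimed principal symbol. The only discrepancy is the sign convention in the Mellin transform ($s^{i\lambda}$ versus the paper's $s^{-i\lambda}$), which propagates into your $a_\mu(\mp h^{-1},\cdot)$ but does not affect the argument.
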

\begin{proof}
  First, if $P\in\Psib^{-\infty}(M)$ is residual, the restriction of its Schwartz kernel to the b-front face $\ff_\bop$ is a smooth right density $K_P(s_\bop,\omega,\omega')|\frac{\dd s_\bop}{s_\bop}|\nu$, where $s_\bop=\rho/\rho'\in[0,\infty]$ is a projective coordinate on $\ff_\bop$ and $\omega,\omega'\in\pa M$, and $0<\nu\in\CI(\pa M;\Omega\pa M)$ is a positive density on $\pa M$. The Schwartz kernel of $\wh N(P,\lambda)$ is then $\wh N(P,\lambda)(\omega,\omega')=\int_0^\infty s_\bop^{-i\lambda}K_N(s_\bop,\omega,\omega')\frac{\dd s_\bop}{s_\bop}$. This is smooth in $(\omega,\omega')$ and rapidly vanishing as $|\Re\lambda|\to\infty$ when $|\Im\lambda|$ remains bounded. Therefore, the family $(\mu,h)\mapsto\wh N(P,\pm h^{-1}-i\mu)$ is a smooth family (in $\mu$) of elements of $h^\infty\Psih^{-\infty}(\pa M)$.

  For general operators $P\in\Psib^s(M)$, we may modify $P$ by a residual operator so as to arrange that the Schwartz kernel of $P\in\Psib^s(M)$ is supported in any fixed neighborhood of $\diag_\bop$. With $\omega,\omega'\in\R^{n-1}$ denoting the lifts of local coordinates on $\pa M$ to the two factors of $\pa M\times\pa M$, the Schwartz kernel of $P$ thus restricts to $\ff_\bop$ as
  \[
    K_{N(P)}(s_\bop,\omega,\omega') = (2\pi)^{-n}\iint_{\R\times\R^{n-1}} s_\bop^{i\lambda}e^{i\eta\cdot(\omega-\omega')} a(\omega,\lambda,\eta)\,\dd\lambda\,\dd\eta\,\cdot\Bigl|\frac{\dd s_\bop}{s_\bop}\dd\omega'\Bigr|,
  \]
  where $a\in S^s(\R^{n-1}_\omega;\R^n_{(\lambda,\eta)})$ (in fact, $a$ is entire in $\lambda\in\C$, and a symbol of order $s$ in $(\Re\lambda,\eta)$ for each fixed $\Im\lambda$). Therefore,
  \[
    \wh N(P,\lambda)(\omega,\omega') = (2\pi)^{-(n-1)}\int_{\R^{n-1}} e^{i\eta\cdot(\omega-\omega')}a(\omega,\lambda,\eta)\,\dd\eta\,\cdot|\dd\omega'|
  \]
  is a ps.d.o.\ on $\pa M$. Its principal symbol is the equivalence class of $(\omega,\eta)\mapsto a(\omega,\lambda,\eta)$ in $(S^s/S^{s-1})(T^*\pa M)$, which is independent of $\lambda$ and indeed given by $a|_{T^*\pa M}\colon(\omega,\eta)\mapsto a(\omega,0,\eta)$.
  
  For $\lambda=\pm h^{-1}-i\mu$ (with $0<h<1$ and bounded $\mu\in\R$), we have
  \[
    \wh N(P,\pm h^{-1}-i\mu) = (2\pi h)^{-(n-1)}\int_{\R^{n-1}} e^{i\eta_\semi\cdot(\omega-\omega')/h} a(\omega,\pm h^{-1}-i\mu,h^{-1}\eta_\semi)\,\dd\eta_\semi\cdot|\dd\omega'|.
  \]
  But $|a(\omega,\pm h^{-1}-i\mu,h^{-1}\eta_\semi)|\lesssim(1+h^{-1}+h^{-1}|\eta_\semi|)^s\lesssim h^{-s}\la\eta_\semi\ra^s$, and by direct differentiation one finds that $(h,\omega,\eta_\semi)\mapsto a(\omega,\pm h^{-1}-i\mu,h^{-1}\eta_\semi)$ is an element of $S^{s,s}({}^\semi T^*\pa M)$. Therefore, $(0,1)\ni h\mapsto\wh N(P,\pm h^{-1}-i\mu)$ is a semiclassical ps.d.o.\ on $\pa M$. The claim about its principal symbol follows from this explicit description.
\end{proof}

\begin{lemma}[Elliptic b-ps.d.o.s]
\label{LemmaAbEll}
  Suppose that $P\in\Psib^s(M)$ has an elliptic principal symbol. Then $\wh N(P,\lambda)\colon\CI(\pa M)\to\CI(\pa M)$ is invertible for $\lambda$ outside a discrete subset of $\C$. Moreover, for all $\mu_0>0$, there exists $h_0>0$ so that $\wh N(P,\pm h^{-1}-i\mu)$ is invertible for $\mu\in[-\mu_0,\mu_0]$ and $h<h_0$.
\end{lemma}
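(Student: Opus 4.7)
The plan is to exploit the two parts of Lemma~\ref{LemmaAbNorm} and combine a semiclassical elliptic parametrix construction with the analytic Fredholm theorem. First I would note that, by the symbolic assertion of Lemma~\ref{LemmaAbNorm}, for every $\lambda\in\CC$ the operator $\wh N(P,\lambda)\in\Psi^s(\pa M)$ is elliptic, hence Fredholm of index $0$ as a map $H^{s'}(\pa M)\to H^{s'-s}(\pa M)$ for any $s'\in\RR$.

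Next I would handle the semiclassical claim, which is where the bulk of the work (and the obstruction to the first part) lies. By the second part of Lemma~\ref{LemmaAbNorm}, the family $h\mapsto \wh N(P,\pm h^{-1}-i\mu)$ is an element of $\Psih^{s,s}(\pa M)$ with smooth dependence on $\mu$, whose principal symbol is
\[
  \sigmab^s(P)\bigl(\pm h^{-1}\tfrac{\dd\rho}{\rho}+h^{-1}\eta_\semi\bigr) = h^{-s}\,\sigmab^s(P)\bigl(\pm\tfrac{\dd\rho}{\rho}+\eta_\semi\bigr),
\]
using that $\sigmab^s(P)$ is fiber-wise homogeneous of degree $s$. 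Since $\pm\frac{\dd\rho}{\rho}+\eta_\semi$ is nowhere zero in $\Tb^*_{\pa M}M$ (its $\frac{\dd\rho}{\rho}$-component is $\pm 1$), the assumed ellipticity of $\sigmab^s(P)$ yields a uniform lower bound $|\sigmab^s(P)(\pm\frac{\dd\rho}{\rho}+\eta_\semi)|\gtrsim \langle\eta_\semi\rangle^s$, independent of $\mu$. Thus the semiclassical principal symbol is elliptic in $h^{-s}S^s({}^\semi T^*\pa M)$ uniformly for $\mu\in[-\mu_0,\mu_0]$, and the parametrix construction described at the end of~\S\ref{SsAh} produces $Q\in\Psih^{-s,-s}(\pa M)$ depending smoothly on $\mu$ with residual errors $R,R'\in\Psih^{-\infty,\infty}$ whose operator norms are $O(h^\infty)$, again uniformly for $\mu\in[-\mu_0,\mu_0]$. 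A Neumann series then gives $h_0>0$ such that $\wh N(P,\pm h^{-1}-i\mu)$ is invertible for all $h\in(0,h_0)$ and $\mu\in[-\mu_0,\mu_0]$.

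Finally, for the first part I would pick any $\lambda_0\in\CC$ with $\wh N(P,\lambda_0)$ invertible (which exists by the semiclassical step, taking e.g.\ $\lambda_0=h^{-1}$ for some small $h$) and write
\[
  \wh N(P,\lambda) = \wh N(P,\lambda_0)\bigl(I + K(\lambda)\bigr),\qquad K(\lambda):=\wh N(P,\lambda_0)^{-1}\bigl(\wh N(P,\lambda)-\wh N(P,\lambda_0)\bigr).
\]
Since the principal symbol of $\wh N(P,\lambda)$ is independent of $\lambda$, the difference $\wh N(P,\lambda)-\wh N(P,\lambda_0)$ lies in $\Psi^{s-1}(\pa M)$; combined with $\wh N(P,\lambda_0)^{-1}\in\Psi^{-s}(\pa M)$ (the inverse of an elliptic ps.d.o.\ on the closed manifold $\pa M$), this shows $K(\lambda)\in\Psi^{-1}(\pa M)$, hence compact on $H^{s'}(\pa M)$, and holomorphic in $\lambda$. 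The analytic Fredholm theorem applied to $I+K(\lambda)$ then yields invertibility of $\wh N(P,\lambda)$ outside a discrete set of $\lambda\in\CC$.

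The main obstacle is the semiclassical step: one needs the precise statement in Lemma~\ref{LemmaAbNorm} that $\wh N(P,\pm h^{-1}-i\mu)$ is a genuine element of $\Psih^{s,s}(\pa M)$ (not merely a family in $\Psi^s$) with the explicit form of the semiclassical principal symbol, together with the fact that this symbol is independent of $\mu$—without these inputs, neither ellipticity of the semiclassical symbol nor uniformity in $\mu$ would be available to drive the parametrix construction.
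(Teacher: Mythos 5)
Your proof is correct and follows essentially the same route as the paper: semiclassical ellipticity of $h\mapsto\wh N(P,\pm h^{-1}-i\mu)$ via Lemma~\ref{LemmaAbNorm}, a parametrix plus Neumann series for small $h$, and the analytic Fredholm theorem for the discreteness statement (which you spell out more explicitly than the paper, and with more care about uniformity in $\mu$). One tiny imprecision: for a general $P\in\Psib^s(M)$ the principal symbol is only a class in $(S^s/S^{s-1})(\Tb^*M)$ and need not be fiber-wise homogeneous, so the identity $\sigmab^s(P)(h^{-1}\zeta)=h^{-s}\sigmab^s(P)(\zeta)$ should be replaced by the elliptic lower bound $|\sigmab^s(P)(\zeta)|\gtrsim\langle\zeta\rangle^s$ for $|\zeta|$ large, which applied at $\zeta=\pm h^{-1}\frac{\dd\rho}{\rho}+h^{-1}\eta_\semi$ gives the same uniform ellipticity in $h^{-s}S^s({}^\semi T^*\pa M)$.
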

\begin{proof}
  The operator family $\wh N(P,\lambda)$ is an analytic family of elliptic ps.d.o.s on $\pa M$. The ellipticity of the semiclassical principal symbol of $P_\mu:=(h\mapsto\wh N(P,\pm h^{-1}-i\mu))$ implies that there exists $Q\in\Psih^{-s,-s}(\pa M)$ so that $Q P_\mu-I\in h\Psih^{-1}(\pa M)$; for small $h>0$, the error here is small as an operator on $L^2(\pa M)$, and therefore $P_\mu^{-1}$ exists and is given by a Neumann series and indeed lies in $\Psih^{-s,-s}(\pa M)$ for small enough $h$ (depending on $\mu$). An application of the analytic Fredholm theorem completes the proof.
\end{proof}

We next discuss the scale of weighted Sobolev spaces corresponding to b-analysis. Name\-ly, fixing a smooth b-density $0<\nu\in\CI(M;\Omegab M)$, or more generally a weighted b-density $\nu=\rho^\beta\nu_0$ where $\beta\in\R$ and $0<\nu_0\in\CI(M;\Omegab M)$, we can define $\Hb^0(M,\nu):=L^2(M,\nu)$. We now drop $\nu$ from the notation. For $s\geq 0$, we fix any $A\in\Psib^s(M)$ with elliptic principal symbol and let
\[
  \Hb^s(M) = \{ u\in\Hb^0(M) \colon A u\in\Hb^0(M) \}.
\]
This is a Hilbert space with squared norm $\|u\|_{L^2}^2+\|A u\|_{L^2}^2$. The space $\Hb^{-s}(M)$ is, by definition, its $L^2(M)$-dual. (Equivalently, $\Hb^{-s}(M)$ is the space of all distributions of the form $u_0+A u_1$ where $u_0,u_1\in\Hb^0(M)$ and $A\in\Psib^{|s|}(M)$ is any fixed elliptic operator. See~\cite[Appendix~B]{MelroseVasyWunschDiffraction} for the relevant functional analysis.) Finally, for $\alpha\in\R$, we let
\[
  \Hb^{s,\alpha}(M) = \rho^\alpha\Hb^s(M) = \{ \rho^\alpha u\colon u\in\Hb^s(M) \}.
\]
Using H\"ormander's square root trick (see e.g.\ \cite[Theorem~2.2.1]{HormanderFIO1}), one can show that elements of $\Psib^0(M)$ are bounded linear maps on $L^2(M)$; and then any $A\in\Psib^{s,\alpha}(M)$ defines a bounded linear map $\Hb^{s',\alpha'}(M)\to\Hb^{s'-s,\alpha'-\alpha}(M)$.

Fixing a collar neighborhood $[0,1)_\rho\times\pa M$ of $\pa M\subset M$, and letting $\chi\in\CIc([0,1)\times\pa M)$, we moreover have an equivalence of norms
\begin{equation}
\label{EqAbEquiv}
  \|\chi u\|_{\Hb^{s,\alpha}(M)}^2 \sim \int_{\Im\lambda=-\alpha} \| \wh{\chi u}(\lambda,-) \|_{H_{\la\lambda\ra^{-1}}^{s,s}(\pa M)}^2\,\dd\lambda,
\end{equation}
where $\wh{\chi u}(\lambda,x)=\int_0^\infty\rho^{-i\lambda}\chi u(\rho,x)\,\frac{\dd\rho}{\rho}$ denotes the Mellin transform in $\rho$. That is, there exists a constant $C$ (only depending on the collar neighborhood as well as on $\chi$, $s$, $\alpha$) so that the left hand side of~\eqref{EqAbEquiv} is bounded by $C$ times the right hand side and vice versa. One can reduce the proof of~\eqref{EqAbEquiv} to the case $\alpha=0$; for $s=0$, it then follows from Plancherel's Theorem. To obtain~\eqref{EqAbEquiv} for general $s$, one can first establish the case $s\in\N$ via testing with dilation-invariant vector fields, and then use interpolation and duality to get the full result; see \cite[\S3.1]{VasyMicroKerrdS} for this approach. An approach that generalizes more easily (and avoids the use of complex interpolation) proceeds for $s>0$ (and $\alpha=0$ still) by fixing an elliptic operator $A\in\Psib^s(M)$ which near $\supp\chi$ is dilation-invariant,\footnote{By this, we mean that the Schwartz kernel of $A$ is equal to that of its normal operator $N(A)$ near $\supp\chi\times\supp\chi$.} and writing
\begin{equation}
\label{EqAbEquiv2}
\begin{split}
  \|\chi u\|_{\Hb^s(M)}^2 &= \|\chi u\|_{\Hb^0(M)}^2 + \|A(\chi u)\|_{\Hb^0(M)}^2 \\
    &\sim \int_\R \|\wh{\chi u}(\lambda,-)\|_{L^2(\pa M)}^2 + \| \wh N(A,\lambda)\wh{\chi u}(\lambda,-) \|_{L^2(\pa M)}^2\,\dd\lambda.
\end{split}
\end{equation}
But by Lemma~\ref{LemmaAbNorm}, $\R\ni\lambda\mapsto\wh N(A,\lambda)$ is an elliptic semiclassical ps.d.o.\ of order $(s,s)$, with semiclassical parameter $\la\lambda\ra^{-1}$, and hence the integrand on the right is equivalent to $\|\wh{\chi u}(\lambda,-)\|_{H_{\la\lambda\ra^{-1}}^{s,s}(\pa M)}^2$, uniformly for $\lambda\in\R$.

Finally, we turn to finer aspects of elliptic b-theory.

\begin{definition}[Boundary spectrum]
\label{DefAbSpecb}
  Let $P\in\Psib^s(M)$ be elliptic. The \emph{boundary spectrum} of $P$ is then\footnote{There exist other conventions for the definition of $\Specb(P)$; the most frequently used one omits the factor of $-i$ in the relationship of $\lambda$ and $z$, cf.\ \cite[Equation~(5.10)]{MelroseAPS}. The convention we use here has the advantage that the relationship between $\Specb(P)$ and index sets for Schwartz kernels of parametrices for $P$ does not involve factors of $i$; a disadvantage is that a factor of $-i$ is now required when converting poles of the Mellin-transformed spectral family to elements of $\Specb(P)$.}
  \begin{align*}
    \Specb(P) &:= \bigl\{ (z,k)\in\C\times\N_0 \colon \wh N(P,\lambda)^{-1}\ \text{has a pole at $\lambda=-i z$ of order $\geq k+1$} \bigr\} \\
      &\subset \C\times\N_0,
  \end{align*}
  and we write $\specb(P)=\{z\colon(z,0)\in\Specb(P)\}\subset\C$ for its projection to the first factor. Moreover, for $\alpha\in\R$ with $\alpha\notin\Re\Specb(P)$, we denote by $\cE^\pm(P,\alpha)\subset\C\times\N_0$ the smallest index sets\footnote{The existence of the index sets $\cE^\pm(P,\alpha)$ is guaranteed by Lemma~\ref{LemmaAbEll}.} with
  \begin{align*}
    \cE^+(P,\alpha) &\supset \{ (z,k)\in\Specb(P) \colon \Re z>\alpha \}, \\
    \cE^-(P,\alpha) &\supset \{ (-z,k)\in\Specb(P) \colon \Re z<\alpha \}.
  \end{align*}
\end{definition}

Approximate inverses of `fully elliptic' b-operators (see Theorem~\ref{ThmAbPx} below) typically do not lie in $\Psib(M)$, as their Schwartz kernels do not decay rapidly at $\lb_\bop$ and $\rb_\bop$. Thus, for a collection $\cE=(\cE_\lb,\cE_\ff,\cE_\rb)$ of index sets, and for index sets $\cE_0,\cE_1$, we define
\[
  \Psib^{-\infty,\cE}(M) := \cA_\phg^\cE(M,\pi_R^*\Omegab M),\qquad
  \Psi^{-\infty,(\cE_0,\cE_1)}(M) := \cA_\phg^{(\cE_0,\cE_1)}(M\times M,\pi_R^*\Omegab M),
\]
where in the first expression $\cE_H$ is the index set at $H_\bop$ for $H=\lb,\ff,\rb$, and in the second expression $\cE_0$, resp.\ $\cE_1$ is the index set associated with $\pa M\times M$, resp.\ $M\times\pa M$. (Note that $\Psib^{-\infty,(\emptyset,\emptyset,\cE_1)}(M)=\Psi^{-\infty,(\emptyset,\cE_1)}(M)$.) The \emph{large b-calculus} consists of operators in the space $\Psib^s(M)+\Psib^{-\infty,\cE}(M)$ for $s\in\R$ and collections of index sets $\cE$.

\begin{rmk}[Mellin-transformed normal operator family in the large calculus]
\label{RmkAbMTLarge}
  One typically only considers those collections $\cE$ of index sets for which $\Re(\cE_\lb+\cE_\rb)>0$. In this case, one can define the Mellin-transformed normal operator family of elements of $\Psib^{-\infty,(\cE_\lb,\N_0,\cE_\rb)}(M)$: the Mellin transform of the Schwartz kernel restricted to $\ff_\bop$ is then well-defined when the Mellin-dual variable $\lambda$ satisfies $\Re\cE_\lb>-\Im\lambda$ and $\Re\cE_\rb>\Im\lambda$, and extends from such a strip of $\lambda$ meromorphically to the entire complex plane.
\end{rmk}

\begin{prop}[Composition in the large b-calculus]
\label{PropAbComp}
  Let $P\in\Psib^s(M)+\Psib^{-\infty,\cE}(M)$ and $Q\in\Psib^{s'}(M)+\Psib^{-\infty,\cF}(M)$, where $\cE=(\cE_\lb,\cE_\ff,\cE_\rb)$ and $\cF=(\cF_\lb,\cF_\ff,\cF_\rb)$ are two collections of index sets. Suppose $\Re(\cE_\rb+\cF_\lb)>0$. Then the composition $P\circ Q$ is well-defined, and $P\circ Q \in \Psib^{s+s'}(M) + \Psib^{-\infty,\cG}(M)$, where $\cG=(\cG_\lb,\cG_\ff,\cG_\rb)$ with
  \begin{align*}
    \cG_\lb &= \cE_\lb \extcup (\cE_\ff+\cF_\lb), \\
    \cG_\ff &= (\cE_\ff+\cF_\ff) \extcup (\cE_\lb+\cF_\rb), \\
    \cG_\rb &= (\cE_\rb+\cF_\ff) \extcup \cF_\rb.
  \end{align*}
  Furthermore, if the index sets $\cF_0,\cF_1\subset\C\times\N_0$ are such that $\Re(\cE_\rb+\cF_0)>0$, then the composition of $P\in\Psib^s(M)+\Psib^{-\infty,\cE}(M)$ and $Q\in\Psi^{-\infty,(\cF_0,\cF_1)}(M)$ is well-defined, with
  \begin{equation}
  \label{EqAbCompFullyRes}
    P\circ Q \in \Psi^{-\infty,(\cE_\lb\extcup(\cE_\ff+\cF_0),\cF_1)}(M).
  \end{equation}
\end{prop}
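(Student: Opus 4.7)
The plan is to realize $P\circ Q$ as a pushforward along a b-fibration of a product of pullbacks of the Schwartz kernels of $P$ and $Q$, and to compute index sets using Melrose's pullback and pushforward theorems for polyhomogeneous conormal distributions \cite{MelrosePushfwd}. The part of the composition coming from the interior conormal singularities is handled by the standard composition $\Psib^s(M)\circ\Psib^{s'}(M)\subset\Psib^{s+s'}(M)$ in the small b-calculus \cite{MelroseAPS}, so after subtracting this contribution the content of the proposition concerns the boundary asymptotics, where the Schwartz kernels are polyhomogeneous conormal on $M^2_\bop$ and the composition reduces to integration.

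The central construction will be the b-triple space
\[
  M^3_\bop := \bigl[ M^3;\ (\pa M)^3;\ (\pa M)^2\times M,\ (\pa M)\times M\times(\pa M),\ M\times(\pa M)^2 \bigr],
\]
whose boundary hypersurfaces I will label $\rmf_{abc}$, $abc\in\{0,1\}^3\setminus\{000\}$, according to which factors of $M^3$ are at the boundary. The first structural step is to verify that the three factor projections $\pi_F,\pi_S,\pi_C\colon M^3\to M^2$ (forgetting the first, middle, last factor) lift to b-fibrations $M^3_\bop\to M^2_\bop$; this is a matter of commuting the three codimension-two blow-ups past each other using \cite[Proposition~5.11.2]{MelroseDiffOnMwc}. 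Once this is done, one reads off which $\rmf_{abc}$ map to which hypersurface of $M^2_\bop$: for $\pi_S$, the face $\rmf_{010}$ is mapped into the interior of $M^2_\bop$, while $\rmf_{100},\rmf_{110}$ map onto $\lb_\bop$, $\rmf_{001},\rmf_{011}$ onto $\rb_\bop$, and $\rmf_{101},\rmf_{111}$ onto $\ff_\bop$; the analogous tables for $\pi_C$ and $\pi_F$ are obtained similarly. Using the density identity~\eqref{EqADensity} to track density factors, I would write
\[
  K_{P Q} = (\pi_S)_*\bigl( \pi_C^* K_P \cdot \pi_F^* K_Q \bigr).
\]
The pullback theorem gives that $\pi_C^*K_P$ is polyhomogeneous on $M^3_\bop$ with index set at each $\rmf_{abc}$ inherited from $(\cE_\lb,\cE_\ff,\cE_\rb)$ via the table for $\pi_C$, and analogously for $\pi_F^*K_Q$; multiplying them produces a polyhomogeneous distribution whose index set at each face is the sum of the two. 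At the middle face $\rmf_{010}$, this sum is $\cE_\rb+\cF_\lb$, and the hypothesis $\Re(\cE_\rb+\cF_\lb)>0$ is exactly what guarantees integrability of the pushforward integral.

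Melrose's pushforward theorem then yields that $(\pi_S)_*$ of the product is polyhomogeneous on $M^2_\bop$, with index set at each boundary hypersurface of $M^2_\bop$ equal to the $\extcup$ of the index sets at the preimage faces that are not mapped into the interior; collecting terms from the tables above gives $\cG_\lb=\cE_\lb\extcup(\cE_\ff+\cF_\lb)$ (from $\rmf_{100}$ and $\rmf_{110}$) and, symmetrically, the asserted formulas for $\cG_\ff$ and $\cG_\rb$. The second statement follows by specialization: an element $Q\in\Psi^{-\infty,(\cF_0,\cF_1)}(M)$, viewed on $M^2_\bop$, is polyhomogeneous with index sets $\cF'_\lb=\cF_0$, $\cF'_\rb=\cF_1$, $\cF'_\ff=\cF_0+\cF_1$ (since $\ff_\bop$ arises from blowing up $(\pa M)^2$), the convergence hypothesis becomes the stated $\Re(\cE_\rb+\cF_0)>0$, and the first part's formulas collapse to show that $PQ$ has Schwartz kernel polyhomogeneous on the unblown-up $M\times M$ with index set $\cE_\lb\extcup(\cE_\ff+\cF_0)$ at $\pa M\times M$ and $\cF_1$ at $M\times\pa M$, the latter using $\Re(\cE_\rb+\cF_0)>0$ to discard the $(\cE_\rb+\cF_0+\cF_1)$ term. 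The main technical obstacle I anticipate is precisely the verification that the factor projections are b-fibrations of $M^3_\bop$ and the clean bookkeeping of density factors under the various blow-ups; both are standard in the b-calculus literature but are exactly what makes the $\extcup$-and-sum structure of the index sets appear correctly.
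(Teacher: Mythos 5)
Your treatment of the main composition formula is correct and is exactly the paper's (cited) route: realize $K_{PQ}$ as a pushforward over the b-triple space, check the three lifted projections are b-fibrations, and read off the index sets from the pullback/pushforward theorems; the face bookkeeping and the role of $\Re(\cE_\rb+\cF_\lb)>0$ are all right.

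The deduction of~\eqref{EqAbCompFullyRes} by specializing the first part, however, has a genuine gap. First, the specialization does not give the stated index sets: with $\cF'=(\cF_0,\cF_0+\cF_1,\cF_1)$ the first part yields $\cG_\rb=(\cE_\rb+\cF_0+\cF_1)\extcup\cF_1$, and the extra terms $\cE_\rb+\cF_0+\cF_1$ need not be contained in $\cF_1$ (an index set is not closed under adding arbitrary elements of positive real part), so you cannot simply ``discard'' them; similarly $\cG_\ff=(\cE_\ff+\cF_0+\cF_1)\extcup(\cE_\lb+\cF_1)$ is in general strictly larger than $\cG_\lb+\cG_\rb$. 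Second, and more fundamentally, knowing that $K_{PQ}$ is polyhomogeneous on $M^2_\bop$ with index sets $(\cA,\cA+\cB,\cB)$ does \emph{not} imply it is the lift of a polyhomogeneous distribution on the unresolved $M^2$: the expansion at $\ff_\bop$ must be the one induced by the corner expansion, which is extra information the pushforward theorem on $M^3_\bop$ does not provide. The correct argument (which the paper indicates) avoids the $\ff_\bop$ blow-up in the outgoing variables altogether: since $Q$ is fully residual, view $K_Q$ as a polyhomogeneous family in $z''\in M$ (index set $\cF_1$) of elements of $\cA_\phg^{\cF_0}(M)$ in the left variable, and apply $P$ fiberwise; equivalently, work on the simpler triple space $[M^3;(\pa M)^2\times M]=M^2_\bop\times M$. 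The mapping property $P\colon\cA_\phg^{\cF_0}(M)\to\cA_\phg^{\cE_\lb\extcup(\cE_\ff+\cF_0)}(M)$ under $\Re(\cE_\rb+\cF_0)>0$ then gives the left index set, while the $z''$-dependence, and hence the index set $\cF_1$ at $M\times\pa M$, is untouched.
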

\begin{proof}
  See \cite[Theorem~4.20]{AlbinLectureNotes}. We merely remark that a geometric proof of the composition properties of the large b-calculus utilizes the \emph{b-triple space}
  \begin{equation}
  \label{EqAbCompTriple}
    M^3_\bop := \bigl[ M^3; (\pa M)^3; (\pa M)^2\times M, \pa M\times M\times\pa M, M\times(\pa M)^2 \bigr],
  \end{equation}
  and pullbacks and pushforwards along the lifts of the three different projections to $M^2_\bop$. The proof of~\eqref{EqAbCompFullyRes} uses the simpler triple space $[M^3;(\pa M)^2\times M]=M^2_\bop\times M$.
\end{proof}

Parametrix constructions in the polyhomogeneous category often involve a proliferation of index sets; we thus make the following general construction:
\begin{definition}[Index sets]
\label{DefAbIndexSets}
  Given an index set $\cE\subset\C\times\N_0$, we define $\cE^{(0),0}:=\cE$ and $\cE^{(0),j+1}:=\cE\extcup(\cE^{(0),j}+1)$ for $j\in\N_0$, and $\cE^{(0)}:=\bigcup_{j\in\N_0}\cE^{(0),j}$.
\end{definition}

\begin{thm}[Elliptic parametrix/inverse]
\label{ThmAbPx}
  Let $P\in\Psib^s(M)$ be elliptic, and suppose $\alpha\in\R$ is such that $\alpha\notin\Re\Specb(P)$. (We say that $P$ is \emph{fully elliptic} with weight $\alpha$.) Write $\cE^\pm:=\cE^\pm(P,\alpha)$ and put
  \begin{equation}
  \label{EqAbPxInd}
    \cE^{(0)} := \N_0 \cup \bigl( (\cE^{+,(0)}+\cE^{-,(0)}) \extcup(\N_0+1) \bigr).
  \end{equation}
  Define the collection $\cE:=(\cE^{+,(0)},\cE^{(0)},\cE^{-,(0)})$ of index sets, corresponding to the boundary hypersurfaces $\lb_\bop$, $\ff_\bop$, $\rb_\bop$ (in this order). Then there exist left and right parametrices $Q_L,Q_R\in\Psib^{-s}(M)+\Psib^{-\infty,\cE}(M)$ with
  \begin{align*}
    P Q_R &= I - R_R,\quad R_R\in\Psi^{-\infty,(\emptyset,\cE^{-,(0)})}(M), \\
    Q_L P &= I - R_L,\quad R_L\in\Psi^{-\infty,(\cE^{+,(0)},\emptyset)}(M).
  \end{align*}
  In particular, $P\colon\Hb^{s',\alpha}(M)\to\Hb^{s'-s,\alpha}(M)$ is Fredholm (where the underlying density is a smooth positive b-density on $M$). If $P$ is invertible, then also
  \begin{equation}
  \label{EqAbPxInv}
    P^{-1} \in \Psib^{-s}(M) + \Psib^{-\infty,\cE}(M).
  \end{equation}
\end{thm}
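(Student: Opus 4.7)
The plan is to execute the standard three-stage parametrix construction of Melrose in the b-calculus, tracking index sets carefully through each step.

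\textbf{Stage 1 (symbolic inversion).} Ellipticity of $\sigmab^s(P)$ together with the multiplicativity of the b-principal symbol, the short exact sequence $\Psib^{s-1}\to\Psib^s\to(P^{[s]}/\text{lower})(\Tb^*M)$, and the usual asymptotic summation produce a symbolic parametrix $Q_{\mathrm{sym}}\in\Psib^{-s}(M)$ with $P Q_{\mathrm{sym}}-I=R_{\mathrm{sym}}\in\Psib^{-\infty}(M)$; the error is smoothing and vanishes to infinite order at $\lb_\bop$ and $\rb_\bop$, but in general has a nontrivial restriction to $\ff_\bop$.

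\textbf{Stage 2 (normal operator inversion and iteration).} The hypothesis $\alpha\notin\Re\Specb(P)$ combined with Lemma~\ref{LemmaAbEll} (for bounded $\Re\lambda$) and the semiclassical ellipticity of Lemma~\ref{LemmaAbNorm} (for large $|\Re\lambda|$) ensures $\wh N(P,\lambda)^{-1}$ is holomorphic on a neighborhood of $\Im\lambda=-\alpha$ and meromorphic on $\C$, with poles contributing the index sets $\cE^\pm(P,\alpha)$. Inverse Mellin transform along $\Im\lambda=-\alpha$ yields $N(P)^{-1}$ as a dilation-invariant operator on $({}^+N\pa M)^\circ$ whose Schwartz kernel has polyhomogeneous expansions with index sets $\cE^\pm(P,\alpha)$ at the two edges of $\ff_\bop$ (obtained by shifting the contour past the poles). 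Extending $N(P)^{-1}N(R_{\mathrm{sym}})$ off $\ff_\bop$ into $M^2_\bop$ produces a correction $Q_1$ so that $P(Q_{\mathrm{sym}}+Q_1)-I$ vanishes to first order at $\ff_\bop$; iterating (solving $N(P)\,\delta N(Q_j)=N(\rho_\ff^{-(j-1)}R_{j-1})$ at step $j$) and Borel-summing in $\rho_\ff$ produces $Q_R\in\Psib^{-s}(M)+\Psib^{-\infty,\cE}(M)$ with $R_R:=I-P Q_R$ vanishing to infinite order at $\ff_\bop$. Each iteration introduces $\cE^\pm$-expansions at $\lb_\bop,\rb_\bop$ and a $+1$-shifted $\extcup$-combination at $\ff_\bop$ via Proposition~\ref{PropAbComp}, and these sequences close up to precisely the $(0)$-closures of Definition~\ref{DefAbIndexSets}; the $\cE^{+,(0)}+\cE^{-,(0)}$ contribution at $\ff_\bop$ arises from the "crossed" term in the composition formula $\cG_\ff=(\cE_\ff+\cF_\ff)\extcup(\cE_\lb+\cF_\rb)$. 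Since the symbolic error vanishes at $\lb_\bop$, and since $P$ is a b-operator which preserves infinite-order vanishing at $\lb_\bop$ while each $Q_j$ is designed so that its leading expansion at $\lb_\bop$ is annihilated by $\wh N(P,\cdot)$ at the relevant spectral points, the error $R_R$ also vanishes to infinite order at $\lb_\bop$. Since $\lb_\bop\cap\rb_\bop=\emptyset$ in $M^2_\bop\setminus\ff_\bop$, the kernel $R_R$ descends to a smooth right-b-density on $M\times M$ vanishing to infinite order at $\pa M\times M$, hence $R_R\in\Psi^{-\infty,(\emptyset,\cE^{-,(0)})}(M)$. The left parametrix is constructed dually.

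\textbf{Stage 3 (Fredholm property and promotion to the large calculus).} The remainders $R_L,R_R$ are compact on $\Hb^{s',\alpha}(M)$ by the compactness of the inclusion $\Hb^{s',\alpha}(M)\hookrightarrow\Hb^{s'-\eps,\alpha-\eps}(M)$ for $\eps>0$ (operators in $\Psi^{-\infty,(\cE^{+,(0)},\emptyset)}$ and $\Psi^{-\infty,(\emptyset,\cE^{-,(0)})}$ being smoothing and gaining strict positive decay), giving the Fredholm property of $P\colon\Hb^{s',\alpha}(M)\to\Hb^{s'-s,\alpha}(M)$. If $P$ is invertible, write $P^{-1}=Q_R+P^{-1}R_R$; since $R_R$ has Schwartz kernel polyhomogeneous in the right variable with index set $\cE^{-,(0)}$ and rapidly decaying in the left variable, one applies $P^{-1}$ (equivalently $Q_L$ modulo the smoothing term $R_L$) in the left variable, bootstrapping the left parametrix identity $P^{-1}=Q_L+R_L P^{-1}$ to establish polyhomogeneity of the resulting kernel at both $\lb_\bop$ and $\ff_\bop$ with the expected index sets. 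This shows $P^{-1}R_R\in\Psib^{-\infty,\cE}(M)$, so $P^{-1}\in\Psib^{-s}(M)+\Psib^{-\infty,\cE}(M)$ as claimed.

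The principal obstacle will be the combinatorial bookkeeping of index sets through iteration: verifying that the iterated composition rule of Proposition~\ref{PropAbComp} produces exactly the closure $\cE^{(0)}$ of Definition~\ref{DefAbIndexSets} (and not something strictly larger), and correctly separating the diagonal contribution $\N_0$ from the crossed contribution $\cE^{+,(0)}+\cE^{-,(0)}$ at $\ff_\bop$. A secondary technical point is the polyhomogeneity bootstrap needed to promote $P^{-1}$ to the large calculus, which requires the left and right parametrix identities in concert.
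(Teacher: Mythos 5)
Your overall strategy is the standard Melrose construction and matches the paper's: symbolic parametrix, inversion of the Mellin-transformed normal family along $\Im\lambda=-\alpha$, correction at the boundary faces, asymptotic summation, adjoint for the left parametrix, and the identity $P^{-1}=Q_R+Q_L R_R+R_L P^{-1}R_R$ for the invertible case. Stages 1 and 3 are fine. The gap is in the organization of Stage 2, specifically in how you dispose of the left boundary.

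You iterate the normal-operator correction at $\ff_\bop$ and Borel-sum in $\rho_\ff$, asserting that the resulting error "also vanishes to infinite order at $\lb_\bop$" because each $Q_j$ has its leading $\lb_\bop$-expansion annihilated by $\wh N(P,\cdot)$. That mechanism is only valid for the \emph{first} correction $Q_1$: its $\lb_\bop$-expansion consists of residues of $\wh N(P,\lambda)^{-1}$, i.e.\ formal solutions of $N(P)$, so $P Q_1$ gains one order at $\lb_\bop$ (this is exactly how the paper gets $R_1\in\Psib^{-\infty,(\cE^++1,\N_0+1,\cE^-)}(M)$). But from the second step onward the correction $Q_{j+1}=N(P)^{-1}$ applied to the previous error acquires, via the composition formula $\cG_\lb=\cE_\lb\extcup(\cE_\ff+\cF_\lb)$, $\lb_\bop$-terms coming from the front-face expansion of $N(P)^{-1}$ composed with the (now nontrivial) $\lb_\bop$-expansion of $R_j$; these are not formal solutions of $N(P)$, so no further gain occurs there. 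The $\lb_\bop$-index sets of the iterated errors therefore stabilize near $\cE^{+,(0)}$ rather than receding, and after Borel summation at $\ff_\bop$ you are left with an error that is trivial at $\ff_\bop$ but still has a full polyhomogeneous expansion at $\lb_\bop$ — which is not in $\Psi^{-\infty,(\emptyset,\cE^{-,(0)})}(M)$. (The paper's Remark~\ref{RmkAbPxlb} is precisely a warning that naive iteration schemes lose control of index sets and must be summed only at carefully chosen faces.)

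What is missing is a dedicated step removing the error at $\lb_\bop$ to infinite order. The paper does this with a correction $Q_2\in\Psib^{-\infty,(\cE^{+,(0)},\N_0+1,\emptyset)}(M)$ constructed by exploiting that $\lb_\bop\to\pa M$ is a fibration and that the b-normal operator at $\lb_\bop$ of the lift of $P$ to the left factor of $M^2_\bop$ is again $N(P)$: one solves away the Taylor series of $R_1$ at $\lb_\bop$ term by term via the inverse Mellin transform, obtaining $R_2\in\Psib^{-\infty,(\emptyset,\N_0+1,\cE^-)}(M)$. Only then does the front-face error get removed, by an asymptotic Neumann series $\sum_j R_2^j$ — and it is essential that this summation happens \emph{after} the $\lb_\bop$-index set is trivial, since $\cG_\lb=\emptyset\extcup(\cE_\ff+\emptyset)=\emptyset$ is then preserved under composition, and the crossed term $\cE_\lb+\cF_\rb$ in $\cG_\ff$ (which you correctly identify as the source of $\cE^{+,(0)}+\cE^{-,(0)}$) enters only through the final composition of $Q_0+Q_1+Q_2$ with $I+\tilde R_2$. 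Your plan should be amended to insert this $\lb_\bop$-solve (or the alternative scheme of Remark~\ref{RmkAbPxlb}) between the normal-operator step and the front-face summation.
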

\begin{proof}
  This is standard, see e.g.\ \cite[\S5.25]{MelroseAPS} and \cite[Proposition~5.7]{AlbinLectureNotes} for (variants) of this result (with slightly different notation). We sketch the construction of a right parametrix $Q_R$. (A left parametrix can be constructed as the adjoint of a right parametrix for $P^*$.) Let $Q_0\in\Psib^{-s}(M)$ be a symbolic parametrix, i.e.\ $R_0:=I-P Q_0\in\Psib^{-\infty}(M)$. Passing to Mellin-transformed normal operator families, we have $\wh N(P,\lambda)\wh N(Q_0,\lambda) = I-\wh N(R_0,\lambda)$, with the Schwartz kernel of $\wh N(R_0,\lambda)$ (with holomorphic dependence on $\lambda\in\C$) being smooth, and rapidly decaying as $|\Re\lambda|\to\infty$ for bounded $|\Im\lambda|$. Lemma~\ref{LemmaAbNorm} then allows us to pick $Q_1\in\Psib^{-\infty,(\cE^+,\N_0,\cE^-)}(M)$ whose normal operator has Schwartz kernel $K_{Q_1}(s,\omega,\omega')$ given by
  \[
    K_{Q_1}(s,\omega,\omega') = (2\pi)^{-1}\int_{\Im\lambda=-\alpha} s^{i\lambda}\bigl(\wh N(P,\lambda)^{-1}\wh N(R_0,\lambda)\bigr)(\omega,\omega')\,\dd\lambda.
  \]
  (The claimed membership of $N(Q_1)$ follows from the residue theorem upon shifting the integration contour.) We now have
  \[
    R_1 := I - P(Q_0+Q_1) \in \Psib^{-\infty,(\cE^++1,\N_0+1,\cE^-)}(M).
  \]
  The improvement of the $\lb_\bop$-index set by $1$ here is a consequence of the definition of $Q_1$ combined with the fact that the b-normal operator at $\lb_\bop$ of the lift of $P$ to the left factor of $M^2_\bop$ is equal to $N(P)$ itself. One can then solve away the error $R_1$ at $\lb_\bop$ to infinite order in an iterative procedure using the (inverse) Mellin transform; this produces $Q_2\in\Psib^{-\infty,(\cE^{+,(0)},\N_0+1,\emptyset)}(M)$ with
  \begin{equation}
  \label{EqAbPxR2}
    R_2 := I - P(Q_0+Q_1+Q_2) \in \Psib^{-\infty,(\emptyset,\N_0+1,\cE^-)}(M).
  \end{equation}
  The desired right parametrix is then $(Q_0+Q_1+Q_2)(I+\tilde R_2)$, where the operator $\tilde R_2\in\Psib^{-\infty,(\emptyset,\N_0+1,\cE^{-,(0)})}(M)$ is an asymptotic sum (at $\ff_\bop$) of $R_2^j$, $j\in\N$. (Here one uses Proposition~\ref{PropAbComp}.)

  When $P$ is invertible, we have
  \begin{equation}
  \label{EqAbPxInverse}
    P^{-1} = Q_R + Q_L R_R + R_L P^{-1}R_R,
  \end{equation}
  with the first two summands in the space~\eqref{EqAbPxInv}; and the third summand lies in the space $\Psi^{-\infty,(\cE^{+,(0)},\cE^{-,(0)})}(M)$, as can be checked by noting that $R_L P^{-1}R_R$ is given by fiber-wise application (along the fibers of the left projection $M^2\to M$) of (the smoothing operator) $R_L$ on $P^{-1}R_R$, with $P^{-1} R_R$ itself expressible as the fiber-wise application of $P^{-1}$ to the Schwartz kernel of $R_R$.
\end{proof}

\begin{rmk}[Systematic procedure to solve away errors at the left boundary]
\label{RmkAbPxlb}
  In this proof, solving away the error $R_1$ at the left boundary (i.e.\ the construction of $Q_2$) is accomplished by lifting $P$ to the left factor of $M^2_\bop$ and noting that the b-normal operator of this lift at $\lb_\bop$ can be identified with the b-normal operator of $P$ itself; since the left projection $\lb_\bop\to\pa M$ is a smooth fibration, solving away errors at $\lb_\bop$ thus amounts to constructing (smoothly in families) formal solutions on $M$ with given asymptotics at $\pa M$. An alternative method is to solve away $R_1$ directly using the composition properties of the large b-calculus: applying a parametrix $Q_0'+Q_1'$, defined exactly like $Q_0+Q_1$ but for the weight $\alpha+1$ (or rather $\alpha+1-\eps$ for some small $\eps>0$ to avoid the set $\Re\Specb(P)$), to the error $R_1$ and adding the result to $Q_0+Q_1$ gives a more precise parametrix, with error term vanishing to one order more at $\ff_\bop$ and $\lb_\bop$ than $R_1$ itself. Then, one applies a parametrix $Q_0''+Q_1''$ for the weight $\alpha+2$, and so on. While the errors get successively better at $\lb_\bop$ and $\ff_\bop$, naive accounting of index sets yields insufficient control at $\rb_\bop$ to allow for an asymptotic summation there. Instead, one asymptotically sums this sequence of parametrices only at $\lb_\bop$, and is left with an error $R_2$ which is trivial at $\lb_\bop$ (but which typically has a larger index set at $\ff_\bop$ than in~\eqref{EqAbPxR2}). From there, one solves away the error $R_2$ using an asymptotic Neumann series as before. This alternative method does not require the left boundary to be the total space of a smooth fibration, and thus is rather more robust. We shall use it in the 3b-setting; see Lemma~\ref{LemmaEPImpr} and the discussion following it.
\end{rmk}

\subsection{The scattering algebra}
\label{SsAsc}

We continue to denote by $M$ a compact manifold with non-empty embedded boundary $\pa M$; let $\rho\in\CI(M)$ denote a boundary defining function. Then
\[
  \Vsc(M) := \rho\Vb(M) = \{ \rho V \colon V\in\Vb(M) \}
\]
is the Lie algebra of \emph{scattering vector fields}; we have $[\Vsc(M),\Vsc(M)]\subset\rho\Vsc(M)$. In local coordinates $x\geq 0$, $y\in\R^{n-1}$, the space $\Vsc(M)$ is spanned over $\CI(M)$ by the vector fields $x^2\pa_x$, $x\pa_{y^j}$ ($j=1,\ldots,n-1$), which are a frame of the \emph{scattering tangent bundle} $\Tsc M\to M$; the dual 1-forms $\frac{\dd x}{x^2}$, $\frac{\dd y^j}{x}$ ($j=1,\ldots,n-1$) are a frame of the scattering cotangent bundle $\Tsc^*M\to M$. The corresponding space of scattering differential operators is denoted $\Diffsc^m(M)$, and we put $\Diffsc^{m,r}(M)=\rho^{-r}\Diffsc^m(M)$. The principal symbol map is
\[
  0 \to \Diffsc^{m-1,r-1}(M) \hra \Diffsc^{m,r}(M) \xra{\sigmasc^{m,r}} (\rho^{-r}P^m/\rho^{-(r-1)}P^{m-1})(\Tsc^*M) \to 0.
\]

In order to microlocalize $\Diffsc(M)$, we introduce the \emph{scattering double space}
\[
  M^2_\scop := [M^2_\bop; \pa\diag_\bop].
\]
The lift of $\diag_\bop$ is denoted $\diag_\scop$, and the front face is denoted $\ff_\scop$. Then the space
\[
  \Psisc^s(M)
\]
consists of all operators with Schwartz kernels which are conormal distributions on $M^2_\scop$ of order $s$ at $\diag_\scop$, vanish to infinite order at all boundary faces of $M^2_\scop$ except for $\ff_\scop$, and are valued in the bundle $\pi_R^*\,\Omegasc M$, where $\pi_R\colon M^2_\scop\to M$ is the lifted right projection, and $\Omegasc M\to M$ is the density bundle associated with $\Tsc M\to M$. More generally, we define
\[
  \Psisc^{s,r}(M)
\]
to consist of operators with Schwartz kernels which are conormal (with weight $-r$) down to $\ff_\scop$. (The space $\rho^{-r}\Psisc^s(M)$ is then the subspace of operators whose Schwartz kernels are \emph{classical} conormal down to $\ff_\scop$.) The principal symbol map is
\[
  0 \to \Psisc^{s-1,r-1}(M) \hra \Psisc^{s,r}(M) \xra{\sigmasc^{s,r}} (S^{s,r}/S^{s-1,r-1})(\Tsc^*M) \to 0,
\]
where $S^{s,r}(\Tsc^*M)=\cA^{-s,-r}(\ol{\Tsc^*}M)$ (with weight $-s$, resp.\ $-r$ at fiber infinity, resp.\ at $\ol{\Tsc^*_{\pa M}}M$); it is multiplicative.

A key example of the scattering algebra is $\Psisc^{s,r}(\ol{\R^n})$, which is the same as the space of standard left quantizations $(2\pi)^{-n}\int_{\R^n} e^{i(z-z')\cdot\zeta}a(z,\zeta)\,\dd\zeta$ of functions $a=a(z,\zeta)$ which are symbols in $z$ (of order $r$) and $\zeta$ (of order $s$), i.e.\ bounded by $C\la z\ra^r\la\zeta\ra^s$ together with all derivatives along $\pa_{z^k}$, $z^j\pa_{z^k}$, $\pa_{\zeta_k}$, and $\zeta_j\pa_{\zeta_k}$. In this form, the scattering algebra was introduced by Cordes \cite{CordesScattering} and Schrohe \cite{CordesGramschWidomPsdo}; see \cite{VasyMinicourse} for a detailed exposition. The general definition given here follows Melrose \cite{MelroseEuclideanSpectralTheory}.

Parametrices (with error terms in $\Psisc^{-\infty,-\infty}(M)$, which thus have smooth Schwartz kernels on $M^2$ which vanish to infinite order at all boundary hypersurfaces)---or inverses when they exist---of elliptic elements of $\Psisc^{s,r}(M)$ are elements of $\Psisc^{-s,-r}(M)$. Therefore, there is no need for the development of a `large scattering calculus' here.

An associated scale of weighted scattering Sobolev spaces
\[
  \Hsc^{s,r}(M),
\]
with the underlying $L^2$-space defined with respect to any positive weighted b- or weighted scattering density, can then be defined in the usual manner, and weighted scattering ps.d.o.s are bounded linear maps between such weighted spaces.

\subsubsection{Semiclassical scattering operators}
\label{SssAsch}

We define a semiclassical version of the scattering algebra by mimicking the definitions in~\S\ref{SsAh}; this first appeared in work by Vasy--Zworski \cite{VasyZworskiScl}. Thus, on the space $M_\semi$ from~\eqref{EqAhMh}, we consider the space $\Vsch(M)$ of semiclassical scattering fields, which is the space of all horizontal vector fields in $h\rho\Vb(M)$. In local coordinates $x\geq 0$, $y\in\R^{n-1}$, this space is spanned over $\CI(M_\semi)$ by $h x^2\pa_x$, $h x\pa_{y^j}$ ($j=1,\ldots,n-1$); these vector fields are a frame of the semiclassical scattering tangent bundle
\[
  \Tsch M\to M_\semi,
\]
while the dual 1-forms $\frac{\dd x}{h x^2}$, $\frac{\dd y^j}{h x}$ ($j=1,\ldots,n-1$) are a frame of $\Tsch^*M\to M_\semi$. The corresponding space of differential operators is denoted
\[
  \Diff_{\scop,\semi}^{m,r,b}(M) = h^{-b}\rho^{-r}\Diff_{\scop,\semi}^m(M),
\]
and since $[\Vsch(M),\Vsch(M)]\subset h\rho\Vsch(M)$, the principal symbol map is
\begin{align*}
  0 \to &\Diff_{\scop,\semi}^{m-1,r-1,b-1}(M) \hra \Diff_{\scop,\semi}^{m,r,b}(M) \\
    &\qquad \xra{\sigmasch^{m,r,b}} (h^{-b}\rho^{-r}P^m/h^{-(b-1)}\rho^{-(r-1)}P^{m-1})(\Tsch^*M) \to 0.
\end{align*}
For $s,r,b\in\R$, the space
\[
  \Psisch^{s,r,b}(M)
\]
of \emph{semiclassical scattering ps.d.o.s} consists of suitable smooth (in $h\in(0,1)$) families of elements of $\Psisc^{s,r}(M)$ whose Schwartz kernels are distributions on
\[
  M^2_{\scop,\semi} := \bigl[ [0,1)_h\times M^2_\scop; \{0\}\times\diag_\scop \bigr]
\]
which are conormal (of order $s-\frac14$) to $\diag_{\scop,\semi}$ (the lift of $[0,1)_h\times\diag_\scop$) and conormal of order $-r$, resp.\ $-b$ at the lift of $[0,1)_h\times\ff_\scop$, resp.\ $\{0\}\times\diag_\scop$, which vanish to infinite order at all other boundary hypersurfaces of $M^2_{\scop,\semi}$, and which are valued in the lift $\pi_R^*\,\Omegasch M$ of the semiclassical scattering density bundle $\Omegasch M\to M$ along the lift $\pi_R\colon M^2_{\scop,\semi}\to M_\semi$ of the right projection $(h,z,z')\mapsto (h,z')$. The principal symbol map is now
\begin{align*}
  0 \to &\Psi_{\scop,\semi}^{s-1,r-1,b-1}(M) \hra \Psi_{\scop,\semi}^{s,r,b}(M) \\
    &\qquad \xra{\sigmasch^{m,r,b}} (S^{s,r,b}/S^{s-1,r-1,b-1})(\Tsch^*M) \to 0.
\end{align*}

The associated scale of Sobolev spaces is denoted
\[
  H_{\scop,h}^{s,r,b}(M);
\]
as a set, this is equal to $\Hsc^{s,r}(M)$, but the $h$-dependent norm is given by testing with a fixed elliptic operator $A\in\Psisch^{s,r,b}(M)$ analogously to~\eqref{EqAhSobNorm} for $s\geq 0$, and is defined by duality for $s<0$. For example, an explicit expression for this norm in local coordinates $x\geq 0$, $y\in\R^{n-1}$ in the case $s=1$ is
\[
  \| u \|_{H_{\scop,h}^{1,r,b}}^2 = \| x^{-r}h^{-b}u \|_{L^2}^2 + \| x^{-r}h^{-b} x^2 D_x u \|_{L^2}^2 + \sum_{j=1}^{n-1} \| x^{-r}h^{-b} x D_{y^j} u \|_{L^2}^2,
\]
where $L^2=L^2(M)$ is defined with respect to any fixed ($h$-independent) weighted b- or scattering density on $M$. Moreover, any $P=(P_h)\in\Psisch^{s,r,b}(M)$ defines a uniformly bounded (in $h\in(0,1)$) family of linear operators $P_h\colon H_{\scop,h}^{s',r',b'}(M)\to H_{\scop,h}^{s'-s,r'-r,b'-b}(M)$ for any $s',r',b'\in\R$.

\begin{lemma}[Inverse of elliptic semiclassical scattering operators]
\label{LemmaAschInv}
  If $P=(P_h)_{h\in(0,1)}\in\Psisch^{s,r,b}(M)$ is elliptic, then there exists $h_0>0$ so that for $0<h<h_0$ and for all $s',r'\in\R$, the operator $P_h\colon\Hsc^{s',r'}(M)\to\Hsc^{s'-s,r'-r}(M)$ is invertible. Moreover, $P^{-1}=(P_h^{-1})_{h\in(0,h_0)}\in\Psisch^{-s,-r,-b}(M)$.
\end{lemma}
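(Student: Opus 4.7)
My plan is the standard elliptic parametrix construction, adapted to the semiclassical scattering calculus, followed by a Neumann series argument to upgrade the parametrix to a true inverse.

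\textbf{Step 1: Symbolic parametrix.} Ellipticity of $P$ means its principal symbol $\sigmasch^{s,r,b}(P)$ has a representative bounded below by $c\,h^{-b}\rho^{-r}\la\xi\ra^s$. I construct a symbolic right parametrix $Q_0\in\Psisch^{-s,-r,-b}(M)$ by the usual iterative procedure: take a first approximation by quantizing an inverse of the principal symbol, then improve it order by order by correcting the error in each of the three filtrations ($s$, $r$, $b$ simultaneously). Borel summation (or asymptotic summation in the calculus $\Psisch^{\bullet,\bullet,\bullet}(M)$) then yields $Q_0$ with
\[
  P Q_0 = I - R_R,\qquad Q_0 P = I - R_L,
\]
where $R_R,R_L\in\Psisch^{-\infty,-\infty,-\infty}(M)$, i.e.\ their Schwartz kernels vanish to infinite order at every boundary hypersurface of $M^2_{\scop,\semi}$ (including at $h=0$).

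\textbf{Step 2: Residual operators are $O(h^\infty)$.} A kernel in $\Psisch^{-\infty,-\infty,-\infty}(M)$ is, in particular, a smooth kernel on $M^2$ whose values (together with all derivatives) vanish to infinite order at $h=0$ and at $\pa M\times M\cup M\times\pa M$. Consequently, for any $s',r'\in\R$ and any $N\in\N$, the family $R_R$ defines a bounded operator $H_{\scop,h}^{s',r',b'}(M)\to H_{\scop,h}^{s'+N,r'+N,b'+N}(M)$ with operator norm $O(h^N)$ (and similarly for $R_L$); this follows from Schur-type estimates on the kernel together with the explicit form of the semiclassical scattering Sobolev norms.

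\textbf{Step 3: Neumann inversion in the residual ideal.} Pick $h_0>0$ so small that $\|R_{R,h}\|_{\Hsc^{s',r'}\to\Hsc^{s',r'}}\leq\tfrac12$ for $h<h_0$ (uniformly in $s',r'$ up to any fixed level; Step 2 provides this). Then $I-R_{R,h}$ is invertible on $\Hsc^{s',r'}$ for $h<h_0$, with inverse given by the Neumann series $\sum_{k\geq 0}R_R^k$. The residual ideal $\Psisch^{-\infty,-\infty,-\infty}(M)$ is closed under composition (this is trivial here because residual kernels are literally smooth and rapidly decaying in $h$, so the composition is an ordinary integral in $M$ that decays faster than any power of $h$); and since each $R_R^k\in h^k\Psisch^{-\infty,-\infty,0}(M)\subset \Psisch^{-\infty,-\infty,-\infty}(M)$, the sum $\sum_{k\geq 1}R_R^k$ can be asymptotically summed in $h$ within the residual ideal. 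This produces $S_R\in\Psisch^{-\infty,-\infty,-\infty}(M)$ (defined for $h<h_0$) with $(I-R_R)(I+S_R)=I$; an analogous construction gives $S_L$ with $(I+S_L)(I-R_L)=I$.

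\textbf{Step 4: Conclusion.} From $P(Q_0(I+S_R))=I$ and $((I+S_L)Q_0)P=I$, it follows that $P_h$ is bijective on each $\Hsc^{s',r'}(M)$ for $h<h_0$, and the two sides agree, so
\[
  P_h^{-1} = Q_0+Q_0 S_R \in \Psisch^{-s,-r,-b}(M),
\]
as desired. The only genuinely non-routine point is Step~3, namely checking that the Neumann series can be summed \emph{within} the calculus rather than merely as an operator family; this is precisely where the ``$-\infty$ in $h$'' aspect of the residual ideal, combined with asymptotic summation in the semiclassical order, is essential.
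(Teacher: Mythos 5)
Your proposal is correct and follows essentially the same route as the paper: a symbolic parametrix with residual error in $\Psisch^{-\infty,-\infty,-\infty}(M)$, smallness of that error on $L^2$ for small $h$, and a Neumann series inversion staying within the residual ideal. The only cosmetic difference is that the paper simply notes the Neumann series converges with limit again in the residual class (since residual kernels compose to residual kernels and all seminorms decay geometrically), whereas you invoke asymptotic summation; both work.
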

\begin{proof}
  For a symbolic parametrix $Q\in\Psisch^{-s,-r,-b}(M)$, we have $P Q=I-R$ where the Schwartz kernel of $R=(R_h)_{h\in(0,1)}\in\Psisch^{-\infty,-\infty,-\infty}(M)$ is a smooth right density on $[0,1)\times M^2$ that vanishes to infinite order at $h=0$ and at $[0,1)\times\pa(M^2)$. Thus, $R_h$ has small operator norm on $L^2(M)$ for small $h>0$. Therefore, $I-R_h$ is invertible for sufficiently small $h>0$ by a Neumann series, with $(I-R)^{-1}=I+\tilde R$, $\tilde R\in\Psisch^{-\infty,-\infty,-\infty}(M)$. Therefore, $P^{-1}=Q(I+\tilde R)$.
\end{proof}

\subsection{The scattering-b-transition algebra}
\label{SsAscbt}

We next discuss a pseudodifferential algebra and corresponding large calculus which already appeared in~\cite{GuillarmouHassellResI}, though we will use a slightly more descriptive (albeit more cumbersome) notation following \cite{HintzKdSMS}; the underlying double space was introduced in the unpublished note \cite{MelroseSaBarretoLow}. In \cite{GuillarmouHassellResI}, and later in more general contexts in \cite{GuillarmouHassellResII}, Guillarmou and Hassell construct the low energy resolvent for Laplacians associated with scattering metrics in this calculus.

Let $M$ denote a compact $n$-dimensional manifold with embedded boundary $\pa M\neq\emptyset$. Let $\sigma_0>0$, and denote $I=[0,\sigma_0)$ or $I=(-\sigma_0,0]$; for the sake of definiteness, we focus on the former case. Define the resolved space
\begin{equation}
\label{EqAScbSingle}
  M_\scbtop := [I\times M; \{0\}\times\pa M],
\end{equation}
which is equipped with a smooth map $\sigma\colon M_\scbtop\to I$; we denote its boundary hypersurfaces by $\scface$ (the lift of $I\times\pa M$), $\tface$ (the front face), and $\zface$ (the lift of $\{0\}\times M$), and we write $\rho_H\in\CI(M_\scbtop)$ for a defining function of $H$. (Thus, while $M_\scbtop$ depends on $I$, we omit the interval $I$ from the notation.) Consider then the Lie algebra
\[
  \cV_\scbtop(M) := \{ V\in\rho_\scface\Vb(M_\scbtop)\colon V\ \text{is tangent to the leaves of}\ \sigma \}.
\]
We call this the space of scattering-b-transition vector fields; much as in semiclassical settings, an element of $\cV_\scbtop(M)$ is thus a \emph{family} of vector fields on $M$. An element $V\in\cV_\scbtop(M)$ can be restricted to a scattering vector field at $\sigma\neq 0$, to a b-vector field at the lift $\zface$ of $\sigma=0$, and to a scattering-b vector field
\[
  V|_\tface\in\cV_{\scop,\bop}(\tface)=\rho_\scface\Vb(\tface)
\]
on $\tface\cong\ol{{}^+N}\pa M$, respectively, with scattering behavior at $\tface\cap\scface$ and with b-behavior at $\tface\cap\zface$. There is a natural vector bundle $\Tscbt M\to M_\scbtop$, equipped with a bundle map to $\Tb M_\scbtop$, so that $\cV_\scbtop(M)=\CI(M_\scbtop;\Tscbt M)$; the corresponding dual bundle
\[ 
  \Tscbt^*M \to M_\scbtop
\]
is the sc-b-transition cotangent bundle. The corresponding spaces of differential operators are denoted
\[
  \Diff_\scbtop^m(M),\qquad
  \Diff_\scbtop^{m,r,l,b}(M) = \rho_\scface^{-r}\rho_\tface^{-l}\rho_\zface^{-b}\Diff_\scbtop^m(M),
\]
and the principal symbol map is
\begin{align*}
  0 &\to \Diff_\scbtop^{m-1,r-1,l,b}(M) \hra \Diff_\scbtop^{m,r,l,b}(M) \\
    &\qquad\xra{\sigmascbt^{m,r,l,b}} ( \rho_\scface^{-r}\rho_\tface^{-l}\rho_\zface^{-b} P^m / \rho_\scface^{-(r-1)}\rho_\tface^{-l}\rho_\zface^{-b} P^{m-1})(\Tscbt^*M) \to 0.
\end{align*}
The $\tface$- and $\zface$-normal operator maps fit into the short exact sequences
\begin{alignat*}{4}
  0 &\to \Diff_\scbtop^{m,r,-1,b}(M) &&\hra \Diff_\scbtop^{m,r,0,b}(M) &&\xra{N_\tface} \Diff_{\scop,\bop}^{m,r,b}(\tface) &&\to 0, \\
  0 &\to \Diff_\scbtop^{m,r,l,-1}(M) &&\hra \Diff_\scbtop^{m,r,l,0}(M) &&\xra{N_\zface} \Diffb^{m,l}(M) &&\to 0.
\end{alignat*}

In local coordinates $x\geq 0$, $y\in\R^{n-1}$ near a boundary point of $M$, we can take $\rho_\scface=\frac{x}{x+|\sigma|}$, $\rho_\tface=x+|\sigma|$, and $\rho_\zface=\frac{|\sigma|}{x+|\sigma|}$. Then $\Tscbt M\to M_\scbtop$ has as a local frame the vector fields
\begin{equation}
\label{EqAscbtLocFrame}
  \frac{x}{x+|\sigma|}x\pa_x,\quad
  \frac{x}{x+|\sigma|}\pa_{y^j}\ (j=1,\ldots,n-1).
\end{equation}
Their $\tface$-normal operators, in the coordinates $\hat x=\frac{x}{|\hat\sigma|}$ and $y$, are $\frac{\hat x}{\hat x+1}\hat x\pa_{\hat x}$ and $\frac{\hat x}{\hat x+1}\pa_{y^j}$, while the $\zface$-normal operators are $x\pa_x$ and $\pa_{y^j}$, respectively.

The double space carrying Schwartz kernels of elements of $\cV_\scbtop(M)$ is defined with reference to the b-double space $M^2_\bop=[M^2;(\pa M)^2]$ of $M$ via
\begin{equation}
\label{EqAScbDouble}
  M_\scbtop^2 := \bigl[ I \times M^2_\bop; \{0\}\times\ff_\bop; \{0\}\times\lb_\bop, \{0\}\times\rb_\bop, I\times\pa\diag_\bop \bigr],
\end{equation}
where $\lb_\bop,\rb_\bop,\ff_\bop,\diag_\bop\subset M^2_\bop$ denotes the left boundary, right boundary, front face, and lifted diagonal, respectively. We denote its boundary hypersurfaces as follows: $\scface_\scbtop$, resp.\ $\bface_\scbtop$ is the lift of $I\times\pa\diag_\bop$, resp.\ $I\times\ff_\bop$, while $\tface_\scbtop$, resp.\ $\zface_\scbtop$ is the lift of $\{0\}\times\ff_\bop$, resp.\ $\{0\}\times M_\bop^2$; and $\lb_\scbtop$ and $\rb_\scbtop$, resp.\ $\tlb_\scbtop$ and $\trb_\scbtop$, are the lifts of $I\times\lb_\bop$ and $I\times\rb_\bop$, resp.\ $\{0\}\times\lb_\bop$ and $\{0\}\times\rb_\bop$. Finally, we denote by $\diag_\scbtop$ the lift of $I\times\diag_\bop$. See Figure~\ref{FigAScbDouble}.

\begin{figure}[!ht]
\centering
\includegraphics{FigAscbt}
\caption{The sc-b-transition double space $M_\scbtop^2$.}
\label{FigAScbDouble}
\end{figure}

Lifts of elements of $\cV_\scbtop(M)$ along the lift $\pi_R$ of the right projection $I\times M\times M\ni(\sigma,x,x')\mapsto(\sigma,x)$ are smooth vector fields on $M_\scbtop^2$; the lift of $\cV_\scbtop(M)$ is transversal to $\diag_\scbtop$. Thus, $N^*\diag_\scbtop \cong \Tscbt^*M$. Denoting by ${}^\scbtop\Omega M$ the density bundle associated with $\Tscbt M$, we put
\begin{align*}
  \Psi^s_\scbtop(M) &:= \bigl\{ \kappa\in I^{s-\frac14}\bigl(M_\scbtop^2,\diag_\scbtop;\pi_R^*({}^\scbtop\Omega M)\bigr) \colon \\
    &\hspace{4em} \kappa\equiv 0\ \text{at}\ \bface_\scbtop\cup\lb_\scbtop\cup\rb_\scbtop\cup\tlb_\scbtop\cup\trb_\scbtop \bigr\}.
\end{align*}
Here, we require $\kappa$ to be merely conormal down to $\scface_\scbtop$, but smooth down to $\tface_\scbtop$ and $\zface_\scbtop$ (unless otherwise stated). We also define weighted versions
\[
  \Psiscbt^{s,r,l,b}(M) = \rho_{\scface_\scbtop}^{-r}\rho_{\tface_\scbtop}^{-l}\rho_{\zface_\scbtop}^{-b}\Psiscbt^m(M).
\]
The principal symbol map is now
\begin{align*}
  0 &\to \Psiscbt^{s-1,r-1,l,b}(M) \hra \Psiscbt^{s,r,l,b}(M) \xra{\sigmascbt^{s,r,l,b}} ( S^{s,r,l,b} / S^{s-1,r-1,l,b})(\Tscbt^*M) \to 0,
\end{align*}
where $S^{s,r,l,b}(\Tscbt^*M)=\rho_{\tface_\scbtop}^{-l}\rho_{\zface_\scbtop}^{-b}S^{s,r}(\Tscbt^*M)$, with $S^{s,r}(\Tscbt^*M)$ denoting the space of symbols of order $s$ at fiber infinity and $r$ at the phase space over $\scface$ which are smooth down to $\tface\cup\zface$.

\begin{rmk}[Notation]
\label{RmkAScbNot}
  The spaces $M_\scbtop^2$ and $\Psiscbt(M)$ are denoted $M^2_{k,sc}$ and $\Psi_k(M)$ in~\cite{GuillarmouHassellResI}, respectively.
\end{rmk}

We may regard an element $A\in\Psiscbt^{s,r,l,b}(M)$ as a parameterized family $A(\sigma)$ of ps.d.o.s with appropriate behavior as $\sigma\to 0$; and for $l=0=b$, we have
\[
  A(0) \in \Psib^s(M);\qquad
  \sigma\neq 0 \implies A(\sigma) \in \Psisc^s(M).
\]
Moreover, the restriction of the Schwartz kernel of $A\in\Psiscbt^{s,r,0,0}(M)$ to $\tface_\scbtop$ is an element
\[
  N_\tface(A) \in \Psi_{\scop,\bop}^{s,r,0}(\ol{{}^+N}\pa M),
\]
i.e.\ a scattering ps.d.o.\ (with weight $r$ in the base at the zero section of $\ol{{}^+N}\pa M$) near the zero section, and a b-ps.d.o.\ near fiber infinity of $\ol{{}^+N}\pa M$. We have short exact sequences
\begin{equation}
\label{EqAscbtNtfzf}
\begin{alignedat}{4}
  0 &\to \Psiscbt^{s,r,-1,b}(M) &&\hra \Psiscbt^{s,r,0,b}(M) &&\xra{N_\tface} \Psi_{\scop,\bop}^{s,r,b}(\tface) &&\to 0, \\
  0 &\to \Psiscbt^{s,r,l,-1}(M) &&\hra \Psiscbt^{s,r,l,0}(M) &&\xra{N_\zface} \Psib^{s,l}(M) &&\to 0,
\end{alignedat}
\end{equation}
which are consequences of the natural diffeomorphisms $\zface_\scbtop\cong M^2_\bop$ and $\tface_\scbtop\cong(\ol{{}^+N}\pa M)^2_{\scop,\bop}$ (the blow-up of the b-double space of $\ol{{}^+N}\pa M$ at the intersection of the b-diagonal with the front face corresponding to the zero section).

For $P\in\Psiscbt^{s,r,0,0}(M)$, the operators $N_\tface(P)$ and $N_\zface(P)$ themselves have b-normal operators
\[
  N_{\zface\cap\tface}(N_\tface(P))\in\Psi_{\bop,I}^s({}^+N_\tface(\zface\cap\tface)),
\]
where ${}^+N_\tface(\zface\cap\tface)=T_{\zface\cap\tface}\tface/T(\zface\cap\tface)$ is the normal bundle of $\zface\cap\tface$ inside of $\tface$, and $N_{\pa M}(N_\zface(P))\in\Psi_{\bop,I}^s({}^+N\pa M)$. These two normal operators carry the same information:

\begin{lemma}[b-normal operator of $N_\tface(P)$]
\label{LemmaAscbtNorm}
  Let $P\in\Psiscbt^{s,r,0,0}(M)$. Using the above notation, denote by
  \[
    \psi\colon\ol{{}^+N}\pa M\to\ol{{}^+N_\tface}(\zface\cap\tface)
  \]
  the bundle isomorphism (homogeneous of degree $-1$) given by Lemma~\usref{LemmaAId} with respect to the joint defining function $|\sigma|$ of $\zface\cup\tface$. Then $\psi^*(N_{\zface\cap\tface}(N_\tface(P)))=N_{\pa M}(N_\zface(P))$.
\end{lemma}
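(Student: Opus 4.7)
\textit{Strategy.} Both iterated normal operators extract the same fundamental datum from $K_P$: the restriction of the Schwartz kernel of $P$ on $M_\scbtop^2$ to the codimension-two corner $\tface_\scbtop\cap\zface_\scbtop$, extended by invariance under a one-parameter dilation action. The two relevant dilations---the b-normal dilation of $\zface\cap\tface$ inside $\tface$, and the b-normal dilation of $\pa M$ inside $\zface\cong M$---are intertwined precisely by the bundle map $\psi$ of Lemma~\ref{LemmaAId}, applied with the joint defining function $|\sigma|$ of $\zface\cup\tface$. The lemma is therefore a compatibility of these two invariant extensions.

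\textit{Reduction and computation on a generator.} By linearity and multiplicativity of the four normal operator maps $N_\tface$, $N_\zface$, $N_{\zface\cap\tface}$, $N_{\pa M}$, it suffices to check the identity for $V$ ranging over a local $\CI(M_\scbtop)$-generating set of $\cV_\scbtop(M)$. Work in a chart on $M_\scbtop$ with coordinates $(\hat\sigma,x,y)$ where $\hat\sigma=\sigma/x$, covering a neighborhood of a point of $\pa M$ and omitting $\scface$; in this chart $\tface=\{x=0\}$, $\zface=\{\hat\sigma=0\}$, $\zface\cap\tface=\{x=\hat\sigma=0\}$, and $\rho_\scface=(1+\hat\sigma)^{-1}$. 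The two conditions $V\in\rho_\scface\Vb(M_\scbtop)$ and $V\sigma=0$ with $\sigma=\hat\sigma x$ yield the local frame $(1+\hat\sigma)^{-1}\pa_{y^j}$ for $j=1,\ldots,n-1$ together with
\[
  V_0 := (1+\hat\sigma)^{-1}(x\pa_x - \hat\sigma\pa_{\hat\sigma}).
\]
For $V_0$, the $x\pa_x$ summand vanishes at $\tface$, so $N_\tface(V_0)=-(1+\hat\sigma)^{-1}\hat\sigma\pa_{\hat\sigma}$; its b-normal operator at $\zface\cap\tface$ is $-\hat\sigma\pa_{\hat\sigma}$ on ${}^+N_\tface(\zface\cap\tface)$ with linear fiber coordinate $\hat\sigma$. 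Symmetrically, the $\hat\sigma\pa_{\hat\sigma}$ summand vanishes at $\zface$, so $N_\zface(V_0)=x\pa_x$ on $\zface\cong M$ and $N_{\pa M}(N_\zface(V_0))=x\pa_x$ on ${}^+N\pa M$ with linear fiber coordinate $x$. Applying Lemma~\ref{LemmaAId} with $|\sigma|=\hat\sigma x$ and defining function $x$ of $\tface$ identifies $\psi$, under the intrinsic-extrinsic isomorphism coming from transversality of $\zface$ and $\tface$, as the fiber-wise map $s\pa_x\mapsto(1/s)\pa_{\hat\sigma}$. A short change-of-variable calculation for the degree-$(-1)$ diffeomorphism $\psi(s)=1/s$ yields $\psi^*(-\hat\sigma\pa_{\hat\sigma})=x\pa_x$, confirming the identity for $V_0$. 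For the tangential generators $(1+\hat\sigma)^{-1}\pa_{y^j}$ both iterated operators equal $\pa_{y^j}$, and compatibility with $\psi$ is automatic since $\psi$ covers the identity on the base $\zface\cap\tface\cong\pa M$.

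\textit{Extension to pseudodifferential operators and main obstacle.} For a general $P\in\Psiscbt^{s,r,0,0}(M)$ the same analysis applies at the level of Schwartz kernels: both $K_{N_{\zface\cap\tface}(N_\tface(P))}$ and $K_{N_{\pa M}(N_\zface(P))}$ are the dilation-invariant extensions of the restriction of $K_P$ to the lift of $\tface_\scbtop\cap\zface_\scbtop$ in the respective blown-up double spaces, and $\psi$-compatibility then reduces to the verified identity on generators once one notes that the iterated normal operator maps commute with restriction to this common corner. The main subtlety is notational and conceptual bookkeeping: ${}^+N\pa M$ and ${}^+N_\tface(\zface\cap\tface)$ are intrinsic normal bundles of codimension-one submanifolds inside codimension-one boundary hypersurfaces, whereas Lemma~\ref{LemmaAId} is phrased using the extrinsic normal bundles of $\zface$ and $\tface$ in $M_\scbtop$ restricted to $\zface\cap\tface$; the transversality of $\zface$ and $\tface$ along $\zface\cap\tface$ provides the natural isomorphisms $T_{\zface\cap\tface}\zface/T(\zface\cap\tface)\cong T_{\zface\cap\tface}M_\scbtop/T_{\zface\cap\tface}\tface$ (and the one obtained by exchanging $\zface$ and $\tface$) that reconcile the two viewpoints. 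The degree-$(-1)$ homogeneity of $\psi$ ultimately reflects the reciprocal relationship $\hat\sigma\cdot x=\sigma$ between defining functions of $\zface$ and $\tface$.
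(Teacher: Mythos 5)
Your proof is correct and follows essentially the same route as the paper's: the identical local-coordinate computation with the generator $x\pa_x-\hat\sigma\pa_{\hat\sigma}$ (whose two summands restrict to the two faces) identifies $\psi$ as the fiber inversion $\hat\sigma=x^{-1}$, and the pseudodifferential case is handled, as in the paper, by noting that both iterated normal operators are dilation-invariant extensions of the restriction of $K_P$ to $\tface_\scbtop\cap\zface_\scbtop$. Your explicit treatment of the intrinsic/extrinsic normal-bundle identification via transversality is a point the paper leaves implicit, while the paper instead makes explicit the left/right boundary swap on the b-double spaces that forces the homogeneity degree $-1$; both observations amount to the same reciprocity $\hat\sigma\cdot x=\sigma$.
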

\begin{proof}
  Fix local coordinates $x\geq 0$, $y\in\R^{n-1}$ near a boundary point of $M$; then $x,y,\hat\sigma:=\frac{\sigma}{x}$ are local coordinates near $\zface\subset M_\scbtop$, and using (the differentials of) $x$ and $\hat\sigma$ to trivialize $N\pa M$ and $N(\zface\cap\tface)$, respectively, the isomorphism $\psi$ is given by $(y,x)\mapsto(y,\hat\sigma)=(y,x^{-1})$. For differential operators $P$, the claim then follows from the fact that $\scbtop$-vector fields are spanned (over the space of smooth functions of $(x,y,\hat\sigma)$) by $x\pa_x-\hat\sigma\pa_{\hat\sigma}$ (which is the expression for the $\sigma$-independent lift of $x\pa_x\in\Vb(M)$ to $M_\scbtop$) and $\pa_{y^j}$, $j=1,\ldots,n-1$; but the $\pa M$-normal operator of $x\pa_x-\hat\sigma\pa_{\hat\sigma}$ is $x\pa_x$, and the $\zface\cap\tface$-normal operator of its $\tface$-normal operator $-\hat\sigma\pa_{\hat\sigma}$ is $-\hat\sigma\pa_{\hat\sigma}$, which indeed equals $x\pa_x$ upon identifying $\hat\sigma=x^{-1}$.

  For general pseudodifferential operators $P$, we note that the normal operators of $N_\tface(P)$ and $N_\zface(P)$ in question are both dilation-invariant extensions of the restriction of the Schwartz kernel of $P$ to $\tface_\scbtop\cap\zface_\scbtop$; but while $\tlb_\scbtop$ is, from the perspective of $\zface_\scbtop$ (and thus from the perspective of $N_{\pa M}(N_\zface(P))$) the left boundary of the b-double space, it is the \emph{right} boundary of the b-double space of $\tface$ (note that the scattering behavior of the $\tface$-normal operator takes place at the other end $\tface\cap\scface$, which is irrelevant for present purposes). This explains why the identification of the two normal operators involves a homogeneous degree $-1$ map. The fact that $\psi$ is the correct such map is easily checked in local coordinates; we leave the details to the reader.
\end{proof}

Inverses (if they exist) of elliptic elements of $\Psiscbt^m(M)$ lie in a large calculus:

\begin{definition}[Large $\scop$-$\bop$-transition calculus]
\label{DefAscbtLarge}
  Let $\cE=(\cE_{\lb_0},\cE_{\rb_0},\cE_\tface,\cE_\zface)$ be a collection of index sets. Then
  \[
    \Psiscbt^{-\infty,\cE}(M)
  \]
  consists of all operators whose Schwartz kernels are polyhomogeneous sections of the bundle $\pi_R^*({}^\scbtop\Omega M)$ with index set $\cE_{\lb_0}$, $\cE_{\rb_0}$, $\cE_\tface$, $\cE_\zface$ at the boundary hypersurfaces $\tlb_\scbtop$, $\trb_\scbtop$, $\tface_\scbtop$, $\zface_\scbtop\subset M_\scbtop^2$, and with index set $\emptyset$ at the remaining boundary hypersurfaces $\bface_\scbtop$, $\lb_\scbtop$, $\rb_\scbtop$, $\scface_\scbtop$ of $M_\scbtop^2$.
\end{definition}

The following composition result is proved in \cite[\S6]{GuillarmouHassellResI}:

\begin{lemma}[Composition in the large $\scop$-$\bop$-transition calculus]
\label{LemmaAscbtComp}
  Let $A\in\Psiscbt^{-\infty,\cE}(M)$ and $B\in\Psiscbt^{-\infty,\cF}(M)$ where $\cE=(\cE_{\lb_0},\cE_{\rb_0},\cE_\tface,\cE_\zface)$ and $\cF=(\cF_{\lb_0},\cF_{\rb_0},\cF_\tface,\cF_\zface)$. Then $A\circ B\in\Psiscbt^{-\infty,\cG}(M)$, where $\cG=(\cG_{\lb_0},\cG_{\rb_0},\cG_\tface,\cG_\zface)$ with
  \begin{align*}
    \cG_{\lb_0} &= (\cE_{\lb_0}+\cF_\zface) \extcup(\cE_\tface+\cF_{\lb_0}), \\
    \cG_{\rb_0} &= (\cE_\zface+\cF_{\rb_0}) \extcup(\cE_{\rb_0}+\cF_\tface), \\
    \cG_\tface &= (\cE_{\lb_0}+\cF_{\rb_0}) \extcup(\cE_\tface+\cF_\tface), \\
    \cG_\zface &= (\cE_\zface+\cF_\zface) \extcup(\cE_{\rb_0}+\cF_{\lb_0}).
  \end{align*}
  Moreover, $\Psiscbt^{\infty,\cE}(M)$ is a module over $\Psiscbt^m(M)$ for any $m\in\R$.
\end{lemma}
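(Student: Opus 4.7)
The plan is to mimic the triple-space proof of Proposition~\ref{PropAbComp}: construct an $\scbtop$-triple space $M_\scbtop^3$, lift the Schwartz kernels of $A$ and $B$ along two stretched projections to polyhomogeneous distributions on $M_\scbtop^3$, multiply them (with densities handled via~\eqref{EqADensity}), and push the product forward along a third projection which I verify is a b-fibration. The index sets of the composition are then read off from Melrose's pullback and pushforward theorems \cite{MelrosePushfwd,MelroseDiffOnMwc}. Concretely, I would start from $I\times M_\bop^3$ with the b-triple space of~\eqref{EqAbCompTriple}, then perform the $\scbtop$-analogue of the blow-ups in~\eqref{EqAScbDouble}: first blow up $\{0\}$ times the maximal b-corner and the three intermediate b-corners of $M_\bop^3$, then $\{0\}$ times the three boundary hypersurfaces where only one $M$-factor meets $\pa M$, and finally $I$ times the three partial diagonals required to separate the stretched diagonals lifted from $M_\scbtop^2$. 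The order of these blow-ups is dictated by the commutation criteria recalled after~\eqref{EqAItBlowup} and by the requirement that the three natural projections $\pi_{ij}\colon M_\scbtop^3\to M_\scbtop^2$ (obtained by forgetting one factor of $M$) extend to b-fibrations.

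Next I write $A\circ B=(\pi_{13})_*\bigl(\pi_{12}^*A\cdot\pi_{23}^*B\bigr)$, with the density factors tracked via~\eqref{EqADensity}, which contributes codimension-dependent weight shifts at each blown-up face. For each boundary hypersurface $G$ of $M_\scbtop^3$ and each of the two pullbacks, I tabulate the image hypersurface of $G$ in $M_\scbtop^2$ (if the image is interior, the pullback is smooth at $G$ and contributes trivially). The index set at $G$ of $\pi_{12}^*A\cdot\pi_{23}^*B$ is then the sum of the index sets of $A$ and $B$ at the images; polyhomogeneity is preserved under pullback along b-fibrations. Applying the pushforward theorem to $\pi_{13}$, which is a b-fibration by construction, gives an index set at each boundary hypersurface $H$ of $M_\scbtop^2$ equal to the extended union $\extcup$ of the product index sets over the hypersurfaces $G\in(\pi_{13})^{-1}(H)$, shifted by the excess of b-orders computed from~\eqref{EqADensity}.

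The main obstacle is the combinatorial bookkeeping: one must check that for each of the four target hypersurfaces $\tlb_\scbtop, \trb_\scbtop, \tface_\scbtop, \zface_\scbtop$, exactly two hypersurfaces of $M_\scbtop^3$ contribute nontrivially under $\pi_{13}$, and that they produce the two summands in $\cG_\bullet$. For instance, at $\tface_\scbtop$ the contributions come from (i)~the lifted triple $\tface$-face, where all three $M$-factors meet $\pa M$ at the same $\scbtop$-scale (giving $\cE_\tface+\cF_\tface$), and (ii)~the face where the middle factor approaches $\pa M$ at a slower rate than the outer two, so that from the perspective of $A$ and $B$ the middle variable sits in the interior of $\zface_\scbtop$ while the outer pair is at $\tlb_\scbtop$ and $\trb_\scbtop$ respectively (giving $\cE_{\lb_0}+\cF_{\rb_0}$); the analogous identification at $\zface_\scbtop$ yields $(\cE_\zface+\cF_\zface)\extcup(\cE_{\rb_0}+\cF_{\lb_0})$, and the two asymmetric faces $\tlb_\scbtop,\trb_\scbtop$ receive one contribution from each of the two adjacent configurations. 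The module property over $\Psiscbt^m(M)$ follows from the same argument applied to a kernel which is conormal (rather than polyhomogeneous) at $\diag_\scbtop$ but smoothing away from it: near $\diag_\scbtop$ this is the standard symbolic composition, and the conormal factor contributes only to the diagonal singularity, leaving the index sets $\cE$ of the other factor unchanged away from $\diag_\scbtop$.
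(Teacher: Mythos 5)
Your proposal is correct in outline, but it is worth noting that the paper does not actually prove this lemma by a triple-space argument: its ``proof'' consists of citing \cite[\S6]{GuillarmouHassellResI} and checking that the notational and density conventions translate (the face $\tface$ is called $\bface_0$ there, and the reference works with b-half-densities, which agree with $\scop$-$\bop$-half-densities near the faces carrying nontrivial index sets). What you have written is essentially a reconstruction of the argument carried out in that reference. Your combinatorial identifications are the substantive content and they check out: at $\tface_\scbtop$ of the composite the two contributing configurations are $x\sim x'\sim x''\sim|\sigma|$ (giving $\cE_\tface+\cF_\tface$) and $x,x''\sim|\sigma|\ll x'$, where the kernel of $A$ sits at $\tlb_\scbtop$ and that of $B$ at $\trb_\scbtop$ (giving $\cE_{\lb_0}+\cF_{\rb_0}$); the $\zface$ and $\lb_0,\rb_0$ cases are analogous, and the absence of an integrability hypothesis (in contrast to Proposition~\ref{PropAbComp}) is consistent because the kernels vanish to infinite order at $\lb_\scbtop,\rb_\scbtop,\bface_\scbtop,\scface_\scbtop$, so any would-be divergence of the $x'$-integral is captured as a (possibly negative-order) term in $\cG_\zface$ or $\cG_\tface$ rather than an obstruction. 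Two caveats: first, the real technical work --- constructing the $\scbtop$-triple space with a blow-up order for which all three stretched projections are b-fibrations, and tabulating the preimages of each boundary hypersurface --- is only asserted in your sketch, not carried out, and this is where such proofs typically consume several pages; second, your justification of the module property is slightly loose, since the composite of $P\in\Psiscbt^m(M)$ with a smoothing element has no diagonal singularity at all (there is no ``symbolic composition'' to perform); the correct observation is simply that $P$ has index sets $(\emptyset,\emptyset,\N_0,\N_0)$, so the composition formula returns $\cE$ unchanged, and the conormal singularity of $P$ at $\diag_\scbtop$ integrates out harmlessly because it is transversal to the fibers of the relevant projection.
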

\begin{proof}
  The correspondence of symbols between the present paper and the reference is: $\lb_0$, $\rb_0$, $\zface$ are the same in both places, while $\tface$ is denoted $\bface_0$ in~\cite{GuillarmouHassellResI}. Furthermore, the reference uses b-$\half$-densities on the double space; near the interior of those boundary hypersurfaces of the double space where $A$ or $B$ do not vanish to infinite order, b-$\half$-densities are the same as $\scop$-$\bop$-$\half$-densities, and therefore the usage of $\scop$-$\bop$-densities here makes no difference. Finally, conjugating by any fixed positive smooth $\bop$-half density to pass between functions or densities and $\half$-densities does not affect any of the nontrivial index sets.
\end{proof}

To capture index sets for inverses of invertible $\scbtop$-operators, we introduce:
\begin{definition}[Index sets]
\label{DefAscbtInd}
  Given index sets $\cE^+,\cE^-,\cE\subset\C\times\N_0$ with $\Re(\cE^++\cE^-)>0$ and $\Re\cE>0$, we set $\cE^{\pm,(1),1}:=\cE^\pm$ and $\cE^{(1),1}:=\cE$, and inductively for $j\in\N$
  \begin{align*}
    \cE^{\pm,(1),j+1} &:= \bigl(\cE^\pm + \cE^{(1),j}\bigr) \extcup \bigl(\cE+\cE^{\pm,(1),j}\bigr), \\
    \cE^{(1),j+1} &:= \bigcup_\pm\,\bigl(\cE^\pm+\cE^{\mp,(1),j}\bigr) \extcup \bigl(\cE+\cE^{(1),j}\bigr).
  \end{align*}
  We then put
  \[
    \cE^{\pm,(1)} := \bigcup_{j\in\N} \cE^{\pm,(1),j},\qquad
    \cE^{(1)} := \bigcup_{j\in\N} \cE^{(1),j}.
  \]
\end{definition}

\begin{lemma}[Existence of index sets]
\label{LemmaAscbtInd}
  The sets $\cE^{\pm,(1)}$, $\cE^{(1)}\subset\C\times\N_0$ in Definition~\usref{DefAscbtInd} are index sets; and for any $C\in\R$ there exists $j_0\in\N$ so that $\Re\cE^{\pm,(1),j}$, $\Re\cE^{(1),j}\geq C$ for $j\geq j_0$.
\end{lemma}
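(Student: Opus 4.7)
I would first verify inductively that every iterate $\cE^{\pm,(1),j}$ and $\cE^{(1),j}$ is itself an index set: the base cases $j=1$ are hypotheses, and the operations $+$ and $\extcup$ preserve the index set property. So the only substantive thing to prove is that the real parts of the iterates tend to $+\infty$ uniformly, since then for each threshold $C\in\R$ only finitely many iterates contribute elements $(z,k)$ with $\Re z<C$, and each contributing iterate contributes only finitely many such pairs (being an index set); together with the closure under $(z,k)\mapsto(z+n,k')$ for $k'\leq k$ and $n\in\N_0$ (which is immediate since each iterate has this closure), this yields that $\cE^{(1)}=\bigcup_j\cE^{(1),j}$ and $\cE^{\pm,(1)}=\bigcup_j\cE^{\pm,(1),j}$ are index sets, and gives the claimed quantitative statement as well.

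\textbf{Structural unfolding.} To get the lower bound on real parts, I would unfold the recursion. By induction on $j$, every element of $\cE^{(1),j}$ can be written as a sum $z_1+\cdots+z_j$ with each $z_i\in\cE\cup\cE^+\cup\cE^-$, where the sequence of labels $s_1,\ldots,s_j\in\{*,+,-\}$ (with $*$ denoting $\cE$ and $\pm$ denoting $\cE^\pm$) satisfies the following alternation rule, dictated by the recursion: whenever two consecutive non-$*$ labels appear (ignoring $*$'s in between), they must have opposite sign. The recursion for $\cE^{\pm,(1),j}$ forces additionally that the first non-$*$ label is $\pm$. This is verified by reading off the three (resp.\ two) options in the definition at each step and checking that the state transitions ($*\to*$ or $*\to\mp$ via emitting $*$ or $\pm$, and $\pm\to\pm$ or $\pm\to*$ via emitting $*$ or $\pm$) exactly encode the alternation rule.

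\textbf{Quantitative bound.} Set $\alpha:=\min\Re\cE>0$, $c:=\min\Re(\cE^++\cE^-)>0$, and $\beta_-:=\min(0,\min\Re\cE^+,\min\Re\cE^-)$. Given an element of $\cE^{(1),j}$ expressed as above with $k$ labels equal to $*$ and $j-k$ non-$*$ labels, the alternation forces the non-$*$ contributions to pair up as $\beta^++\beta^-$ (up to one leftover if $j-k$ is odd); hence the real part is at least $k\alpha+mc+\beta_-$ where $2m\geq j-k-1$. Minimizing $k\alpha+mc$ subject to $k+2m\geq j-1$ and $k,m\geq 0$ is immediate and yields
\[
  \min\Re\cE^{(1),j} \;\geq\; (j-1)\min\bigl(\alpha,\tfrac{c}{2}\bigr) + \beta_-,
\]
with the same bound for $\cE^{\pm,(1),j}$. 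Since $\min(\alpha,c/2)>0$, these lower bounds tend to $\infty$, which is exactly the quantitative statement in the lemma and, together with the first paragraph, gives the index set property of $\cE^{(1)}$ and $\cE^{\pm,(1)}$.

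\textbf{Anticipated obstacle.} The only step requiring care is the structural unfolding: one has to verify that the alternation rule is preserved throughout the three-way mutual recursion and that one keeps track of which label starts the sequence. Once that bookkeeping is done, the minimization is a routine convex-combination argument. Everything else is formal manipulation of index sets.
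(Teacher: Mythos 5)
There is a genuine gap: the structural unfolding on which your quantitative bound rests is false. The alternation rule is \emph{not} preserved by the recursion. Concretely, $\cE^{(1),2}\supset\cE^++\cE^-$ (label sequence $(+,-)$), hence $\cE^{+,(1),3}\supset\cE^++\cE^{(1),2}\supset\cE^++\cE^++\cE^-$, whose label sequence $(+,+,-)$ has two consecutive non-$*$ labels of the same sign. The defect in your state-machine check is at the signed states: from the state corresponding to $\cE^{\pm,(1),\cdot}$ one emits $\pm$ and passes to the $*$-state, from which the \emph{next} non-$*$ emission may again be $\pm$; nothing forces the sign to flip across that transition. Since your pairing count $2m\geq j-k-1$ is derived from alternation, and without some structural control the bound genuinely fails (e.g.\ an all-$+$ sequence with $\min\Re\cE^+<0$ would drive the real parts to $-\infty$), the proof as written does not go through.

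The argument is salvageable by replacing alternation with the correct invariant, which is \emph{sign balance}: if $n_\pm$ denotes the number of labels equal to $\pm$, then every element of $\cE^{(1),j}$ has $n_+=n_-$ and every element of $\cE^{\pm,(1),j}$ has $n_\pm=n_\mp+1$; this is immediate by induction from the three-way recursion (each option either prepends $*$ and preserves the state, or prepends $\pm$ and moves between the balanced and the $\pm$-excess states). Sign balance yields exactly the same pairing count $2m\geq j-k-1$, so your final inequality $\min\Re\cE^{(1),j}\geq(j-1)\min(\alpha,c/2)+\beta_-$ is in fact correct. For comparison, the paper avoids the unfolding altogether: it chooses $\alpha\in\R$ and $\eps>0$ with $\Re\cE^\pm>\pm\alpha+\eps$ and $\Re\cE>\eps$ (possible since $\Re(\cE^++\cE^-)>0$), and proves directly by induction on the recursion that $\Re\cE^{\pm,(1),j}>\pm\alpha+j\eps$ and $\Re\cE^{(1),j}>j\eps$; the shift $\pm\alpha$ plays the role of your sign-balance bookkeeping, and each of the two or three options in the definition manifestly preserves the inductive bound. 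That one-line induction is the cleaner route if you revise.
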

\begin{proof}
  Note that there exist $\alpha\in\R$ and $\eps>0$ with $\Re\cE^\pm=\Re\cE^{\pm,(1),1}>\pm\alpha+\eps$ and $\Re\cE=\Re\cE^{(1),1}>\eps$. An inductive argument gives $\Re\cE^{\pm,(1),j}>\pm\alpha+j\eps$ and $\Re\cE^{(1),j}>j\eps$; therefore, $j_0\geq(C+|\alpha|)/\eps$ works.
\end{proof}

\begin{definition}[More index sets]
\label{DefAscbtInd2}
  Given index sets $\cE^+,\cE^-\subset\C\times\N_0$ with $\Re(\cE^++\cE^-)>0$, define $\cE^{\pm,(0)}$ via Definition~\ref{DefAbIndexSets} (relative to $\cE^\pm$), define $\cE^{(0)}$ by~\eqref{EqAbPxInd}, and set $\cE^{(0)\prime}:=\cE^{(0)}\setminus\{(0,0)\}$ (so $\cE^{(0)\prime}=(\cE^{+,(0)}+\cE^{-,(0)})\extcup(\N_0+1)$). Let then further $\cE^{\pm,(1)}:=\cE^{\pm,(0),(1)}$ and $\cE^{(1)}:=\cE^{(0)\prime,(1)}$ in the notation of Definition~\ref{DefAscbtInd} (relative to $\cE^{+,(0)}$, $\cE^{-,(0)}$, $\cE^{(0)\prime}$). Finally, put
  \begin{alignat*}{3}
    \cE^{\pm,(2)} &:= \cE^{\pm,(0)} &&\cup\cE^{\pm,(1)} &&\cup \bigl(\cE^{\pm,(0)}+\cE^{(1)}\bigr) \extcup\bigl(\cE^{\pm,(1)}+\cE^{(0)}\bigr), \\
    \cE^{(2)} &:= \cE^{(0)} &&\cup \cE^{(1)} &&\cup \bigcup_\pm\,\bigl( \cE^{(0)}+\cE^{(1)} \bigr) \extcup\bigl(\cE^{\pm,(0)}+\cE^{\mp,(1)}\bigr).
  \end{alignat*}
\end{definition}

\begin{thm}[Inverses in the $\scbtop$-calculus]
\label{ThmAscbtEll}
  Fix a positive $\scbtop$-density on $M$, a positive b-density on $\zface\cong M$, and a positive $(\scop,\bop)$-density on $\tface$. Let $s,r\in\R$ and $I=\pm[0,1)$, and suppose $P=(P_\sigma)_{\sigma\in I}\in\Psiscbt^{s,r,0,0}(M)$ has an elliptic principal symbol. Let $\alpha\in\R$ be such that $\alpha\notin\Re\Specb(P_0)$ (where $P_0=N_\zface(P)\in\Psib^s(M)$ is elliptic). Suppose that
  \begin{enumerate}
  \item $P_0\colon\Hb^{s',\alpha}(M)\to\Hb^{s'-s,\alpha}(M)$ is invertible for some (thus all) $s'\in\R$, and
  \item $N_\tface(P)\colon H_{\scop,\bop}^{s',r',-\alpha}(\tface)\to H_{\scop,\bop}^{s'-s,r'-r,-\alpha}(\tface)$ is invertible for some (thus all) $s',r'\in\R$.
  \end{enumerate}
  Then there exists $\sigma_0>0$ so that $P_\sigma\colon\Hsc^{s',r'}(M)\to\Hsc^{s'-s,r'-r}(M)$ is invertible for $\sigma\in\pm(0,\sigma_0]$. Moreover, the inverse $P^{-1}=(P_\sigma^{-1})_{\sigma\in\pm(0,\sigma_0)}$ is an element of the large $\scbtop$-calculus,
  \begin{equation}
  \label{EqAscbtEllInv}
    P^{-1} \in \Psiscbt^{-s,-r,0,0}(M) + \Psiscbt^{-\infty,(\cE^{+,(2)},\cE^{-,(2)},\cE^{(2)},\cE^{(2)})}(M),
  \end{equation}
  where the index sets are given by Definition~\usref{DefAscbtInd2} in terms of $\cE^\pm:=\cE^\pm(P_0,\alpha)$.
\end{thm}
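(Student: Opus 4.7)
The plan is to build a right parametrix $Q_R$ in the large $\scbtop$-calculus with error vanishing to infinite order at $\tface\cup\zface$, invert the remaining error by a Neumann series (which only converges for $|\sigma|<\sigma_0$ small enough), and then combine with a left parametrix to read off the index sets of $P^{-1}$. Ellipticity of $\sigmascbt^{s,r,0,0}(P)$ furnishes a symbolic parametrix $Q_0\in\Psiscbt^{-s,-r,0,0}(M)$ with $P Q_0 = I - R_0$, $R_0\in\Psiscbt^{-\infty,-\infty,0,0}(M)$; the error is smoothing, rapidly decaying at $\scface_\scbtop$ and $\bface_\scbtop$, but only bounded at $\tface_\scbtop,\zface_\scbtop$ (and vanishing to infinite order at the other hypersurfaces).

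To improve at $\zface$, we use that $N_\zface(P)=P_0$ is fully elliptic in the sense of Theorem~\ref{ThmAbPx} with weight $\alpha$, hence $P_0^{-1}\in\Psib^{-s}(M)+\Psib^{-\infty,(\cE^{+,(0)},\cE^{(0)},\cE^{-,(0)})}(M)$. Picking any $Q_1\in\Psiscbt^{-\infty,\cdots}(M)$ with $N_\zface(Q_1)=P_0^{-1}N_\zface(R_0)$ (possible by the surjectivity in~\eqref{EqAscbtNtfzf}) improves the $\zface$-order of the error by one, at the cost of introducing index sets $\cE^{+,(0)}$ at $\tlb_\scbtop$ and $\cE^{-,(0)}$ at $\trb_\scbtop$. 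The analogous step at $\tface$ uses the inverse of $N_\tface(P)$ in the large sc-b calculus on $\tface\cong\ol{{}^+N}\pa M$; by Lemma~\ref{LemmaAscbtNorm}, the b-normal operator of $N_\tface(P)$ at $\zface\cap\tface$ is conjugate (via $\psi$) to $N_{\pa M}(P_0)$, so the index sets thrown off at $\tlb_\scbtop,\trb_\scbtop$ by this step are \emph{the same} $\cE^{\pm,(0)}$ as those from the $\zface$-step, and the two improvements can be iterated coherently. Alternating these two types of corrections (each gaining at least one order at $\zface$ or $\tface$, and hence by Lemma~\ref{LemmaAscbtInd} accumulating index sets whose real parts tend to $+\infty$) and asymptotically summing at $\tface$ and at $\zface$ produces $\wt Q_R\in\Psiscbt^{-s,-r,0,0}(M)+\Psiscbt^{-\infty,(\cE^{+,(0)},\cE^{-,(0)},\cE^{(0)},\cE^{(0)})}(M)$ with $P\wt Q_R=I-R_\infty$ where $R_\infty$ has nontrivial index sets only at $\tlb_\scbtop,\trb_\scbtop$ (namely $\cE^{+,(0)}$ and $\cE^{-,(0)}$) and vanishes to infinite order at $\tface_\scbtop,\zface_\scbtop,\scface_\scbtop,\bface_\scbtop,\lb_\scbtop,\rb_\scbtop$.

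For each fixed $\sigma$ small, $R_{\infty,\sigma}$ is Schwartz-smoothing on $M^\circ$; the infinite-order vanishing at $\tface$ together with control at $\tlb_\scbtop,\trb_\scbtop$ forces its operator norm on the relevant weighted scattering Sobolev space to decay as $|\sigma|\to 0$, so $(I-R_\infty)^{-1}=I+\sum_{k\geq 1}R_\infty^k$ converges. Using Lemma~\ref{LemmaAscbtComp} to track index sets in this Neumann series produces precisely the $\cdot^{(1)}$-enhancements of Definition~\ref{DefAscbtInd} applied to the index sets $\cE^{\pm,(0)},\cE^{(0)\prime}$ of $\wt Q_R$, with asymptotic summation convergent by Lemma~\ref{LemmaAscbtInd}. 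Setting $Q_R:=\wt Q_R(I-R_\infty)^{-1}$ and performing the parallel construction for $P^*$ to obtain a left parametrix $Q_L$, the identity $P^{-1}=Q_L+Q_L(I-P Q_R)+(Q_L P-I)P^{-1}(I-P Q_R)$ together with one more application of Lemma~\ref{LemmaAscbtComp} yields~\eqref{EqAscbtEllInv}, the $\cdot^{(2)}$-layer of index sets arising from this final composition.

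The principal obstacle is the combinatorial bookkeeping of index sets: one must verify that the three generations $\cE^{\pm,(0)}\to\cE^{\pm,(1)}\to\cE^{\pm,(2)}$ emerging from (i)~the parametrix loop, (ii)~the Neumann series, and (iii)~the left-right combination match Definition~\ref{DefAscbtInd2} exactly. A secondary but essential check is the compatibility provided by Lemma~\ref{LemmaAscbtNorm}, without which the alternating $\tface$/$\zface$-improvement scheme would produce incompatible asymptotics at the corner $\tlb_\scbtop\cap\tface_\scbtop$ (and similarly at $\trb_\scbtop\cap\tface_\scbtop$) and fail to close.
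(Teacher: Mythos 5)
Your overall architecture (symbolic parametrix, normal operator corrections at $\tface$ and $\zface$ using Theorem~\ref{ThmAbPx}, corner compatibility via Lemma~\ref{LemmaAscbtNorm}, Neumann series, left/right combination) matches the paper's, but there is a genuine gap at the Neumann series step. You reduce to an error $R_\infty$ that vanishes to infinite order at $\tface_\scbtop$ and $\zface_\scbtop$ but still carries the nontrivial index sets $\cE^{\pm,(0)}$ at $\tlb_\scbtop$ and $\trb_\scbtop$, and you then assert that $\sum_k R_\infty^k$ converges in operator norm for small $|\sigma|$. This does not follow: $\tlb_\scbtop$ and $\trb_\scbtop$ lie over $\sigma=0$, and near the interior of $\tlb_\scbtop$ (say $x\ll|\sigma|$ in the left factor, with the right variable in $M^\circ$) the Schwartz kernel is of size $\rho_{\tlb}^{a^+}\sim(x+|\sigma|)^{a^+}$ with $a^+=\min\Re\cE^{+,(0)}$, which is only guaranteed to exceed $\alpha$ and can be negative or zero. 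A Schur-type estimate then gives an operator norm that is $O(|\sigma|^{a^+})$ at best --- bounded away from zero, or even divergent, as $\sigma\to 0$. So the smallness needed for the Neumann series is exactly what nontrivial index sets at $\tlb_\scbtop,\trb_\scbtop$ destroy. The paper's proof is organized to avoid this: it corrects both normal operators in a \emph{single} step (choosing one correction $P_-$ with $N_\tface(P_-)=N_\tface(P)^{-1}$ and $N_\zface(P_-)=N_\zface(P)^{-1}$ simultaneously, which is where Lemma~\ref{LemmaAscbtNorm} is used), obtaining an error $R_1$ with index sets $(\cE^{+,(0)},\cE^{-,(0)},\cE^{(0)\prime},\cE^{(0)\prime})$; the composition formula of Lemma~\ref{LemmaAscbtComp} then couples the $\tlb_0/\rb_0$ index sets of $R_1^j$ to the strictly positive-order sets at $\tface,\zface$, so that by Lemma~\ref{LemmaAscbtInd} the real parts of \emph{all four} index sets of $R_1^j$ tend to $+\infty$. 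The \emph{asymptotic} Neumann sum $\tilde R_1\sim\sum R_1^j$ therefore leaves a remainder $R_2$ that is trivial at every boundary hypersurface (in particular at $\tlb_\scbtop,\trb_\scbtop$), and only \emph{that} remainder admits a convergent Neumann series. To repair your argument you would need to carry the error's $\tlb/\trb$ index sets into the iteration rather than freezing them at $\cE^{\pm,(0)}$, at which point your "alternating corrections plus Neumann series" collapses into the paper's single asymptotic Neumann series.

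Two smaller points. First, even granting your intermediate claim, the index sets of $R_\infty^k$ computed from Lemma~\ref{LemmaAscbtComp} with trivial $\tface/\zface$ entries do not reproduce the recursion of Definition~\ref{DefAscbtInd} (whose input $\cE=\cE^{(0)\prime}$ is nonempty with positive real part); your alternating scheme also cannot terminate with index sets exactly $\cE^{\pm,(0)}$ at $\tlb,\trb$, since each successive application of $P_0^{-1}$ (now in the large b-calculus, composed via Proposition~\ref{PropAbComp}) enlarges them by extended unions. Second, your closing resolvent identity is misstated: it should read $P^{-1}=Q_R+Q_L(I-P Q_R)+(I-Q_L P)P^{-1}(I-P Q_R)$, i.e.\ the leading term is $Q_R$, not $Q_L$.
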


\begin{rmk}[Lower bounds on index sets]
\label{RmkAscbtEllInd}
  Setting $\alpha^\pm:=\pm\min\Re\cE^{\pm}$, we have $\Re\cE_{\pm,(2)}\geq\pm\alpha^\pm$ and $\Re(\cE^{(2)}\setminus\{(0,0)\})\geq\min(\alpha^+-\alpha^-,1)>0$.
\end{rmk}

\begin{proof}[Proof of Theorem~\usref{ThmAscbtEll}]
  We first let $Q\in\Psiscbt^{-s,-r,0,0}(M)$ be a symbolic parametrix of $P$, thus
  \[
    P Q = I - R,\qquad R\in\Psiscbt^{-\infty,-\infty,0,0}(M)=\Psiscbt^{-\infty,(\emptyset,\emptyset,\N_0,\N_0)}(M).
  \]

  Next, by Theorem~\ref{ThmAbPx}, we have
  \begin{equation}
  \label{EqAscbtEllzf}
    P_0^{-1}\in\Psib^{-s}(M)+\Psib^{-\infty,(\cE^{+,(0)},\cE^{(0)},\cE^{-,(0)})}(M).
  \end{equation}
  Similarly, a mild generalization of Theorem~\ref{ThmAbPx} applies also to the description of $N_\tface(P)$: symbolic arguments in the scattering calculus near $\tface\cap\scface$ are sufficient to produce left and right parametrices which produce trivial errors (in the sense of differential and decay order) at the scattering end of $\tface_\scbtop=\tface^2_{\scop,\bop}$, and near the b-end the arguments in the proof of Theorem~\ref{ThmAbPx} (which are local apart from the global inversion of $P$ there) apply without change. Thus,
  \begin{equation}
  \label{EqAscbtElltf}
    N_\tface(P)^{-1} \in \Psi_{\scop,\bop}^{-s,-r,0}(\tface) + \Psi_{\scop,\bop}^{-\infty,(\cE^{-,(0)},\cE^{(0)},\cE^{+,(0)})}(\tface),
  \end{equation}
  where the second space consists of polyhomogeneous right densities on $\tface^2_{\scop,\bop}$ with the stated index sets at $\trb_\scbtop$, $\zface_\scbtop$, $\tlb_\scbtop$ in this order (recall the switch between left and right boundaries from the proof of Lemma~\ref{LemmaAscbtNorm}), and trivial index sets (corresponding to infinite order vanishing) at all other boundary hypersurfaces. Note here that the index set at the left boundary $\trb_\scbtop$ of the b-end of $\tface^2_{\scop,\bop}$ is $\cE^{+,(0)}(N_{\zface\cap\tface}(N_\tface(P)),-\alpha)$ in the notation of Definition~\ref{DefAbIndexSets} (by Theorem~\ref{ThmAbPx}), which, as a consequence of Lemma~\ref{LemmaAscbtNorm}, is equal to $\cE^{-,(0)}(N_\zface(P),\alpha)=\cE^{-,(0)}$ indeed; likewise for the index set at $\tlb_\scbtop$.

  We can then pick
  \[
    P_- \in \Psiscbt^{-s,-r,0,0}(M) + \Psiscbt^{-\infty,(\cE^{+,(0)},\cE^{-,(0)},\cE^{(0)},\cE^{(0)})}(M)
  \]
  so that $N_\tface(P_-)=N_\tface(P)^{-1}$ and $N_\zface(P_-)=N_\zface(P)^{-1}$, i.e.\ the restriction of the Schwartz kernel of $P_-$ to $\tface_\scbtop$, resp.\ $\zface_\scbtop$ is given by~\eqref{EqAscbtElltf}, resp.\ \eqref{EqAscbtEllzf}. By Lemma~\ref{LemmaAscbtComp} then,
  \[
    Q_1:=P_- R\in\Psiscbt^{-\infty,(\cE^{+,(0)},\cE^{-,(0)},\cE^{(0)},\cE^{(0)})}(M),
  \]
  and by the multiplicativity of the normal operator maps,
  \[
    R_1 := R-P Q_1 = I-P(Q+Q_1) \in \Psiscbt^{-\infty,(\cE^{+,(0)},\cE^{-,(0)},\cE^{(0)\prime},\cE^{(0)\prime})}(M),
  \]
  where $\cE^{(0)\prime}:=\cE^{(0)}\setminus\{(0,0)\}$; that is, $R_1$ vanishes to leading order at $\tface_\scbtop$ and $\zface_\scbtop$. We now define $\cE^{\pm,(1),j}$ and $\cE^{(1),j}$ as in Definition~\ref{DefAscbtInd} with respect to the index sets $\cE^{\pm,(0)}$ and $\cE^{(0)\prime}$, respectively; that is, $\cE^{\pm,(1),j}=\cE^{\pm,(0),(1),j}$, $\cE^{(1),j}=\cE^{(0)\prime,(1),j}$. Then Lemma~\ref{LemmaAscbtComp} implies $R_1^j\in\Psiscbt^{-\infty,(\cE^{+,(1),j},\cE^{-,(1),j},\cE^{(1),j},\cE^{(1),j})}(M)$, $j\in\N$; by Lemma~\ref{LemmaAscbtInd}, we can asymptotically sum these powers, producing (in the notation of Definition~\ref{DefAscbtInd2})
  \[
   \tilde R_1 \sim \sum_{j=1}^\infty R_1^j \in \Psiscbt^{-\infty,(\cE^{+,(1)},\cE^{-,(1)},\cE^{(1)},\cE^{(1)})}(M)
  \]
  with the property that
  \[
    P(Q+Q_1)(I+\tilde R_1) = I-R_2,\qquad R_2\in\Psiscbt^{-\infty,(\emptyset,\emptyset,\emptyset,\emptyset)}(M).
  \]
  The Schwartz kernel of $R_2$ is a smooth right density on $[0,1)_\sigma\times M^2$ which vanishes to infinite order at $\sigma=0$ and $\pm[0,1)\times\pa(M^2)$; therefore $I-(R_2)_\sigma$ can be inverted on $L^2(M)$, for $\sigma\in\pm[0,\sigma_0)$ with $\sigma_0>0$ small enough, by means of a Neumann series, and we have $(I-R_2)^{-1}=I+\tilde R_2$ where the Schwartz kernel of $\tilde R_2$ is of the same class as that of $R_2$, so $\tilde R_2\in\Psiscbt^{-\infty,(\emptyset,\emptyset,\emptyset,\emptyset)}(M)$. This implies that $(Q+Q_1)(I+\tilde R_1)(I+\tilde R_2)$ is a right inverse of $P$, and using Lemma~\ref{LemmaAscbtComp} one can show that it is of the class~\eqref{EqAscbtEllInv}.

  A left inverse of $P$ can be constructed as the adjoint of a right inverse of $P^*$. A standard (group theory) argument then shows that the right and left inverses agree.
\end{proof}

Fix now a smooth positive $\scbtop$-density $\nu$ on $M_\scbtop$ (i.e.\ a smooth positive section of ${}^\scbtop\Omega M\to M_\scbtop$), or a weighted version thereof. (Examples include $\sigma$-independent b- or scattering densities on $M$.) We then define for $\sigma\neq 0$
\[
  H_{\scbtop,\sigma}^{s,r,l,b}(M,\nu) = \Hsc^{s,r}(M,\nu_\sigma)
\]
as a set, where $\nu_\sigma$ is the restriction of $\nu$ to the level set $\sigma$ (thus $\nu_\sigma$ is a weighted scattering density on $M$), but equipped with the following norm for $s\geq 0$: fix any $A\in\Psiscbt^{s,0,0,0}(M)$ with elliptic principal symbol, then
\[
  \|u\|_{H_{\scbtop,\sigma}^{s,r,l,b}(M,\nu)}^2 := \|\rho_\scface^{-r}\rho_\tface^{-l}\rho_\zface^{-b}u\|_{L^2(M,\nu_\sigma)}^2 + \|\rho_\scface^{-r}\rho_\tface^{-l}\rho_\zface^{-b}A u\|_{L^2(M,\nu_\sigma)}^2.
\]
For $s<0$, the norm on $H_{\scbtop,\sigma}^{s,r,l,b}(M,\nu)$ is defined via duality relative to $L^2(M,\nu_\sigma)$.

A slight variant of the following result already appears in~\cite[Appendix~A.4]{HintzKdSMS}:

\begin{prop}[Relationships to Sobolev spaces at $\tface$ and $\zface$]
\label{PropAscbtRel}
  Fix a positive $\scbtop$-density on $M_\scbtop$, a positive $(\scop,\bop)$-density on $\tface$, and a positive $\bop$-density on $M$. Let $s,r,l,b\in\R$.
  \begin{enumerate}
  \item\label{ItAscbtReltf} Fix a collar neighborhood $[0,1)_\rho\times\pa M$ of $\pa M\subset M$, and consider the family of maps $\phi_\sigma\colon(\hat\rho,\omega)\mapsto(|\sigma|\hat\rho,\omega)\in M$ for $0\neq\sigma\in I$. Let $\chi\in\CIc(I_\sigma\times[0,1)_\rho\times\pa M)$. Then we have a uniform equivalence of norms
    \[
      \|\chi u\|_{H_{\scbtop,\sigma}^{s,r,l,b}(M)} \sim \sigma^{-l}\| \phi_\sigma^*(\chi u) \|_{H_{\scop,\bop}^{s,r,b-l}(\tface)}.
    \]
    That is, there exists $C>0$ (which is independent of $\sigma$ and $u$) so that the left hand side is bounded by $C$ times the right hand side, and vice versa.
  \item\label{ItAscbtRelzf} Fix $\chi\in\CIc(M_\scbtop\setminus\scface)$. Then we have a uniform equivalence of norms
    \begin{equation}
    \label{EqAscbtRelzf}
      \|\chi u\|_{H_{\scbtop,\sigma}^{s,r,l,b}(M)} \sim \sigma^{-b}\| \chi u \|_{\Hb^{s,l-b}(M)}.
    \end{equation}
  \end{enumerate}
\end{prop}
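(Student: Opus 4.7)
The plan is to reduce both parts to the case $s=0$, which I would verify by explicit computations in local coordinates, and then bootstrap to general $s\in\R$ by testing with an elliptic operator of order $s$.

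For part~\ref{ItAscbtReltf}, in local coordinates $x\geq 0$, $y\in\R^{n-1}$ near a boundary point of $M$, together with the rescaled coordinate $\hat\rho=x/|\sigma|$ on $\tface\cong\ol{{}^+N}\pa M$, one computes
\[
  \phi_\sigma^*\rho_\scface = \frac{\hat\rho}{\hat\rho+1},\qquad \phi_\sigma^*\rho_\tface = |\sigma|(\hat\rho+1),\qquad \phi_\sigma^*\rho_\zface = \frac{1}{\hat\rho+1},
\]
so that $\phi_\sigma^*(\rho_\scface^{-r}\rho_\tface^{-l}\rho_\zface^{-b})=|\sigma|^{-l}(\hat\rho/(\hat\rho+1))^{-r}(\hat\rho+1)^{b-l}$; here the last two factors constitute precisely the weight for $H^{0,r,b-l}_{\scop,\bop}(\tface)$, since $\hat\rho/(\hat\rho+1)$ and $1/(\hat\rho+1)$ are defining functions of the scattering and b-ends of $\tface$, respectively. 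The local generators from~\eqref{EqAscbtLocFrame} of $\cV_\scbtop(M)$ pull back to the $\sigma$-independent vector fields $\frac{\hat\rho}{\hat\rho+1}\hat\rho\pa_{\hat\rho}$ and $\frac{\hat\rho}{\hat\rho+1}\pa_{y^j}$, which are exactly the standard local generators of $\cV_{\scop,\bop}(\tface)$. Finally, any fixed smooth positive $\scbtop$-density at level $\sigma\neq 0$ takes the form $c(x,y,\sigma)\frac{(x+|\sigma|)^n}{x^{n+1}}|dx\,dy|$ with $c$ smooth and positive down to the boundary; its $\phi_\sigma$-pullback, using $\phi_\sigma^*dx=|\sigma|\,d\hat\rho$, is $c(|\sigma|\hat\rho,y,\sigma)\frac{(\hat\rho+1)^n}{\hat\rho^{n+1}}|d\hat\rho\,dy|$, a smooth positive $(\scop,\bop)$-density on $\tface$, uniformly in $\sigma$. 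Combining these three ingredients gives the $s=0$ case, with the overall factor of $|\sigma|^{-l}$ arising from the weight alone.

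For part~\ref{ItAscbtRelzf}, I work in coordinates $x,y,\hat\sigma$ near $\zface$ with $\hat\sigma=\sigma/x$. There one has $\rho_\scface\sim 1/(1+\hat\sigma)$, $\rho_\tface\sim x(1+\hat\sigma)$, and $\rho_\zface\sim\hat\sigma/(1+\hat\sigma)$, all smooth and bounded above and below on $\supp\chi\subset M_\scbtop\setminus\scface$. Hence $\rho_\scface^{-r}\rho_\tface^{-l}\rho_\zface^{-b}\sim x^{-l}\hat\sigma^{-b}=\sigma^{-b}x^{b-l}$, uniformly in $\sigma$. The generators from~\eqref{EqAscbtLocFrame} restrict to $\frac{1}{1+\hat\sigma}x\pa_x$ and $\frac{1}{1+\hat\sigma}\pa_{y^j}$, uniformly equivalent to the b-generators $x\pa_x,\pa_{y^j}$ on $\supp\chi$; and the $\scbtop$-density $\frac{(x+|\sigma|)^n}{x^{n+1}}|dx\,dy|$ is uniformly equivalent to the b-density $x^{-1}|dx\,dy|$ there. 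The $s=0$ case of~\eqref{EqAscbtRelzf} follows at once.

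The passage from $s=0$ to general $s\in\R$ proceeds in two stages in both parts. For $s\in\N_0$, I would use the characterization $\|u\|_{H^{s,r,l,b}_{\scbtop,\sigma}}^2\sim\sum_\alpha\|P_\alpha u\|_{H^{0,r,l,b}_{\scbtop,\sigma}}^2$ with $P_\alpha$ ranging over products of up to $s$ local generators of $\cV_\scbtop(M)$, and apply the $s=0$ equivalence summand by summand, noting that each $P_\alpha$ pulls back (for part~\ref{ItAscbtReltf}) or restricts (for part~\ref{ItAscbtRelzf}) to a corresponding product of generators of $\cV_{\scop,\bop}(\tface)$, respectively $\Vb(M)$. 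Negative integer $s$ follows by duality relative to the $L^2$-pairings, which themselves match up via the density computations above; general real $s$ then follows by complex interpolation. Alternatively, one can work directly with real $s$ by testing with a fixed elliptic $A\in\Psiscbt^{s,0,0,0}(M)$ and using that its rescaled/restricted versions form uniformly bounded families of elliptic operators in $\Psi_{\scop,\bop}^{s,0,0}(\tface)$ (with $\sigma\to 0$ limit $N_\tface(A)$) and in $\Psib^s(M)$ (with $\sigma\to 0$ limit $N_\zface(A)$, cf.~\eqref{EqAscbtNtfzf}). The main technical point is precisely this uniform ellipticity of the rescaled/restricted kernels; it is encoded in the smoothness of the Schwartz kernel of $A$ down to $\tface_\scbtop$ and $\zface_\scbtop$, but turning it into a clean uniform bound for $s\notin\N_0$ requires a short argument at the level of conormal distributions on $M^2_\scbtop$.
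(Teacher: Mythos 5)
Your proposal is correct, and your $s=0$ computations (the pullbacks of the boundary defining functions, the matching of the weight exponents $r$ and $b-l$, and the identification of $\phi_\sigma^*$ of the $\scbtop$-density with a $(\scop,\bop)$-density on $\tface$, resp.\ of the restriction near $\zface$ with a b-density) are exactly the verification the paper leaves to the reader. For general $s$ the paper follows the second of your two routes, but resolves the technical point you flag at the end by running the construction in the opposite direction: rather than fixing an arbitrary elliptic $A\in\Psiscbt^{s,0,0,0}(M)$ and proving uniform ellipticity and uniform two-sided bounds for its restrictions to the $\sigma$-level sets, one \emph{starts} from the model operator $A_0\in\Psib^s(M)$ (resp.\ an elliptic element of $\Psi_{\scop,\bop}^{s,0,0}(\tface)$) used to define the right-hand norm, extends its Schwartz kernel $\sigma$-independently (resp.\ dilation-invariantly) to $M^2_\scbtop$, and cuts off near $\zface_\scbtop$ (resp.\ $\tface_\scbtop$) by a cutoff identically $1$ near $\supp\chi\times\supp\chi$. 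The resulting $A\in\Psiscbt^s(M)$ is elliptic on $\supp\chi$, so it computes the left-hand norm, while by construction $A(\chi u)$ coincides with $A_0(\chi u)$ (resp.\ with the model operator applied to $\phi_\sigma^*(\chi u)$); the equivalence for general $s$ then reduces immediately to your $s=0$ case applied to $A(\chi u)$, with no uniformity argument at the level of conormal distributions needed. Your first route (integer $s$ by testing with products of generators, then duality and interpolation) is also complete modulo standard interpolation facts, so the proposal stands as written; the reversed construction just makes the second route gap-free and is the same device the paper uses for the b-analogue \eqref{EqAbEquiv2}.
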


One can use weighted volume densities on $M_\scbtop$ if one changes the weights on the right hand sides appropriately. Typical choices for the cutoff functions are $\chi=\psi(\rho+|\sigma|)$ for the first part, and $\chi=\psi(|\sigma|/\rho)$ for the second part, where $\psi\in\CIc([0,\eps))$.

\begin{proof}[Proof of Proposition~\usref{PropAscbtRel}]
  This is easily checked for $L^2$-spaces, i.e.\ for $s=0$. For $s>0$ then, one exploits the existence of the normal operator maps~\eqref{EqAscbtNtfzf} and the fact that the normal operators of an elliptic operator are themselves elliptic. Thus, in part~\eqref{ItAscbtRelzf}, one fixes an elliptic operator $A_0\in\Psib^s(M)=\Psib^s(\zface)$, and defines an operator $A\in\Psiscbt^s(M)$ by extending the Schwartz kernel of $A_0$ to a $\sigma$-independent distribution on $I\times M^2_\bop$ which one subsequently lifts to $M^2_\scbtop$, followed by cutting off to a neighborhood of $\zface_\scbtop$ by means of a cutoff which is identically $1$ near $\supp\chi\times\supp\chi$; thus $A$ is elliptic on $\supp\chi$. Expressing the $\scbtop$-norm on the left of~\eqref{EqAscbtRelzf} via testing with $A$, and the b-norm on the right via testing with $A_0$, the equivalence~\eqref{EqAscbtRelzf} follows. The proof of part~\eqref{ItAscbtReltf} is completely analogous.
\end{proof}

\subsection{The semiclassical cone algebra}
\label{SsAch}

The class of semiclassical cone pseudodifferential operators which we shall recall next was introduced in~\cite{HintzConicPowers}; there it was also shown that fully elliptic semiclassical cone differential operators have inverses in the large semiclassical cone calculus. (See \cite{XiConeParametrix} for a parametrix construction in the significantly more involved hyperbolic case.) Closely related ps.d.o.\ algebras were introduced by Loya \cite{LoyaConicResolvent}; see also \cite{GilKrainerMendozaResolvents,SchulzePsdoBVP94}.

Let $M$ be a compact $n$-dimensional manifold with embedded boundary $\pa M\neq\emptyset$. We denote by
\[
  M_\chop := \bigl[ [0,1)_h\times M; \{0\}\times\pa M \bigr]
\]
the semiclassical cone (or $\chop$-) single space, with boundary hypersurfaces denoted $\cface$ (the lift of $[0,1)\times\pa M$), $\tface$ (the front face), and $\sface$ (the lift of $\{0\}\times M$).\footnote{The notation $\tface$ clashes with the notation used for the transition face of $M_\scbtop$. However, not only will the context always make clear whether we are working with $\scbtop$ or $\chop$-operators, but also the two transition faces are the same in that they are naturally diffeomorphic to $\ol{{}^+N}\pa M$.} With $\rho_H\in\CI(M_\chop)$ denoting a boundary defining function for $H=\cface,\tface,\sface$, the Lie algebra of $\chop$-vector fields is
\[
  \Vch(M) := \{ V\in\rho_\sface\Vb(M_\chop) \colon V h = 0 \}.
\]
In local coordinates $x\geq 0$, $y\in\R^{n-1}$ near a boundary point of $M$, the space $\Vch(M)$ is spanned by
\[
  \frac{h}{x+h}x\pa_x,\qquad
  \frac{h}{x+h}\pa_{y^j}\ (j=1,\ldots,n-1).
\]
Since $[\Vch(M),\Vch(M)]\subset\rho_\sface\Vch(M)$, we have for the corresponding space of $\chop$-differential operators $\Diffch^m(M)$, or more generally for its weighted version
\[
  \Diffch^{m,l,\alpha,b}(M) := \rho_\cface^{-l}\rho_\tface^{-\alpha}\rho_\sface^{-b}\Diffch^m(M),
\]
a principal symbol map $\sigmach^{m,l,\alpha,b}$ with
\begin{align*}
  0 \to &\Diffch^{m-1,l,\alpha,b-1}(M) \hra \Diffch^{m,l,\alpha,b}(M) \\
  &\qquad \xra{\sigmach^{m,l,\alpha,b}} (\rho_\cface^{-l}\rho_\tface^{-\alpha}\rho_\sface^{-b}P^m/\rho_\cface^{-l}\rho_\tface^{-\alpha}\rho_\sface^{-(b-1)}P^{m-1})(\Tch^*M) \to 0,
\end{align*}
where $\Tch^*M\to M_\chop$ is the $\chop$-cotangent bundle; this bundle is the dual bundle of the $\chop$-tangent bundle $\Tch M\to M_\chop$, the smooth sections of which are precisely the elements of $\Vch(M)$. We remark that $h$-independent lifts of b-vector fields on $M$ satisfy
\begin{equation}
\label{EqAchVbDiffch}
  \Vb(M) \subset \rho_\sface^{-1}\Vch(M) \subset \Diffch^{1,0,0,1}(M).
\end{equation}

Besides the principal symbol, $\chop$-operators $P$ without weights at $\tface$ have a (multiplicative) $\tface$-normal operator $N_\tface(P)$, with 
\[
  0 \to \Diffch^{m,l,-1,b}(M) \hra \Diffch^{m,l,0,b}(M) \xra{N_\tface} \Diff_{\bop,\scop}^{m,l,b}(\tface) \to 0,
\]
where the target space consists of operators which near $\tface\cap\cface$, resp.\ $\tface\cap\sface$ are weighted b-differential operators (with weight $l$), resp.\ weighted scattering differential operators (with weight $b$). Furthermore, there is a family of b-normal operators at $\cface$, parameterized by $h\in[0,1)$,
\[
  N_\cface \colon \Diffch^{m,0,\alpha,b}(M) \to h^{-\alpha}\CI\bigl([0,1);\Diff_{\bop,I}^m({}^+N\pa M)\bigr).
\]
When $N_\cface(P)$ is independent of $h\in(0,1)$ up to multiplication by an $h$-dependent constant, we shall say (by a mild abuse of language) that $P$ has an $h$-independent b-normal operator; this will be the case for all $\chop$-operators which appear in the present paper.

The definition of semiclassical cone pseudodifferential operators requires the introduction of the $\chop$-double space
\[
  M^2_\chop := \bigl[ [0,1)_h\times M^2_\bop; \{0\}\times\ff_\bop; \{0\}\times\diag_\bop \bigr],
\]
with boundary hypersurfaces denoted $\ff_\chop$ (the lift of $[0,1)\times\ff_\bop$), $\lb_\chop$ (the lift of $[0,1)\times\lb_\bop$) and $\rb_\chop$ (the lift of $[0,1)\times\rb_\bop$), further $\tface_\chop$ (the front face), $\sface_\chop$ (the lift of $\{0\}\times M^2_\bop$), and $\dface_2$ (the lift of $\{0\}\times\diag_\bop$); and we write $\diag_\chop$ for the lift of $[0,1)\times\diag_\bop$. Then
\[
  \Psich^s(M)
\]
consists of all operators whose Schwartz kernels are elements of $I^{s-\frac14}(M^2_\chop,\diag_\chop,\pi_R^*\Omegach M)$ (with $\pi_R$ the lift of the right projection $[0,1)\times M\times M\ni(h,z,z')\mapsto(h,z')$ and $\Omegach M\to M_\chop$ the density bundle associated with $\Tch M\to M$) which vanish to infinite order at all boundary hypersurfaces except $\ff_\chop$, $\tface_\chop$, and $\dface_\chop$. We also consider spaces of weighted operators
\[
  \Psich^{s,l,\alpha,b}(M),
\]
where we demand classical conormality down to $\cface_\chop$ (with weight $-l$) and $\tface_\chop$ (with weight $-\alpha$), but allow for mere conormality down to $\dface_\chop$ (with weight $-b$). Analogously to the case of differential operators, the principal symbol map is
\[
  0 \to \Psich^{s-1,l,\alpha,b-1}(M) \hra \Psich^{s,l,\alpha,b}(M) \xra{\sigmach^{s,l,\alpha,b}} (S^{s,l,\alpha,b}/S^{s-1,l,\alpha,b-1})(\Tch^*M) \to 0,
\]
where $S^{s,l,\alpha,b}(\Tch^*M)$ is the class of symbols which are classical conormal down to (the $\chop$-cotangent bundle over) $\cface$ and $\tface$, but merely conormal down to $\sface$ and fiber infinity. The normal operator homomorphisms are
\[
  0 \to \Psich^{s,l,-1,b}(M) \hra \Psich^{s,l,0,b}(M) \xra{N_\tface} \Psi_{\bop,\scop}^{s,l,b}(\tface) \to 0
\]
(where we take advantage of the required smoothness of Schwartz kernels down to $\tface_\chop$) and
\[
  N_\cface \colon \Psich^{s,0,\alpha,b}(M) \to h^{-\alpha}\CI\bigl([0,1)_h;\Psi_{\bop,I}^s({}^+N\pa M)\bigr).
\]

For precise elliptic theory, we also need the large $\chop$-calculus: for $\cE=(\cE_\lb,\cE_\ff,\cE_\rb,\cE_\tface)$, we put
\[
  \Psich^{-\infty,\cE}(M) := \cA_\phg^{\cE}(M^2_\chop),
\]
where the index set $\cE_H$ is assigned to the boundary hypersurface $H_\chop$ for $H=\lb,\ff,\rb,\tface$, while the trivial index set $\emptyset$ is assigned to the remaining boundary hypersurfaces $\sface$ and $\dface$.

\begin{prop}[Composition in the large $\chop$-calculus]
\label{PropAchComp}
  Let $P\in\Psich^s(M)+\Psich^{-\infty,\cE}(M)$ and $Q\in\Psich^{s'}(M)+\Psich^{-\infty,\cF}(M)$, where $\cE=(\cE_\lb,\cE_\ff,\cE_\rb,\cE_\tface)$ and $\cF=(\cF_\lb,\cF_\ff,\cF_\rb,\cF_\tface)$ are two collections of index sets. Suppose $\Re(\cE_\rb+\cF_\lb)>0$. Then the composition $P\circ Q$ is well-defined, and $P\circ Q \in \Psich^{s+s'}(M) + \Psich^{-\infty,\cG}(M)$, where $\cG=(\cG_\lb,\cG_\ff,\cG_\rb,\cG_\tface)$ with
  \begin{align*}
    \cG_\lb &= \cE_\lb \extcup(\cE_\ff+\cF_\lb), \\
    \cG_\ff &= (\cE_\ff+\cF_\ff) \extcup (\cE_\lb+\cF_\rb), \\
    \cG_\rb &= (\cE_\rb+\cF_\ff) \extcup \cF_\rb, \\
    \cG_\tface &= \cE_\tface+\cF_\tface.
  \end{align*}
  Furthermore, when the index sets $\cF_0,\cF_1\subset\C\times\N_0$ are such that $\Re(\cE_\rb+\cF_0)>0$, the composition of $P$ as above and $Q\in\CIdot([0,1)_h;\Psi^{-\infty,(\cF_0,\cF_1)}(M))$ is well-defined, with
  \begin{equation}
  \label{EqAchCompFullyRes}
    P\circ Q \in \CIdot([0,1)_h;\Psi^{-\infty,(\cE_\lb\extcup(\cE_\ff+\cF_0),\cF_1)}(M)\bigr).
  \end{equation}
\end{prop}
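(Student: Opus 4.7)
The proof follows the pullback--pushforward strategy standard in geometric microlocal analysis. The plan is to construct a $\chop$-triple space $M^3_\chop$ together with three b-fibrations $\pi_F,\pi_S,\pi_C\colon M^3_\chop\to M^2_\chop$ realizing the three projections $M^3\to M^2$ that forget the first, last, and middle factor respectively, and then to write the Schwartz kernel of the composition as $K_{P\circ Q}=(\pi_C)_*\bigl(\pi_F^*K_P\cdot\pi_S^*K_Q\bigr)$. The composition formula and the assignment of index sets will then follow from the pullback theorem (polyhomogeneity is preserved under interior b-maps, with exponents multiplied by the integers $e(H,H')$) and Melrose's pushforward theorem \cite{MelrosePushfwd,MelroseDiffOnMwc} (polyhomogeneous conormal distributions push forward along b-fibrations, with two source hypersurfaces mapping to the same target giving $\extcup$ in the index sets).

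First I would dispose of the symbolic part. Composition of $P_0\in\Psich^s(M)$ and $Q_0\in\Psich^{s'}(M)$ is determined, microlocally near $\diag_\chop$, by the usual composition of amplitudes: with respect to a tubular neighborhood of $\diag_\chop$ in $M^2_\chop$ the relevant projections are b-submersions with fibers modelled on $\Tch^*M$, and the symbol calculation proceeds essentially as in the closed-manifold case, yielding an element of $\Psich^{s+s'}(M)$ plus a Schwartz kernel supported away from $\diag_\chop$ that we absorb into the residual part. Crossed terms, in which one of $P,Q$ is symbolic and the other residual, reduce to the fully residual case since the symbolic factor can be cut off near $\diag_\chop$ and the remainder has a polyhomogeneous kernel on $M^2_\chop$. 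It therefore suffices to prove the statement for $P\in\Psich^{-\infty,\cE}(M)$ and $Q\in\Psich^{-\infty,\cF}(M)$.

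The main obstacle is the construction of $M^3_\chop$ and the verification that the three partial projections are b-fibrations. Starting from $[0,1)_h\times M^3$, one first blows up the deepest corner $\{0\}\times(\pa M)^3$, then the three codimension-two faces $\{0\}\times((\pa M)^2\times M)$ and the two symmetric images, then the triple partial diagonals in $\{0\}\times(\pa M)^3$, and finally the three partial diagonals $\{0\}\times(\diag_{ij}\times M_k)$ in $\{0\}\times M^3$; the order must be chosen so that each blown-up submanifold on the source is either the preimage of a blown-up submanifold in $M^2_\chop$ under the relevant projection, or is mapped diffeomorphically to its image modulo transverse factors. This is the $\chop$-analogue of the b-triple space \eqref{EqAbCompTriple} and can be verified by the same commutation arguments (via \cite[Proposition~5.11.2]{MelroseDiffOnMwc}) that justify the b-case. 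Once the b-fibration property is in place, the index set calculation is mechanical: under $\pi_C$, the hypersurface $\lb_\chop\subset M^2_\chop$ is covered by the lift of $\pa M\times M\times M$ (contributing $\cE_\lb$ through $\pi_F^*$) and the lift of $(\pa M)^2\times M$ (where $\pi_F$ contributes $\cE_\ff$ and $\pi_S$ contributes $\cF_\lb$), which the pushforward theorem combines into $\cG_\lb=\cE_\lb\extcup(\cE_\ff+\cF_\lb)$; symmetric inspection gives $\cG_\rb$, and the front face $\ff_\chop$ is covered by the lifts of $(\pa M)^3$ and of $\pa M\times M\times\pa M$, giving $\cG_\ff=(\cE_\ff+\cF_\ff)\extcup(\cE_\lb+\cF_\rb)$. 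The transition face $\tface_\chop$ is covered by a single triple hypersurface under $\pi_C$, which explains why $\cG_\tface=\cE_\tface+\cF_\tface$ has no $\extcup$; the convergence of the resulting distribution near $\ff_\chop$ is exactly the hypothesis $\Re(\cE_\rb+\cF_\lb)>0$, which ensures integrability along the fibers of $\pi_C$ over the interior. The final claim \eqref{EqAchCompFullyRes} is handled on the much simpler triple space $M^2_\chop\times_{[0,1)_h}M$, since $Q$ is Schwartz at $h=0$ and lives on $M^2$ rather than $M^2_\bop$, so no diagonal or triple-corner resolution in the $Q$-factor is required; the same pushforward argument then yields the stated left index set.
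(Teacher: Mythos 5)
Your overall strategy (pullback--pushforward on a $\chop$-triple space for the first part, and a degenerate triple space for the fully residual case) is the standard and correct one; for the first part the paper simply cites \cite[Proposition~3.9]{HintzConicPowers}, so your sketch is in the spirit of the outsourced proof, and your final index-set bookkeeping (which faces cover $\lb_\chop$, $\ff_\chop$, $\rb_\chop$, $\tface_\chop$ under $\pi_C$, and why the integrability condition is $\Re(\cE_\rb+\cF_\lb)>0$) is right. However, your description of the triple space itself would fail as written: starting from $[0,1)_h\times M^3$ and blowing up only the $\{0\}\times(\cdot)$ slices of the corners does not reproduce a space whose partial projections land in $M^2_\chop$, because $M^2_\chop=[[0,1)_h\times M^2_\bop;\{0\}\times\ff_\bop;\{0\}\times\diag_\bop]$ contains the blow-up of $[0,1)_h\times(\pa M)^2$ at \emph{all} $h$, not just at $h=0$. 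The correct starting point is $[0,1)_h\times M^3_\bop$ (the full b-triple space \eqref{EqAbCompTriple} in the $M$-factors), followed by the blow-ups over $h=0$ of the triple and partial b-front faces and then of the partial b-diagonals. You acknowledge that the order of blow-ups and the verification that $\pi_F,\pi_S,\pi_C$ are b-fibrations is "the main obstacle" and then defer it to "the same commutation arguments"; that deferred verification is where essentially all of the content of the first part lies.

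For the second claim \eqref{EqAchCompFullyRes}, your fibered product $M^2_\chop\times_{[0,1)_h}M$ does not admit a b-fibration onto the target as it stands: the faces $\tface_\chop\times_{[0,1)}M$ and $\dface_\chop\times_{[0,1)}M$ are mapped by the composition projection into codimension-two corners over $h=0$. The paper's one-sentence proof is exactly the observation that repairs this: since $Q$ (hence the product of kernels) is Schwartz at $h=0$, one may multiply the kernel of $P$ by a function in $\CIdot([0,1)_h)$ and blow \emph{down} $\tface_\chop$ and $\dface_\chop$, landing in $\CIdot([0,1)_h;\Psib^s(M)+\Psib^{-\infty,(\cE_\lb,\cE_\ff,\cE_\rb)}(M))$, after which \eqref{EqAchCompFullyRes} is the $h$-parametrized version of the last part of Proposition~\ref{PropAbComp} on the simple triple space $M^2_\bop\times M$. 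Your argument implicitly relies on this blow-down but does not state it; you should make it explicit, since without it the pushforward theorem does not apply to your $\pi_C$.
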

\begin{proof}
  See \cite[Proposition~3.9]{HintzConicPowers} for the first part. The proof of~\eqref{EqAchCompFullyRes} reduces to the last part of Proposition~\ref{PropAbComp} by noting that the product of the Schwartz kernel of $P$ with any function in $\CIdot([0,1)_h)$ is an element of $\CIdot([0,1)_h;\Psib^s(M)+\Psib^{-\infty,(\cE_\lb,\cE_\ff,\cE_\rb)}(M))$ (i.e.\ $\tface_\chop$ and $\dface_\chop\subset M^2_\chop$ can be blown down, leaving one with a Schwartz kernel on $[0,1)_h\times M^2_\bop$, conormal to $[0,1)\times\diag_\bop$, which vanishes rapidly at $h=0$).
\end{proof}

\begin{thm}[Inverses in the $\chop$-calculus]
\label{ThmAchInv}
  Fix a positive $\chop$-density on $M$ and a positive $(\bop,\scop)$-density on $\tface$. Let $s,b\in\R$ and suppose $P=(P_h)_{h\in(0,1)}\in\Psich^{s,0,0,b}(M)$ has an elliptic principal symbol and $h$-independent $\cface$-normal operator $N_\cface(P)$. Let $\alpha\in\R$ be such that $\alpha\notin\Re\Specb(N_\cface(P))$. Suppose that
  \[
    N_\tface(P) \colon H_{\bop,\scop}^{s',\alpha,b'}(\tface) \to H_{\bop,\scop}^{s'-s,\alpha,b'-b}(\tface)
  \]
  is invertible for some (thus all) $s',b'\in\R$. Then there exists $h_0>0$ so that $P_h\colon\Hb^{s',\alpha}(M)\to\Hb^{s'-s,\alpha}(M)$ is invertible for $h\in(0,h_0]$. Moreover, the inverse $P^{-1}=(P_h^{-1})_{h\in(0,h_0)}$ is an element of the large $\chop$-calculus,
  \[
    P^{-1} \in \Psich^{-s,0,0,-b}(M) + \Psich^{-\infty,(\cE^{+,(0)},\cE^{(0)},\cE^{-,(0)},\N_0)}(M),
  \]
  where the index sets are as in Theorem~\usref{ThmAbPx}, i.e.\ $\cE^{\pm,(0)}$ is given by Definition~\usref{DefAbIndexSets} for $\cE^\pm=\cE^\pm(N_\cface(P),\alpha)$, and $\cE^{(0)}$ is defined by~\eqref{EqAbPxInd}.
\end{thm}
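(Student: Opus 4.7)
The strategy closely parallels that of Theorem~\ref{ThmAscbtEll}, with the roles of $\zface_\scbtop$ and $\tface_\scbtop$ there replaced by $\cface_\chop$ and $\tface_\chop$ here, and the low-energy limit replaced by the semiclassical limit. The plan is to build a right parametrix by successively solving away errors at the principal symbol level, then at $\tface_\chop$ and $\cface_\chop$, and finally at $\sface_\chop$ via a Neumann series; a left parametrix is then obtained by taking adjoints of a right parametrix for $P^*$, and a standard argument identifies the two.

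First I would use ellipticity of $\sigmach^{s,0,0,b}(P)$ to construct a symbolic parametrix $Q_0\in\Psich^{-s,0,0,-b}(M)$ with $P Q_0=I-R_0$, where $R_0\in\Psich^{-\infty,(\emptyset,\emptyset,\emptyset,\N_0)}(M)$; that is, $R_0$ is smooth down to $\tface_\chop$ but vanishes to infinite order at $\cface_\chop$, $\lb_\chop$, $\rb_\chop$, and $\sface_\chop$. Next, the invertibility hypothesis on $N_\tface(P)$, combined with an analogue of Theorem~\ref{ThmAbPx} in the $(\bop,\scop)$-setting on $\tface\cong\ol{{}^+N}\pa M$ (scattering symbolic estimates near $\tface\cap\sface$, b-parametrix near $\tface\cap\cface$), yields
\[
  N_\tface(P)^{-1}\in\Psi_{\bop,\scop}^{-s,0,-b}(\tface)+\Psi_{\bop,\scop}^{-\infty,(\cE^{+,(0)},\cE^{(0)},\cE^{-,(0)})}(\tface),
\]
with the index sets at the b-end determined by $\Specb(N_{\cface\cap\tface}N_\tface(P))$; by an analogue of Lemma~\ref{LemmaAscbtNorm}, these are the same as $\cE^\pm=\cE^\pm(N_\cface(P),\alpha)$, by hypothesis $(H)$. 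Picking $P_-\in\Psich^{-s,0,0,-b}(M)+\Psich^{-\infty,(\cE^{+,(0)},\cE^{(0)},\cE^{-,(0)},\N_0)}(M)$ with $N_\tface(P_-)=N_\tface(P)^{-1}$ and $N_\cface(P_-)=N_\cface(P)^{-1}$ (possible since the two normal operators are compatible at $\cface\cap\tface$), and using Proposition~\ref{PropAchComp} to compose $Q_1:=P_- R_0$, I obtain
\[
  R_1:=I-P(Q_0+Q_1)\in\Psich^{-\infty,(\cE^{+,(0)},\cE^{(0)},\cE^{-,(0)},\N_0+1)}(M),
\]
i.e. the error now vanishes at $\tface_\chop$ and, by the leading-order matching with the b-inverse at $\cface_\chop$, has the desired asymptotic behavior there.

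The next step is to solve away $R_1$ at $\cface_\chop$ to infinite order. Rather than treating $\cface_\chop$ as the total space of a fibration (which here it is, at each fixed $h$), I would use the robust method sketched in Remark~\ref{RmkAbPxlb}: for each $h$, apply the b-inverse of $N_\cface(P)$ (now with a weight shifted by $1,2,3,\dots$, avoiding $\Re\Specb$) iteratively to the current error, asymptotically summing the resulting sequence at $\cface_\chop$ using Proposition~\ref{PropAchComp}. This produces $Q_2\in\Psich^{-\infty,(\cE^{+,(0)},\cE^{(0)},\emptyset,\N_0+1)}(M)$ with
\[
  R_2:=I-P(Q_0+Q_1+Q_2)\in\Psich^{-\infty,(\emptyset,\emptyset,\emptyset,\N_0)}(M)\cap h^\infty\Psich^{-\infty}(M);
\]
in fact $R_2$ vanishes to infinite order at $\sface_\chop$ as well, since $Q_2$ was constructed to absorb $R_1$ including its nontrivial behavior at $\cface_\chop\cap\sface_\chop$. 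Consequently the Schwartz kernel of $R_2$ is a smooth right density on $[0,1)_h\times M^2$, vanishing to infinite order at $h=0$ and at $[0,1)\times\pa(M^2)$, so $(R_2)_h$ has arbitrarily small $L^2$-operator norm for small $h$. A Neumann series inverts $I-R_2$, giving $(I-R_2)^{-1}=I+\tilde R_2$ with $\tilde R_2$ of the same class, and $Q_R:=(Q_0+Q_1+Q_2)(I+\tilde R_2)$ is a right parametrix whose class is, after tracking index sets via Proposition~\ref{PropAchComp}, exactly $\Psich^{-s,0,0,-b}(M)+\Psich^{-\infty,(\cE^{+,(0)},\cE^{(0)},\cE^{-,(0)},\N_0)}(M)$.

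The main obstacle in my view is the bookkeeping at the corner $\cface_\chop\cap\tface_\chop$: the construction of $P_-$ requires the leading values of $N_\cface(P)^{-1}$ and $N_\tface(P)^{-1}$ at their common corner to agree, and one must verify that the generated error $R_1$ really does acquire one extra order of vanishing at $\cface_\chop$ (not just at $\tface_\chop$), which is what feeds the subsequent iteration. This is the content of the identification in Lemma~\ref{LemmaAscbtNorm}, which here translates into the statement that the b-normal operator at $\cface\cap\tface$ of $N_\tface(P)$ coincides (via the natural homogeneous-degree-$(-1)$ identification of normal bundles) with $N_{\pa M}(N_\cface(P))$. Once this compatibility is in place, the Fredholm conclusion and hence invertibility of $P_h$ for small $h$ between $\Hb^{s',\alpha}$-spaces follows, since Lemma~\ref{LemmaAschInv}-type arguments applied to the scattering semiclassical behavior at $\sface_\chop$ and the identification~\eqref{EqAchVbDiffch} translate the $\Hb$-mapping statement into one that can be read off from the $\chop$-calculus parametrix.
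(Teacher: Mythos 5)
Your overall strategy — symbolic parametrix, inversion of $N_\tface(P)$, inversion of $N_\cface(P)$ via the Mellin transform and elliptic b-theory, asymptotic summation, and a final Neumann series exploiting smallness for small $h$ — is the same as the paper's, and your organizational choice of inverting both normal operators at once via a single $P_-$ (as in Theorem~\ref{ThmAscbtEll}) rather than sequentially is immaterial. However, two of your index-set claims are wrong, and one of them would, taken literally, contradict the statement being proved.

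First, the symbolic parametrix error $R_0=I-P Q_0$ does \emph{not} vanish to infinite order at $\ff_\chop$: symbolic iteration only improves the order at $\dface_\chop$, so $R_0\in\Psich^{-\infty,0,0,-\infty}(M)$, with Schwartz kernel smooth and generically nonvanishing down to $\ff_\chop$ and $\tface_\chop$. Second, and more seriously, the claims that $Q_2$ has empty index set at $\rb_\chop$ and that $R_2$ vanishes to infinite order at $[0,1)\times\pa(M^2)$ are false. Exactly as in the b-calculus (Theorem~\ref{ThmAbPx}), a nontrivial right-boundary index set of type $\cE^{-,(0)}$ is introduced by $N_\tface(P)^{-1}$ and by the Mellin-transform step at $\ff_\chop$, and it persists through every subsequent composition: Proposition~\ref{PropAchComp} gives $\cG_\rb=(\cE_\rb+\cF_\ff)\extcup\cF_\rb$, which never trivializes $\cF_\rb$. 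The correct final error is of the form $R=I-P Q\in\Psich^{-\infty,(\emptyset,\emptyset,\cF^-,\emptyset)}(M)$ with $\Re\cF^->-\alpha$, not an element of $h^\infty\Psich^{-\infty,-\infty,-\infty,-\infty}(M)$. This is not cosmetic: if $R_2$ and $\tilde R_2$ really were trivial at $\rb_\chop$, the resulting $P^{-1}$ would have empty index set there, contradicting the $\cE^{-,(0)}$ appearing in the theorem. What saves the argument is that such an $R$, while nontrivial at $\rb_\chop$, is still bounded on $\Hb^{0,\alpha}(M)$ (since $\Re\cF^-+\alpha>0$) with operator norm $O(h^\infty)$ owing to the infinite-order vanishing at $\ff_\chop$, $\tface_\chop$, $\lb_\chop$, and $\sface_\chop$; the Neumann series then proceeds as you intend. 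A last small point: in the $\chop$-setting the identification of ${}^+N(\tface\cap\cface)\subset\tface$ with ${}^+N\pa M$ is $\hat x=x/h$, homogeneous of degree $+1$ rather than $-1$; this is why the b-end index sets of $N_\tface(P)^{-1}$ appear in the unflipped order $(\cE^{+,(0)},\cE^{(0)},\cE^{-,(0)})$, in contrast to the $\scbtop$-case you are modeling this on.
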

\begin{proof}
  This is a simple generalization of \cite[Theorem~3.10]{HintzConicPowers}. We shall thus be brief. Let $Q_0\in\Psich^{-s,0,0,-b}(M)$ be a symbolic parametrix with error term
  \[
    R_0 = I - P Q_0 \in \Psich^{-\infty,0,0,-\infty}(M).
  \]
  On the level of $\tface$-normal operators, we have
  \[
    N_\tface(P)^{-1} \in \Psi_{\bop,\scop}^{-s,0,-b}(\tface) + \Psi_{\bop,\scop}^{-\infty,(\cE^{+,(0)},\cE^{(0)},\cE^{-,(0)})}(\tface),
  \]
  where the index sets refer to the boundary hypersurfaces at the b-end of the $(\bop,\scop)$-double space of $\tface$ (and at all other boundary hypersurfaces the index sets are trivial); and $N_\tface(R_0)\in\Psi_{\bop,\scop}^{-\infty,0,-\infty}(M)$. Thus, we can pick
  \[
    Q_1 \in \Psich^{-\infty,(\cE^{+,(0)},\cE^{(0)},\cE^{-,(0)},\N_0)}(M),\qquad
    N_\tface(Q_1) = N_\tface(P)^{-1}N_\tface(R_0);
  \]
  the remaining error $R_1=I-P(Q_0+Q_1)\in\Psich^{-\infty,(\cE^{+,(0)},\cE^{(0)},\cE^{-,(0)},\N_0+1)}(M)$ vanishes at $\tface_\chop$.
  
  By inverting $N_\cface(P)$ (via the Mellin transform), one can then solve away the error at $\ff_\chop$ to leading order as in the elliptic b-setting; subsequently one solves away the remaining error to infinite order at $\lb_\chop$. The remaining error can be solved away using an asymptotic Neumann series argument; altogether, this argument produces $Q\in\Psich^{-s,0,0,-b}(M)+\Psich^{-\infty,(\cF^+,\cF,\cF^-,\N_0)}(M)$, where $\cF^+,\cF,\cF^-$ are index sets with $\Re\cF^\pm>\pm\alpha$ and $\Re\cF\geq 0$, so that $R=I-P Q\in\Psich^{-\infty,(\emptyset,\emptyset,\cF^-,\emptyset)}(M)$; this has small operator norm on $L^2(M)$ for sufficiently small $h>0$, and therefore $I-R$ is invertible via a Neumann series, with $\tilde R$ in $(I-R)^{-1}=I+\tilde R$ of the same class as $R$. This proves the invertibility of $P_h$ for such small $h$; that the index sets of $P^{-1}$ at $\lb_\chop$, $\ff_\chop$, and $\rb_\chop$ are $\cE^{+,(0)}$, $\cE^{(0)}$, and $\cE^{-,(0)}$, respectively, follows from Theorem~\ref{ThmAbPx}.
\end{proof}

Finally, upon fixing a weighted $\chop$-density on $M_\chop$ to define the space $H_{\cop,h}^0(M)$ for $h\in(0,1)$ as $L^2(M)$ with respect to the restriction of the chosen density to the level set of $h$, we can define weighted $\chop$-Sobolev spaces
\[
  H_{\cop,h}^{s,\alpha,l,b}(M) = \rho_\cface^\alpha\rho_\tface^l\rho_\sface^b H_{\cop,h}^s(M)
\]
in the usual fashion; and $\chop$-ps.d.o.s are then uniformly (in $h$) bounded linear operators between such spaces. We recall from \cite[Corollary~3.7]{HintzConicProp} the following analogue of Proposition~\ref{PropAscbtRel}, for simplicity stated for a particular choice of density (for other choices of densities, one merely needs to shift the weights appropriately):

\begin{prop}[Relationships of Sobolev spaces]
\label{PropAchRel}
  Fix a positive $\chop$-density on $M_\chop$, and a positive $(\bop,\scop)$-density on $\tface$. Fix a collar neighborhood $[0,1)_\rho\times\pa M$ of $\pa M\subset M$, and consider the family of maps $\phi_h\colon(\hat\rho,\omega)\mapsto(h\hat\rho,\omega)\in M$ for $h\in(0,1)$. Let $\chi\in\CIc([0,1)_h\times[0,1)_\rho\times\pa M)$. Then we have a uniform equivalence of norms
  \[
    \|\chi u\|_{H_{\cop,h}^{s,\alpha,l,b}(M)} \sim h^{-l} \| \phi_h^*(\chi u) \|_{H_{\bop,\scop}^{s,\alpha,b-l}(\tface)}.
  \]
\end{prop}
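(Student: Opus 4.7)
The plan is to mirror the strategy sketched just after Proposition~\ref{PropAscbtRel}, carrying out the argument in the order of weight bookkeeping, the $L^2$ base case, and extension to general Sobolev order.

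First I would reduce to the case $\alpha=l=b=0$. In local coordinates $x\ge 0$, $y\in\R^{n-1}$ near a boundary point, one may take $\rho_\cface=x/(x+h)$, $\rho_\tface=x+h$, and $\rho_\sface=h/(x+h)$; under $\phi_h$ (so $x=h\hat x$) these become $\hat x/(\hat x+1)$, $h(\hat x+1)$, and $1/(\hat x+1)$, respectively. Thus $\rho_\cface$ and $\rho_\sface$ pull back to defining functions of the b-end $\tface\cap\cface$ and the sc-end $\tface\cap\sface$ of $\tface$, while $\rho_\tface\sim h$ produces the prefactor $h^{-l}$; the leftover smooth factor $(\hat x+1)^{-l}$ combines with the weight $b$ at the sc-end to yield $b-l$, matching the target space $H_{\bop,\scop}^{s,\alpha,b-l}(\tface)$. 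So the weighted claim reduces to the unweighted one, and by duality it suffices to treat $s\ge 0$.

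Second, for $s=0$ a direct change of variables gives the claim. The restriction $\nu_h$ of a positive $\chop$-density to $\{h\}$ is locally a smooth positive multiple of $((x+h)/h)^n x^{-1}\,|dx\,dy|$, and $\phi_h^*\nu_h$ equals $(\hat x+1)^n\hat x^{-1}\,|d\hat x\,dy|$ times the same smooth factor, which is (up to a smooth positive function) a fixed $(\bop,\scop)$-density on $\tface$ (b-type near $\hat x=0$, sc-type near $\hat x=\infty$), independent of $h$. Integrating yields $\|\chi u\|_{L^2(M,\nu_h)}\sim\|\phi_h^*(\chi u)\|_{L^2(\tface,\mu)}$.

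Third, for integer $s>0$ I would test with vector fields: the local frame $\tfrac{h}{x+h}x\partial_x$, $\tfrac{h}{x+h}\partial_{y^j}$ of $\Vch(M)$ pulls back under $\phi_h$ to $\tfrac{\hat x}{\hat x+1}\partial_{\hat x}$ and $\tfrac{1}{\hat x+1}\partial_{y^j}$, which is a local frame of $(\bop,\scop)$-vector fields on $\tface$ (b-type near $\hat x=0$, sc-type near $\hat x=\infty$); combining with the $s=0$ result gives the equivalence. For general $s\in\R$, one fixes an elliptic $A_0\in\Psi_{\bop,\scop}^s(\tface)$ and chooses elliptic $A\in\Psich^{s,0,0,0}(M)$ with $N_\tface(A)=A_0$, which is possible by surjectivity of the $\tface$-normal operator map. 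The Schwartz kernel of $A$, restricted to a neighborhood of $\tface_\chop$ and pulled back by $(\phi_h,\phi_h)$, equals that of $A_0$ modulo an error vanishing at $\tface_\chop$; so $\|A\chi u\|_{L^2}$ and $\|A_0\phi_h^*(\chi u)\|_{L^2}$ agree modulo lower-order contributions controlled by the $L^2$ norm, and the testing characterization closes the argument. Duality then covers $s<0$. The main obstacle is the last step: making precise that $\phi_h$ identifies $\chop$-ps.d.o.s supported near $\tface_\chop$ with $(\bop,\scop)$-ps.d.o.s on $\tface$ modulo errors of lower order, which rests on identifying $\tface_\chop$ (as a subset of $M^2_\chop$) with a suitable compactified form of the $(\bop,\scop)$-double space of $\tface$. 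This technicality can be bypassed by combining the integer-order case with complex interpolation and duality.
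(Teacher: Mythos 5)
Your proposal is correct and follows essentially the same route as the paper, which establishes the $L^2$ case by a change-of-variables calculation and handles general $s$ via normal-operator testing as in \cite[Corollary~3.7]{HintzConicProp} (in the spirit of the proof of Proposition~\ref{PropAscbtRel}). For the last step you can avoid both the error-term issue you flag and the appeal to interpolation by choosing $A$ so that its Schwartz kernel near $\supp\chi\times\supp\chi$ is \emph{exactly} the dilation-invariant extension of that of $A_0$ (using that $\tface_\chop\subset M^2_\chop$ is naturally the $(\bop,\scop)$-double space of $\tface$), so that $\phi_h$ intertwines $A$ and $A_0$ with no remainder.
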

\begin{proof}
  This is true in the $L^2$-case $s=0$ by a change of variables calculation, and then follows for general $s$ as in the reference.
\end{proof}

\subsection{Fourier transforms of non-product type families of distributions}
\label{SsAF}

In the inversion of normal operators in the edge-b- and 3b-calculi, we will encounter the following situation: we are given a conormal function not on $[0,1)_x\times\ol\R_\lambda$ (which would be a parameterized family of symbols on $\R_\lambda$), but on its blow-up at $\{0\}\times\pa\ol\R$, and need to control its Fourier transform in $\lambda$. Since $[[0,1)\times\ol\R;\{0\}\times\pa\ol\R]\to[0,1)$ is not a smooth fibration anymore, this is a nontrivial task; see Proposition~\ref{PropAF1}. We also encounter similar situations where $[0,1)\times\ol\R$ is instead resolved at $\{(0,0)\}$ (see Proposition~\ref{PropAF2}). Special cases of the last type of result were used in \cite[\S3.2]{HintzPrice} to compute inverse Fourier transforms of distributions on what is called the scattering-b-transition single space in~\S\ref{SsAscbt}.

In this section, for functions $a=a(x,\lambda)$, where $x$ is a parameter (or absent altogether), we write
\[
  \hat a(x,y)=\int_\R e^{i\lambda y}a(x,\lambda)\,\dd\lambda.
\]
(This is the inverse Fourier transform in $\lambda$ up to a factor of $2\pi$. Since in this section signs in oscillatory exponentials as well as factors of $2\pi$ will be irrelevant, we shall talk about $a\mapsto\hat a$ as the `Fourier transform' for brevity.) The following two auxiliary results are classical (except for the notation---we write $\cA^z(\ol\R)=S^{-z}(\R)$ for the space of Kohn--Nirenberg symbols of order $z$). We include proofs for the sake of completeness, and also as a template for proofs later in this section. We use the notation~\eqref{EqAConExtcup}.

\begin{lemma}[Fourier transform of symbols: conormal case]
\label{LemmaAFSymb}
  Let $z\in\R$.
  \begin{enumerate}
  \item Let $a\in\cA^z(\ol\R)$. Then
  \[
    \hat a\in
    \begin{cases}
      \cA^{(z-1,\infty)}(\pm[0,\infty]), & z<1, \\
      \cA_\phg^{(\N_0,\emptyset)}(\pm[0,\infty])+\cA^{((z-1,1),\infty)}(\pm[0,\infty]),& z\in\N, \\
      \cA_\phg^{(\N_0,\emptyset)}(\pm[0,\infty])+\cA^{(z-1,\infty)}(\pm[0,\infty]),& 1<z\notin\N.
    \end{cases}
  \]
  \item\label{IfAFSymbLo} Suppose $z>-1$, and extend $a\in\cA^{(z,\infty)}(\pm[0,\infty])$ by $0$ to $\mp(0,\infty)$ as an $L^1(\R)$-function. Then $\hat a\in\cA^{z+1}(\ol\R)$.
  \end{enumerate}
\end{lemma}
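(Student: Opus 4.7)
The plan for both parts is a smooth dyadic (Littlewood--Paley) decomposition in the appropriate variable, combined with rescaling. The guiding observation is that conormality of weight $z$ at an endpoint is equivalent to the statement that on each dyadic annulus the function, appropriately rescaled, lies in a fixed bounded subset of $\CIc([1/2,2])$; the Fourier transform of such a rescaled bump is uniformly in $\sS(\R)$, and summing the rescaled contributions against the weight $2^{-kz}$ then reproduces the claimed behavior via elementary geometric-series estimates.

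For Part~(2), assume without loss of generality $\supp a\subset[0,\infty)$. Fix a smooth dyadic partition $\{\chi_k\}_{k\geq 0}$ of $(0,1]$ with $\supp\chi_k\subset[2^{-k-1},2^{-k+1}]$, and split $a=a_\infty+\sum_{k\geq 0}\chi_k a$, where $a_\infty\in\sS(\R)$ accounts for $y\geq 1/2$. Conormality of $a$ gives $\chi_k a(y)=2^{-kz}\phi_k(2^k y)$ with $\{\phi_k\}$ bounded in $\CIc([1/2,2])$, hence
\[
  \widehat{\chi_k a}(\lambda)=2^{-k(z+1)}\hat\phi_k(2^{-k}\lambda),\qquad \{\hat\phi_k\}\ \text{bounded in}\ \sS(\R).
\]
For $|\lambda|\leq 1$ the sum over $k$ converges since $z+1>0$. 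For $|\lambda|>1$, I would split at $2^k\sim|\lambda|$: on the $2^k>|\lambda|$ side one uses $|\hat\phi_k(2^{-k}\lambda)|\lesssim 1$ together with $\sum_{2^k>|\lambda|}2^{-k(z+1)}\lesssim|\lambda|^{-(z+1)}$, while on the $2^k\leq|\lambda|$ side one uses rapid decay $|\hat\phi_k(2^{-k}\lambda)|\lesssim_N (2^{-k}|\lambda|)^{-N}$ with $N$ large, giving $|\lambda|^{-N}\sum_{2^k\leq|\lambda|}2^{k(N-z-1)}\lesssim|\lambda|^{-(z+1)}$. Hence $|\hat a(\lambda)|\lesssim\la\lambda\ra^{-(z+1)}$, and full conormal bounds follow by bootstrapping via $(\lambda\pa_\lambda)\hat a=i\lambda\widehat{ya}$, noting that $y^k a\in\cA^{(z+k,\infty)}(\pm[0,\infty])$ for every $k\in\N_0$.

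For Part~(1), I set up the analogous partition on the $\lambda$-side: $a=a_0+\sum_{k\geq 0}(a_k^++a_k^-)$ with $a_0\in\CIc(\R)$ and $a_k^\pm(\lambda)=\chi_k(\pm\lambda)a(\lambda)$; rescaling gives $\hat a_k^\pm(y)=2^{k(1-z)}\hat\psi_k^\pm(2^k y)$ with $\{\hat\psi_k^\pm\}$ uniformly Schwartz. Schwartz decay of $\hat a$ at $y=\pm\infty$ is immediate once $N>1-z$ is taken in the Schwartz bound of $\hat\psi_k^\pm$. The delicate behavior is at $y=0^\pm$: splitting the sum at $2^k\sim 1/|y|$, the large-$k$ tail produces $O(|y|^{z-1})$ by the Schwartz-decay argument from Part~(2), while the small-$k$ portion, after Taylor-expanding $\hat\psi_k^\pm(2^k y)=\sum_{j<J}\hat\psi_k^{\pm,(j)}(0)(2^k y)^j/j!+O((2^k y)^J)$, reduces to evaluating the dyadic sums
\[
  S_j(y):=|y|^j\sum_{2^k\leq 1/|y|} 2^{k(1-z+j)},\qquad 0\leq j<J.
\]
These sums yield $c_j|y|^{z-1}$ when $1-z+j<0$ (dominant contribution at $k\sim-\log_2|y|$), a logarithm $|y|^j\log(1/|y|)$ precisely when $1-z+j=0$ (i.e.\ when $z=j+1\in\N$), and a convergent-series contribution $c_j|y|^j$ when $1-z+j>0$. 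Assembling the contributions then matches the three alternatives in the statement: for $z<1$ only the first type appears, producing the conormal remainder of weight $z-1$; for $z=1$ the $j=0$ sum generates a logarithm (the index set $0\extcup 0=(0,1)$); and for $z>1$ the convergent terms $c_j|y|^j$ with $j<z-1$ build up the polyhomogeneous $\N_0$-expansion, with the resonant term $j=z-1$ contributing an extra $|y|^{z-1}\log|y|$ exactly when $z\in\N$.

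\textbf{Main obstacle.} The principal care is the resonance bookkeeping in Part~(1)---tracking precisely when the dyadic sum $S_j$ transitions between the three regimes to produce the logarithmic index-set contributions at integer $z$---together with verifying that the tail estimate and the Taylor-expansion remainder both contribute only to the $|y|^{z-1}$-conormal component (with the appropriate log factor at integer $z$). By comparison, the uniform symbol bounds on the rescaled profiles $\phi_k$, $\psi_k^\pm$ and the bootstrap for higher conormal derivatives are routine.
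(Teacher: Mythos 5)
Your argument is correct in substance but takes a genuinely different route from the paper. The paper works directly with the oscillatory integral: it splits $\int e^{i\lambda y}a\,\dd\lambda$ at the scale $|\lambda|=|y|^{-1}$, bounds the low-frequency piece by brute force and the high-frequency piece by $N$-fold integration by parts, and then handles $z\geq 1$ in part (1) by reducing to $z<1$ via $D_y^N\hat a=\wh{\lambda^N a}$ followed by $N$-fold integration from $y=1$, which produces the $\N_0$-expansion as constants of integration. Your Littlewood--Paley version packages the same scale-splitting dyadically, and the rescaled profiles $\phi_k,\psi_k^\pm$ being uniformly bounded in $\CIc([1/2,2])$ makes the conormal bounds on derivatives essentially automatic (each $y\pa_y$ or $\lambda\pa_\lambda$ preserves the uniform Schwartz bounds on $\hat\phi_k,\hat\psi_k^\pm$), which is cleaner than the paper's separate bootstrap via $(y\pa_y)^N\hat a=((-\pa_\lambda\lambda)^Na)\ftrans$. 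The more substantive difference is in extracting the polyhomogeneous part: Taylor-expanding $\hat\psi_k^\pm$ at $0$ produces the coefficients $c_j=\sum_k 2^{k(1-z+j)}\hat\psi_k^{\pm,(j)}(0)/j!$ directly (with the truncation of the sum at $2^k\le 1/|y|$ contributing only an $O(|y|^{z-1})$ error), whereas the paper never sees these coefficients explicitly. Both approaches identify the logarithm at integer $z$ as the resonance between the dyadic weight and the Taylor order.

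One slip to fix: in your case analysis of $S_j(y)=|y|^j\sum_{2^k\le 1/|y|}2^{k(1-z+j)}$ the inequality conditions are swapped. The sum is dominated by $k\sim-\log_2|y|$ and yields $|y|^{z-1}$ precisely when $1-z+j>0$; it converges to a constant and yields $c_j|y|^j$ precisely when $1-z+j<0$ (your descriptive parentheticals are attached to the right outcomes, and your final assembly paragraph uses the correct convention $j<z-1\Leftrightarrow$ convergent, so this is internally inconsistent as written but the conclusion is right). Beyond that, you should record explicitly that the remainder after subtracting the (cut-off) polynomial part is conormal of weight $z-1$, not merely pointwise $O(|y|^{z-1})$; in your framework this follows because $y\pa_y$ applied to $2^{k(1-z)}\hat\psi_k^\pm(2^ky)$ reproduces an expression of the same form with $\hat\psi_k^\pm$ replaced by $u\mapsto u(\hat\psi_k^\pm)'(u)$, again uniformly Schwartz, so the entire estimate closes under b-derivatives.
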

\begin{proof}
  For the first part, when $|y|>1$ we have $|y^N\hat a(y)|=|\int e^{i\lambda y}\pa_\lambda^N a(\lambda)\,\dd\lambda|\leq C_N$ when $N>z+1$. For $|y|\leq 1$, we split the Fourier transform into a low and a high frequency part, according to the relative size of $|\lambda|$ and $|y|^{-1}$. For the low frequency part, we have
  \[
    \biggl|\int_{|\lambda|<|y|^{-1}} e^{i\lambda y}a(\lambda)\,\dd\lambda\biggr| \lesssim 1+\int_1^{|y|^{-1}} \lambda^{-z}\,\dd\lambda \lesssim \begin{cases} 1, & z>1, \\ |\log|y||, & z=1, \\ |y|^{z-1}, & z<1, \end{cases}
  \]
  which is the $L^\infty$-bound required for membership in $\cA^{(z-1)\extcup 0}(\pm[0,1)_y)$. On the other hand, in $\int_{|\lambda|\geq|y|^{-1}} e^{i\lambda y}a(\lambda)\,\dd\lambda$ one can write $e^{i\lambda y}=((i y)^{-1}\pa_\lambda)^N e^{i\lambda y}$ and integrate by parts $N$ times, obtaining
  \[
    |y|^{-N}\biggl| \int_{|\lambda|\geq|y|^{-1}} e^{i\lambda y} \pa_\lambda^N a(\lambda)\,\dd\lambda\biggr| \lesssim |y|^{-N}\int_{|y|^{-1}}^\infty \lambda^{-z-N}\,\dd\lambda \lesssim |y|^{z-1}
  \]
  for $N>1-z$, while the boundary terms are $\lesssim |\lambda|^{-z-k}|y|^{-k-1}|_{|\lambda|=|y|^{-1}}=|y|^{z-1}$ for $k=0,\ldots,N-1$. The observation $(y\pa_y)^N\hat a(y)=((-\pa_\lambda\lambda)^N a)\ftrans(y)$ finishes the proof in the case $z<1$. For $z\geq 1$, we take $N\in\N$ with $z-N<1$ and apply what we have shown to $D_y^N\hat a=\wh{\lambda^N a}$, with $\lambda^N a\in\cA^{z-N}(\ol\R)$, followed by $N$-fold integration from $y=1$ towards $y=0$.
  
  For the second part, it suffices to consider the case that $a$ is supported in $[0,1]$. We then note that for $|y|>1$, we can estimate the low frequency contribution to the Fourier transform by $\int_0^{|y|^{-1}} \lambda^z\,\dd\lambda \lesssim |y|^{-z-1}$, whereas in the high frequency part we can integrate by parts $N$ times and obtain an upper bound by $|y|^{-N}\int_{|y|^{-1}}^1 \lambda^{z-N}\,\dd\lambda\lesssim|y|^{-z-1}$ when $z-N<-1$.
\end{proof}

\begin{cor}[Fourier transforms of symbols: polyhomogeneous case]
\label{CorAFPhg}
  Let $\cE\subset\C\times\N_0$ denote an index set.
  \begin{enumerate}
  \item Let $a\in\cA_\phg^\cE(\ol\R)$. Then $\hat a\in\cA_\phg^{(\N_0\extcup(\cE-1),\emptyset)}(\pm[0,\infty])$.
  \item\label{ItAFSymbLoPhg} Suppose $\Re\cE>-1$, and let $a\in\cA_\phg^{(\cE,\emptyset)}(\pm[0,\infty])$; extend $a$ to an $L^1(\R)$-function via extension by $0$ to $\mp(0,\infty)$. Then $\hat a\in\cA_\phg^{\cE+1}(\ol\R)$.
  \end{enumerate}
\end{cor}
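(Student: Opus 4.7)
For both parts, my plan is to reduce the polyhomogeneous statement to the conormal statement of Lemma~\ref{LemmaAFSymb} by peeling off finitely many model terms from the polyhomogeneous expansion of $a$, handling the remainder conormally, and computing the Fourier transform of each model term explicitly. In part~(1), I would fix an arbitrarily large $N\in\N$ and decompose
\[
  a(\lambda) = \sum_{\pm}\sum_{(z,k)\in\cE,\ \Re z<N}\chi^\pm(\lambda)|\lambda|^{-z}(\log|\lambda|)^k a^\pm_{(z,k)} + r_N(\lambda),
\]
where $\chi^\pm\in\CI(\R)$ is supported in $\pm[1,\infty)$ and identically $1$ on $\pm[2,\infty)$, and $r_N\in\cA^N(\ol\R)$. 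Lemma~\ref{LemmaAFSymb}(1) applied to $r_N$ produces a Taylor polynomial at $y=0$ (with index set in $\N_0$) plus a conormal remainder of order $N-1$; taking $N\to\infty$ captures the $\N_0$ contribution and reduces matters to computing the Fourier transform of each model term $\chi^\pm(\lambda)|\lambda|^{-z}(\log|\lambda|)^k$.

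\textbf{Model term computation in part~(1).} For $\chi^+(\lambda)\lambda^{-z}(\log\lambda)^k$ and $y>0$, the substitution $\mu=\lambda y$ gives
\[
  \int_0^\infty \chi^+(\lambda)\lambda^{-z}(\log\lambda)^k e^{i\lambda y}\,\dd\lambda = y^{z-1}\int_0^\infty \chi^+(\mu/y)\mu^{-z}(\log\mu-\log y)^k e^{i\mu}\,\dd\mu.
\]
Binomial expansion of $(\log\mu-\log y)^k$ and Taylor expansion of $\chi^+(\mu/y)$ around $\mu/y=0$ (legitimate for small $y$ since $\chi^+$ is smooth and the integrand in $\mu$ is controlled) identifies the leading asymptotic at $y=0^+$ as $y^{z-1}$ times a polynomial in $\log y$ of degree $k$, with coefficients the regularized Mellin-type integrals $\int_0^\infty \mu^{-z}(\log\mu)^j e^{i\mu}\,\dd\mu$ (made sense of by contour deformation or repeated integration by parts, which also accounts for the extra logarithmic factor appearing when $z\in\N$). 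The subleading terms have a full Taylor expansion at $y=0^+$ contributing to the $\N_0$ part. Schwartz decay at $y\to+\infty$ follows from Lemma~\ref{LemmaAFSymb}(1) applied to the model term itself. The analogous computation with $\chi^-$ handles $y<0$. Summing contributions over $(z,k)\in\cE$ gives the index set $\N_0\extcup(\cE-1)$ at $y=0$, the $\extcup$ reflecting precisely the logarithmic enhancement just described when shifted exponents $z-1$ coincide with elements of $\N_0$.

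\textbf{Part~(2).} I would proceed analogously but now expand $a$ at $\lambda=0$: fix $\chi\in\CIc([0,2))$ equal to $1$ near $0$, and write $a=a_{\mathrm{mod}}+r_N$ with $a_{\mathrm{mod}}=\sum_{(z,k)\in\cE,\ \Re z<N}\chi(\lambda)\lambda^z(\log\lambda)^k a_{(z,k)}$ and $r_N\in\cA^{(N,\infty)}(\pm[0,\infty])$. Then Lemma~\ref{LemmaAFSymb}(2) gives $\hat r_N\in\cA^{N+1}(\ol\R)$, which becomes arbitrarily smoother as $N\to\infty$. For each model term, compactly supported in $\lambda\in[0,2]$, the substitution $\mu=\lambda y$ (for large $|y|$) yields $\widehat{\chi\lambda^z(\log\lambda)^k}(y)=y^{-z-1}\Phi_{z,k}(y)$, where $\Phi_{z,k}(y)$ admits a full polyhomogeneous expansion in $y^{-1}$ at $|y|=\infty$ (with logarithmic enhancements when $z\in\N_0$ via the same regularization argument), supplying a contribution with index set in $\cE+1$. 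Smoothness of $\hat a$ on $\R$ is immediate from $\lambda^N a\in L^1(\R)$ for every $N\geq 0$, which uses $\Re\cE>-1$ together with the Schwartz decay of $a$ at $\pm\infty$.

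\textbf{Main obstacle.} The delicate step is the explicit computation of the Fourier transform of model terms at the resonant values $z\in\N$ in part~(1) (resp.\ $z\in\N_0$ in part~(2)), where the naive Mellin-type integrals diverge and require regularization by integration by parts or contour deformation. This regularization produces the extra logarithmic factors that are precisely encoded by the $\extcup$-operation in $\N_0\extcup(\cE-1)$, and bookkeeping these resonances uniformly across the polyhomogeneous expansion is the main technical content of the argument.
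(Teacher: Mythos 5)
Your proposal is correct in outline but takes a genuinely different route from the paper's. The paper never computes the Fourier transform of a single model term: for part (1) it applies the product of indicial operators $P=\prod_{(z,k)\in\cE_C}(-\lambda\pa_\lambda-z)$ to $a$, which lands in $\cA^C(\ol\R)$; under the Fourier transform $P$ becomes $\prod_{(z,k)\in\cE_C}(y\pa_y-(z-1))$, Lemma~\ref{LemmaAFSymb} controls $P\hat a$, and integrating this b-ODE from $y=1$ toward $y=0$ produces the index set $\N_0\extcup(\cE-1)$ directly (the extended union recording the coincidences $z-1\in\N_0$ as repeated indicial roots), with $C$ arbitrary; part (2) is identical with $\prod(\lambda\pa_\lambda-z)$ and integration from $y=\pm 1$ toward $y=\pm\infty$. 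Your route --- peel off model terms, compute their transforms by the substitution $\mu=\lambda y$ and regularized Mellin integrals, and treat the remainder via the conormal lemma --- is the classical homogeneous-distribution computation; it works and yields explicit coefficients (values of $\Gamma$ at shifted arguments and their finite parts at the poles $z\in\N$), at the price of the regularization bookkeeping you correctly identify as the main burden. The asymptotic-summation step as $N\to\infty$ is fine, since for each fixed order only finitely many peeled terms contribute.

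One step as written would fail: you cannot ``Taylor expand $\chi^+(\mu/y)$ around $\mu/y=0$,'' because $\chi^+$ vanishes identically near $0$, so that expansion is zero and yields nothing. The correct manipulation is to write $\chi^+(\mu/y)=1-(1-\chi^+(\mu/y))$: the ``$1$'' piece gives, after binomial expansion of $(\log\mu-\log y)^k$, exactly the cutoff-free regularized integrals $\int_0^\infty\mu^{-z}(\log\mu)^j e^{i\mu}\,\dd\mu$ you quote as coefficients of $y^{z-1}(\log y)^j$, while the piece carrying $1-\chi^+(\mu/y)$ is supported in $\mu\le 2y$ and rescales to $\int_0^2(1-\chi^+(s))s^{-z}(\log s)^k e^{i s y}\,\dd s$, which (for $0<\Re z<1$, and by analytic continuation in $z$ otherwise) is smooth in $y$ and contributes only to the $\N_0$ part; note that for $\Re z\ge 1$ both pieces must be regularized compatibly, which is precisely where the resonant $\log y$ arises. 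With this repair, and the analogous one for the cutoff $\chi$ in part (2), your argument goes through.
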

\begin{proof}
  In the first part, we only need to consider the region $|y|<1$. Given $1<C\notin\N$, and setting $\cE_C:=\{(z,k)\in\cE\colon\Re z\leq C\}$, the Fourier transform of $(\prod_{(z,k)\in\cE_C}(-\lambda\pa_\lambda-z))a \in \cA^C(\ol\R)$ is
  \begin{align*}
    \biggl(\prod_{(z,k)\in\cE_C} (\pa_y y-z)\biggr)\hat a&=\biggl(\prod_{(z,k)\in\cE_C}(y\pa_y-(z-1))\biggr)\hat a \\
      &\qquad\qquad \in \cA^{\N_0}(\pm[0,1))+\cA^{C-1}(\pm[0,1))
  \end{align*}
  by Lemma~\ref{LemmaAFSymb}. Integration from $y=1$ towards $y=0$ shows that $\hat a\in\cA^{\N_0\extcup(\cE-1)}(\pm[0,1))+\cA^{C'}(\pm[0,1))$ for any $C'<C-1$. Since $C$ is arbitrary, we are done.

  For the second part, we may assume that $a$ is supported in $[0,1]$. Given $C>1$ and defining $\cE_C$ as before, the Fourier transform of $(\prod_{(z,k)\in\cE_C} (\lambda\pa_\lambda-z))a\in\cA^C([0,1))$ is then
  \[
    \biggl(\prod_{(z,k)\in\cE_C}(-y\pa_y-(z+1)))\biggr)\hat a \in \cA^{C+1}(\ol\R).
  \]
  This can be integrated from $y=\pm 1$ towards $y=\pm\infty$ and thereby implies $\hat a\in\cA^{\cE+1}(\ol\R)+\cA^{C'}(\ol\R)$ for any $C'<C+1$. The proof is complete.
\end{proof}

We now turn to non-product type parameterized setting. We work with the resolved spaces
\begin{equation}
\label{EqAFSpaces}
\begin{split}
  M_\infty &:= \bigl[ [0,1)\times\ol\R; \{0\}\times\pa\ol\R \bigr], \\
  M_0 &:= \bigl[ [0,1)\times\ol\R; \{0\}\times\{0\} \bigr].
\end{split}
\end{equation}
We denote by $\ff_\infty$ and $\ff_0$ the respective front faces, by $\iface_\infty$ and $\iface_0$ the lifts of $[0,1)\times\pa\ol\R$, and by $\bface_\infty$ and $\bface_0$ the lifts of $\{0\}\times\ol\R$. The lift of the coordinate in the first factor will be denoted $x\in[0,1)$; the lift of the second coordinate will be denoted $y$ or $\lambda$, depending on the context. See Figure~\ref{FigAFSpaces}.

\begin{notation}
  For $\bullet=\infty,0$, we write $\cA_\phg^{(\cE,\cF,\cG)}(M_\bullet)$ for the space of polyhomogeneous functions on $M_\bullet$ with index set $\cE$, $\cF$, and $\cG$ at $\ff_\bullet$, $\bface_\bullet$, and $\iface_\bullet$, respectively. We similarly write $\cA^{(\alpha,\beta,\gamma)}(M_\bullet)$ (and $\cA^{((\alpha,k),(\beta,l),(\gamma,m))}(M_\bullet)$) for spaces of conormal functions (with logarithmic weights).
\end{notation}

\begin{figure}[!ht]
\centering
\includegraphics{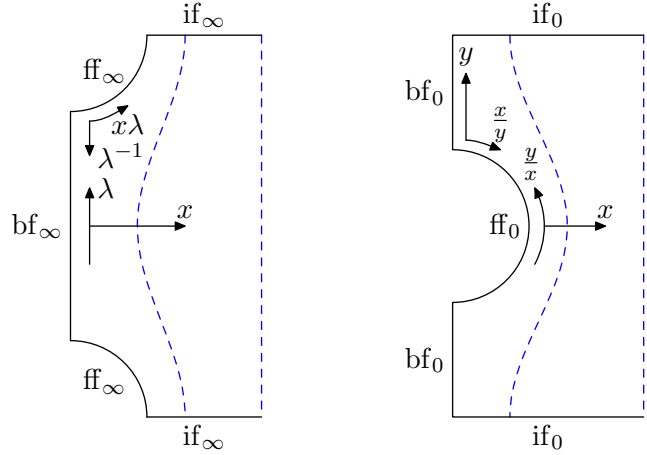}
\caption{Illustration of~\eqref{EqAFSpaces}. \textit{On the left:} the space $M_\infty$ (a resolution of $[0,1)_x\times\ol{\R_\lambda}$). \textit{On the right:} the space $M_0$ (a resolution of $[0,1)_x\times\ol{\R_y}$). The dashed blue curves are level sets of $x$ (along which we are Fourier transforming). We shall also consider situations with the labels $\lambda$ and $y$ interchanged.}
\label{FigAFSpaces}
\end{figure}

\begin{prop}[Fourier transform and resolution at infinite frequencies]
\label{PropAF1}
  Let $z,w\in\R$, and let $\cE,\cF\subset\C\times\N_0$ be index sets. Let $a=a(x,\lambda)$.
  \begin{enumerate}
  \item\label{ItAF1Con}{\rm (Conormal case.)} Let $a\in\cA^{(w,z,\infty)}(M_\infty)$. Then $\hat a\in\cA^{((w-1)\extcup z,z,\infty)}(M_0)$.
  \item\label{ItAF1Phg}{\rm (Polyhomogeneous case.)} Let $a\in\cA^{(\cF,\cE,\emptyset)}(M_\infty)$. Then $\hat a\in\cA_\phg^{((\cF-1)\extcup\cE,\cE,\emptyset)}(M_0)$.
  \end{enumerate}
\end{prop}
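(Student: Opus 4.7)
The analysis takes place near $\{x=0\}\subset M_0$, since away from there $a(x,\cdot)$ is Schwartz in $\lambda$ and hence $\hat a(x,\cdot)$ is Schwartz in $y$, a situation stronger than anything claimed. Cut off $a=\chi(\lambda)a+(1-\chi)a=:a_{lo}+a_{hi}$ where $\chi\in\CIc(\R)$ equals $1$ near $0$. The low-frequency part $a_{lo}$ is smooth and compactly supported in $\lambda$ with weight $z$ at $\{x=0\}$ (resp.\ polyhomogeneous with index set $\cE$ there); standard IBP arguments in $\lambda$ show $\hat a_{lo}\in\cA^{(z,z,\infty)}(M_0)\subseteq\cA^{((w-1)\extcup z,z,\infty)}(M_0)$ (resp.\ $\hat a_{lo}\in\cA_\phg^{(\cE,\cE,\emptyset)}(M_0)\subseteq\cA_\phg^{((\cF-1)\extcup\cE,\cE,\emptyset)}(M_0)$). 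The remaining work is for $a_{hi}$, which encodes all the non-product behavior at $\ff_\infty$.

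For Part~(1), the two projective charts on $M_\infty$ near $\ff_\infty$---namely $(\mu,\tau)=(|\lambda|^{-1},x|\lambda|)$ near $\ff_\infty\cap\bface_\infty$ and $(x,\sigma)=(x,(x|\lambda|)^{-1})$ near $\ff_\infty\cap\iface_\infty$---translate the conormal bounds for $a_{hi}$ into
\[
  |(x\pa_x)^j(\lambda\pa_\lambda)^k a_{hi}(x,\lambda)| \leq
  \begin{cases}
    C_{jk}\,x^z|\lambda|^{z-w}, & 1\leq|\lambda|\leq 1/x,\\
    C_{jkN}\,x^{w-N}|\lambda|^{-N}\ \forall N, & |\lambda|\geq 1/x.
  \end{cases}
\]
Integrating by parts $N$ times in $\lambda$ in $\hat a_{hi}(x,y)=\int e^{i\lambda y}a_{hi}\,d\lambda$ and combining these estimates yields $|\hat a_{hi}(x,y)|\leq C_N|y|^{-N}(x^z+x^{w+N-1})$, with a logarithmic factor when the two exponents coincide. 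Optimizing $N$ by the location in $M_0$---$N=0$ in the interior of $\ff_0$ (giving $O(x^{(w-1)\extcup z})$), $N$ large in the interiors of $\bface_0$ and $\iface_0$ (giving $O(x^z|y|^{-\infty})$), and $N$ chosen to match exponents at the corner $\ff_0\cap\bface_0$ so as to reproduce the product weight $y^{(w-1)\extcup z}(x/y)^z$---gives the pointwise membership, and conormal derivatives in $(x,y)$ are absorbed using $x\pa_x\hat a=\widehat{x\pa_x\,a}$ and $y\pa_y\hat a=-\hat a-\widehat{\lambda\pa_\lambda\,a}$, both preserving the relevant classes.

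For Part~(2), I mimic the reduction to the conormal case from Corollary~\ref{CorAFPhg}. Given $C\in\R$, apply the characteristic operators $\prod_{(w',k)\in\cF,\,\Re w'\leq C}(x\pa_x-\lambda\pa_\lambda-w')^{k+1}$ at $\ff_\infty$ (where, in $(\mu,\tau)$-coordinates, $\mu\pa_\mu=-(x\pa_x-\lambda\pa_\lambda)$ annihilates $\mu^{w'}(\log\mu)^k$-terms) and $\prod_{(z',k)\in\cE,\,\Re z'\leq C}(x\pa_x-z')^{k+1}$ at $\bface_\infty$ (where $\tau\pa_\tau=x\pa_x$ annihilates $\tau^{z'}(\log\tau)^k$-terms) to reduce $a$ to a conormal function with weights exceeding $C$ at both $\ff_\infty$ and $\bface_\infty$ while preserving infinite decay at $\iface_\infty$. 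Applying Part~(1) gives a conormal remainder on $M_0$ with weights $\geq C-1$ at $\ff_0$ and $\geq C$ at $\bface_0$. Integrating back using the Fourier-conjugated operators---$x\pa_x-\lambda\pa_\lambda-w'$ transforms into $x\pa_x+y\pa_y-(w'-1)$, the radial vector field on $M_0$ near $\ff_0$ (killing $x^{w'-1}(\log x)^k$), while $x\pa_x-z'$ is unchanged and kills $x^{z'}(\log x)^k$ at $\bface_0$---reconstructs $\hat a$ as a polyhomogeneous distribution with the claimed index sets $\cF-1$ at $\ff_0$ and $\cE$ at $\bface_0$, merged via $\extcup$ at the corner.

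The main technical obstacle lies in the index-set bookkeeping for Part~(2), in particular at the corner $\ff_0\cap\bface_0$: one must verify that the $\ff_0$-expansion (inherited from $\ff_\infty$ with shift $\cF\mapsto\cF-1$) and the $\bface_0$-expansion (inherited from $\cE$) combine compatibly via $\extcup$, with correct log-counts coming from the commutator algebra of the operators above, and that no spurious $\N_0$-type terms appear at $\ff_0$. The absence of $\N_0$---which would otherwise arise, as in Corollary~\ref{CorAFPhg}, from the Taylor expansion of a smooth function at $y=0$---is a consequence of the infinite decay of $a$ at $\iface_\infty$, which forces $\hat a_{hi}(x,y)$ to have no smooth contribution at $y=0$ beyond what is encoded in the $\ff_0$-expansion itself.
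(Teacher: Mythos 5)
Your Part~(1) is correct and is essentially the paper's argument. The paper organizes the proof by support cases ($|\lambda|$ bounded; $|x\lambda|$ bounded below; the intermediate region) rather than by a cutoff in $\lambda$, and splits the oscillatory integral at $|\lambda|=|y|^{-1}$ rather than optimizing a single integration-by-parts parameter $N$, but these are equivalent; the substantive estimates (the bound $|a|\lesssim x^z|\lambda|^{z-w}$ in the intermediate region, integration by parts in $\lambda$, and the identities $x\pa_x\hat a=\widehat{x\pa_x a}$, $y\pa_y\hat a=-\hat a-\widehat{\lambda\pa_\lambda a}$ to handle conormal derivatives) are the same. (A cosmetic point: $\mu\pa_\mu=+(x\pa_x-\lambda\pa_\lambda)$, not its negative; your characteristic operator is nonetheless the right one.)

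Part~(2) has the correct skeleton --- apply characteristic operators to reduce to the conormal case, use Part~(1), integrate back --- but the step you label ``the main technical obstacle'' and defer to verification is the actual content of the proof, and your description of its outcome is inconsistent with the statement. The index set of $\hat a$ at $\ff_0$ is $(\cF-1)\extcup\cE$, not $\cF-1$ with some ``merging at the corner''; index sets are attached to faces, and the $\cE$-contribution is a genuine part of the expansion at $\ff_0$. It arises exactly when you invert $\prod_{(z',k)}(x\pa_x-z')^{k+1}$ on $M_0$: the integration constants are spanned by $x^{z'}(\log x)^j g(y)$, and since $x=\rho_{\ff_0}\rho_{\bface_0}$ (up to a smooth positive factor) these terms carry exponents from $\cE$ into the $\ff_0$-expansion, with the singularity of $g$ at $y=0$ controlled by the $\ff_0$-weight of the remainder. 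Moreover, the inversion itself --- integrating along $\{y=\mathrm{const}\}$ from $x=\tfrac12$ towards $x=0$ and showing that the result is polyhomogeneous on the \emph{blown-up} space $M_0$, where this family of curves degenerates at $\ff_0$ --- is not routine: the paper devotes a b-fibration/pushforward argument (the space $M_{2,R}$ and \cite[Theorem~5]{MelrosePushfwd}) to precisely this point, and for this reason performs the two reductions in a fixed order, first establishing polyhomogeneity at $\bface_0$ with a conormal remainder at $\ff_0$ (equation~\eqref{EqAF1Phg}), and only then reading off the $\ff_0$-index set from the fact that the product of \emph{radial} operators $\prod(x\pa_x+y\pa_y-z')\prod(x\pa_x+y\pa_y-(w'-1))$ annihilates $\hat a$ modulo high-weight errors --- note that both families of factors are characteristic for the radial vector field at $\ff_0$, which is how $\cE$ and $\cF-1$ end up combined by $\extcup$ there. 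Your all-at-once reduction would in addition require inverting $\prod(x\pa_x+y\pa_y-(w'-1))$ and matching its integration constants (homogeneous functions on $M_0$) against the expansion. Finally, your closing remark about the absence of $\N_0$-terms points at the right ingredient (rapid vanishing at $\iface_\infty$) but is not where the difficulty lies. As written, the argument for Part~(2) would not produce the correct index set at $\ff_0$ without supplying the inversion lemma.
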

\begin{proof}
  We restrict attention to $x<\half$ (so $|\log x|\gtrsim 1$).

  \pfstep{Part~\eqref{ItAF1Con}.} Note that the Fourier transform commutes with multiplication by $x^{\pm w}$; therefore, we may replace $(w,z)$ by $(0,z-w)$, and we shall then simply consider the case $w=0$. Consider first the case that $|\lambda|$ is bounded on $\supp a$ (so $\supp a\cap(\ff_\infty\cup\iface_\infty)=\emptyset$); then $a\in\cA^{(\infty,z,\infty)}(M_\infty)=\cA^z([0,1)_x;\sS(\R_\lambda))$ and therefore $\hat a$ lies in the same space, which is contained in $\cA^{(z,z,\infty)}(M_0)$ as claimed. Similarly straightforward is the case when $|x\lambda|\geq c>0$ on $\supp a$ (so $\supp a\cap\bface_\infty=\emptyset$): then
  \[
    \hat a(x,y) = \int_{|\lambda|\geq c/x} e^{i\lambda y}a(x,\lambda)\,\dd\lambda = x^{-1}\int_{|\hat\lambda|>c} e^{i\hat\lambda y/x}a_0(x,\hat\lambda)\,\dd\hat\lambda,
  \]
  where $a_0\in\cA^0([0,1)_x,\sS(\R_{\hat\lambda}))$ (in fact, $|\hat\lambda|\geq c$ on $\supp a_0$). The Fourier transform of $a_0$ lies in the same space, and therefore $x\hat a(x,y)$ is bounded conormal in $x$ and Schwartz in $y/x$; this gives $\hat a\in\cA^{(-1,\infty,\infty)}(M_0)$.

  It remains to consider the case that $c\leq|\lambda|\leq C/x$ on $\supp a$. Note that we have the bound $|a|\lesssim |x\lambda|^z$, similarly for derivatives of $a$ along b-vector fields. Firstly, then, we have the estimate
  \[
    |\hat a(x,y)| \leq \int_{c\leq|\lambda|\leq C/x} |a(x,\lambda)|\,\dd\lambda \lesssim x^{-1} \int_{c x}^C \hat\lambda^z\,\dd\hat\lambda \lesssim \begin{cases} x^z, & z<-1, \\ x^{-1}|\log x|, & z=-1, \\ x^{-1}, & z>-1, \end{cases}
  \]
  where we introduced $\hat\lambda=x\lambda$. For $|y|\gtrsim 1$, we can strengthen this using $y^N e^{i\lambda y}=D_\lambda^N e^{i\lambda y}$ and integration by parts to
  \[
    |\hat a(x,y)| \lesssim |y|^{-N}\int_{c\leq|\lambda|\leq C/x} \lambda^{-N} |\lambda^N D_\lambda^N a(x,\lambda)|\,\dd\lambda \lesssim x^{-1+N}|y|^{-N}\int_{c x}^C \hat\lambda^{z-N}\,\dd\hat\lambda \lesssim x^z|y|^{-N}
  \]
  for any $N>z+1$. These two estimates, together with the analogous estimates for b-derivatives of $a$, prove the claim, except near the corner $\ff_0\cap\bface_0$ to which we now turn. We shall work in the region $|y|<\min(\half,c^{-1})$ and $|\frac{x}{y}|<C$. We split the $\lambda$-integral into a low and a high frequency part, according to the relative size of $|\lambda|$ and $|y|^{-1}$. The low frequency part is bounded by
  \[
    \int_{c\leq|\lambda|\leq|y|^{-1}} |x\lambda|^z\,\dd\lambda \lesssim x^{-1}\int_{c x}^{|x/y|} \hat\lambda^z\,\dd\hat\lambda \lesssim \begin{cases} x^z=\bigl(\tfrac{x}{|y|}\bigr)^z|y|^z, & z<-1, \\ x^{-1}|\log y|=\bigl(\tfrac{x}{|y|}\bigr)^{-1} |y|^{-1}|\log |y||, & z=-1, \\ x^z|y|^{-z-1}=\bigl(\tfrac{x}{|y|}\bigr)^z|y|^{-1}, & z>-1. \end{cases}
  \]
  In the high frequency part we integrate by parts $N>z+1$ times, as above, and get an upper bound by
  \[
    x^{-1+N}|y|^{-N}\int_{|x/y|}^C \hat\lambda^{z-N}\,\dd\hat\lambda \lesssim x^z|y|^{-z-1} = \Bigl(\frac{x}{|y|}\Bigr)^z|y|^{-1}.
  \]
  plus contributions from the boundary terms at $|\lambda|=|y|^{-1}$ which obey the same bound. Similar estimates for b-derivatives of $a$ finish the proof of part~\eqref{ItAF1Con}.

  \pfstep{Part~\eqref{ItAF1Phg}.} Fix $w<\Re\cF$. We first only require $a\in\cA_{\phg(\bface_\infty)}^{(w,\cE,\infty)}(M_\infty)$. Let $C\in\R$, $C>w-1$, put $\cE_C=\{(z,k)\in\cE\colon\Re z\leq C\}$, and set
  \[
    a_C := P a \in \cA^{(w,C,\infty)}(M_\infty),\qquad
    P := \prod_{(z,k)\in\cE_C} (x\pa_x-z).
  \]
  Then by what we have just shown, $P\hat a = \wh{a_C}\in\cA^{(w-1,C,\infty)}(M_0)$. Fix $\psi\in\CIc([0,\half)_x)$ with $\psi=1$ on $[0,\frac14]$. Since $\hat a$ is, a fortiori, conormal and vanishes to infinite order at $\iface_0$, we then have
  \[
    P(\psi\hat a)=f:=\psi\wh{a_C}+[P,\psi]\hat a\in\cA^{(w-1,C,\infty)}(M_0).
  \]
  Inverting the operator $P$ by integrating from $x=\half$ towards $x=0$, one obtains
  \begin{equation}
  \label{EqAF1Almost}
    \hat a \in \cA_\phg^{(\cE,\cE,\emptyset)}(M_0) + \cA_{\phg(\bface_0)}^{(w',\cE,\infty)}(M_0) + \cA^{(w',C',\infty)}(M_0),\qquad w'<w-1,\quad C'<C.
  \end{equation}
  We leave the details of the direct proof to the reader, and instead sketch a geometric proof of this fact using b-analysis. The inverse of $P$ used here is an element of the large b-calculus on $[0,1)_x$, to wit, $P^{-1}\in\Psib^{-N,(\cE,\N_0,\emptyset)}([0,1)_x)$ where $N=|\cE_C|$. In order to analyze $P^{-1}$ acting on the distribution $f=f(x,y)$ on the space $M_0=[[0,1)_x\times\ol{\R_y};\{(0,0)\}]$, note first that the $y$-independent extension $K'=K'(x,x',y)$ of the Schwartz kernel $K=K(x,x')$ of $P^{-1}$ lifts to the total space
  \begin{align*}
    M_{2,R} &:=\bigl[ [0,1)_x \times [0,1)_{x'} \times \ol{\R_y}; \{(0,0)\}\times\ol{\R}; \{(0,0,0)\}; [0,1)\times\{0\}\times\{0\} \bigr] \\
    &= \bigl[ [0,1)^2_\bop\times\ol{\R_y}; \ff_\bop\times\{0\}; \rb_\bop\times\{0\} \bigr] 
  \end{align*}
  to have a conormal singularity at $\diag_\bop\times\ol\R$, while it is polyhomogeneous with index set $\cE$ at $\lb_\bop\times\ol\R$, with index set $\N_0$ at the lifts of $\ff_\bop\times\ol\R$ and $\ff_\bop\times\{0\}$, and with index set $\emptyset$ at all other boundary hypersurfaces. The lift $\pi_R\colon M_{2,R}\to M_0$ of the right projection $(x,x',y)\mapsto(x',y)$ is a b-fibration.\footnote{Indeed, under the b-fibration $[0,1)_\bop^2\times\ol\R\to[0,1)\times\ol\R$, defined as the product of the right projection $[0,1)_\bop^2\to[0,1)$ with the identity map on $\ol\R$, the preimage of $\{(0,0)\}$ is the union of $\ff_\bop\times\{0\}$ and $\rb_\bop\times\{0\}$. Therefore, its lift to $\pi_R\colon M_{2,R}=[[0,1)_\bop^2\times\ol\R;\ff_\bop\times\{0\};\rb_\bop\times\{0\}]\to M_0$ is a b-fibration by \cite[Proposition~5.12.1]{MelroseDiffOnMwc}.} We then have $(P^{-1}f)(x,y)=\int_0^1 K(x,x')f(x',y)=(\pi_L)_*(K'\pi_R^*f)$. But $K'\pi_R^*f$ is a right b-density, conormal with weights $w-1$ and $C$ at the lift of $\ff_\bop\times\{0\}$ and at $\ff_\bop\times\ol\R$, respectively, polyhomogeneous with index set $\cE$ at $\lb_\bop\times\ol\R$, and vanishes to infinite order at all other boundary hypersurfaces. Thus, we may blow down $\rb_\bop\times\{0\}$; and upon then blowing up $\lb_\bop\times\{0\}$, we find that $K'\pi_R^*f$ is conormal with weights $w-1$ and $C$ at the lift of $\ff_\bop\times\{0\}$ and at $\ff_\bop\times\ol\R$, respectively, and polyhomogeneous with index set $\cE$ at the lifts of $\lb_\bop\times\{0\}$ and $\lb_\bop\times\ol\R$. The conclusion~\eqref{EqAF1Almost} then follows from \cite[Theorem~5]{MelrosePushfwd}, since the lift of the left projection $(x,x',y)\mapsto(x,y)$ to $M_{2,L}=[[0,1)_\bop^2\times\ol{\R_y};\ff_\bop\times\{0\};\lb_\bop\times\{0\}]\to M_0$ is a b-fibration.

  Now recall that in~\eqref{EqAF1Almost}, the number $C$ was arbitrary. We thus conclude that
  \begin{equation}
  \label{EqAF1Phg}
    \hat a\in\cA_\phg^{(\cE,\cE,\emptyset)}(M_0) + \cA_{\phg(\bface_0)}^{(w',\cE,\infty)}(M_0).
  \end{equation}

  In order to prove the polyhomogeneity of $\hat a$ at $\ff_0$ when $a\in\cA_\phg^{(\cF,\cE,\emptyset)}(M_\infty)$, define $\cF_C:=\{(w,l)\in\cF\colon\Re w\leq C\}$ and
  \[
    a^C(x,\lambda) := \biggl(\prod_{(w,l)\in\cF_C} (x\pa_x-\lambda\pa_\lambda-w)\biggr)a(x,\lambda) \in \cA_{\phg(\bface_\infty)}^{(C,\cE,\infty)}(M_\infty).
  \]
  Then from what we have already shown,
  \[
    \biggl(\prod_{(w,l)\in\cF_C} \bigl(x\pa_x+y\pa_y-(w-1)\bigr)\biggr)\hat a(x,y) = \wh{a^C}(x,y) \in \cA_\phg^{(\cE,\cE,\emptyset)}(M_0) + \cA_{\phg(\bface_0)}^{(C',\cE,\infty)}(M_0).
  \]
  for any $C'<C-1$. But this in turn gives
  \[
    \biggl(\prod_{(z,k)\in\cE_C}(x\pa_x+y\pa_y-z)\biggr)\biggl(\prod_{(w,l)\in\cF_C} \bigl(x\pa_x+y\pa_y-(w-1)\bigr)\biggr)\hat a(x,y) \in \cA_{\phg(\bface_0)}^{(C',\cE,\infty)}(M_0).
  \]
  Since $C$ and thus $C'$ are arbitrary, this implies $\hat a\in\cA^{(\cE\extcup(\cF-1),\cE,\emptyset)}(M_0)$. (The extended union here takes into account the multiplicity of factors $x\pa_x+y\pa_y-\zeta$ when $\zeta=z$ and $\zeta=w-1$ for some $(z,k)\in\cE$ and $(w,l)\in\cF$.) The proof is complete.
\end{proof}

\begin{prop}[Fourier transform and resolution at zero frequency]
\label{PropAF2}
  Let $z,w\in\R$, and let $\cE,\cF\subset\C\times\N_0$ be index sets. Let $a=a(x,\lambda)$.
  \begin{enumerate}
  \item\label{ItAF2Con}{\rm (Conormal case.)} Let $a\in\cA^{(w,z,\infty)}(M_0)$. Then $\hat a\in\cA^{(w+1,z\extcup (w+1),\infty)}(M_\infty)$.
  \item\label{ItAF2Phg}{\rm (Polyhomogeneous case.)} Let $a\in\cA^{(\cF,\cE,\emptyset)}(M_0)$. Then $\hat a\in\cA^{(\cF+1,\cE\extcup(\cF+1),\emptyset)}(M_\infty)$.
  \end{enumerate}
\end{prop}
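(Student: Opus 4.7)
The proof runs parallel to that of Proposition~\ref{PropAF1} but with the roles of $M_0$ and $M_\infty$ reversed, exploiting the fact that the Fourier transform in $\lambda$ interchanges the regimes $\lambda=0$ (resolved in $M_0$) and $|y|=\infty$ (resolved in $M_\infty$). For Part~\eqref{ItAF2Con}, reduce to $z=0$ by writing $a=x^z a'$ with $a'\in\cA^{(w-z,0,\infty)}(M_0)$; since $\lambda$-Fourier transformation commutes with multiplication by $x^z$, and $x=\rho_\bface\rho_\ff$ locally on both $M_0$ and $M_\infty$, this reduction only shifts weights at $\ff$ and $\bface$ by $z$ on both sides. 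For Part~\eqref{ItAF2Phg}, peel off the phg expansions of $a$ term by term and compute their Fourier transforms explicitly.

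\textbf{Part~\eqref{ItAF2Con} ($z=0$).} Using the infinite-order vanishing at $\iface_0$, assume $a$ has compact $\lambda$-support. Fix cutoffs $\phi\in\CIc([-2,2])$ with $\phi=1$ on $[-1,1]$ and $\psi\in\CIc((-1,1))$ with $\psi=1$ on $[-\tfrac12,\tfrac12]$, and split $a=a_1+a_{2a}+a_{2b}$ with $a_1=\phi(\lambda/x)a$, $a_{2a}=\psi(\lambda)(1-\phi(\lambda/x))a$, and $a_{2b}=(1-\psi(\lambda))(1-\phi(\lambda/x))a$. Change of variables $\hat\lambda=\lambda/x$ gives
\[
  \hat{a_1}(x,y) = x^{w+1}\!\int e^{i\hat\lambda(xy)}\bigl[x^{-w}\phi(\hat\lambda)a(x,x\hat\lambda)\bigr]\,\dd\hat\lambda =: x^{w+1}B(x,xy),
\]
where $B$ is Schwartz in the second argument and bounded conormal in $x$; since $xy$ is a smooth projective coordinate on $M_\infty$ near $\ff_\infty^\circ$, this yields $\hat{a_1}\in\cA^{(w+1,w+1,\infty)}(M_\infty)$. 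On $\supp a_{2a}\subset\{x\leq|\lambda|\leq 1\}$, the function $\lambda$ is a local defining function of $\ff_0$ and $\lambda\pa_\lambda$ is a b-vector field there, so $|\pa_\lambda^N a|\lesssim|\lambda|^{w-N}$; a direct $L^1$-estimate for $|y|\leq 1$ yields $|\hat{a_{2a}}|\lesssim\max(1,|\log x|,x^{w+1})$, while $N$-fold integration by parts with $N>w+1$ for $|y|\geq 1$ yields $|\hat{a_{2a}}|\lesssim x^{w+1}(xy)^{-N}$; combined, $\hat{a_{2a}}\in\cA^{(w+1,0\extcup(w+1),\infty)}(M_\infty)$. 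Finally, $\hat{a_{2b}}$ is Schwartz in $y$ uniformly conormal in $x$, so lies in $\cA^{(\infty,0,\infty)}(M_\infty)$. Iterated application of the b-vector fields $x\pa_x,y\pa_y$ on $M_\infty$ (which under the Fourier transform correspond to $x\pa_x$ and $-(\lambda\pa_\lambda+1)$ on $M_0$, both preserving the same conormal spaces) upgrades pointwise bounds to full conormality.

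\textbf{Part~\eqref{ItAF2Phg}.} Given $C>0$, write $a=a_\bface+a_\ff+r$ where $a_\bface$ is a finite sum of $\bface_0$-phg terms with index $\cE_{\leq C}$ supported away from $\ff_0$, $a_\ff$ is a finite sum of $\ff_0$-phg terms with index $\cF_{\leq C}$ supported near $\ff_0$, and $r\in\cA^{(C,C,\infty)}(M_0)$. A generating term $x^z(\log x)^k a_{z,k}(\lambda)$ of $a_\bface$, with $a_{z,k}$ smooth compactly supported in $\{|\lambda|\geq\tfrac14\}$, Fourier-transforms to $x^z(\log x)^k\wh{a_{z,k}}(y)$, a Schwartz function of $y$ times a phg factor in $x$; this contributes index $(z,k)\in\cE$ at $\bface_\infty$ and vanishes to infinite order at $\ff_\infty\cup\iface_\infty$. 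A generating term $x^w(\log x)^l b_{w,l}(\lambda/x)$ of $a_\ff$, with $b_{w,l}$ smooth compactly supported in $\hat\lambda$, Fourier-transforms to $x^{w+1}(\log x)^l\wh{b_{w,l}}(xy)$; using that $xy$ is a smooth coordinate on $\ff_\infty^\circ$ and $xy=1/\hat\tau$ in coordinates $(x,\hat\tau)=(x,\tau/x)$ with $\tau=1/|y|$ near $\bface_\infty\cap\ff_\infty$, this gives phg index $(w+1,l)\in\cF+1$ at $\ff_\infty$; Taylor expanding $\wh{b_{w,l}}(\xi)=\sum_j c_j\xi^j$ at $\xi=0$ and substituting $\xi=xy$ produces the phg expansion $\sum_j c_j y^j\cdot x^{w+1+j}(\log x)^l$ at $\bface_\infty$, contributing the set $\{(w+1+j,l)\colon j\in\N_0\}\subset\cF+1$. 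The remainder satisfies $\hat r\in\cA^{(C+1,C,\infty)}(M_\infty)$ by Part~\eqref{ItAF2Con}. Summing and letting $C\to\infty$ yields $\hat a\in\cA_\phg^{(\cF+1,\cE\extcup(\cF+1),\emptyset)}(M_\infty)$, the extended union at $\bface_\infty$ encoding the possible overlap between the contributions from $a_\bface$ (index $\cE$) and the corner contributions from $a_\ff$ (index $\cF+1$).

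\textbf{Main obstacle.} The main point requiring care is tracking the corner contribution in Part~\eqref{ItAF2Phg}: the Taylor expansion of $\wh{b_{w,l}}$ at the origin is what produces the $\cF+1$ term in the index set $\cE\extcup(\cF+1)$ at $\bface_\infty$. Alternatively, this can be approached via the operator calculus $\wh{(\lambda\pa_\lambda-w)a}=-(y\pa_y+(w+1))\hat a$ and $\wh{(x\pa_x-z)a}=(x\pa_x-z)\hat a$ (noting that $x\pa_x-y\pa_y=\tau\pa_\tau$ is the radial b-vector field at $\ff_\infty$ in coordinates $(\tau,\hat x)$), reducing to Part~\eqref{ItAF2Con} after killing the phg expansions and then inverting via b-calculus arguments, exactly as in the proof of Proposition~\ref{PropAF1}\eqref{ItAF1Phg}.
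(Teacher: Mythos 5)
Your Part~\eqref{ItAF2Con} follows the paper's decomposition by support into the regions $|\lambda|\lesssim x$, $x\lesssim|\lambda|\lesssim 1$, $|\lambda|\gtrsim 1$, and your treatment of $a_1$ and $a_{2b}$ is fine, but the two estimates you state for $a_{2a}$ do not establish the claimed membership near the corner $\ff_\infty\cap\bface_\infty$, i.e.\ in the transition region $1\lesssim|y|\lesssim x^{-1}$. There one needs $|\hat{a_{2a}}|\lesssim\rho_{\ff_\infty}^{\,w+1}\rho_{\bface_\infty}^{\,\min(0,w+1)}\sim|y|^{-(w+1)}(x|y|)^{\min(0,w+1)}$ (up to a logarithm when $w=-1$). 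For $w>-1$ this forces decay $|y|^{-(w+1)}$; your $L^1$ bound only gives $O(1)$ there, and your full integration-by-parts bound $x^{w+1}(x|y|)^{-N}$ degenerates as $x|y|\to 0$ (at $|y|=x^{-1/2}$ it equals $x^{w+1-N/2}$, which is far weaker than the required $x^{(w+1)/2}$ for any admissible $N>w+1$). The missing step is the additional splitting of the $\lambda$-integral at the scale $|\lambda|\sim|y|^{-1}$ --- low frequencies in $L^1$, high frequencies by integration by parts with boundary terms at $|\lambda|=|y|^{-1}$ --- which is exactly how the paper handles this region. As written, the conclusion $\hat{a_{2a}}\in\cA^{(w+1,0\extcup(w+1),\infty)}(M_\infty)$ does not follow from your bounds.

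In Part~\eqref{ItAF2Phg}, the primary decomposition $a=a_\bface+a_\ff+r$ does not exist for general polyhomogeneous $a$: the coefficients of the $\ff_0$-expansion are functions on $\ff_0\cong\ol{\R}_{\hat\lambda}$ that are themselves polyhomogeneous at the corner $\hat\lambda=\pm\infty$ (with index set governed by $\cE$), not compactly supported in $\hat\lambda$; likewise the $\bface_0$-coefficients are polyhomogeneous at $\lambda=0$ with index set governed by $\cF$, not supported in $|\lambda|\geq\tfrac14$. A single corner term such as $\lambda^{w_0}(x/\lambda)^{z_0}$, cut off near $\ff_0\cap\bface_0$, lies in neither of your two classes nor in the remainder class, so the explicit term-by-term Fourier computation does not cover the general case --- and it is precisely the joint corner expansion that makes the extended union in $\cE\extcup(\cF+1)$ unavoidable. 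The alternative you sketch in the final paragraph (the commutation relations $\wh{(\lambda\pa_\lambda-w)a}=-(y\pa_y+(w+1))\hat a$ and $\wh{(x\pa_x-z)a}=(x\pa_x-z)\hat a$, followed by inversion of the resulting b-operators as in Proposition~\ref{PropAF1}\eqref{ItAF1Phg}) is the correct route and is what the paper actually does, first applying $\prod(x\pa_x+\lambda\pa_\lambda-w)$ over $\cF_C$ to reduce to Part~\eqref{ItAF2Con} and integrating back in the coordinates $(x,xy)$, then repeating with $\prod(x\pa_x-w)$ over $\cE_C$; but since you present it only as a one-sentence fallback, the proof as written rests on the flawed decomposition.
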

\begin{proof}
  We denote coordinates on $M_0$ by $x,\lambda$, and on $M_\infty$ by $x,y$.

  \pfstep{Part~\eqref{ItAF2Con}.} The proof is similar to that of Proposition~\ref{PropAF1}\eqref{ItAF1Con}. First of all, it suffices to consider the case $z=0$, as multiplication by $x^{\pm z}$ commutes with the Fourier transform in $\lambda$. When $\supp a\cap\ff_0=\emptyset$ (so $w$ is arbitrary), then $\hat a\in\cA^0([0,1);\sS(\R_y))\subset\cA^{(\infty,0,\infty)}(M_\infty)$. When $\supp a\cap\bface_0=\emptyset$, then $a(x,\lambda)=x^w a_0(x,\lambda/x)$ where $a_0\in\cA^0([0,1),\CIc(\R))$, and
  \begin{equation}
  \label{EqAF1FTa0}
    \hat a(x,y) = x^{w+1} \wh{a_0}(x,x y),\qquad \wh{a_0}(x,\hat y)=\int e^{i\hat\lambda\hat y}a_0(x,\hat\lambda)\,\dd\hat\lambda \in \cA^0([0,1),\sS(\R)).
  \end{equation}
  Since $x y$ is a projective coordinate along (the two components of) $\ff_\infty$, we therefore conclude that $\hat a\in\cA^{(w+1,w+1,\infty)}(M_\infty)$.

  It remains to consider the case that $c x\leq|\lambda|\leq C$ on $\supp a$ where $c,C>0$. We work in $x<\half$. For bounded $|y|$, we then estimate
  \[
    |\hat a(x,y)| \lesssim \int_{c x}^C \lambda^w\,\dd\lambda \lesssim
      \begin{cases} 
        x^{w+1}, & w<-1, \\
        |\log x|, & w=-1, \\
        1, & w>-1.
      \end{cases}
  \]
  When $|y|\gtrsim x^{-1}$ on the other hand, integration by parts (i.e.\ non-stationary phase) and $|\pa_\lambda^N a(x,\lambda)|\lesssim|\lambda|^{w-N}$ imply for $N>w+1$ the estimate
  \[
    |\hat a| \lesssim |y|^{-N}\int_{c x}^C \lambda^{w-N}\,\dd\lambda \lesssim |y|^{-N}x^{w-N+1} = x^{w+1}|x y|^{-N}.
  \]
  Finally, when $1\lesssim|y|<\min(\half,c^{-1})x^{-1}$, we estimate the low energy part of the Fourier transform of $a$ by
  \[
    \int_{c x}^{|y|^{-1}} \lambda^w\,\dd\lambda \lesssim
      \begin{cases}
        x^{w+1}=|y|^{-w-1}|x y|^{w+1}, & w<-1, \\
        |\log|x y||, & w=-1, \\
        |y|^{-w-1}, & w>-1.
      \end{cases}
  \]
  For the high energy part on the other hand, we use integration by parts and obtain, for $N>w+1$, the bound
  \[
    |y|^{-N}\int_{|y|^{-1}}^C \lambda^{w-N}\,\dd\lambda \lesssim |y|^{-w-1}.
  \]
  The same pointwise bounds also hold for conormal derivatives of $\hat a$; this proves part~\eqref{ItAF2Con}.

  \pfstep{Part~\eqref{ItAF2Phg}.} Fix $z<\Re\cE$. We first only require $a\in\cA_{\phg(\ff_0)}^{(\cF,z,\infty)}(M_0)$. Let $C\in\R$, $C+1>z$, put $\cF_C=\{(w,l)\in\cF\colon\Re w\leq C\}$, and set
  \[
    a_C := P a \in \cA^{(C,z,\infty)}(M_0),\qquad P := \prod_{(w,l)\in\cF_C}(x\pa_x+\lambda\pa_\lambda-w).
  \]
  Then part~\eqref{ItAF2Con} implies that
  \[
    \hat P\hat a=\wh{a_C}\in\cA^{(C+1,z,\infty)}(M_\infty),\qquad
    \hat P:=\prod_{(w,l)\in\cF_C}(x\pa_x-y\pa_y-(w+1)).
  \]
  Switching to the coordinates $x$ and $\hat y:=x y$, the vector field $x\pa_x-y\pa_y$ becomes $x\pa_x$. Note also that $y$ is an affine coordinate on the front face of the blow-up of $[0,1)\times\ol{\R_{\hat y}}$ at $\{(0,0)\}$; therefore, we are in the same setting as in the proof of~\eqref{EqAF1Almost}. Therefore, upon integrating $P$ from $x=\half$ (where $\hat a$ is Schwartz in $y$ and also in $\hat y$), and switching back to $(x,y)$ coordinates, we obtain
  \[
    \hat a \in \cA_\phg^{(\cF+1,\cF+1,\emptyset)}(M_\infty) + \cA_{\phg(\ff_\infty)}^{(\cF+1,z',\infty)}(M_\infty) + \cA^{(C',z',\infty)}(M_\infty),\qquad C'<C+1,\quad z'<z.
  \]
  Since $z<\Re\cE$ and $C>z-1$ were arbitrary, we conclude that
  \begin{equation}
  \label{EqAF2PhgAlmost}
    \hat a \in \cA_\phg^{(\cF+1,\cF+1,\emptyset)}(M_\infty) + \cA_{\phg(\ff_\infty)}^{(\cF+1,z,\infty)}(M_\infty).
  \end{equation}

  Set now $\cE_C=\{(z,k)\in\cE\colon\Re z\leq C\}$ and
  \[
    a^C := \biggl(\prod_{(w,l)\in\cE_C}(x\pa_x-w)\biggr)a \in \cA_{\phg(\ff_0)}^{(\cF,C,\infty)}(M_0).
  \]
  Then~\eqref{EqAF2PhgAlmost} implies
  \[
     \biggl(\prod_{(w,l)\in\cE_C}(x\pa_x-w)\biggr)\hat a = \wh{a^C} \in \cA_\phg^{(\cF+1,\cF+1,\emptyset)}(M_\infty) + \cA_{\phg(\ff_\infty)}^{(\cF+1,C',\infty)}(M_\infty)
  \]
  for any $C'<C$. Since $C$ is arbitrary, integration of this equation starting from $x=\half$ implies $\hat a\in\cA_\phg^{(\cF+1,\cE\extcup(\cF+1),\emptyset)}(M_\infty)$, as claimed.
\end{proof}

\subsection{The edge-b-algebra}
\label{SsAeb}

Edge (pseudo)differential operators were introduced by Mazzeo \cite{MazzeoEdge} on manifolds with boundary whose boundary is the total space of a fibration. The underlying Lie algebra of \emph{edge vector fields} is the subalgebra of $\Vb$ consisting of all vector fields which are tangent to the leaves of the fibration. On a manifold with corners $M$ with more than one boundary hypersurface, one can consider edge-b-vector fields corresponding to the fibration of a single boundary hypersurface of $M$; the corresponding small (i.e.\ without boundary terms) pseudodifferential algebra was developed in \cite[Appendix~B]{MelroseVasyWunschDiffraction}. Here, we discuss a very special case of this general edge-b-setup, but for this setup go beyond \cite{MelroseVasyWunschDiffraction} in that we describe the b-normal operator and its inverse in detail, as well as Sobolev spaces and their interaction with the Mellin transform. (We do not, however, discuss parametrices of fully elliptic edge-b-operators here.)

\subsubsection{Differential operators}

Let $M$ be an $n$-dimensional manifold with corners, with $n\geq 2$; we assume that the set $M_1(M)=\{\cD,\cR\}$ of boundary hypersurfaces of $M$ has only two elements which intersect in the closed manifold $Y:=\cD\cap\cR$. We assume that $\cD$ is compact (with boundary $\pa\cD=\cD\cap\cR$). We moreover assume that $\cR$ is the total space
\[
  Y-\cR\xra{\phi}[0,\infty)
\]
of a fibration, with $\phi^{-1}(0)=\cD\cap\cR$. (Thus, $\cR$ and therefore also $M$ are noncompact, although one can consider similar setups with compact $\cR,M$.) See Figure~\ref{FigGDModel} for an example of such a setup. We then consider the Lie algebra of edge-b-vector fields
\[
  \Veb(M) = \{ V\in\Vb(M) \colon V\ \text{is tangent to the fibers of}\ \cR \}.
\]
We denote the corresponding tangent and cotangent bundles by $\Teb M\to M$ and $\Teb^*M\to M$, respectively. Furthermore, $\rho_\cD$ and $\rho_\cR\in\CI(M)$ denote defining functions of $\cD$ and $\cR$, respectively. Spaces of differential operators are denoted $\Diffeb^m(M)$, and the principal symbol map is
\[
  0 \to \Diffeb^{m-1}(M) \hra \Diffeb^m(M) \xra{\sigmaeb^m} P^{[m]}(\Teb^*M) \to 0.
\]

In local coordinates $T\geq 0$ (defining function of $\cD$), $R$ (defining function of $\cR$), and $y\in\R^{n-2}$ (coordinates along $Y$), in which the fibration of $\cR$ is given by $(T,y)\mapsto y$, the space $\Veb(M)$ is spanned over $\CI(M)$ by
\begin{equation}
\label{EqAebVF}
  R T\pa_T,\quad
  R\pa_R,\quad
  \pa_{y^j}\ (j=1,\ldots,n-2).
\end{equation}
Thus, regarding an element $P\in\Diffeb^m(M)$ (thus its coefficients are smooth down to $\cD$) as a b-differential operator $P\in\Diffb^m(M)$, it has a dilation-invariant normal operator $N_\cD(P)\in\Diff_{\bop,I}^m({}^+N\cD)$, given in local coordinates by freezing the coefficients of $P$ at $T=0$; in light of~\eqref{EqAebVF}, $N_\cD(P)$ is then itself an edge-b-differential operator on ${}^+N\cD$ with respect to the fibration ${}^+N_{\pa\cD}\cD\to[0,\infty)={}^+N_{\{0\}}[0,\infty)$ induced by the differential of $\phi$. That is, we have a short exact sequence
\[
  0 \to \rho_\cD\Diffeb^m(M) \hra \Diffeb^m(M) \xra{N_\cD} \Diff_{\ebop,I}^m({}^+N\cD) \to 0,
\]
where $\Diff_{\ebop,I}({}^+N\cD)$ is the space of edge-b-differential operators which are invariant under the dilation action in the fibers of ${}^+N\cD$. As such, $N_\cD(P)$ is naturally analyzed via the Mellin transform in the fiber variables.

In local coordinates as above, we can write
\begin{equation}
\label{EqAebND}
  N_\cD(P) = \sum_{j+k+|\alpha|\leq m} a_{j k\alpha}(R,y) (R T D_T)^j (R D_R)^k D_y^\alpha,\qquad a_{j k\alpha}\in\CI([0,\infty)_R\times\R^{n-2}_y),
\end{equation}
and therefore the Mellin-transformed normal operator family (defined with respect to a choice of boundary defining function of $\cD$, here $T$) is
\begin{equation}
\label{EqAebNcDMT}
  \wh{N_\cD}(P,\lambda) = \sum_{j+k+|\alpha|\leq m} a_{j k\alpha}(R,y) (R\lambda)^j (R D_R)^k D_y^\alpha,\qquad \lambda\in\C.
\end{equation}
For bounded $\lambda$, this is an analytic family of elements of $\Diffb^m(\cD)$. When $\lambda=-i\mu\pm h^{-1}$ however, with $\mu\in\R$ and $h>0$, then
\begin{equation}
\label{EqAebNhi}
  (0,1)\ni h\mapsto \wh{N_\cD}(P,-i\mu\pm h^{-1}) = \sum_{j+k+|\alpha|\leq m} a_{j k\alpha}(R,y) (\pm 1)^j\Bigl(\frac{R}{h}\mp i\mu R\Bigr)^j (R D_R)^k D_y^\alpha
\end{equation}
is a smooth (in $\mu$) family of elements of $\Diffch^{m,0,0,m}(\cD)$, cf.\ \eqref{EqAchVbDiffch}. As discussed in~\S\ref{SsAch}, its normal operator at the transition face $\tface\cong\ol{{}^+N}\pa\cD$ of $\cD_\chop$ is given in the rescaled coordinate $\tilde R=R/h$ by taking the limit $h\searrow 0$ for bounded $\tilde R$, so
\begin{equation}
\label{EqAebNtfpm}
  N_{\cD,\tface}^\pm(P) := \sum_{j+k+|\alpha|\leq m} a_{j k\alpha}(0,y) (\pm 1)^j \tilde R^j (\tilde R D_{\tilde R})^k D_y^\alpha \in \Diff_{\bop,\scop}^{m,0,m}(\ol{{}^+N}\pa\cD),
\end{equation}
where the weights $0$ and $m$ of the $\bop,\scop$-space refer to the weight at the b-end $\tface\cap\cface$ (where $\tilde R=0$) and the scattering end $\tface\cap\sface$ (where $\tilde R^{-1}=0$), respectively. This normal operator is independent of $\mu$ as long as $\mu$ remains bounded.

\begin{rmk}[Edge-b and semiclassical cone algebras]
\label{RmkAebCh}
  A simple instance of the relationship between the present edge-b setup and semiclassical cone analysis was already hinted at in \cite[Remark~3.4]{HintzConicPowers}.
\end{rmk}

Without passing through the $\chop$-calculus, one can directly freeze the coefficients of $N_\cD(P)$ in~\eqref{EqAebND} at $R=0$, which produces the edge normal operator
\begin{equation}
\label{EqAebNDe}
  N_{\cD,\eop}(P) = \sum_{j+k+|\alpha|\leq m} a_{j k\alpha}(0,y) (R T D_T)^j (R D_R)^k D_y^\alpha \in \Diff_{\ebop,I}^m([0,\infty)_T\times[0,\infty)_R\times Y).
\end{equation}
One then exploits the dilation-invariance of $N_{\cD,\eop}(P)$ in $T$ by passing to the Mellin transformed normal operator family, and one then exploits the invariance under $(R,\lambda)\mapsto(c R,\lambda/c)$ for $c\in\R_+$ (with $\lambda$ denoting the Mellin-dual variable to $T$) by passing to $\tilde R=R|\lambda|$; this produces $N_{\cD,\tface}^\pm(P)$, where the choice of sign `$\pm$' is now identified with the choice of point at infinity in $\ol{\R_\lambda}=\{\pm\infty\}$. Thus, $\hat\lambda=\pm\infty\mapsto N_{\cD,\tface}^\pm(P)$ is the \emph{reduced edge normal operator}, in analogy with the reduced normal operator in the 0-setting \cite{LauterPsdoConfComp,Hintz0Px}.

The dilation-invariance of $N_\cD(P)$ in $T$ implies the fact (which also follows directly by inspection of~\eqref{EqAebNcDMT}) that the b-normal operator of $\wh{N_\cD}(P,\lambda)$ at $\pa\cD$ is independent of $\lambda$; it is denoted
\begin{equation}
\label{EqAebNpaD}
  N_{\pa\cD}(P) := N_{\pa\cD}\bigl(\wh{N_\cD}(P,0)\bigr) \in \Diff_{\bop,I}^m({}^+N\pa\cD),
\end{equation}
where ${}^+N\pa\cD$ is the (non-strictly) inward pointing part of the normal bundle of $\pa\cD\subset\cD$. Equivalently, $N_{\pa\cD}(P)$ can be defined as the b-normal operator of $N_{\cD,\tface}^\pm(P)$ at $\tface\cap\cface$. In terms of~\eqref{EqAebND}, we have $N_{\pa\cD}(P)=\sum_{k+|\alpha|\leq m}a_{0 k\alpha}(0,y)(R D_R)^k D_y^\alpha$.

We proceed to relate the principal symbols of $P\in\Diffeb^m(M)$ and $N_\cD(P)$ (and related operators). We use two facts: firstly, $\Tb^*\cD$ is naturally a subbundle of ${}^{\eop,\bop}T_\cD^*({}^+N\cD)$, and also of $\Tb^*_\cD({}^+N\cD)$, where $\cD$ in the subscript on the right denotes the zero section of ${}^+N\cD$. Secondly, a choice of boundary defining function $\rho_\cD\in\CI(M)$ induces a product decomposition
\begin{equation}
\label{EqGDTbIso}
  \Tb_\cD^*({}^+N\cD)\cong\Tb^*\cD\times\R,\qquad \frac{\dd\rho_\cD}{\rho_\cD}\mapsto(0,1)\in\Tb^*\cD\times\R.
\end{equation}

\begin{lemma}[Relationships between principal symbols]
\label{LemmaAebSymbol}
  Let $P\in\Diffeb^m(M)$.
  \begin{enumerate}
  \item The principal symbol of $N_\cD(P)$ is the dilation-invariant extension to $\Teb^*({}^+N\cD)$ of the restriction $\sigmaeb^m(P)$ to $\Teb^*_\cD M\cong\Teb^*_\cD({}^+N\cD)$.
  \item The principal symbol of $\wh{N_\cD}(P,\lambda)$ is independent of $\lambda$, and it is given by the restriction of ${}^{\eop,\bop}\upsigma^m(N_\cD(P))$ to $\Tb^*\cD\subset{}^{\eop,\bop}T^*_\cD({}^+N\cD)$.
  \item The principal symbol $\sigmab^m(N_{\pa\cD}(P))$ is the dilation-invariant (in the fibers of ${}^+N\pa\cD$) extension of its restriction to the b-cotangent bundle over $\pa\cD\subset{}^+N\pa\cD$, where it is given by the restriction of ${}^{\eop,\bop}\upsigma^m(N_\cD(P))$ to $\Tb^*_{\pa\cD}\cD$.
  \item\label{ItAebSymbolch} Given $\mu\in\R$, the $\chop$-principal symbol of $(0,1)\ni h\mapsto\wh{N_\cD}(P,-i\mu\pm h^{-1})$ is equal to that of $N_\cD(P)$ at $\pm h^{-1}\frac{\dd\rho_\cD}{\rho_\cD}+\Tb^*\cD$.
  \item In terms of the isomorphism~\eqref{EqGDTbIso}, the principal symbol of $N_{\cD,\tface}^\pm(P)$ is given by the restriction of $\sigmab^m(N_\cD(P))|_{\Tb^*_\cD({}^+N\cD)}$ to the front face of $[\Tb^*\cD\times\ol\R;\Tb^*_{\pa\cD}\cD\times\{\pm\infty\}]$.
  \end{enumerate}
\end{lemma}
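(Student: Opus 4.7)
The plan is to prove all five statements by direct computation in local coordinates $T\geq 0$, $R\geq 0$, $y\in\R^{n-2}$ adapted to the fibration $(T,y)\mapsto y$ of $\cR$. Fix fiber-linear coordinates $(\sigma,\xi,\eta)$ on $\Teb^*M$ by writing a covector as $\sigma\frac{\dd T}{R T}+\xi\frac{\dd R}{R}+\eta\cdot\dd y$, dual to the basis~\eqref{EqAebVF} for $\Veb(M)$. Writing $P\in\Diffeb^m(M)$ in the form of~\eqref{EqAebND} with coefficients $a_{j k\alpha}(T,R,y)$, the edge-b principal symbol is then
\[
  \sigmaeb^m(P) = \sum_{j+k+|\alpha|=m} a_{j k\alpha}(T,R,y)\,\sigma^j\xi^k\eta^\alpha.
\]

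Parts~(1)--(3) follow immediately from this formula. For part~(1), $N_\cD(P)$ is obtained by freezing $a_{j k\alpha}$ at $T=0$, and the natural identification $\Teb^*_\cD M\cong\Teb^*_\cD({}^+N\cD)$ (reflecting the fact that $\Veb(M)|_\cD$ maps onto dilation-invariant elements of $\Veb({}^+N\cD)|_\cD$) makes the two symbols coincide. For part~(2), the b-principal symbol of $\wh{N_\cD}(P,\lambda)$ in~\eqref{EqAebNcDMT}, computed with $RD_R\to\xi$ and $D_y\to\eta$, receives contributions only from terms with $j=0$ and $k+|\alpha|=m$ (since $\lambda$ is a scalar parameter), and is therefore independent of $\lambda$ and equal to $\sigmaeb^m(N_\cD(P))|_{\sigma=0}$; the isomorphism~\eqref{EqGDTbIso} precisely cuts out $\Tb^*\cD\subset\Teb^*_\cD({}^+N\cD)$ as the locus $\{\sigma=0\}$. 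Part~(3) then combines part~(2) at $\lambda=0$ with Lemma~\ref{LemmaAbNorm}: the b-principal symbol of $N_{\pa\cD}(P)$ at a covector over $\pa\cD$ equals that of $\wh{N_\cD}(P,0)$ at the same covector, and dilation-invariance in the fibers of ${}^+N\pa\cD$ is automatic from the definition~\eqref{EqAebNpaD}.

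Parts~(4) and~(5) require carefully identifying the fiber-linear coordinate in which $\sigma$ is replaced. Taking $\rho_\cD=T$ locally, the edge-b covector $\pm h^{-1}\frac{\dd\rho_\cD}{\rho_\cD}=\pm h^{-1}\frac{\dd T}{T}=\pm(R/h)\cdot\frac{\dd T}{R T}$ has $\sigma$-component $\pm R/h$. For part~(4), from~\eqref{EqAebNhi} the leading $\chop$-part of $\wh{N_\cD}(P,-i\mu\pm h^{-1})$ is $\sum_{j+k+|\alpha|=m} a_{j k\alpha}(R,y)(\pm R/h)^j(RD_R)^k D_y^\alpha$, with the $\mp i\mu R$ term contributing only to lower order at $\sface$ (as it carries one more power of $\rho_\sface$). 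Its $\chop$-principal symbol, obtained by $RD_R\to\xi$, $D_y\to\eta$, thus matches $\sigmaeb^m(N_\cD(P))$ evaluated at $\sigma=\pm R/h$. For part~(5), I would either take the formal limit $h\to 0$ with $\tilde R=R/h$ held fixed in part~(4), or simply read the $(\bop,\scop)$-symbol off~\eqref{EqAebNtfpm}, obtaining $\sum_{j+k+|\alpha|=m} a_{j k\alpha}(0,y)(\pm\tilde R)^j\xi^k\eta^\alpha$; the front face of $[\Tb^*\cD\times\ol\R;\Tb^*_{\pa\cD}\cD\times\{\pm\infty\}]$ is precisely the locus at which $\pm\tilde R$ is the fiber-linear coordinate on the $\R$-factor at infinity, matching $\sigma$ under~\eqref{EqGDTbIso}.

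The main obstacle will be keeping the various cotangent bundle identifications straight---in particular, the dependence of~\eqref{EqGDTbIso} on the choice of $\rho_\cD$ and the identification $\sigma=\pm R/h$ in parts~(4) and~(5), which ties a fiber-linear coordinate on $\Teb^*_\cD({}^+N\cD)$ to a base variable on the $\chop$-single space. Once these are disentangled, the arguments reduce to mechanical symbol computations; since principal symbols are independent of the chosen local frame, it suffices to verify the coordinate-free assertions against the single frame fixed above.
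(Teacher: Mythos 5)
Your proposal is correct and follows essentially the same route as the paper: the paper's proof likewise reduces all five parts to the explicit local coordinate formulas~\eqref{EqAebND}--\eqref{EqAebNpaD}, and for part~(5) performs exactly your coordinate check that $\tilde R=R|\lambda|$, $y$, $\xi_\bop$, $\eta_\bop$ are coordinates near the front face of $[\Tb^*\cD\times\ol\R;\Tb^*_{\pa\cD}\cD\times\{\pm\infty\}]$, so that comparison of~\eqref{EqAebNcDMT} with~\eqref{EqAebNtfpm} gives the claim. The only cosmetic slip is attributing the inclusion $\Tb^*\cD\subset{}^{\eop,\bop}T^*_\cD({}^+N\cD)$ to~\eqref{EqGDTbIso} (which concerns $\Tb^*_\cD({}^+N\cD)$); the substance, namely that this subbundle is $\{\sigma=0\}$, is right.
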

\begin{proof}
  These statements follow directly from the local coordinate descriptions of the various normal operators given above. For the final part, note that if we write the canonical 1-form on $\Tb^*\cD$ in the coordinates $R,y$ near $\pa\cD$ as $\xi_\bop\frac{\dd R}{R}+\eta_\bop\cdot\dd y$, then local coordinates on $[\Tb^*\cD\times\ol\R;\Tb^*_{\pa\cD}\cD\times\{\pm\infty\}]$ away from the lift of $\Tb^*\cD\times\{\pm\infty\}$ are $\tilde R=R|\lambda|$ (where $|\lambda|=\pm\lambda$), $y$, $\xi_\bop$, $\eta_\bop$, with the front face defined by $|\lambda|^{-1}=0$; comparison with~\eqref{EqAebNcDMT} and \eqref{EqAebNtfpm} proves the claim.
\end{proof}

\subsubsection{Pseudodifferential operators and Sobolev spaces}

Following~\cite[Appendix~B]{MelroseVasyWunschDiffraction}, we set:

\begin{definition}[Edge-b-double space]
\label{DefAebDouble}
  The \emph{edge-b-double space} of $M$ is
  \[
    M^2_\ebop := \bigl[ M^2; \cD^2; \cR^2_\phi \bigr],
  \]
  where $\cR^2_\phi:=\cR\times_\phi\cR=\{(z,z')\in\cR\times\cR\colon\phi(z)=\phi(z')\}$ is the fiber-diagonal. (The lift of $\cR^2_\phi$ to $[M^2;\cD^2]$ is a p-submanifold.) We denote the boundary hypersurfaces of $M^2_\ebop$ by
  \begin{itemize}
  \item $\ff_\bop$ (the lift of $\cD^2$),
  \item $\ff_\eop$ (the lift of $\cR^2_\phi$),
  \item $\lb_\bop$, resp.\ $\rb_\bop$ (the lifts of $\cD\times M$, resp.\ $M\times\cD$),
  \item $\lb_\eop$, resp.\ $\rb_\eop$ (the lifts of $\cR\times M$, resp.\ $M\times\cR$).
  \end{itemize}
  Furthermore, the edge-b-diagonal is the lift $\diag_\ebop\subset M^2_\ebop$ of $\diag_M$.
\end{definition}

The double space that arises as a model for the 3b-calculus in~\S\ref{Ss3D} turns out to be a resolution of $M^2_\ebop$ (for a particular choice of $M$):

\begin{definition}[Extended edge-b-double space]
\label{DefAebExt}
  The \emph{extended edge-b-double space} of $M$ is the resolution
  \begin{equation}
  \label{EqAebExt}
    M^2_{\ebop,\sharp} := [M^2_\ebop;\cR^2].
  \end{equation}
  We denote by $\ff_{\bop,\sharp}$ the lift of $\ff_\bop$, likewise for the lifts of $\ff_\eop$, $\lb_\bop$, $\rb_\bop$, $\lb_\eop$, $\rb_\eop$, $\diag_\ebop$. The front face of~\eqref{EqAebExt} is denoted $\ff_\sharp$.
\end{definition}

We note that $M^2_{\ebop,\sharp}$ is naturally diffeomorphic to $[M^2_\bop;\cR^2_\phi]$. The terminology is taken from an analogous construction in the 0-calculus by Lauter \cite{LauterPsdoConfComp}.

The space of $s$-th order edge-b-pseudodifferential operators
\[
  \Psieb^s(M)
\]
then consists of all operators with Schwartz kernels in $I^s(M^2_\ebop,\diag_\ebop,\pi_R^*\Omegaeb M)$ (with $\pi_R$ the lift $M^2_\ebop\to M$ of the right projection, and $\Omegaeb M\to M$ the density bundle associated with $\Teb M\to M$) which vanish to infinite order at all boundary hypersurfaces of $M^2_\ebop$ except for $\ff_\bop$ and $\ff_\eop$. (Since the lift of $\cR^2$ to $M^2_\ebop$ is disjoint from $\diag_\ebop$, we can equivalently define $\Psieb^s(M)$ via their Schwartz kernels on $M^2_{\ebop,\sharp}$ in exactly the same manner upon replacing $\ff_\bop$, $\ff_\eop$, $\diag_\ebop$ by $\ff_{\bop,\sharp}$, $\ff_{\eop,\sharp}$, $\diag_{\ebop,\sharp}$.) The (multiplicative) principal symbol map is
\[
  0 \to \Psieb^{s-1}(M) \hra \Psieb^s(M) \xra{\sigmaeb^s} (S^s/S^{s-1})(\Teb^*M) \to 0,
\]
and the normal operator homomorphism at $\cD$, which on the level of Schwartz kernels is given by restriction to $\ff_\bop$ and subsequent dilation-invariant extension to $({}^+N\cD)^2_\ebop$, fits into the short exact sequence
\[
  0 \to \rho_\cD\Psieb^s(M) \hra \Psieb^s(M) \xra{N_\cD} \Psi_{\ebop,I}^s({}^+N\cD) \to 0.
\]

Given $P\in\Psieb^s(M)$, the operator $N_\cD(P)$ has itself a model operator at ${}^+N_{\pa\cD}\cD$ generalizing~\eqref{EqAebNDe}: upon fixing a collar neighborhood $[0,1)_T\times[0,1)_R\times Y$ of $\cD\cap\cR$, we define
\[
  N_{\cD,\eop}(P) \in \Psi_{\ebop,I}^s(M_I),\qquad M_I:=[0,\infty)_T\times[0,\infty)_R\times Y,
\]
as the operator whose Schwartz kernel is the extension of the restriction of the Schwartz kernel of $P$ to $\ff_\bop\cap\ff_\eop$ by invariance under an action of the group $\R\rtimes\R_+$ on $(M_I)^2_\ebop$ given by $(y,s)\cdot(T,R,\omega,T',R',\omega')=(T^s e^y,s R,\omega,T'{}^s e^y,s R',\omega')$. (Note that in the coordinates $\hat T':=\log T',R',\omega,\omega'$, $\Delta=\frac{\log T-\log T'}{R'}$, $\hat R=\frac{R}{R'}$ on $(M_I)^2_\ebop$---the significance being that $\log\hat R$ and $\Delta$ are affine coordinates on the intersection of the b- and edge front faces---this action takes the form $(y,s)\cdot(\hat T',R',\omega,\omega',\Delta,\hat R)=(s\hat T'+y,s R',\omega,\omega',\Delta,\hat R)$; cf.\ \cite[\S2.2]{Hintz0Px} where $\omega,\omega'$ are absent, and $(\hat T',R',\Delta,\hat R)$ are denoted $(\tilde y',\tilde x',\frac{\tilde y-\tilde y'}{\tilde x'},\frac{\tilde x}{\tilde x'})$.)

We shall generalize~\eqref{EqAebNcDMT}--\eqref{EqAebNpaD} to the pseudodifferential setting:

\begin{prop}[Properties of $\hat N(P,-)$]
\label{PropAebMT}
  Fix a boundary defining function $\rho_\cD\in\CI(M)$ of $\cD$. Write ${}^+N\cD=\cD\times[0,\infty)$ for the trivialization of the inward pointing normal bundle determined by $\dd\rho_\cD$, and write the fiber-linear coordinate $\dd\rho_\cD$ as $\rho_\cD$ simply. Let $P\in\Psi_{\eop,\bop}^s(M)$. Then:
  \begin{enumerate}
  \item the Mellin-transformed normal operator family $\wh{N_\cD}(P,\lambda)$, $\lambda\in\C$, defined by
    \begin{equation}
    \label{EqAebMT}
      \wh{N_\cD}(P,\lambda) = \bigl(\rho_\cD^{-i\lambda} N_\cD(P)(\rho_\cD^{i\lambda}u)\bigr)|_{\rho=0},\qquad u\in\CIdot(\cD),
    \end{equation}
    is a holomorphic family of elements of $\Psib^s(\cD)$;
  \item\label{ItAebMTNpaD} the b-normal operator $N_{\pa\cD}(P)\in\Psi_{\bop,I}^s(\ol{{}^+N}\pa\cD)$ of $\wh{N_\cD}(P,\lambda)$ is independent of $\lambda$;
  \item\label{ItAebMTch} for $\mu\in\R$, the operator family
  \begin{equation}
  \label{EqAebHi}
    (0,1)\ni h\mapsto \wh{N_\cD}(P,-i\mu\pm h^{-1}) 
  \end{equation}
  defines an element of $\Psich^{s,0,0,s}(\cD)$ which depends smoothly on $\mu$.
  \end{enumerate}
  The principal symbols of these operators, as well as of the $\tface$-normal operator $N_{\cD,\tface}^\pm(P)\in\Psi_{\bop,\scop}^{s,0,s}(\ol{{}^+N}\pa\cD)$ of~\eqref{EqAebHi}, are given in terms of the principal symbol of $P$ as in Lemma~\usref{LemmaAebSymbol}.
\end{prop}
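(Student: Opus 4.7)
The plan is to mirror the proof of Lemma~\ref{LemmaAbNorm} but on the richer edge-b-geometry, localizing first near the edge-b-diagonal. I would split $P = P_{\mathrm{res}} + P_0$ with $P_{\mathrm{res}} \in \Psi_\ebop^{-\infty}(M)$ and $P_0$ having Schwartz kernel supported in an arbitrarily small neighborhood of $\diag_\ebop$. For $P_{\mathrm{res}}$, the restriction of $N_\cD(P_{\mathrm{res}})$ to $\ff_\bop$ is smooth and rapidly vanishing at $\lb_\bop,\rb_\bop$, so direct Mellin transformation in the projective coordinate $s = T/T'$ shows that $\wh{N_\cD}(P_{\mathrm{res}},\lambda)$ is entire, residual, and rapidly decaying in $|\Re\lambda|$ for bounded $|\Im\lambda|$; after the substitution $\lambda = -i\mu \pm h^{-1}$ this is a family in $h^\infty\Psich^{-\infty,0,0,-\infty}(\cD)$.

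For $P_0$, in local coordinates $T \ge 0$, $R \ge 0$, $y \in \R^{n-2}$ near $Y = \pa\cD$, the edge-b-structure (generated by $RT\pa_T, R\pa_R, \pa_y$) forces the restriction of the Schwartz kernel of $N_\cD(P)$ to $\ff_\bop$ to take the oscillatory form
\[
K|_{\ff_\bop} = (2\pi)^{-n} \iiint e^{i[(\log s)\lambda + (R-R')\xi + (y-y')\eta]}\, a(R,y,R\lambda,R\xi,\eta)\, d\lambda\, d\xi\, d\eta \cdot \nu,
\]
where $a$ is a symbol of order $s$ in the last three arguments (entire in $\lambda\in\C$) and $\nu$ is a smooth right edge-b-density. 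Applying the definition~\eqref{EqAebMT} removes the $\lambda$-integration, yielding
\[
\wh{N_\cD}(P,\lambda)(R,y,R',y') = (2\pi)^{-(n-1)} \iint e^{i[(R-R')\xi+(y-y')\eta]} a(R,y,R\lambda,R\xi,\eta)\, d\xi\, d\eta \cdot \nu',
\]
which is manifestly an entire family in $\Psib^s(\cD)$, whose principal symbol (at fiber infinity in $(\xi,\eta)$) is independent of $\lambda$ and matches the formula of Lemma~\ref{LemmaAebSymbol}. For part~\eqref{ItAebMTNpaD}, freezing coefficients at $R=0$ kills the $R\lambda$-dependence entirely, so $N_{\pa\cD}(\wh{N_\cD}(P,\lambda))$ is independent of $\lambda$, justifying the notation $N_{\pa\cD}(P)$.

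For part~\eqref{ItAebMTch}, substituting $\lambda = -i\mu \pm h^{-1}$ and rescaling to $\tilde R = R/h$ (the natural coordinate on the $\tface_\chop$ blow-up) with $\tilde\xi = h\xi$ converts the above into the standard semiclassical cone quantization with symbol $a(h\tilde R, y, \pm \tilde R - i\mu h\tilde R, \tilde R\tilde\xi, \eta)$, which is smooth down to $h=0$ and restricts at $\tface_\chop$ to the symbol of $N_{\cD,\tface}^\pm(P)$ from~\eqref{EqAebNtfpm}. The main obstacle is verifying uniformity and the correct orders at every boundary hypersurface of $\cD^2_\chop$ simultaneously: one must confirm the Kohn--Nirenberg symbol estimates consistent with $\Psich^{s,0,0,s}(\cD)$ near the corner $\tface_\chop\cap\cface_\chop$ (where $R,h\to 0$ at comparable rates, requiring care in separating $|R\lambda|\lesssim 1$ from $|R\lambda|\gtrsim 1$) and near $\tface_\chop\cap\sface_\chop$ (where the family transitions to a semiclassical scattering operator in $\tilde R$). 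This is accomplished via a partition of unity in $(R,h)$ adapted to the $\chop$-single space, with the two principal-symbol computations of Lemma~\ref{LemmaAebSymbol}\eqref{ItAebSymbolch} dropping out of the respective local representations.
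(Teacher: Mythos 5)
Your treatment of the near-diagonal piece $P_0$ is essentially the paper's argument and is sound: the oscillatory representation with symbol $a(R,y,R\lambda,R\xi,\eta)$, the removal of the $\lambda$-integration by the Mellin transform, and the rescaling $\tilde R=R/h$, $\tilde\xi=h\xi$ all match the paper's computation (which carries out the uniformity check you defer by working in the two explicit charts of $\cD_\chop$ covering $\tface\cap\cface$ and $\tface\cap\sface$).

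The gap is in your treatment of $P_{\mathrm{res}}$. A general $P_{\mathrm{res}}\in\Psieb^{-\infty}(M)$ has Schwartz kernel vanishing to infinite order at $\lb_\bop$, $\rb_\bop$, $\lb_\eop$, $\rb_\eop$, but \emph{not} at $\ff_\eop$. Its restriction to $\ff_\bop$ is therefore smooth only on the blow-up $[[0,\infty]_{s_\bop}\times\cD^2;\{1\}\times(\pa\cD)^2]$, not on $[0,\infty]_{s_\bop}\times\cD^2$: near $\ff_\eop$ the kernel is concentrated where $|\log s_\bop|\lesssim R'$. Its Mellin transform in $s_\bop$ consequently has the form $\wh\kappa(R'\lambda,R/R',\omega,\omega')$ --- a Fourier transform in the rescaled variable $\tau=(\log s_\bop)/R'$ evaluated at $R'\lambda$ --- which decays rapidly in $R'\lambda$ but \emph{not} in $\lambda$ uniformly down to $\pa\cD$. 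Your conclusion that $\wh{N_\cD}(P_{\mathrm{res}},-i\mu\pm h^{-1})\in h^\infty\Psich^{-\infty,0,0,-\infty}(\cD)$ is therefore false: this piece contributes a nontrivial element of $\Psich^{-\infty,0,0,-\infty}(\cD)$ whose restriction to $\tface_\chop$ is precisely the residual (off-diagonal) part of $N_{\cD,\tface}^\pm(P)$, which your argument would incorrectly compute to be zero. The correct splitting, as in the paper, is into (a) kernels vanishing near $\ff_\eop$, where your b-calculus argument applies verbatim, and (b) kernels supported near $\ff_\eop$ (both the residual part and the part conormal to $\diag_\ebop$), which must be analyzed in the coordinates $u_\bop=R/R'$ and $\tau=(\log s_\bop)/R'$; only for (b) restricted to a neighborhood of the diagonal does your oscillatory-integral computation cover the required analysis.
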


An equivalent definition of $\wh{N_\cD}(P,\lambda)$ is $u\mapsto(\rho_\cD^{-i\lambda}P(\rho_\cD^{i\lambda}\tilde u))|_\cD$ where $\tilde u\in\CI(M)$ is a smooth extension of $u\in\CIdot(\cD)$.

\begin{proof}[Proof of Proposition~\usref{PropAebMT}]
  Denoting by $R$ and $R'$ the right and left lift of the chosen defining function of $\cD$ to $M\times M$, respectively, the front face of $[M^2;\cD^2]$ is diffeomorphic to $[0,\infty]_{s_\bop}\times\cD^2$ where $s_\bop=R/R'$. Therefore,
  \begin{equation}
  \label{EqAebMTffb}
    \ff_\bop = \bigl[ [0,\infty]_{s_\bop}\times\cD^2; \{1\}\times(\pa\cD)^2 \bigr].
  \end{equation}
  The intersection $\diag_\ebop\cap\,\ff_\bop$ is given by the lift of $\{1\}\times\diag_{\pa\cD}$. See Figure~\ref{FigAebffb}. Denote by $K=K(s_\bop,z,z')$ the restriction of the Schwartz kernel of $P$ to $\ff_\bop$, where $z,z'\in\cD$.

  \begin{figure}[!ht]
  \centering
  \includegraphics{FigAebffb}
  \caption{\textit{On the left:} illustration of the b-front face $\ff_\bop$, see~\eqref{EqAebMTffb}; only the coordinates $R,R'$ (left and right lifts of defining functions of $\pa\cD\subset\cD$) are shown, whereas the coordinates $\omega,\omega'$ in the left and right factor of $\pa\cD\times\pa\cD\subset\cD\times\cD$ are suppressed. The boundary hypersurface labeled $\lb_\bop$ is the intersection of $\lb_\bop$ with $\ff_\bop$, likewise for the other boundary hypersurface labels. \textit{On the right:} the b-front face $\ff_{\bop,\sharp}$ of the extended edge-b-double space.}
  \label{FigAebffb}
  \end{figure}
  Consider first the case that $K(s_\bop,z,z')$ vanishes identically near $\{1\}\times(\pa\cD)$ (i.e.\ near $\ff_\eop$). Then $\ff_\bop\cap\supp K=([0,\infty]_{s_\bop}\times\cD^2)\cap\supp K$ is a subset of the front face of $[M^2;\cD^2]$; therefore the Mellin transform of $K(s_\bop,z,z')$ in $s_\bop$ can be analyzed as in the case of b-ps.d.o.s, see in particular Lemma~\ref{LemmaAbNorm}. Thus, the Schwartz kernel $K_\lambda$ of $\wh{N_\cD}(P,\lambda)$ is an element of $\Psi^s(\cD\setminus\pa\cD)$ for bounded $\lambda$, and of $\Psih^{s,s}(\cD\setminus\pa\cD)$ in the high frequency regime~\eqref{EqAebHi}. Replacing $K$ by its cutoff away from the diagonal singularity, the resulting distribution (which we still denote by $K$) is a smooth right edge-b-density which vanishes to infinite order at $s_\bop=0$ (i.e.\ $\lb_\bop$), $s_\bop=\infty$ (i.e.\ $\rb_\bop$), as well as at $[0,\infty]\times(\pa\cD\times\cD\cup\cD\times\pa\cD)$ (i.e.\ $\lb_\eop\cup\ff_\eop\cup\rb_\eop$). Therefore, $K_\lambda$ is an analytic family (in $\lambda\in\C$) of smooth right densities on $\cD\times\cD$ which vanish to infinite order at $\pa\cD\times\cD$ and $\cD\times\pa\cD$, and which as such also vanish to infinite order at $h=|\lambda|^{-1}$ when $|\Im\lambda|$ is bounded while $|\Re\lambda|\to\infty$. This means that $K_\lambda\in\Psib^{-\infty,-\infty}(\cD)$, and the high energy family~\eqref{EqAebHi} is an element of $\Psich^{-\infty,-\infty,-\infty,-\infty}(\cD)$.

  It remains to analyze $K_\lambda$ when $K$ is supported near $\ff_\eop$. We switch to the coordinate $\tau_\bop=\log s_\bop\in[-\infty,\infty]$ on the front face of $[M^2;\cD^2]$; moreover, we work in a collar neighborhood $[0,1)_R\times\pa\cD$ of $\pa\cD\subset\cD$, and correspondingly denote points on $\ol{\R_{\tau_\bop}}\times\cD^2$ by $(\tau_\bop,R,\omega,R',\omega')$. In these coordinates, $\ff_\bop$ is the product of $[\ol\R\times[0,1)\times[0,1);\{(0,0,0)\}]$ (with points labeled $(\tau_\bop,R,R')$) with $(\pa\cD)^2$ (with points labeled $(\omega,\omega')$). The right edge-b-density bundle is trivialized by
  \begin{equation}
  \label{EqAebDensity}
    \Bigl|\frac{\dd T'}{R'T'}\frac{\dd R'}{R'}\dd\omega'\Bigr|=R'{}^{-1}\Bigl|\dd\tau_\bop\frac{\dd R'}{R'}\dd\omega'\Bigr|=R'{}^{-1}\Bigl|\frac{\dd s_\bop}{s_\bop}\frac{\dd R'}{R'}\dd\omega'\Bigr|.
  \end{equation}
  
  We first consider the case that $P\in\Psieb^{-\infty}(M)$. We express $K$ in terms of the coordinates
  \begin{equation}
  \label{EqAebCoord}
    R'\geq 0,\qquad
    u_\bop=\frac{R}{R'}\in[0,\infty),\qquad
    \tau=\frac{\tau_\bop}{R'}=\frac{\log s_\bop}{R'}\in\R,\qquad
    \omega,\ \omega'
  \end{equation}
  in $\ff_\bop^\circ$. (On the extended edge-b-double space, these are valid coordinates near $\ff_{\eop,\sharp}\cap\lb_{\eop,\sharp}$ if we extend the domain of definition of $\tau$ to also include the points $\pm\infty$.) We then have
  \begin{equation}
  \label{EqAebkappa}
    K=\kappa(R',u_\bop,\tau,\omega,\omega')\,\Bigl|\dd\tau\frac{\dd u_\bop}{u_\bop}\dd\omega'\Bigr|,
  \end{equation}
  where $\kappa$ is smooth in $R',u_\bop,\omega,\omega'$, vanishes to infinite order as $u_\bop\to 0$ or $|\tau|\to\infty$; and $R'\tau$ is bounded on $\supp\kappa$. The Schwartz kernel $K_\lambda$ of $\wh{N_\cD}(P,\lambda)$ is then, in view of $s_\bop=e^{R'\tau}$,
  \begin{align*}
    K_\lambda(R',u_\bop,\omega,\omega') &= \int_\R (e^{R'\tau})^{-i\lambda} \kappa(R',u_\bop,\tau,\omega,\omega')\,\dd\tau\,\Bigl|\frac{\dd u_\bop}{u_\bop}\dd\omega'\Bigr| \\
      &= \wh\kappa(R',u_\bop,R'\lambda,\omega,\omega')\,\Bigl|\frac{\dd u_\bop}{u_\bop}\dd\omega'\Bigr|,
  \end{align*}
  where $\wh\kappa$ denotes the Fourier transform in the third argument. Since $e^{R'\tau}$ lies in a compact subinterval of $(0,\infty)$ on $\supp K$, we conclude from this expression (and an analogous expression near $\ff_{\eop,\sharp}\cap\rb_{\eop,\sharp}$) that $\wh{N_\cD}(P,\lambda)$ is analytic in $\lambda$ with values in $\Psib^{-\infty}(\cD)$. To analyze the high frequency regime, let us write $\lambda=-i\mu+\lambda_0$ where $\mu,\lambda_0\in\R$, and write $\kappa_\mu(R',u_\bop,\tau,\omega,\omega')=e^{-R'\tau\mu}\kappa(R',u_\bop,\tau,\omega,\omega')$, which is a smooth family (in $\mu,R',u_\bop,\omega,\omega'$) of Schwartz functions in $\tau$ which vanishes rapidly at $u_\bop=0$; then
  \[
    K_{-i\mu+\lambda_0}\Bigl(\pm\frac{\tilde R'}{\lambda_0},u_\bop,\omega,\omega'\Bigr) = \wh{\kappa_\mu}\Bigl(\pm\frac{\tilde R'}{\lambda_0},u_\bop,\tilde R',\omega,\omega'\Bigr)\,\Bigl|\frac{\dd u_\bop}{u_\bop}\dd\omega'\Bigr|
  \]
  is, for $\pm\lambda_0>1$, a smooth function of $h=|\lambda_0|^{-1}=\pm\lambda_0^{-1}$ (down to $h=0$), $\mu\in\R$, and $(\tilde R',u_\bop,\omega,\omega')$ which is Schwartz in $\tilde R'$. This shows that~\eqref{EqAebHi} is a smooth family (in $\mu\in\R$) of elements of $\Psich^{-\infty,0,0,-\infty}(\cD)$.

  Finally, we consider $P\in\Psieb^s(M)$; it remains to analyze $\wh{N_\cD}(P,\lambda)$ in the case that on the support of the restriction of the Schwartz kernel of $P$ to $\ff_\bop$, expressed similarly to~\eqref{EqAebCoord}--\eqref{EqAebkappa} as
  \[
    K = \kappa_0(R',u,\tau,\omega,\omega')\,|\dd\tau\,\dd u\,\dd\omega'|,\qquad u:=\log u_\bop,
  \]
  the coordinates $u,\tau\in\R$ are bounded; and moreover we use local coordinates $\omega,\omega'\in\R^{n-2}$ on $\pa\cD$, with $|\omega-\omega'|$ bounded as well. (That is, $K$ is supported in a neighborhood of $\diag_\ebop\cap\,\ff_\eop$.) Thus, $\kappa_0$ is a conormal distribution,
  \[
    \kappa_0(R',u,\tau,\omega,\omega')=(2\pi)^{-n}\iiint_{\R\times\R\times\R^{n-1}} e^{i\tau\tilde\lambda} e^{i u\xi} e^{i(\omega-\omega')\cdot\eta} a(R',\omega,\tilde\lambda,\xi,\eta)\,\dd\tilde\lambda\,\dd\xi\,\dd\eta,
  \]
  where $a$ is a symbol of order $s$ in $(\tilde\lambda,\xi,\eta)$; in fact, due to the support properties of $\kappa_0$, the symbol $a$ is analytic in $\tilde\lambda$, and satisfies symbolic bounds in $(\Re\tilde\lambda,\xi,\eta)$ locally uniformly in $\Im\tilde\lambda$. In the coordinates $R',u,\omega,\omega'$, the Schwartz kernel of $\wh{N_\cD}(P,\lambda)$ is then
  \[
    K_\lambda(R',u,\omega,\omega') = (2\pi)^{-(n-1)} \iint_{\R\times\R^{n-1}} e^{i u\xi}e^{i(\omega-\omega')\cdot\eta} a(R',\omega,R'\lambda,\xi,\eta)\,\dd\xi\,\dd\eta\,\cdot|\dd u\,\dd\omega'|.
  \]
  For bounded $\lambda\in\C$, this is the Schwartz kernel of an element of $\Psib^s(\cD)$, with analytic dependence on $\lambda$; this follows from the aforementioned symbolic bounds on $a$ and the fact that $R'\lambda$ lies in a bounded subset of $\C$.
  
  For $\lambda=-i\mu+\lambda_0$ with $\pm\lambda_0>1$, we study $K_\lambda(R',u,\omega,\omega')$ as a distribution on the semiclassical cone single space $\cD_\chop$, where $h:=|\lambda_0|^{-1}$ is the semiclassical parameter. We first work away from the semiclassical face $\sface\subset\cD_\chop$, and use the coordinates $h=\pm\lambda_0^{-1}$, $\tilde R'=R'/h$, $u,\omega,\omega'$, in which $K_\lambda$ takes the form
  \begin{align*}
    &(h,\tilde R',u,\omega,\omega') \\
    &\qquad \mapsto (2\pi)^{-(n-1)}\iint_{\R\times\R^{n-1}} e^{i u\xi}e^{i(\omega-\omega')\cdot\eta}a(h\tilde R',\omega,\pm\tilde R'-i h\tilde R'\mu,\xi,\eta)\,\dd\xi\,\dd\eta  \,\cdot|\dd u\,\dd\omega'|.
  \end{align*}
  This verifies the membership in $\Psich^{s,0,0,s}(M)$ of~\eqref{EqAebHi} in this coordinate chart. Away from $\cface\subset\cD_\chop$ on the other hand, we use the coordinates
  \[
    \tilde h=\frac{1}{\pm\lambda_0 R'}\geq 0,\qquad
    R',\qquad
    \tilde u=\frac{u}{\tilde h},\qquad
    \omega,\qquad
    \tilde\omega:=\frac{\omega-\omega'}{\tilde h}\in\R^{n-2},
  \]
  on $\cD_\chop$, in which $K_\lambda=K_{-i\mu+\lambda_0}$ is given by
  \begin{equation}
  \label{EqAebKernelsf}
  \begin{split}
    &(\tilde h,R',\tilde u,\omega,\tilde\omega) \\
    &\qquad \mapsto (2\pi)^{-(n-1)}\iint_{\R\times\R^{n-1}} e^{i\tilde u\cdot\tilde h\xi}e^{i\tilde\omega\cdot \tilde h\eta}a(R',\omega,\pm\tilde h^{-1}-i R'\mu,\xi,\eta)\,\dd\xi\,\dd\eta  \,\cdot\tilde h^{n-1}|\dd\tilde u\,\dd\tilde\omega| \\
    &\;\qquad = (2\pi)^{-(n-1)} \iint_{\R\times\R^{n-1}} e^{i\tilde u\cdot\tilde\xi}e^{i\tilde\omega\cdot\tilde\eta} \tilde a(R',\omega,\tilde h,\tilde\xi,\tilde\eta)\,\dd\tilde\xi\,\dd\tilde\eta\cdot|\dd\tilde u\,\dd\tilde\omega|,
  \end{split}
  \end{equation}
  where
  \[
    \tilde a(R',\omega,\tilde h,\tilde\xi,\tilde\eta) = a(R',\omega,\pm\tilde h^{-1}-i R'\mu,\tilde h^{-1}\tilde\xi,\tilde h^{-1}\tilde\eta).
  \]
  Note that $\tilde h$ is a defining function $\sface\subset\cD_\chop$. The symbolic estimates for $a$ imply
  \[
    |\pa_{R'}^j\pa_\omega^\alpha (\tilde h\pa_{\tilde h})^k\pa_{\tilde\xi}^l\pa_{\tilde\eta}^\beta\tilde a| \lesssim \bigl(1+|\tilde h|^{-1}+|\tilde h|^{-1}|(\tilde\xi,\tilde\eta)|\bigr)^s \lesssim |\tilde h|^{-s} \big\la(\tilde\xi,\tilde\eta)\big\ra^s.
  \]
  Therefore,~\eqref{EqAebKernelsf} is the Schwartz kernel of an element of $\Psich^{s,0,0,s}(M)$. The proof is complete.
\end{proof}

Using edge-b-ps.d.o.s, we can define, as usual, the full scale of weighted edge-b-Sobolev spaces
\[
  H_\ebop^{s,\alpha_\cD,\alpha_\cR}(M) = \rho_\cD^{\alpha_\cD}\rho_\cR^{\alpha_\cR}H_\ebop^s(M),
\]
with underlying $L^2$-space defined with respect to any fixed weighted positive edge-b-density.

\begin{prop}[Edge-b-Sobolev spaces and the Mellin transform]
\label{PropAebSobMT}
  Fix a collar neighborhood $[0,1)_{\rho_\cD}\times\cD\subset M$, and let $\chi\in\cA^{(0,0)}([0,1)_{\rho_\cD}\times\cD)$ be a bounded conormal cutoff with $\rho_\cD\leq\half$ on $\supp\chi$. Write the Mellin transform of $u=u(\rho_\cD,q)$, $q\in\cD$, with support in $\rho_\cD<1$, as $\hat u(\lambda,q)=\int_0^\infty \rho_\cD^{i\lambda}u(\rho_\cD,q)\,\frac{\dd\rho_\cD}{\rho_\cD}$. Fix a weighted positive b-density $\nu_\bop$ on $\cD$, and fix the weighted edge-b-density $|\frac{\dd\rho_\cD}{\rho_\cD}\nu_\bop|$ on $M$; use $\nu_\bop$ also as the density for defining $\chop$-Sobolev spaces on $\cD$. Let $s,\alpha_\cD,\alpha_\cR\in\R$. Then
  \begin{equation}
  \label{EqAebSobMT}
  \begin{split}
    \|\chi u\|_{H_\ebop^{s,\alpha_\cD,\alpha_\cR}(M)} \sim \sum_\pm &\int_{[-1,1]} \|\wh{\chi u}(\lambda_0-i\alpha_\cD,-)\|_{\Hb^{s,\alpha_\cR}(\cD)}^2\,\dd\lambda_0 \\
      & + \int_{\pm[1,\infty)} \|\wh{\chi u}(\lambda_0-i\alpha_\cD,-)\|_{H_{\cop,|\lambda_0|^{-1}}^{s,\alpha_\cR,\alpha_\cR,s}(\cD)}^2\,\dd\lambda_0.
  \end{split}
  \end{equation}
  That is, there exists $C>1$ so that for all $u$, the left hand side is bounded by $C$ times the right hand side, and vice versa.
\end{prop}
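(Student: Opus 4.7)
The strategy parallels the b-Sobolev case \eqref{EqAbEquiv}--\eqref{EqAbEquiv2}, with the high-frequency family of semiclassical scattering operators replaced by the family of $\chop$-operators given by Proposition~\ref{PropAebMT}\eqref{ItAebMTch}.

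First, I would reduce to the case $\alpha_\cD = 0 = \alpha_\cR$. The weight $\alpha_\cR$ is absorbed since $\rho_\cR^{\alpha_\cR}$ commutes (up to a bounded conormal factor, which on $\supp\chi$ is uniformly bounded) with the Mellin transform in $\rho_\cD$, and its effect on the norms on $\cD$ is precisely the shift from weight $0$ to weight $\alpha_\cR$ in both $\Hb^{s,\alpha_\cR}$ and $H_{\cop,|\lambda_0|^{-1}}^{s,\alpha_\cR,\alpha_\cR,s}$ (the shift by $s$ at $\sface$ is intrinsic and not caused by any weight on $M$). The weight $\alpha_\cD$ is handled by noting that $\rho_\cD^{\alpha_\cD}u$ has Mellin transform $\hat u(\lambda - i\alpha_\cD,\cdot)$, so the contour shift from $\Im\lambda = 0$ to $\Im\lambda = -\alpha_\cD$ produces the claimed shift in the right-hand side of~\eqref{EqAebSobMT}.

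Second, the case $s=0$ is immediate from Plancherel: with the chosen density $|\tfrac{\dd\rho_\cD}{\rho_\cD}\nu_\bop|$, one has $\|\chi u\|_{L^2(M)}^2 = \int_\R \|\wh{\chi u}(\lambda_0,\cdot)\|_{L^2(\cD,\nu_\bop)}^2\,\dd\lambda_0$, and both $\Hb^{0,0}(\cD)$ and $H_{\cop,h}^{0,0,0,0}(\cD)$ agree (as sets with equivalent norms, uniformly in $h$) with $L^2(\cD,\nu_\bop)$.

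Third, for $s>0$, I would fix an elliptic operator $A\in\Psieb^s(M)$ whose Schwartz kernel, in a neighborhood of $\supp\chi\times\supp\chi$ inside $M^2_\ebop$, is invariant under joint dilation in $\rho_\cD$ (equivalently, equals the lift of the Schwartz kernel of $N_\cD(A)$). Such an $A$ can be constructed by picking any elliptic element of $\Psi^s_{\ebop,I}({}^+N\cD)$, lifting its Schwartz kernel to a $\rho_\cD$-independent distribution on $M^2_\ebop$ near $\ff_\bop$, and cutting off. For $u$ supported where $\chi$ equals $1$ (the general case follows by commuting cutoffs), one then has $\wh{A(\chi u)}(\lambda_0,\cdot) = \wh{N_\cD}(A,\lambda_0)\wh{\chi u}(\lambda_0,\cdot)$. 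Combining the $s=0$ result with the identity $\|\chi u\|_{H_\ebop^s}^2 \sim \|\chi u\|_{L^2}^2 + \|A(\chi u)\|_{L^2}^2$, the equivalence~\eqref{EqAebSobMT} reduces to the uniform (in $\lambda_0$) equivalence of the norms on $\wh{N_\cD}(A,\lambda_0)$-testing with the norms $\Hb^s$ (for $|\lambda_0|\leq 1$) and $H_{\cop,|\lambda_0|^{-1}}^{s,0,0,s}$ (for $|\lambda_0|\geq 1$). This in turn follows from Proposition~\ref{PropAebMT}: for bounded $\lambda_0$, $\wh{N_\cD}(A,\lambda_0)$ is an analytic family of elliptic elements of $\Psib^s(\cD)$, hence gives an equivalent norm on $\Hb^s(\cD)$ by a standard compactness argument; for $|\lambda_0|\geq 1$, it defines (with $h=|\lambda_0|^{-1}$, $\mu=0$) an elliptic family in $\Psich^{s,0,0,s}(\cD)$, hence an equivalent norm on $H_{\cop,h}^{s,0,0,s}(\cD)$. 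For $s<0$, one dualizes.

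The main obstacle is the uniform matching at the transition scale $|\lambda_0|\sim 1$: one must ensure that the b-norm at bounded frequency and the $\chop$-norm at unit frequency agree (up to uniform constants), which uses that $\Psich(\cD)$ restricted to $h\in[\tfrac12,1]$ is just a bounded family in $\Psib(\cD)$, and that the principal symbols of the two normal operator families match there. A subsidiary technical point is verifying that the $\chop$-weight structure $(s,\alpha_\cR,\alpha_\cR,s)$ on $\cD$ is the correct one: the order $s$ at $\sface$ is forced by the semiclassical scattering behavior at fiber infinity in Proposition~\ref{PropAebMT}\eqref{ItAebSymbolch}, while the two $\alpha_\cR$ weights at $\cface$ and $\tface$ both encode the single weight $\rho_\cR^{\alpha_\cR}$ on $M$ as seen from the bounded-$\lambda$ and transition regimes of $\cD_\chop$ respectively (compare Proposition~\ref{PropAchRel}).
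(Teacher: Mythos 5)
Your proposal is correct and follows essentially the same route as the paper's proof: reduce to $\alpha_\cD=\alpha_\cR=0$, use Plancherel for $s=0$, test with an elliptic $A\in\Psieb^s(M)$ that is dilation-invariant near $\supp\chi$, split the frequency integral at $|\lambda_0|=1$, and invoke the ellipticity of $\wh{N_\cD}(A,\lambda)$ as a b-operator for bounded $\lambda$ and as a $\chop$-operator in the high-frequency regime (Proposition~\ref{PropAebMT}), with duality for $s<0$. The extra remarks on the matching at $|\lambda_0|\sim 1$ and on the weight bookkeeping are correct elaborations of details the paper leaves implicit.
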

\begin{proof}
  It suffices to consider the case $\alpha_\cD=\alpha_\cR=0$. For $s=0$, the equivalence~\eqref{EqAebSobMT} follows from Plancherel's Theorem. For $s>0$, we argue analogously to~\eqref{EqAbEquiv}--\eqref{EqAbEquiv2} and pick an operator $A\in\Psieb^s(M)$ which has an elliptic principal symbol and which near $\supp\chi$ is dilation-invariant. Then
  \begin{align*}
    \|\chi u\|_{\Heb^s(M)}^2 \sim \int_\R \|\wh{\chi u}(\lambda,-)\|_{L^2(\cD)}^2 + \|\wh{N_\cD}(A,\lambda)\wh{\chi u}(\lambda,-)\|_{L^2(\cD)}^2\,\dd\lambda.
  \end{align*}
  Split the integral into three pieces according to $\R=(-\infty,-1]\cup[-1,1]\cup[1,\infty)$. For $\lambda\in[-1,1]$, note that $\wh{N_\cD}(A,\lambda)\in\Psib^s(\cD)$ is elliptic, and for $\lambda=\pm h^{-1}$, the operator family $(0,1)\ni h\mapsto\wh{N_\cD}(A,\pm h^{-1})$ is an elliptic element of $\Psich^{s,0,0,s}(\cD)$. Thus,~\eqref{EqAebSobMT} follows from the definition of b- and $\chop$-Sobolev norms. For $s<0$, use duality.
\end{proof}

\subsubsection{Inversion of the \texorpdfstring{$\cD$}{D}-normal operator}
\label{SssAebD}

While we will not give a full elliptic parametrix construction here, we do encounter elements of the large edge-b-calculus in the parametrix construction for fully elliptic 3b-operators:

\begin{definition}[Large edge-b-calculus]
\label{DefAebLarge}
  The large edge-b-calculus is defined as the sum of the algebra $\Psieb(M)$ and the spaces
  \[
    \Psi_\ebop^{-\infty,(\cE_{\lb_\bop},\cE_{\ff_\bop},\cE_{\rb_\bop},\cE_{\lb_\eop},\cE_{\ff_\eop},\cE_{\rb_\eop})}(M)
  \]
  of operators whose Schwartz kernels are polyhomogeneous on $M^2_\ebop$, valued in $\pi_R^*\Omegaeb M$, and with index set $\cE_H\subset\C\times\N_0$ at the boundary hypersurface $H\subset M^2_\ebop$. We furthermore define the large extended edge-b-calculus as the sum of $\Psieb(M)$ and the spaces
  \[
    \Psi_{\ebop,\sharp}^{-\infty,(\cE_{\lb_\bop},\cE_{\ff_\bop},\cE_{\rb_\bop},\cE_{\lb_\eop},\cE_{\ff_\eop},\cE_{\rb_\eop},\cE_{\ff_\sharp})}(M)
  \]
  of operators with polyhomogeneous Schwartz kernels on $M^2_{\ebop,\sharp}$, valued in $\pi_R^*\Omegaeb M$, with index set $\cE_H$ at the lift of the boundary hypersurface $H\subset M^2_\ebop$ to $M^2_{\ebop,\sharp}$, and with index set $\cE_\sharp$ at $\ff_\sharp$.
\end{definition}

We note that pullback along the blow-down map $M^2_{\ebop,\sharp}\to M^2_\ebop$ shows that
\begin{equation}
\label{EqAebLargeVsExt}
  \Psi_\ebop^{-\infty,(\cE_{\lb_\bop},\cE_{\ff_\bop},\cE_{\rb_\bop},\cE_{\lb_\eop},\cE_{\ff_\eop},\cE_{\rb_\eop})}(M) \subset \Psi_{\ebop,\sharp}^{-\infty,(\cE_{\lb_\bop},\cE_{\ff_\bop},\cE_{\rb_\bop},\cE_{\lb_\eop},\cE_{\ff_\eop},\cE_{\rb_\eop},\cE_{\lb_\eop}+\cE_{\rb_\eop})}(M).
\end{equation}

One can define the Mellin-transformed normal operator family of elements of the large (extended) calculus provided $\Re(\cE_{\lb_\bop}+\cE_{\rb_\bop})>0$, cf.\ Remark~\ref{RmkAbMTLarge}.

\begin{thm}[Inverse of the $\cD$-normal operator]
\label{ThmAebDInv}
  Let $P\in\Psieb^s(M)$ be elliptic. Fix a positive b-density on $\cD$, and a positive $(\bop,\scop)$-density on $\tface\subset\cD_\chop$. Let $\alpha_\cD,\alpha_\cR\in\R$, and consider the conditions
  \begin{enumerate}
  \item\label{ItAebDInvNpaD} $\alpha_\cR\notin\Re\Specb(N_{\pa\cD}(P))$,
  \item\label{ItAebDInvND} for all $\lambda\in\C$ with $\Im\lambda=-\alpha_\cD$, the operator
    \begin{equation}
    \label{EqAebDInvlambda}
      \wh{N_\cD}(P,\lambda) \colon \Hb^{s',\alpha_\cR}(\cD)\to\Hb^{s'-s,\alpha_\cR}(\cD)
    \end{equation}
    is invertible for some (hence all) $s'\in\R$;
  \item\label{ItAebDInvtf} the $\tface$-normal operator
    \begin{equation}
    \label{EqAebDInvtf}
      N_{\cD,\tface}^\pm(P) \colon H_{\bop,\scop}^{s',\alpha_\cR,r'}(\tface) \to H_{\bop,\scop}^{s'-s,\alpha_\cR,r'-s}(\tface)
    \end{equation}
    (see Proposition~\usref{PropAebMT}) is invertible for some (hence all) $s',r'\in\R$.
  \end{enumerate}
  (We say that $P$ is \emph{fully elliptic} at the weights $\alpha_\cD,\alpha_\cR$ if all three conditions are satisfied.) Only assuming conditions~\eqref{ItAebDInvNpaD} and \eqref{ItAebDInvtf}, the operator $\wh{N_\cD}(P,\lambda)$, as a map~\eqref{EqAebDInvlambda}, is an analytic family of Fredholm operators which is invertible outside discrete set; and putting
  \begin{equation}
  \label{EqAebDInvSpecb}
    \Specb(N_\cD(P)) := \bigl\{ (z,k) \colon \wh{N_\cD}(P,\lambda)^{-1}\ \text{has a pole of order $\geq k+1$ at $\lambda=-i z$} \bigr\},
  \end{equation}
  we have $|\Re z|\to\infty$ along any sequence $(z,k)\in\Specb(N_\cD(P))$ with $|z|\to\infty$. Assuming now in addition that condition~\eqref{ItAebDInvND} is valid (thus $P$ is fully elliptic), define, in the notation of Definition~\usref{DefAbSpecb}, the index sets
  \[
    \cE_\cD^\pm := \cE^\pm(N_\cD(P),\alpha_\cD),\qquad
    \cE_\cR^\pm := \cE^\pm(N_{\pa\cD}(P),\alpha_\cR).
  \]
  Define $\cE_\cR^{\pm,(0)}$ in terms of $\cE_\cR^\pm$ via Definition~\usref{DefAbIndexSets}, and $\cE_\cR^{(0)}$ in terms of $\cE_\cR^{\pm,(0)}$ as in~\eqref{EqAbPxInd}. Then there exists an operator
  \begin{equation}
  \label{EqAebDInvQ}
    Q \in \Psieb^{-s}(M) + \Psieb^{-\infty,(\cE_\cD^+,\N_0,\cE_\cD^-,\cE_\cR^{+,(0)},\N_0\extcup(\cE_\cR^{(0)}+1),\cE_\cR^{-,(0)}+1)}(M)
  \end{equation}
  so that $\wh{N_\cD}(Q,\lambda)=\wh{N_\cD}(P,\lambda)^{-1}$ for all $\lambda\in\C$ with $\lambda\notin -i\specb(N_\cD(P))$.
\end{thm}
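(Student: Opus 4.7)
\medskip

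\textbf{Proof proposal.} The plan is to establish invertibility of $\wh{N_\cD}(P,\lambda)$ at each $\lambda$ off a discrete set via two applications of elliptic parametrix theory (at bounded $\lambda$, and in the high-frequency regime), and then to reconstruct $Q$ from the inverse family via the inverse Mellin transform, using the non-product Fourier transform results of \S\ref{SsAF} to read off the polyhomogeneity of the Schwartz kernel on $M^2_\ebop$.

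First I would argue that for each $\lambda\in\C$, the operator $\wh{N_\cD}(P,\lambda)\in\Psib^s(\cD)$ is elliptic (by Lemma~\ref{LemmaAebSymbol}) and has $\lambda$-independent b-normal operator $N_{\pa\cD}(P)$ (Proposition~\ref{PropAebMT}~\eqref{ItAebMTNpaD}); since $\alpha_\cR\notin\Re\Specb(N_{\pa\cD}(P))$, Theorem~\ref{ThmAbPx} shows that $\wh{N_\cD}(P,\lambda)$ is fully elliptic with weight $\alpha_\cR$, hence Fredholm on $\Hb^{s',\alpha_\cR}(\cD)$ for every $s'\in\R$. Holomorphy of the family in $\lambda$ was established in Proposition~\ref{PropAebMT}. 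To apply the analytic Fredholm theorem it remains to exhibit a single invertible $\lambda$: by Proposition~\ref{PropAebMT}~\eqref{ItAebMTch}, the family $h\mapsto\wh{N_\cD}(P,-i\mu\pm h^{-1})$ belongs to $\Psich^{s,0,0,s}(\cD)$, is elliptic, has $h$-independent $\cface$-normal operator $N_{\pa\cD}(P)$, and has $\tface$-normal operator $N_{\cD,\tface}^\pm(P)$, which is invertible by hypothesis~\eqref{ItAebDInvtf}. Theorem~\ref{ThmAchInv} then gives invertibility for $h<h_0$, i.e.\ for $|\Re\lambda|$ sufficiently large on horizontal lines. This simultaneously proves that $\wh{N_\cD}(P,\lambda)^{-1}$ is meromorphic, that $|\Re z|\to\infty$ along any unbounded sequence in $\Specb(N_\cD(P))$, and (by condition~\eqref{ItAebDInvND}) that the contour $\Im\lambda=-\alpha_\cD$ is disjoint from this set.

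Next, I would build $Q$ via
\[
  N_\cD(Q) := \frac{1}{2\pi}\int_{\Im\lambda=-\alpha_\cD} \rho_\cD^{i\lambda}\,\wh{N_\cD}(P,\lambda)^{-1}\,\dd\lambda,
\]
regarded as a dilation-invariant operator on ${}^+N\cD$, and then extend its Schwartz kernel to a polyhomogeneous distribution $K_Q$ on $M^2_\ebop$ in a collar neighborhood of $\ff_\bop$, subsequently cut off near $\ff_\bop$; the index set $\N_0$ at $\ff_\bop$ reflects the smoothness of $\wh{N_\cD}(P,\lambda)^{-1}$ in $\lambda$ along the chosen contour. The two key geometric inputs for the index-set bookkeeping are (a) the description~\eqref{EqAebMTffb} of $\ff_\bop$ as a blow-up of $[0,\infty]_{s_\bop}\times\cD^2$ at $\{1\}\times(\pa\cD)^2$, which by Theorem~\ref{ThmAbPx} applied fiberwise in $\lambda$ yields polyhomogeneity in the $\cD^2_\bop$-direction with index sets $\cE_\cR^{+,(0)}, \cE_\cR^{(0)}, \cE_\cR^{-,(0)}$ at $\lb_\bop, \ff_\bop, \rb_\bop$ of $\cD^2_\bop$; and (b) shifting the contour past the poles at $z\in\specb(N_\cD(P))$ with $\Im(-i z)<-\alpha_\cD$ (respectively $>-\alpha_\cD$) contributes the expansions $\rho_\cD^z$ at $\lb_\bop\subset M^2_\ebop$ (respectively at $\rb_\bop$), giving the index sets $\cE_\cD^+, \cE_\cD^-$. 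The polyhomogeneity near $\ff_\eop$ and at $\lb_\eop, \rb_\eop$ will be read off from the high-frequency structure in $\lambda$: the $\chop$-conormal description of $\wh{N_\cD}(P,\lambda)^{-1}$ for $|\Re\lambda|\gg 1$ combined with Propositions~\ref{PropAF1} and~\ref{PropAF2} (which convert such non-product conormal behavior in the frequency variable into polyhomogeneity on the resolved physical side) produces precisely the index sets $\cE_\cR^{+,(0)}$, $\N_0\extcup(\cE_\cR^{(0)}+1)$, $\cE_\cR^{-,(0)}+1$ at $\lb_\eop, \ff_\eop, \rb_\eop$, with the $+1$ shifts arising from the Jacobian factor in the change of variables to the $\chop$-semiclassical parameter $h=|\Re\lambda|^{-1}$ combined with the density identification~\eqref{EqADensity}, and the $\N_0$ summand at $\ff_\eop$ accounting for the diagonal singularity already present in the symbolic part.

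The main obstacle I anticipate is the careful accounting at the corner $\ff_\bop\cap\ff_\eop$: the b-large-calculus inverse on each $\lambda$-slice must be glued to the $\chop$-large-calculus inverse in the high-frequency regime, and the compatibility of the two index-set descriptions at this corner must be verified via a direct computation on the extended double space $M^2_{\ebop,\sharp}$, using~\eqref{EqAebLargeVsExt} as a bridge and the pushforward theorem to collapse $\ff_\sharp$. Once $K_Q$ is constructed, $\wh{N_\cD}(Q,\lambda)=\wh{N_\cD}(P,\lambda)^{-1}$ holds by construction on the contour and hence everywhere by meromorphic continuation, completing the proof.
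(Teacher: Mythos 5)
Your first paragraph reproduces the paper's argument for the Fredholm/meromorphy statements essentially verbatim (ellipticity plus $\lambda$-independence of $N_{\pa\cD}(P)$, Theorem~\ref{ThmAbPx} for the Fredholm property, Theorem~\ref{ThmAchInv} applied to the $\chop$-family for invertibility at large $|\Re\lambda|$, then analytic Fredholm theory), and that part is correct. Your plan for constructing $Q$ — inverse Mellin transform along $\Im\lambda=-\alpha_\cD$, with the polyhomogeneity of the resulting kernel on the b-front face read off from the uniform large-b-calculus/$\chop$-calculus description of $\wh{N_\cD}(P,\lambda)^{-1}$ via Proposition~\ref{PropAF1} — is also the paper's route to a \emph{first} version of $Q$. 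Two caveats there: you should split off a symbolic parametrix $Q_0$ and apply the inverse Mellin transform only to the fully smoothing family $\wh{N_\cD}(P,\lambda)^{-1}\wh{N_\cD}(I-P Q_0,\lambda)$, since Proposition~\ref{PropAF1} is a statement about conormal/polyhomogeneous \emph{functions} and does not apply across the diagonal singularity; and the $+1$ shifts at $\ff_\eop$, $\rb_\eop$ (and $\ff_\sharp$) come from converting right b-densities to right edge-b-densities, cf.\ the factor $R'^{-1}$ in~\eqref{EqAebDensity}, not from a Jacobian in the semiclassical parameter.

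The genuine gap is in the last step. What the Mellin-transform route naturally produces is membership in the \emph{extended} edge-b-calculus, i.e.\ polyhomogeneity on $M^2_{\ebop,\sharp}$ with a nontrivial index set ($\cE_\cR^{(0)}+1$) at the extra front face $\ff_\sharp$; this is exactly~\eqref{EqAebQ1}. The theorem, however, asserts membership in the non-extended calculus on $M^2_\ebop$, and you cannot get there by "using the pushforward theorem to collapse $\ff_\sharp$": a distribution polyhomogeneous on the blow-up $[M^2_\ebop;\cR^2]$ descends to one on $M^2_\ebop$ only if its expansion at $\ff_\sharp$ is generated by the product of its expansions at $\lb_{\eop,\sharp}$ and $\rb_{\eop,\sharp}$ (compare the pullback direction~\eqref{EqAebLargeVsExt}), and the index set $\cE_\cR^{(0)}+1$ obtained from Proposition~\ref{PropAF1} contains terms (e.g.\ the integer part $\N_0+1$) that do not factor in this way; nothing in your argument supplies the required compatibility. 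The paper closes this gap by a second, independent construction carried out on the physical side \emph{before} Mellin transforming: it inverts the edge normal operator $N_{\cD,\eop}(P)$ at the corner $\ff_\bop\cap\ff_\eop$ (whose reduced normal operator is precisely $N_{\cD,\tface}^\pm(P)$, invertible by hypothesis~\eqref{ItAebDInvtf}), solves away the resulting error at $\lb_\eop$ using $N_{\pa\cD}(P)$, and runs an asymptotic Neumann series, producing a correction that manifestly lives on the non-extended double space; combining this with the index-set bounds from the Mellin-transform construction (both describe the same operator $Q$) yields~\eqref{EqAebDInvQ}. Your proposal contains no analogue of this edge-normal-operator step, so as written it proves only the weaker, extended-calculus version of the conclusion.
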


The conclusion about $Q$ can equivalently be phrased as the statement that $N_\cD(Q)$ is the inverse of $N_\cD(P)$ as an operator between weighted edge-b-Sobolev spaces
\[
  N_\cD(P) \colon H_\ebop^{s',\alpha_\cD,\alpha_\cR-\frac12}({}^+N\cD)\to H_\ebop^{s'-s,\alpha_\cD,\alpha_\cR-\frac12}({}^+N\cD)
\]
for any $s'\in\R$. Here, $H_\ebop$ is defined via testing by dilation-invariant edge-b-ps.d.o.s, and with respect to a positive dilation-invariant edge-b-density. (The choice of density causes the shift by $\half$; if one were to use a b-density instead, the weight at $\cR$ would be $\alpha_\cR$. Cf.\ \cite[Corollary~3.3]{Hintz0Px}.)

\begin{rmk}[Weights at $\cR$]
\label{RmkAebInterval}
  Since the Fredholm index of $\wh{N_\cD}(P,\lambda)$ in~\eqref{EqAebDInvlambda} jumps when the weight $\alpha_\cR$ crosses an element of $\Re\Specb(N_{\pa\cD}(P))$ (by the relative index formula \cite[\S6.2]{MelroseAPS}), the interval of weights $\alpha_\cR$ for which~\eqref{EqAebDInvlambda} is invertible is an open (possibly empty) interval; the invertibility of the operator~\eqref{EqAebDInvlambda} is then independent of the particular choice of $\alpha_\cR$ inside this interval. Thus, $\Specb(N_\cD(P))$, when it is defined, is independent of $\alpha_\cR$.
\end{rmk}

\begin{proof}[Proof of Theorem~\usref{ThmAebDInv}]
  Assume conditions~\eqref{ItAebDInvNpaD} and \eqref{ItAebDInvtf}. We begin by analyzing $\wh{N_\cD}(P,\lambda)$ in the high frequency regime. Thus, we consider
  \[
    \wh{N_{\cD,\mu,h}^\pm}:=\wh{N_\cD}(P,-i\mu\pm h^{-1})
  \]
  for $\mu\in[-C,C]$. But $\wh{N_{\cD,\mu,h}^\pm}\in\Psich^{s,0,0,s}(\cD)$ has an elliptic principal symbol, and its $\tface$-normal operator~\eqref{EqAebDInvtf} is invertible. Therefore, Theorem~\ref{ThmAchInv} shows that there exists $h_0=h_0(C)>0$ so that $\wh{N_{\cD,\mu,h}^\pm}$ is invertible for $h\in(0,h_0)$ and for all $\mu\in[-C,C]$, and the inverse satisfies
  \[
    \Bigl(\bigl(\wh{N_{\cD,\mu,h}^\pm}\bigr)^{-1}\Bigr)_{h\in(0,h_0)} \in \Psich^{-s,0,0,-s}(\cD) + \Psich^{-\infty,(\cE_\cR^{+,(0)},\cE_\cR^{(0)},\cE_\cR^{-,(0)},\N_0)}(\cD),
  \]
  with smooth dependence on $\mu$. Since $\wh{N_\cD}(P,\lambda)\in\Psib^s(\cD)$ is fully elliptic with weight $\alpha_\cR$, it is an analytic family of Fredholm operators between the spaces~\eqref{EqAebDInvlambda} by Theorem~\ref{ThmAbPx}. The analytic Fredholm theorem thus implies the discreteness of $\Specb(N_\cD(P))$; and the high frequency analysis shows that for all $C\in\R$ the number of elements $(z,k)\in\Specb(N_\cD(P))$ with $|\Re z|\leq C$ is finite.

  Theorem~\ref{ThmAbPx}, together with Proposition~\ref{PropAebMT}\eqref{ItAebMTNpaD}, give the description
  \begin{equation}
  \label{EqAebDInvNDInv}
    \wh{N_\cD}(P,\lambda)^{-1} \in \Psib^{-s}(\cD) + \Psib^{-\infty,(\cE_\cR^{+,(0)},\cE_\cR^{(0)},\cE_\cR^{-,(0)})}(\cD)
  \end{equation}
  for all $\lambda\in\C$ with $(i\lambda,0)\notin\Specb(N_\cD(P))$. Both summands can be chosen to depend meromorphically on $\lambda$. Indeed, denote by $Q_0\in\Psieb^{-s}(M)$ a symbolic parametrix of $P$, so
  \[
    P Q_0 = I - R_0,\qquad R_0\in\Psieb^{-\infty}(M).
  \]
  Passing to $\cD$-normal operators, this gives
  \begin{equation}
  \label{EqAebDInvR0}
    N_\cD(P)N_\cD(Q_0) = I - N_\cD(R_0).
  \end{equation}
  The construction of the parametrices $Q_L,Q_R$ for $\wh{N_\cD}(P,\lambda)$ (in the notation of Theorem~\ref{ThmAbPx}) can be performed with holomorphic dependence on $\lambda\in\C$, and the formula~\eqref{EqAbPxInverse} then shows that $\wh{N_\cD}(P,\lambda)^{-1}$ is a meromorphic family of operators of class~\eqref{EqAebDInvNDInv}.

  \pfstep{Construction of $Q$; non-sharp control.} From now on, we require the validity of all three conditions~\eqref{ItAebDInvNpaD}--\eqref{ItAebDInvtf}. We first present a simple but slightly lossy way to solve away the error term in~\eqref{EqAebDInvR0}. Passing to Mellin-transformed normal operator families, define
  \[
    \wh{Q_1}(\lambda) := \wh{N_\cD}(P,\lambda)^{-1}\wh{N_\cD}(R_0,\lambda),\qquad \lambda\in\C.
  \]
  In view of~\eqref{EqAebDInvNDInv}, this is a meromorphic family of elements of $\Psib^{-\infty,(\cE_\cR^{+,(0)},\cE_\cR^{(0)},\cE_\cR^{-,(0)})}(\cD)$ with the following properties: it has no poles for $\Im\lambda=-\alpha_\cD$ by condition~\eqref{ItAebDInvND}; its divisor (poles, multiplied with $i$, with multiplicity) is contained in $\Specb(N_\cD(P))$; for fixed $C>0$, it has no poles with $\mu=-\Im\lambda\in[-C,C]$ and $|\Re\lambda|\geq h_0^{-1}$ for sufficiently small $h_0>0$; and for such $\mu,h_0$, we have
  \[
    \bigl( \wh{Q_1}(-i\mu\pm h^{-1}) \bigr)_{h\in(0,h_0)} \in \Psich^{-\infty,0,0,-\infty}(\cD) + \Psich^{-\infty,(\cE_\cR^{+,(0)},\cE_\cR^{(0)},\cE_\cR^{-,(0)},\N_0)}(\cD).
  \]
  The inverse Mellin transform of the Schwartz kernel of $\wh{Q_1}(\lambda)$ on the line $\Im\lambda=-\alpha_\cD$ can then be evaluated on the b-front face $\ff_{\bop,\sharp}$ in the extended edge-b-double space $M^2_{\ebop,\sharp}$, noting that a neighborhood of $(\ff_\sharp\cup\ff_{\eop,\sharp})\cap\ff_{\bop,\sharp}$ is diffeomorphic to the product of $[0,\infty]_{u_\bop}\times(\pa\cD)^2$ and $[[0,\infty]_{s_\bop}\times[0,1)_{R_\bop};\{(1,0)\}]$ where $u_\bop=\frac{R}{R'}$, $s_\bop=\frac{T}{T'}$, and $R_\bop=R+R'$ in local coordinates as in the proof of Proposition~\ref{PropAebMT}, with the Mellin transform taken in the variable $s_\bop$. Since the Mellin transform is the same as the Fourier transform in $\log s_\bop$, we can apply Proposition~\ref{PropAF1} and conclude that $\wh{Q_1}(\lambda)$ is the Mellin-transformed $\cD$-normal operator of an element
  \begin{equation}
  \label{EqAebQ1}
    Q_1 \in \Psi_{\ebop,\sharp}^{-\infty,(\cE_\cD^+,\N_0,\cE_\cD^-,\cE_\cR^{+,(0)},\N_0\extcup(\cE_\cR^{(0)}+1),\cE_\cR^{-,(0)}+1,\cE_\cR^{(0)}+1)}(M).
  \end{equation}
  The shifts by $1$ in the index sets at $\ff_{\eop,\sharp}$, $\rb_{\eop,\sharp}$, and $\ff_\sharp$ arise from passing to right b-densities to right edge-b-densities, cf.\ the factor $R'^{-1}$ in~\eqref{EqAebDensity}.

  Since $\wh{N_\cD}(Q_0+Q_1,\lambda)$ is a right inverse of $\wh{N_\cD}(P,\lambda)$, and since one can similarly construct a left inverse (which then necessarily agrees with the right inverse), we have succeeded in proving Theorem~\ref{ThmAebDInv} with a slightly less precise description of $Q$ than in~\eqref{EqAebDInvQ}. (This description is sufficient for the application of Theorem~\ref{ThmAebDInv} to the elliptic theory of 3b-operators in~\S\ref{SsETD}; therefore, the reader not interested in the sharp edge-b-result here may skip the remainder of the proof.)

  \pfstep{More careful construction.} Consider again~\eqref{EqAebDInvR0}. Rather than passing to the Mellin transform and inverting $\wh{N_\cD}(P,\lambda)$ directly, we first pass to the normal operator $N_{\cD,\eop}(P)$ at $\ff_\eop\cap\ff_\bop$. This operator can be inverted by adapting \cite[Proposition~3.1]{Hintz0Px} to the edge setting (which requires only notational changes); the key ingredient is the invertibility of the reduced normal operator, which in the present setting is precisely condition~\eqref{EqAebDInvtf} (for both choices of signs), cf.\ the discussion following~\eqref{EqAebNDe}.
  
  Using the inverse of $N_{\cD,\eop}(P)$, one can now solve away $N_\cD(R_0)$ to leading order at $\ff_\eop\cap\ff_\bop$; while one can ensure that the remaining error vanishes rapidly at $\lb_\bop$ and $\rb_\bop$, it has nontrivial index sets $\cE_\cR^+$, resp.\ $\cE_\cR^-+1$ at $\lb_\eop$, resp.\ $\rb_\eop$. (The shift by $n-1$ of the index set at the right boundary in \cite[Proposition~3.1]{Hintz0Px} is a shift by $2-1=1$ here, as $n$ in the reference, generalized to the edge setting, is the codimension, in the manifold $M$, of the fibers of the boundary fibration---which in the present setting is $2$.) The error at $\lb_\eop$ (where $R=0$) can be solved away using a b-normal operator argument as in \cite[Proof of Theorem~1.5]{Hintz0Px}; the relevant normal operator is thus $N_{\pa\cD}(P)$. Solving away the remaining error (rapidly vanishing at $\ff_\bop\cap(\lb_\eop\cup\lb_\bop\cup\rb_\bop)$, vanishing simply at $\ff_\bop\cap\ff_\eop$) using an asymptotic Neumann series yields an error which vanishes rapidly at all boundary hypersurfaces of $\ff_\bop$ except for $\rb_\eop$. This is completely analogous to \cite[Theorem~1.5]{Hintz0Px}; applying the resulting parametrix to $N_\cD(R_0)$, we conclude the existence of an operator
  \[
    Q_2 \in \Psieb^{-\infty,(\emptyset,\N_0,\emptyset,\cE_\cR^{+,(0)},\cE_\cR^{(3)},\cE_\cR^{-,(3)}+1)}(M)
  \]
  with the property that
  \[
    P(Q_0+Q_2) = I-R_2,\qquad
    N_\cD(R_2) \in N_\cD\Bigl( \Psieb^{-\infty,(\emptyset,\N_0,\emptyset,\emptyset,\emptyset,\cE_\cR^{-,(3)}+1)}(M) \Bigr);
  \]
  here $\cE_\cR^{-,(3)}:=\cE_\cR^{-,(0),(0)}\extcup(\cE_\cR^{-,(0)}+1)$ and $\cE_\cR^{(3)}:=\N_0\extcup(\cE_\cR^{+,(0),(0)}+\cE_\cR^{-,(0),(0)}+1)$ (which, for good measure, contain the sets $\wh{\cE_-^\sharp}$ and $\wh{\cE_\ff^+}$ in the notation of the reference)
  
  Only now do we pass to Mellin-transformed $\cD$-normal operators; this gives
  \begin{equation}
  \label{EqAebDInvSymb}
    \wh{N_\cD}(P,\lambda) \wh{N_\cD}(Q_0+Q_1,\lambda) = I - \wh{N_\cD}(R_1,\lambda).
  \end{equation}
  In view of the expression~\eqref{EqAebDensity} for a positive right edge-b-density, the Schwartz kernel of $\wh{N_\cD}(R_1,\lambda)$ can be written as $R_{1,\lambda}(R,\omega,R',\omega')|\frac{\dd R'}{R'}\dd\omega'|$ where $R_{1,\lambda}$ is analytic in $\lambda$, and uniformly (for bounded $\Im\lambda$) Schwartz in $\Re\lambda$ with values in the space $\Psi^{-\infty,(\emptyset,\cE_\cR^{-,(3)})}(\cD)$ of fully residual operators.

  Define then
  \[
    \wh{Q_3}(\lambda) := \wh{N_\cD}(P,\lambda)^{-1}\wh{N_\cD}(R_1,\lambda).
  \]
  Using~\eqref{EqAebDInvNDInv} and the composition property~\eqref{EqAbCompFullyRes}, this is a meromorphic family of elements of $\Psi^{-\infty,(\cE_\cR^{+,(4)},\cE_\cR^{-,(4)})}(\cD)$, for some index sets $\cE_\cR^{\pm,(4)}$ which we shall not write out explicitly, with the following properties: it has no poles for $\Im\lambda=-\alpha_\cD$; its divisor (poles, multiplied with $i$, with multiplicity) is contained in $\Specb(N_\cD(P))$; for fixed $C>0$, it has no poles with $\mu=-\Im\lambda\in[-C,C]$ and $|\Re\lambda|\geq h_0^{-1}$ for sufficiently small $h_0>0$; and for such $\mu,h_0$, using the composition property~\eqref{EqAchCompFullyRes}, we have
  \[
    \bigl( \wh{Q_3}(-i\mu\pm h^{-1}) \bigr)_{h\in(0,h_0)} \in \CIdot\bigl([0,h_0)_h;\Psi^{-\infty,(\cE_\cR^{+,(4)},\cE_\cR^{-,(4)})}(\cD)\bigr).
  \]
  Therefore, the inverse Mellin transform of the Schwartz kernel of $\wh{Q_3}(\lambda)$ on the line $\Im\lambda=-\alpha_\cD$ is an element of
  \[
    \cA_\phg^{(\cE_\cD^+,\cE_\cD^-,\cE_\cR^{+,(4)},\cE_\cR^{-,(4)})}([0,\infty]_{s_\bop}\times\cD\times\cD;\pi_R^*\Omegab\cD) \otimes \Bigl|\frac{\dd s_\bop}{s_\bop}\Bigr|
  \]
  where $\pi_R\colon[0,\infty]\times\cD\times\cD\to\cD$ is the right projection. This is, a fortiori, the $\cD$-normal operator of an element
  \[
    Q_3 \in \Psi_\ebop^{-\infty,(\cE_\cD^+,\N_0,\cE_\cD^-,\cE_\cR^{+,(4)},\cE_\cR^{+,(4)}+\cE_\cR^{-,(4)}+1,\cE_\cR^{-,(4)}+1)}(M).
  \]
  The right inverse $\wh{N_\cD}(Q_0+Q_2+Q_3,\lambda)$ of $\wh{N_\cD}(P,\lambda)$ is necessarily equal to $\wh{N_\cD}(Q,\lambda)$ constructed before, and therefore also $Q_0+Q_2+Q_3=Q$ (as these operators are defined as inverse Mellin transforms along the same contour $\Im\lambda=-\alpha_\cD$). Combining the thus established fact that $Q$ lies in the non-extended edge-b-calculus with the index set bounds from~\eqref{EqAebQ1} finishes the proof.
\end{proof}

\begin{rmk}[Parametrices]
\label{RmkAebPx}
  The construction of precise parametrices of general (i.e.\ not dilation-invariant) fully elliptic edge-b-pseudodifferential operators requires, in addition to Theorem~\ref{ThmAebDInv}, the inversion of the normal operator at $\cR$, which is of edge type; see \cite{MazzeoEdge,MazzeoVertmanEdge} (and \cite[\S5]{AlbinLectureNotes}, \cite{Hintz0Px}) for details on edge (or, as a special case, uniformly degenerate) normal operators and their inverses. As we shall not need edge-b-parametrices in this general setting, we do not work out the details here.
\end{rmk}

\section{Geometric setup and basics of 3b-analysis}
\label{SG}

We are now set to turn to the main objective of the present paper: the detailed geometric and analytic description of vector fields and operators with approximate translation- and dilation-invariances.

Let $M_0$ denote a smooth compact connected $n$-dimensional manifold whose boundary $\pa M_0$ is a non-empty embedded hypersurface. Fix a point $\fp\in\pa M_0$.

\begin{definition}[3b-single space]
\label{DefGSingle}
  The \emph{3b-single space} (associated with $M_0$ and $\fp\in\pa M_0$) is defined as the real blow-up
  \[
    M := [ M_0; \{ \fp \} ].
  \]
  We denote by $\cD\subset M$ (called \emph{dilation face}) the lift of $\pa M_0$ and by $\cT\subset M$ (called \emph{translation face}) the lift of $\{\fp\}$ (i.e.\ the front face of the blow-up); we denote by $\rho_\cD,\rho_\cT\in\CI(M)$ defining functions of $\cD,\cT\subset M$. The blow-down map is denoted $\upbeta\colon M\to M_0$. Finally, $\rho_0\in\CI(M_0)$ denotes a boundary defining function, and thus $\upbeta^*\rho_0\in\CI(M)$ is a total boundary defining function on $M$.
\end{definition}

See Figure~\ref{FigGSingle}.

\begin{figure}[!ht]
\centering
\includegraphics{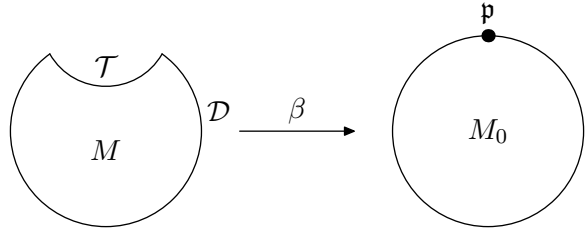}
\caption{The 3b-single space $M$ as a blow-up of $M_0$.}
\label{FigGSingle}
\end{figure}

\begin{rmk}[Several boundary points]
\label{RmkGSeveral}
  We shall occasionally work on 3b-single spaces defined via the blow-up of several boundary points. We leave it to the reader to spell out all details of this generalization. In this section, this requires only notational changes, and the definitions of (large) 3b-pseudodifferential calculi in subsequent sections require only minor adaptations.
\end{rmk}

\subsection{Vector fields, differential operators, bundles}
\label{SsGV}

We proceed define the class of vector fields on the 3b-single space $M$ from Definition~\ref{DefGSingle} which will be the center of attention in this work:

\begin{definition}[3b-vector fields]
\label{DefGV3b}
  The space $\Vtb(M)$ of \emph{3b-vector fields} is the span over $\CI(M)$ of all smooth vector fields $V\in\cV(M)$ which are of the form
  \begin{equation}
  \label{EqGV3b}
    V = \rho_\cD^{-1}\upbeta^*W,\qquad W\in\Vsc(M_0).
  \end{equation}
  (Thus, in the notation of \cite{VasyThreeBody}, $\Vtb(M)=\rho_\cD^{-1}\Vtsc(M)$, where the space $\Vtsc(M)$ of \emph{3-body-scattering vector fields} is the $\CI(M)$-span of $\upbeta^*\Vsc(M_0)$.)
\end{definition}

Since the quotient of any two defining functions of $\cD\subset M$ is a smooth function on $M$, this definition of $\Vtb(M)$ is independent of the choice of $\rho_\cD$.

The first part of the following Lemma clarifies the formula~\eqref{EqGV3b}; the remaining parts elucidate the structure of $\Vtb(M_0)$. By an abuse of notation, we denote by $\rho_0\pa_{\rho_0}\in\Vb(M_0)$ a b-normal vector field on $M_0$; this vector field is well-defined if one chooses a collar neighborhood of $\pa M_0$, and as a b-vector field it is independent modulo $\rho_0\Vb(M_0)$ of the choice of collar neighborhood.

\begin{lemma}[Basic properties of $\Vtb(M)$]
\fakephantomsection
\label{LemmaGV3b}
  \begin{enumerate}
  \item\label{ItGV3bLift} Let $W\in\Vsc(M_0)$. Then $\rho_\cD^{-1}W$ extends from the interior $M_0^\circ=M^\circ$ to a smooth b-vector field on $M$.
  \item\label{ItGV3bLie} The space $\Vtb(M)$ is a Lie subalgebra of $\Vb(M)$, and $\rho_\cT\Vb(M)\subset\Vtb(M)$.
  \item\label{ItGV3bApprox} Let $V\in\Vtb(M)$. Then $V$ is approximately dilation-invariant at $\cD$ in the sense that $[\upbeta^*\rho_0\pa_{\rho_0},V]\in\rho_\cD\Vtb(M)$ vanishes at $\cD$ as a 3b-vector field. Furthermore, $V$ is approximately translation-invariant (with respect to $\rho_0^{-1}$) at $\cT$ in the sense that $[\upbeta^*\rho_0^2\pa_{\rho_0},V]\in\rho_\cT\Vtb(M)$ vanishes at $\cT$ (in fact, this lies in $\rho_\cT\rho_\cD\Vtb(M)$).
  \item\label{ItGV3bEquiv} Let $V\in\Vb(M)$. Then $V\in\Vtb(M)$ if and only if $V(\upbeta^*\rho_0)\in(\upbeta^*\rho_0)\rho_\cT\CI(M)$.
  \end{enumerate}
\end{lemma}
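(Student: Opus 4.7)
The plan is to verify all four statements by direct local computation. Choose coordinates $(x, y^1, \ldots, y^{n-1}) \in [0,\infty)\times\R^{n-1}$ on $M_0$ near $\fp$ with $\rho_0 = x$ and $\fp = \{x = 0,\, y = 0\}$, so that $\Vsc(M_0)$ is locally spanned over $\CI(M_0)$ by $x^2\pa_x$ and $x\pa_{y^j}$. Near a point of $\cT\cap\cD$ where, say, $y^{n-1} > 0$, introduce projective coordinates $\rho_\cT = y^{n-1}$, $\rho_\cD = x/y^{n-1}$, and $Y^j = y^j/y^{n-1}$ for $j < n-1$; then $\upbeta^*\rho_0 = \rho_\cD\rho_\cT$, and a short change-of-variables computation yields
\[
  \rho_\cD^{-1}\upbeta^*(x^2\pa_x) = \rho_\cT\rho_\cD\pa_{\rho_\cD},\qquad \rho_\cD^{-1}\upbeta^*(x\pa_{y^j}) = \pa_{Y^j}\ (j<n-1),
\]
and $\rho_\cD^{-1}\upbeta^*(x\pa_{y^{n-1}}) = \rho_\cT\pa_{\rho_\cT} - \rho_\cD\pa_{\rho_\cD} - \sum_{j<n-1} Y^j\pa_{Y^j}$. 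The analogous checks in the interiors $\cT^\circ\setminus\cD$ and $\cD^\circ\setminus\cT$ (using polar, resp.\ the original $(x,y)$-coordinates) are strictly simpler and I would omit them.

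Part~\eqref{ItGV3bLift} is immediate from these formulas, and also admits the coordinate-free proof that, since $\upbeta^*\rho_0 = \rho_\cD\rho_\cT\cdot a$ with $0 < a \in \CI(M)$ and $\upbeta^*\Vb(M_0) \subset \Vb(M)$,
\[
  \rho_\cD^{-1}\upbeta^*\Vsc(M_0) = \rho_\cD^{-1}\upbeta^*\bigl(\rho_0\Vb(M_0)\bigr) \subset \rho_\cT\Vb(M) \subset \Vb(M).
\]
For part~\eqref{ItGV3bLie}, the inclusion $\rho_\cT\Vb(M) \subset \Vtb(M)$ is proved locally by a short calculation expressing each of the b-generators $\rho_\cT^2\pa_{\rho_\cT}$, $\rho_\cT\rho_\cD\pa_{\rho_\cD}$, $\rho_\cT\pa_{Y^j}$ as a $\CI(M)$-combination of the three 3b-generators just listed. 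The Lie algebra property reduces via the Leibniz identity for brackets to showing that $[\rho_\cD^{-1}\upbeta^*W_1,\rho_\cD^{-1}\upbeta^*W_2]\in\Vtb(M)$ for $W_i\in\Vsc(M_0)$; using $\upbeta^*W_i \in \rho_\cD\rho_\cT\Vb(M)$ (hence $\upbeta^*W_i(\rho_\cD) \in \rho_\cD^2\rho_\cT\CI(M)$) together with $[\Vsc(M_0),\Vsc(M_0)] \subset \rho_0\Vsc(M_0)$, one checks that the resulting bracket lies in $\rho_\cT\Vb(M) \subset \Vtb(M)$.

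For~\eqref{ItGV3bApprox}, one has $X_\cD := \upbeta^*(\rho_0\pa_{\rho_0}) = \rho_\cD\pa_{\rho_\cD}$ and $X_\cT := \upbeta^*(\rho_0^2\pa_{\rho_0}) = \rho_\cT\rho_\cD^2\pa_{\rho_\cD}$ in our local chart. A short application of $[fA,gB] = fg[A,B] + fA(g)B - gB(f)A$ gives $[X_\cD, V_i] = 0$ and $[X_\cT, V_i] \in \rho_\cT\rho_\cD\Vtb(M)$ for each of the three generators $V_i$; the general case $V = \sum a_i V_i$ then follows from
\[
  [X_\ast, V] = \sum (X_\ast a_i)\, V_i + \sum a_i\, [X_\ast, V_i],
\]
using $X_\cD(a) = \rho_\cD\pa_{\rho_\cD} a \in \rho_\cD\CI(M)$ and $X_\cT(a) \in \rho_\cT\rho_\cD^2\CI(M) \subset \rho_\cT\rho_\cD\CI(M)$.

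Finally, for~\eqref{ItGV3bEquiv}, write a general local b-vector field as $V = A\rho_\cT\pa_{\rho_\cT} + B\rho_\cD\pa_{\rho_\cD} + \sum C^j\pa_{Y^j}$; then $V(\upbeta^*\rho_0) = V(\rho_\cD\rho_\cT) = (A+B)\,\upbeta^*\rho_0$, so the stated condition is equivalent to $A+B \in \rho_\cT\CI(M)$. Writing $A = \rho_\cT D - B$ then exhibits
\[
  V = D\cdot\rho_\cT^2\pa_{\rho_\cT} - B(\rho_\cT\pa_{\rho_\cT} - \rho_\cD\pa_{\rho_\cD}) + \sum C^j\pa_{Y^j} \in \Vtb(M),
\]
and the converse is immediate from the explicit action of the three generators on $\upbeta^*\rho_0$. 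I foresee no real obstacle; the only subtlety to keep in mind is the asymmetric treatment of $\cD$ and $\cT$ enforced by the definition $\Vtb(M) = \CI(M) \cdot \rho_\cD^{-1}\upbeta^*\Vsc(M_0)$.
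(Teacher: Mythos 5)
Your overall strategy — reduce everything to the explicit local frame $\rho_\cT\rho_\cD\pa_{\rho_\cD}$, $\rho_\cT\pa_{\rho_\cT}-\rho_\cD\pa_{\rho_\cD}-\sum Y^j\pa_{Y^j}$, $\pa_{Y^j}$ near the corner, and expand brackets via the Leibniz rule — is the same as the paper's, and your computations for parts (1) (the local version), (3) and (4) are correct. However, part (2) as written contains false intermediate claims that a careful verification would not confirm.

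Concretely: you assert $\upbeta^*W_i\in\rho_\cD\rho_\cT\Vb(M)$, hence $\upbeta^*W_i(\rho_\cD)\in\rho_\cD^2\rho_\cT\CI(M)$, and that the bracket $[\rho_\cD^{-1}\upbeta^*W_1,\rho_\cD^{-1}\upbeta^*W_2]$ lands in $\rho_\cT\Vb(M)$. All three memberships have a spurious factor of $\rho_\cT$. In your own chart, $\upbeta^*(x\pa_{y^j})=\rho_\cD\pa_{Y^j}$ for $j<n-1$, and $\pa_{Y^j}\notin\rho_\cT\Vb(M)$, so $\upbeta^*W\in\rho_\cD\Vb(M)$ only; likewise $\upbeta^*(x\pa_{y^{n-1}})(\rho_\cD)=-\rho_\cD^2\notin\rho_\cD^2\rho_\cT\CI(M)$. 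For the bracket, take $W_1=x\pa_{y^1}$ and $W_2=x\pa_{y^{n-1}}$: then
\[
  \bigl[\rho_\cD^{-1}\upbeta^*W_1,\ \rho_\cD^{-1}\upbeta^*W_2\bigr]
  = \Bigl[\pa_{Y^1},\ \rho_\cT\pa_{\rho_\cT}-\rho_\cD\pa_{\rho_\cD}-\textstyle\sum_k Y^k\pa_{Y^k}\Bigr] = -\pa_{Y^1},
\]
which lies in $\Vtb(M)$ but certainly not in $\rho_\cT\Vb(M)$. The same over-claim infects your ``coordinate-free'' version of part (1): $\upbeta^*\Vb(M_0)\not\subset\Vb(M)$ (the lift of $\pa_{y^j}$ blows up like $\rho_\cT^{-1}$ at the front face), so the chain of inclusions ending in $\rho_\cT\Vb(M)$ is wrong; the correct conclusion there is only $\rho_\cD^{-1}\upbeta^*\Vsc(M_0)\subset\Vb(M)$, which is all part (1) asserts and which your explicit frame computation does establish.

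The damage is repairable without changing the architecture, and the repair is exactly the paper's argument: use only $\upbeta^*W_i\in\rho_\cD\Vb(M)$, so that $(\upbeta^*W_1)(\rho_\cD^{-1})\in\CI(M)$ and the two derivative terms in the Leibniz expansion are $\CI(M)$-multiples of $\rho_\cD^{-1}\upbeta^*W_2$ and $\rho_\cD^{-1}\upbeta^*W_1$, hence lie in $\Vtb(M)$ (not in $\rho_\cT\Vb(M)$); the remaining term $\rho_\cD^{-2}\upbeta^*[W_1,W_2]$ lies in $\rho_\cT\Vtb(M)$ because $[W_1,W_2]\in\rho_0\Vsc(M_0)$ and $\upbeta^*\rho_0=a\rho_\cD\rho_\cT$. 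So the bracket of two 3b-vector fields is again a 3b-vector field, but in general it does not vanish at $\cT$ (nor at $\cD$) — which is precisely why the calculus needs the two normal operators. You should delete the extra $\rho_\cT$'s and restate the target of the bracket computation as $\Vtb(M)$.
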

\begin{proof}
  Since $W\in\Vsc(M_0)=\rho_0\Vb(M_0)$ vanishes (as a smooth vector field) at the point $\fp$, its lift $\upbeta^*W$ to $M$ lies in $\Vb(M)$; but as a b-vector field, the restriction of $\upbeta^*W$ to $M\setminus\cT=M_0\setminus\{\fp\}$ vanishes at the boundary, and therefore $\upbeta^*W\in\rho_\cD\Vb(M)$. This shows~\eqref{ItGV3bLift}.

  For part~\eqref{ItGV3bLie}, suppose $V_j=\rho_\cD^{-1}\upbeta^*W_j$ with $W_j\in\Vsc(M_0)$ for $j=1,2$, then
  \[
    [V_1,V_2] = \rho_\cD^{-2}\upbeta^*[W_1,W_2] + \rho_\cD^{-1}[\upbeta^* W_1,\rho_\cD^{-1}]\upbeta^* W_2 - \rho_\cD^{-1}[\upbeta^* W_2,\rho_\cD^{-1}]\upbeta^* W_1.
  \]
  But $[W_1,W_2]\in\rho_0\Vsc(M_0)$, so the first term on the right lies in $\rho_\cD^{-1}\rho_\cT\Vtsc(M)$. In the second and third terms, we note that the commutator of $\upbeta^*W_1\in\rho_\cD\Vb(M)$ with $\rho_\cD^{-1}$ lies in $\CI(M)$. Thus $[V_1,V_2]\in\Vtb(M)$. More generally, if $f_1,f_2\in\CI(M)$, then also
  \[
    [f_1 V_1,f_2 V_2] = f_1 f_2[V_1,V_2] + f_1(V_1 f_2)V_2 - f_2(V_2 f_1)V_1 \in \Vtb(M)
  \]
  since $\Vtb(M)$ is, by definition, a $\CI(M)$-module. This proves that $\Vtb(M)$ is a Lie algebra. Lastly, the claim $\rho_\cT\Vb(M)\subset\Vtb(M)$ is easily verified in local coordinates (see the discussion after equation~\eqref{EqGV3beb} below).

  To prove part~\eqref{ItGV3bApprox}, consider $f\in\CI(M)$ and $W\in\Vsc(M_0)$. Since $\rho_0\pa_{\rho_0}$ vanishes as a smooth vector field at $\fp$ and indeed on $\pa M_0$, its lift to $M$ satisfies $\upbeta^*\rho_0\pa_{\rho_0}\in\Vb(M)\cap\rho_\cD\cV(M)$. Therefore,
  \[
    [\upbeta^*\rho_0\pa_{\rho_0},f\rho_\cD^{-1}\upbeta^*W] = [\upbeta^*\rho_0\pa_{\rho_0},f]\rho_\cD^{-1}\upbeta^*W + f[\upbeta^*\rho_0\pa_{\rho_0},\rho_\cD^{-1}\upbeta^*W].
  \]
  Since $[\upbeta^*\rho_0\pa_{\rho_0},f]\in\rho_\cD\CI(M)$, the first summand on the right lies in $\Vtsc(M)=\rho_\cD\Vtb(M)$. The commutator in the second summand can be expanded into the sum of the vector field $[\upbeta^*\rho_0\pa_{\rho_0},\rho_\cD^{-1}]\upbeta^*W$ (which lies in $\rho_\cD^{-1}\Vtsc(M)=\Vtb(M)$) and $\rho_\cD^{-1}\upbeta^*[\rho_0\pa_{\rho_0},W]$ (which due to $[\rho_0\pa_{\rho_0},W]\in[\rho_0\pa_{\rho_0},\rho_0\Vb(M_0)]\subset\rho_0\Vb(M_0)=\Vsc(M_0)$ lies in $\rho_\cD^{-1}\Vtsc(M)=\Vtb(M)$ as well). But since $\rho_\cD^{-1}\upbeta^*W\in\Vb(M)$, we also have $f[\upbeta^*\rho_0\pa_{\rho_0},\rho_\cD^{-1}\upbeta^*W]\in\rho_\cD\Vb(M)\subset\rho_\cT^{-1}\rho_\cD\Vtb(M)$ (using part~\eqref{ItGV3bLie}). This gives, for $V=f\rho_\cD^{-1}\upbeta^*W\in\Vtb(M)$, the membership
  \[
    [\upbeta^*\rho_0\pa_{\rho_0},V] \in \Vtb(M) \cap \rho_\cT^{-1}\rho_\cD\Vtb(M) = \rho_\cD\Vtb(M).
  \]
  This proves the approximate dilation-invariance.

  The approximate translation-invariance follows from the calculation
  \[
    [\upbeta^*\rho_0^2\pa_{\rho_0},V] = (\upbeta^*\rho_0) [\upbeta^*\rho_0\pa_{\rho_0},V] - [V,\upbeta^*\rho_0] \rho_0\pa_{\rho_0}.
  \]
  Indeed, the first summand lies in $\rho_\cT\rho_\cD^2\Vtb(M)$ by what we have already shown. The second summand, for $V=f\rho_\cD^{-1}\upbeta^*W$ with $W\in\Vsc(M_0)$, is equal to
  \[
    -f\rho_\cD^{-1}\upbeta^*[W,\rho_0]\rho_0\pa_{\rho_0};
  \]
  but since $[W,\rho_0]\in\rho_0^2\CI(M)$, this lies in $\rho_\cD^{-1}(\upbeta^*\rho_0)\CI(M)\upbeta^*\rho_0^2\pa_{\rho_0}\subset(\upbeta^*\rho_0)\Vtb(M)=\rho_\cT\rho_\cD\Vtb(M)$. The proof of part~\eqref{ItGV3bApprox} is complete.

  Finally, we turn to part~\eqref{ItGV3bEquiv}. In one direction, we observe that for $V=\rho_\cD^{-1}\upbeta^*W$, $W\in\Vsc(M_0)$, we have $V(\upbeta^*\rho_0)=\rho_\cD^{-1}\upbeta^*(W\rho_0)$, which due to $W\rho_0\in\rho_0^2\CI(M)$ lies in $\rho_\cT(\upbeta^*\rho_0)\CI(M)$ indeed. The converse is easily checked in local coordinates; see the discussion following~\eqref{EqGV3CornerVF} below.
\end{proof}

We remark that the commutator of two 3b-vector fields typically does not vanish, as a 3b-vector field, at $\cD$ or $\cT$. This foreshadows the fact that 3b-vector fields, or more generally (pseudo)differential operators, have two normal operators capturing their leading order behavior at $\cD$, resp.\ $\cT$.

\begin{rmk}[3b vs.\ b and cusp]
\label{RmkGComparison}
  Lemma~\ref{LemmaGV3b} implies that $\rho_\cT\Vb(M)\subset\Vtb(M)\subset\Vb(M)$, which directly shows that $\Vtb(M)$ and $\Vb(M)$ agree away from $\cT$ (i.e.\ $\chi\Vtb(M)=\chi\Vb(M)$ for any $\chi\in\CI(M)$ which vanishes in a neighborhood of $\cT$). On the other hand, in $M\setminus\cD$, a 3b-vector field is a \emph{cusp vector field} \cite{MazzeoMelroseFibred} with respect to the defining function $\upbeta^*\rho_0$ of $\cT^\circ$. The terminology `3-body' (rather than `cusp') adopted in the present paper refers to the fact that $\upbeta^*\rho_0$ is a \emph{total} boundary defining function of $M$, \emph{not} the defining function of the single boundary hypersurface $\cT\subset M$. (This is related to the fact that $\Vtb(M)$ is not of `product type' near $\cT\cap\cD$, i.e.\ the space of restrictions of elements of $\Vtb(M)$ to a collar product neighborhood of $\cT\cap\cD\subset M$ is not spanned by the horizontal lifts of Lie algebras of vector fields on $\cT$ and $\cD$. Cf.\ the local frame~\eqref{EqGV3CornerVF} below.) Note that any two boundary defining functions of $M_0$ lift to total boundary defining functions of $M$ which over $\cT$ are constant multiples of each other; this is directly related to the independence of the space of cusp vector fields on the choice of boundary defining functions related in this manner.
\end{rmk}

Let us now consider the above structures in local coordinates. Consider a neighborhood
\begin{equation}
\label{EqGCoordsTX}
  [0,1)_T \times B^{n-1}_X,\qquad B^{n-1}_X = \{ X\in\R^{n-1} \colon |X|<1 \},
\end{equation}
of the point $\fp$ inside of $M_0$, with $\pa M_0$, resp.\ $\fp$ given by $T=0$, resp.\ $(T,X)=(0,0)$. The space $\Vsc(M_0)$ is then spanned by the vector fields $T^2\pa_T$, $T\pa_{X^j}$ ($j=1,\ldots,n-1$). In terms of the coordinates
\begin{equation}
\label{EqGCoordstx}
  t := T^{-1},\quad
  x := \frac{X}{T}
\end{equation}
in $(0,1)_T\times B^{n-1}_X$, these vector fields are equal to $-\pa_t-\frac{x}{t}\pa_x$, $\pa_{x^j}$, and therefore (noting that $\frac{x}{t}=X$) elements of $\Vsc(M_0)$ can equivalently be written as linear combinations of $\pa_t$, $\pa_{x^j}$ with coefficients in $\CI([0,1)_T\times B^{n-1}_X)$. Note also that $\rho_0=T$ is a (local) boundary defining function. In particular,
\[
  \rho_0\pa_{\rho_0} = T\pa_T = -(t\pa_t + x\pa_x)
\]
is the scaling vector field up to an overall sign; its lift to the 3b-single space $M$ is, at the lift $\cD$ of the original boundary, still the scaling vector field. This explains the terminology in the first half of Lemma~\ref{LemmaGV3b}\eqref{ItGV3bApprox}.

On the 3b-single space $M$, we may continue to use the coordinates $(t,x)$ away from $\cT\cup\cD$. Moreover, the coordinates $T=t^{-1}\in[0,1)$ and $x\in\R^{n-1}$ cover a neighborhood of $\cT^\circ$ (and indeed they cover the intersection of a neighborhood of $\cT$ with $M\setminus\cD$). Since $x$ is an affine function on $\cT^\circ$, the function $\la x\ra^{-1}\in\CI(M)$ is a defining function of $\cD$, and we conclude that $\Vtb(M)$ is spanned over $\CI(M)$ by the vector fields
\begin{equation}
\label{EqGV3bxt}
  \la x\ra\pa_t,\quad
  \la x\ra\pa_{x^j}\ (j=1,\ldots,n-1).
\end{equation}
For bounded $x$, i.e.\ in $|X|\lesssim T$, we can equivalently use
\[
  T^2\pa_T,\quad
  T\pa_{X^j}\ (j=1,\ldots,n-1).
\]
As an aside, note that
\[
  \rho_0^2\pa_{\rho_0} = T^2\pa_T = -\pa_t - \frac{x}{t}\pa_x \equiv -\pa_t \bmod \rho_\cT\Vtb(M);
\]
thus the second half of Lemma~\ref{LemmaGV3b}\eqref{ItGV3bApprox} implies that near $\cT^\circ$ we have $[\pa_t,V]\in t^{-1}\Vtb(M)$ for $V\in\Vtb(M)$, explaining the terminology `translation-invariance'.

In $|x|\geq 1$, we can pass in~\eqref{EqGV3bxt} to polar coordinates $x=r\omega$, $r\geq 1$, $\omega\in\Sph^{n-2}$, and use as a spanning set the vector fields (in local coordinates $\omega=(\omega^1,\ldots,\omega^{n-2})$ on $\Sph^{n-2}$)
\begin{equation}
\label{EqGV3rt}
  r\pa_t,\quad
  r\pa_r,\quad
  \pa_{\omega^j}\ (j=1,\ldots,n-2)\qquad\qquad (t>1,\ r<t);
\end{equation}
these were mentioned already in~\S\ref{SI}.

Returning to the coordinates $(T,X)$ on $M_0$ and using polar coordinates $X=R\omega$, we can, in $|X|\gtrsim T$ (i.e.\ $|x|\gtrsim 1$) where $\la\frac{X}{T}\ra^{-1}\sim\frac{T}{|X|}$ is a local defining function of $\cD$, equivalently use as a spanning set of $\Vtb(M)$ the vector fields
\begin{equation}
\label{EqGV3beb}
  |X|T\pa_T,\quad
  |X|\pa_{X^j} \qquad \textnormal{or}\qquad
  R T\pa_T,\quad
  R\pa_R,\quad
  \pa_{\omega^j}\ (j=1,\ldots,n-2).
\end{equation}
(We remark that in this region, $\Vb(M)$ is spanned over $\CI(M)$ by $T\pa_T$, $R\pa_R$, $\pa_{\omega^j}$; multiplying this vector fields by $R$ thus gives smooth 3b-vector fields. Since $R$ is a local defining function of $\cT$, this implies $\rho_\cT\Vb(M)\subset\Vtb(M)$. Note here that upon replacing $X$ by $X-v T$ for any fixed $v\in\R^{n-1}$, the regions $|X|>c T$ for various values of $v$ but fixed $c>0$ cover a full neighborhood of $\cT\subset M$.)

Finally, we record a spanning set of $\Vtb(M)$ expressed in the local coordinates
\begin{subequations}
\begin{equation}
\label{EqGV3CornerCoord}
  \rho_\cT = R,\quad
  \rho_\cD = \frac{T}{R},\quad
  \omega \in \Sph^{n-2}
\end{equation}
near $\cD\cap\cT$: the second set of vector fields~\eqref{EqGV3beb} takes the form
\begin{equation}
\label{EqGV3CornerVF}
  \rho_\cT\rho_\cD\pa_{\rho_\cD},\quad
  \rho_\cT\pa_{\rho_\cT}-\rho_\cD\pa_{\rho_\cD},\quad
  \pa_{\omega^j}\ (j=1,\ldots,n-2).
\end{equation}
\end{subequations}
This description of $\Vtb(M)$ allows for an easy proof of Lemma~\ref{LemmaGV3b}\eqref{ItGV3bEquiv}. Indeed, write an arbitrary b-vector field as
\[
  V = a\rho_\cD\pa_{\rho_\cD} + b(\rho_\cT\pa_{\rho_\cT}-\rho_\cD\pa_{\rho_\cD}) + \sum_{j=1}^{n-2} c_j\pa_{\omega^j}
\]
where $a,b,c_j$ are smooth functions of $\rho_\cD\geq 0$, $\rho_\cT\geq 0$, and $\omega\in\Sph^{n-2}$. Since for $\rho_0=T$ we have $\upbeta^*\rho_0=\rho_\cT\rho_\cD$, the condition $V(\upbeta^*\rho_0)\in(\upbeta^*\rho_0)\rho_\cT\CI(M)$ is equivalent to 
\[
  a\rho_\cT\rho_\cD \in \rho_\cT^2\rho_\cD\CI(M),
\]
so $a\in\rho_\cT\CI(M)$, and thus to the membership $V\in\Vtb(M)$ in view of~\eqref{EqGV3CornerVF}. (Working with $X-v T$ for any fixed $v\in\R^{n-1}$, the regions $R>c T$ for various values of $v$ but fixed $c>0$ cover a full neighborhood of $\cT\subset M$, and hence this argument is sufficient for proving Lemma~\ref{LemmaGV3b}\eqref{ItGV3bEquiv}. One can alternatively work directly with the coordinates $T,x$ near $\cT^\circ$.)

The space $\Vtb(M)$ is in a natural manner the space of smooth sections of a vector bundle:

\begin{definition}[3b-tangent bundle and related bundles]
\label{DefGTtb}
  The \emph{3b-tangent bundle} $\Ttb M\to M$ is the smooth rank $n$ vector bundle with local frames given by~\eqref{EqGV3bxt}, \eqref{EqGV3rt}, \eqref{EqGV3beb}, \eqref{EqGV3CornerVF} in the respective coordinates. Invariantly, for $q\in M$, the fiber $\Ttb_q M$ is the quotient $\Vtb(M)/\cI_q\Vtb(M)$ where $\cI_q\subset\CI(M)$ is the ideal of functions vanishing at $q$. The \emph{3b-cotangent bundle} $\Ttb^*M\to M$ is the dual bundle of $\Ttb M$. By $\ol{\Ttb^*}M\to M$ we denote the radially compactified 3b-cotangent bundle, and $\Stb^*M$ is its boundary at fiber infinity. For $\alpha\in\R$, the 3b-$\alpha$-density bundle $\Omegatb^\alpha M\to M$ is the bundle of $\alpha$-densities corresponding to $\Ttb M$. For $\alpha=1$, we write $\Omegatb M=\Omegatb^1 M$ for the 3b-density bundle.
\end{definition}

In local coordinates $(t,x)$ as in~\eqref{EqGV3bxt}, an example of a smooth positive 3b-density is
\begin{equation}
\label{EqGOmegatb}
  \la x\ra^{-n}|\dd t\,\dd x^1\cdots\dd x^{n-1}|.
\end{equation}

\begin{definition}[3b-differential operators]
\label{DefGDiff3b}
  For $m\in\N$, we define $\Difftb^m(M)$ as the space of finite sums of up to $m$-fold compositions of 3b-vector fields; for $m=0$ we set $\Difftb^0(M)=\CI(M)$, regarded as multiplication operators. For weights $\alpha_\cD,\alpha_\cT\in\R$, we furthermore set
  \[
    \rho_\cD^{-\alpha_\cD}\rho_\cT^{-\alpha_\cT}\Difftb^m(M) = \{ \rho_\cD^{-\alpha_\cD}\rho_\cT^{-\alpha_\cT}P \colon P\in\Difftb^m(M) \}.
  \]
  If $E_0\to M_0$ and $F_0\to M_0$ are smooth vector bundles over $M_0$ and $E=\upbeta^*E_0$, $F=\upbeta^*F_0$ denote their pullbacks to $M$, then $\Difftb^m(M;E,F)$ and $\rho_\cD^{-\alpha_\cD}\rho_\cT^{-\alpha_\cT}\Difftb^m(M;E,F)$ denote the corresponding spaces of 3b-differential operators acting between sections of $E$ and $F$.
\end{definition}

The union of all spaces of weighted 3b-differential operators is an algebra under composition, with the differential order $m$ and the weights $\alpha_\cD$, $\alpha_\cT$ behaving additively under composition; this uses that for $V\in\Vtb(M)\subset\Vb(M)$ we have $\rho_\cD^{\alpha_\cD}\rho_\cT^{\alpha_\cT}[V,\rho_\cD^{-\alpha_\cD}\rho_\cT^{-\alpha_\cT}]\in\CI(M)$. We also note that the fact that $\Vtb(M)$ is a Lie algebra implies that elements $P\in\Difftb^m(M)$ have a well-defined principal symbol
\begin{equation}
\label{EqGsigma3b}
  \sigmatb^m(M) \in P^{[m]}(\Ttb^*M) \subset \CI(\Ttb^*M),
\end{equation}
i.e.\ it is a homogeneous polynomial of degree $m$ in the fibers of $\Ttb^*M$. The principal symbol captures $P$ modulo operators of one order lower (in the differential sense); that is, we have a short exact sequence
\[
  0 \to \Difftb^{m-1}(M) \hra \Difftb^m(M) \xra{\sigmatb^m} P^{[m]}(\Ttb^*M) \to 0.
\]

In~\S\ref{SsGT}, we discuss the leading order behavior of 3b-operators in the sense of \emph{decay} at $\cT$, and in~\S\ref{SsGD} the leading order behavior at $\cD$. In particular, the approximate invariances at $\cT$ and $\cD$ recorded in Lemma~\ref{LemmaGV3b}\eqref{ItGV3bApprox} are related to the existence of \emph{exactly} invariant normal operators.

We end this section by discussing the relationship of $\Ttb M$ and $\Tb\cT$, $\Tb\cD$. Restriction to $\cT$ gives a restriction map $\Vtb(M)\to\Vb(\cT)$; this map is surjective since in the affine coordinates $x\in\R^{n-1}$ on $\cT^\circ$, the space $\Vb(\cT)$ is spanned over $\CI(\cT)$ by $\la x\ra\pa_{x^j}\in\Vtb(M)$, see~\eqref{EqGV3bxt}. Similarly, the restriction map $\Vtb(M)\to\Vb(\cD)$ to $\cD$ is surjective, as follows from the description~\eqref{EqGV3CornerVF} of 3b-vector fields. Thus, we get corresponding surjective maps of tangent bundles, and by duality inclusions of cotangent bundles,
\begin{equation}
\label{EqGVtbVb}
\begin{alignedat}{2}
  \Ttb_\cT M &\twoheadrightarrow \Tb\cT, &\qquad
  \Tb^*\cT &\hra \Ttb_\cT^* M, \\
  \Ttb_\cD M &\twoheadrightarrow \Tb\cD, &\qquad
  \Tb^*\cD &\hra \Ttb_\cD^* M.
\end{alignedat}
\end{equation}

\subsection{Model at the translation face\texorpdfstring{ $\cT$}{}}
\label{SsGT}

Due to the close relationship between 3b-vector fields and cusp vector fields near $\cT^\circ$, the normal operator at $\cT$ is closely related to the cusp normal operator; thus, we show here how to adapt some of the arguments of \cite[\S4]{MazzeoMelroseFibred} to the present 3b-setting. As a first step, we prove:

\begin{prop}[Existence of the 0-energy operator]
\label{PropGT0}
  Let $P\in\Difftb^m(M)$. Then the operator
  \[
    \wh{N_\cT}(P,0) \colon \CIdot(\cT) \to \CIdot(\cT),\qquad
    \CIdot(\cT)\ni u\mapsto (P\tilde u)|_\cT,
  \]
  where $\tilde u\in\CI(M)$ satisfies $\tilde u|_\cT=u$, is well-defined (i.e.\ independent of the choice of $\tilde u$). Moreover, $\wh{N_\cT}(P,0)\in\Diffb^m(\cT)$, and the map $\Difftb^m(M)\ni P\mapsto\wh{N_\cT}(P,0)\in\Diffb^m(\cT)$ is surjective.
\end{prop}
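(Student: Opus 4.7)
The proof proceeds in three steps: well-definedness of the restriction map, identification of the range as $\Diffb^m(\cT)$, and surjectivity. Throughout, I will rely on the two explicit local frames of $\Vtb(M)$: near $\cT^\circ$ (where $\rho_\cD$ is bounded below), the vector fields $T^2\pa_T$ and $\pa_{x^j}$ ($j=1,\ldots,n-1$) span, cf.~\eqref{EqGV3bxt}; and near the corner $\cT\cap\cD$, the vector fields $\rho_\cT\rho_\cD\pa_{\rho_\cD}$, $\rho_\cT\pa_{\rho_\cT}-\rho_\cD\pa_{\rho_\cD}$, $\pa_{\omega^j}$ span, cf.~\eqref{EqGV3CornerVF}.

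For well-definedness, the key observation is that since $\Vtb(M)\subset\Vb(M)$ by Lemma~\ref{LemmaGV3b}\eqref{ItGV3bLie}, any $V\in\Vtb(M)$ satisfies $V\rho_\cT\in\rho_\cT\CI(M)$; consequently $V(\rho_\cT v)=(V\rho_\cT)v+\rho_\cT Vv\in\rho_\cT\CI(M)$ for any $v\in\CI(M)$, and by induction $P(\rho_\cT\CI(M))\subset\rho_\cT\CI(M)$ for all $P\in\Difftb^m(M)$. If $\tilde u_1,\tilde u_2\in\CI(M)$ are two extensions of $u$, then $\tilde u_1-\tilde u_2\in\rho_\cT\CI(M)$ (since $\cT$ is embedded), and hence $(P\tilde u_1)|_\cT=(P\tilde u_2)|_\cT$. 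Choosing an extension $\tilde u\in\CI(M)$ that vanishes to infinite order at $\cD$ (possible by a Borel-type extension since $u\in\CIdot(\cT)$ vanishes to infinite order at $\pa\cT=\cT\cap\cD$), and using that elements of $\Vb(M)$ preserve infinite-order vanishing at $\cD$, gives $(P\tilde u)|_\cT\in\CIdot(\cT)$.

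Next I will verify $\wh{N_\cT}(P,0)\in\Diffb^m(\cT)$ by a local coordinate computation. Near $\cT^\circ$ in coordinates $(T,x)$, write $P=\sum_{j+|\alpha|\leq m}a_{j\alpha}(T,x)(T^2\pa_T)^j\pa_x^\alpha$. Taking any extension $\tilde u=\tilde u(T,x)$ of $u=u(x)$, only terms with $j=0$ survive upon evaluation at $T=0$, yielding $\wh{N_\cT}(P,0)u=\sum_{|\alpha|\leq m}a_{0\alpha}(0,x)\pa_x^\alpha u$. Since $x\in\R^{n-1}$ are affine coordinates on $\cT^\circ\cong\R^{n-1}$ and $\cT$ is diffeomorphic to the radial compactification of $\R^{n-1}$, this is a smooth differential operator on $\cT^\circ$. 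Near the corner in coordinates $(\rho_\cT,\rho_\cD,\omega)$, write $P=\sum a_{j k\alpha}(\rho_\cT,\rho_\cD,\omega)(\rho_\cT\rho_\cD\pa_{\rho_\cD})^j(\rho_\cT\pa_{\rho_\cT}-\rho_\cD\pa_{\rho_\cD})^k\pa_\omega^\alpha$. Applying this to an extension $\tilde u(\rho_\cT,\rho_\cD,\omega)$ of $u(\rho_\cD,\omega)$ and restricting to $\rho_\cT=0$: the first factor vanishes unless $j=0$, and the mixed vector field restricts to $-\rho_\cD\pa_{\rho_\cD}$ when applied to a function independent of $\rho_\cT$. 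Hence $\wh{N_\cT}(P,0)=\sum a_{0 k\alpha}(0,\rho_\cD,\omega)(-\rho_\cD\pa_{\rho_\cD})^k\pa_\omega^\alpha$, which is a genuine b-differential operator on $\cT$ near $\pa\cT$, since $\rho_\cD$ is a defining function of $\pa\cT\subset\cT$. Matching of these two local expressions in the overlap region is automatic from the construction.

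Finally, surjectivity follows by a partition of unity argument combined with explicit lifts of local frames of $\Vb(\cT)$ to $\Vtb(M)$: in the corner region, $\rho_\cD\pa_{\rho_\cD}\in\Vb(\cT)$ lifts to $-(\rho_\cT\pa_{\rho_\cT}-\rho_\cD\pa_{\rho_\cD})\in\Vtb(M)$ and $\pa_{\omega^j}$ lifts to itself; near $\cT^\circ$, $\pa_{x^j}\in\Vb(\cT)$ (which spans $\Vb(\cT)$ when $x$ is bounded) lifts to $\pa_{x^j}\in\Vtb(M)$. Given $Q\in\Diffb^m(\cT)$, the construction of a preimage $P\in\Difftb^m(M)$ proceeds by writing $Q$ locally in these frames, replacing each factor by its chosen lift, extending the coefficients arbitrarily off $\cT$, and patching via a partition of unity subordinate to a finite cover of $M$; any error introduced by patching lies in $\rho_\cT\Difftb^m(M)$ and hence is annihilated by $\wh{N_\cT}(\cdot,0)$.

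The only genuine subtlety---and the one requiring careful bookkeeping---is the behavior of the mixed vector field $\rho_\cT\pa_{\rho_\cT}-\rho_\cD\pa_{\rho_\cD}$ at the corner: one must check both that it restricts to $-\rho_\cD\pa_{\rho_\cD}$ when acting on $\rho_\cT$-independent functions (for the first claim) and that it serves as a valid lift of $\rho_\cD\pa_{\rho_\cD}$ (for surjectivity). Everything else is routine given the explicit local frames assembled in~\S\ref{SsGV}.
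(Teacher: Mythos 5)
Your proof is correct and follows essentially the same route as the paper: restrict coefficients to $\cT$, observe that only the terms without the (approximately) time-translation-generating factor survive, recognize the result as a b-operator, and deduce surjectivity from the surjectivity of $\CI(M)\to\CI(\cT)$ on coefficients. The only cosmetic difference is that the paper works with the single frame $\la x\ra D_t$, $\la x\ra D_{x^j}$ from~\eqref{EqGV3bxt}, valid on a full neighborhood of $\cT$ since $x$ is a global affine coordinate on $\cT^\circ$, whereas you split into an interior chart and a corner chart and patch with a partition of unity.
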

\begin{proof}
  If $\tilde u|_\cT=0$, then using only that $P\in\Diffb^m(M)$ we also have $(P\tilde u)|_\cT=0$; this proves that $\wh{N_\cT}(P,0)$ is well-defined. For the second part, we work in the coordinates $(t,x)$ from~\eqref{EqGV3bxt}; thus
  \begin{equation}
  \label{EqGT0P}
    P = \sum_{j+|\alpha|\leq m} a_{j\alpha} (\la x\ra D_t)^j (\la x\ra D_x)^\alpha,
  \end{equation}
  where $a_{j\alpha}\in\CI(M)$. Since $x\colon\cT^\circ\to\R^{n-1}$ is an affine coordinate system, the 0-energy operator
  \begin{equation}
  \label{EqGT0NP}
    \wh{N_\cT}(P,0) = \sum_{|\alpha|\leq m} (a_{0\alpha}|_\cT) (\la x\ra D_x)^\alpha
  \end{equation}
  is indeed an $m$-th order b-differential operator, as claimed. Since any b-differential operator on $\cT$ can be written as on the right hand side in~\eqref{EqGT0NP} for suitable coefficients $a_{0\alpha}|_\cT\in\CI(\cT)$, the surjectivity of $P\mapsto\wh{N_\cT}(P,0)$ onto $\Diffb^m(\cT)$ follows from the surjectivity of the restriction map $\CI(M)\to\CI(\cT)$.
\end{proof}

\begin{definition}[$\pa\cT$-normal operator]
\label{DefGTbNorm}
  For $P\in\Difftb^m(M)$, we denote by $N_{\pa\cT}(P)\in\Diff_{\bop,I}^m({}^+N\pa\cT)$ the b-normal operator of $\wh{N_\cT}(P,0)$ at $\pa\cT$.
\end{definition}

In order to capture $P$ \emph{as a 3b-operator} to leading order at $\cT$, we also need to take the $D_t$-terms of~\eqref{EqGT0P} into account.

\begin{prop}[Existence of the spectral family]
\label{PropGTsigma}
  Fix a boundary defining function $\rho_0\in\CI(M_0)$. Let $P\in\Difftb^m(M)$ and $\sigma\in\R$. Then the operator\footnote{The choice of signs in the exponents is a matter of convention; the present signs are chosen for compatibility with the convention that the inverse Fourier transform of a function $\hat f(\sigma)$ is $(2\pi)^{-1}\int e^{-i\sigma t}\hat f(\sigma)\,\dd\sigma$. This convention is unusual in Fourier analysis, but it is rather standard in the theory of wave equations where $t$ is a time coordinate.}
  \[
    \wh{N_\cT}(P,\sigma) \colon \CIdot(\cT) \to \CIdot(\cT),\qquad
    \CIdot(\cT) \ni u \mapsto \bigl(e^{i\sigma/\upbeta^*\rho_0}P(e^{-i\sigma/\upbeta^*\rho_0}\tilde u)\bigr)|_\cT,
  \]
  where $\tilde u\in\CI(M)$ satisfies $\tilde u|_\cT=u$, is well-defined. Moreover, $\wh{N_\cT}(P,\sigma)\in\rho_\cD^{-m}\Diffsc^m(\cT)$; and for any fixed $\sigma\neq 0$, the map $\Difftb^m(M)\ni P\mapsto\wh{N_\cT}(P,\sigma)\in\Diffsc^{m,m}(\cT)=\rho_\cD^{-m}\Diffsc^m(\cT)$ is surjective.
\end{prop}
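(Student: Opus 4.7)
My strategy mirrors the proof of Proposition~\ref{PropGT0}, replacing restriction by conjugated restriction. I will work in the local coordinate chart $(T,x)\in[0,1)\times\R^{n-1}$ from~\eqref{EqGCoordsTX}--\eqref{EqGCoordstx} which covers a neighborhood of $\cT^\circ$ in $M$ and in which we may take $\upbeta^*\rho_0=T=1/t$. Since $\la x\ra$ is independent of $t$ and $e^{i\sigma t}D_t e^{-i\sigma t}=D_t-\sigma$, the explicit conjugation of the local generators~\eqref{EqGV3bxt} is
\[
  e^{i\sigma t}(\la x\ra D_t)e^{-i\sigma t}=\la x\ra D_t-\sigma\la x\ra,\qquad e^{i\sigma t}(\la x\ra D_{x^j})e^{-i\sigma t}=\la x\ra D_{x^j}.
\]
Writing $P$ as in~\eqref{EqGT0P} and using that $\la x\ra D_t$ commutes with the $t$-independent multiplication operator $\la x\ra$, a binomial expansion gives
\[
  e^{i\sigma t}Pe^{-i\sigma t}=\sum_{j+|\alpha|\leq m}a_{j\alpha}\sum_{k=0}^{j}\binom{j}{k}(\la x\ra D_t)^k(-\sigma\la x\ra)^{j-k}(\la x\ra D_x)^\alpha.
\]
Applying this to an extension $\tilde u\in\CI(M)$ of $u$ and restricting to $\cT=\{T=0\}$ kills every summand with $k\geq 1$: indeed, $(\la x\ra D_x)^\alpha\tilde u\in\CI(M)$, and in the $T$-coordinate $\la x\ra D_t=iT^2\la x\ra\pa_T$, so $\la x\ra D_t$ sends $\CI(M)$ into functions vanishing at $\cT$. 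Only the $k=0$ term survives, yielding the explicit formula
\begin{equation}
\label{EqGTsigmaFormula}
  \wh{N_\cT}(P,\sigma)u=\sum_{j+|\alpha|\leq m}(a_{j\alpha}|_\cT)(-\sigma\la x\ra)^j(\la x\ra D_x)^\alpha u.
\end{equation}

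Independence of the extension is immediate from~\eqref{EqGTsigmaFormula}: if $\tilde u|_\cT=0$, then $\tilde u=T\tilde u'$ for some $\tilde u'\in\CI(M)$, and each term $(-\sigma\la x\ra)^j(\la x\ra D_x)^\alpha(T\tilde u')=T(-\sigma\la x\ra)^j(\la x\ra D_x)^\alpha\tilde u'$ vanishes at $\cT$ since the $\la x\ra D_x$ factors commute with multiplication by $T$. For the membership statement, recall that $\cT\cong\ol{\R^{n-1}}$ is the radial compactification with $\pa\cT$ defined by $\rho_\cD\sim\la x\ra^{-1}$; in Cartesian coordinates $\pa_{x^j}$ is a scattering vector field on $\cT$, so $\la x\ra D_{x^j}\in\rho_\cD^{-1}\Vsc(\cT)\subset\Diffsc^{1,1}(\cT)$, and likewise $-\sigma\la x\ra\in\rho_\cD^{-1}\CI(\cT)$. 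Multiplicativity of the scattering calculus gives $(-\sigma\la x\ra)^j(\la x\ra D_x)^\alpha\in\Diffsc^{|\alpha|,j+|\alpha|}(\cT)\subset\Diffsc^{m,m}(\cT)$ whenever $j+|\alpha|\leq m$, while the coefficients $a_{j\alpha}|_\cT\in\CI(\cT)$ are smooth on $\cT$ up to and including $\pa\cT$ since $a_{j\alpha}\in\CI(M)$. Hence $\wh{N_\cT}(P,\sigma)\in\Diffsc^{m,m}(\cT)=\rho_\cD^{-m}\Diffsc^m(\cT)$, and the same computation shows that it maps $\CIdot(\cT)$ into itself (Schwartz functions are preserved by scattering operators and by multiplication by polynomially bounded conormal functions).

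For surjectivity at $\sigma\neq 0$: I claim the operators $\{(-\sigma\la x\ra)^j(\la x\ra D_x)^\alpha\}_{j+|\alpha|\leq m}$ span $\Diffsc^{m,m}(\cT)$ as a $\CI(\cT)$-module. Modulo $\Diffsc^{m-1,m-1}(\cT)$, these operators reduce to $\la x\ra^m(-\sigma)^j(-i\pa_x)^\alpha$ with $j+|\alpha|=m$, and already the $j=0$ terms $\la x\ra^m\pa_x^\alpha$ with $|\alpha|=m$ span the principal parts of $\Diffsc^{m,m}(\cT)$ at the level of principal symbols; the extra $j\geq 1$ terms then allow one, for $\sigma\neq 0$, to realize the pure weight contributions of order less than $m$ in $\pa_x$. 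An induction on $m$ (successively subtracting off the contributions identified in the top order) therefore produces coefficients $q_{j\alpha}\in\CI(\cT)$ with $Q=\sum q_{j\alpha}(-\sigma\la x\ra)^j(\la x\ra D_x)^\alpha$. Extending each $q_{j\alpha}$ to some $a_{j\alpha}\in\CI(M)$ (e.g.\ as a $T$-independent function in a collar of $\cT$, cut off away from $\cT$) and setting $P=\sum a_{j\alpha}(\la x\ra D_t)^j(\la x\ra D_x)^\alpha\in\Difftb^m(M)$ yields $\wh{N_\cT}(P,\sigma)=Q$ by~\eqref{EqGTsigmaFormula}. The main obstacle will be the inductive decomposition of a general $Q$: this is what fails at $\sigma=0$, where the weight-increasing operators disappear and one recovers the unweighted b-type formula of Proposition~\ref{PropGT0}.
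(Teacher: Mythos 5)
Your proof is correct and takes essentially the same route as the paper's: conjugate the local generators, observe that the $\la x\ra D_t$ terms die upon restriction to $\cT$ to obtain the explicit formula $\wh{N_\cT}(P,\sigma)=\sum(a_{j\alpha}|_\cT)(-\sigma\la x\ra)^j(\la x\ra D_x)^\alpha$, read off membership in $\rho_\cD^{-m}\Diffsc^m(\cT)$ from $\sigma\la x\ra,\la x\ra D_{x^j}\in\Diffsc^{1,1}(\cT)$, and for surjectivity divide by powers of $-\sigma$ to absorb the excess weights $\la x\ra^{m-|\beta|}$ (the paper does this by an explicit commutator rewriting of $\la x\ra^m D_x^\beta$ rather than your symbol-level induction, but the content is identical). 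The only cosmetic quibble is in the well-definedness step: near the corner $\cT\cap\cD$ one should factor $\tilde u=\rho_\cT\tilde u'$ rather than $\tilde u=T\tilde u'$ (since $T=\rho_\cD\rho_\cT$ is a total, not a $\cT$-, defining function), and then use that the 3b-vector fields are tangent to $\cT$.
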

\begin{proof}
  In terms of the local coordinate description~\eqref{EqGT0P}, and with $\rho_0=t^{-1}$, we have
  \[
    e^{i\sigma/\upbeta^*\rho_0}P e^{-i\sigma/\upbeta^*\rho_0} = \sum_{j+|\alpha|\leq m} a_{j\alpha} (\la x\ra(D_t-\sigma))^j (\la x\ra D_x)^\alpha,
  \]
  and therefore
  \begin{equation}
  \label{EqGTsigmaNorm}
    \wh{N_\cT}(P,\sigma) = \sum_{j+|\alpha|\leq m} (a_{j\alpha}|_\cT) (-\la x\ra\sigma)^j (\la x\ra D_x)^\alpha.
  \end{equation}
  Since $\la x\ra^{-1}\in\CI(\cT)$ is smooth (and vanishes simply at $\pa\cT$), this implies that the rescaling $\la x\ra^{-m}\wh{N_\cT}(P,\sigma)$ is indeed a smooth coefficient scattering operator on $\cT$.
  
  Conversely, when $\sigma\neq 0$, one can rewrite any operator
  \[
    \rho_\cD^{-m}\Diffsc^m(\cT) \ni B = \la x\ra^m \sum_{|\beta|\leq m} b_\beta D_x^\beta,\qquad b_\beta\in\CI(\cT),
  \]
  in the form
  \begin{align*}
    B &= \sum_{|\beta|\leq m} b_\beta\la x\ra^m (\la x\ra^{-1}\la x\ra D_x)^\beta \\
      &= \sum_{|\beta|\leq m} b_\beta \sum_{\alpha\leq\beta} \la x\ra^{m-|\beta|}f_{\alpha\beta} (\la x\ra D_x)^\alpha \qquad\qquad (f_{\alpha\beta}\in\CI(\cT)) \\
      &= \sum_{|\alpha|\leq m} \biggl( \sum_{j=0}^{m-|\alpha|} \sum_{\genfrac{}{}{0pt}{}{\beta\geq\alpha}{j+|\beta|=m}} f_{\alpha\beta}b_\beta \la x\ra^j\biggr) (\la x\ra D_x)^\alpha \\
      &= \sum_{j+|\alpha|\leq m} \tilde b_{j\alpha} (-\la x\ra\sigma)^j(\la x\ra D_x)^\alpha,
  \end{align*}
  where $\tilde b_{j\alpha}=\sum_{\beta\geq\alpha,\,j+|\beta|=m} f_{\alpha\beta}b_\beta(-\sigma)^{-j}\in\CI(\cT)$. Since the restriction map $\CI(M)\to\CI(\cT)$ is surjective, this proves the surjectivity of $\wh{N_\cT}(-,\sigma)$ for $\sigma\neq 0$ in view of~\eqref{EqGTsigmaNorm}.
\end{proof}

\begin{definition}[Spectral family]
\label{DefGTsigma}
  Fix a boundary defining function $\rho_0\in\CI(M_0)$. Then the \emph{spectral family} of $P\in\Difftb^m(M)$ is the family of operators
  \[
    \wh{N_\cT}(P,\sigma),\qquad \sigma\in\R,
  \]
  defined by Proposition~\ref{PropGTsigma} for $\sigma\neq 0$, and by Proposition~\ref{PropGT0} for $\sigma=0$.
\end{definition}

We proceed to relate the principal symbols of elements of the spectral family to the principal symbol of $P$ itself. First, we consider the relationship between phase spaces. We will use the fact that a choice of boundary defining function $\rho_0\in\CI(M_0)$ fixes a bundle isomorphism
\[
  \Ttb_{\cT^\circ}M\to T\cT^\circ\oplus\Tsc_{\{0\}}[0,1)_{\rho_0}.
\]
(In local coordinates as in~\eqref{EqGV3bxt}, and with $t=\rho_0^{-1}$, this map takes $\la x\ra\pa_t\mapsto(0,\la x\ra\pa_t)=(0,-\la x\ra\rho_0^2\pa_{\rho_0})$ and $\la x\ra\pa_{x^j}\mapsto(\la x\ra\pa_{x^j},0)$.) Identifying $\Tsc^*_{\{0\}}[0,1)\cong\R_\sigma$ via $\sigma\frac{\dd\rho_0}{\rho_0^2}=-\sigma\,\dd t\mapsto\sigma$, the adjoint is the isomorphism
\begin{equation}
\label{EqGTPhaseMap}
  T^*\cT^\circ \oplus \R_\sigma \to \Ttb^*_{\cT^\circ}M,\qquad (\dd x^j,\sigma) \mapsto -\sigma\,\dd t+\dd x^j.
\end{equation}

\begin{lemma}[Phase space identifications]
\fakephantomsection
\label{LemmaGTPhase}
  \begin{enumerate}
  \item\label{ItGTPhase0} \rm{(Zero energy.)} The adjoint of the surjective bundle map $\Ttb_\cT M\to\Tb\cT$ (given by restriction of vector fields) is the injective map $\iota_0\colon\Tb^*\cT\hra\Ttb^*_\cT M$, with range equal to the annihilator $\ann(\Ttb_\cT M\to\Tb\cT)$ (see also~\eqref{EqGVtbVb}).
  \item\label{ItGTPhaseNonzero} \rm{(Nonzero energies.)} For a fixed defining function $\rho_0\in\CI(M_0)$, and for $\sigma_0\neq 0$, the restriction of the map~\eqref{EqGTPhaseMap} to $T^*\cT^\circ\times\{\sigma_0\}$ extends by continuity to a fiber-wise affine map
    \begin{equation}
    \label{EqGTPhaseNonzero}
      \iota_{\sigma_0} \colon \Tsc^*\cT \to \rho_\cD^{-1}\,\Ttb^*_\cT M
    \end{equation}
    which is a diffeomorphism onto its image. (Here $\rho_\cD^{-1}\,\Ttb^*_\cT M\to\cT$ is the vector bundle for which the space of smooth sections is $\rho_\cD^{-1}\CI(\cT;\Ttb^*_\cT M)$.)
  \end{enumerate}
\end{lemma}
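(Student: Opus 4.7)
The plan is to handle the two parts separately: part~\eqref{ItGTPhase0} is a pointwise linear-algebra statement, while part~\eqref{ItGTPhaseNonzero} requires a concrete local coordinate calculation near $\pa\cT$.

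For part~\eqref{ItGTPhase0}, I would fix $q\in\cT$ and work fiber-wise: the map $A_q\colon\Ttb_q M\twoheadrightarrow\Tb_q\cT$ is surjective by the discussion surrounding~\eqref{EqGVtbVb} (using that the frame $\la x\ra\pa_{x^j}$ for $\Vtb(M)$ restricts to a frame of $\Vb(\cT)$, and similarly for the frame near $\cT\cap\cD$ in~\eqref{EqGV3CornerVF}). Standard linear algebra then gives that the adjoint $A_q^*\colon\Tb_q^*\cT\to\Ttb_q^*M$ is injective and its range equals the annihilator of $\ker A_q$, which by the standard convention is what $\ann(\Ttb_\cT M\to\Tb\cT)$ denotes. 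Smoothness in $q$ is automatic since $A$ is a smooth bundle map. I would then note that the formula~\eqref{EqGTPhaseMap} at $\sigma=0$ agrees with $\iota_0$, since at $\sigma=0$ the image $\dd x^j$ annihilates $\la x\ra\pa_t$.

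For part~\eqref{ItGTPhaseNonzero}, the key computation is to exhibit a smooth frame of $\rho_\cD^{-1}\Ttb_\cT^*M$ near $\pa\cT$ in which $\iota_{\sigma_0}$ becomes the obvious affine embedding. Take $\rho_0=T$ in the coordinates~\eqref{EqGCoordsTX}, so on $\cT^\circ$ we use $x\in\R^{n-1}$, and the dual frame of $\Ttb^*_{\cT^\circ}M$ to $\la x\ra\pa_t,\la x\ra\pa_{x^j}$ is $\frac{\dd t}{\la x\ra},\frac{\dd x^j}{\la x\ra}$; in this frame, $\iota_{\sigma_0}(\xi_j\dd x^j)=-\sigma_0\dd t+\xi_j\dd x^j$ has fiber coordinates $(-\sigma_0\la x\ra,\xi_j\la x\ra)$. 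After rescaling by $\rho_\cD^{-1}=\la x\ra$ (locally on $\cT^\circ$), the frame $\dd t,\dd x^j$ of $\rho_\cD^{-1}\Ttb_{\cT^\circ}^*M$ makes the map read $(\xi_j)\mapsto(-\sigma_0,\xi_j)$. To extend to $\pa\cT$, I would pass to the local coordinates $(\rho_\cT,\rho_\cD,\omega)=(R,T/R,\omega)$ from~\eqref{EqGV3CornerCoord} and compute, using the dual frame $\theta^1,\theta^2,\theta^3_j$ of the 3b-frame in~\eqref{EqGV3CornerVF}, the explicit formulas
\[
\rho_\cD\,\dd t=-\theta^1,\qquad \rho_\cD\,\dd x^j=-\omega^j\rho_\cT\,\theta^1+\omega^j\,\theta^2+\theta^3_j,
\]
from $t=(\rho_\cT\rho_\cD)^{-1}$, $x^j=\omega^j/\rho_\cD$. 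These $n$ sections are visibly smooth and linearly independent at $\rho_\cT=0$ (the associated matrix has determinant $\pm 1$), so $\dd t,\dd x^j$ form a smooth frame of $\rho_\cD^{-1}\Ttb^*M$ near $\pa\cT$. Restricting to $\cT$ and combining with the classical fact that $\{\dd x^j\}$ is a smooth frame of $\Tsc^*\cT=\Tsc^*(\ol{\R^{n-1}})$ (which follows from $\dd x^j=-\omega^j(\hat\rho^{-2}\dd\hat\rho)+\hat\rho^{-1}\dd\omega^j$ in polar coordinates $\hat\rho=\la x\ra^{-1},\omega$), the map $\iota_{\sigma_0}$ in these frames is simply $(\xi_j)\mapsto(-\sigma_0,\xi_j)$, manifestly a smooth fiber-wise affine diffeomorphism onto the codimension one affine subbundle $\{\text{coefficient of }\dd t=-\sigma_0\}$.

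The main obstacle is bookkeeping: one must verify that $\dd t$ remains a \emph{smooth nonvanishing} section of $\rho_\cD^{-1}\Ttb^*_\cT M$ up to $\pa\cT$, which is why rescaling by $\rho_\cD^{-1}$ is essential (in $\Ttb^*_\cT M$ itself $\dd t$ would blow up). The displayed identity $\rho_\cD\dd t=-\theta^1$ together with the analogous identity for $\dd x^j$ encodes this, and once in hand, the rest of the argument reduces to reading off the coordinate expression of $\iota_{\sigma_0}$.
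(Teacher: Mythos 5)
Your proof is correct and follows essentially the same route as the paper: part~\eqref{ItGTPhase0} is the same elementary linear algebra, and part~\eqref{ItGTPhaseNonzero} is the same local-coordinate verification, which the paper records via the formula $\iota_{\sigma_0}\colon\xi_\scop\mapsto(\rho_\cD^{-1}\sigma_0,\rho_\cD^{-1}\xi_\scop)$ in the smooth fiber coordinates dual to the frame $\la x\ra\pa_t,\la x\ra\pa_{x^j}$ of $\Ttb M$. Your explicit expansion of $\rho_\cD\,\dd t$ and $\rho_\cD\,\dd x^j$ in the corner coframe dual to~\eqref{EqGV3CornerVF} is a correct (if slightly more laborious than necessary) way of seeing that $\dd t,\dd x^j$ form a smooth frame of $\rho_\cD^{-1}\Ttb^*M$ near $\pa\cT$, which is exactly the content of the paper's argument.
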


In local coordinates, we can use the duals $\rho_\cD\,\dd t$, $\rho_\cD\,\dd x$ (where $\rho_\cD=\la x\ra^{-1}$) of the local frame~\eqref{EqGV3bxt} of $\Vtb(M)$ and thus introduce smooth fiber-linear coordinates $\sigma_\tbop\in\R$, $\xi_\tbop\in\R^{n-1}$ on $\Ttb^*M$ near $\cT$ by writing the canonical 1-form on $\Ttb^*M$ as
\[
  -\sigma_\tbop\rho_\cD\,\dd t + \xi_\tbop\cdot\rho_\cD\,\dd x.
\]
Then Lemma~\ref{LemmaGTPhase}\eqref{ItGTPhase0} is the isomorphism $\Tb^*\cT\cong\{\sigma_\tbop=0\}\subset\Ttb^*_\cT M$; using as fiber-linear coordinates on $\Tb^*\cT$ the coordinates $\xi_\bop\in\R^{n-1}$ defined by writing the canonical 1-form as $\xi_\bop\cdot\rho_\cD\,\dd x$, this isomorphism is given fiber-wise by $\xi_\bop\mapsto(0,\xi_\bop)$.

Using fiber-linear coordinates $\xi_\scop\in\R^{n-1}$ defined by writing the canonical 1-form as $\xi_\scop\cdot\dd x$ (so $\xi_\bop=\rho_\cD^{-1}\xi_\scop$), the map~\eqref{EqGTPhaseNonzero} is given fiber-wise by
\begin{equation}
\label{EqGTPhaseNonzeroLoc}
  \iota_{\sigma_0} \colon \xi_\scop \mapsto (\rho_\cD^{-1}\sigma_0, \rho_\cD^{-1}\xi_\scop).
\end{equation}
The factor $\rho_\cD^{-1}$ in the first component arises from $\dd t=\rho_\cD^{-1}\cdot\rho_\cD\,\dd t$.

\begin{proof}[Proof of Lemma~\usref{LemmaGTPhase}]
  Part~\eqref{ItGTPhase0} is elementary linear algebra. Part~\eqref{ItGTPhaseNonzero} is a consequence of~\eqref{EqGTPhaseNonzeroLoc}.
\end{proof}

See Figure~\ref{FigGTPhase} for an illustration.

\begin{figure}[!ht]
\centering
\includegraphics{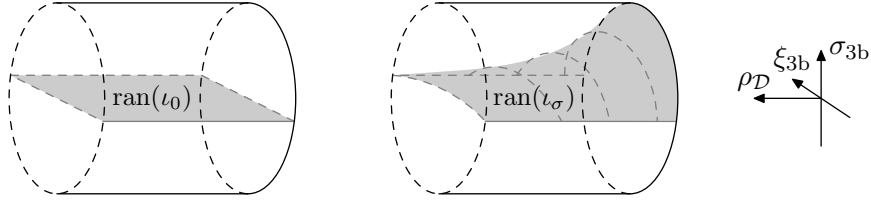}
\caption{The ranges of the maps $\iota_0$ (on the left) and $\iota_\sigma$ where $\sigma>0$ (on the right) inside of the radial compactification of $\Ttb^*M$, in the notation of Lemma~\ref{LemmaGTPhase}. Only the coordinates $\rho_\cD$, $\sigma_\tbop$, and $\xi_\tbop$ are indicated.}
\label{FigGTPhase}
\end{figure}

\begin{prop}[Relationships between principal symbols]
\label{PropGTSymbol}
  Fix a boundary defining function $\rho_0\in\CI(M_0)$. For $P\in\Difftb^m(M)$, the principal symbol of $\wh{N_\cT}(P,\sigma)$ (as an element of $P^{[m]}(\Tb^*\cT)$ for $\sigma=0$, and as an element of $\rho_\cD^{-m}P^m(\Tsc^*\cT)/\rho_\cD^{-(m-1)}P^{m-1}(\Tsc^*\cT)$ for $\sigma\neq 0$) is equal to the pullback of $\sigmatb^m(P)$ along the map $\iota_\sigma$ from Lemma~\usref{LemmaGTPhase}.
\end{prop}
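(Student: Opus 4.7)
The plan is to verify the identity pointwise and in local coordinates, exploiting the explicit formulas~\eqref{EqGT0NP} and \eqref{EqGTsigmaNorm} for $\wh{N_\cT}(P,\sigma)$ together with the local coordinate description~\eqref{EqGTPhaseNonzeroLoc} of $\iota_\sigma$. Since both sides of the claimed identity are tensorial, it suffices to work in the coordinates $(t,x)$ of~\eqref{EqGV3bxt} and to assume that $P$ has the local form~\eqref{EqGT0P}. I would then fix the fiber-linear coordinates $(\sigma_\tbop,\xi_\tbop)$ on $\Ttb^*M$ defined by writing covectors as $-\sigma_\tbop\rho_\cD\,\dd t+\xi_\tbop\cdot\rho_\cD\,\dd x$ (with $\rho_\cD=\la x\ra^{-1}$), so that, evaluating $\sigmatb^1(V)(\xi)=i\xi(V)$ on the spanning 3b-vector fields and using multiplicativity, the principal symbol of $P$ reads
\begin{equation*}
  \sigmatb^m(P)(q;\sigma_\tbop,\xi_\tbop) = \sum_{j+|\alpha|=m} a_{j\alpha}(q)\,(-\sigma_\tbop)^j \xi_\tbop^\alpha,\qquad q\in M.
\end{equation*}

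First I would treat the zero energy case. Choosing on $\cT$ the fiber-linear coordinate $\xi_\bop$ on $\Tb^*\cT$ by writing $\bop$-covectors as $\xi_\bop\cdot\rho_\cD\,\dd x$, equation~\eqref{EqGT0NP} directly yields
\begin{equation*}
  \sigmab^m\bigl(\wh{N_\cT}(P,0)\bigr)(q;\xi_\bop) = \sum_{|\alpha|=m} (a_{0\alpha}|_\cT)(q)\,\xi_\bop^\alpha,\qquad q\in\cT.
\end{equation*}
Since $\iota_0(\xi_\bop)=(0,\xi_\bop)$ by Lemma~\ref{LemmaGTPhase}\eqref{ItGTPhase0}, pulling back $\sigmatb^m(P)|_\cT$ along $\iota_0$ reproduces exactly this expression (only the $j=0$ terms survive), giving the claim for $\sigma=0$.

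For nonzero $\sigma$, I would fix the scattering fiber-linear coordinate $\xi_\scop$ on $\Tsc^*\cT$ by writing scattering covectors as $\xi_\scop\cdot\dd x$, so that $\xi_\bop=\rho_\cD^{-1}\xi_\scop$ and $\sigmasc^m(\la x\ra D_{x^j})=\la x\ra\xi_\scop^j=\rho_\cD^{-1}\xi_\scop^j$. Then~\eqref{EqGTsigmaNorm} gives
\begin{equation*}
  \sigmasc^{m,m}\bigl(\wh{N_\cT}(P,\sigma)\bigr)(q;\xi_\scop) = \rho_\cD^{-m}\sum_{j+|\alpha|=m} (a_{j\alpha}|_\cT)(q)\,(-\sigma)^j \xi_\scop^\alpha \bmod \rho_\cD^{-(m-1)}P^{m-1}(\Tsc^*\cT).
\end{equation*}
On the other hand, the explicit formula~\eqref{EqGTPhaseNonzeroLoc} for $\iota_\sigma$ shows $\iota_\sigma(\xi_\scop)=(\rho_\cD^{-1}\sigma,\rho_\cD^{-1}\xi_\scop)$, whence
\begin{equation*}
  \iota_\sigma^*\sigmatb^m(P)(q;\xi_\scop) = \sum_{j+|\alpha|=m} a_{j\alpha}(q)\,(-\rho_\cD^{-1}\sigma)^j(\rho_\cD^{-1}\xi_\scop)^\alpha = \rho_\cD^{-m}\sum_{j+|\alpha|=m} a_{j\alpha}(q)\,(-\sigma)^j\xi_\scop^\alpha,
\end{equation*}
which agrees with the previous display upon restriction to $\cT$ and modulo the specified lower order space. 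The only part requiring genuine care (and the main obstacle, such as it is) is bookkeeping of signs and of the factors of $\rho_\cD$ arising from the rescaling between the $\tbop$-, $\bop$-, and $\scop$-phase spaces, together with checking that the identification is well-defined modulo the correct subprincipal space; once~\eqref{EqGTPhaseNonzeroLoc} and~\eqref{EqGTsigmaNorm} are in hand this reduces to the direct computation above.
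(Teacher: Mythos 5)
Your proposal is correct and takes essentially the same route as the paper: the paper's proof is exactly an inspection of~\eqref{EqGTsigmaNorm} in the coordinates of~\eqref{EqGV3bxt}, observing that the terms with $j+|\alpha|\leq m-1$ are subprincipal in $\rho_\cD^{-m}\Diffsc^m(\cT)$ and that the surviving terms match $\sigmatb^m(P)$ evaluated at $(\sigma_\tbop,\xi_\tbop)=(\la x\ra\sigma,\la x\ra\xi_\scop)$ as in~\eqref{EqGTPhaseNonzeroLoc}. Your sign and weight bookkeeping is consistent with the paper's conventions.
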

\begin{proof}
  This follows from an inspection of~\eqref{EqGTsigmaNorm}. Indeed, for $\sigma=0$ the conclusion is immediate. For $\sigma\neq 0$ on the other hand, terms in~\eqref{EqGTsigmaNorm} with $j+|\alpha|\leq m-1$ are subprincipal in $\rho_\cD^{-m}\Diffsc^m(\cT)$, and therefore the scattering principal symbol of $\wh{N_\cT}(P,\sigma)$ is given by
  \[
    \sum_{j+|\alpha|=m} (a_{j\alpha}|_\cT) (-\la x\ra\sigma)^j (\la x\ra\xi_\scop)^\alpha,
  \]
  which indeed equals
  \[
    \sigmatb^m(P) = \sum_{j+|\alpha|=m} a_{j\alpha} (-\sigma_\tbop)^j \xi_\tbop^\alpha
  \]
  at $(\sigma_\tbop,\xi_\tbop)=(\la x\ra\sigma,\la x\ra\xi_\scop)$ (cf.\ \eqref{EqGTPhaseNonzeroLoc}).
\end{proof}

We remark that the geometric reason behind the fact that we can characterize the scattering symbol of $\wh{N_\cT}(P,\sigma)$, $\sigma\neq 0$ at base infinity in the manner described in Proposition~\ref{PropGTSymbol} is that the image of $\Tsc^*_{\pa\cT}\cT$ under $\iota_\sigma$, or more precisely under the continuous extension of $\iota_\sigma$ to a map between radially compactified bundles, is contained in fiber infinity $\Stb^*_{\pa\cT}M$ (which is contained in the locus of the 3b-principal symbol of $P$).

Regarding $\wh{N_\cT}(P,\sigma)$ as a \emph{family} of operators, we first consider the uniform behavior near low frequencies:

\begin{prop}[The spectral family as a scattering-b-transition operator for low frequencies]
\label{PropGTscbt}
  Fix a boundary defining function $\rho_0\in\CI(M_0)$. Let $\sigma_0>0$. For $P\in\Difftb^m(M)$, the family
  \begin{equation}
  \label{EqGTscbt}
    {\pm}[0,\sigma_0) \ni \sigma \mapsto \wh{N_\cT}(P,\sigma)
  \end{equation}
  defines an element of $\Diff_\scbtop^{m,m,0,0}(\cT)$ in the notation of~\S\usref{SsAscbt}. Conversely, given an operator family $A=(A_\sigma)_{\sigma\in\pm[0,\sigma_0)}\in\Diffscbt^{m,m,0,0}(\cT)$, then there exists $P\in\Difftb^m(M)$ with $A(\sigma)=\wh{N_\cT}(P,\sigma)$ if and only if $A(\sigma)$ is a polynomial of degree $m$ in $\sigma$, and $\pa_\sigma^j A(0)\in\Diffb^{m-j,j}(\cT)=\rho_\cD^{-j}\Diffb^{m-j}(\cT)$ for $j=0,\ldots,m$.
\end{prop}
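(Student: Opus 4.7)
The plan is to work throughout in the local chart $(t,x)$ of~\eqref{EqGV3bxt} near $\cT^\circ$ (or its polar-coordinate refinement $(t,r,\omega)$ near $\pa\cT=\cT\cap\cD$), using $\rho_\cD=\la x\ra^{-1}$ as a boundary defining function of $\pa\cT\subset\cT$, so that the explicit local formula~\eqref{EqGTsigmaNorm} reduces both directions to computation. For the forward inclusion, the crucial identity is that in the $\scbtop$-coordinates of~\eqref{EqAscbtLocFrame} (with $\la x\ra^{-1}$ playing the role of $x$ there) one computes $\la x\ra\sigma=\pm\rho_\zface/\rho_\scface$, so $(\la x\ra\sigma)^j$ lies in $\rho_\scface^{-j}\rho_\zface^j\CI(\cT_\scbtop)$, bounded at $\tface\cup\zface$ with weight $-j$ at $\scface$. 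Simultaneously, each $\la x\ra D_{x^k}$ is a global smooth b-vector field on $\cT$ (near $\pa\cT$ a $\CI$-linear combination of $r\pa_r$ and $\pa_\omega$), and direct comparison with~\eqref{EqAscbtLocFrame} yields $\Vb(\cT)=\rho_\scface^{-1}\cV_\scbtop(\cT)$, whence $(\la x\ra D_x)^\alpha\in\Diff_\scbtop^{|\alpha|,|\alpha|,0,0}(\cT)$ (commutators remain in the same class). Each summand of~\eqref{EqGTsigmaNorm} then lies in $\rho_\scface^{-j}\rho_\zface^j\Diff_\scbtop^{|\alpha|,|\alpha|,0,0}(\cT)\subset\Diff_\scbtop^{m,m,0,0}(\cT)$ since $j+|\alpha|\le m$ and $\rho_\zface\le 1$.

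For the converse, necessity is immediate from~\eqref{EqGTsigmaNorm}: the spectral family is a polynomial in $\sigma$ of degree at most $m$, and its $\sigma^j$-coefficient equals $(-1)^j\sum_{|\alpha|\le m-j}(a_{j\alpha}|_\cT)\la x\ra^j(\la x\ra D_x)^\alpha\in\la x\ra^j\Diffb^{m-j}(\cT)=\Diffb^{m-j,j}(\cT)$. For sufficiency, given $A(\sigma)=\sum_{j=0}^m\sigma^j A_j$ with $A_j\in\Diffb^{m-j,j}(\cT)$, I write $A_j=\la x\ra^j B_j$ with $B_j\in\Diffb^{m-j}(\cT)$; since $\{\la x\ra D_{x^k}\}_{k=1}^{n-1}$ span $\Vb(\cT)$ over $\CI(\cT)$ (the transition matrix to the polar b-frame $\{r\pa_r,\pa_\omega\}$ being pointwise invertible), induction on $m-j$ absorbing commutators yields a decomposition $B_j=\sum_{|\alpha|\le m-j}c_{j\alpha}(\la x\ra D_x)^\alpha$ with $c_{j\alpha}\in\CI(\cT)$. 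I then extend each $c_{j\alpha}$ to $a_{j\alpha}\in\CI(M)$ via surjectivity of $\CI(M)\to\CI(\cT)$, form $\tilde P=\sum_{j+|\alpha|\le m}(-1)^j a_{j\alpha}(\la x\ra D_t)^j(\la x\ra D_x)^\alpha$ in the chart, and set $P:=\chi\tilde P$ for a cutoff $\chi\in\CI(M)$ with $\chi\equiv 1$ near $\cT$ and $\supp\chi$ contained in the chart, extended by zero elsewhere; then $P\in\Difftb^m(M)$, and~\eqref{EqGTsigmaNorm} directly yields $\wh{N_\cT}(P,\sigma)=A(\sigma)$ (since $\chi|_\cT\equiv 1$).

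The main technical step is the first one: verifying the identifications $\la x\ra\sigma=\pm\rho_\zface/\rho_\scface$ and $\Vb(\cT)=\rho_\scface^{-1}\cV_\scbtop(\cT)$, and checking that they produce the claimed orders and weights uniformly down to $\sigma=0$. This is the point at which the $\scbtop$-calculus structure of~\S\ref{SsAscbt} actually interfaces with the 3b-setup, and requires some care with the low-frequency resolution structure of $\cT_\scbtop$ near $\pa\cT$. Once these correspondences are set up cleanly, the remainder of the argument is algebraic bookkeeping (for the forward direction and necessity) or essentially explicit reconstruction from coefficients (for sufficiency).
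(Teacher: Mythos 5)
Your proof is correct and follows essentially the same route as the paper's: the forward direction rests on the identities $\la x\ra\sigma=\pm\rho_{\zface}/\rho_{\scface}$ and $\Vb(\cT)=\rho_{\scface}^{-1}\cV_{\scbtop}(\cT)$ (the paper states exactly these facts in polar rather than Cartesian coordinates), and the converse is the same reconstruction of $P$ from the Taylor coefficients $\pa_\sigma^j A(0)$ via the frame decomposition of $\Diffb(\cT)$ already used in Proposition~\ref{PropGT0}. Your treatment of the converse is in fact slightly more complete, since the paper only sketches it near $\pa\cT$.
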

\begin{proof}
  Write $\wh{N_\cT}(P,\sigma)$ in~\eqref{EqGTsigmaNorm} using inverse polar coordinates $x=\rho_\cD^{-1}\omega$, $\rho_\cD=|x|^{-1}$, in $\rho_\cD<1$ in the form
  \begin{equation}
  \label{EqGTscbtFamily}
    \wh{N_\cT}(P,\sigma) = \sum_{j+k+|\alpha|\leq m} a_{j k\alpha} (-\sigma\rho_\cD^{-1})^j (\rho_\cD D_{\rho_\cD})^k D_\omega^\alpha,\qquad a_{j k\alpha}\in\CI([0,1)_{\rho_\cD}\times\Sph^{n-2}_\omega).
  \end{equation}
  The membership $(\wh{N_\cT}(P,\sigma))_{\sigma\in\pm[0,\sigma_0)}\in\Diffscbt^{m,m,0,0}(\cT)$ then follows from the facts that $\sigma\rho_\cD^{-1}\in\Diff_\scbtop^{0,1,0,-1}(\cT)\subset\Diff_\scbtop^{1,1,0,0}(\cT)$ and
  \[
    \rho_\cD D_{\rho_\cD}=\Bigl(\frac{\rho_\cD}{\rho_\cD+|\sigma|}\Bigr)^{-1}\cdot\frac{\rho_\cD}{\rho_\cD+|\sigma|}\rho_\cD D_{\rho_\cD}\in\Diff_\scbtop^{1,1,0,0}(\cT),
  \]
  cf.\ \eqref{EqAscbtLocFrame}; likewise $D_\omega\in\Diff_\scbtop^{1,1,0,0}(\cT)$.

  We prove the converse only near $\pa\cT$, where we can write
  \[
    \pa_\sigma^j A(\sigma)=\rho_\cD^{-j}\sum_{k+|\alpha|\leq m-j}a_{j k\alpha}(\rho_\cD D_{\rho_\cD})^k D_\omega^\alpha.
  \]
  Therefore, $A(\sigma)=\sum_{j=0}^m \frac{\sigma^j}{j!}\pa_\sigma^j A(\sigma)$ is the spectral family of
  \[
    P := \sum_{j+k+|\alpha|\leq m}\tilde a_{j k\alpha}(-\rho_\cD^{-1}D_t)^j(\rho_\cD D_{\rho_\cD})^k D_\omega^\alpha\in\Difftb^m(M);
  \]
  here, $\tilde a_{j k\alpha}\in\CI(M)$ is an extension of $a_{j k\alpha}\in\CI(\cT)$.
\end{proof}

\begin{definition}[$\cT$-$\tface$-normal operator]
\label{DefGTNormtf}
  Fix a boundary defining function $\rho_0\in\CI(M_0)$. Let $P\in\Difftb^m(M)$. Then the \emph{$\cT$-$\tface$ normal operator}
  \[
    N_{\cT,\tface}^\pm(P) \in \Diff_{\scop,\bop}^{m,m,0}(\ol{{}^+N}\pa\cT)
  \]
  is the $\tface$-normal operator of the $\scbtop$-operator~\eqref{EqGTscbt}.
\end{definition}

Explicitly, one introduces for $\pm\sigma>0$ the variable $\hat\rho_\cD=\rho_\cD/|\sigma|$ in the expression~\eqref{EqGTscbtFamily} and takes the limit $|\sigma|\to 0$ for bounded $\hat\rho_\cD>0$. Thus,
\begin{equation}
\label{EqGTFamilyResc}
  N_{\cT,\tface}^\pm(P) := \sum_{j+k+|\alpha|\leq m} (a_{j k\alpha}|_{\pa\cT}) (\mp\hat\rho_\cD)^{-j} (\hat\rho_\cD D_{\hat\rho_\cD})^k D_\omega^\alpha
\end{equation}
on $[0,\infty]_{\hat\rho_\cD}\times\Sph^{n-2}$; here the coefficients are $a_{j k\alpha}|_{\pa\cT}\in\CI(\Sph^{n-2})$. The b-normal operator of $N_{\cT,\tface}^\pm(P)$ at $\hat\rho_\cD=\infty$ selects the terms with $j=0$, and hence is given by $\sum_{k+|\alpha|\leq m} (a_{j k\alpha}|_{\pa\cT})(\hat\rho_\cD D_{\hat\rho_\cD})^k D_\omega^\alpha$, which is equal to the b-normal operator $N_{\pa\cT}(P)$ of the zero energy operator $\wh{N_\cT}(P,0)$ at $\rho_\cD=0$ (obtained from~\eqref{EqGTscbtFamily} by keeping only the terms with $j=0$ and restricting coefficients to $\pa\cT$) upon identifying $\rho_\cD$ and $\hat\rho_\cD$. (Note that $\rho_\cD=0$ corresponds to the far end $\hat\rho_\cD=0$ from the perspective of the b-normal operator of $N_{\cT,\tface}^\pm(P)$ at $\hat\rho_\cD^{-1}=0$; in this sense, the map $\rho_\cD=(\hat\rho_\cD^{-1})^{-1}$ is homogeneous of degree $-1$, matching Lemma~\ref{LemmaAscbtNorm}.)

\begin{prop}[Relationship between principal symbols at low energy]
\label{PropGTtfSymbol}
  Fix a boundary defining function $\rho_0\in\CI(M_0)$. Denote by $\rho_\scop\in\CI(\ol{{}^+N}\pa\cT)$ a defining function of the zero section. The principal symbol of $N_{\cT,\tface}^\pm(P)$ (i.e.\ a representative of the equivalence class in $(\rho_\scop^{-m}P^m/\rho_\scop^{-(m-1)}P^{m-1})({}^{\scop,\bop}T^*(\ol{{}^+N}\pa\cT))$) is the pullback of $\sigmatb^m(P)$ along the map
  \[
    \iota_{\cT,\tface}^\pm \colon \ol{{}^{\scop,\bop}T^*}(\ol{{}^+N}\pa\cT) \to \ol{\Ttb^*_{\pa\cT}}M
  \]
  defined as follows: fixing a defining function $\rho_\cD$ of $\pa\cT\subset\cT$, and setting $\hat\rho_\cD:=\rho_\cD/|\sigma|$, identify $\Tb_p^*(\ol{{}^+N}\pa\cT)\cong\Tb_{\pi(p)}^*\cT\subset\Ttb^*_{\pi(p)}M$ where $\pi\colon\ol{{}^+N}\pa\cT\to\pa\cT$ is the base projection. (Thus, we identify $\frac{\dd\hat\rho_\cD}{\hat\rho_\cD}$ with $\frac{\dd\rho_\cD}{\rho_\cD}$.) Write $(\scop,\bop)$-covectors on $\ol{{}^+N}\pa\cT$, resp.\ $\tbop$-covectors over $\pa\cT$ as
  \[
    \frac{\hat\rho_\cD+1}{\hat\rho_\cD}\zeta_{\scop,\bop},\qquad \text{resp.}\qquad {-}\sigma_\tbop\rho_\cD\,\dd t + \zeta_\tbop,
  \]
  where $\zeta_{\scop,\bop}\in\Tb^*(\ol{{}^+N}\pa\cT)$ and $\zeta_\tbop\in\Tb^*\cT\subset\Ttb^*_\cT M$. Then
  \begin{align*}
    \iota_{\cT,\tface}^\pm \colon \ol{{}^{\scop,\bop}T^*}(\ol{{}^+N}\pa\cT) &\ni (\hat\rho_\cD,\omega;\zeta_{\scop,\bop}) \mapsto (\omega;\sigma_\tbop,\zeta_\tbop) = \bigl(\omega;\pm\hat\rho_\cD^{-1},(1+\hat\rho_\cD^{-1})\zeta_{\scop,\bop}\bigr)\in\ol{\Ttb^*_{\pa\cT}}M.
  \end{align*}
\end{prop}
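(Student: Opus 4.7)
The plan is to reduce the claim to a direct local coordinate computation near $\pa\cT$, similar in spirit to the proof of Proposition~\ref{PropGTSymbol}, by computing both sides of the claimed equality independently in a chart and observing that they match. The main ingredients are: (a) the local expression~\eqref{EqGTFamilyResc} for $N_{\cT,\tface}^\pm(P)$; (b) the fact that $P$ itself has (near $\pa\cT$, with $t=T^{-1}$) the 3b-form
\[
  P = \sum_{j+k+|\alpha|\leq m}\tilde a_{j k\alpha}(\rho_\cD,T,\omega)(\rho_\cD^{-1}D_t)^j(\rho_\cD D_{\rho_\cD})^k D_\omega^\alpha,\qquad \tilde a_{j k\alpha}|_\cT=a_{j k\alpha},
\]
as is implicit in the derivation of~\eqref{EqGTscbtFamily}; and (c) the local frame description of the $(\scop,\bop)$-structure on $\tface=\ol{{}^+N}\pa\cT$ from~\eqref{EqAscbtLocFrame}.

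The first step is to compute $\sigmatb^m(P)|_{\pa\cT}$ in fiber coordinates compatible with the paper's conventions. Using the 3b-frame $(\rho_\cD^{-1}\pa_t,\rho_\cD\pa_{\rho_\cD},\pa_\omega)$ near $\pa\cT$, whose dual basis is $(-\rho_\cD\,\dd t,\frac{\dd\rho_\cD}{\rho_\cD},\dd\omega)$, the principal symbols of $\rho_\cD^{-1}D_t$, $\rho_\cD D_{\rho_\cD}$, $D_\omega$ are $-\sigma_\tbop$, $\xi_\bop$, $\eta_\bop$ respectively, with $\zeta_\tbop=\xi_\bop\frac{\dd\rho_\cD}{\rho_\cD}+\eta_\bop\cdot\dd\omega$. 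Hence
\[
  \sigmatb^m(P)|_{\pa\cT} = \sum_{j+k+|\alpha|=m}(a_{j k\alpha}|_{\pa\cT})(-\sigma_\tbop)^j\xi_\bop^k\eta_\bop^\alpha.
\]
The second step is to compute the $(\scop,\bop)$-symbol of $N_{\cT,\tface}^\pm(P)$. The $(\scop,\bop)$-frame is $\frac{\hat\rho_\cD}{\hat\rho_\cD+1}\hat\rho_\cD\pa_{\hat\rho_\cD},\,\frac{\hat\rho_\cD}{\hat\rho_\cD+1}\pa_\omega$, so the operators $\hat\rho_\cD D_{\hat\rho_\cD}$ and $D_\omega$ have $(\scop,\bop)$-symbols $\frac{\hat\rho_\cD+1}{\hat\rho_\cD}\xi_\bop$ and $\frac{\hat\rho_\cD+1}{\hat\rho_\cD}\eta_\bop$, while $(\mp\hat\rho_\cD)^{-j}$ contributes a weight-$j$ multiplication factor at the scattering end (consistent with $\rho_\scop\sim\frac{\hat\rho_\cD}{\hat\rho_\cD+1}$). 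Retaining only the top-order terms ($j+k+|\alpha|=m$) in~\eqref{EqGTFamilyResc} yields
\[
  \sigma^{m,m,0}\bigl(N_{\cT,\tface}^\pm(P)\bigr) = \sum_{j+k+|\alpha|=m}(a_{j k\alpha}|_{\pa\cT})(\mp\hat\rho_\cD^{-1})^j\bigl((1+\hat\rho_\cD^{-1})\xi_\bop\bigr)^k\bigl((1+\hat\rho_\cD^{-1})\eta_\bop\bigr)^\alpha.
\]

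The third step is to apply $\iota_{\cT,\tface}^\pm$ to $\sigmatb^m(P)|_{\pa\cT}$: the substitution $\sigma_\tbop=\pm\hat\rho_\cD^{-1}$ and $\zeta_\tbop=(1+\hat\rho_\cD^{-1})\zeta_{\scop,\bop}$ (i.e.\ $\xi_\bop\mapsto(1+\hat\rho_\cD^{-1})\xi_\bop$, $\eta_\bop\mapsto(1+\hat\rho_\cD^{-1})\eta_\bop$) produces exactly the right-hand side of the previous display, which completes the proof. Separately, one should verify that $\iota_{\cT,\tface}^\pm$ extends smoothly to the stated map of radial compactifications; this follows from the explicit formula, which shows that the scattering end $\hat\rho_\cD\to 0$ of $\tface$ is mapped to 3b-fiber infinity $\Stb^*_{\pa\cT}M$ (compatibly with the $\hat\rho_\cD^{-m}$ blow-up of the $(\scop,\bop)$-symbol and with the homogeneity requirement on $\sigmatb^m(P)$), while the b-end $\hat\rho_\cD\to\infty$ sends $\sigma_\tbop\to 0$ and reduces $\iota_{\cT,\tface}^\pm$ to the zero-energy identification $\iota_0$ of Lemma~\ref{LemmaGTPhase}\eqref{ItGTPhase0}.

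The only genuine obstacle is bookkeeping: keeping track of the rescaling factors $\frac{\hat\rho_\cD+1}{\hat\rho_\cD}$ when passing from the b-frame to the $(\scop,\bop)$-frame, the sign conventions in the definition of $\sigma_\tbop$, and the identification $\frac{\dd\hat\rho_\cD}{\hat\rho_\cD}\leftrightarrow\frac{\dd\rho_\cD}{\rho_\cD}$ built into the hypothesis $\Tb_p^*(\ol{{}^+N}\pa\cT)\cong\Tb_{\pi(p)}^*\cT$. No new analytic input beyond Proposition~\ref{PropGTSymbol} and the explicit form~\eqref{EqGTFamilyResc} is required.
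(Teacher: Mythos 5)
Your proposal is correct and follows essentially the same route as the paper: both proofs reduce the claim to an explicit local computation near $\pa\cT$, comparing the expression~\eqref{EqGTFamilyResc} for $N_{\cT,\tface}^\pm(P)$ (with the $(\scop,\bop)$-frame rescalings $\frac{\hat\rho_\cD+1}{\hat\rho_\cD}$) against the substitution $\sigma_\tbop=\pm\hat\rho_\cD^{-1}$, $\zeta_\tbop=(1+\hat\rho_\cD^{-1})\zeta_{\scop,\bop}$ in $\sigmatb^m(P)$. The bookkeeping you flag (sign of $\sigma_\tbop$, the frame rescaling, the identification $\frac{\dd\hat\rho_\cD}{\hat\rho_\cD}\leftrightarrow\frac{\dd\rho_\cD}{\rho_\cD}$) is exactly what the paper's proof carries out via the coordinates~\eqref{EqGTtfSymbolCoord}.
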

\begin{proof}
  The choice of $\hat\rho_\cD$ gives a diffeomorphism $\ol{{}^+N}\pa\cT\cong[0,\infty]_{\hat\rho_\cD}\times\pa\cT$. Let us write $(\scop,\bop)$-covectors, resp.\ 3b-covectors (in a collar neighborhood of $\pa\cT\subset\cT$) as
  \begin{equation}
  \label{EqGTtfSymbolCoord}
  \begin{split}
    &\xi_{\scop,\bop}\frac{\dd\hat\rho_\cD}{\hat\rho_\cD\frac{\hat\rho_\cD}{\hat\rho_\cD+1}} + \eta_{\scop,\bop}\frac{\dd\omega}{\tfrac{\hat\rho_\cD}{\hat\rho_\cD+1}}=(\rho_\cD+|\sigma|)\Bigl(\xi_{\scop,\bop}\frac{\dd\rho_\cD}{\rho_\cD^2} + \eta_{\scop,\bop}\,\frac{\dd\omega}{\rho_\cD}\Bigr), \\
    &\hspace{10em} \text{resp.}\quad {-}\sigma_\tbop\rho_\cD\,\dd t+\rho_\cD\Bigl(\xi_\tbop\frac{\dd\rho_\cD}{\rho_\cD^2}+\eta_\tbop\frac{\dd\omega}{\rho_\cD}\Bigr).
  \end{split}
  \end{equation}
  In terms of the map~\eqref{EqGTPhaseNonzeroLoc}, with $(\rho_\cD+|\sigma|)(\xi_{\scop,\bop},\eta_{\scop,\bop})=(|\sigma|\hat\rho_\cD+|\sigma|)(\xi_\scbtop,\eta_\scbtop)$ in place of $\xi_\scop$, the principal symbol of $N_{\cT,\tface}^\pm(P)$ at $(\xi_\scbtop,\eta_\scbtop)$ in the fiber of the sc-b-cotangent bundle over $(\hat\rho_\cD,\omega)\in\ol{{}^+N}\pa\cT$ is then the limit of the restriction of $\sigmatb^m(P)$ to the point over $(|\sigma|\hat\rho_\cD,\omega)\in\cT$ with 3b-momentum
  \[
    \iota_\sigma|_{(|\sigma|\hat\rho_\cD,\omega)}\bigl( (|\sigma|\hat\rho_\cD+|\sigma|)(\xi_{\scop,\bop},\eta_{\scop,\bop}) \bigr) = \bigl( \pm\hat\rho_\cD^{-1}, (1+\hat\rho_\cD^{-1})\xi_{\scop,\bop},(1+\hat\rho_\cD^{-1})\eta_{\scop,\bop} \bigr)
  \]
  as $\pm\sigma\searrow 0$. Indeed, consider again the expression~\eqref{EqGTscbtFamily} of $\wh{N_\cT}(P,\sigma)$, with $P$ having 3b-principal symbol $\sum_{j+k+|\alpha|=m} a_{j k\alpha}(-\sigma_\tbop)^j\xi_\tbop^k\eta_\tbop^\alpha$ where we write 3b-covectors as in~\eqref{EqGTtfSymbolCoord}; the $\scbtop$-principal symbol of $N_{\cT,\tface}^\pm(P)$ in~\eqref{EqGTFamilyResc} in the coordinates~\eqref{EqGTtfSymbolCoord} is then (the equivalence class of)
  \[
    \sum_{j+k+|\alpha|\leq m} (a_{j k\alpha}|_{\pa\cT}) (\mp\hat\rho_\cD)^{-j} \bigl((1+\hat\rho_\cD^{-1})\xi_{\scop,\bop}\bigr)^k \bigl( (1+\hat\rho_\cD^{-1})\eta_{\scop,\bop}\bigr)^\alpha.
  \]
  This proves the claim.
\end{proof}

See Figure~\ref{FigGTtf}.

\begin{figure}[!ht]
\centering
\includegraphics{FigGTtf}
\caption{Illustration of the map $\iota_{\cT,\tface}^+$ from Proposition~\usref{PropGTtfSymbol}; we only show the coordinates $(\hat\rho_\cD,\xi_{\scop,\bop})$ and $(\sigma_\tbop,\xi_\tbop)$ from~\eqref{EqGTtfSymbolCoord}. The shaded grey area is the range of $\iota_{\cT,\tface}^+$. The dashed red, resp.\ solid blue lines are the images of lines of constant $\hat\rho_\cD$, resp.\ $\xi_{\scop,\bop}$.}
\label{FigGTtf}
\end{figure}

Next, we have the following result on the large $\sigma$ behavior of $\wh{N_\cT}(\sigma)$, in which we use the semiclassical scattering cotangent bundle $\Tsch^*\cT\to[0,1)_h\times\cT$, see~\S\ref{SssAsch}. Recall that for each $h>0$, the restriction $\Tsch^*_h\cT\to\{h\}\times\cT\cong\cT$ of this bundle to the $h$-level set is naturally isomorphic to the scattering cotangent bundle $\Tsc^*\cT\to\cT$.

\begin{prop}[Spectral family at high energy]
\label{PropGTHigh}
  Fix a boundary defining function $\rho_0\in\CI(M_0)$. Let $P\in\Difftb^m(M)$. For $\sigma\in\R$, $|\sigma|>1$, set $h=|\sigma|^{-1}\in(0,1)$ and define
  \begin{equation}
  \label{EqGTHighDef}
    \wh{N_{\cT,h}^\pm}(P) := \wh{N_\cT}(P,\pm h^{-1}).
  \end{equation}
  Then $\wh{N_{\cT,h}^\pm}(P)\in\Diff_{\scop,\semi}^{m,m,m}(\cT)$. Its semiclassical principal symbol
  \[
    \sigmasch\Bigl(\wh{N_{\cT,h}^\pm}(P)\Bigr)\in (h^{-m}\rho_\cD^{-m}P^m/h^{-(m-1)}\rho_\cD^{-(m-1)}P^{m-1})(\Tsch^*\cT)
  \]
  is given by the restriction of $\sigmatb^m(P)$ to the image of the map $\iota_{\pm h^{-1}}\colon\Tsch^*_h\cT\to\rho_\cD^{-1}\,\Ttb^*_\cT M$ in the notation of~\eqref{EqGTPhaseNonzero}.
\end{prop}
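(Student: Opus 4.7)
The plan is to work from the explicit local coordinate formula~\eqref{EqGTsigmaNorm} for $\wh{N_\cT}(P,\sigma)$, substitute $\sigma=\pm h^{-1}$, and rearrange so that the resulting family is manifestly in the weighted semiclassical scattering algebra. In the coordinates $(t,x)\in\R\times\R^{n-1}$ covering $M\setminus(\cD\cup\cT)$ and extending to a collar of $\cT$, write $P$ as in~\eqref{EqGT0P}. Then~\eqref{EqGTsigmaNorm} gives
\[
  \wh{N_{\cT,h}^\pm}(P) = \sum_{j+|\alpha|\leq m}(a_{j\alpha}|_\cT)\,(\mp h^{-1}\la x\ra)^j\,(\la x\ra D_x)^\alpha.
\]

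The first step is to factor out the weights $h^{-m}\la x\ra^m$ (with $\la x\ra^{-1}$ a local defining function of $\pa\cT$) and to rewrite every remaining factor in terms of the semiclassical scattering vector fields $h D_{x^j}$ on $\cT$. A direct computation gives
\[
  \wh{N_{\cT,h}^\pm}(P) = h^{-m}\la x\ra^m \sum_{j+|\alpha|\leq m}(a_{j\alpha}|_\cT)\,(\mp 1)^j\,h^{m-j-|\alpha|}\la x\ra^{j+|\alpha|-m}\,(h D_x)^\alpha.
\]
Since $j+|\alpha|\leq m$, both $h^{m-j-|\alpha|}\in\CI([0,1)_h)$ and $\la x\ra^{j+|\alpha|-m}\in\CI(\cT)$ (as nonnegative powers of $h$, respectively of the boundary defining function $\la x\ra^{-1}$ of $\pa\cT$); the coefficients $a_{j\alpha}|_\cT$ are smooth on $\cT$ and independent of $h$. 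This exhibits $\wh{N_{\cT,h}^\pm}(P)$ as a linear combination of $(h D_x)^\alpha$ with smooth (down to $h=0$ and to $\pa\cT$) coefficients, multiplied by $h^{-m}\la x\ra^m$, which by definition places it in $\Diff_{\scop,\semi}^{m,m,m}(\cT)=h^{-m}\rho_\cD^{-m}\Diff_{\scop,\semi}^m(\cT)$.

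For the principal symbol, the terms with $j+|\alpha|<m$ carry an extra factor of $h\la x\ra^{-1}$, hence lie in $h^{-(m-1)}\rho_\cD^{-(m-1)}\Diff_{\scop,\semi}^{m-1}(\cT)$; thus a representative of $\sigmasch(\wh{N_{\cT,h}^\pm}(P))$ is obtained by keeping only the terms with $j+|\alpha|=m$ and replacing $h D_{x^j}$ by the semiclassical scattering fiber-linear coordinate $\xi_\scop$, giving
\[
  h^{-m}\la x\ra^m\sum_{j+|\alpha|=m}(a_{j\alpha}|_\cT)(\mp 1)^j (h\xi_\scop)^\alpha.
\]
To match this with the pullback along $\iota_{\pm h^{-1}}$, I will use the local expression $\iota_{\pm h^{-1}}(\xi_\scop)=(\pm\la x\ra h^{-1},\la x\ra\xi_\scop)$ from~\eqref{EqGTPhaseNonzeroLoc} together with $\sigmatb^m(P)=\sum_{j+|\alpha|=m}a_{j\alpha}(-\sigma_\tbop)^j\xi_\tbop^\alpha$; a trivial arithmetic verification (factoring $h^{-j}\la x\ra^{j+|\alpha|}=h^{-m}\la x\ra^m\cdot h^{m-j}\cdot 1$ and using $m-j=|\alpha|$) shows the two expressions agree.

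The main (and only) subtlety is that the computation is carried out in a local chart near $\pa\cT$; away from $\pa\cT$, the weights $\la x\ra^{\pm m}$ and $h^{\pm m}$ are bounded and the family is manifestly a smooth family of elements of $\Diff^m(\cT^\circ)$, which is automatically contained in $\Diff_{\scop,\semi}^{m,m,m}(\cT)$ in the interior. The invariance of the principal symbol and of the map $\iota_{\pm h^{-1}}$ (established in Lemma~\ref{LemmaGTPhase} and Proposition~\ref{PropGTSymbol}) then glues the local computation into the global conclusion. I expect no real obstacle here: all the geometric content has already been packaged into $\iota_{\pm h^{-1}}$, and the argument mirrors the low-energy scattering-b-transition calculation in Proposition~\ref{PropGTscbt} but with the substitution $\sigma=\pm h^{-1}$ producing semiclassical rather than transition behavior.
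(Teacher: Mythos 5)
Your proposal is correct and follows essentially the same route as the paper's proof: start from the local expression~\eqref{EqGTsigmaNorm}, substitute $\sigma=\pm h^{-1}$, factor out $h^{-m}\rho_\cD^{-m}$ to exhibit membership in $\Diff_{\scop,\semi}^{m,m,m}(\cT)$, and read off the principal symbol from the terms with $j+|\alpha|=m$, matching it with the pullback along $\iota_{\pm h^{-1}}$ via the homogeneity of $\sigmatb^m(P)$. The only cosmetic difference is that the paper keeps the factors $(h\la x\ra D_x)^\alpha$ intact rather than commuting the weights through to write $(h D_x)^\alpha$, which sidesteps the (harmless, lower-order) commutator terms your rearrangement implicitly absorbs.
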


In local coordinates, the semiclassical principal symbol thus maps $\xi_{\scop,\semi}$ to the 3b-principal symbol of $P$ at $(\sigma_\tbop,\xi_\tbop)=(\pm h^{-1}\rho_\cD^{-1},h^{-1}\rho_\cD^{-1}\xi_{\scop,\semi})$. In other words, the principal symbol of $\wh{N_{\cT,h}^\pm}(P)$ is the composition of $\xi_{\scop,\semi}\frac{\dd x}{h}\mapsto h^{-1}\xi_{\scop,\semi}\cdot\dd x\in\Tsc^*\cT$ with $\iota_{\pm h^{-1}}$.

\begin{proof}[Proof of Proposition~\usref{PropGTHigh}]
  This is most directly seen in local coordinates starting from the expression~\eqref{EqGTsigmaNorm}, and with $\rho_\cD=\la x\ra^{-1}$. Indeed, we have
  \[
    \wh{N_\cT}(P,\pm h^{-1}) = h^{-m}\rho_\cD^{-m}\sum_{j+|\alpha|\leq m} (h\la x\ra^{-1})^{m-j-|\alpha|}(a_{j\alpha}|_\cT) (\mp 1)^j \la x\ra^{-|\alpha|}(h\la x\ra D_x)^\alpha.
  \]
  Thus, only those terms with $j+|\alpha|=m$ contribute to the principal symbol of this operator (as an element of $h^{-m}\rho_\cD^{-m}\Diff_{\scop,\semi}^m(\cT)$); its semiclassical scattering principal symbol is therefore
  \[
    h^{-m}\rho_\cD^{-m}\sum_{j+|\alpha|=m} (a_{j\alpha}|_\cT) (\mp 1)^j \xi_{\scop,\semi}^\alpha = \sigmatb^m(P)(\pm h^{-1}\rho_\cD^{-1},h^{-1}\rho_\cD^{-1}\xi_{\scop,\semi}),
  \]
  where we used that $\sigmatb^m(P)$ is homogeneous of degree $m$.
\end{proof}

Finally, we assemble the spectral family into a single object. Note that the spectral family $\wh{N_\cT}(P,\sigma)$ is the conjugation by the Fourier transform in $t$ of the translation-invariant operator
\begin{equation}
\label{EqGTNormOp}
  N_\cT(P) := \sum_{j+|\alpha|\leq m} (a_{j\alpha}|_\cT) (\la x\ra D_t)^j (\la x\ra D_x)^\alpha;
\end{equation}
i.e.\ $N_\cT(P)$ arises from $P$ simply by freezing its coefficients (as a 3b-operator) at $\cT$. The operator~\eqref{EqGTNormOp} is a 3b-operator on the 3b-single space arising from the blow-up of $\ol{\R_t\times\R^{n-1}_x}$ at the `north' and `south' poles $\{\pm\infty\}\times\{0\}$. Since we are really only interested in $N_\cT(P)$ as a model of $P$ for large $t$, let us observe that the subset of this 3b-single space where $t>-\infty$ is naturally diffeomorphic to the blow-up of $(-\infty,\infty]_t\times\ol{\R_x^{n-1}}$ at $\{\infty\}\times\pa\ol{\R^{n-1}}$. We phrase this more invariantly:

\begin{definition}[Model space for the $\cT$-normal operator]
\label{DefGTSpace}
  The \emph{model space for the $\cT$-normal operator} (or \emph{$\cT$-model space}) is defined as
  \[
    N_\tbop\cT := \bigl[ (-\infty,\infty]_t \times \cT; \{\infty\}\times\pa\cT \bigr].
  \]
  We denote by $\hat\cT$ the lift of $\{\infty\}\times\cT$, and by $\hat\cD$ the front face. The space $N_\tbop\cT$ is equipped with a translation action given by the lift of the $\R$-translation action on the second factor of $\cT^\circ\times\R$.
\end{definition}

See Figure~\ref{FigGTModel}. Since $\cT^\circ$ is an affine space, we can equivalently define $N_\tbop\cT$ as the set $t>-\infty$ inside the blow-up of the radial compactification of $\R_t\times\cT^\circ$ at $\{(\infty,x_0)\}$ for any fixed $x_0\in\cT^\circ$; therefore, we can define 3b-vector fields and associated classes of operators on $N_\tbop\cT$. Note then that a choice of local coordinates $(T,X)$ on $M_0$ near $\fp$ and of $(t,x)$ on $M^\circ$ near $\cT$ induces an embedding of a neighborhood of $\hat\cT\subset N_\tbop\cT$ into a neighborhood of $\cT\subset M$ via continuous extension of the map $\R\times\cT^\circ\ni(t,x)\mapsto(t,x)\in M^\circ$; under this embedding, $\hat\cT$ and $\cT$ get identified, and so do the 3b-tangent bundles on $M$ and $N_\tbop\cT$.

\begin{figure}[!ht]
\centering
\includegraphics{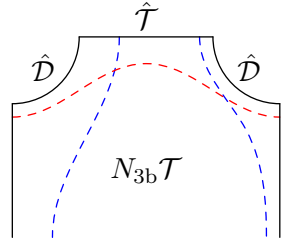}
\caption{The $\cT$-model space $N_\tbop\cT$. Also shown are two orbits of the translation action (dashed, blue), as well as a level set of $t$ (dashed, red).}
\label{FigGTModel}
\end{figure}

\begin{definition}[$\cT$-normal operator]
\label{DefGT}
  Let $P\in\Difftb^m(M)$. Then the \emph{$\cT$-normal operator} of $P$ is the operator
  \[
    N_\cT(P) \in \Diff_{\tbop,I}^m(N_\tbop\cT)
  \]
  (where the subscript `$I$' restricts to the space of operators which are translation-invariant) which is uniquely determined by the requirement that it have $\wh{N_\cT}(P,\sigma)$ (defined with respect to a choice of boundary defining function $\rho_0\in\CI(M_0)$) as its spectral family at $\hat\cT$ (defined with respect to $t^{-1}$).
\end{definition}

See~\eqref{EqGTNormOp} for the expression in local coordinates. We shall not carry out an analysis of the dependence of $N_\cT(P)$ on the choice of $\rho_0$ and thus do not provide a fully invariant definition of $N_\cT(P)$ (or of the $\cT$-model space); see \cite{MazzeoMelroseFibred} for a discussion in the closely related cusp calculus.

The $\cT$-normal operator gives rise to a multiplicative short exact sequence
\begin{equation}
\label{EqGTSES}
  0 \to \rho_\cT\Difftb(M) \hra \Difftb(M) \xra{N_\cT} \Diff_{\tbop,I}(N_\tbop\cT) \to 0
\end{equation}
and thus captures, in a precise manner, a 3b-differential operator to leading order at $\cT$. The analysis of $N_\cT(P)$ of course takes advantage of the translation-invariance, i.e.\ the main part of its analysis is based on the study of $\wh{N_\cT}(P,\sigma)$.

The principal symbol $\sigmatb^m(N_\cT(P))$ is translation-invariant and thus uniquely determined by its restriction to $\Ttb^*_{\hat\cT}(N_\tbop\cT)$, where it is equal to $\sigmatb^m(P)|_{\Ttb^*_\cT M}$ under the isomorphism $\Ttb_{\hat\cT}^*(N_\tbop\cT)\cong\Ttb_\cT^* M$.

\begin{rmk}[Normal operator at the corner $\cD\cap\cT$]
\label{RmkGTCorner}
  For $P\in\Difftb^m(M)$, the translation-invariant operator $N_\cT(P)$ has itself a dilation-invariant model at $\hat\cD\subset N_\tbop\cT$. Concretely, in the coordinates $(t,r,\omega)$ from~\eqref{EqGV3rt} and in $t,r\gtrsim 1$, let us write
  \[
    N_\cT(P) = \sum_{j+k+|\alpha|\leq m} a_{j k\alpha}(r^{-1},\omega) (r D_t)^j (r D_r)^k D_\omega^\alpha;
  \]
  then we have
  \begin{equation}
  \label{EqGTCornerOp}
    N_{\hat\cD}(P) := N_{\hat\cD}(N_\cT(P)) = \sum_{j+k+|\alpha|\leq m} a_{j k\alpha}(0,\omega) (r D_t)^j (r D_r)^k D_\omega^\alpha.
  \end{equation}
  This operator is both translation-invariant in $t$ and dilation-invariant in $(t,r)$. The $\cT$-$\tface$-normal operators can be defined in terms of $N_{\hat\cD}(P)$ by exploiting the invariances successively: first by passing to the spectral family in $t$ (effectively replacing $D_t$ by $-\sigma$) and then by rescaling $\hat r=\pm r\sigma$ in $\pm\sigma>0$; this gives
  \begin{equation}
  \label{EqGTCornerTF}
    N_{\cT,\tface}^\pm(P) = \sum_{j+k+|\alpha|\leq m} a_{j k\alpha}(0,\omega) (-\hat r)^j (\hat r D_{\hat r})^k D_\omega^\alpha.
  \end{equation}
  Thus, one can equivalently regard $N_{\hat\cD}(P)$ or $N_{\cT,\tface}^\pm(P)$ as the model operator(s) connecting the two asymptotic regimes (approximate dilation- and approximate translation-invariance) of 3b-operators.
\end{rmk}

\subsection{Model at the dilation face\texorpdfstring{ $\cD$}{}}
\label{SsGD}

Since $\Difftb(M)\subset\Diffb(M)$, one can use the normal operator homomorphism at $\cD$ from the b-calculus to capture the leading order behavior of 3b-differential operators at $\cD$:
\[
  N_{\cD,\bop} \colon \Diffb(M) \to \Diff_{\bop,I}({}^+N\cD).
\]
While this map $N_{\cD,\bop}$ is surjective, its restriction to $\Difftb(M)$ is not surjective anymore due to the fact that 3b-vector fields degenerate in a particular manner (relative to b-vector fields) at $\pa\cD$, cf.\ \eqref{EqGV3CornerVF}. In order to describe a more precise normal operator map on $\Difftb(M)$, note first that we have a canonical isomorphism ${}^+N_{\cD^\circ}\cD\cong{}^+N_{\pa M_0\setminus\{\fp\}}\pa M_0$ of half line bundles.

\begin{definition}[Model space for the $\cD$-normal operator]
\label{DefGDSpace}
  The \emph{model space for the $\cD$-normal operator} (or \emph{$\cD$-model space}) is defined as
  \[
    {}^+N_\tbop\cD := \bigl[ {}^+N\pa M_0; {}^+N_\fp\pa M_0 \bigr].
  \]
  This is a half line bundle over $[\pa M_0;\{\fp\}]=\cD$, and it is equipped with an $\R_+$-dilation action on its fibers (given by the lift of the dilation action on ${}^+N\pa M_0$). We denote its zero section by $\cD$ (by an abuse of notation) and the front face of ${}^+N_\tbop\cD$ by $\cR$. We fix on $\cR$ the fibration $\cR\to{}^+N_\fp\pa M_0$ given by restriction of the blow-down map; the typical fiber is thus $\Sph^{n-2}$. By $\cV_{\eop,\bop,I}({}^+N_\tbop\cD)$ we denote the space of smooth vector fields which are tangent to $\cD$ and to the fibers of $\cR$, and which are moreover invariant under the dilation action on the fibers; by $\Diff_{\eop,\bop,I}^m({}^+N_\tbop\cD)$ we denote the corresponding space of $m$-th order differential operators. Finally, we define
  \begin{equation}
  \label{EqGDSpaceTilde}
    \wt{{}^+N_\tbop\cD} := [ {}^+N\pa M_0; {}^+N_\fp\pa M_0; o_\fp ],
  \end{equation}
  where $o_\fp\subset N_\fp\pa M_0$ is the zero section over $\fp$.
\end{definition}

The usage of the notation `$\cD$' for a boundary hypersurface of ${}^+N_\tbop\cD$ is justified since the lift of the zero section is diffeomorphic to $[\pa M_0;\{\fp\}]=\cD$. See Figure~\ref{FigGDModel}.

Note that we can identify a collar neighborhood of $\pa M_0\subset M_0$ with a neighborhood of the zero section $o\subset N\pa M_0$, and then $[{}^+N\pa M_0;o_\fp]$ is a model for $M$ near $\cD\cup\cT$. As far as a neighborhood of $\cD$ is concerned, there exists a diffeomorphism from a neighborhood of the lift of $\cD$ (i.e.\ of the zero section of ${}^+N\pa M_0$) in $[{}^+N\pa M_0;o_\fp]$ to a neighborhood of $\cD\subset M$ which is the identity on $\cD$ and whose differential at each point of $\cD$ is also the identity (using the natural identifications of the respective tangent spaces). The lift of ${}^+N_\fp\pa M_0$ to $[{}^+N\pa M_0;o_\fp]$ is disjoint from a sufficiently small such collar neighborhood of $\cD$, and thus blowing it up does not affect this statement (but this blow-up is performed in~\eqref{EqGDSpaceTilde} so that $\wt{{}^+N_\tbop\cD}$ is a resolution of ${}^+N_\tbop\cD$).

\begin{figure}[!ht]
\centering
\includegraphics{FigGDModel}
\caption{The $\cD$-model space ${}^+N_\tbop\cD$. Shown are also the boundary hypersurfaces $\cD$ and $\cR$ as well as the fibers of $\cR$. The space $\wt{{}^+N_\tbop\cD}$ in~\eqref{EqGDSpaceTilde} is the blow-up at $\cR\cap\cD$ (solid circle).}
\label{FigGDModel}
\end{figure}

\begin{prop}[$\cD$-normal operator]
\label{PropGD}
  Let $P\in\Difftb(M)$. If $N_{\cD,\bop}(P)$ denotes its b-normal operator at $\cD$, then its restriction $N_{\cD,\bop}(P)|_{{}^+N_{\cD^\circ}\cD}$ extends by continuity to a dilation-invariant edge-b-operator on ${}^+N_\tbop\cD$. This defines a surjective homomorphism\footnote{See equation~\eqref{EqGDNorm} below for the expression in local coordinates in the case of vector fields.}
  \begin{equation}
  \label{EqGD}
    N_\cD \colon \Difftb(M) \to \Diff_{\eop,\bop,I}({}^+N_\tbop\cD).
  \end{equation}
  Moreover, there is a multiplicative short exact sequence
  \[
    0 \to \rho_\cD\Difftb(M) \hra \Difftb(M) \xra{N_\cD} \Diff_{\eop,\bop,I}({}^+N_\tbop\cD) \to 0.
  \]
\end{prop}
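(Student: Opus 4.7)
The plan is to work in local coordinates near $\fp$ and exploit the observation that $\Vtb(M)$ and $\cV_{\eop,\bop,I}({}^+N_\tbop\cD)$ admit identical formal generators in appropriately chosen charts; the continuous extension claim, surjectivity, computation of the kernel, and multiplicativity then follow from the correspondence of local frames.

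Concretely, I would fix coordinates $(T, X) \in [0,\infty)\times\R^{n-1}$ on $M_0$ near $\fp = (0,0)$ and pass to polar coordinates $X = R\omega$ with $R = |X|$, $\omega\in\Sph^{n-2}$. By \eqref{EqGV3beb}, in $R > 0$ the space $\Vtb(M)$ is spanned over $\CI(M)$ by $RT\pa_T$, $R\pa_R$, $\pa_\omega$, so any $P\in\Difftb^m(M)$ admits a local expression $P = \sum_{j+k+|\alpha|\leq m} a_{jk\alpha}(RT\pa_T)^j(R\pa_R)^k\pa_\omega^\alpha$ with $a_{jk\alpha}\in\CI(M)$. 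The key geometric observation is that ${}^+N_\tbop\cD = [{}^+N\pa M_0; {}^+N_\fp\pa M_0]$ carries, near the corner $\cR\cap\cD$, natural projective coordinates $(R, T, \omega)$ in which $R$ defines $\cR$, $T$ (the pulled-back fiber coordinate of ${}^+N\pa M_0$) defines $\cD$, $\omega$ parameterizes the $\Sph^{n-2}$-fibers of the fibration $\cR\to{}^+N_\fp\pa M_0$, $(T,\omega)\mapsto T$, and the dilation action is $T\mapsto\lambda T$. By the local frame \eqref{EqAebVF}, the space $\cV_{\eop,\bop,I}({}^+N_\tbop\cD)$ is spanned in this chart by exactly the same formal expressions $RT\pa_T$, $R\pa_R$, $\pa_\omega$, with coefficients in $\CI(\cD)$ extended to be $T$-independent. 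This prompts the definition
\begin{equation*}
  N_\cD(P) := \sum (a_{jk\alpha}|_\cD)(RT\pa_T)^j(R\pa_R)^k\pa_\omega^\alpha \in \Diff_{\eop,\bop,I}^m({}^+N_\tbop\cD);
\end{equation*}
each $a_{jk\alpha}|_\cD$ is smooth on $\cD = [\pa M_0;\{\fp\}]$, hence extends smoothly across $R = 0$, and the $T$-independence yields dilation-invariance. On the region $R>0$ the same chart parameterizes ${}^+N_{\cD^\circ}\cD$, and the expression agrees there with the classical b-normal operator of $P\in\Diffb^m(M)$; this is precisely the continuous extension statement. Surjectivity follows by extending any $b_{jk\alpha}\in\CI(\cD)$ to $\CI(M)$; $\ker N_\cD = \rho_\cD\Difftb(M)$ because $a_{jk\alpha}|_\cD = 0$ for all indices iff each $a_{jk\alpha}\in\rho_\cD\CI(M)$, in which case $P = \rho_\cD P'$ with $P'\in\Difftb(M)$; and multiplicativity is automatic from the frame correspondence.

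The main obstacle is a coordinate subtlety. The natural smooth coordinates on $M$ near $\pa\cD$ use $\hat T := T/R$ (so that $R,\hat T$ define $\cT,\cD$), whereas on ${}^+N_\tbop\cD$ the natural projective chart uses $T$ directly, and the change of variables $T = R\hat T$ is singular at $R = 0$. Written in the $M$-chart $(R,\hat T,\omega)$, the 3b-generator $R\pa_R|_T$ becomes $R\pa_R|_{\hat T} - \hat T\pa_{\hat T}$, whose naive extension to $\{R = 0\}$ is not tangent to the fibers of $\cR$ and hence not edge-b; only upon reverting to $(R,T,\omega)$-coordinates via $T = R\hat T$ does one recover $R\pa_R|_T$, which is a bona fide edge-b-vector field. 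Tracking this coordinate disparity---equivalently, understanding the blowdown-blowup relationship between ${}^+N\cD = [{}^+N\pa M_0; o_\fp]$ and ${}^+N_\tbop\cD = [{}^+N\pa M_0; {}^+N_\fp\pa M_0]$---is the technical heart of the proof.
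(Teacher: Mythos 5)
Your proposal is correct and follows essentially the same route as the paper's own (first) proof: restrict the coefficients of $P$, written in the frame $R T\pa_T$, $R\pa_R$, $\pa_\omega$ of \eqref{EqGV3beb}, to $\cD=\{T/R=0\}$, reinterpret the resulting expression on the chart $[0,\infty)_T\times[0,1)_R\times\Sph^{n-2}$ of ${}^+N_\tbop\cD$, and read off that it is a dilation-invariant edge-b-operator agreeing with $N_{\cD,\bop}(P)$ over $\cD^\circ$; surjectivity, the kernel computation, and multiplicativity follow as you say. The ``coordinate disparity'' you flag at the end is real, but your direct construction already circumvents it (continuity over ${}^+N_{\cD^\circ}\cD$ pins the extension down uniquely, and the explicit formula exhibits it as smooth edge-b on the model space); the systematic, invariant resolution you gesture at is exactly the paper's alternative second proof via the equivariant diffeomorphism $[\ol{{}^+N}\cD;{}^+S N_{\pa\cD}\cD]\cong\wt{{}^+N_\tbop\cD}$ of \eqref{EqGDModelDiff} (note your identification ``${}^+N\cD=[{}^+N\pa M_0;o_\fp]$'' is slightly off: $[{}^+N\pa M_0;o_\fp]$ models $M$ near $\cD\cup\cT$, not the normal bundle ${}^+N\cD$).
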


Colloquially, the map $N_\cD$ is given by freezing coefficients at $\cD$. The normal operator $N_\cD(P)$ for $P\in\Difftb(M)$ thus captures, in a precise manner, a 3b-differential operator to leading order at $\cD$ by means of a dilation-invariant normal operator.

\begin{proof}[Proof of Proposition~\usref{PropGD}]
  It suffices to analyze $N_\cD$ on vector fields; since away from $\cT$ 3b-vector fields and b-vector fields are the same, we only work near $\cT$. We use polar coordinates $X=R\omega$ in $\pa M_0$ around $\fp$ as in~\eqref{EqGCoordsTX}, and hence 3b-vector fields are spanned, in $T<|X|$, by the vector fields in \eqref{EqGV3beb}. Note furthermore that smooth functions near $\cD\subset M$ are precisely those functions which are smooth in $T/R\in[0,1)$, $R\in[0,1)$, and $\omega\in\Sph^{n-2}$. Therefore, we can restrict a smooth 3b-vector field
  \[
    V = a(T/R,R,\omega)R T\pa_T + b(T/R,R,\omega)R\pa_R + \sum_{j=1}^{n-2} c_j(T/R,R,\omega)\pa_{\omega^j}\qquad (T<R)
  \]
  to $T/R=0$ as a b-vector field and extend it by dilation-invariance in $T$, thus obtaining
  \begin{equation}
  \label{EqGDNorm}
    N_\cD(V) = a(0,R,\omega)R T\pa_T + b(0,R,\omega)R\pa_R + \sum_{j=1}^{n-2} c_j(0,R,\omega)\pa_{\omega^j}.
  \end{equation}
  But this is not merely a dilation-invariant b-vector field on
  \begin{equation}
  \label{EqGDModelCoord}
    [0,\infty)_T\times[0,1)_R\times\Sph^{n-2},
  \end{equation}
  but indeed an edge-b-vector field, where the edge structure is defined using the fibration $[0,\infty)\times\Sph^{n-2}\to[0,\infty)$. This shows that, in this description, $N_\cD(V)\in\cV_{\eop,\bop,I}([0,\infty)\times[0,1)\times\Sph^{n-2})$. On the other hand, the space~\eqref{EqGDModelCoord} is also a local coordinate description of ${}^+N_\tbop M_0$. Note indeed that ${}^+N\pa M_0$ is isomorphic (via a choice of boundary defining function, such as $T$ in our local chart) to $[0,\infty)\times\pa M_0$, and hence ${}^+N_\tbop M_0\cong[0,\infty)\times[\pa M_0;\{\fp\}]$; and $(R,\omega)$ are smooth coordinates near the front face of $[\pa M_0;\{\fp\}]=\cD$.

  An alternative, more geometric and invariant, proof---which in particular explains how the edge structure arises from the 3b-structure on $M$---proceeds as follows. Let $V\in\Vb(M)\supset\Vtb(M)$, and consider $V_\cD:=N_{\cD,\bop}(V)\in\cV_{\bop,I}({}^+N\cD)$. Note that a global trivialization of ${}^+N\cD$ is given by the fiber-linear function $\dd\rho_\cD$ for any fixed defining function $\rho_\cD$ of $\cD$. Letting $\rho_\cT=\upbeta^*\rho_0/\rho_\cD$ where $\rho_0\in\CI(M_0)$ is a boundary defining function, another trivialization is defined over $\cD^\circ=\cD\setminus\cT$ by $\dd\rho_0=\rho_\cT\dd\rho_\cD$ (note though that this trivialization does not extend smoothly down to $\pa\cD$). Now, the stronger membership $V\in\Vtb(M)$ is equivalent to $V\rho_0\in\rho_0\rho_\cT\CI(M)$ by Lemma~\ref{LemmaGV3b}\eqref{ItGV3bEquiv} (where we now drop the blow-down map $\upbeta$ from the notation), which implies
  \begin{equation}
  \label{EqGDModel3b}
    V_\cD(\dd\rho_0) = f\rho_\cT\,\dd\rho_0
  \end{equation}
  for some $f\in\CI(\cD)$ (regarded as a fiber-constant function on ${}^+N\cD$). Conversely, for any $V\in\Vb(M)$ so that $V_\cD=N_{\cD,\bop}(V)$ has the property~\eqref{EqGDModel3b}, there exists $V'\in\Vtb(M)$ so that $N_{\cD,\bop}(V')=V_\cD$, as is easily checked in local coordinates.

  Now, the positive level sets of $\dd\rho_0$ inside of ${}^+N\cD$ escape to fiber infinity as one approaches $\pa\cD$ in the base. We thus consider the resolution
  \begin{equation}
  \label{EqGDModelBlowup}
    \bigl[ \ol{{}^+N}\cD; {}^+S N_{\pa\cD}\cD \bigr]
  \end{equation}
  of the radial compactification of ${}^+N\cD$ at fiber infinity (identified with the inward pointing spherical normal bundle) over $\pa\cD$; denote the front face of~\eqref{EqGDModelBlowup} by $\eface$. The level sets of $\dd\rho_0=\rho_\cT\,\dd\rho_\cD=\rho_\cT/(\dd\rho_\cD)^{-1}$ are transversal to $\eface$ (note here that $(\dd\rho_\cD)^{-1}$ is a defining function of fiber infinity inside $\ol{{}^+N}\cD$). Moreover, $\dd\rho_0|_\eface\colon\eface\to[0,\infty]$ is a smooth fibration; the condition~\eqref{EqGDModel3b} is equivalent to the tangency of $V_\cD$ of the fibers of this fibration. (We remark that a different choice of the boundary defining function $\rho_0'$ of $M_0$ leads to the same fibration of $\eface$ up to post-composition by scaling $[0,\infty]$ via $x\mapsto\lambda x$ where $\lambda=(\rho'_0/\rho_0)(\fp)>0$.)

  We wish to `blow down' the lift of the lateral boundary $\ol{{}^+N_{\pa\cD}}\cD$ of~\eqref{EqGDModelBlowup}. To this end, a calculation in local coordinates shows that the identity map on ${}^+N\cD^\circ={}^+N(\pa M_0\setminus\{\fp\})$ extends by continuity to a diffeomorphism
  \begin{equation}
  \label{EqGDModelDiff}
    \bigl[\,\ol{{}^+N}\cD; {}^+S N_{\pa\cD}\cD \bigr] \cong \bigl[\,\ol{{}^+N}\pa M_0; \ol{{}^+N_\fp}\pa M_0; \{(\fp,0)\} \bigr] = \wt{{}^+N_\tbop\cD}
  \end{equation}
  which is equivariant for the lifts of the $\R_+$-dilation actions on ${}^+N\cD$ and ${}^+N\pa M_0$. See Figure~\ref{FigGDModelDiff}. Moreover, $\eface$ on the left in~\eqref{EqGDModelDiff} corresponds to the lift of $\ol{{}^+N_\fp}\pa M_0$ on the right, and the lateral boundary on the left corresponds to the lift of $\{(\fp,0)\}$ (the zero section of ${}^+N_\fp\pa M_0$) on the right. Blowing down the lateral boundary is thus effected by omitting the final blow-up on the right in~\eqref{EqGDModelDiff}; this gives~\eqref{EqGD} for 3b-vector fields and thus (by multiplicativity) finishes the proof.
\end{proof}

\begin{figure}[!ht]
\centering
\includegraphics{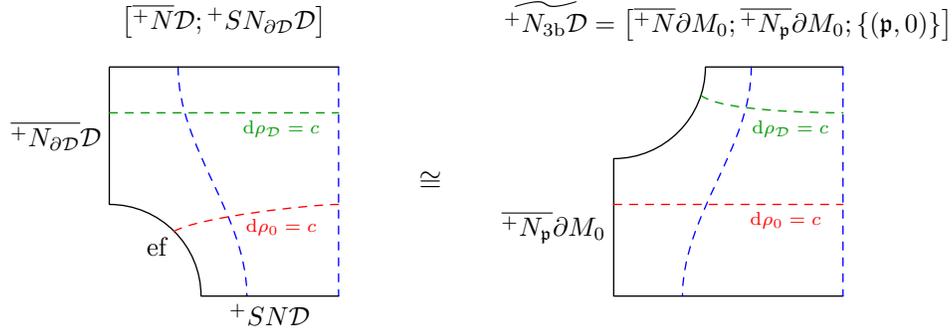}
\caption{Illustration of the diffeomorphism~\eqref{EqGDModelDiff} (here $\pa M_0=(-1,1)$, so $\cD=(-1,0]\sqcup[0,1)$, and we only show the component $[0,1)$). Also shown are corresponding fibers of $\ol{{}^+N}\cD$ and $\ol{{}^+N}\pa M_0$ (blue, dashed) as well as corresponding level sets of $\dd\rho_0$ (red, dashed) and $\dd\rho_\cD$ (green, dashed).}
\label{FigGDModelDiff}
\end{figure}

\begin{cor}[Phase space identification]
\label{CorGDPhase}
  The restriction of the bundle isomorphism $\Tb_{\pa M_0}M_0\cong\Tb_{\pa M_0}({}^+N\pa M_0)$ (where we identify $\pa M_0$ with the zero section of ${}^+N\pa M_0$) to $\pa M_0\setminus\{\fp\}$ extends by continuity to an isomorphism
  \begin{equation}
  \label{EqGDPhase}
    \Ttb^*_\cD M \cong {}^{\eop,\bop}T^*_\cD({}^+N_\tbop\cD),
  \end{equation}
  and likewise for the tangent bundles.
\end{cor}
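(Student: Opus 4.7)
The plan is to obtain Corollary~\ref{CorGDPhase} as a direct bundle-theoretic consequence of the normal operator homomorphism constructed in Proposition~\ref{PropGD}. Concretely, I would argue at the level of smooth sections using the short exact sequence
\[
  0 \to \rho_\cD\Vtb(M) \hookrightarrow \Vtb(M) \xrightarrow{N_\cD} \cV_{\eop,\bop,I}({}^+N_\tbop\cD) \to 0
\]
(which is the vector field version of the multiplicative short exact sequence for $\Difftb$ in Proposition~\ref{PropGD}). Restriction to $\cD$ identifies $\Vtb(M)/\rho_\cD\Vtb(M)\cong\CI(\cD;\Ttb_\cD M)$ on the one hand, and on the other hand, since elements of $\cV_{\eop,\bop,I}({}^+N_\tbop\cD)$ are dilation-invariant in the fibers of ${}^+N_\tbop\cD\to\cD$ and hence uniquely determined by their restriction to the zero section, restriction to $\cD$ yields a $\CI(\cD)$-module isomorphism $\cV_{\eop,\bop,I}({}^+N_\tbop\cD)\cong\CI(\cD;{}^{\eop,\bop}T_\cD({}^+N_\tbop\cD))$. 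Composing gives a $\CI(\cD)$-module isomorphism of sections of two locally free sheaves of rank $n$, which by a Serre--Swan argument is induced by a bundle isomorphism $\Ttb_\cD M\cong{}^{\eop,\bop}T_\cD({}^+N_\tbop\cD)$. The cotangent isomorphism~\eqref{EqGDPhase} is then obtained by dualization.

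To check that this agrees with the b-isomorphism $\Tb_{\pa M_0}M_0\cong\Tb_{\pa M_0}({}^+N\pa M_0)$ on $\pa M_0\setminus\{\fp\}=\cD^\circ\setminus\cR$, I would observe that $\rho_\cT$ is a smooth nonvanishing function on this open set, so $\Vtb(M)$ and $\Vb(M)$ agree there and $N_\cD$ restricts to the b-normal operator homomorphism $N_{\cD,\bop}$. The statement then reduces to the classical b-identification used to trivialize the b-tangent bundle via the dilation-invariant extension.

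The remaining step---and this is the one that carries the geometric content---is to verify that the map extends smoothly across $\pa\cD=\cD\cap\cR$. Here I would use the local coordinates from the proof of Proposition~\ref{PropGD}: near $\cD\cap\cT$, the 3b-frame
\[
  \rho_\cT\rho_\cD\pa_{\rho_\cD},\quad \rho_\cT\pa_{\rho_\cT}-\rho_\cD\pa_{\rho_\cD},\quad \pa_{\omega^j}
\]
of~\eqref{EqGV3CornerVF} is mapped by $N_\cD$ (upon identifying $T=\rho_\cT\rho_\cD$, $R=\rho_\cT$) to the dilation-invariant edge-b frame
\[
  R T\pa_T,\quad R\pa_R,\quad \pa_{\omega^j}
\]
of~\eqref{EqGV3beb} on ${}^+N_\tbop\cD$. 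Both are smooth frames of the respective bundles down to $\pa\cD$, confirming that the isomorphism so constructed is a smooth bundle map; the corresponding frame check for $\Ttb^*_\cD M$ and ${}^{\eop,\bop}T^*_\cD({}^+N_\tbop\cD)$ follows by passing to dual frames.

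The main obstacle is therefore essentially notational: one must track that the geometric identification of the two models of a neighborhood of $\cD$, furnished by the diffeomorphism~\eqref{EqGDModelDiff} in the proof of Proposition~\ref{PropGD}, behaves correctly with respect to the characterization of 3b-vector fields given in Lemma~\ref{LemmaGV3b}\eqref{ItGV3bEquiv}. Once this is in place, the isomorphism on both sides of~\eqref{EqGDPhase} is transparent from the coincidence of local frames, and the global statement follows.
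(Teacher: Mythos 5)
Your argument is correct and is essentially the paper's: the Corollary is stated without separate proof precisely because it is read off from the construction in Proposition~\ref{PropGD}, namely the local computation~\eqref{EqGDNorm} showing that $N_\cD$ carries the 3b-frame $R T\pa_T$, $R\pa_R$, $\pa_{\omega^j}$ (equivalently~\eqref{EqGV3CornerVF}) to the corresponding dilation-invariant edge-b-frame, combined with the observation that away from $\pa\cD$ the map reduces to the classical b-identification. Your Serre--Swan packaging and the explicit frame check across $\pa\cD$ are just a slightly more formal rendering of the same content.
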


\begin{cor}[Principal symbol]
\label{CorGDSymbol}
  Fix a boundary defining function $\rho_0\in\CI(M_0)$. Let $P\in\Difftb^m(M)$. Under the isomorphism~\eqref{EqGDPhase}, we have ${}^{\eop,\bop}\upsigma^m(N_\cD(P))=\sigmatb^m(P)|_{\Ttb^*_\cD M}$.
\end{cor}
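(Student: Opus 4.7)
The plan is to verify the statement in local coordinates, reducing first to the case of vector fields and then comparing the symbols of $P$ and $N_\cD(P)$ directly using the frame~\eqref{EqGV3CornerVF}. Since both principal symbol maps are multiplicative and $N_\cD$ is a homomorphism, and since the restriction map $\CI(M)\ni f\mapsto f|_\cD\in\CI(\cD)$ is compatible with both principal symbols (it kills nothing at the top order), it suffices to check the identity $\sigma(N_\cD(V))=\sigmatb^1(V)|_{\Ttb^*_\cD M}$ for a single 3b-vector field $V\in\Vtb(M)$; the case of products follows since $\sigmatb^m((\prod_j V_j)P_0)=i^m\prod_j\xi(V_j)\cdot\sigmatb^0(P_0)$, and likewise after applying $N_\cD$.

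Away from $\cT$, the 3b- and b-structures coincide, and~\eqref{EqGD} reduces to the standard b-normal operator; the claim then follows from the analogous, well-known statement for the b-calculus together with the identification~\eqref{EqGVtbVb}. Near a point of $\cD\cap\cT$ I will use the coordinates $(\rho_\cT,\rho_\cD,\omega)$ from~\eqref{EqGV3CornerCoord} together with the local frame $\rho_\cT\rho_\cD\pa_{\rho_\cD}$, $\rho_\cT\pa_{\rho_\cT}-\rho_\cD\pa_{\rho_\cD}$, $\pa_{\omega^j}$ of $\Vtb(M)$ from~\eqref{EqGV3CornerVF}. Writing
\[
  V = a(\rho_\cT,\rho_\cD,\omega)\rho_\cT\rho_\cD\pa_{\rho_\cD} + b(\rho_\cT,\rho_\cD,\omega)(\rho_\cT\pa_{\rho_\cT}-\rho_\cD\pa_{\rho_\cD}) + \sum_j c_j(\rho_\cT,\rho_\cD,\omega)\pa_{\omega^j},
\]
the proof of Proposition~\ref{PropGD} (cf.\ \eqref{EqGDNorm}, translated via~\eqref{EqGDModelDiff} into the current coordinates) shows that $N_\cD(V)$ is obtained by restricting $a,b,c_j$ to $\rho_\cD=0$, regarded as an edge-b-vector field on ${}^+N_\tbop\cD$ in which $\rho_\cT$ plays the role of the defining function of $\cR$ and $\rho_\cD$ plays the role of the fiber variable in ${}^+N\cD$ trivialized by $\dd\rho_\cD$.

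Writing 3b-covectors over $\cD$ as $\xi_\tbop\frac{\dd\rho_\cD}{\rho_\cT\rho_\cD}+\eta_\tbop\cdot\dd\omega+\mu_\tbop(\frac{\dd\rho_\cT}{\rho_\cT}-\frac{\dd\rho_\cD}{\rho_\cD})$ and edge-b-covectors over $\cD\subset{}^+N_\tbop\cD$ in exactly the same form with subscripts $\tbop$ replaced by $\ebop$, one checks directly that under the isomorphism~\eqref{EqGDPhase} of Corollary~\ref{CorGDPhase} the dual linear coordinates are identified, $(\xi_\tbop,\mu_\tbop,\eta_\tbop)\leftrightarrow(\xi_\ebop,\mu_\ebop,\eta_\ebop)$. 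Consequently
\[
  \sigmatb^1(V)|_{\Ttb^*_\cD M} = i\bigl(a|_\cD\,\xi_\tbop + b|_\cD\,\mu_\tbop + \textstyle\sum_j c_j|_\cD\,\eta_{\tbop,j}\bigr),
\]
while the edge-b-principal symbol of $N_\cD(V)$ has the same expression with $\tbop$ replaced by $\ebop$ and the coefficients read off from the frozen operator; these agree under~\eqref{EqGDPhase}. No step is a genuine obstacle; the only mild subtlety is matching the two fiber-linear coordinate systems on $\Ttb^*_\cD M$ and ${}^{\eop,\bop}T^*_\cD({}^+N_\tbop\cD)$, which amounts to tracing through the identification made in the proof of Corollary~\ref{CorGDPhase}.
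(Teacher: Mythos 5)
Your proposal is correct and matches the paper's (implicit) argument: the corollary is read off from the local-coordinate description of $N_\cD$ in the proof of Proposition~\ref{PropGD} (freezing the coefficients of a 3b-frame at $\cD$), combined with multiplicativity and the identification of frames underlying~\eqref{EqGDPhase}. The only nitpick is that the coframe you write down, $\frac{\dd\rho_\cD}{\rho_\cT\rho_\cD}$, $\frac{\dd\rho_\cT}{\rho_\cT}-\frac{\dd\rho_\cD}{\rho_\cD}$, $\dd\omega$, is not the exact dual coframe of~\eqref{EqGV3CornerVF} and is not smooth as a coframe of $\Ttb^*M$ up to $\rho_\cT=0$; using the honest dual coframe (or arguing by continuity from $\cD^\circ$, where the statement reduces to the b-calculus case) removes this cosmetic issue without changing the argument.
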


Having placed $N_\cD(P)$ in the edge-b-algebra, the definitions and results of~\S\ref{SsAeb} become applicable. We stress that in view of the dilation-invariance of the $\cD$-normal operator in the fibers of ${}^+N_\tbop\cD$ we analyze it by means of the Mellin-transform in the total boundary defining function $\rho_0$, \emph{not} in the boundary defining function $\rho_\cD$ of $\cD$.

\begin{definition}[Mellin-transformed $\cD$-normal operator family, and related operators]
\label{DefGDNormMT}
  Fix a boundary defining function $\rho_0\in\CI(M_0)$. Denote, by an abuse of notation, the fiber-linear function $\dd\rho_0$ on ${}^+N\pa M_0$ by $\rho_0$ as well; this induces a trivialization ${}^+N_\tbop\cD\cong\cD\times[0,\infty)_{\rho_0}$. Let $P\in\Difftb^m(M)$. Following~\eqref{EqAebNcDMT}, the \emph{Mellin-transformed $\cD$-normal operator family} $\wh{N_\cD}(P,\lambda)\in\Diffb^m(\cD)$, $\lambda\in\C$, is defined by
  \begin{equation}
  \label{EqGDNormMT}
    \wh{N_\cD}(P,\lambda)u := \bigl( \rho_0^{-i\lambda} N_\cD(P)(\rho_0^{i\lambda}u) \bigr)|_{\rho_0=0},\qquad u\in\CIdot(\cD).
  \end{equation}
  Mirroring~\eqref{EqAebNtfpm}, we moreover denote by
  \[
    N_{\cD,\tface}^\pm(P) \in \Diff_{\bop,\scop}^{m,0,m}(\ol{{}^+N}\pa\cD)
  \]
  the $\tface$-normal operator of the smooth (in $\mu\in\R$) operator family
  \[
    (0,1) \ni h \mapsto \wh{N_\cD}(P,-i\mu\pm h^{-1}),
  \]
  which defines an element of $\Diffch^{m,0,0,m}(\cD)$ (see~\eqref{EqAebNhi}). Finally, we denote by
  \[
    N_{\pa\cD}(P) \in \Diff_{\bop,I}^m({}^+N\pa\cD)
  \]
  the b-normal operator of $\wh{N_\cD}(P,0)$ at $\pa\cD$ (or equivalently that of $N_{\cD,\tface}^\pm(P)$ at $\tface\cap\cface$), see~\eqref{EqAebNpaD}.
\end{definition}

\begin{rmk}[Normal operator of $N_\cD(P)$ at $\cR$]
  The operator $N_\cD(P)$ has a normal operator $N_{\cD,\eop}(P)$ at $\cR$, obtained by freezing its coefficients there (as an edge-b-operator); see~\eqref{EqAebNDe}. In terms of $t=T^{-1}$ and $r=\frac{R}{T}$, we note that $R T D_T=-r D_t-\frac{r}{t}r D_r$ and $R D_R=r D_r$; in particular, $-r D_t$ is the unique 3b-vector field which is equal to $R T D_T$ at $\cD\cap\cT$ (as a 3b-vector field) and invariant under translations in $t$ and dilations in $(t,r)$. One can then show that the translation- and dilation-invariant extension of $N_{\cD,\eop}(P)$ is equal to the $\hat\cD$-normal operator $N_{\hat\cD}(P)$ of $N_\cT(P)$.
\end{rmk}

\begin{rmk}[Mellin-transformed normal operator in a special case]
\label{RmkGDSpecial}
  In some applications, the operator under consideration is a 3b-operator only near $\cT$, whereas far from $\cT$ it has a different structure, and $\cD$ may have additional boundary hypersurfaces. One such situation arises in~\cite{HintzNonstat} where $\cD$ can be identified with the dilation face $\hat\cD\subset N_\tbop\cT$ of the $\cT$-model space, and the $\cD$-normal operator is equal to the operator $N_{\hat\cD}(P)$ in the notation of Remark~\ref{RmkGTCorner}. In this special setting, we proceed to explain the relationship of the Mellin-transformed normal operator family of $N_{\hat\cD}(P)$ and the $\cT$-$\tface$-normal operators. It is most convenient to use the coordinates
  \[
    \rho=\frac{1}{r},\quad
    v=\frac{t}{r},
  \]
  in which the operator~\eqref{EqGTCornerOp} takes the form
  \[
    N_{\hat\cD}(P) = \sum_{j+k+|\alpha|\leq m} a_{j k\alpha}(0,\omega)D_v^j(-v D_v-\rho D_\rho)^k D_\omega^\alpha.
  \]
  (The dilation action is generated by $t\pa_t+r\pa_r=-\rho\pa_\rho$, and the translation action by $\pa_t=\rho\pa_v$.) We pass to the Mellin transformed normal operator family with respect to $\rho$; this is a singular multiple of the total defining function $t^{-1}=v^{-1}\rho$, and thus the Mellin-transformed normal operator families are related via conjugation by $v^{i\lambda}$. That is, we consider the operator $\rho^{-i\lambda}N_{\hat\cD}(P)\rho^{i\lambda}$ acting on functions of $u(v,\omega)$ only, which takes the form
  \[
    \rho^{-i\lambda}N_{\hat\cD}(P)\rho^{i\lambda} = \sum_{j+k+|\alpha|\leq m} a_{j k\alpha}(0,\omega)D_v^j(-v D_v-\lambda)^k D_\omega^\alpha.
  \]
  We then exploit a vestige of the translation-invariance by conjugating this operator by the Fourier transform in $v$ with the same unusual sign convention as for the $t$-Fourier transform, so $\hat u(\hat r,\omega)=\int e^{i\hat r v}u(v,\omega)\,\dd v$. This amounts to replacing $D_v$ and $v$ by $-\hat r$ and $D_{\hat r}$, respectively, and thus gives, in view of $D_{\hat r}\hat r=\hat r D_{\hat r}-i$,
  \[
    \wt{N_{\hat\cD}}(P,\lambda) = \sum_{j+k+|\alpha|\leq m} a_{j k\alpha}(0,\omega)(-\hat r)^j \bigl(\hat r D_{\hat r}-(\lambda+i)\bigr)^k D_\omega^\alpha.
  \]
  Finally, we conjugate this operator by $\hat r^{i(\lambda+i)}=\hat r^{i\lambda-1}$,
  \[
    \hat r^{-i\lambda+1}\wt{N_{\hat\cD}}(P,\lambda) \hat r^{i\lambda-1} = \sum_{j+k+|\alpha|\leq m} a_{j k\alpha}(0,\omega)(-\hat r)^j (\hat r D_{\hat r})^k D_\omega^\alpha.
  \]
  This, finally, is the expression~\eqref{EqGTCornerTF} for $N_{\cT,\tface}^+(P)$.
\end{rmk}

\subsection{Summary of symbols, normal operators, and their interrelationships}
\label{SsGRel}

At this point, we have introduced a number of multiplicative symbol and normal operator maps. Given a 3b-differential operator $P\in\Difftb^m(M)$, these are:
\begin{enumerate}
\item the 3b-principal symbol $\sigmatb^m(P)\in P^{[m]}(\Ttb^*M)$ (see~\eqref{EqGsigma3b});
\item the $\cT$-normal operator $N_\cT(P)$ and the corresponding spectral family $\wh{N_\cT}(P,\sigma)$, $\sigma\in\R$ (see Definitions~\ref{DefGTsigma} and \ref{DefGT}).
\item the $\cD$-normal operator $N_\cD(P)$ and the corresponding Mellin-transformed normal operator family $\wh{N_\cD}(P,\lambda)$, $\lambda\in\C$ (see Proposition~\ref{PropGD} and Definition~\ref{DefGDNormMT}).
\end{enumerate}

Moreover, the low energy spectral family $\pm[0,1)\ni\sigma\mapsto\wh{N_\cT}(P,\sigma)$ defines an element of $\Diffscbt^{m,m,0,0}(\cT)$ (see Proposition~\ref{PropGTscbt}), and the high energy Mellin-transformed normal operator family $\R\times(0,1)\ni(\mu,h)\mapsto\wh{N_\cD}(P,-i\mu\pm h^{-1})$ defines an element of $\Diffch^{m,0,0,m}(\cD)$ (see Definition~\ref{DefGDNormMT}).

The principal symbols of $\wh{N_\cT}(P,\sigma)$ as a b-differential operator for $\sigma=0$ or a weighted scattering differential operator for $\sigma\neq 0$ (including in the high energy, or semiclassical, sense as $|\sigma|\to\infty$) can be expressed in terms of the principal symbol of $P$; see Propositions~\ref{PropGTSymbol} and \ref{PropGTHigh}. Likewise for the principal symbols of $\wh{N_\cD}(P,\lambda)$ as a b-differential operator, or in the high energy sense as a semiclassical cone operator; see Corollary~\ref{CorGDPhase} and Lemma~\ref{LemmaAebSymbol}. Geometrically, the principal symbols of the various normal operators are obtained by pulling back the principal symbol of $P$ to appropriate subsets of (the radial compactification of) $\Ttb^*M$ which are the images under maps which embed the (radially compactified) phase spaces corresponding to the model algebras (e.g.\ $\Tb^*\cT$ for the zero energy operator, or $\Tsc^*\cT$ for elements of the spectral family at nonzero energies) into $\Ttb^*M$.

There are further normal operators related to $N_\cT(P)$, namely $N_{\pa\cT}(P)$ (see Definition~\ref{DefGTbNorm}), $\wh{N_{\cT,h}^\pm}(P)$ (see Proposition~\ref{PropGTHigh}), and $N_{\cT,\tface}^\pm(P)$ (see Definition~\ref{DefGTNormtf}); and normal operators related to $N_\cD(P)$, namely $N_{\pa\cD}(P)$ and $N_{\cD,\tface}^\pm(P)$ (see Definition~\ref{DefGDNormMT}). The $\cT$-$\tface$-normal operator $N_{\cT,\tface}^\pm(P)$ of $\wh{N_\cT}(P,\sigma)$ near $\sigma=0$ and $\pa\cT$ and the $\cD$-$\tface$-normal operator $N_{\cD,\tface}^\pm(P)$ of $\wh{N_\cD}(P,\lambda)$ near $|\lambda|=\infty$ and $\pa\cD$ carry the same information:

\begin{prop}[Identification of $N_{\cD,\tface}^\pm(P)$ and $N_{\cT,\tface}^\pm(P)$]
\label{PropGRel}
  Let $P\in\Difftb^m(M)$. Fix a boundary defining function $\rho_0\in\CI(M_0)$ to define $N_{\cT,\tface}^\pm(P)$ and $N_{\cD,\tface}^\pm(P)$. Denote by
  \begin{equation}
  \label{EqGRelIso}
    \phi \colon \ol{{}^+N}\pa\cT \to \ol{{}^+N}\pa\cD
  \end{equation}
  the isomorphism (homogeneous of degree $-1$ in the fibers) given by Lemma~\usref{LemmaAId}. Then $\phi^*N_{\cD,\tface}^\pm(P)=N_{\cT,\tface}^\pm(P)$.
\end{prop}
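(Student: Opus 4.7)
The strategy is to verify the identity in local coordinates adapted to the codimension 2 corner $\cT\cap\cD$, exploiting the explicit local descriptions of both $\tface$-normal operators already derived in~\S\ref{SsGT} and~\S\ref{SsGD}, and then to match the resulting expressions via the identification of fiber coordinates on $\ol{{}^+N}\pa\cT$ and $\ol{{}^+N}\pa\cD$ provided by Lemma~\ref{LemmaAId}.

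I would begin in the corner chart $(\rho_\cT,\rho_\cD,\omega)=(R,T/R,\omega)$ with $\rho_0=T=\rho_\cT\rho_\cD$, writing
\[
  P = \sum_{j+k+|\alpha|\leq m} a_{j k\alpha}(\rho_\cD,\rho_\cT,\omega)\,(-R T D_T)^j (R D_R)^k D_\omega^\alpha,\qquad a_{j k\alpha}\in\CI(M).
\]
On the $\cT$ side: passing to the coordinates $(t,r,\omega)$ of~\eqref{EqGV3rt} in which $-R T D_T$ becomes $r D_t$ and $R D_R$ becomes $r D_r$, one forms $\wh{N_\cT}(P,\sigma)$ by replacing $D_t\mapsto-\sigma$, and then passes to $\hat r=r|\sigma|$, obtaining (cf.~\eqref{EqGTCornerTF})
\[
  N_{\cT,\tface}^\pm(P) = \sum a_{j k\alpha}(0,0,\omega)\,(\mp\hat r)^j (\hat r D_{\hat r})^k D_\omega^\alpha,
\]
with $\hat r$ the fiber coordinate on $\ol{{}^+N}\pa\cT$. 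On the $\cD$ side: forming $\wh{N_\cD}(P,\lambda)=\sum a_{j k\alpha}(0,R,\omega)(-R\lambda)^j(R D_R)^k D_\omega^\alpha$ via the Mellin transform in $\rho_0=T$, then substituting $\lambda=-i\mu\pm h^{-1}$, rescaling $\tilde R=R/h$, and letting $h\searrow 0$ (noting $R D_R=\tilde R D_{\tilde R}$ and $-R\lambda\to\mp\tilde R$), produces
\[
  N_{\cD,\tface}^\pm(P) = \sum a_{j k\alpha}(0,0,\omega)\,(\mp\tilde R)^j (\tilde R D_{\tilde R})^k D_\omega^\alpha,
\]
with $\tilde R$ the fiber coordinate on $\ol{{}^+N}\pa\cD$.

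The remaining step is to match coordinates. Lemma~\ref{LemmaAId} applied with $\rho_0=\rho_\cT\rho_\cD$ furnishes the degree $-1$ isomorphism $\phi\colon\ol{{}^+N_{\pa\cT}}\cT\to\ol{{}^+N_{\pa\cD}}\cD$ which, in the trivializations by $\dd\rho_\cT$ and $\dd\rho_\cD$, sends fiber coordinate $s$ to $1/s$. Under the natural identifications of these normal bundles (in $M$) with $\ol{{}^+N}\pa\cT$, resp.\ $\ol{{}^+N}\pa\cD$ (as bundles over $\pa\cT=\pa\cD$), the rescaled fiber coordinates $\hat r\in[0,\infty]$ and $\tilde R\in[0,\infty]$ are, respectively, the images of $s=\hat r$ and $s=\tilde R$; thus $\phi^*\tilde R=1/\hat\rho_\cD=\hat r$. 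Substituting $\tilde R=\hat r$ into the formula for $N_{\cD,\tface}^\pm(P)$ then yields exactly $N_{\cT,\tface}^\pm(P)$, term by term.

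The main obstacle is the coordinate-invariance of Step 3, i.e.\ verifying cleanly that the rescaled coordinates $\hat r$ and $\tilde R$ encode the same fiber identifications given by Lemma~\ref{LemmaAId}. This is essentially bookkeeping but requires care: one must check that the $\scbtop$-transition variable $\hat\rho_\cD=\rho_\cD/|\sigma|$ arising in Definition~\ref{DefGTNormtf} and the $\chop$-transition variable $\tilde R=R|\lambda|$ arising in Definition~\ref{DefGDNormMT}, when read as coordinates on $\ol{{}^+N_{\pa\cT}}\cT$ and $\ol{{}^+N_{\pa\cD}}\cD$ respectively, are related by the inversion $\hat\rho_\cD\cdot\tilde R=1$ that Lemma~\ref{LemmaAId} predicts for the joint defining function $\rho_\cT\rho_\cD$. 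Alternatively, one can route the comparison through the corner operator $N_{\hat\cD}(P)$ of Remark~\ref{RmkGTCorner} and the calculation in Remark~\ref{RmkGDSpecial}, which already exhibits both $\tface$-normal operators as the same reduction of the dilation- and translation-invariant corner model; this avoids handling $\phi$ explicitly and reduces the proof to the identifications of coordinates on $[0,\infty]_{\hat r}\times\Sph^{n-2}$ and $[0,\infty]_{\tilde R}\times\Sph^{n-2}$.
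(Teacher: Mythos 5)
Your proposal is correct and follows essentially the same route as the paper's proof: both compute $N_{\cT,\tface}^\pm$ and $N_{\cD,\tface}^\pm$ explicitly in the corner coordinates $(R,T/R,\omega)$ (the paper does so on the generators $R T D_T$, $R D_R$, $D_{\omega^j}$, you on a general expansion, which is equivalent by multiplicativity) and then observe that the isomorphism of Lemma~\ref{LemmaAId} with joint defining function $\rho_0=T$ is exactly the fiber inversion $\tilde R=\hat\rho_\cD^{-1}=\hat r$. The only cosmetic caveat is that $-R T D_T$ equals $r D_t$ only modulo $\rho_\cT\Vtb(M)$, which is harmless here since the $\tface$-normal operators depend only on the restriction of coefficients to $\cT$.
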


In this sense, the \emph{low frequency} behavior of the spectral family at $\cT$ near $\pa\cT$ is the same as the \emph{high frequency} behavior of the Mellin-transformed normal operator family at $\cD$ near $\pa\cD$.

\begin{proof}[Proof of Proposition~\usref{PropGRel}]
  We check this in the coordinates $T=\rho_0$, $R$, $\omega$ from~\eqref{EqGV3beb}, and for the basic operators $P_1=R T D_T$, $P_2=R D_R$, and $P_3=D_{\omega^j}$. Thus, $\wh{N_\cD}(P_1,\lambda)=R\lambda$, $\wh{N_\cD}(P_2,\lambda)=R D_R$, and $\wh{N_\cD}(P_3,\lambda)=D_{\omega^j}$; taking the limit as $\Re\lambda\to\pm\infty$ (for bounded $\Im\lambda$) with $\tilde R=R|\lambda|$ bounded, we get
  \begin{equation}
  \label{EqGRel1}
    N_{\cD,\tface}^\pm(P_1) = \pm\tilde R,\qquad
    N_{\cD,\tface}^\pm(P_2) = \tilde R D_{\tilde R},\qquad
    N_{\cD,\tface}^\pm(P_3) = D_{\omega^j}.
  \end{equation}

  In the coordinates $t=T^{-1}$, $\rho=(R/T)^{-1}=T/R$, $\omega$, thus with $\rho|_\cT$ a defining function of $\pa\cT$, we have $P_1=-\rho^{-1}D_t+t^{-1}D_\rho$, $P_2 = -\rho D_\rho$, and $P_3 = D_{\omega^j}$, therefore $\wh{N_\cT}(P_1,\sigma)=\rho^{-1}\sigma$, $\wh{N_\cT}(P_2,\sigma)=-\rho D_\rho$, and $\wh{N_\cT}(P_3,\sigma)=D_{\omega^j}$, and thus, with $\hat\rho:=\rho/\sigma$,
  \begin{equation}
  \label{EqGRel2}
    N_{\cT,\tface}^\pm(P_1) = \pm\hat\rho^{-1},\qquad
    N_{\cT,\tface}^\pm(P_2) = -\hat\rho D_{\hat\rho},\qquad
    N_{\cT,\tface}^\pm(P_3) = D_{\omega^j}.
  \end{equation}

  Using the identifications of $\tilde R$ and $\hat\rho$ with the fiber-linear coordinates $\dd R$ and $\dd\rho$ on ${}^+N\pa\cD$ and ${}^+N\pa\cT$ respectively, the isomorphism $\phi$ takes the form $\phi(\tilde R,\omega)=(\tilde R^{-1},\omega)$, i.e.\ $\hat\rho=\tilde R^{-1}$. (Note here that $R\cdot\rho=T=\rho_0$ indeed.) This identifies~\eqref{EqGRel1} and \eqref{EqGRel2}, as desired.
\end{proof}

Note that $N_{\pa\cD}(P)$ is the b-normal operator of $N_{\cD,\tface}^\pm(P)$ at $\cface\subset\cD_\chop$ in the notation of~\S\ref{SsAch}, using the identification of ${}^+N\pa\cD$ and the inward pointing normal bundle of $\tface\cap\cface$ inside of $\tface\subset\cD_\chop$. Furthermore, Proposition~\ref{PropGRel} implies that this b-normal operator can be identified (via $\phi$) with the b-normal operator $N_{\zface\cap\tface}(N_{\cT,\tface}^\pm(P))$ of $N_{\cT,\tface}^\pm(P)$ at $\zface\cap\tface\subset\tface\subset\cT_\scbtop$, where $\zface\cong\cT$ is the zero face of the $\scbtop$-single space $\cT_\scbtop$.

\begin{prop}[Relationship of $N_{\pa\cD}(P)$ and $N_{\pa\cT}(P)$]
\label{PropGRel2}
  Let $P\in\Difftb^m(M)$. Fix a defining function $\rho_0\in\CI(M_0)$. Let $\psi \colon \ol{{}^+N}\pa\cT \to \ol{{}^+N}(\zface\cap\tface)$ denote the isomorphism (homogeneous of degree $-1$) of Lemma~\usref{LemmaAscbtNorm}. Let $\phi$ be as in~\eqref{EqGRelIso}. Then $\psi^*\phi^*N_{\pa\cD}(P)=N_{\pa\cT}(P)$, where we identify $\phi^*N_{\pa\cD}(P)=N_{\zface\cap\tface}(N_{\cT,\tface}^\pm(P))$, as explained above.
\end{prop}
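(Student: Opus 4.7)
The argument I have in mind is essentially a chaining of Proposition~\ref{PropGRel} with Lemma~\ref{LemmaAscbtNorm}. To set the stage, observe that $N_{\pa\cD}(P)$ is, by Definition~\ref{DefGDNormMT}, the b-normal operator of $N_{\cD,\tface}^\pm(P)$ at the corner $\tface\cap\cface\subset\cD_\chop$, while $N_{\pa\cT}(P)$ is the b-normal operator at $\pa\cT$ of $\wh{N_\cT}(P,0)$, which is precisely the $\zface$-normal operator of the $\scbtop$-family $\pm[0,1)\ni\sigma\mapsto\wh{N_\cT}(P,\sigma)\in\Diffscbt^{m,m,0,0}(\cT)$ from Proposition~\ref{PropGTscbt}.

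First, by Proposition~\ref{PropGRel}, $\phi^*N_{\cD,\tface}^\pm(P)=N_{\cT,\tface}^\pm(P)$. Since $\phi$ is homogeneous of degree $-1$ in the fibers, it matches the b-end $\{\tilde R=0\}$ of $\tface\subset\cD_\chop$ with the b-end $\{\hat\rho_\cD=\infty\}$ of $\tface\subset\cT_\scbtop$, consistently with the relation $\hat\rho=\tilde R^{-1}$ appearing in the proof of Proposition~\ref{PropGRel}. Taking b-normal operators of both sides at these corresponding b-ends then yields the identification
\[
  \phi^*N_{\pa\cD}(P) = N_{\zface\cap\tface}\bigl(N_{\cT,\tface}^\pm(P)\bigr)
\]
alluded to in the statement.

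Second, I would apply Lemma~\ref{LemmaAscbtNorm} to the $\scbtop$-family $\wh{N_\cT}(P,\sigma)$ on the closed manifold $\cT$, noting that its $\zface$-normal operator is $\wh{N_\cT}(P,0)$. The lemma yields
\[
  \psi^*N_{\zface\cap\tface}\bigl(N_{\cT,\tface}^\pm(P)\bigr) = N_{\pa\cT}\bigl(\wh{N_\cT}(P,0)\bigr) = N_{\pa\cT}(P).
\]
Combining with the previous identification produces $\psi^*\phi^*N_{\pa\cD}(P)=N_{\pa\cT}(P)$, as claimed.

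The only point requiring a modicum of care is the matching of b-ends in the first step. I expect this to be transparent from a direct inspection of the basic generators: from the formulas~\eqref{EqGRel1} and \eqref{EqGRel2}, the b-normal operators of $N_{\cD,\tface}^\pm(P_i)$ at $\tilde R=0$ and of $N_{\cT,\tface}^\pm(P_i)$ at $\hat\rho=\infty$ are obtained by dropping the $\pm\tilde R$ and $\pm\hat\rho^{-1}$ terms respectively, leaving $R D_R$, $D_{\omega^j}$ and $-\hat\rho D_{\hat\rho}$, $D_{\omega^j}$; these are intertwined by the substitution $\hat\rho=\tilde R^{-1}$, since $-\hat\rho\pa_{\hat\rho}=\tilde R\pa_{\tilde R}$, which is precisely the pullback by $\phi$.
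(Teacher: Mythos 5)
Your proof is correct, but it takes a different route from the paper's own proof of this proposition. The paper proves Proposition~\ref{PropGRel2} by a bare-hands computation: it evaluates $N_{\pa\cT}$ and $N_{\zface\cap\tface}(N_{\cT,\tface}^\pm(-))$ on the three generators $P_1=R T D_T$, $P_2=R D_R$, $P_3=D_{\omega^j}$ using the formulas~\eqref{EqGRel2} already derived in the proof of Proposition~\ref{PropGRel}, and then observes that $\psi$ is $(\hat r,\omega)=(\rho^{-1},\omega)$. You instead argue structurally: pull back the identity of Proposition~\ref{PropGRel} and pass to b-normal operators at the matching b-ends of the two transition faces (which is exactly the identification the paper sets up in the prose preceding the proposition), and then invoke Lemma~\ref{LemmaAscbtNorm} applied to the $\scbtop$-family $\pm[0,1)\ni\sigma\mapsto\wh{N_\cT}(P,\sigma)$, whose $\tface$- and $\zface$-normal operators are $N_{\cT,\tface}^\pm(P)$ and $\wh{N_\cT}(P,0)$ respectively. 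This is legitimate, and it is in fact precisely the argument the paper uses for the pseudodifferential analogue, Proposition~\ref{Prop3DT2}; your version therefore has the advantage of treating the differential and pseudodifferential cases uniformly and of isolating where each ingredient (homogeneity of $\phi$ versus of $\psi$) enters, at the cost of relying on the general Lemma~\ref{LemmaAscbtNorm} where a two-line coordinate check suffices. Two small points: $\cT\cong\ol{\R^{n-1}}$ is a compact manifold \emph{with boundary}, not a closed manifold, so Lemma~\ref{LemmaAscbtNorm} applies because $\pa\cT\neq\emptyset$, not despite it; and your final paragraph checking the matching of b-ends on the generators essentially reproduces the paper's entire proof, so the two arguments are not far apart in substance.
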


Note here that under the (homogeneous degree $-1$) identification of the inward pointing normal bundles of $\zface\cap\tface\subset\tface\subset\cT_\scbtop$---which is the inward pointing normal bundle \emph{at fiber infinity} of $\ol{{}^+N}\pa\cT$---and of $\pa\cT\subset\cT$, the composition
\begin{equation}
\label{EqGRel2Iso}
  \phi\circ\psi \colon \ol{{}^+N}\pa\cT \to \ol{{}^+N}\pa\cD
\end{equation}
is an isomorphism and homogeneous of degree $-1$.

\begin{proof}[Proof of Proposition~\usref{PropGRel2}]
 For the operators $P_1,P_2,P_3$ from the proof of Proposition~\ref{PropGRel}, we have
  \[
    N_{\pa\cT}(P_1)=0,\qquad
    N_{\pa\cT}(P_2)=-\rho D_\rho,\qquad
    N_{\pa\cT}(P_3)=D_{\omega^j}.
  \]
  In terms of the defining function $\hat r=\hat\rho^{-1}=\sigma/\rho$ of $\zface\cap\tface\subset\tface$, we deduce from~\eqref{EqGRel2} that
  \[
    N_{\zface\cap\tface}(N_{\cT,\tface}^\pm(P_1))=0,\qquad
    N_{\zface\cap\tface}(N_{\cT,\tface}^\pm(P_1))=\hat r D_{\hat r},\qquad
    N_{\zface\cap\tface}(N_{\cT,\tface}^\pm(P_3))=D_{\omega^j}.
  \]
  It then remains to note that the isomorphism $\psi$ takes the form $(\hat r,\omega)=(\rho^{-1},\omega)$.
\end{proof}

See Figure~\ref{FigGRel} for an illustration of the various normal operators.

\begin{figure}[!ht]
\centering
\includegraphics{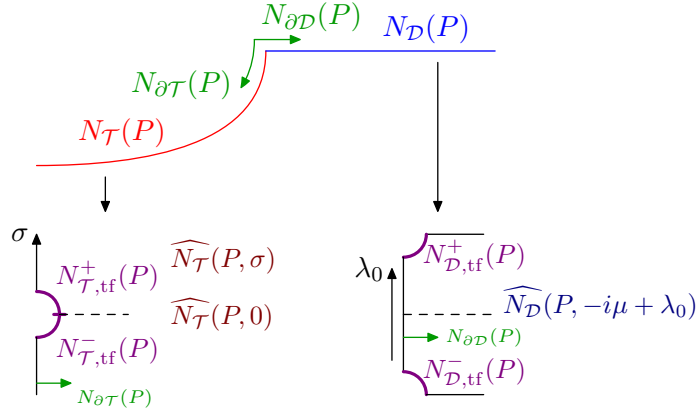}
\caption{The normal operators of a 3b-differential operator (with the semiclassical regimes not explicitly indicated); some relationships are indicated with matching colors.}
\label{FigGRel}
\end{figure}

\subsection{An example}
\label{SsGEx}

We consider the example from Theorem~\ref{ThmIEx}. Thus, on $\R^n$ with coordinates $(t,x)$, $t\in\R$, $x\in\R^{n-1}$, consider the Euclidean Laplacian $\Delta=D_t^2+\sum_{j=1}^{n-1} D_{x^j}^2$ and potentials $V\in\la(t,x)\ra^{-2}\CI(\ol{\R^n})$ and $V_\cT\in\la x\ra^{-3}\CI(\ol{\R^{n-1}})$. Consider then
\begin{equation}
\label{EqGOp}
  P = \la x\ra^2 P_0,\qquad P_0 := \Delta_{\R^n} + V(t,x) + V_\cT(x).
\end{equation}

In polar coordinates $x=r\omega$, $r=|x|$, $\omega\in\Sph^{n-2}$, this is
\begin{equation}
\label{EqGEx}
  P = \la r\ra^2 \bigl( D_t^2 + D_r^2 - i(n-2)r^{-1} D_r + r^{-2}\Delta_{\Sph^{n-2}} + V_\cT(r\omega) + V(t,r\omega) \bigr).
\end{equation}
On $M=[\ol{\R^n};\{(\pm\infty,0)\}]$ and in $r>1$, the vector fields $\la r\ra D_t$, $\la r\ra D_r$, and $D_{\omega^j}$ are 3b-vector fields, and therefore we have $P\in\Difftb^2(M)$, and indeed the 3b-principal symbol $\sigmatb^2(P)$ is elliptic. As defining functions of the lift $\cD$ of $\pa\ol{\R^n}$ and the front face (which has two connected components), we can take $\rho_\cD=\la x\ra^{-1}$ and $\rho_\cT=\frac{\la x\ra}{\la(t,x)\ra}$, respectively; note that $\rho_\cD\rho_\cT=\la(t,x)\ra^{-1}$ is a boundary defining function of $\ol{\R^n}$. The zero energy operator (at either front face) is
\begin{equation}
\label{EqGExZero}
  \wh{N_\cT}(P,0) = \la x\ra^2(\Delta_{\R^{n-1}}+V_\cT) \in \Diffb^2(\ol{\R^{n-1}}),
\end{equation}
while
\begin{equation}
\label{EqGExNonzero}
  \la x\ra^{-2}\wh{N_\cT}(P,\sigma) = \Delta_{\R^{n-1}} + V_\cT + \sigma^2 \in \Diffsc^2(\ol{\R^{n-1}}).
\end{equation}
Passing to inverse polar coordinates $x=\rho^{-1}\omega$ on $\R^{n-1}$ with $\rho=|x|^{-1}$, $\omega\in\Sph^{n-2}$, one then finds that, for $\hat\rho=\rho/\sigma$,
\[
  \hat\rho^2 N_{\cT,\tface}^\pm(P) = \hat\rho^2\bigl((\hat\rho D_{\hat\rho})^2 + i(n-3)\hat\rho D_{\hat\rho} + \Delta_{\Sph^{n-2}}\bigr) + 1.
\]
This is the spectral family (at a spectral parameter off the continuous spectrum) of the Laplacian on an exact cone, with $\hat\rho=0$, resp.\ $\hat\rho=\infty$ being the large, resp.\ small end of the cone. Proposition~\ref{PropGRel} (or direct computation using the expressions for $\wh{N_\cD}(P,\lambda)$ below) gives
\begin{equation}
\label{EqGExNcD}
  \tilde R^{-2}N_{\cD,\tface}^\pm(P) = \tilde R^{-2}\bigl((\tilde R D_{\tilde R})^2 - i(n-3)\tilde R D_{\tilde R} + \Delta_{\Sph^{n-2}}\bigr) + 1,
\end{equation}
now with $\tilde R=0$, resp.\ $\tilde R=\infty$, being the small, resp.\ large end of the cone. Furthermore,
\begin{equation}
\label{EqGExNpaT}
  N_{\pa\cT}(P) = (\rho D_\rho)^2 + i(n-3)\rho D_\rho + \Delta_{\Sph^{n-2}}, \quad
  N_{\pa\cD}(P) = (R D_R)^2 - i(n-3)R D_R + \Delta_{\Sph^{n-2}}.
\end{equation}

The $\cD$-normal operator can be computed in the coordinates $T=t^{-1}$, $R=r/t$, $\omega$, to be (in $t>0$, $|x|\lesssim t$, with similar expressions in $t<0$, $|x|\lesssim-t$)
\begin{align*}
  & N_\cD(P) = (R T D_T+R^2 D_R)^2 + (R D_R)^2 - i(n-3)R D_R + \Delta_{\Sph^{n-2}} + W(R\omega) \\
  &\qquad \implies \wh{N_\cD}(P,\lambda) = (R D_R)^2 - i(n-3)R D_R + \Delta_{\Sph^{n-2}} + (R\lambda + R^2 D_R)^2 + W(R\omega),
\end{align*}
where $W(R,\omega)=\lim_{T\to 0}(\la R/T\ra^2 V(T^{-1},R\omega/T))$ (which is expression for the restriction of $\la x\ra^2 V$ to $\pa\ol{\R^n}$ in local coordinates). A simpler description can be given in inverse polar coordinates $\varrho=|(t,x)|^{-1}$, $\varpi=\varrho\cdot(t,x)\in\Sph^{n-1}$: then $\rho_\cT^{-2}N_\cD(P)$ is the b-normal operator of $\varrho^{-2}\Delta_{\R^n}+V_\cD$ where $V_\cD:=(\varrho^{-2}V)|_{\pa\ol{\R^n}}\in\CI(\Sph^{n-1}_\varpi)$, and therefore
\begin{equation}
\label{EqGExND}
  N_\cD(P) = (\rho_\cT|_{\pa\ol{\R^n}})^2 \bigl((\varrho D_\varrho)^2 + i(n-2)\varrho D_\varrho + \Delta_{\Sph^{n-1}} + V_\cD\bigr),
\end{equation}
regarded as a dilation-invariant (in $\varrho$) b-differential operator on $[0,\infty)_\varrho\times[\Sph^{n-1};\{ N,S \}]$ where $N,S\in\Sph^{n-1}$ are the north and south pole (where $\rho_\cT=0$), respectively.

\section{The small 3b-calculus}
\label{S3}

We use the notation $M_0$, $\fp$, $M=[M_0;\{\fp\}]$ of~\S\ref{SG}, see Definition~\ref{DefGSingle}. We now microlocalize the algebra $\Difftb(M)$ of 3b-differential operators (see Definition~\ref{DefGDiff3b}) on the 3b-single space $M$ to an algebra $\Psitb(M)$ of 3b-pseudodifferential operators. We accomplish this by defining a suitable resolution of the space $M_0^2$ so that the Schwartz kernels of 3b-differential operators are precisely the nondegenerate Dirac distributions at the lifted diagonal, and then generalizing the class of Schwartz kernels to conormal distributions. See e.g.\ \cite{MazzeoMelroseHyp,MazzeoEdge,MelroseAPS,MelroseEuclideanSpectralTheory,MazzeoMelroseFibred} for earlier instances of this procedure.

Loosely speaking, we want elements of $\Psitb(M)$ to act like b-ps.d.o.s near $\cD^\circ$, and like cusp ps.d.o.s (with respect to the lift of a defining function of $M_0$) near $\cT^\circ$. Recall here that if we were to consider the cusp calculus on $M_0$, with respect to a fixed boundary defining function $\rho_0\in\CI(M_0)$, we would introduce on the b-double space $(M_0)^2_\bop$ the smooth function $s=\frac{\rho_0-\rho_0'}{\rho_0+\rho_0'}\in[-1,1]$ where we write (by an abuse of notation) $\rho_0$ and $\rho_0'$ for the lifts of $\rho_0$ along the left and right projections $(M_0)^2_\bop\to M_0$, respectively; and we would then define the cusp double space of $M_0$ by
\[
  (M_0)_\cuop^2 := [(M_0)_\bop^2; \ff_\bop \cap s^{-1}(0) ],
\]
where $\ff_\bop\subset(M_0)_\bop^2$ is the front face. The (small) cusp calculus then consists of distributional right cusp densities on $(M_0)_\cuop^2$ which are conormal to $\diag_\cuop$ (the lift of $\diag_\bop$) and vanish to infinite order at the boundary hypersurfaces of $(M_0)_\cuop^2$ which are disjoint from $\diag_\cuop$. (The corresponding large calculus permits nontrivial, typically conormal or polyhomogeneous, behavior of Schwartz kernels at all boundary hypersurfaces.)

Roughly speaking then, to construct the 3b-double space of $M$, we wish to blow up the b-double space of $M_0$ similarly to the definition of the cusp double space, but now only at the point $(\fp,\fp)$. The following minimalistic definition suffices to capture 3b-differential operators:

\begin{lemma}[Tiny 3b-double space]
\label{Lemma3Tiny}
  Let $\fp_\tbop:=\pi_{\bop,L}^{-1}(\{\fp\})\cap\diag_\bop$ (which is a subset of $(M_0)^2_\bop$ containing only one point). Put
  \[
    M^2_{\tbop,\rm tiny} := [ (M_0)_\bop^2; \fp_\tbop ].
  \]
  Then lifts of elements of $\Vtb(M)$ to the left factor of $M^2_{\tbop,\rm tiny}$ are smooth b-vector fields, and the lift of $\Vtb(M)$ is transversal to the lift $\diag_{\tbop,\rm tiny}$ of $\diag_\bop$ to $M^2_{\tbop,\rm tiny}$; moreover, this lifted diagonal is a p-submanifold. That is, the fiber-linear subspaces of $T_{\diag_{\tbop,\rm tiny}}M^2_{\tbop,\rm tiny}$ given by $T\diag_{\tbop,\rm tiny}$ on the one hand, and by the restrictions to $\diag_{\tbop,\rm tiny}$ of the lifts of elements of $\Vtb(M)$ to the left factor, are transversal. This induces a canonical isomorphism $N\diag_{\tbop,\rm tiny}\cong\Ttb M$.
\end{lemma}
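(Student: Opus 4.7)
The plan is to reduce to a local computation in two projective charts on the blow-up, since at all points of $\diag_{\tbop,\rm tiny}$ away from the new front face the statement is a direct consequence of the b-calculus on $(M_0)^2_\bop$. First I would introduce coordinates $(T,X)\in[0,1)\times B^{n-1}$ on $M_0$ near $\fp$ (as in~\eqref{EqGCoordsTX}), and the adapted coordinates $T'\ge 0$, $u=(T-T')/T'$, $X'$, $v=X-X'$ on $(M_0)^2_\bop$ near $\fp_\tbop$; then $\fp_\tbop=\{T'=0,\,u=0,\,X'=0,\,v=0\}$, the b-front face is $\{T'=0\}$, and $\diag_\bop=\{u=0,\,v=0\}$. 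Two projective charts cover a neighborhood of $\diag_{\tbop,\rm tiny}$ inside the new front face of $M^2_{\tbop,\rm tiny}=[(M_0)^2_\bop;\fp_\tbop]$: a \emph{$T'$-chart} with coordinates $(T',\hat u,\hat X',\hat v)=(T',u/T',X'/T',v/T')$, and an \emph{$|X'|$-chart} with coordinates $(|X'|,\hat T',\hat u_2,\hat\omega,\hat v_2)=(|X'|,T'/|X'|,u/|X'|,X'/|X'|,v/|X'|)$. In either chart $\diag_{\tbop,\rm tiny}$ is cut out by the vanishing of a subset of the coordinates ($\hat u=\hat v=0$, resp.\ $\hat u_2=\hat v_2=0$), so it is a p-submanifold. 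Moreover, the part of $\diag_{\tbop,\rm tiny}$ in the new front face is identified with $\cT^\circ$ via $\hat X'\leftrightarrow x$ (cf.~\eqref{EqGCoordstx}) in the $T'$-chart and with $\cD$ near $\cT\cap\cD$ via $(\hat T',\hat\omega)\leftrightarrow(\rho_\cD,\omega)$ in the $|X'|$-chart; the interior of $\diag_{\tbop,\rm tiny}$ is $M^\circ$, so altogether $\diag_{\tbop,\rm tiny}\cong M$.

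Next I would compute the left-lifts of the local generators of $\Vtb(M)$ in these charts. Near $\cT^\circ$ the generators are $T^2\pa_T$ and $T\pa_{X^j}$; their lifts $T'(1+u)^2\pa_u$ and $T'(1+u)\pa_{v^j}$ on $(M_0)^2_\bop$ re-express in the $T'$-chart as
\[
  (1+T'\hat u)^2\pa_{\hat u}\quad\text{and}\quad (1+T'\hat u)\pa_{\hat v^j},
\]
which are smooth b-vector fields on $M^2_{\tbop,\rm tiny}$, nonvanishing on $\diag_{\tbop,\rm tiny}$. Near $\cD$ away from $\cT\cap\cD$, working in polar coordinates $X=R\omega$, $R>0$, the generators~\eqref{EqGV3beb} are $RT\pa_T$, $R\pa_R$, $\pa_{\omega^j}$; their lifts $|X'+v|(1+u)\pa_u$, $(X'+v)\cdot\pa_v$, and the analogous rotational vector fields re-express in the $|X'|$-chart as
\[
  |\hat\omega+\hat v_2|(1+|X'|\hat u_2)\pa_{\hat u_2},\qquad (\hat\omega+\hat v_2)\cdot\pa_{\hat v_2},\quad\text{etc.,}
\]
which are smooth near $\diag_{\tbop,\rm tiny}$ since $|\hat\omega|=1$, and again nonvanishing on $\diag_{\tbop,\rm tiny}$. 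Taking $\CI(M)$-linear combinations preserves smoothness of these b-vector fields in a neighborhood of the diagonal; this gives the first assertion of the lemma. Restricted to $\diag_{\tbop,\rm tiny}$, the lifts span $\{\pa_{\hat u},\pa_{\hat v^j}\}$ in the $T'$-chart and $\{\pa_{\hat u_2},\pa_{\hat v_2^j}\}$ in the $|X'|$-chart, which are precisely complements of $T\diag_{\tbop,\rm tiny}$; this is the transversality.

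Finally, for the canonical isomorphism $N\diag_{\tbop,\rm tiny}\cong\Ttb M$, the map $V\mapsto V|_L|_{\diag_{\tbop,\rm tiny}}\bmod T\diag_{\tbop,\rm tiny}$ is $\CI(M)$-linear under the identification $\diag_{\tbop,\rm tiny}\cong M$ recorded above, and the computation of the previous paragraph shows that it is fiber-wise surjective; since $\rank\Ttb M=n=\codim\diag_{\tbop,\rm tiny}$, it is an isomorphism of vector bundles.

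The main technical point — and the reason the blow-up of the single point $\fp_\tbop$ is exactly the right resolution — is that the two local spanning sets of $\Vtb(M)$ (the $\cT^\circ$-adapted set $T^2\pa_T,\,T\pa_{X^j}$ and the $\cD$-adapted set $RT\pa_T,\,R\pa_R,\,\pa_{\omega^j}$) lift, in the two complementary projective charts on the blow-up, to nondegenerate generators of the normal bundle of the diagonal; any less blow-up would leave the $\cT^\circ$-adapted generators vanishing at the diagonal on the front face of $(M_0)^2_\bop$, while any additional blow-up (such as blowing up the full preimage $\pi_{\bop,L}^{-1}(\{\fp\})$) would be superfluous for capturing $\Vtb(M)$ microlocally and would destroy the clean identification $N\diag_{\tbop,\rm tiny}\cong\Ttb M$.
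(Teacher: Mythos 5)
Your proof is correct and follows essentially the same route as the paper's: an explicit local computation of the left lifts of the two adapted spanning sets of $\Vtb(M)$ in two projective charts covering a neighborhood of $\diag_{\tbop,\rm tiny}$ in the front face, checking smoothness and that the restrictions to the lifted diagonal span a complement of its tangent space. The only (immaterial) differences are that you base your projective coordinates on the right-factor variables $T',X'$ and use polar coordinates in the second chart, whereas the paper uses the left-factor variables and an affine chart $X_1\gtrsim|X_2|$.
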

\begin{proof}
  Note that the diagonal inclusion $M_0\to\diag_{M_0}\subset(M_0)^2$ lifts to a diffeomorphism $M_0\cong\diag_\bop$ and then further to a diffeomorphism $M=[M_0;\{\fp\}]\to[\diag_\bop;\{(\fp,\fp)\}]=\diag_{\tbop,\rm tiny}$. We shall write $(q,q)\in\diag_{\tbop,\rm tiny}$ for the point corresponding to $q\in M$ under this diffeomorphism.

  Denote by $\pi_L\colon M^2_{\tbop,\rm tiny}\to M$ the lift of the left projection. By dimension counting, we merely need to prove that the lift $\pi_L^*V$ to $M^2_{\tbop,\rm tiny}$ of an element $V\in\Vtb(M)$ with $V(q)\neq 0\in\Ttb_q M$ is nonzero at $(q,q)\in\diag_{\tbop,\rm tiny}$. (The desired bundle isomorphism then arises by identifying $V(q)\in\Ttb_q M$ with $[(\pi_L^*V)(q,q)]\in T_{\diag_{\tbop,\rm tiny}}/T\diag_{\tbop,\rm tiny}=N\diag_{\tbop,\rm tiny}$.)

  We only give details near the preimage of $(\fp,\fp)\in M_0\times M_0$ under the total blow-down map $M^2_{\tbop,\rm tiny}\to M_0^2$. With coordinates $T,X$ on $M_0$ as in~\eqref{EqGCoordsTX} (so $\fp=(0,0)$), we commit a standard abuse of notation and denote by $T,X$ and $T',X'$ the lifts of $T,X$ to the left and right factor, respectively. Near $\diag_\bop\subset(M_0)^2_\bop$, we have smooth coordinates
  \[
    T,\quad
    X,\quad
    s_\bop = \frac{T-T'}{T'},\quad
    X_\bop = X-X',\quad
  \]
  with $\diag_\bop$ defined by $s_\bop=0$, $X_\bop=0$. The scattering vector fields $T^2\pa_T$ and $T\pa_{X^j}$ on $M_0$ thus lift to $T^2\pa_T+T(1+s_\bop)\pa_{s_\bop}$ and $T\pa_{X^j}+T\pa_{X_\bop^j}$. Upon passing to the blow-up of $(M_0)^2_\bop$ at $\fp_\tbop=\{(0,0,0,0)\}$, we first consider the region where $T\gtrsim\max(|X|,s_\bop,|X_\bop|)$; there, we have smooth coordinates
  \[
    T,\quad
    x=\frac{X}{T},\quad
    s_\tbop=\frac{s_\bop}{T},\quad
    X_\tbop=\frac{X_\bop}{T},
  \]
  and thus $T^2\pa_T$ and $T\pa_{X^j}$ lift to the vector fields $(1+T s_\tbop)\pa_{s_\tbop}+T(T\pa_T-x\pa_x-s_\tbop\pa_{s_\tbop}-X_\tbop\pa_{X_\tbop})$ and $\pa_{x^j}+\pa_{X_\tbop^j}$, which at the 3b-diagonal $s_\tbop=0=X_\tbop$ are equal to $\pa_{s_\tbop}+T^2\pa_T-T x\pa_x$ and $\pa_{x^j}+\pa_{X_\tbop^j}$, and hence linearly independent. Note that the weight $\la x\ra$ is bounded in this region.

  In the region where $|X|\gtrsim\max(T,s_\bop,|X_\bop|)$, we split $X=(X_1,X_2)$ where $X_1\in\R$ and $X_2\in\R^{n-2}$, and after relabeling coordinates we may assume that $X_1\gtrsim|X_2|$. We likewise write $X_\bop=(X_{\bop,1},X_{\bop,2})$. We then introduce coordinates
  \begin{equation}
  \label{Eq3ffTffD}
  \begin{gathered}
    \rho_\cT = X_1,\qquad
    \rho_\cD = \frac{T}{X_1},\qquad
    \hat X_2 = \frac{X_2}{X_1}, \\
    s_\tbop = \frac{s_\bop}{X_1}=\frac{T-T'}{T' X_1},\qquad
    X_{\tbop,1} = \frac{X_{\bop,1}}{X_1}=\frac{X_1-X'_1}{X_1},\qquad
    X_{\tbop,2} = \frac{X_{\bop,2}}{X_1}=\frac{X_2-X'_2}{X_1}.
  \end{gathered}
  \end{equation}
  The 3b-vector fields $X_1 T\pa_T$, $X_1\pa_{X_1}$, and $X_1\pa_{X_2^j}$ (cf.\ \eqref{EqGV3beb}) lift, respectively, to
  \begin{align*}
    &(1+\rho_\cT s_\tbop)\pa_{s_\tbop} + \rho_\cT\rho_\cD\pa_{\rho_\cD}, \\
    &(1-X_{\tbop,1})\pa_{X_{\tbop,1}} + \rho_\cT\pa_{\rho_\cT}-\rho_\cD\pa_{\rho_\cD}-\hat X_2\pa_{\hat X_2} - s_\tbop\pa_{s_\tbop} - X_{\tbop,2}\pa_{X_{\tbop,2}}, \\
    & \pa_{\hat X_2^j} + \pa_{X_{\tbop,2}^j}.
  \end{align*}
  At the diagonal where $s_\tbop=X_{\tbop,1}=0=X_{\tbop,2}$, these vector fields are linearly independent.

  The lifted diagonal $\diag_{\tbop,\rm tiny}$ is disjoint from the regions where $|X_\bop|$ or $s_\bop$ are relatively large compared to $|X'|,T'$, and hence we do not need to consider these regions here. The proof is complete.
\end{proof}

Thus, the space given by the lifts to $M^2_{\tbop,\rm tiny}$ of Schwartz kernels of elements of $\Difftb(M)$ is equal to the space of Dirac distributions at $\diag_{\tbop,\rm tiny}$, with values in the lift of $\Omegatb M$ to the right factor. In order to microlocalize $\Difftb(M)$, we need to refine the space $M^2_{\tbop,\rm tiny}$ considerably; indeed, loosely speaking, we need to separate $\cD$ and $\cT$ in either factor so as to ensure, among other things, that we obtain a class of operators which act sensibly on spaces of functions encoding weights at $\cD$ and $\cT$ (i.e.\ that they preserve weights and conormality).\footnote{Elements of the large 3b-calculus, which will be shown to include (approximate) inverses of fully elliptic 3b-operators, will not be local in this manner anymore.} Thus:

\begin{definition}[Small 3b-double space]
\label{Def3Double}
  Let $\fp_{L\cap R}$, $\fp_L$, $\fp_R$ denote the lifts to $(M_0)_\bop^2$ of $\{(\fp,\fp)\}$, $\{\fp\}\times\pa M_0$, and $\pa M_0\times\{\fp\}$, respectively; let $\fp_\tbop=\fp_{L\cap R}\cap\diag_\bop$ (which is the same singleton set as in Lemma~\usref{Lemma3Tiny}). Then the \emph{small 3b-double space} is defined as
  \begin{equation}
  \label{Eq3Double}
    M^2_{\tbop,\flat} := \bigl[ (M_0)_\bop^2; \fp_\tbop; \fp_{L\cap R}; \fp_L, \fp_R \bigr].
  \end{equation}
  We denote the lift of $\fp_\tbop$ by $\ff_{\cT,\flat}$, and the lift of the front face of $(M_0)_\bop^2$ by $\ff_{\cD,\flat}$. The lift of $\diag_\bop\subset(M_0)_\bop^2$ is the \emph{3b-diagonal}, denoted $\diag_{\tbop,\flat}$.
\end{definition}

(See~\S\ref{SL} for figures illustrating the (full) 3b-double space, which is a further resolution of $M_{\tbop,\flat}^2$ at the lifts of $\{\fp\}\times M_0$ and $M_0\times\{\fp\}$.) Denoting by $\pi_R\colon M_{\tbop,\flat}^2\to M$ the lifted right projection, we then define:

\begin{definition}[3b-pseudodifferential operators]
\label{Def3Psdo}
  For $m\in\R\cup\{-\infty\}$, we define the space
  \[
    \Psitb^m(M)
  \]
  to consist of all operators (mapping $\CIc(M^\circ)\to\sD'(M^\circ)$) with Schwartz kernels lying in the space $I^m(M^2_{\tbop,\flat},\diag_{\tbop,\flat};\pi_R^*\,\Omegatb M)$ of conormal distributions (valued in right 3b-densities) which vanish to infinite order at all boundary hypersurfaces of $M_{\tbop,\flat}$ except at $\ff_{\cT,\flat}$ and $\ff_{\cD,\flat}$; unless otherwise noted, we require the Schwartz kernels to be smooth down to $\ff_{\cT,\flat}\cup\ff_{\cD,\flat}$. More generally, if $E_0,F_0\to M_0$ are two vector bundles and $E=\upbeta^*E_0$, $F=\upbeta^*F_0\to M$ denote their pullbacks along the blow-down map $\upbeta\colon M=[M_0;\{\fp\}]\to M_0$, we define
  \[
    \Psitb^m(M;E,F)
  \]
  to consist of all operators whose Schwartz kernels lie in $I^m(M^2_{\tbop,\flat},\diag_{\tbop,\flat};\upbeta_2^*(F_0\boxtimes E_0^*)\otimes\pi_R^*\,\Omegatb M)$ and vanish to infinite order at all boundary hypersurfaces of $M_{\tbop,\flat}$ except at $\ff_{\cT,\flat}$ and $\ff_{\cD,\flat}$; here $\upbeta_2\colon M^2_{\tbop,\flat}\to M_0^2$ is the blow-down map and $F_0\boxtimes E_0^*$ is the bundle $\pi_{0 L}^*F_0\otimes\pi_{0 R}^*E_0^*\to M_0^2$ where $\pi_{0 L},\pi_{0 R}\colon M_0^2\to M_0$ are the left and right projections.
\end{definition}

We shall restrict our discussion to the case of scalar ps.d.o.s, unless adding vector bundles requires more than just notational modifications. Lemma~\ref{Lemma3Tiny} and the fact that $\diag_{\tbop,\rm tiny}$ is disjoint from the lifts of $\fp_{L\cap R}$, $\fp_L$, and $\fp_R$ imply that $\Difftb^m(M)\subset\Psitb^m(M)$ is characterized as the subspace having Dirac distributional Schwartz kernels. Similarly, Lemma~\ref{Lemma3Tiny} gives a natural isomorphism $\Ttb^*M\cong N^*\diag_{\tbop,\flat}$. Therefore, the principal symbol of the conormal Schwartz kernels at $\diag_{\tbop,\flat}$ gives rise to the symbol short exact sequence
\[
  0 \to \Psitb^{m-1}(M) \hra \Psitb^m(M) \xra{\sigmatb^m} (S^m/S^{m-1})(\Ttb^*M) \to 0.
\]

One can also consider weighted versions $\rho_\cD^{-\alpha_\cD}\rho_\cT^{-\alpha_\cT}\Psitb^m(M)$, with $\rho_\cD,\rho_\cT$ denoting lifts to the left factor of defining functions of $\cD,\cT\subset M$. More generally still, one can allow for the coefficients of 3b-ps.d.o.s to be polyhomogeneous at $\ff_{\cT,\flat}$ and $\ff_{\cD,\flat}$, or merely conormal; spaces of such operators are denoted
\[
  \cA_\phg^{\cE_\cD,\cE_\cT}\Psitb^m(M),\qquad
  \cA^{\alpha_\cD,\alpha_\cT}\Psitb^m(M),
\]
where the index sets $\cE_\cD,\cE_\cT\subset\C\times\N_0$ capture the exponents of expansions at $\ff_{\cD,\flat}$ and $\ff_{\cT,\flat}$. Since in~\S\ref{SL} we shall consider yet more general classes of operators, we shall however only study the space $\Psitb^m(M)$ in this section.

\begin{prop}[Basic mapping and composition properties] 
\fakephantomsection
\label{Prop3MapComp}
  \begin{enumerate}
  \item Any element $P\in\Psitb^s(M)$ defines a bounded linear map on the spaces $\CIdot(M)$, $\CI(M)$, and on the dual spaces $\bar\sD(M^\circ)$, $\dot\sD(M)$ of extendible and supported distributions, respectively.
  \item Let $P_j\in\Psitb^{s_j}(M)$, $j=1,2$. Then $P_1\circ P_2\in\Psitb^{s_1+s_2}(M)$. The principal symbol map $\sigmatb$ is multiplicative.
  \end{enumerate}
\end{prop}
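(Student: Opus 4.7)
The plan is to prove both parts via pullback-pushforward arguments on appropriate resolutions of $M$-powers, following the template of \cite{MelrosePushfwd,MelroseDiffOnMwc} as used in e.g.\ the b-calculus composition (see the remark after Proposition~\ref{PropAbComp}).

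For part (1), I would first check that the lifted left projection $\pi_L\colon M^2_{\tbop,\flat}\to M$ is a b-fibration. Near $\diag_{\tbop,\flat}$ this follows from Lemma~\ref{Lemma3Tiny} (transversality of $\Vtb(M)$-lifts to the diagonal), and away from the diagonal the key point is that the blow-ups defining $M^2_{\tbop,\flat}$ in~\eqref{Eq3Double} are all at p-submanifolds contained in $\pi_{\bop,L}^{-1}(\cD\cup\cT)$, so that for each $H\in M_1(M^2_{\tbop,\flat})$ at most one boundary hypersurface of $M$ has nonzero exponent in the pullback $\pi_L^*\rho_H$. Given $u\in\CIdot(M)$ (respectively $u\in\CI(M)$), the Schwartz kernel $K_P\in I^s(M^2_{\tbop,\flat},\diag_{\tbop,\flat};\pi_R^*\,\Omegatb M)$ of $P$, multiplied by $\pi_R^*u$, is a distribution conormal to $\diag_{\tbop,\flat}$, valued in right 3b-densities, vanishing to infinite order at the non-front boundary hypersurfaces and lying in $\CIdot$ (respectively $\cC^\infty$) at $\ff_{\cT,\flat}$ and $\ff_{\cD,\flat}$ relative to $\diag_{\tbop,\flat}$. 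Pushforward along $\pi_L$ (a b-fibration with fibers transversal to $\diag_{\tbop,\flat}$ so that the conormal singularity is integrated out to give a smooth function) then gives $P u\in\CIdot(M)$, respectively $P u\in\CI(M)$. Duality handles $\bar\sD(M^\circ)$ and $\dot\sD(M)$ once one verifies that $P^*\in\Psitb^s(M)$ (up to density factors), which follows from the symmetry of $M^2_{\tbop,\flat}$ under swapping the two factors.

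For part (2), the key construction is a \emph{small 3b-triple space} $M^3_{\tbop,\flat}$ equipped with three b-fibrations $\pi_F,\pi_S,\pi_C\colon M^3_{\tbop,\flat}\to M^2_{\tbop,\flat}$ (for first-second, second-third, composed first-third), such that each lifted partial diagonal is a p-submanifold, and moreover the three projections are b-transversal to the lifted diagonals. The recipe is to start from the b-triple space $(M_0)^3_\bop$ as in~\eqref{EqAbCompTriple} and then successively blow up the lifts of $\{\fp\}^3$, pairwise products involving the diagonal with $\fp$, and the face $\{\fp\}\times M_0\times M_0$ together with its two partners, in the order dictated by the definition~\eqref{Eq3Double} of $M^2_{\tbop,\flat}$ so that each of the three partial projections factors through an interior b-map that lifts to a b-fibration. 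Given this space, one writes $K_{P_1\circ P_2}=(\pi_C)_*(\pi_F^*K_{P_1}\cdot\pi_S^*K_{P_2})$ (after converting densities using~\eqref{EqADensity} to track the boundary weights), and applies Melrose's pushforward theorem \cite{MelrosePushfwd} to conclude that the result is conormal to $\diag_{\tbop,\flat}$ of order $s_1+s_2$ with the correct vanishing at non-front boundary hypersurfaces. Multiplicativity of the principal symbol is then immediate from the transversality of the lifted partial diagonals in $M^3_{\tbop,\flat}$ and the identification $N^*\diag_{\tbop,\flat}\cong\Ttb^*M$.

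The main obstacle is the construction of the triple space and the verification that the three projections $\pi_F,\pi_S,\pi_C$ are b-fibrations; this is the usual technical heart of composition in non-product singular calculi, and the delicate point in the 3b-setting is that the translation face $\cT$ is defined by a blow-up at a \emph{point} in $\pa M_0$, so that the relevant boundary submanifolds in $(M_0)^3_\bop$ that must be blown up (namely the lifts of $\{\fp\}^3$, $\{\fp\}^2\times M_0$, and their cyclic permutations, as well as $\fp_\tbop$-type diagonal-corner intersections) are not independent and must be blown up in a carefully chosen order to ensure that the three resulting maps to $M^2_{\tbop,\flat}$ all satisfy the b-fibration conditions of \cite[Propositions~5.11.2 and 5.12.1]{MelroseDiffOnMwc}. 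Once the correct order is fixed, verification is a local coordinate calculation analogous to the one carried out in the proof of Lemma~\ref{Lemma3Tiny}.
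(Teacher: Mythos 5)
Your overall strategy (pushforward along lifted projections for the mapping properties, and a triple space for composition) is the one the paper follows, but there is a concrete gap at the first step: the lifted left projection is \emph{not} a b-fibration $M^2_{\tbop,\flat}\to M$ --- it is not even a smooth b-map. The preimage of $\{\fp\}$ under the left projection $(M_0)^2_\bop\to M_0$ contains, besides the lifts of $\fp_{L\cap R}$ and $\fp_L$, also the lift $\tilde\fp_L$ of $\{\fp\}\times M_0$, and this last submanifold is \emph{not} blown up in the definition~\eqref{Eq3Double} of $M^2_{\tbop,\flat}$. Concretely, near a point $(\fp,q')$ with $q'\in M_0^\circ$ the small double space is locally unchanged from $M_0^2$, the left projection sends a full neighborhood of this point onto a neighborhood of $\fp\in M_0$, and this map does not lift even continuously to the blow-up $M=[M_0;\{\fp\}]$ in the target (a single point cannot be sent to the positive-dimensional front face $\cT$ in a continuous fashion). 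Your justification --- that each boundary hypersurface of $M^2_{\tbop,\flat}$ carries a nonzero exponent for at most one boundary hypersurface of $M$ --- addresses the ``no hypersurface into a corner'' condition for a b-fibration, but not the prior requirement that the map lift smoothly at all, which forces one to resolve the \emph{entire} preimage of $\{\fp\}$. The same defect would propagate to any triple space fibering over $M^2_{\tbop,\flat}$.

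The fix is exactly what the paper does: pass to the large 3b-double space $M^2_\tbop=[M^2_{\tbop,\flat};\tilde\fp_L,\tilde\fp_R]$ of Definition~\ref{DefL3Space}, for which Lemma~\ref{LemmaLProj} shows that both lifted projections are b-fibrations onto $M$, and to the triple space $M^3_\tbop$ of Definition~\ref{DefLCTriple} with its three b-fibrations onto $M^2_\tbop$ (Lemma~\ref{LemmaLCProj}). Since Schwartz kernels of elements of $\Psitb^s(M)$ vanish to infinite order at the lifts of $\pa M_0\times M_0$ and $M_0\times\pa M_0$, they lift to $M^2_\tbop$ with trivial index sets at the two new front faces, so nothing is lost in enlarging the double space; the proposition is then the special case of Propositions~\ref{PropLMap} and~\ref{PropLComp} in which all boundary index sets other than those at $\ff_\cD$ and $\ff_\cT$ are empty, which is precisely how the paper proves it. With that modification the remaining ingredients of your outline (density bookkeeping via~\eqref{EqADensity}, transversality of the lifted diagonal from Lemma~\ref{Lemma3Tiny} for the symbol statement, and adjoints via the swap symmetry of the double space) are sound.
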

\begin{proof}
  See Propositions~\ref{PropLMap} and \ref{PropLComp} for more general results in the large 3b-calculus.
\end{proof}

\begin{rmk}[Bounded geometry perspective on 3b-ps.d.o.s]
\label{Rmk3Bdd}
  Fix any Riemannian 3b-metric $g\in\CI(M;S^2\,\Ttb^*M)$ on $M$. Then by the transversality statement of Lemma~\ref{Lemma3Tiny}, for $\eps>0$, the closure $\cN_\eps$ of the set $\{(q,q')\in M^\circ\times M^\circ\colon d_g(q,q')\leq\eps\}$ in $M^2_\tbop$ (with $d_g$ denoting the metric induced by $g$) contains an open neighborhood of $\diag_\tbop$, and as $\eps\searrow 0$, the set $\cN_\eps$ converges to $\diag_\tbop$. Furthermore, $g$ endows $M^\circ$ with the structure of a manifold with bounded geometry \cite{ShubinBounded}; this follows from the fact that in the coordinates $s_\tbop,X_{\tbop,1},X_{\tbop,2}$ from~\eqref{Eq3ffTffD} near a point $(T',X_1',X_2')$ on $M^\circ$ with $|X_1'|\gtrsim T',|X'_2|$, the metric tensor $g$ and its inverse $g^{-1}$ are, essentially by definition, uniformly bounded in the smooth topology, and similarly in other coordinate systems covering $M$. One can then regard 3b-ps.d.o.s on $M$ with Schwartz kernels supported in $N_\eps$ for some small $\eps>0$ as bounded geometry ps.d.o.s on $M^\circ$. (The converse is true only under additional regularity hypotheses on the Schwartz kernel of the bounded geometry ps.d.o.; the standard definition of the latter typically gives operators whose coefficients only enjoy infinite 3b-regularity, which is weaker than b-regularity.)
\end{rmk}

\subsection{Normal operator at the translation face\texorpdfstring{ $\cT$}{}}
\label{Ss3T}

The $\cT$-normal operator of a 3b-ps.d.o.\ $P$ will be defined in terms of the restriction of its Schwartz kernel of $P$ to $\ff_{\cT,\flat}$. We first describe this boundary hypersurface in some detail:

\begin{lemma}[Structure of $\ff_{\cT,\flat}$]
\label{Lemma3TStruct}
  The boundary hypersurface $\ff_{\cT,\flat}\subset M^2_{\tbop,\flat}$ is diffeomorphic to $\cT_\bop^2\times\ol\R$, where $\cT_\bop^2=[\cT^2;(\pa\cT)^2]$ is the b-double space of $\cT$. The isomorphism is explicitly given as follows: denoting by $t,x$ and $t',x'$ the lifts to the left and right factor of $M^2_{\tbop,\flat}$ of the coordinates $t,x$ on $M$ introduced in~\eqref{EqGCoordstx}, the functions $\tau:=t-t'$ and $x,x'$ give affine coordinates on the interior $(\ff_{\cT,\flat})^\circ$; and the map
  \begin{equation}
  \label{Eq3TStruct}
    \ff_{\cT,\flat} \in (\tau,x,x') \mapsto \Bigl(\frac{\tau}{\la(x,x')\ra}, x, x' \Bigr) =: (\tau_\tbop,x,x') \in \ol\R\times\cT_\bop^2,
  \end{equation}
  defined via continuous extension from $(\ff_{\cT,\flat})^\circ$, is a diffeomorphism. Via this diffeomorphism, the intersection $\ff_{\cT,\flat}\cap\ff_{\cD,\flat}$ is equal to $\ol\R\times\ff_{\cT,\bop}$ where $\ff_{\cT,\bop}\subset\cT_\bop^2$ denotes the front face (i.e.\ the lift of $(\pa\cT)^2$).
\end{lemma}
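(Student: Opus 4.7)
The plan has three stages: identify the interior of $\ff_{\cT,\flat}$ with $\R_\tau \times \R^{n-1}_x \times \R^{n-1}_{x'}$; account for all boundary hypersurfaces by tracking how the later blow-ups $\fp_{L\cap R}, \fp_L, \fp_R$ meet the front face over $\fp_\tbop$; and verify that $(\tau,x,x')\mapsto(\tau/\la(x,x')\ra,x,x')$ extends smoothly across every such boundary.

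First I would identify $(\ff_{\cT,\flat})^\circ$. Since $\fp_{L\cap R}, \fp_L, \fp_R$ all meet $\ff_{\cT,\rm tiny}$ only on $\pa\ff_{\cT,\rm tiny}$ (as is clear from the coordinate calculation in the proof of Lemma~\ref{Lemma3Tiny}), we have $(\ff_{\cT,\flat})^\circ = (\ff_{\cT,\rm tiny})^\circ$. In the $T$-dominated chart from the proof of Lemma~\ref{Lemma3Tiny}, with coordinates $(T,x,s_\tbop,X_\tbop)$ where $x=X/T$, $s_\tbop=s_\bop/T$, $X_\tbop=X_\bop/T$, the interior of the front face is $\{T=0\}$, parametrized by $(x,s_\tbop,X_\tbop)\in\R^{2n-1}$. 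A direct computation using $t=T^{-1}$, $t'=T'^{-1}=T^{-1}(1+Ts_\tbop)$ gives $\tau=t-t'=-s_\tbop$ at $T=0$, and similarly $x-x'=X_\tbop$ at $T=0$. Thus $(\tau,x,x')$ are global affine coordinates on $(\ff_{\cT,\flat})^\circ$.

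Second I would analyze the boundary behavior. Since the lift of $\ff_\bop$ (i.e.\ $\ff_{\cD,\flat}$), together with the lifts arising from the three additional blow-ups in~\eqref{Eq3Double}, account for all boundary hypersurfaces of $M^2_{\tbop,\flat}$ meeting $\ff_{\cT,\flat}$, I need to examine how each meets $\ff_{\cT,\rm tiny}$. Working in the $X_1$-dominated chart~\eqref{Eq3ffTffD} and analogous charts for the $s_\bop$- and $X_\bop$-directions, one checks:
\begin{enumerate}
\item $\fp_{L\cap R}\cap\ff_{\cT,\rm tiny}$ is the closed arc $\{x=0,X_\tbop=0\}$ parametrized by $\ol{s_\tbop}\in\ol\R=[-\infty,\infty]$; after blow-up, this produces the two boundary faces at $\tau_\tbop=\pm 1$ (i.e.\ the endpoints of $\ol\R$) in the target $\cT^2_\bop\times\ol\R$.
\item $\fp_L\cap\ff_{\cT,\rm tiny}$ is parametrized by $(x'\in\R^{n-1},\tau\in\R)$ in the limit $|x|\to\infty$ (after appropriate rescaling); its blow-up produces the left boundary $\lb_\bop$ of $\cT^2_\bop$. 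Analogously for $\fp_R$ with $x$ and $x'$ interchanged.
\item $\ff_{\cD,\flat}\cap\ff_{\cT,\rm tiny}$ corresponds to the joint limit $|x|,|x'|\to\infty$, and produces (after blow-ups of $\fp_L,\fp_R,\fp_{L\cap R}$ have been carried out) the b-front face $\ff_{\cT,\bop}$ of $\cT^2_\bop$.
\end{enumerate}
A key combinatorial check is that the order of blow-ups in~\eqref{Eq3Double} is precisely the one needed to produce the iterated blow-up structure of $\cT^2_\bop = [\cT^2;(\pa\cT)^2]$: blowing up $\fp_{L\cap R}$ before $\fp_L$ and $\fp_R$ mirrors blowing up $(\pa\cT)^2$ before separating $\pa\cT\times\cT$ from $\cT\times\pa\cT$.

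Third, I would verify smoothness of the map $(\tau,x,x')\mapsto(\tau_\tbop,x,x')=(\tau/\la(x,x')\ra,x,x')$ across every boundary hypersurface. Away from $|x|+|x'|=\infty$ this is just the usual radial compactification of $\R_\tau$ to $\ol\R$, and matches the blow-up of $\fp_{L\cap R}$ (which in interior coordinates is $\{x=0,X_\tbop=0\}$, but since the claim is about the global compactification of the $\tau$-direction by $\la(x,x')\ra$, the smoothness near $\tau_\tbop=\pm 1$ for fixed bounded $(x,x')$ is automatic). The substantive check is near $|x|+|x'|\to\infty$: using e.g.\ the $X_1$-dominated chart, $\la(x,x')\ra\sim\rho_\cD^{-1}$, so $\tau_\tbop=\tau/\la(x,x')\ra\sim-s_\tbop\rho_\cD$, and the boundary defining functions of $\cT^2_\bop$ lift to local defining functions on $\ff_{\cT,\flat}$ consistently with the iterated blow-ups described above. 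This also yields the last assertion $\ff_{\cT,\flat}\cap\ff_{\cD,\flat}=\ol\R\times\ff_{\cT,\bop}$: $\ff_{\cD,\flat}$ is $\{\rho_\cD=0\}$ in the relevant chart, and restricting the diffeomorphism gives $\tau_\tbop\in\ol\R$ free while $(x,x')$ ranges over $\ff_{\cT,\bop}$.

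\textbf{Main obstacle.} The hard part is the third stage: the compactifying function $\la(x,x')\ra$ is defined in terms of the affine (interior) coordinates, and one must verify that the smooth structure it induces on $\ff_{\cT,\flat}$ agrees, at every stratum of the boundary, with the one coming from the iterated sequence of blow-ups in~\eqref{Eq3Double}. This requires covering $\ff_{\cT,\flat}$ by several projective charts (corresponding to which of $T, X_1, s_\bop, X_{\bop,1}$, etc.\ dominates) and transforming $\tau_\tbop$ and its partial derivatives into each chart to show it is a smooth defining function for the two faces at $\tau_\tbop=\pm 1$, uniformly down to $\ff_{\cD,\flat}$. Once this bookkeeping is in place, the identification of $\ff_{\cT,\flat}\cap\ff_{\cD,\flat}$ follows by direct inspection.
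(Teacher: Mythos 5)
Your plan follows the same route as the paper: parametrize $(\ff_{\cT,\flat})^\circ$ by $(\tau,x,x')$, realize $\ff_{\cT,\flat}$ as an iterated blow-up of $\ol{\R_\tau\times\R^{n-1}_x\times\R^{n-1}_{x'}}$ by tracking how the lifts of $\fp_{L\cap R}$, $\fp_L$, $\fp_R$ meet the front face over $\fp_\tbop$, and then match the result with $\ol\R\times\cT^2_\bop$. Your first stage is fine. The boundary bookkeeping in the second stage -- which is the entire content of the lemma -- is not. First, the lift of $\fp_{L\cap R}$ to $M^2_{\tbop,\rm tiny}$ meets $\ff_{\cT,\rm tiny}$ only at the \emph{two points} $(\tau,x,x')=(\pm\infty,0,0)$: in the coordinates $(T',X,X',s)$, $\fp_{L\cap R}$ is the curve $\{(0,0,0,s)\}$ through $\fp_\tbop=\{(0,0,0,1)\}$, its lift is the closure of its complement of $\fp_\tbop$, and this is transversal to the front face. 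It is \emph{not} the closed arc $\{x=0,\,X_\tbop=0\}$; that arc is the closure of the $\tau$-axis and, except for its two endpoints, lies in the interior of $\ff_{\cT,\rm tiny}$. This matters: blowing up two isolated boundary points produces two front faces, each diffeomorphic to $\ol{\R^{2(n-1)}_{x,x'}}$, and these are precisely the end-caps which, after the further resolutions coming from $\fp_L$ and $\fp_R$, become $\{\pm\infty\}\times\cT^2_\bop$. Blowing up an arc would instead produce a single face fibering over $\ol\R$ with sphere fibers, so your stated conclusion does not follow from your stated premise. Second, the lift of $\fp_L$ meets $\pa\ff_{\cT,\rm tiny}$ at the closure of $\{x=0\}$, i.e.\ at the directions reached by $|(\tau,x')|\to\infty$ with $x$ bounded; the resulting face $\lface$ has its left factor in $\cT$ and its right factor in $\cD$, so inside $\ff_\cT\cong\ol\R\times\cT^2_\bop$ it is adjacent to $\ol\R\times\rb_\bop$ (where $|x'|=\infty$), not to $\lb_\bop$. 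You have the roles of the two factors reversed in both the location of the center and the face it produces.

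Beyond these errors, the step you defer to a ``key combinatorial check'' is where the actual work sits: one must show that the front face of the blow-up of $(\pm\infty,0,0)$ is the \emph{joint} radial compactification $\ol{\R^{n-1}_x\times\R^{n-1}_{x'}}$, that the $\fp_L$- and $\fp_R$-centers restrict to it as $\{0\}\times\pa\ol{\R^{n-1}}$ and $\pa\ol{\R^{n-1}}\times\{0\}$, and that
\[
  \bigl[\ol{\R^{n-1}\times\R^{n-1}};\{0\}\times\pa\ol{\R^{n-1}},\pa\ol{\R^{n-1}}\times\{0\}\bigr]\cong\bigl[\ol{\R^{n-1}}\times\ol{\R^{n-1}};(\pa\ol{\R^{n-1}})^2\bigr]=\cT^2_\bop
\]
via continuous extension of the identity; this joint-versus-product compactification comparison is not a consequence of the ordering of the blow-ups alone. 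Finally, your third stage can be shortened considerably: once the blow-up description of $\ff_{\cT,\flat}$ is established, it suffices to observe that $|\tau_\tbop|^{-1}$ is a local defining function of the two front faces over $(\pm\infty,0,0)$ and that $\ff_{\cT,\flat}$ fibers over $\cT^2_\bop$ along the flow of $\pa_{\tau_\tbop}$; the chart-by-chart verification of smoothness of $\tau/\la(x,x')\ra$ that you identify as the main obstacle is then unnecessary.
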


See Figure~\ref{Fig3TStruct}.

\begin{figure}[!ht]
\centering
\includegraphics{Fig3TStruct}
\caption{Structure of $\ff_{\cT,\flat}$ when $\pa M_0$ is 1-dimensional.}
\label{Fig3TStruct}
\end{figure}

\begin{proof}[Proof of Lemma~\usref{Lemma3TStruct}]
  Fix local coordinates $T,X$ on $M_0$ near $\fp$ as in~\eqref{EqGCoordsTX}, and denote their lifts under the left, resp.\ right projection to $(M_0)_\bop^2$ by $T,X$, resp.\ $T',X'$. Local coordinates near $\fp_\tbop\subset(M_0)_\bop^2$ are then $T'\geq 0$, $X\in\R^{n-1}$, $X'\in\R^{n-1}$, and $s=\frac{T}{T'}\in(0,\infty)$, with $\fp_\tbop$ given by $(T',X,X',s)=(0,0,0,1)$. Since $\fp_\tbop$ is thus contained in the boundary hypersurface $T'=0$, affine coordinates on the interior of the front face $\ff_{\cT,\rm tiny}$ of $[(M_0)_\bop^2;\fp_\tbop]=M_{\tbop,\rm tiny}^2$ are $\frac{X}{T'}$, $\frac{X'}{T'}$, and $\frac{s-1}{T'}=\frac{1}{T}(\frac{T}{T'})^2-\frac{1}{T'}$. But $T'/T=1$ at the front face, and therefore we can equivalently use
  \[
    x = \frac{X}{T},\quad
    x' = \frac{X'}{T'},\quad
    \tau = \frac{1}{T}-\frac{1}{T'} = t-t'
  \]
  as affine coordinates (here $t=T^{-1}$ and $t'=T'{}^{-1}$, cf.\ \eqref{EqGCoordstx}). Thus, $\ff_{\cT,\rm tiny}$ is the radial compactification of $\R_\tau\times\R^{n-1}_x\times\R^{n-1}_{x'}$.

  When blowing up $M^2_{\tbop,\rm tiny}$ to obtain $M^2_\tbop$, we shall now track the corresponding blow-ups of $\ff_{\cT,\rm tiny}$. The intersection of the lift of $\fp_{L\cap R}$ (which in local coordinates on $(M_0)_\bop^2$ is given by $(T',X,X',s)=(0,0,0,s)$, $s\in(0,\infty)$) with $\ff_{\cT,\rm tiny}$ is given by the endpoints of the compactified $\tau$-axis, i.e.\ by the points $(\tau,x,x')=(\pm\infty,0,0)$ in $\ff_{\cT,\rm tiny}$. Upon blow-up, the lift of $\fp_L$ (given by $(T',X,X',s)=(0,0,X',s)$) intersects $\pa\ff_{\cT,\rm tiny}$ at the closure of $x=0$; similarly, the lift of $\fp_R$ intersects $\pa\ff_{\cT,\rm tiny}$ at the closure of $x'=0$. Altogether,
  \begin{equation}
  \label{Eq3TStructLoc}
  \begin{split}
    \ff_{\cT,\flat} &= \Bigl[\,\ol{\R_\tau\times\R_x^{n-1}\times\R_{x'}^{n-1}}; \{(\pm\infty,0,0)\}; \\
      &\quad\qquad \pa\bigl(\ol{\R_\tau\times\R^{n-1}_{x'}}\,\bigr) \times \{x=0\}, \pa\bigl(\ol{\R_\tau\times\R^{n-1}_x}\,\bigr) \times \{x'=0\} \Bigr].
  \end{split}
  \end{equation}
  The front face of the blow-up of $(\pm\infty,0,0)$ is naturally diffeomorphic to $\ol{\R^{n-1}_x\times\R^{n-1}_{x'}}$, with the subsequent blow-ups in~\eqref{Eq3TStructLoc} resolving $\{0\}\times\pa\ol{\R_{x'}^{n-1}}$ and $\pa\ol{\R_x^{n-1}}\times\{0\}$. Thus, the lift of $\{(\pm\infty,0,0)\}$ to $\ff_{\cT,\flat}$ is
  \begin{equation}
  \label{Eq3TStructLocff}
    \Bigl[\ol{\R^{n-1}\times\R^{n-1}}; \{0\}\times\pa\ol{\R^{n-1}}, \pa\ol{\R^{n-1}}\times\{0\} \Bigr] \cong \Bigl[\ol{\R^{n-1}}\times\ol{\R^{n-1}}; \pa\ol{\R^{n-1}}\times\pa\ol{\R^{n-1}} \Bigr].
  \end{equation}
  (This diffeomorphism is the continuous extension of the identity map on $\R^{n-1}\times\R^{n-1}$.) Invariantly put, this is the b-double space $\cT^2_\bop=[\cT^2;(\pa\cT)^2]$.\footnote{In the case $\dim M=2$, so $\cT=\ol\R$, this is the `over-blown' double space, with all four corners of $\ol\R\times\ol\R$ resolved.} The space $\ff_{\cT,\flat}$ fibers over $\cT_\bop^2$ by means of the flow along the vector field $\pa_{\tau_\tbop}$ where $\tau_\tbop=\tau/\la(x,x')\ra$ is a rescaled time coordinate (the scaling by $\la(x,x')\ra$ being necessitated by the fact that $\ff_{\cT,\flat}$ in~\eqref{Eq3TStructLoc} is a resolution of the radial compactification in \emph{all} variables $\tau,x,x'$). Moreover, $|\tau_\tbop|^{-1}$ is a local defining function of the lifts of $\{(\pm\infty,0,0)\}$. This shows that the map~\eqref{Eq3TStruct} is indeed a diffeomorphism.

  The intersection of $\ff_{\cT,\flat}$ with $\ff_{\cD,\flat}$ is given by the lift of $\pa(\ol{\R_\tau\times\R_x^{n-1}\times\R_{x'}^{n-1}})$; this is the product of $\ol{\R_{\tau_\tbop}}$ with the front face of~\eqref{Eq3TStructLocff}, the latter being the front face of $\cT^2_\bop$.
\end{proof}

Let now $P\in\Psitb^m(M)$, and denote by\footnote{The increase in the order follows the standard convention for conormal distributions \cite{HormanderFIO1}.} $K_{\cT,\flat}\in I^{m+\frac14}(\ff_{\cT,\flat};\diag_{\tbop,\flat}\cap\,\ff_{\cT,\flat};\pi_R^*\,\Omegatb M)$ the restriction of its Schwartz kernel $\ff_{\cT,\flat}$; thus $K_{\cT,\flat}$ vanishes at all boundary hypersurfaces of $\ff_{\cT,\flat}$ except for $\ff_{\cT,\flat}\cap\ff_{\cD,\flat}$.

\begin{definition}[$\cT$-normal operator; spectral family]
\label{Def3TNorm}
  Recalling Definition~\ref{DefGTSpace}, the $\cT$-normal operator of $P\in\Psitb^m(M)$ is the operator
  \[
    N_\cT(P) \in \Psi_{\tbop,I}^m(N_\tbop\cT)
  \]
  with Schwartz kernel given by the partial convolution kernel $(t,x,t',x')\mapsto K_{\cT,\flat}(t-t',x,x')$. Here, the subscript `$I$' restricts to the subspace of 3b-operators which are translation-invariant in $t$ (i.e.\ precisely to the space of operators with such partial convolution kernels). The spectral family
  \[
    \wh{N_\cT}(P,\sigma),\qquad \sigma\in\R,
  \]
  is defined via the Schwartz kernels of their elements as follows: the Schwartz kernel of $\wh{N_\cT}(P,\sigma)$ is equal to $(x,x')\mapsto\int_\R e^{i\sigma\tau} K_{\cT,\flat}(\tau,x,x')$.\footnote{This is well-defined since for any $x,x'\in\cT^\circ$ the kernel $K_{\cT,\flat}(\tau,x,x')$ is a rapidly decreasing density on $\R_\tau$ (tensored with a density in $x'$).} Finally, we define
  \[
    N_{\pa\cT}(P) \in \Psi_{\bop,I}^m({}^+N\pa\cT)
  \]
  as the b-normal operator of $\wh{N_\cT}(P,0)$ at $\pa\cT$. (The membership $\wh{N_\cT}(P,0)\in\Psib^m(\cT)$ is part of Proposition~\ref{Prop3TNormMemRough}\eqref{It3TNormMemRoughFixed} below.)
\end{definition}

One can equivalently define $\wh{N_\cT}(P,\sigma)$ via the action on functions on $\cT$ times an exponential in $t=\rho_0^{-1}$ with frequency $\sigma$ just as in Proposition~\ref{PropGTsigma}. One can likewise give a testing definition of $\wh{N_\cT}(P,0)$ as in Proposition~\ref{PropGT0}.

We aim to show the following analogue of Propositions~\ref{PropGT0}, \ref{PropGTsigma}, and \ref{PropGTHigh}:

\begin{prop}[Membership of $\wh{N_\cT}(P,\sigma)$]
\label{Prop3TNormMemRough}
  Let $P\in\Psitb^m(M)$.
  \begin{enumerate}
  \item\label{It3TNormMemRoughFixed}{\rm (Fixed frequencies.)} We have $\wh{N_\cT}(P,0)\in\Psib^m(\cT)$. Moreover, for $\sigma\neq 0$, we have $\wh{N_\cT}(P,\sigma)\in\Psisc^{m,m}(\cT)$, with smooth dependence on $\sigma\in\R\setminus\{0\}$.
  \item\label{It3TNormMemRoughScl}{\rm (High frequencies.)} The semiclassical rescaling
    \begin{equation}
    \label{Eq3TNormMemRoughScl}
      \wh{N_{\cT,h}^\pm}(P) := \wh{N_\cT}(P,\pm h^{-1}),\qquad h\in(0,1),
    \end{equation}
    defines an element of $\Psisch^{m,m,m}(\cT)$, with conormal dependence on $h$ down to $h=0$.
  \end{enumerate}
  The principal symbols of these operators are given in terms of $\sigmatb^m(P)$ as in Propositions~\usref{PropGTSymbol}, \usref{PropGTHigh}.
\end{prop}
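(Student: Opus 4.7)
The plan is to analyze the Schwartz kernel $K_P$ of $P$ on $M^2_{\tbop,\flat}$ near $\ff_{\cT,\flat}$ in two pieces: an on-diagonal piece $K_1$ supported in a small neighborhood of $\diag_{\tbop,\flat}$, and an off-diagonal remainder $K_2=K_P-K_1$. For $K_2$, the restriction to $\ff_{\cT,\flat}$ is, by Lemma~\ref{Lemma3TStruct}, a smooth right 3b-density on $\cT^2_\bop\times\ol{\R_{\tau_\tbop}}$ vanishing to infinite order at every boundary hypersurface except possibly $\ff_{\cT,\bop}\times\ol{\R_{\tau_\tbop}}$. Writing $\tau=\la(x,x')\ra\tau_\tbop$ and Fourier-transforming in $\tau$, this contribution is Schwartz in $\sigma$ on $(\cT^\circ)^2$; because $\la(x,x')\ra$ blows up at $\pa\cT$, the change of variable shows that this contribution is a smooth family of residual b-, scattering-, or semiclassical-scattering operators in the respective regimes, with the expected decay and weights at $\pa\cT$ and uniformly down to $h=0$.

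For $K_1$, I work in local coordinates on $M^2_{\tbop,\flat}$. Over $\cT^\circ$, the standard coordinates $(t,x,t-t',x-x')$ apply and the Schwartz kernel is represented as an oscillatory integral
\[
  K_1 = (2\pi)^{-n}\int e^{i(t-t')\sigma+i(x-x')\cdot\xi}\,a(x,x';\sigma,\xi)\,d\sigma\,d\xi
\]
with $a$ an $m$-th order symbol, so that Fourier transforming in $\tau=t-t'$ at a value $\sigma_0$ produces the standard ps.d.o.\ quantization of $a(x,x';-\sigma_0,\xi)$ on $\cT^\circ$. Near $\ff_{\cT,\flat}\cap\ff_{\cD,\flat}$, in the 3b-coordinates of~\eqref{Eq3ffTffD} the diagonal is cut out by $s_\tbop=0$, $X_\tbop=0$; using the dual 3b-fiber coordinates $(\sigma_\tbop,\xi_\tbop)$ on $\Ttb^*M$ (related to the scattering fibers over $\cT$ by $\sigma_\tbop=\rho_\cD^{-1}\sigma$, $\xi_\tbop=\rho_\cD^{-1}\xi$, cf.~\eqref{EqGTPhaseNonzeroLoc}), the kernel admits an analogous oscillatory integral representation with a symbol $a(\rho_\cD,\rho_\cT,\omega;\sigma_\tbop,\xi_\tbop)$ smooth down to $\rho_\cD=\rho_\cT=0$. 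Making the change of variable $\sigma_\tbop=\la x\ra\sigma$ in the $\tau$-Fourier transform yields uniform descriptions up to $\pa\cT$ in the three regimes. At $\sigma=0$ the result is a classical b-ps.d.o.\ on $\cT$ with symbol $a|_{\sigma_\tbop=0}$ in b-cotangent variables (mirroring Lemma~\ref{LemmaAbNorm}); at fixed $\sigma\neq0$, the factor $\la x\ra^m$ produced when substituting $\sigma_\tbop=\la x\ra\sigma$ into a degree-$m$ symbol accounts for the second weight in $\Psisc^{m,m}(\cT)$; and at $\sigma=\pm h^{-1}$, the further rescaling $\xi_\tbop=h^{-1}\xi_{\scop,\semi}$ exhibits the kernel as a semiclassical scattering symbol of order $(m,m,m)$, with conormal dependence on $h$ inherited from the conormality of $a$ at $\ff_{\cD,\flat}$.

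The principal symbol claims are then automatic from these representations: the leading-order part of $a$ in each relevant quotient is $\sigmatb^m(P)$ pulled back via the identification maps $\iota_\sigma$ of Lemma~\ref{LemmaGTPhase}, or the composition with the high-frequency rescaling as in Proposition~\ref{PropGTHigh}. The main obstacle is ensuring uniformity of the oscillatory integral representation up to $\pa\cT$, and matching the on-diagonal symbolic piece seamlessly with the off-diagonal residual piece at their interface; both amount to verifying that the rescalings $\sigma_\tbop=\la x\ra\sigma$ and $\xi_\tbop=h^{-1}\xi_{\scop,\semi}$ preserve the appropriate symbol classes and are compatible with the conormal regularity of $K_P$ at $\ff_{\cD,\flat}$, which is handled by a direct adaptation of the arguments used in the b-, scattering-, and semiclassical-scattering calculi in Section~\ref{SA}.
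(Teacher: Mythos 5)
Your proposal is correct and follows essentially the same route as the paper: split the kernel into an off-diagonal (residual) piece, whose $\tau$-Fourier transform is controlled via the substitution $\sigma\mapsto\sigma/\rho_\tot$ (equivalently $\sigma_\tbop=\la x\ra\sigma$) forced by the rescaled time variable $\tau_\tbop$, and a near-diagonal piece handled by oscillatory-integral representations with the fiber-variable rescalings of Lemma~\ref{LemmaGTPhase} and Proposition~\ref{PropGTHigh}. The only cosmetic difference is that the paper deduces part~\eqref{It3TNormMemRoughFixed} from the sharper $\scbtop$-statement of Proposition~\ref{Prop3TNormMem} rather than proving it directly, and your rescaling in the semiclassical regime should read $\xi_\tbop=(h\rho_\cD)^{-1}\xi_{\scop,\semi}$ rather than $\xi_\tbop=h^{-1}\xi_{\scop,\semi}$.
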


Part~\eqref{It3TNormMemRoughFixed} follows directly from Proposition~\ref{Prop3TNormMem} below; we will prove part~\eqref{It3TNormMemRoughScl} after the proof of Proposition~\ref{Prop3TNormMem}. We remark that the statement $\wh{N_\cT}(P,0)\in\Psib^m(\cT)$ is a direct consequence of the push-forward theorem; the proofs of the remaining claims require more work. The following result is the pseudodifferential analogue of Proposition~\ref{PropGTscbt}:

\begin{prop}[Membership of $\wh{N_\cT}(P,-)$]
\label{Prop3TNormMem}
  Let $P\in\Psitb^m(M)$ and $\sigma_0>0$. Then, using the notation of \S\usref{SsAscbt}, the operator family $\pm[0,\sigma_0)\ni\sigma\mapsto\wh{N_\cT}(P,\sigma)$ is an element of $\Psiscbt^{m,m,0,0}(\cT)$, with principal symbol given in terms of that of $P$ as in Proposition~\usref{Prop3TNormMemRough}. (Recall here that the principal symbol of a $\scbtop$-ps.d.o.\ is uniquely determined by the principal symbols of the individual operators for all values of $\sigma$.)
\end{prop}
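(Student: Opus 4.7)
The plan is to verify the claim by decomposing $P$ into a part supported near $\diag_{\tbop,\flat}$ (where symbolic/oscillatory-integral techniques apply) and a part with smooth Schwartz kernel on $\ff_{\cT,\flat}$ (where the pushforward of the Fourier transform in $\tau$ is controlled by the results of \S\ref{SsAF}). The target double space
\[
  \cT_\scbtop^2 = \bigl[\,\pm[0,\sigma_0)\times\cT_\bop^2;\{0\}\times\ff_\bop;\{0\}\times\lb_\bop,\{0\}\times\rb_\bop;\pm[0,\sigma_0)\times\pa\diag_\bop\bigr]
\]
should be recognized as arising naturally from the single blow-up structure of $\ff_{\cT,\flat}=\ol{\R_\tau}\times\cT_\bop^2$ (of Lemma~\ref{Lemma3TStruct}) by Fourier-duality between $\tau$ and $\sigma$. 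In particular, under Fourier transform, infinity in the rescaled variable $\tau_\tbop=\tau/\la(x,x')\ra$ becomes a small-$\sigma$ regime rescaled by $\la(x,x')\ra^{-1}$, which is exactly what defines the transition face $\tface$ of $\cT_\scbtop^2$.

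First I would handle the off-diagonal part. Pick a cutoff $\chi$ on $M_{\tbop,\flat}^2$ supported away from $\diag_{\tbop,\flat}$ and equal to $1$ near $\ff_{\cT,\flat}\setminus\diag_{\tbop,\flat}$; then $\chi P$ has Schwartz kernel which is smooth on $M_{\tbop,\flat}^2$, vanishing to infinite order at all boundary hypersurfaces except possibly $\ff_{\cT,\flat}$ and $\ff_{\cD,\flat}$. Using the diffeomorphism $\ff_{\cT,\flat}\cong\ol{\R_{\tau_\tbop}}\times\cT_\bop^2$ from Lemma~\ref{Lemma3TStruct} and the observation that $\tau=\la(x,x')\ra\tau_\tbop$, the Fourier transform in $\tau$ at frequency $\sigma$ is the same as the Fourier transform in $\tau_\tbop$ at frequency $\sigma\la(x,x')\ra$. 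Thus, near $\ff_\bop\subset\cT_\bop^2$ (where $\la(x,x')\ra$ is a defining function of that front face), the variable $\sigma\la(x,x')\ra$ is naturally a projective coordinate on the $\scbtop$-blow-up, and Proposition~\ref{PropAF1} (or its conormal form for smooth compactly supported symbols) applies fiber-wise over $\cT_\bop^2$ to yield the required smoothness and vanishing properties of the kernel of $\wh{N_\cT}(\chi P,\sigma)$ on $\cT_\scbtop^2$: the kernel is smooth down to $\tface_\scbtop$ and $\zface_\scbtop$, and vanishes to infinite order at all other boundary hypersurfaces.

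Next I would handle the part of $P$ with Schwartz kernel supported in an arbitrarily small neighborhood of $\diag_{\tbop,\flat}$. Using the local coordinates near $\diag_{\tbop,\flat}$ introduced in the proof of Lemma~\ref{Lemma3Tiny} (in particular $s_\tbop=(t-t')/\la x\ra$ or variants near $\ff_{\cT,\flat}\cap\ff_{\cD,\flat}$, and $X_\tbop$), the Schwartz kernel can be written as an oscillatory integral
\[
  (2\pi)^{-n}\iint e^{i\tau_{\rm fr}\sigma_\tbop+iX_\tbop\cdot\xi_\tbop}\,a(t,x;\sigma_\tbop,\xi_\tbop)\,d\sigma_\tbop\,d\xi_\tbop
\]
(modulo rapidly decaying remainders), where $\tau_{\rm fr}$ is the appropriate frequency-dual to $\sigma$-type variable. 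Taking the Fourier transform in $\tau$ amounts to fixing $\sigma_\tbop=\la x\ra\sigma$ (or the analogous projective relation in the $\ff_{\cT,\flat}\cap\ff_{\cD,\flat}$ chart), producing a family in $\sigma$ which is precisely a $\scbtop$-conormal distribution at the lifted diagonal of $\cT_\scbtop^2$: the rescaling of the frequency variable by $\la x\ra$ matches the $\scbtop$-cotangent structure at $\tface_\scbtop$, while at $\sigma=0$ one recovers a standard b-conormal kernel on $\cT_\bop^2$. This step is essentially parallel to the computations in the proofs of Lemmas~\ref{LemmaAbNorm} and Proposition~\ref{PropAebMT}, but with the Fourier variable playing the role of the parameter, and one verifies the membership $\wh{N_\cT}(P,\sigma)\in\Psiscbt^{m,m,0,0}(\cT)$ and the claimed principal symbol directly from this symbolic representation using Proposition~\ref{PropGTSymbol}.

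The main obstacle I anticipate is ensuring that the $\scbtop$ structure assembles coherently from the two pieces near the corner $\tface_\scbtop\cap\bface_\scbtop$, where the symbolic (diagonal) piece and the off-diagonal piece meet; this requires a careful bookkeeping of the competing weights $\rho_\cD$ (in the base) and $\sigma$ (in the parameter), with $\rho_\scface_\scbtop\sim\rho_\cD/(\rho_\cD+|\sigma|)$ and $\rho_\tface_\scbtop\sim\rho_\cD+|\sigma|$. The use of $\la(x,x')\ra$ (as opposed to $\la x\ra$ or $\la x'\ra$ separately) in the scaling of $\tau_\tbop$ from Lemma~\ref{Lemma3TStruct} is what makes the off-diagonal analysis symmetric in the two factors, and thus allows the two pieces to glue into a single $\scbtop$-ps.d.o.\ with the stated weights $(r,l,b)=(m,0,0)$ at $\scface_\scbtop$, $\tface_\scbtop$, and $\zface_\scbtop$; the sharpness of these weights is forced by the factor $\la(x,x')\ra$ in $\tau_\tbop$ together with the infinite-order vanishing of $K_{\cT,\flat}$ away from $\ff_{\cT,\flat}\cap\ff_{\cD,\flat}$ at the boundary hypersurfaces of $\ff_{\cT,\flat}$.
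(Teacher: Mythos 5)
Your proposal matches the paper's proof in all essentials: the paper likewise splits $P$ into a residual piece and a near-diagonal piece, computes $\wh{N_\cT}(P,\sigma)$ as the Fourier transform of the kernel on $\ff_{\cT,\flat}\cong\ol{\R}_{\tau_\tbop}\times\cT^2_\bop$ evaluated at frequency $\sigma/\rho_\tot=\sigma\la(x,x')\ra$, and verifies the $\scbtop$-structure by direct coordinate computations in the charts of $\cT^2_\scbtop$ (including the rescaled diagonal variables $\hat s$, $\hat\omega'$ near $\scface_\scbtop$, which produce the weight $m$ there). The only discrepancy is your appeal to Proposition~\ref{PropAF1}, which is neither needed nor quite the right setup for the off-diagonal piece: since that kernel is Schwartz in $\tau_\tbop$, its Fourier transform $\cF_1 K'$ is already Schwartz, and the paper simply checks the composite $(\cF_1 K')(\sigma/\rho_\tot,\cdot)$ directly on the blown-up space.
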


As a consequence, we can define the $\cT$-$\tface$-normal operators (cf.\ Definition~\ref{DefGTNormtf})
\[
  N_{\cT,\tface}^\pm(P) \in \Psi_{\scop,\bop}^{m,m,0}(\ol{{}^+N}\pa\cT)
\]
also in the ps.d.o.\ setting.

\begin{proof}[Proof of Proposition~\usref{Prop3TNormMem}]
  We only consider the behavior of $\wh{N_\cT}(P,\sigma)$ for $\sigma\in[0,\sigma_0)$, the analysis for $\sigma\in(-\sigma_0,0]$ being completely analogous. In the coordinates $(\tau_\tbop,x,x')$ introduced in~\eqref{Eq3TStruct}, we have
  \begin{equation}
  \label{Eq3TNormMemDedensity}
    K_{\cT,\flat} = K\nu_\tbop,
  \end{equation}
  where $K=K(\tau_\tbop,x,x')$ is conormal (of order $m+\frac14$) at $\{0\}\times\diag_{\cT,\bop}$ and vanishes rapidly as $|\tau_\tbop|\to\infty$ or as $|x|/|x'|\to 0,\infty$; here $\diag_{\cT,\bop}\subset\cT_\bop^2$ denotes the b-diagonal, and $\nu_\tbop=|\frac{\dd\tau}{\la x'\ra}\frac{\dd x'{}^1\cdots\dd x'{}^{n-1}}{\la x'\ra^{n-1}}|$ is the right lift of the 3b-density~\eqref{EqGOmegatb}. Thus, the Schwartz kernel of $\wh{N_\cT}(P,\sigma)$ is
  \begin{equation}
  \label{Eq3TNormMemK}
    \frac{\la(x,x')\ra}{\la x'\ra}\wh{K_0}(\sigma)\nu_\bop,\qquad
    \nu_\bop:=\Bigl|\frac{\dd x'{}^1\cdots\dd x'{}^{n-1}}{\la x'\ra^{n-1}}\Bigr|,
  \end{equation}
  where $\wh{K_0}(\sigma)$ is given by
  \begin{align}
    \wh{K_0}(\sigma;x,x') &= \Bigl(\frac{\la(x,x')\ra}{\la x'\ra}\Bigr)^{-1}\int_\R e^{i\sigma\tau}K\Bigl(\frac{\tau}{\la(x,x')\ra},x,x'\Bigr)\,\frac{\dd\tau}{\la x'\ra} \nonumber\\
  \label{Eq3TNormMemK0}
      &= \int_\R e^{i\la(x,x')\ra\sigma\tau_\tbop} K(\tau_\tbop,x,x')\,\dd\tau_\tbop.
  \end{align}

  Consider first the case $m=-\infty$, i.e.\ $P\in\Psitb^{-\infty}(M)$ is residual and thus $K$ is smooth. When either $x$ or $x'$ vary over a compact subset of $\cT^\circ$, then $\wh{K_0}(\sigma,x,x')$ is smooth in all arguments. Consider thus a neighborhood of $\ff_{\cT,\bop}$; letting $\rho=|x|^{-1}$, $\omega=\frac{x}{|x|}$ and likewise $\rho'=|x'|^{-1}$, $\omega'=\frac{x'}{|x'|}$, we work with the coordinates
  \begin{equation}
  \label{Eq3TNormMemCoord}
    \mu:=\rho+\rho'\geq 0,\quad
    s:=\frac{\rho-\rho'}{\rho+\rho'}\in[-1,1],\quad
    \omega,\quad \omega',\quad \tau_\tbop
  \end{equation}
  on $\ff_{\cT,\flat}$. Denote by $K'=K'(\tau_\tbop,\mu,s,\omega,\omega')$ the kernel $K$ in these coordinates, so $K'$ is smooth and vanishes to infinite order as $s\to\pm 1$ or $\tau_\tbop\to\pm\infty$. Then, writing
  \begin{equation}
  \label{Eq3TNormMemRho}
    \rho_\tot:=\la(x,x')\ra^{-1}=\bigl(1+(\half\mu(1+s))^{-2}+(\half\mu(1-s))^{-2}\bigr)^{-1/2}
  \end{equation}
  for the total boundary defining function of $\cT_\bop^2$, the expression for $\wh{K_0}(\sigma)$ in the coordinates $\mu,s,\omega,\omega'$ is
  \begin{equation}
  \label{Eq3TNormMemB}
    (\sigma;\mu,s,\omega,\omega') \mapsto \int e^{i\tau_\tbop\sigma/\rho_\tot} K'(\tau_\tbop,\mu,s,\omega,\omega')\,\dd\tau_\tbop = (\cF_1 K')\Bigl(\frac{\sigma}{\rho_\tot},\mu,s,\omega,\omega'\Bigr),
  \end{equation}
  where $\cF_1$ denotes the Fourier transform in the first argument (in which, as usual in this paper, we use the opposite of the `standard' convention). 

  When $s$ lies in a fixed compact subset of $(-1,1)$, then $\sigma$, resp.\ $\hat\rho_\tot=\rho_\tot/\sigma$ lifts to a defining function of $\tface_\scbtop$, resp.\ total defining function of $\scface_\scbtop\cup\bface_\scbtop\subset\cT_\scbtop$ away from $\zface_\scbtop$; moreover, $\mu=\mu(s,\rho_\tot)$ is a smooth function of $\rho_\tot$ and $s$ in this range which vanishes simply at $\rho_\tot=0$. Thus, $\wh{K_0}$ is given by
  \[
    (\sigma,\hat\rho_\tot,s,\omega,\omega') \mapsto (\cF_1 K')\bigl(\hat\rho_\tot^{-1},\mu(s,\sigma\hat\rho_\tot),s,\omega,\omega'\bigr),
  \]
  This vanishes rapidly as $\hat\rho_\tot\searrow 0$ and is smooth in the remaining variables, as required for membership of $\wh{N_\cT}(P,-)$ in $\Psiscbt^{-\infty,-\infty,0,0}(\cT)=\rho_{\scface_\scbtop}^\infty\Psiscbt^{-\infty}(\cT)$. Near $\zface_\scbtop$ on the other hand, and with $s$ still lying in a fixed compact subset of $(-1,1)$, we can use $\hat\sigma=\sigma/\rho_\tot\geq 0$ and $\rho_\tot\geq 0$ as local defining functions of $\zface_\scbtop$ and $\tface_\scbtop$, respectively, and $\wh{K_0}$ is given by
  \begin{equation}
  \label{Eq3TNormMemZf}
    (\hat\sigma,\rho_\tot,s,\omega,\omega') \mapsto (\cF_1 K')\bigl(\hat\sigma,\mu(s,\rho_\tot),s,\omega,\omega'\bigr),
  \end{equation}
  which is smooth in all variables.

  It remains to consider the case when $s$ is near $-1$ (the case when $s$ is near $+1$ being completely analogous), thus we need to study $\wh{K_0}$ near $\lb_\scbtop\cup\tlb_\scbtop$. Away from $\zface_\scbtop\cup\tlb_\scbtop$ then, $\sigma$, $\hat\mu=\mu/\sigma$, and $\hat s:=s+1$ are defining functions of $\tface_\scbtop$, $\bface_\scbtop$, and $\lb_\scbtop$, respectively. In this region, we can write $\rho_\tot=a(\mu,\hat s)\mu\hat s=a(\sigma\hat\mu,\hat s)\sigma\hat\mu\hat s$ where $0<a$ is a smooth function. Thus, $\wh{K_0}$ takes the form
  \[
    (\sigma,\hat\mu,\hat s,\omega,\omega') \mapsto (\cF_1 K')\Bigl(\frac{1}{a(\sigma\hat\mu,\hat s)\hat\mu\hat s},\sigma\hat\mu,\hat s-1,\omega,\omega'\Bigr),
  \]
  and therefore vanishes rapidly as $\hat\mu\hat s\to 0$, i.e.\ at $\bface_\scbtop\cup\lb_\scbtop$. Near $\zface_\scbtop\cup\tlb_\scbtop$ on the other hand, we can use\footnote{We recycle old symbols here with new definitions.} $\hat\sigma=\sigma/\mu$, $\mu$, and $\hat s=s+1$ as local defining functions of $\zface_\scbtop\cup\tlb_\scbtop$, $\tface_\scbtop$, and $\lb_\scbtop\cup\tlb_\scbtop$, and $\wh{K_0}$ takes the form
  \[
    (\hat\sigma,\mu,\hat s,\omega,\omega') \mapsto (\cF_1 K')\Bigl(\frac{\hat\sigma}{a(\mu,\hat s)\hat s},\mu,\hat s-1,\omega,\omega'\Bigr),
  \]
  which thus vanishes rapidly at $\hat s=0$ (i.e.\ at $\lb_\scbtop\cup\tlb_\scbtop$) and is smooth down to $\hat\sigma=0$ and $\mu=0$. This completes the proof of the Proposition when $P\in\Psitb^{-\infty}(M)$.

  For $P\in\Psitb^m(M)$, $m\in\R$, it suffices to consider de-densitized kernels $K$ (see~\eqref{Eq3TNormMemDedensity}) which are supported in an arbitrary but fixed neighborhood of $\{0\}\times\diag_{\cT,\bop}$. We only consider a neighborhood of the boundary $\{0\}\times(\diag_{\cT,\bop}\cap\,\ff_{\cT,\bop})$ of the diagonal, the arguments near the interior being similar (and indeed simpler). Thus, we work with the coordinates $\rho_\tot$, $s$, $\omega$, $\omega'$, $\tau_\tbop$ from~\eqref{Eq3TNormMemCoord}--\eqref{Eq3TNormMemRho}; recall here that $\rho_\tot/\mu$ is smooth and varies over a compact subset of $(0,\infty)$ when $s$ is restricted to a compact subset of $(-1,1)$. In these coordinates then, $K$ is given by an oscillatory integral
  \begin{equation}
  \label{Eq3TNormMemOscFull}
  \begin{split}
    &(\tau_\tbop,\rho_\tot,s,\omega,\omega') \\
    &\qquad \mapsto (2\pi)^{-n}\iiint_{\R\times\R\times\R^{n-2}} e^{-i\sigma_\tbop\tau_\tbop} e^{i\xi s}e^{i\eta\cdot(\omega-\omega')} a(\rho_\tot,\omega;\sigma_\tbop,\xi,\eta)\,\dd\sigma_\tbop\,\dd\xi\,\dd\eta,
  \end{split}
  \end{equation}
  where $a$ is a symbol of order $m$ in $(\sigma_\tbop,\xi,\eta)$. Recalling formula~\eqref{Eq3TNormMemK0}, $\wh{K_0}(\sigma;x,x')$ is given in these coordinates by
  \begin{equation}
  \label{Eq3TNormMemOsc}
    (\sigma,\rho_\tot,s,\omega,\omega') \mapsto (2\pi)^{-(n-1)}\iint_{\R\times\R^{n-2}} e^{i\xi s}e^{i\eta\cdot(\omega-\omega')} a\Bigl(\rho_\tot,\omega;\frac{\sigma}{\rho_\tot},\xi,\eta\Bigr)\,\dd\xi\,\dd\eta.
  \end{equation}
  Near $\scface_\scbtop$, we introduce coordinates $\sigma\geq 0$, $\hat\rho_\tot=\rho_\tot/\sigma\geq 0$, $\hat s=s/\hat\rho_\tot\in\R$, $\omega$, and $\hat\omega':=(\omega-\omega')/\hat\rho_\tot$, in which $\wh{K_0}$ is given by
  \begin{equation}
  \label{Eq3TNormMemSc}
  \begin{split}
    &(\sigma,\hat\rho_\tot,\hat s,\omega,\hat\omega') \\
    &\qquad \mapsto (2\pi\hat\rho_\tot)^{-(n-1)} \iint_{\R\times\R^{n-2}} e^{i\xi_\scop\hat s}e^{i\eta_\scop\cdot\hat\omega'} a\Bigl(\sigma\hat\rho_\tot,\omega;\frac{1}{\hat\rho_\tot},\frac{\xi_\scop}{\hat\rho_\tot},\frac{\eta_\scop}{\hat\rho_\tot}\Bigr)\,\dd\xi_\scop\,\dd\eta_\scop.
  \end{split}
  \end{equation}
  The factor $\hat\rho_\tot^{-(n-1)}$ combines with $\nu_\bop$ in~\eqref{Eq3TNormMemK} to give a right $\scbtop$-density; and the rescaling
  \[
    (\hat\rho_\tot,\omega;\xi_\scop,\eta_\scop) \mapsto \hat\rho_\tot^m a\Bigl(\sigma\hat\rho_\tot,\omega;\frac{1}{\hat\rho_\tot},\frac{\xi_\scop}{\hat\rho_\tot},\frac{\eta_\scop}{\hat\rho_\tot}\Bigr)
  \]
  can easily be checked to be a symbol of order $m$ in $(\xi_\scop,\eta_\scop)$ which is bounded conormal in $\hat\rho_\tot\geq 0$ and as such depends smoothly on $\sigma$ down to $\sigma=0$. Therefore, its inverse Fourier transform in~\eqref{Eq3TNormMemSc} is a conormal distribution at $\hat s=0=\hat\omega'$, vanishes rapidly as $|\hat s|+|\hat\omega'|\to\infty$ (thus at $\bface_\scbtop$), and is conormal with weight $\hat\rho_\tot^{-m}$ at $\scface_\scbtop$ smoothly down to $\sigma=0$.
  
  In order to finish the proof that $\wh{N_\cT}(P,-)\in\Psiscbt^{m,m,0,0}(\cT)$, it remains to study $\wh{K_0}$ in the coordinates $\hat\sigma=\sigma/\rho_\tot$ and $\rho_\tot$ near the diagonal of $\zface_\scbtop$ as in~\eqref{Eq3TNormMemZf}; in these, $\wh{K_0}$ is given by
  \begin{equation}
  \label{Eq3TNormMemzf}
    (\hat\sigma,\rho_\tot,s,\omega,\omega') \mapsto (2\pi)^{-(n-1)}\iint_{\R\times\R^{n-2}} e^{i\xi s}e^{i\eta\cdot(\omega-\omega')}a(\rho_\tot,\omega;\hat\sigma,\xi,\eta)\,\dd\xi\,\dd\eta,
  \end{equation}
  which is thus conormal of order $m$ at the diagonal $s=0$, $\omega=\omega'$ with smooth dependence on $\rho_\tot\geq 0$ and $\hat\sigma\geq 0$.
\end{proof}

\begin{proof}[Proof of Proposition~\usref{Prop3TNormMemRough}]
  It remains to prove part~\eqref{It3TNormMemRoughScl}. We do this first in the case that $P\in\Psitb^{-\infty}(M)$ is residual. We use the notation from the proof of Proposition~\ref{Prop3TNormMem}. Consider again the expression~\eqref{Eq3TNormMemB} for $\wh{K_0}(\sigma)$ with $\sigma=h^{-1}$, $0<h<1$: since $\sigma/\rho_\tot=(h\rho_\tot)^{-1}\nearrow\infty$ as either $h\searrow 0$ or $\rho_\tot\searrow 0$, we conclude that indeed $\wh{N_\cT}(P,h^{-1})\in \rho_\cD^\infty h^\infty\Psi_{\bop,\semi}^{-\infty}(\cT)=\rho_\cD^\infty h^\infty\Psi_{\scop,\semi}^{-\infty}(\cT)$ in this case.

  It remains to study operators $P\in\Psitb^m(M)$ whose Schwartz kernels are localized near the diagonal. In local coordinates near $\ff_{\cT,\bop}\subset\cT^2_\bop$, the Schwartz kernel of $\wh{K_0}$ is then given by the expression~\eqref{Eq3TNormMemOsc} with $\sigma=h^{-1}$. Corresponding local coordinates on the interior of the semiclassical scattering front face are then $h$, $\rho_\tot$, $\hat s:=s/(h\rho_\tot)$, $\omega$ and $\hat\omega':=(\omega-\omega')/(h \rho_\tot)$, and $\wh{K_0}$ is given by
  \begin{align*}
    (h,\rho_\tot,\hat s,\omega,\hat\omega') \mapsto (2\pi h\rho_\tot)^{-(n-1)}&\iint_{\R\times\R^{n-2}} e^{i\xi_{\scop,\semi}\hat s}e^{i\eta_{\scop,\semi}\cdot\hat\omega'} \\
   &\qquad \times a\Bigl(\rho_\tot,\omega;\frac{1}{h\rho_\tot},\frac{\xi_{\scop,\semi}}{h\rho_\tot},\frac{\eta_{\scop,\semi}}{h\rho_\tot}\Bigr)\,\dd\xi_{\scop,\semi}\,\dd\eta_{\scop,\semi}.
  \end{align*}
  Multiplying the positive b-density $\nu_\bop$ in~\eqref{Eq3TNormMemK} with the weight $(h\rho_\tot)^{-(n-1)}$ gives a positive right semiclassical scattering density, while the oscillatory integral itself is a conormal distribution of order $m$ at the diagonal $\hat s=0$, $\hat\omega'=0$ which vanishes rapidly as $|\hat s|+|\hat\omega'|\to\infty$; indeed this follows from the observation that $h^m\rho_\tot^m a(\rho_\tot,\omega;\frac{1}{h\rho_\tot},\frac{\xi_{\scop,\semi}}{h\rho_\tot},\frac{\eta_{\scop,\semi}}{h\rho_\tot})$ is a symbol of order $m$ in $(\xi_{\scop,\semi},\eta_{\scop,\semi})$, with bounded conormal regularity as $h\to 0$ or $\rho_\tot\to 0$. 

  The statements about the principal symbols can be checked by inspection of the explicit calculations in the proof of Proposition~\usref{Prop3TNormMem} as well as the present proof.
\end{proof}

\begin{prop}[Algebraic properties of $N_\cT$]
\fakephantomsection
\label{Prop3TNorm}
  \begin{enumerate}
  \item{\rm (Multiplicativity.)} The maps assigning to $P\in\Psitb(M)$ the normal operator $N_\cT(P)\in\Psi_{\tbop,I}(N_\tbop\cT)$, or the spectral family $(\pm[0,\sigma_0)\ni\sigma\mapsto\wh{N_\cT}(P,\sigma))\in\bigcup_{m\in\R}\Psiscbt^{m,m,0,0}(\cT)$, or any individual element $\wh{N_\cT}(P,\sigma)\in\Psisc^{m,m}(\cT)$ of the spectral family, are multiplicative.
  \item{\rm (Short exact sequence.)} The map $N_\cT$ gives a short exact sequence
    \[
      0 \to \rho_\cT\Psitb^m(M) \hra \Psitb^m(M) \xra{N_\cT} \Psi_{\tbop,I}^m(N_\tbop\cT) \to 0.
    \]
  \end{enumerate}
\end{prop}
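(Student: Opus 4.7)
\emph{Multiplicativity.} The cleanest route is through the testing definition of the spectral family. For $P_1,P_2\in\Psitb(M)$, $\sigma\in\R$, $u\in\CIdot(\cT)$, and $\tilde u\in\CI(M)$ extending $u$, I would verify directly that
\[
  \wh{N_\cT}(P_1 P_2,\sigma)u = \bigl(e^{i\sigma/\upbeta^*\rho_0}P_1 P_2(e^{-i\sigma/\upbeta^*\rho_0}\tilde u)\bigr)\big|_\cT = \wh{N_\cT}(P_1,\sigma)\wh{N_\cT}(P_2,\sigma)u,
\]
by inserting $e^{-i\sigma/\upbeta^*\rho_0}e^{i\sigma/\upbeta^*\rho_0}$ between $P_1$ and $P_2$ and noting that, because $\wh{N_\cT}(P_2,\sigma)\in\Psisc^{m_2,m_2}(\cT)$ by Proposition~\ref{Prop3TNormMemRough}, the conjugated operator $e^{i\sigma/\upbeta^*\rho_0}P_2 e^{-i\sigma/\upbeta^*\rho_0}$ maps $\CI(M)$ to $\CI(M)$. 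Equivalently, one can argue via a triple-space construction $M^3_{\tbop,\flat}$ (treated in detail in~\S\ref{SsLC} within the large calculus): the composition of two Schwartz kernels is obtained by pullback to $M^3_{\tbop,\flat}$ and pushforward back to $M^2_{\tbop,\flat}$, and restriction to $\ff_{\cT,\flat}$ commutes with this procedure because the triple space has a codimension-two corner $\ff_{\cT,\flat,3}$ mapping by a b-fibration onto $\ff_{\cT,\flat}$ under each partial projection. Multiplicativity of $N_\cT(P)$ and of the entire spectral family on $\Psiscbt(\cT)$ follows at once from the fixed-$\sigma$ statement by Fourier (resp.\ $\scbtop$-) synthesis.

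\emph{Kernel of $N_\cT$.} In the forward direction, $P\in\rho_\cT\Psitb^m(M)$ has Schwartz kernel vanishing at $\ff_{\cT,\flat}$, so $K_{\cT,\flat}=0$ and hence $N_\cT(P)=0$. For the converse I would verify that the left lift of $\rho_\cT$ to $M^2_{\tbop,\flat}$ is a defining function of $\ff_{\cT,\flat}$; near the diagonal this is immediate from the local coordinate description in the proof of Lemma~\ref{Lemma3Tiny} (in the two charts of~\eqref{EqGV3CornerVF}--style, $\rho_\cT$ lifts to a smooth function vanishing simply at $\ff_{\cT,\flat}$), and away from the diagonal one uses that Schwartz kernels of elements of $\Psitb^m(M)$ are required to vanish to infinite order at all boundary hypersurfaces other than $\ff_{\cT,\flat}$ and $\ff_{\cD,\flat}$. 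Since $\rho_\cT$ does not vanish on $\ff_{\cD,\flat}\setminus\ff_{\cT,\flat}$, Taylor expansion at $\ff_{\cT,\flat}$ then yields $P=\rho_\cT P'$ with $P'\in\Psitb^m(M)$.

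\emph{Surjectivity.} Given $Q\in\Psi_{\tbop,I}^m(N_\tbop\cT)$, I would first use translation invariance to identify its Schwartz kernel with a distribution $\kappa$ on $\ol\R_\tau\times\cT^2_\bop$ (valued in right 3b-densities), conormal of order $m+\tfrac14$ at $\{\tau=0\}\times\diag_{\cT,\bop}$, rapidly decaying as $|\tau|\to\infty$ and at $\lb_\bop\cup\rb_\bop\subset\cT^2_\bop$, and smooth down to $\ff_{\cT,\bop}$. Under the diffeomorphism~\eqref{Eq3TStruct}, the rescaling $\tau_\tbop=\tau/\la(x,x')\ra$ identifies this space of kernels with precisely the space of admissible restrictions of 3b-kernels to $\ff_{\cT,\flat}\cong\ol\R\times\cT^2_\bop$. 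Using a collar neighborhood of $\ff_{\cT,\flat}$ in $M^2_{\tbop,\flat}$ (trivialized by a left lift of $\rho_\cT$) and a cutoff supported near $\ff_{\cT,\flat}$, I would extend $\kappa$ to a distribution on $M^2_{\tbop,\flat}$, smooth in the transverse direction, conormal at $\diag_{\tbop,\flat}$ of order $m$, and vanishing to infinite order at the remaining boundary faces. This defines a $P\in\Psitb^m(M)$ with $N_\cT(P)=Q$.

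\emph{Main obstacle.} The multiplicativity step is soft once one has the testing definition, and surjectivity is a standard extension argument given Lemma~\ref{Lemma3TStruct}. The subtlest point is the kernel characterization: one must check that no spurious contributions arise from $\ff_{\cD,\flat}$, i.e.\ that the left lift of $\rho_\cT$ is genuinely a defining function of $\ff_{\cT,\flat}$ globally on the support of Schwartz kernels of elements of $\Psitb^m(M)$ and not merely up to a factor of defining functions of other faces. This is a purely geometric verification in the blow-down picture of~\eqref{Eq3Double}, but it is the step where the order of blow-ups in Definition~\ref{Def3Double} is essential.
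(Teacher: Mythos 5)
Your proof is correct and, for the multiplicativity, coincides with the paper's: the paper's entire proof consists of the observation that multiplicativity of $\wh{N_\cT}(-,\sigma)$ follows from its testing definition (the remark after Definition~\ref{Def3TNorm}), with the multiplicativity of $N_\cT$ and of the $\scbtop$-family then immediate. The paper gives no argument at all for the short exact sequence, so your kernel and surjectivity steps — that the left lift of $\rho_\cT$ is, on the essential support of 3b-Schwartz kernels, a defining function of $\ff_{\cT,\flat}$ (the extra vanishing of that lift at $\lface$, $\iface$, $\lb_\cT$ being absorbed by the required infinite-order vanishing of the kernels there), and the extension of translation-invariant kernels off $\ff_{\cT,\flat}\cong\ol\R\times\cT^2_\bop$ via Lemma~\ref{Lemma3TStruct} — are a correct filling-in of what the paper treats as standard.
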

\begin{proof}
  The multiplicativity of $\wh{N_\cT}(-,\sigma)$ follows from its testing definition, see the comment after Definition~\ref{Def3TNorm}. The multiplicativity of $N_\cT$ is a direct consequence of this.
\end{proof}

\subsection{Normal operator at the dilation face\texorpdfstring{ $\cD$}{}}
\label{Ss3D}

We now turn to the $\cD$-normal operator, which on the Schwartz kernel level captures the restriction to $\ff_{\cD,\flat}$.

\begin{lemma}[Structure of $\ff_{\cD,\flat}$]
\label{Lemma3DStruct}
  Denote by $\cD_\bop^2=[\cD^2;(\pa\cD)^2]$ the b-double space of $\cD$, and by $\ff_{\cD,\bop}$ its front face. Then
  \begin{equation}
  \label{Eq3DStruct}
    \ff_{\cD,\flat} \cong \bigl[ [0,\infty] \times \cD^2_\bop; \{1\}\times\ff_{\cD,\bop} \bigr].
  \end{equation}
  This diffeomorphism is explicitly given as follows: fix a boundary defining function $\rho_0\in\CI(M_0)$ and a collar neighborhood $[0,\eps)_{\rho_0}\times\pa M_0$ of $\pa M_0$, and consider the corresponding product collar neighborhood $[0,\eps)_{\rho_0}\times[0,\eps)_{\rho_0'}\times\pa M_0\times\pa M_0$ of $(\pa M_0)^2\subset M_0^2$. Then the map $(\rho_0,\rho_0',q,q')\mapsto(\rho_0/\rho_0',q,q')$ extends by continuity from $(M_0^\circ)^2$ and upon restriction to $\ff_{\cD,\flat}$ to the diffeomorphism~\eqref{Eq3DStruct}.
\end{lemma}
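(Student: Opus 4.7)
The plan is to identify $\ff_\bop\subset(M_0)^2_\bop$ with $[0,\infty]_s\times(\pa M_0)^2$ via $s=\rho_0/\rho_0'$, recognize the four subsequent blow-up centers as p-submanifolds of $\ff_\bop$, and then commute the blow-ups into the order that displays the claimed structure. First I would recall the standard identification of the b-front face: using the chosen boundary defining function $\rho_0$ and collar of $\pa M_0$, the map $(\rho_0,q,\rho_0',q')\mapsto(\rho_0/\rho_0',q,q')$ on $(M_0^\circ)^2$ extends by continuity to a diffeomorphism $\ff_\bop\cong[0,\infty]_s\times(\pa M_0)^2$. Working in local coordinates, I would then verify that each of the four centers in~\eqref{Eq3Double} lies in $\ff_\bop$ and corresponds under this identification to $\fp_\tbop=\{(1,\fp,\fp)\}$, $\fp_{L\cap R}=[0,\infty]\times\{(\fp,\fp)\}$, $\fp_L=[0,\infty]\times\{\fp\}\times\pa M_0$, and $\fp_R=[0,\infty]\times\pa M_0\times\{\fp\}$; for $\fp_\tbop$ this uses that locally $\diag_\bop$ and $\pi_{\bop,L}^{-1}(\{\fp\})$ cut out $\{s=1,\ Y=Y'\}$ and $\{Y=0\}$ in $\ff_\bop$ (with $Y\in\R^{n-1}$ a chart on $\pa M_0$ near $\fp$). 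Since all four centers lie in $\ff_\bop$, the lift of $\ff_\bop$ to $M_{\tbop,\flat}^2$, which is $\ff_{\cD,\flat}$, is the iterated blow-up of $\ff_\bop$ along these four p-submanifolds in the order prescribed by~\eqref{Eq3Double}.

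Next I would commute the blow-ups using the nesting $\fp_\tbop\subset\fp_{L\cap R}\subset\fp_L$, $\fp_{L\cap R}\subset\fp_R$, together with $\fp_L\cap\fp_R=\fp_{L\cap R}$. Applying the commutation rule $[N;S;T]=[N;T;\beta_T^{-1}(S)]$ whenever $S\subset T$ is a p-submanifold of the center (a form of Proposition~5.11.2 in \cite{MelroseDiffOnMwc}), I would move $\fp_\tbop$ past $\fp_{L\cap R}$, then past $\fp_L$, then past $\fp_R$, and then move $\fp_{L\cap R}$ past $\fp_L$ and $\fp_R$, arriving at
\[
\ff_{\cD,\flat}=[\ff_\bop;\fp_L;\fp_R;\fp_{L\cap R}^*;\fp_\tbop^*],
\]
where the starred objects denote iterated lifts.

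Finally, I would recognize this reordered iterated blow-up. Blowing up $\fp_L$ and then the lift of $\fp_R$ in $[0,\infty]\times(\pa M_0)^2$ replaces each factor of $\pa M_0$ by $\cD=[\pa M_0;\{\fp\}]$, producing $[0,\infty]\times\cD^2$. The iterated lift $\fp_{L\cap R}^*$ becomes $[0,\infty]\times(\pa\cD)^2\subset[0,\infty]\times\cD^2$, and blowing it up yields $[0,\infty]\times\cD^2_\bop$. Since $\{(\fp,\fp)\}$ lies in every preceding center, the iterated lift $\fp_\tbop^*$ of $\{(1,\fp,\fp)\}$ is simply the preimage of this point under all preceding blow-downs, namely $\{1\}\times\ff_{\cD,\bop}$. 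Assembling, $\ff_{\cD,\flat}\cong[[0,\infty]\times\cD^2_\bop;\{1\}\times\ff_{\cD,\bop}]$, realized explicitly by the continuous extension of $(\rho_0,q,\rho_0',q')\mapsto(\rho_0/\rho_0',q,q')$.

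The main obstacle will be justifying the commutation of blow-ups at each stage: one must confirm that each iterated lift is a p-submanifold, that the required nesting survives each lift, and---for the step moving $\fp_{L\cap R}^*$ past $\fp_L$ and $\fp_R$---that the lift of $\fp_R$ after blowing up $\fp_L$ still contains the appropriate lift of $\fp_{L\cap R}$ (so that the $S\subset T$ hypothesis of the commutation rule applies). These verifications are routine given the clean containment structure of the original four centers, but they are best carried out by direct inspection in the local coordinates introduced at the outset.
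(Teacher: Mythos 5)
Your proposal is correct and follows essentially the same route as the paper's proof: restrict to the b-front face $\ff_\bop\cong[0,\infty]_s\times(\pa M_0)^2$, commute the four blow-ups so that $\fp_L$, $\fp_R$ come first, then $\fp_{L\cap R}$, then $\fp_\tbop$, and identify the resulting iterated blow-up with $[[0,\infty]\times\cD^2_\bop;\{1\}\times\ff_{\cD,\bop}]$. The commutation steps you flag as the main obstacle are exactly the ones the paper carries out (each justified by the containment pattern $\fp_\tbop\subset\fp_{L\cap R}\subset\fp_L,\fp_R$ and $\fp_L\cap\fp_R=\fp_{L\cap R}$), so there is no gap.
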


See Figure~\usref{Fig3DStruct}.

\begin{figure}[!ht]
\centering
\includegraphics{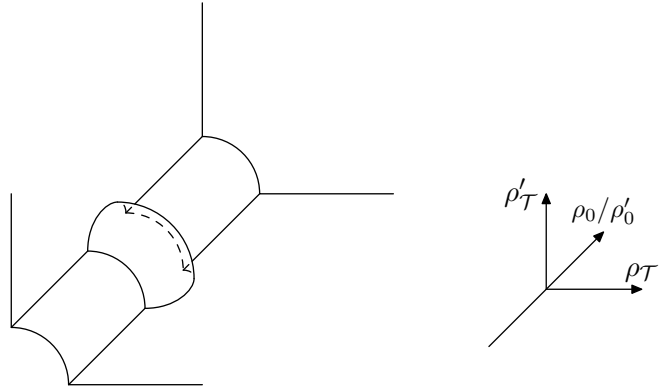}
\caption{Structure of $\ff_{\cD,\flat}$. We only show a single coordinate in the left and right factor of $\cD\times\cD$, namely the defining function $\rho_\cT$, resp.\ $\rho_\cT'$ of $\pa\cD$ in the first, resp.\ second factor.}
\label{Fig3DStruct}
\end{figure}

\begin{proof}[Proof of Lemma~\usref{Lemma3DStruct}]
  The front face $\ff\subset(M_0^2)_\bop$ is diffeomorphic to $[0,\infty]_s\times(\pa M_0)^2$ where $s=\frac{\rho_0}{\rho_0'}$. In the following, we use that $[\pa M_0;\{\fp\}]=\cD$, and we work entirely inside of $\ff=[0,\infty]\times(\pa M_0)^2$. 

  Consider the resolution~\eqref{Eq3Double}, restricted to $\ff$; then
  \[
    \ff_{\cD,\flat} = [\ff; \fp_\tbop\cap\ff; \fp_{L\cap R}\cap\ff; \fp_L\cap\ff, \fp_R\cap\ff ].
  \]
  Using the terminology introduced after~\eqref{EqAItBlowup}, we can now commute $\fp_\tbop\cap\ff$ through $\fp_{L\cap R}\cap\ff$ ($\supset$), and then further through $\fp_L\cap\ff$ and $\fp_R\cap\ff$ ($\supset$; $\fp_{L\cap R}$). In the resulting naturally diffeomorphic space $\ff_{\cD,\flat}=[\ff;\fp_{L\cap R}\cap\ff;\fp_L\cap\ff,\fp_R\cap\ff;\fp_\tbop\cap\ff]$, we may then commute $\fp_{L\cap R}\cap\ff$ through $\fp_L\cap\ff$ and $\fp_R\cap\ff$ ($\supset$). In summary, we have a natural diffeomorphism
  \[
    \ff_{\cD,\flat} = [\ff; \fp_L\cap\ff, \fp_R\cap\ff; \fp_{L\cap R}\cap\ff; \fp_\tbop\cap\ff]
  \]
  But the first two blow-ups produce $[0,\infty]\times\cD\times\cD$, to which $\fp_{L\cap R}\cap\ff$ lifts as $[0,\infty]\times(\pa\cD)^2$. Therefore, $\ff_{\cD,\flat}$ is the blow-up of $[0,\infty]\times\cD_\bop^2$ at $\{1\}\times\ff_{\cD,\bop}$ (the lift of $\fp_\tbop\cap\ff$), as claimed.
\end{proof}

Recall from Definition~\ref{DefAebExt} the extended edge-b-double space of $[0,\infty)\times\cD$ with edge structure given by the fibration $[0,\infty)\times\pa\cD\to[0,\infty)$,
\[
  \bigl([0,\infty)\times\cD\bigr)_{\ebop,\sharp}^2 = \bigl[ [0,\infty)^2 \times \cD^2; \{(0,0)\}\times\cD^2; \diag_{[0,\infty)}\times(\pa\cD)^2; [0,\infty)^2\times(\pa\cD)^2\bigr].
\]
The b-front face $\ff_{\bop,\sharp}$ (the lift of $\{(0,0)\}\times\cD^2$) is diffeomorphic, via restriction of the map $(\rho_0,\rho'_0,q,q')\mapsto(\rho_0/\rho_0',q,q')$, to the blow-up of $[0,\infty]\times\cD^2$ at its intersection $\{1\}\times(\pa\cD)^2$ with the fiber diagonal (see~\eqref{EqAebMTffb}) and at $[0,\infty]\times(\pa\cD)^2$. Identifying $[0,\infty)\times\cD\cong{}^+N_\tbop\cD$ (see Definition~\ref{DefGDSpace}) by means of a choice of boundary defining function $\rho_0\in\CI(M_0)$, we have thus proved the following result:

\begin{prop}[Relationship of $\ff_{\cD,\flat}$ and the extended edge-b-double space of ${}^+N_\tbop\cD$]
\label{Prop3DRel}
  The boundary hypersurface $\ff_{\cD,\flat}$ is diffeomorphic to the b-front face $\ff_{\bop,\sharp}\subset({}^+N_\tbop\cD)_{\eop,\bop,\sharp}^2$. Via the choice of a boundary defining function $\rho_0\in\CI(M_0)$, both are diffeomorphic to $[[0,\infty]\times\cD_\bop^2;\{1\}\times\ff_{\cD,\bop}]$ (in the explicit manner described above as well as in Lemma~\usref{Lemma3DStruct}).
\end{prop}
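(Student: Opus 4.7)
The approach is to verify that both $\ff_{\cD,\flat}$ and $\ff_{\bop,\sharp}$ arise as the same iterated blow-up $[[0,\infty]\times\cD^2_\bop;\{1\}\times\ff_{\cD,\bop}]$; Lemma~\ref{Lemma3DStruct} already handles $\ff_{\cD,\flat}$, so the task reduces to analyzing the b-front face of the extended edge-b-double space of ${}^+N_\tbop\cD$.

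First, I would fix $\rho_0\in\CI(M_0)$ and use it to identify ${}^+N_\tbop\cD\cong[0,\infty)\times\cD$, so that by Definitions~\ref{DefAebDouble} and~\ref{DefAebExt}
\[
  ({}^+N_\tbop\cD)^2_{\ebop,\sharp} = \bigl[ [0,\infty)^2\times\cD^2;\ \{(0,0)\}\times\cD^2;\ \diag_{[0,\infty)}\times(\pa\cD)^2;\ [0,\infty)^2\times(\pa\cD)^2 \bigr],
\]
where the three centers correspond to $\cD^2$, $\cR^2_\phi$, and $\cR^2$ respectively (using that $\cR\subset{}^+N_\tbop\cD$ is identified with $\{0\}\times\pa\cD$ and fibers over $\{0\}$, so $\cR^2_\phi=\diag_{[0,\infty)}\times(\pa\cD)^2$). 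The first blow-up produces a front face naturally diffeomorphic to $[0,\infty]_{s_\bop}\times\cD^2$ via the restriction of the map $(\rho_0,\rho_0',q,q')\mapsto(s_\bop,q,q')$ with $s_\bop=\rho_0/\rho_0'$; this already coincides with the map used in Lemma~\ref{Lemma3DStruct}.

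Next, I would track the lifts of the remaining two centers to this first blow-up and restrict them to the front face: the lift of $\cR^2_\phi=\diag_{[0,\infty)}\times(\pa\cD)^2$ meets the front face precisely in $\{s_\bop=1\}\times(\pa\cD)^2$, while the lift of $\cR^2=[0,\infty)^2\times(\pa\cD)^2$ meets it in $[0,\infty]\times(\pa\cD)^2$. Since $\{1\}\times(\pa\cD)^2\subset[0,\infty]\times(\pa\cD)^2$, the corresponding pair of blow-ups commutes (in the terminology following~\eqref{EqAItBlowup}, one can commute $\cR^2$ through $\cR^2_\phi$ $(\supset)$), so one may equivalently blow up $[0,\infty]\times(\pa\cD)^2$ first—this yields $[0,\infty]\times\cD^2_\bop$—and only then blow up the lift of $\{1\}\times(\pa\cD)^2$, which has become $\{1\}\times\ff_{\cD,\bop}$. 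Consequently $\ff_{\bop,\sharp}\cong[[0,\infty]\times\cD^2_\bop;\{1\}\times\ff_{\cD,\bop}]$, matching the description of $\ff_{\cD,\flat}$ from Lemma~\ref{Lemma3DStruct}.

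The main work is therefore the bookkeeping of blow-up commutations and the verification that the lifts of $\cR^2_\phi$ and $\cR^2$ intersect the first front face in the claimed submanifolds; both checks are local and elementary in the projective coordinate $s_\bop$ together with coordinates on $\cD^2$, and no serious obstacle should arise. The only subtle point is to ensure that after the first blow-up these lifts remain p-submanifolds so that the commutation rule from \cite[Proposition~5.11.2]{MelroseDiffOnMwc} applies, which is clear because $\diag_{[0,\infty)}$ and $[0,\infty)^2$ meet $\{(0,0)\}\times\cD^2$ cleanly in $\{(0,0)\}\times(\pa\cD)^2$.
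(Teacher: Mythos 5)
Your argument is correct and is essentially the paper's own proof: restrict the three blow-ups defining $({}^+N_\tbop\cD)^2_{\ebop,\sharp}$ to the b-front face $[0,\infty]_{s_\bop}\times\cD^2$, identify the traces of the lifted centers as $\{1\}\times(\pa\cD)^2$ and $[0,\infty]\times(\pa\cD)^2$, and commute the two nested blow-ups to match the description of $\ff_{\cD,\flat}$ from Lemma~\ref{Lemma3DStruct}. Only note that your parenthetical identification of $\cR$ with $\{0\}\times\pa\cD$ fibering over $\{0\}$ is a slip — under the trivialization by $\rho_0$ one has $\cR\cong[0,\infty)\times\pa\cD$ fibering over $[0,\infty)$ — but the formulas $\cR^2_\phi=\diag_{[0,\infty)}\times(\pa\cD)^2$ and $\cR^2=[0,\infty)^2\times(\pa\cD)^2$ that you actually use are the correct ones.
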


\begin{definition}[$\cD$-normal operator]
\label{Def3DNorm}
  The $\cD$-normal operator of $P\in\Psitb^m(M)$ is the edge-b-pseudodifferential operator
  \[
    N_\cD(P) \in \Psi_{\eop,\bop,I}^m({}^+N_\tbop\cD)
  \]
  whose Schwartz kernel is the unique dilation-invariant (with respect to the dilation action in the fibers of ${}^+N_\tbop\cD$---hence the subscript `$I$') extension of the restriction $K_P|_{\ff_{\cD,\flat}}$ of the Schwartz kernel $K_P$ of $P$ to $\ff_{\cD,\flat}$ (identified with $\ff_{\bop,\sharp}\subset({}^+N_\tbop\cD)^2_{\ebop,\sharp}$ via Proposition~\ref{Prop3DRel}).
\end{definition}

Proposition~\ref{PropAebMT} then gives the following analogue of Definition~\ref{DefGDNormMT}:

\begin{definition}[Mellin-transformed $\cD$-normal operator family]
\label{Def3DNormMT}
  Fix a boundary defining function $\rho_0\in\CI(M_0)$. For $P\in\Psitb^m(M)$, the \emph{Mellin-transformed $\cD$-normal operator family}
  \[
    \wh{N_\cD}(P,\lambda)\in\Psib^m(\cD),\qquad \lambda\in\C,
  \]
  is defined by~\eqref{EqAebMT}. Equivalently, the Schwartz kernel of $\wh{N_\cD}(P,\lambda)$ is the Mellin-transform, in the first factor of~\eqref{Eq3DStruct}, of the restriction of the Schwartz kernel of $P$ to $\ff_{\cD,\flat}$. Moreover, we define
  \[
    N_{\pa\cD}(P) \in \Psi_{\bop,I}^m({}^+N\pa\cD)
  \]
  as the b-normal operator of $\wh{N_\cD}(P,0)$ at $\pa\cD$. Furthermore, 
  \[
    N_{\cD,\tface}^\pm(P)\in\Psi_{\bop,\scop}^{m,0,m}(\ol{{}^+N}\pa\cD)
  \]
  is the $\tface$-normal operator of the family
  \begin{equation}
  \label{Eq3DNormMTHi}
    \Bigl( (0,1)\ni h\mapsto \wh{N_\cD}\bigl(P,-i\mu\pm h^{-1}\bigr) \Bigr) \in \Psich^{m,0,0,m}(\cD)
  \end{equation}
  for any $\mu\in\R$.
\end{definition}

We recall also that the operator family~\eqref{Eq3DNormMTHi} depends smoothly on $\mu\in\R$.

\begin{rmk}[Principal symbols]
\label{Rmk3DSymbol}
  The principal symbols of $\wh{N_\cD}(P,\lambda)$, $\wh{N_\cD^\pm}(P,\mu,h)$, and $N_{\cD,\tface}^\pm(P)$ are related to the principal symbol of $P$ in the manner described in Corollary~\ref{CorGDSymbol} and Lemma~\ref{LemmaAebSymbol}.
\end{rmk}

\begin{prop}[Algebraic properties of $N_\cD$]
\fakephantomsection
\label{Prop3DProp}
  \begin{enumerate}
  \item{\rm (Multiplicativity.)} The maps assigning to $P\in\Psitb(M)$ the normal operator $N_\cD(P)\in\Psi_{\eop,\bop,I}({}^+N_\tbop\cD)$, or a Mellin-transformed normal operator $\wh{N_\cD}(P,\lambda)\in\Psi_\bop(\cD)$, $\lambda\in\C$, are multiplicative.
  \item{\rm (Short exact sequence.)} The map $N_\cD$ gives a short exact sequence
    \[
      0 \to \rho_\cD\Psitb^m(M) \hra \Psitb^m(M) \xra{N_\cD} \Psi_{\eop,\bop,I}^m({}^+N_\tbop\cD) \to 0.
    \]
  \end{enumerate}
\end{prop}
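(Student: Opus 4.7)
The plan is to deduce both statements from a careful analysis of Schwartz kernels on $M^2_{\tbop,\flat}$. First I would verify that $N_\cD$ maps into $\Psi_{\eop,\bop,I}^m({}^+N_\tbop\cD)$ by invoking Proposition~\ref{Prop3DRel}: under the identification of $\ff_{\cD,\flat}$ with the b-front face $\ff_{\bop,\sharp}$ of the extended edge-b-double space of ${}^+N_\tbop\cD$, the 3b-diagonal and its conormal behavior, together with the vanishing conditions at the remaining boundary hypersurfaces of $M^2_{\tbop,\flat}$, transfer exactly to the conormal/vanishing structure required of an edge-b Schwartz kernel on ${}^+N_\tbop\cD$; dilation-invariant extension along the first factor of~\eqref{Eq3DStruct} then gives an element of the claimed class.

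For the short exact sequence, the inclusion $\rho_\cD\Psitb^m(M)\subset\ker N_\cD$ is immediate: if $P=\rho_\cD P'$ with $P'\in\Psitb^m(M)$, then the Schwartz kernel factors as $K_P=\rho_\cD^{(L)}K_{P'}$ (where $\rho_\cD^{(L)}$ denotes the pullback along the lifted left projection $M^2_{\tbop,\flat}\to M$), and $\rho_\cD^{(L)}$ vanishes on $\ff_{\cD,\flat}$ since this face projects into $\cD$. For the converse, the key step is to show that on the open subset $U\subset M^2_{\tbop,\flat}$ given by the complement of $\lb_{\cD,\flat}, \rb_{\cD,\flat}, \lb_{\cT,\flat}, \rb_{\cT,\flat}$ (those faces at which $K_P$ vanishes to infinite order for any $P\in\Psitb^m(M)$), the function $\rho_\cD^{(L)}$ vanishes precisely and simply on $\ff_{\cD,\flat}\cap U$. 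In the local coordinates~\eqref{Eq3ffTffD} near $\ff_{\cD,\flat}\cap\ff_{\cT,\flat}$, we have $\rho_\cD^{(L)}=\rho_\cD=T/X_1$ which indeed vanishes only at $\ff_{\cD,\flat}=\{\rho_\cD=0\}$, while on $\ff_{\cT,\flat}^\circ$ (where the affine coordinate $x$ on $\cT^\circ$ is bounded) the identity $\rho_\cD=\la x\ra^{-1}$ shows $\rho_\cD^{(L)}$ to be smooth positive. Consequently $K_P\big|_{\ff_{\cD,\flat}}=0$ forces $K_P=\rho_\cD^{(L)}K_{P'}$ with $K_{P'}$ in the same symbol/boundary class, giving $P\in\rho_\cD\Psitb^m(M)$. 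Surjectivity is obtained by reversing the process: given $Q\in\Psi_{\eop,\bop,I}^m({}^+N_\tbop\cD)$, transfer $K_Q|_{\ff_{\bop,\sharp}}$ to $\ff_{\cD,\flat}$ via Proposition~\ref{Prop3DRel}, extend smoothly across $\ff_{\cT,\flat}$ using the boundary compatibility at the corner, and multiply by a cutoff supported in a neighborhood of $\ff_{\cD,\flat}\cup\ff_{\cT,\flat}$ which equals $1$ on $\ff_{\cD,\flat}$; the result is the Schwartz kernel of some $P\in\Psitb^m(M)$ with $N_\cD(P)=Q$.

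For multiplicativity, my approach is to handle the Mellin-transformed families first via the testing definition~\eqref{EqGDNormMT}. Given $u\in\CIdot(\cD)$ and a smooth extension $\tilde u\in\CI(M)$, set $\tilde v:=\rho_0^{-i\lambda}Q(\rho_0^{i\lambda}\tilde u)$; using Proposition~\ref{Prop3MapComp} and the fact that $Q\in\Psitb^m(M)$ preserves conormal expansions at $\cD$, one checks that $\tilde v$ is smooth up to $\cD$ with $\tilde v|_\cD=\wh{N_\cD}(Q,\lambda)u$, whence
\[
  \wh{N_\cD}(PQ,\lambda)u=\bigl(\rho_0^{-i\lambda}P(\rho_0^{i\lambda}\tilde v)\bigr)\big|_\cD=\wh{N_\cD}(P,\lambda)\wh{N_\cD}(Q,\lambda)u.
\]
Multiplicativity of $N_\cD$ then follows from the injectivity of $\lambda\mapsto\wh{N_\cD}(\cdot,\lambda)$ (i.e., Mellin inversion): the holomorphic family $\wh{N_\cD}(\cdot,\lambda)$ determines the restriction of the Schwartz kernel to $\ff_{\cD,\flat}$, which by dilation-invariant extension determines $N_\cD(\cdot)$.

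The main obstacle is step (ii) of the exact sequence analysis: verifying in all the relevant coordinate charts on $M^2_{\tbop,\flat}$ that $\rho_\cD^{(L)}$ is a simple defining function of $\ff_{\cD,\flat}$ on $U$, with particular attention to the corner $\ff_{\cD,\flat}\cap\ff_{\cT,\flat}$ where the blow-up structure is most intricate. Everything else is essentially bookkeeping once Proposition~\ref{Prop3DRel} and the testing characterization are in hand.
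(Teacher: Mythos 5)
Your proposal is correct and follows essentially the same route as the paper: multiplicativity via the testing characterization of $\wh{N_\cD}(\cdot,\lambda)$, surjectivity via the identification of $\ff_{\cD,\flat}$ with $\ff_{\bop,\sharp}$ from Proposition~\ref{Prop3DRel}, and exactness in the middle from the fact that the left lift of $\rho_\cD$ is a defining function of $\ff_{\cD,\flat}$ modulo faces where kernels vanish to infinite order. The only nit is that your set $U$ should also exclude $\lface$, $\rface$, and $\iface$ (the left lift of $\rho_\cD$ also vanishes at $\rface$), but since Schwartz kernels of $\Psitb^m(M)$ vanish to infinite order there as well, the factorization argument is unaffected.
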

\begin{proof}
  The definition of $\wh{N_\cD}(P,\lambda)$ in terms of~\eqref{EqGDNormMT} implies the first part. In the second part, only the surjectivity of $N_\cD$ requires an argument; but this follows directly from Proposition~\ref{Prop3DRel}.
\end{proof}

\subsection{Summary of symbols, normal operators, and their interrelationships}
\label{Ss3Summ}

The discussion in~\S\ref{SsGRel} applies also in the general case of 3b-ps.d.o.s, with minor notational changes. The analogue of Proposition~\ref{PropGRel} is:

\begin{prop}[Identification of $N_{\cD,\tface}^\pm(P)$ and $N_{\cT,\tface}^\pm(P)$: pseudodifferential case]
\label{Prop3DT}
  Let $P\in\Psitb^m(M)$. In the notation of Proposition~\usref{PropGRel}, we then have
  \begin{equation}
  \label{Eq3DT}
    \phi^*N_{\cD,\tface}^\pm(P)=N_{\cT,\tface}^\pm(P),
  \end{equation}
  where the operators are defined with respect to the same fixed choice of boundary defining function on $M_0$, and where $\phi\colon\ol{{}^+N}\pa\cT\to\ol{{}^+N}\pa\cD$ is the isomorphism~\eqref{EqGRelIso}.
\end{prop}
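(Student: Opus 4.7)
The strategy is to realize both $N_{\cT,\tface}^\pm(P)$ and $\phi^*N_{\cD,\tface}^\pm(P)$ as two parametrizations of a single object, namely an appropriate restriction and invariant extension of the Schwartz kernel $K_P$ of $P$ near the codimension-two corner $\cC := \ff_{\cT,\flat}\cap\ff_{\cD,\flat}\subset M^2_{\tbop,\flat}$. Proposition \ref{PropGRel} established the identity for differential operators by direct verification on generators; the difficulty in the pseudodifferential case is that $\Psitb^m(M)$ is not algebraically generated by $\Difftb^m(M)$, so we must work with Schwartz kernels directly. However, the multiplicativity of $N_{\cT,\tface}^\pm$, $N_{\cD,\tface}^\pm$, and $\phi^*$ (consequences of Propositions \ref{Prop3TNorm} and \ref{Prop3DProp} together with Lemma \ref{LemmaAebSymbol}\eqref{ItAebSymbolch}) reduces \eqref{Eq3DT} to a purely kernel-theoretic statement.

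\textbf{Step 1: Common description of the corner $\cC$.} Using Lemma \ref{Lemma3TStruct}, $\ff_{\cT,\flat}\cong\ol{\R_{\tau_\tbop}}\times\cT^2_\bop$, and $\cC$ corresponds to $\ol{\R_{\tau_\tbop}}\times\ff_{\cT,\bop}$. Using Lemma \ref{Lemma3DStruct} and Proposition \ref{Prop3DRel}, $\ff_{\cD,\flat}\cong[[0,\infty]_{s_\bop}\times\cD^2_\bop;\{1\}\times\ff_{\cD,\bop}]$, and $\cC$ is the front face of this last blow-up, which is a radial compactification of $N(\{1\}\times\ff_{\cD,\bop})$. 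I will exhibit, using the given boundary defining function $\rho_0$ and working in the local coordinates of \S\ref{Ss3T} and \S\ref{Ss3D}, a canonical diffeomorphism of these two descriptions of $\cC$ which on the b-front faces $\ff_{\cT,\bop}\cong(\ol{{}^+N}\pa\cT)^2_\bop$ and $\ff_{\cD,\bop}\cong(\ol{{}^+N}\pa\cD)^2_\bop$ is induced by $\phi$ of Lemma \ref{LemmaAId} (applied in the right factor and inversely in the left factor, producing the correct homogeneous-degree-$(-1)$ identification), and in the $\R$-factor is the rescaling $\tau_\tbop=\la(x,x')\ra^{-1}\log s_\bop$ dual to the identification $\hat\rho_\cD = \tilde R^{-1}$ from the proof of Proposition \ref{PropGRel}.

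\textbf{Step 2: Both normal operators come from $K_P|_\cC$.} For the $\cT$-side, the $\tface$-normal operator of the $\scbtop$-family $\wh{N_\cT}(P,\sigma)$ is, by definition, the restriction to $\tface_\scbtop\cong(\ol{{}^+N}\pa\cT)^2_{\bop,\scop}$ of its Schwartz kernel on $\cT^2_\scbtop$; tracing through the proof of Proposition \ref{Prop3TNormMem} (in particular the expressions \eqref{Eq3TNormMemB} and \eqref{Eq3TNormMemSc}), this restriction only depends on $K_P$ through its restriction to $\cC$, followed by a partial Fourier transform in $\tau_\tbop$ and a rescaling of the fiber variable of $\ol{{}^+N}\pa\cT$. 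For the $\cD$-side, $N_{\cD,\tface}^\pm(P)$ is, via Definition \ref{Def3DNormMT} and the proof of Proposition \ref{PropAebMT}, obtained from $K_P|_{\ff_{\cD,\flat}}=K_{N_\cD(P)}|_{\ff_{\bop,\sharp}}$ by Mellin-transforming in $s_\bop$, passing to the high-frequency regime $|\lambda|\to\infty$ with $\tilde R = R|\lambda|$ fixed, and restricting to $\tface_\chop\cong(\ol{{}^+N}\pa\cD)^2_{\bop,\scop}$; again only $K_P|_\cC$ contributes. Since Mellin transform in $s_\bop$ composed with the high-frequency rescaling is, up to the change of variables $\tau_\tbop\leftrightarrow\log s_\bop$ and $\hat\rho_\cD\leftrightarrow\tilde R^{-1}$ matched in Step 1, precisely the Fourier transform in $\tau_\tbop$ followed by the low-frequency rescaling, the two resulting Schwartz kernels agree after pullback by $\phi$.

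\textbf{Main obstacle.} The main technical point is carefully tracking the geometric identifications: one must verify that the identification of the two descriptions of $\cC$ constructed in Step 1 coincides with the map induced by $\phi$ on the relevant $\bop,\scop$-double spaces, and that the partial transforms (Fourier in $\tau_\tbop$ on the $\cT$-side versus Mellin in $s_\bop$ on the $\cD$-side) become genuinely the same operation under the rescaling matching. This was essentially verified for differential operators in the proof of Proposition \ref{PropGRel} by computation on the operators $P_1=R T D_T$, $P_2=R D_R$, $P_3=D_{\omega^j}$; here the corresponding computation at the Schwartz-kernel level is done by writing $K_P$ near $\diag_{\tbop,\flat}\cap\cC$ as an oscillatory integral (cf.~\eqref{Eq3TNormMemOscFull}) and noting that the symbol data $a(0,\omega;\sigma_\tbop,\xi,\eta)$ entering both normal operators coincides, while the conjugating change of variables is exactly the one recorded in \eqref{EqGRel1}--\eqref{EqGRel2}. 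A residual-operator argument (as in the first half of the proof of Proposition \ref{Prop3TNormMem}) handles the part of $K_P$ away from the diagonal, where the matching is an immediate consequence of the Plancherel-style identification of the rescaled transforms.
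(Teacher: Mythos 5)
Your proposal is correct and follows essentially the same route as the paper: the paper likewise reduces everything to the restriction $K_P|_{\ff_{\cT,\flat}\cap\ff_{\cD,\flat}}$ and verifies in the explicit coordinates~\eqref{Eq3ffTffD} that the Fourier transform in $\hat\tau$ defining $N_{\cT,\tface}^+(P)$ and the $h\searrow 0$ limit of the Mellin transform defining $N_{\cD,\tface}^+(P)$ produce the same integral of $K(0,0,\omega,-\hat\tau,\cdot,\cdot)$ after the substitution $\tilde R=\hat\rho^{-1}$, $S=s^{-1}$ effected by $\phi$. The only cosmetic difference is that the paper carries this out in a single coordinate computation valid uniformly across the diagonal, without your near-diagonal/residual split.
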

\begin{proof}
  By definition of $N_{\cD,\tface}^\pm(P)$ and $N_{\cT,\tface}^\pm(P)$, the equality~\eqref{Eq3DT} certainly only depends on the restriction of the Schwartz kernel $K_P$ of $P$ to an arbitrarily small neighborhood of $(\ff_{\cD,\flat}\cap\ff_{\cT,\flat})^\circ$.\footnote{The explicit calculations in the original definitions of these operators, and also the explicit expressions in equations~\eqref{Eq3DTtfExpr} and \eqref{Eq3DtfExpr} below, show that these operators in fact only depend on the restriction $K_P|_{\ff_{\cD,\flat}\cap\ff_{\cT,\flat}}$.} Via a partition of unity, we may work in the coordinate system~\eqref{Eq3ffTffD}, i.e.
  \[
    (\rho_\cT,\rho_\cD,\hat X_2,s_\tbop,X_{\tbop,1},X_{\tbop,2})=\Bigl(X_1,\frac{T}{X_1},\frac{X_2}{X_1},\frac{T-T'}{T' X_1},\frac{X_1-X'_1}{X_1},\frac{X_2-X'_2}{X_1}\Bigr),
  \]
  where $T\geq 0$ and $X=(X_1,X_2)\in\R\times\R^{n-2}$, with $X_1\gtrsim|X_2|,T,\frac{T-T'}{T'},|X-X'|$, are local coordinates on $M_0$ lifted to the left factor, and $T'$ and $X'=(X'_1,X'_2)$ denote their lifts to the right factor. Moreover, $\ff_{\cT,\flat}$ and $\ff_{\cD,\flat}$ are defined by $\rho_\cT=0$ and $\rho_\cD=0$, respectively. In these coordinates, and letting
  \begin{equation}
  \label{Eq3DTtpxp}
    t'=\frac{1}{T'},\quad
    x_1'=\frac{X_1'}{T'},\quad
    x_2'=\frac{X_2'}{T'},
  \end{equation}
  a positive right 3b-density is given by $x'_1{}^{-n}|\dd t'\,\dd x'_1\,\dd x_2'|$, and therefore we have
  \begin{equation}
  \label{Eq3DTKP}
    K_P = K(\rho_\cT,\rho_\cD,\hat X_2,s_\tbop,X_{\tbop,1},X_{\tbop,2})\cdot x'_1{}^{-n}|\dd t'\,\dd x'_1\,\dd x_2'|,
  \end{equation}
  where on $\supp K$ we have $\rho_\cT\geq 0$, $\rho_\cD\geq 0$, while the remaining coordinates $\hat X_2\in\R^{n-2}$, $s_\tbop\in\R$, $X_{\tbop,1}\in\R$, $X_{\tbop,2}\in\R^{n-2}$ are bounded.
  
  Consider first the $\cT$-$\tface$-normal operator of $P$; thus we work in $\rho_\cT=0$. Introduce the (singular) coordinates
  \begin{equation}
  \label{Eq3DTtx}
    t=\frac{1}{T},\quad
    x_1=\frac{X_1}{T},\quad
    x_2=\frac{X_2}{T}
  \end{equation}
  and~\eqref{Eq3DTtpxp} on $M^2_{\tbop,\flat}$; thus
  \[
    (\rho_\cD,\hat X_2,s_\tbop,X_{\tbop,1},X_{\tbop,2}) = \Bigl(\frac{1}{x_1},\frac{x_2}{x_1},\frac{t'-t}{x_1},\frac{x_1-\tfrac{t}{t'}x'_1}{x_1},\frac{x_2-\tfrac{t}{t'}x'_2}{x_1}\Bigr).
  \]
  The Schwartz kernel of $\wh{N_\cT}(P,\sigma)$ (which only depends on $K$ at $t/t'=1$) is then given by
  \begin{equation}
  \label{Eq3DTExpr}
  \begin{split}
    &(\sigma,x_1,x_2,x'_1,x'_2) \\
    &\qquad \mapsto x_1'{}^{-1}\int_\R e^{i\sigma\tau} K\Bigl(0,\frac{1}{x_1},\frac{x_2}{x_1},-\frac{\tau}{x_1},\frac{x_1-x'_1}{x_1},\frac{x_2-x'_2}{x_1}\Bigr)\,\dd\tau\cdot x'_1{}^{-(n-1)}|\dd x'_1\,\dd x'_2|.
  \end{split}
  \end{equation}
  In order to compute the Schwartz kernel of $N_{\cT,\tface}^+(P)$, we introduce
  \[
    \sigma,\quad
    \hat\rho = \frac{x_1^{-1}}{\sigma}=\frac{1}{\sigma x_1},\quad
    \omega = \frac{x_2}{x_1},\quad
    s = \frac{x'_1{}^{-1}}{x_1^{-1}} = \frac{x_1}{x'_1},\quad
    \omega' = \frac{x'_2}{x'_1}
  \]
  as coordinates near $\tface_\scbtop^\circ\subset\cT^2_\scbtop$. Expressing~\eqref{Eq3DTExpr} in these coordinates, changing variables via $\hat\tau=\sigma\hat\rho\tau=\frac{\tau}{x_1}$, and then taking $\sigma\searrow 0$ gives
  \begin{equation}
  \label{Eq3DTtfExpr}
    N_{\cT,\tface}^+(P)(\hat\rho,s,\omega,\omega') = s\int_\R e^{i\hat\tau/\hat\rho} K\Bigl(0,0,\omega,-\hat\tau,1-\frac{1}{s},\omega-\frac{\omega'}{s}\Bigr)\,\dd\hat\tau \cdot \Bigl|\frac{\dd s}{s}\dd\omega'\Bigr|.
  \end{equation}
  
  On the other hand, the $\cD$-$\tface$-normal operator of $P$ is defined in terms of the restriction of~\eqref{Eq3DTKP} to $\rho_\cD=0$. Working in the (singular) coordinates
  \[
    \mu=\frac{T}{T'},\quad X_1,\quad\hat X_2=\frac{X_2}{X_1},\quad X'_1,\quad\hat X'_2=\frac{X'_2}{X'_1}
  \]
  on $\ff_{\cD,\flat}$, the right density factor in~\eqref{Eq3DTKP} is
  \[
    x'_1{}^{-n}|\dd t'\,\dd x'_1\,\dd x_2'| = X_1'{}^{-1}\Bigl|\frac{\dd\mu}{\mu}\frac{\dd X'_1}{X'_1}\,\dd\hat X'_2\Bigr|
  \]
  Therefore, the Schwartz kernel of $\wh{N_\cD}(P,\lambda)$ is given by
  \begin{align*}
    &(\lambda,X_1,\hat X_2,X'_1,\hat X'_2) \\
    &\qquad \mapsto X'_1{}^{-1}\int_0^\infty \mu^{-i\lambda} K\Bigl(X_1,0,\hat X_2,\frac{\mu-1}{X_1},\frac{X_1-X'_1}{X_1},\hat X_2-\frac{X_1'}{X_1}\hat X'_2\Bigr)\,\frac{\dd\mu}{\mu}\cdot \Bigl|\frac{\dd X'_1}{X'_1}\dd\hat X'_2\Bigr|.
  \end{align*}
  We wish to compute (the Schwartz kernel of) $N_{\cD,\tface}^+(P)$. To this end, we introduce in this expression the coordinates
  \[
    h=\lambda^{-1},\quad
    \tilde R=\frac{X_1}{h},\quad
    \omega=\frac{X_2}{X_1}=\hat X_2,\quad
    S=\frac{X_1'}{X_1},\quad
    \omega'=\hat X_2',
  \]
  and obtain, upon changing variables via $\mu=e^{-h\tilde R\hat\tau}$,
  \begin{equation}
  \label{Eq3DtfExpr}
  \begin{split}
    N_{\cD,\tface}^+(P) &= \lim_{h\searrow 0} S^{-1}\int_\R e^{i\hat\tau\tilde R} K\Bigl(h\tilde R,0,\omega,\frac{e^{-h\tilde R\hat\tau}-1}{h\tilde R},1-S,\omega-S\omega'\Bigr)\,\dd\hat\tau\cdot\Bigl|\frac{\dd S}{S}\dd\omega'\Bigr| \\
      &= S^{-1}\int_\R e^{i\hat\tau\tilde R} K\Bigl(0,0,\omega,-\hat\tau,1-S,\omega-S\omega'\Bigr)\,\dd\hat\tau\cdot\Bigl|\frac{\dd S}{S}\dd\omega'\Bigr|.
  \end{split}
  \end{equation}
  The identification $\phi\colon\ol{{}^+N}\pa\cT\cong\ol{{}^+N}\pa\cD$ in the coordinates used in~\eqref{Eq3DTtfExpr} and \eqref{Eq3DtfExpr} maps $(\hat\rho,s,\omega,\omega')\mapsto(\tilde R,S,\omega,\omega')$ where $\tilde R=\hat\rho^{-1}$ and $S=s^{-1}$; pullback along $\phi$ thus indeed maps $N_{\cD,\tface}^+(P)$ to $N_{\cT,\tface}^+(P)$. The case of $N_{\cD,\tface}^-(P)$ and $N_{\cT,\tface}^-(P)$ is completely analogous.
\end{proof}

Proposition~\ref{PropGRel2} remains valid as well, mutatis mutandis:

\begin{prop}[Relationship of $N_{\pa\cD}(P)$ and $N_{\pa\cT}(P)$: pseudodifferential case]
\label{Prop3DT2}
  Fix a boundary defining function $\rho_0\in\CI(M_0)$. Let $P\in\Psitb^m(M)$. Denoting by $\phi\circ\psi\colon\ol{{}^+N}\pa\cT\to\ol{{}^+N}\pa\cD$ the isomorphism~\eqref{EqGRel2Iso} (homogeneous of degree $-1$), we then have $\psi^*\phi^*N_{\pa\cD}(P)=N_{\pa\cT}(P)$.
\end{prop}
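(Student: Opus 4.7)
$\textbf{Plan.}$ The identity will be derived from Proposition~\ref{Prop3DT} by taking b-normal operators at the appropriate corners. This is the pseudodifferential analogue of Proposition~\ref{PropGRel2}, which was proved by direct computation on a spanning set of 3b-vector fields; here, the relevant Schwartz-kernel work has already been done in proving Proposition~\ref{Prop3DT}, and the present statement is obtained by taking the b-normal operator at the `next face down.'

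Concretely, the proof rests on two characterizations. By Definition~\ref{Def3DNormMT}, $N_{\pa\cD}(P)$ equals the b-normal operator of the $\tface$-normal operator $N_{\cD,\tface}^\pm(P)\in\Psi_{\bop,\scop}^{m,0,m}(\ol{{}^+N}\pa\cD)$ at the b-end $\tface\cap\cface\subset\cD_\chop$. On the $\cT$-side, Proposition~\ref{Prop3TNormMem} shows that the spectral family $\sigma\mapsto\wh{N_\cT}(P,\sigma)$ defines an element of $\Psiscbt^{m,m,0,0}(\cT)$; applying Lemma~\ref{LemmaAscbtNorm} to this $\scbtop$-family (whose $\zface$-normal operator is $\wh{N_\cT}(P,0)$ and whose $\tface$-normal operator is $N_{\cT,\tface}^\pm(P)$) gives
\[
  \psi^* N_{\zface\cap\tface}\bigl(N_{\cT,\tface}^\pm(P)\bigr) = N_{\pa\cT}(P).
\]

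The last step is to take the b-normal operator of both sides of the identity $\phi^* N_{\cD,\tface}^\pm(P)=N_{\cT,\tface}^\pm(P)$ from Proposition~\ref{Prop3DT}: on the right at the b-corner $\zface\cap\tface\subset\tface\subset\cT_\scbtop$, and on the left at the corresponding corner of $\tface\subset\cD_\chop$. Since $\phi$ is homogeneous of degree $-1$, it identifies fiber infinity of $\ol{{}^+N}\pa\cT$ with the zero section of $\ol{{}^+N}\pa\cD$, i.e.\ with $\tface\cap\cface$; so taking the b-normal operator on the left yields $\phi^* N_{\pa\cD}(P)$, producing the identification $\phi^* N_{\pa\cD}(P)=N_{\zface\cap\tface}(N_{\cT,\tface}^\pm(P))$ alluded to in the statement. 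Pulling back by $\psi$ and combining with the previous display gives $\psi^*\phi^* N_{\pa\cD}(P)=N_{\pa\cT}(P)$.

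The main obstacle—really a point to verify carefully rather than a serious difficulty—is the mutual compatibility of the three homogeneous degree $-1$ identifications entering the argument: $\phi$, $\psi$, and the identification of the inward pointing normal bundle of $\tface\cap\cface$ inside $\tface$ with $\ol{{}^+N}\pa\cD$. All three descend from the single choice of boundary defining function $\rho_0\in\CI(M_0)$, which furnishes both the joint defining function of $\cT\cup\cD$ used to define $\phi$ via Lemma~\ref{LemmaAId} and, via the $t$-Fourier variable $\sigma$ dual to $\rho_0^{-1}$, the joint defining function of $\zface\cup\tface$ entering $\psi$. The resulting consistency is already verified on a generating set of $\Vtb(M)$ in the proof of Proposition~\ref{PropGRel2}.
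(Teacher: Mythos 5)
Your proof is correct and follows essentially the same route as the paper: the paper also deduces the statement from Proposition~\ref{Prop3DT} combined with Lemma~\ref{LemmaAscbtNorm} applied to the $\scbtop$-family $\sigma\mapsto\wh{N_\cT}(P,\sigma)$, which identifies $N_{\zface\cap\tface}(N_{\cT,\tface}^\pm(P))$ with $N_{\pa\cT}(P)$ via $\psi$. Your additional remarks on the compatibility of the homogeneous degree $-1$ identifications match the discussion the paper gives in the surrounding text.
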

\begin{proof}
  This follows from Proposition~\ref{Prop3DT} followed by the identification of the b-normal operators of $N_{\cT,\tface}^\pm(P)$ (i.e.\ the restriction of $\wh{N_\cT}(P,-)$ to $\tface\subset\cT_\scbtop$) at $\zface\cap\tface\subset\tface\subset\cT_\scbtop$ and of $\wh{N_\cT}(P,0)$ (which is the same as the restriction of $\wh{N_\cT}(P,-)$ to $\zface\subset\cT_\scbtop$) via $\psi$. The latter identification is valid for any $\scbtop$-operator by Lemma~\ref{LemmaAscbtNorm}.
\end{proof}

\begin{lemma}[3b-operators with elliptic principal symbols]
\label{Lemma3DEll}
  Let $m\in\R$ and $P\in\Psitb^m(M)$; suppose that the 3b-principal symbol $\sigmatb^m(P)$ of $P$ is elliptic. Then all normal operators have elliptic principal symbols. That is, the following operators are elliptic:
  \begin{equation}
  \label{Eq3DEllscbt}
    \bigl( \pm[0,\sigma_0)\ni\sigma\mapsto \wh{N_\cT}(P,\sigma)\bigr)\in\Psiscbt^{m,m,0,0}(\cT),\qquad \sigma_0>0,
  \end{equation}
  and thus also $\wh{N_\cT}(P,0)\in\Psib^m(\cT)$, further $\wh{N_\cT}(P,\sigma)\in\Psisc^{m,m}(\cT)$ for $\sigma\neq 0$, and $N_{\cT,\tface}^\pm(P)\in\Psi_{\scop,\bop}^{m,m,0}(\ol{{}^+N}\pa\cT)$; and also $\wh{N_{\cT,h}^\pm}(P)\in\Psi_{\scop,\semi}^{m,m,m}(\cT)$ is elliptic. Furthermore,
  \[
    \wh{N_\cD}(P,\lambda)\in\Psib^m(\cD),\qquad \lambda\in\C,
  \]
  is elliptic, as is $N_{\cD,\tface}^\pm(P)\in\Psi_{\bop,\scop}^{m,0,m}(\ol{{}^+N}\pa\cD)$ (related to $N_{\cT,\tface}^\pm(P)$ via Proposition~\usref{Prop3DT}) and $\wh{N_{\cD,h}^\pm}(P,\mu,h)\in\Psich^{m,0,0,m}(\cD)$. Finally, the operators $N_{\pa\cT}(P)\in\Psi_{\bop,I}^m({}^+N\pa\cT)$ and $N_{\pa\cD}(P)\in\Psi_{\bop,I}^m({}^+N\pa\cD)$ are elliptic (and related via Proposition~\usref{Prop3DT2}).
\end{lemma}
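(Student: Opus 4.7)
The plan is to reduce every claim to the ellipticity of $\sigmatb^m(P)$ by invoking the explicit principal-symbol calculations already performed. Propositions~\ref{PropGTSymbol}, \ref{PropGTHigh}, \ref{PropGTtfSymbol}, Corollary~\ref{CorGDSymbol}, Lemma~\ref{LemmaAebSymbol}, and Proposition~\ref{PropAebMT} identify each principal symbol in question with the pullback of $\sigmatb^m(P)$ along a suitable map into (the radial compactification of) $\Ttb^*M$. In each case, one then has only to check that this map covers a set where ellipticity of $\sigmatb^m(P)$ is available—namely 3b-fiber infinity, together, where appropriate, with the lift of $\cD$ to the radially compactified 3b-cotangent bundle.

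First I would dispatch the $\cT$-side individual operators. For $\sigma=0$, Proposition~\ref{PropGTSymbol} gives $\sigmab^m(\wh{N_\cT}(P,0))=\iota_0^*\sigmatb^m(P)$, and since $\iota_0\colon\Tb^*\cT\hookrightarrow\Ttb^*_\cT M$ preserves fiber infinity, b-ellipticity follows. For $\sigma\neq 0$ the image of $\iota_\sigma$ contains $(\rho_\cD^{-1}\sigma,\rho_\cD^{-1}\xi_\scop)$ in the local coordinates of~\eqref{EqGTPhaseNonzeroLoc}; after rescaling by $\rho_\cD^m$ this pulls back a uniform lower bound on $\sigmatb^m(P)$ at fiber infinity, which is exactly the scattering ellipticity of $\wh{N_\cT}(P,\sigma)\in\Psisc^{m,m}(\cT)$ (both at fiber infinity and at $\pa\cT$, thanks to $\sigma\neq 0$). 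The semiclassical scattering case $\wh{N_{\cT,h}^\pm}(P)\in\Psisch^{m,m,m}(\cT)$ is handled identically, using Proposition~\ref{PropGTHigh}: the image of the relevant embedding escapes to 3b-fiber infinity uniformly as either $h\to 0$, $|\xi_{\scop,\semi}|\to\infty$, or $\rho_\cD\to 0$.

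Next I would assemble these into the $\scbtop$ statement~\eqref{Eq3DEllscbt}. The $\scbtop$-principal symbol is encoded simultaneously at the diagonal singularity and uniformly down to $\sigma=0$; by Proposition~\ref{PropGTtfSymbol} the $\tface$-piece is, up to a smooth nonvanishing factor $(1+\hat\rho_\cD^{-1})^m$, the pullback of $\sigmatb^m(P)$ along $\iota_{\cT,\tface}^\pm$, whose image in $\ol{\Ttb^*_{\pa\cT}}M$ covers fiber infinity at both $\hat\rho_\cD=0$ and $\hat\rho_\cD=\infty$. Combined with the previously established ellipticity of $\wh{N_\cT}(P,\sigma)$ for each $\sigma$, this yields $\scbtop$-ellipticity uniformly, and in particular ellipticity of $N_{\cT,\tface}^\pm(P)\in\Psi_{\scop,\bop}^{m,m,0}(\ol{{}^+N}\pa\cT)$. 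The b-normal operator $N_{\pa\cT}(P)$ is elliptic as the b-normal operator at $\pa\cT$ of the elliptic operator $\wh{N_\cT}(P,0)$.

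The $\cD$-side is completely parallel. Corollary~\ref{CorGDSymbol} identifies the $(\eop,\bop)$-principal symbol of $N_\cD(P)$ with $\sigmatb^m(P)|_{\Ttb^*_\cD M}$ via~\eqref{EqGDPhase}, yielding edge-b-ellipticity. Lemma~\ref{LemmaAebSymbol}, applied to $N_\cD(P)$, then identifies the principal symbols of $\wh{N_\cD}(P,\lambda)$, of the $\chop$-family $\wh{N_{\cD,h}^\pm}(P,\mu,h)\in\Psich^{m,0,0,m}(\cD)$, of $N_{\cD,\tface}^\pm(P)$, and of $N_{\pa\cD}(P)$ with restrictions/pullbacks of ${}^{\eop,\bop}\upsigma^m(N_\cD(P))$ along maps that each preserve fiber infinity; ellipticity transfers in every case. (Alternatively, the ellipticity of $N_{\cD,\tface}^\pm(P)$ and $N_{\pa\cD}(P)$ follows from that of $N_{\cT,\tface}^\pm(P)$ and $N_{\pa\cT}(P)$ via the identifications in Propositions~\ref{Prop3DT} and \ref{Prop3DT2}.) The argument is essentially bookkeeping; the only point requiring genuine care is the \emph{uniformity} of ellipticity in the $\scbtop$ and $\chop$ families as the asymptotic parameters $\sigma\to 0$ and $|\Re\lambda|\to\infty$ vary, but this uniformity is already built into the symbol identifications in Propositions~\ref{PropGTtfSymbol} and \ref{PropAebMT}\eqref{ItAebMTch}.
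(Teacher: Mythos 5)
Your proposal is correct and follows essentially the same route as the paper's proof: reduce every claim to the ellipticity of $\sigmatb^m(P)$ via the principal symbol identifications (Propositions~\ref{PropGTSymbol}, \ref{PropGTHigh}, \ref{PropGTtfSymbol}, Corollary~\ref{CorGDSymbol}, Lemma~\ref{LemmaAebSymbol}, Proposition~\ref{PropAebMT}), noting that these persist in the pseudodifferential setting, and assemble the $\scbtop$-ellipticity of~\eqref{Eq3DEllscbt} from that of $N_{\cT,\tface}^\pm(P)$, $\wh{N_\cT}(P,0)$, and $\wh{N_\cT}(P,\sigma)$ for $\sigma\neq 0$. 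No gaps.
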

\begin{proof}
  The relationships of the principal symbols of the normal operators of $P$ and the operator $P$ itself are discussed for differential operators in Propositions~\ref{PropGTSymbol}, \ref{PropGTHigh}, and \ref{PropGTtfSymbol}, and Corollary~\ref{CorGDPhase} and Lemma~\ref{LemmaAebSymbol} by means of the phase space identifications of Lemma~\ref{LemmaGTPhase} and Corollary~\ref{CorGDPhase}. These relationships hold without changes in the pseudodifferential setting as well, as can be checked using the explicit constructions in Propositions~\ref{Prop3TNormMem} and~\ref{Prop3TNorm} for the $\cT$-normal operators, and Proposition~\ref{PropAebMT} for $\wh{N_\cD}(P,\lambda)$. See also Remark~\ref{Rmk3DSymbol}. Note that the ellipticity of $N_{\cT,\tface}^\pm(P)$ and $\wh{N_\cT}(P,0)$ implies that of~\eqref{Eq3DEllscbt} for small $\sigma_0>0$, and the ellipticity of~\eqref{Eq3DEllscbt} for arbitrary $\sigma_0$ uses that of $\wh{N_\cT}(P,\sigma)$ for $\sigma\neq 0$.
\end{proof}

\subsection{Weighted 3b-Sobolev spaces}
\label{Ss3H}

We define $L^2_\tbop(M)$ as the $L^2$-space on $M$ with respect to any positive smooth 3b-density on $M$.

\begin{lemma}[$L^2$-boundedness]
\label{Lemma3HL2}
  Let $P\in\Psitb^0(M)$. Then $P$ defines a bounded linear map on $L^2_\tbop(M)$.
\end{lemma}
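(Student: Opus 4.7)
The proof I would give follows Hörmander's classical square-root trick, reducing the $L^2$-boundedness of a general $P\in\Psitb^0(M)$ to the $L^2$-boundedness of a residual operator in $\Psitb^{-\infty}(M)$, which is then handled by Schur's test.

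\textbf{Step 1 (Residual case).} First I would establish that every $R\in\Psitb^{-\infty}(M)$ is bounded on $L^2_\tbop(M)$. Fixing a smooth positive $3$b-density $\nu$ on $M$, the Schwartz kernel of $R$ can be written as $k_R\cdot\pi_R^*\nu$ where $k_R\in\CI(M^2_{\tbop,\flat})$ vanishes to infinite order at every boundary hypersurface of $M^2_{\tbop,\flat}$ \emph{except} $\ff_{\cT,\flat}$ and $\ff_{\cD,\flat}$. By Schur's test on $L^2(M,\nu)$, it suffices to bound
\[
  \sup_{q\in M^\circ}\int_{M^\circ}|k_R(q,q')|\,\nu(q')\quad\text{and}\quad\sup_{q'\in M^\circ}\int_{M^\circ}|k_R(q,q')|\,\nu(q).
\]
The first integral is the pushforward $(\pi_L)_*(|k_R|\,\pi_R^*\nu)$, which is a bounded function on $M$ by the pushforward theorem: the lift $\pi_L\colon M^2_{\tbop,\flat}\to M$ of the left projection is a b-fibration (this requires checking, but in coordinates the necessary factor of $\upbeta^*\rho_0$ to convert $\pi_R^*\nu$ into a right $\bop$-fiber density is absorbed into the positive $3$b-density $\nu(q)$ on the base), and the rapid vanishing of $k_R$ at all the `bad' boundary hypersurfaces ensures that the integrand is a Schwartz density along each fiber. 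The second integral is handled symmetrically.

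\textbf{Step 2 (Square-root construction).} Next, for $P\in\Psitb^0(M)$ I would choose $C>0$ with $|\sigmatb^0(P)|^2\leq C^2-1$ on $\Stb^*M$ (possible since $\sigmatb^0(P^*P)$ is smooth on the compact `base at fiber infinity'). Then $B:=C^2-P^*P\in\Psitb^0(M)$ has principal symbol bounded below by $1$, so one can quantize a positive square root $a_0\in S^0/S^{-1}$ of $\sigmatb^0(B)$ to produce $A_0\in\Psitb^0(M)$ with $B-A_0^*A_0\in\Psitb^{-1}(M)$. Iterating symbolically, one builds $A_j\in\Psitb^{-j}(M)$ with $B-(\sum_{i\leq j}A_i)^*(\sum_{i\leq j}A_i)\in\Psitb^{-j-1}(M)$; asymptotic summation (valid in the $3$b-calculus via standard conormal asymptotic summation at $\diag_{\tbop,\flat}$) yields $A\in\Psitb^0(M)$ with
\[
  C^2-P^*P=A^*A+R,\qquad R\in\Psitb^{-\infty}(M).
\]
Each step uses only the composition, adjoint, and principal symbol properties established in Proposition~\ref{Prop3MapComp} and the symbolic short exact sequence.

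\textbf{Step 3 (Conclusion).} For $u\in\CIc(M^\circ)$, pairing with $u$ in $L^2_\tbop(M)$ gives $\|Pu\|^2=C^2\|u\|^2-\|Au\|^2-\langle Ru,u\rangle$, hence $\|Pu\|^2\leq(C^2+\|R\|_{L^2\to L^2})\|u\|^2$ by Step~1. Density of $\CIc(M^\circ)$ in $L^2_\tbop(M)$ finishes the proof.

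\textbf{Main obstacle.} The genuinely nontrivial content is Step~1, i.e.\ verifying the Schur bound for residual operators. The worry is that, near the corner $\ff_{\cT,\flat}\cap\ff_{\cD,\flat}$, the $3$b-density factors on the two sides of the kernel may conspire to produce unbounded weights in the fiber integral, because $M^2_{\tbop,\flat}$ is a nontrivial blow-up and the lifts of the projections are not of product type there. The required check is finite but requires comparing $\pi_R^*\nu$ to a fiber $\bop$-density on each boundary hypersurface of $M^2_{\tbop,\flat}$ using the relation $\beta^*\,\Omegab M=\rho_\ff^{\codim}\,\Omegab[M;S]$, using in particular the non-strict inequality $\rho_\cT\Vb(M)\subset\Vtb(M)$ from Lemma~\ref{LemmaGV3b}, to see that the lifts of $\pi_L$ and $\pi_R$ are simple b-fibrations into $M$ equipped with the $3$b-density, so that the pushforward theorem applies without loss.
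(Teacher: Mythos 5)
Your proposal follows essentially the same route as the paper: Hörmander's square-root trick to reduce to $P\in\Psitb^{-\infty}(M)$, then Schur's test for the residual kernel. One technical caveat in your Step 1: the lifted left projection is \emph{not} a b-fibration on the small double space $M^2_{\tbop,\flat}$ — that is only proved (Lemma~\ref{LemmaLProj}) for the large double space $M^2_\tbop$, precisely because the lifts $\tilde\fp_L,\tilde\fp_R$ of $\{\fp\}\times M_0$ and $M_0\times\{\fp\}$ must additionally be blown up before the projection even lifts smoothly to $M=[M_0;\{\fp\}]$. This is harmless here, since the residual kernel vanishes to infinite order at those faces (so you may equivalently work on $M^2_\tbop$ and invoke Lemma~\ref{LemmaLProj}), and the paper sidesteps the issue entirely by observing directly that the lift of the $3$b-density $\nu$ to the left factor is smooth down to $\ff_{\cD,\flat}\cup\ff_{\cT,\flat}$ and has only inverse-polynomial conormal singularities at the remaining hypersurfaces, which the rapid vanishing of the kernel absorbs.
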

\begin{proof}
  Using H\"ormander's square root trick, it suffices to prove the claim for $P\in\Psitb^{-\infty}(M)$. Fix any positive 3b-density $\nu$ on $M$, and write the Schwartz kernel of $P$ as $K \pi_R^*\nu$ where $K\in\CI(M^2_{\tbop,\flat})$ vanishes to infinite order at all boundary hypersurfaces except for $\ff_{\cD,\flat}$ and $\ff_{\cT,\flat}$. It then suffices to show that $\int_M |K(-,q)|\nu$ is uniformly bounded for $q\in M$; by symmetry, also $\int_M |K(q,-)|\nu$ is uniformly bounded then. The key observation then is that the lift of $\nu$ to the left factor, as a density on $M^2_{\tbop,\flat}$, is smooth down to $\ff_{\cD,\flat}$ and $\ff_{\cT,\flat}$ (as a consequence of the calculations in Lemma~\ref{Lemma3Tiny}) and has at most inverse polynomial conormal singularities at the other hypersurfaces which are canceled by the infinite order vanishing of $K$ at those.
\end{proof}

\begin{definition}[Weighted 3b-Sobolev spaces]
\label{Def3H}
  For $s\geq 0$, fix an operator $A\in\Psitb^s(M)$ with elliptic principal symbol. We define
  \[
    \Htb^s(M) := \{ u\in L^2_\tbop(M) \colon A u\in L^2_\tbop(M) \}.
  \]
  For $s<0$, we fix $A\in\Psitb^{-s}(M)$ with elliptic principal symbol, and let
  \[
    \Htb^s(M) := \{ u_1+A u_2 \colon u_1,u_2\in L^2_\tbop(M) \}.
  \]
  For weights $\alpha_\cD,\alpha_\cT\in\R$, we finally set
  \[
    \Htb^{s,\alpha_\cD,\alpha_\cT}(M) := \{ \rho_\cD^{\alpha_\cD}\rho_\cT^{\alpha_\cT}u \colon u\in\Htb^s(M) \}.
  \]
\end{definition}

(For $s\in\N_0$, one can equivalently define $\Htb^s(M)$ to consist of all $u\in L^2_\tbop(M)$ so that $A u\in L^2_\tbop(M)$ for all $A\in\Difftb^s(M)$.) Thus, $\Htb^{s,\alpha_\cD,\alpha_\cT}(M)$ is a Hilbert space with dual space relative to $L^2_\tbop(M)$ given by $\Htb^{-s,-\alpha_\cD,-\alpha_\cT}(M)$; and (weighted) 3b-ps.d.o.s act in the expected manner, for instance
\[
  \cA^{\beta_\cD,\beta_\cT}\Psitb^m(M) \ni A \colon \Htb^{s,\alpha_\cD,\alpha_\cT}(M) \to \Htb^{s-m,\alpha_\cD-\beta_\cD,\alpha_\cT-\beta_\cT}(M).
\]

When $E_0\to M_0$ is a vector bundle and $E=\upbeta^*E_0\to M$ its pullback along the blow-down map $\upbeta\colon M\to M_0$, one can similarly define spaces $\Htb^s(M;E)$ of $E$-valued distributions to consist, in local trivializations of $E$, of $(\rank E)$-tuples of elements of $\Htb^s(M)$; likewise for weighted spaces. Elements of $\Psitb^m(M;E,F)$ (or more general spaces of operators with conormal coefficients) act boundedly between such weighted 3b-Sobolev spaces.

Instead of a positive smooth 3b-density, one can also define $L^2(M)$ and weighted 3b-Sobolev spaces with respect to a weighted positive density $\nu=\rho_\cD^{\mu_\cD}\rho_\cT^{\mu_\cT}\nu_0$ where $0<\nu_0\in\CI(M;\Omegatb M)$ and $\mu_\cD,\mu_\cT\in\R$; if the need arises to make the density $\nu$ explicit, one writes $L^2(M,\nu)$ and $\Htb^{s,\alpha_\cD,\alpha_\cT}(M,\nu)$.

\begin{rmk}[Bounded geometry perspective on 3b-Sobolev spaces]
\label{Rmk3SobBdd}
  We continue Remark~\ref{Rmk3Bdd} and fix any Riemannian 3b-metric $g$ on $M$; denote the Riemannian distance function associated with $g$ by $d_g\colon M^\circ\times M^\circ\to[0,\infty)$. One can then, for any fixed $\eps>0$, find a countable collection $\{p_i\colon i\in I\}\subset M^\circ$ of points so that the $\eps$-balls $B(p_i,\eps)$ (with respect to $d_g$) cover $M^\circ$, and so that there is a finite number $J$ so that any intersection of more than $J$ balls $B(p_i,3\eps)$ of thrice the radius is empty. (See \cite[Appendix~A]{ShubinBounded}.) Using the exponential map (with respect to $g$) to identify the balls $B(p_i,2\eps)$ with open balls on $\R^n$ of radius $2\eps$, and denoting by $\{\chi_i\colon i\in I\}$, $\supp\chi_i\subset B(p_i,\eps)$, a bounded partition of unity on $M^\circ$ subordinate to the balls $B(p_i,\eps)$ (i.e.\ in these local coordinate charts, the family $\{\chi_i\}$ is uniformly bounded in $\CI(\R^n)$), we then have an equivalence of norms
  \begin{equation}
  \label{Eq3SobBdd}
    \|u\|_{\Htb^s(M)}^2 \sim \sum_{i\in I} \|\chi_i u\|_{H^s(\R^n)}^2,
  \end{equation}
  where we fix a positive 3b-density on $M$ to define 3b-Sobolev spaces. (To obtain an analogous statement for weighted spaces $\Htb^{s,\alpha_\cD,\alpha_\cT}(M)$, one multiplies the term corresponding to $i\in I$ by $\sup_{B_i}(\rho_\cD^{-\alpha_\cD}\rho_\cT^{-\alpha_\cT})$, or equivalently by $\inf_{B_i}(\rho_\cD^{-\alpha_\cD}\rho_\cT^{-\alpha_\cT})$, the ratio of the two quantities being uniformly bounded.) The proof of~\eqref{Eq3SobBdd} is elementary for $s\in\N_0$; for negative integer $s$ one can use a duality argument. For real $s$ finally, one uses the fact that one can compute 3b-Sobolev norms via testing with any elliptic 3b-ps.d.o.\ of order $s$, which one can thus choose to have Schwartz kernel supported in an $\frac{\eps}{2}$ neighborhood of $\diag_\tbop$. Expressing this Schwartz kernel in local coordinates on the balls $B(p_i,2\eps)$, and localizing to an $\frac{\eps}{2}$-neighborhood of $B(p_i,\eps)\times B(p_i,\eps)$ using a cutoff which in the aforementioned local coordinates is $i$-independent, one obtains a uniformly bounded family of (uniformly) elliptic ps.d.o.s on $\R^n$. Using this family to compute the $H^s(\R^n)$-norm of $\chi_i u$ gives~\eqref{Eq3SobBdd}.
\end{rmk}

\begin{lemma}[Rellich-type compactness]
\label{Lemma3SobCpt}
  Let $s,s',\alpha_\cD,\alpha_\cD',\alpha_\cT,\alpha_\cT'\in\R$, and suppose $s>s'$, $\alpha_\cD>\alpha_\cD'$, $\alpha_\cT>\alpha_\cT'$. Then the inclusion $\Htb^{s,\alpha_\cD,\alpha_\cT}(M)\hra\Htb^{s',\alpha_\cD',\alpha_\cT'}(M)$ is compact.
\end{lemma}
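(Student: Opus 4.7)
The plan is to realize the inclusion $\iota\colon\Htb^{s,\alpha_\cD,\alpha_\cT}(M)\hra\Htb^{s',\alpha_\cD',\alpha_\cT'}(M)$ as an operator norm limit of compact operators. To this end, fix boundary defining functions $\rho_\cD,\rho_\cT\in\CI(M)$ bounded by $1$, together with $\chi\in\CI([0,\infty))$ with $\chi=0$ on $[0,1]$ and $\chi=1$ on $[2,\infty)$, and set $\chi_\epsilon:=\chi(\rho_\cD/\epsilon)\chi(\rho_\cT/\epsilon)\in\CI(M)$. Then $\chi_\epsilon$ is supported in the compact set $K_\epsilon:=\{\rho_\cD\geq\epsilon,\,\rho_\cT\geq\epsilon\}\subset M^\circ$, while $1-\chi_\epsilon$ is supported in $\{\rho_\cD\leq 2\epsilon\}\cup\{\rho_\cT\leq 2\epsilon\}$. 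Writing $\iota u=\chi_\epsilon u+(1-\chi_\epsilon)u$, I will show that $u\mapsto\chi_\epsilon u$ is a compact map $\Htb^{s,\alpha_\cD,\alpha_\cT}\to\Htb^{s',\alpha_\cD',\alpha_\cT'}$ for each $\epsilon>0$, and that the remaining operator $u\mapsto(1-\chi_\epsilon)u$ has operator norm at most $C\epsilon^\delta$ with $\delta:=\min(\alpha_\cD-\alpha_\cD',\alpha_\cT-\alpha_\cT')>0$; compactness of $\iota$ then follows by letting $\epsilon\to 0$.

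The first claim is immediate: on $M^\circ$, the 3b-structure coincides with the ordinary smooth structure and the weights $\rho_\cD^{\pm\alpha_\cD},\rho_\cT^{\pm\alpha_\cT}$ are uniformly bounded on $K_\epsilon$, so $u\mapsto\chi_\epsilon u$ maps continuously into the space of $H^s$-functions supported in $K_\epsilon$. Standard Rellich provides the compact inclusion into $H^{s'}(K_\epsilon)$, and hence into $\Htb^{s',\alpha_\cD',\alpha_\cT'}(M)$.

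For the tail, decompose
\[
  \rho_\cD^{-\alpha_\cD'}\rho_\cT^{-\alpha_\cT'}(1-\chi_\epsilon)=\eta_\epsilon\cdot\rho_\cD^{-\alpha_\cD}\rho_\cT^{-\alpha_\cT},\qquad \eta_\epsilon:=\rho_\cD^{\alpha_\cD-\alpha_\cD'}\rho_\cT^{\alpha_\cT-\alpha_\cT'}(1-\chi_\epsilon).
\]
Splitting $1-\chi_\epsilon=\bigl(1-\chi(\rho_\cD/\epsilon)\bigr)+\chi(\rho_\cD/\epsilon)\bigl(1-\chi(\rho_\cT/\epsilon)\bigr)$ and observing that the functions $y\mapsto y^{\alpha_\cD-\alpha_\cD'}(1-\chi(y))$ and $y\mapsto y^{\alpha_\cT-\alpha_\cT'}(1-\chi(y))$ are smooth and bounded (with bounded b-derivatives) on $[0,\infty)$, I will check that $\epsilon^{-\delta}\eta_\epsilon$ is uniformly bounded in $\cA^{0,0}(M)$ as $\epsilon\in(0,1)$. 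Since multiplication by $\cA^{0,0}(M)$ defines a bounded operator on $\Htb^{s'}(M)$ (see below), this yields
\[
  \|(1-\chi_\epsilon)u\|_{\Htb^{s',\alpha_\cD',\alpha_\cT'}}=\|\eta_\epsilon\,\rho_\cD^{-\alpha_\cD}\rho_\cT^{-\alpha_\cT}u\|_{\Htb^{s'}}\leq C\epsilon^\delta\|u\|_{\Htb^{s',\alpha_\cD,\alpha_\cT}}\leq C\epsilon^\delta\|u\|_{\Htb^{s,\alpha_\cD,\alpha_\cT}},
\]
as required.

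The only genuinely technical point in this plan is the $\Htb^{s'}$-boundedness of multiplication by elements of $\cA^{0,0}(M)$, with constant controlled by finitely many b-seminorms. For $s'\in\N_0$, this is direct from the definition of $\Htb^{s'}$ via testing with elements of $\Difftb^{s'}(M)\subset\Diffb^{s'}(M)$, combined with Leibniz and the stability of $\cA^{0,0}(M)$ under b-differentiation; the case $s'<0$ follows by $L^2_\tbop$-duality; and general $s'\in\R$ then follows by interpolation, or alternatively directly from the bounded-geometry description of $\Htb^{s'}$ in Remark~\usref{Rmk3SobBdd} combined with the uniform boundedness of multiplication by $\CI_b$-functions on $H^{s'}(\R^n)$. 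Everything else amounts to bookkeeping of weights, the key structural observation being that the gains $\alpha_\cD-\alpha_\cD',\alpha_\cT-\alpha_\cT'>0$ are positive \emph{at both} boundary hypersurfaces and hence can absorb the support of $1-\chi_\epsilon$ one component at a time.
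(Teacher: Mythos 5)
Your proof is correct. The two essential ingredients are the same as in the paper's argument — local Rellich compactness in the interior, plus the fact that the weight ratio $\rho_\cD^{\alpha_\cD-\alpha_\cD'}\rho_\cT^{\alpha_\cT-\alpha_\cT'}$ decays at both boundary hypersurfaces — but the packaging differs. The paper works with the bounded-geometry characterization~\eqref{Eq3SobBdd}: it covers $M^\circ$ by unit balls of a 3b-metric, extracts a diagonal subsequence converging on each ball, and concludes by noting that only finitely many balls carry a weight ratio above any fixed threshold. You instead write the inclusion as $\chi_\epsilon+(1-\chi_\epsilon)$ and exhibit it as an operator-norm limit of compact operators, with the tail estimated via a uniform multiplier bound for $\cA^{0,0}(M)$ on $\Htb^{s'}(M)$. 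Your route avoids the diagonal-subsequence bookkeeping and localizes the genuinely compact step to a single application of classical Rellich on a fixed compact subset of $M^\circ$; the price is that you must justify the $\cA^{0,0}$-multiplier bound for arbitrary real $s'$ (which you correctly reduce to integer orders plus duality/interpolation, or to the bounded-geometry description — so the bounded-geometry picture is not fully avoided, only deferred). Both the pointwise bound $|\eps^{-\delta}\eta_\epsilon|\lesssim 1$ and its stability under b-derivatives check out, since $\rho_\cD^{a}(1-\chi(\rho_\cD/\epsilon))=\epsilon^{a}h(\rho_\cD/\epsilon)$ with $h(y)=y^{a}(1-\chi(y))$ bounded conormal on $[0,\infty)$, and $a=\alpha_\cD-\alpha_\cD'\geq\delta$; likewise at $\cT$.
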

\begin{proof}
  This is most easily proved using the characterization~\eqref{Eq3SobBdd}. Given a bounded sequence $u_j\in\Htb^{s,\alpha_\cD,\alpha_\cT}(M)$, which we may assume to converge weakly to some $u\in\Htb^{s,\alpha_\cD,\alpha_\cT}(M)$, we can extract a subsequence (via a diagonal argument), which we denote by $u_j$ still, so that for all $i\in I$, the distribution $\chi_i u_j$ converges in $H^{s'}(\R^n)$ (with the limit necessarily being $\chi_i u$). But upon computing $\|u_j-u\|_{\Htb^{s',\alpha'_\cD,\alpha'_\cT}(M)}^2$ using (the weighted version of)~\eqref{Eq3SobBdd}, the fact that for any fixed $\delta>0$, one has $\sup_{B(p_i,\eps)}(\rho_\cD^{-\alpha'_\cD}\rho_\cT^{-\alpha'_\cT}\times\rho_\cD^{\alpha_\cD}\rho_\cT^{\alpha_\cT})\geq\delta$ only for at most finitely many $i\in I$, implies the desired convergence $u_j\to u$ in $\Htb^{s',\alpha'_\cD,\alpha'_\cT}(M)$.
\end{proof}

Corresponding to the normal operators at the transition face $\cT$ related to the spectral family, and the operator algebras in which the spectral family lives, we have the following result for weighted 3b-Sobolev spaces:

\begin{prop}[3b-Sobolev spaces and the Fourier transform near $\cT$]
\label{Prop3SobFT}
  Fix local coordinates $\rho_0=T\geq 0$ and $X\in\R^{n-1}$ near $\fp\in M_0$, with $(T,X)=(0,0)$ at $\fp$, and put $t=T^{-1}$ and $x=\frac{X}{T}$ as in~\eqref{EqGCoordstx}. Let $\chi\in\CIc([0,\infty)_T\times\R^{n-1}_X)$, with support in the coordinate chart. Write the Fourier transform of $v=v(t,x)$ in $t$ as $\hat v(\sigma,x)=\int_\R e^{i\sigma t}v(t,x)\,\dd t$. Fix any $\gamma\in\R$, and fix the weighted 3b-density $\la x\ra^\gamma|\dd t\,\dd x|$ on $M$ and the density $\la x\ra^\gamma|\dd x|$ on $\cT\cong\ol{\R^{n-1}_x}$. Let $s,\alpha_\cD\in\R$. Then
  \begin{equation}
  \label{Eq3SobFT}
  \begin{split}
    \| \chi u \|_{\Htb^{s,\alpha_\cD,0}(M)}^2 \sim \sum_\pm &\int_{\pm[0,1]} \| \wh{\chi u}(\sigma,-) \|_{H_{\scbtop,\sigma}^{s,s+\alpha_\cD,\alpha_\cD,0}(\cT)}^2 \,\dd\sigma \\
      & + \int_{\pm[1,\infty)} \|\wh{\chi u}(\sigma,-)\|_{H_{\scop,|\sigma|^{-1}}^{s,s+\alpha_\cD,s}(\cT)}^2\,\dd\sigma,
  \end{split}
  \end{equation}
  in the sense that there exists a constant $C>0$ which is independent of $u$ so that the left hand side is bounded by $C$ times the right hand side, and vice versa. (In particular, one side is finite if and only if the other side is.)
\end{prop}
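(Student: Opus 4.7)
The plan is to reduce \eqref{Eq3SobFT} to Plancherel's theorem in $t$ combined with the testing characterization of Sobolev norms via elliptic operators. Since $\la x\ra^{\alpha_\cD}$ is $t$-independent, it commutes with the $t$-Fourier transform; moreover, near $\pa\cT$ one has $\la x\ra\sim\rho_\cD^{-1}=(\rho_\scface\rho_\tface)^{-1}$ in the $\scbtop$-structure (cf.\ \eqref{EqAscbtLocFrame}), so multiplication by $\la x\ra^{\alpha_\cD}$ raises the weights at $\scface$ and $\tface$ by $\alpha_\cD$ while leaving the $\zface$-weight fixed; in the semiclassical scattering regime it shifts only the scattering-face weight. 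Consequently, applying the case $\alpha_\cD=0$ to $\la x\ra^{\alpha_\cD}\chi u$ yields the general statement. The case $s=0$, $\alpha_\cD=0$ is immediate from Plancherel: with the given densities, $H_{\scbtop,\sigma}^{0,0,0,0}(\cT)$ and $H_{\scop,|\sigma|^{-1}}^{0,0,0}(\cT)$ both coincide with $L^2(\cT)$ with norms equivalent uniformly in $\sigma$.

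For $s>0$ and $\alpha_\cD=0$, I would fix an elliptic $A\in\Psitb^s(M)$ whose Schwartz kernel is supported close to $\diag_\tbop$ and is exactly $t$-translation invariant on a neighborhood of $\supp\chi\times\supp\chi$; such an $A$ can be constructed from a translation-invariant model kernel near $\ff_{\cT,\flat}$, using a collar identification of a neighborhood of $\cT\subset M$ with one of $\hat\cT\subset N_\tbop\cT$ as in the discussion after Definition~\ref{DefGTSpace}. For such $A$ and for $u$ supported in the chart,
\[
\wh{A(\chi u)}(\sigma,x)=\wh{N_\cT}(A,\sigma)\,\wh{\chi u}(\sigma,x) + \wh{R(\chi u)}(\sigma,x),
\]
where $R\in\rho_\cT\Psitb^s(M)$ absorbs any non-translation-invariant tail and maps $\Htb^{s,0,0}(M)$ continuously into $\Htb^{0,0,1}(M)\hookrightarrow L^2_\tbop(M)$. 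By Lemma~\ref{Lemma3DEll}, the family $\sigma\mapsto\wh{N_\cT}(A,\sigma)$ is an elliptic element of $\Psiscbt^{s,s,0,0}(\cT)$ for $\sigma\in\pm[0,1]$, and the semiclassical rescaling $h\mapsto\wh{N_{\cT,h}^\pm}(A)$ is an elliptic element of $\Psisch^{s,s,s}(\cT)$ for $h\in(0,1)$. Combining Plancherel with the testing characterizations of these Sobolev norms (and noting that the $R$-term is controlled by the $\|\chi u\|_{L^2}^2$ contribution treated in the $s=0$ case) gives the equivalence between the two sides of \eqref{Eq3SobFT} with $\alpha_\cD=0$. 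The case $s<0$ follows by duality relative to $L^2_\tbop(M)$ and fiber-wise $L^2(\cT)$.

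The main obstacle is producing a single $A\in\Psitb^s(M)$ whose spectral family captures both regimes uniformly: it must be an elliptic $\scbtop$-operator in $\sigma$, which entails a genuinely non-trivial transition between b-behavior at $\sigma=0$ and scattering behavior for $\sigma\neq 0$, while simultaneously being an elliptic semiclassical scattering operator as $|\sigma|\to\infty$. This uniformity is exactly what Propositions~\ref{Prop3TNormMem} and \ref{Prop3TNormMemRough}\eqref{It3TNormMemRoughScl} combined with Lemma~\ref{Lemma3DEll} already provide, so the conceptual difficulty is absorbed into those earlier results; the proof proper then reduces to Plancherel and routine weight bookkeeping.
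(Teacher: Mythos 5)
Your proposal is correct and follows essentially the same route as the paper: reduce to $\alpha_\cD=\gamma=0$ by commuting $\la x\ra^{\alpha_\cD}$ with the $t$-Fourier transform, use Plancherel for $s=0$, test with an elliptic $A\in\Psitb^s(M)$ that is exactly $t$-translation-invariant near $\supp\chi\times\supp\chi$ and invoke the ellipticity of $\sigma\mapsto\wh{N_\cT}(A,\sigma)$ in the $\scbtop$- and semiclassical scattering calculi (Lemma~\ref{Lemma3DEll}), and dualize for $s<0$. One small caution: with $A$ chosen so that its Schwartz kernel equals that of $N_\cT(A)$ near $\supp\chi\times\supp\chi$, the remainder $R$ annihilates $\chi u$, which is how the paper proceeds; your fallback claim that $\|R(\chi u)\|_{L^2}$ is controlled by the $\|\chi u\|_{L^2}^2$ contribution would not hold for a general $R\in\rho_\cT\Psitb^s(M)$ (it is only bounded by the full $\Htb^{s,0,0}$-norm, which would be circular), so the exact translation-invariance of $A$ is the step that should carry the weight.
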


\begin{rmk}[Fourier transform of weighted 3b-Sobolev spaces]
\label{Rmk3SobFT}
  In the case $s=\alpha_\cD=\gamma=0$, the Fourier transform in $t$ gives an equivalence of the $\Htb^{0,0,\alpha_\cT}(M)$-norm of $\chi u$ with the $H^{\alpha_\cT}(\R_\sigma;L^2(\cT))$-norm of $\wh{\chi u}$. In particular, the norm on $\wh{\chi u}$ is no longer local in $\sigma$, unlike~\eqref{Eq3SobFT}, and in particular it is involves differentiation across $\sigma=0$; it is not clear how to capture such $\sigma$-regularity at the same time as the $\scbtop$-behavior near zero frequency. In short, we do not have any norm equivalences such as~\eqref{Eq3SobFT} when the $\cT$-weight is nonzero. See~\S\ref{SsEF} for workarounds in the context of sharp Fredholm theory for fully elliptic 3b-operators.
\end{rmk}

\begin{proof}[Proof of Propsoition~\usref{Prop3SobFT}]
  Since the Fourier transform commutes with multiplication by powers of $\la x\ra$, it suffices to consider the case $\alpha_\cD=\gamma=0$; thus, we work with the densities $|\dd t\,\dd x|$ on $M$ and $|\dd x|$ on $\cT$. For $s=0$ then,
  \[
    \|\chi u\|_{\Htb^{0,0,0}(M)} = \|\chi u\|_{\Htb^0(M)} = \|\chi u\|_{L^2(\R_t;L^2(\R^{n-1}_x))} \sim \|\wh{\chi u}\|_{L^2(\R_\sigma;L^2(\R^{n-1}_x))}
  \]
  by Plancherel's Theorem, and the norms on $H_{\scbtop,\sigma}^{0,0,0,0}(\cT)$ and $H_{\scop,|\sigma|^{-1}}^{0,0,0}(\cT)$ are (by definition) equal to the $L^2(\cT)$-norm; this proves~\eqref{Eq3SobFT} for $s=0$.

  Before proving~\eqref{Eq3SobFT} for general $s$, we discuss the case $s=1$ for the sake of exposition. On the left, we test $\chi u$ with $1$ (identity operator), $\la x\ra D_t$, and $\la x\ra D_x$, which upon passing to the Fourier transform amounts to testing $\wh{\chi u}(\sigma)$ with $1+\la x\ra|\sigma|$ and $\la x\ra D_x$. Let us restrict attention to $\rho_\cD:=|x|^{-1}\leq 1$, and write $\omega=\frac{x}{|x|}$. For $0\leq\sigma\leq 1$, resp.\ $h:=\sigma^{-1}\in(0,1]$, this can be written as testing with
  \[
    1+\frac{\sigma}{\rho_\cD}=\Bigl(\frac{\rho_\cD}{\rho_\cD+\sigma}\Bigr)^{-1}, \quad
    \Bigl(\frac{\rho_\cD}{\rho_\cD+\sigma}\Bigr)^{-1}\Bigl(\frac{\rho_\cD}{\rho_\cD+\sigma}\rho_\cD D_{\rho_\cD}\Bigr), \quad
    \Bigl(\frac{\rho_\cD}{\rho_\cD+\sigma}\Bigr)^{-1}\Bigl(\frac{\rho_\cD}{\rho_\cD+\sigma}D_{\omega^j}\Bigr),
  \]
  resp.\ $h^{-1}\rho_\cD^{-1}$, $h^{-1}\rho_\cD^{-1}(h\rho_\cD^2 D_{\rho_\cD})$, $h^{-1}\rho_\cD^{-1}(h\rho_\cD D_{\omega^j})$. But $\frac{\rho_\cD}{\rho_\cD+\sigma}\rho_\cD\pa_{\rho_\cD}$ and $\frac{\rho_\cD}{\rho_\cD+\sigma}\pa_{\omega^j}$ span the space of $\scbtop$-vector fields near $\rho_\cD=\sigma=0$, while $h\rho_\cD^2\pa_{\rho_\cD}$ and $h\rho_\cD\pa_{\omega^j}$ span the space of semiclassical scattering vector fields near $\rho_\cD=h=0$. This implies~\eqref{Eq3SobFT} for $s=1$.

  For general $s>0$ (and still with $\alpha_\cD=\gamma=0$), we argue as around~\eqref{EqAbEquiv2}. Fix an operator $A\in\Psitb^s(M)$ with elliptic principal symbol. Then
  \[
    \|\chi u\|_{\Htb^s(M)}^2 \sim \|\chi u\|_{\Htb^0(M)}^2 + \|A(\chi u)\|_{\Htb^0(M)}^2.
  \]
  We shall in fact arrange for $A$ to be $t$-translation-invariant on $\supp\chi$; that is, upon identifying a neighborhood of $\cT\subset M$ containing $\supp\chi$ with a neighborhood of $\cT\cong\hat\cT\subset N_\tbop\cT$ in a way compatible with the 3b-structures on $M$ and $N_\tbop\cT$ (as discussed after Definition~\ref{DefGTSpace}), the Schwartz kernels of $A$ and $N_\cT(A)$ are equal near $\supp\chi\times\supp\chi$. The advantage is that then, by definition of the spectral family of $A$, we have
  \begin{equation}
  \label{Eq3SobFTPf}
    \|A(\chi u)\|_{\Htb^0(M)}^2 \sim \int_\R \| \wh{N_\cT}(A,\sigma)\wh{\chi u}(\sigma,-) \|_{L^2(\cT)}^2.
  \end{equation}
  But by Lemma~\ref{Lemma3DEll}, the operator family $(\pm[0,1]\ni\sigma\mapsto\wh{N_\cT}(A,\sigma))\in\Psiscbt^{s,s,0,0}(\cT)$ has an elliptic principal symbol as a $\scbtop$-operator, and so does $((0,1]\ni h\mapsto\wh{N_\cT}(A,\pm h^{-1}))\in\Psisch^{s,s,s}(\cT)$ as a semiclassical scattering operator. Therefore, upon splitting the right hand side of~\eqref{Eq3SobFTPf} into $(-\infty,-1]\cup[-1,0]\cup[0,1]\cup[1,\infty)$, the equivalence of norms~\eqref{Eq3SobFT} follows from the definition of the $\scbtop$- and semiclassical scattering norms.

  For $s<0$, the norm equivalence~\eqref{Eq3SobFT} follows by duality from the case that the 3b-differential order is $-s$.
\end{proof}

We stress that the equivalence of norms~\eqref{Eq3SobFT} only requires as an input the inheritance of ellipticity when passing from a 3b-operator to its various normal operators; this in turn is a testament to the high degree of precision with which, say, the $\scbtop$-algebra captures the range of the low energy spectral family $P\mapsto([0,1)\ni\sigma\mapsto N(P,\sigma))$. However, we caution that the map $P\mapsto N(P,-)$ into the space of $\scbtop$-ps.d.o.s is not surjective, nor is even just its composition with the $\scbtop$-principal symbol map (cf.\ the final part of Proposition~\ref{PropGTscbt}).

\begin{prop}[3b-Sobolev spaces and the Mellin transform near $\cD$]
\label{Prop3SobMT}
  Fix a boundary defining function $\rho_0\in\CI(M_0)$. Fix a collar neighborhood $\cU:=[0,1)_{\rho_\cD}\times\cD$ where $\rho_\cD$ is boundary defining function of $\cD\subset M$, and identify $\cU$ with a collar neighborhood of the lift of $\cD$ to $\wt{{}^+N_\tbop\cD}$ (see Definition~\usref{DefGDSpace}). Fix $\chi\in\CIc(\cU)$. Write the Mellin transform of $v=v(\rho_0,z)$ (where $z\in\cD$) in $\rho_0$ as $\hat v(\lambda,z)=\int_0^\infty\rho_0^{-i\lambda}v(\rho_0,z)\,\frac{\dd\rho_0}{\rho_0}$. Let $\mu_\cD,\mu_\cT\in\R$ and $0<\nu_\tbop\in\CI(M;\Omegatb M)$, and fix on $M$ the weighted 3b-density $\nu=\rho_\cD^{\mu_\cD}\rho_\cT^{\mu_\cT}\nu_\tbop$. Fix on $\cD$ the weighted b-density $\hat\nu:=\rho_\cT^{\mu_\cT-2\mu_\cD+\hat\mu}\nu_\bop$ where $0<\nu_\bop\in\CI(\cD;\Omegab\cD)$ and $\hat\mu\in\R$. Let $s,\alpha_\cD,\alpha_\cT\in\R$. Then
  \begin{equation}
  \label{Eq3SobMT}
  \begin{split}
    &\|\chi u\|_{\Htb^{s,\alpha_\cD,\alpha_\cT}(M,\nu)}^2 \\
    &\quad \sim \int_{[-1,1]} \Bigl\|\wh{\chi u}\Bigl(\lambda_0-i\Bigl(\alpha_\cD-\frac{\mu_\cD}{2}\Bigr),-)\Bigr\|_{\Hb^{s,\alpha_\cT-\alpha_\cD+\frac{\hat\mu+1}{2}}(\cD,\hat\nu)}^2\,\dd\lambda_0 \\
    &\qquad + \sum_\pm\int_{\pm[1,\infty)} \Bigl\|\wh{\chi u}\Bigl(\lambda_0-i\Bigl(\alpha_\cD-\frac{\mu_\cD}{2}\Bigr),-)\Bigr\|_{H_{\cop,|\lambda_0|^{-1}}^{s,\alpha_\cT-\alpha_\cD+\frac{\hat\mu+1}{2},\alpha_\cT-\alpha_\cD+\frac{\hat\mu+1}{2},s}(\cD,\hat\nu)}^2\,\dd\lambda_0.
  \end{split}
  \end{equation}
  Here, we regard $\chi u$ as a distribution on ${}^+N_\tbop\cD$, obtained by blowing down $o_\fp$ in~\eqref{EqGDSpaceTilde}.
\end{prop}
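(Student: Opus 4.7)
The plan is to prove~\eqref{Eq3SobMT} by mirroring the strategy used for Proposition~\ref{Prop3SobFT}, with the Fourier transform in $t$ replaced by the Mellin transform in $\rho_0$. First I would reduce to the case $\alpha_\cD = \alpha_\cT = 0$: since $\rho_\cD^{\alpha_\cD} = \rho_0^{\alpha_\cD}\rho_\cT^{-\alpha_\cD}$ near the corner, multiplication by $\rho_\cD^{\alpha_\cD}\rho_\cT^{\alpha_\cT}$ shifts the Mellin contour by $-i\alpha_\cD$ and multiplies the fiberwise function on $\cD$ by $\rho_\cT^{\alpha_\cT-\alpha_\cD}$, which accounts for both the contour and the weight in the b-Sobolev space on the right-hand side.

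Next I would treat $s = 0$ by Plancherel's theorem for the Mellin transform. A direct computation in coordinates $(\rho_\cD,\rho_\cT,\omega)$ near $\cD \cap \cT$ (using the frame~\eqref{EqGV3CornerVF}) shows that a smooth positive 3b-density is $|\frac{d\rho_\cD}{\rho_\cD}\frac{d\rho_\cT}{\rho_\cT^2}d\omega|$; passing to the coordinates $(\rho_0,\rho_\cT,\omega)$ on ${}^+N_\tbop\cD$ via $\rho_0 = \rho_\cT\rho_\cD$ and using the blow-down $\wt{{}^+N_\tbop\cD} \to {}^+N_\tbop\cD$ on $\supp\chi$, the weighted density $\nu$ becomes $\rho_0^{\mu_\cD}\rho_\cT^{\mu_\cT-\mu_\cD-2}|\frac{d\rho_0}{\rho_0}\,d\rho_\cT\,d\omega|$, and a Plancherel identity in $\rho_0$ with this weight evaluates $\hat u$ on the contour $\{\Im\lambda = -(\alpha_\cD - \mu_\cD/2)\}$. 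Identifying the resulting measure on $\cD$ with the appropriate power of $\rho_\cT$ times the weighted b-density $\hat\nu$ yields the equivalence at $s = 0$.

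For general $s > 0$, I would fix an operator $A \in \Psitb^s(M)$ with elliptic principal symbol which is dilation-invariant on $\supp\chi$, i.e., whose Schwartz kernel agrees with that of its $\cD$-normal operator $N_\cD(A)$ in a neighborhood of $\supp\chi \times \supp\chi$ (using surjectivity of $N_\cD$ in Proposition~\ref{Prop3DProp}, plus local modifications modulo $\rho_\cD\Psitb^s$). Then $\|\chi u\|_{\Htb^s(M)}^2 \sim \|\chi u\|_{\Htb^0}^2 + \|A(\chi u)\|_{\Htb^0}^2$, and dilation-invariance on $\supp\chi$ ensures that the Mellin transform intertwines $A$ with multiplication by $\wh{N_\cD}(A,\lambda)$, so the $s = 0$ case yields
\[
  \|A(\chi u)\|_{\Htb^0(M,\nu)}^2 \sim \int_\R \|\wh{N_\cD}(A,\lambda_0 - i(\alpha_\cD - \mu_\cD/2))\,\wh{\chi u}(\lambda_0 - i(\alpha_\cD - \mu_\cD/2),-)\|_{\Hb^{0,\beta}(\cD,\hat\nu)}^2\,d\lambda_0,
\]
where $\beta = \alpha_\cT - \alpha_\cD + (\hat\mu+1)/2$. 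Lemma~\ref{Lemma3DEll} then furnishes the needed ellipticity: $\wh{N_\cD}(A,\lambda)$ is an elliptic b-operator of order $s$ for $\lambda$ in any bounded strip, and by Definition~\ref{Def3DNormMT} the rescaled family at $\lambda_0 = \pm h^{-1}$ is an elliptic semiclassical cone operator of order $(s,0,0,s)$ on $\cD$. Splitting the integral according to $\R = (-\infty,-1]\cup[-1,1]\cup[1,\infty)$ and using the standard characterizations of $\Hb^s$- and $H_{\cop,h}^{s,\ldots,s}$-norms via testing with such elliptic operators produces the two integrals on the right-hand side of~\eqref{Eq3SobMT}. The case $s < 0$ then follows by duality relative to $L^2(M,\nu)$, using the analogous description of $\Hb^{-s,\beta}(\cD,\hat\nu)^*$.

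The main obstacle is the construction and correct interpretation of the dilation-invariant elliptic operator $A$: one must identify the Schwartz kernel of $A$ on $M^2_{\tbop,\flat}$ with the dilation-invariant kernel of $N_\cD(A)$ on $({}^+N_\tbop\cD)^2_{\ebop,\sharp}$ near the region relevant to $\supp\chi$, via Proposition~\ref{Prop3DRel} and the identification of $\cU$ with a subset of $\wt{{}^+N_\tbop\cD}$; and one must verify that this identification is compatible with the Mellin transform so that the intertwining $\widehat{A(\chi u)}(\lambda,-) = \wh{N_\cD}(A,\lambda)\wh{\chi u}(\lambda,-)$ holds exactly on the chosen collar. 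The remaining subtlety is the bookkeeping of density weights, which requires the explicit local calculation of $\nu_\tbop$ near the corner performed in the second step above.
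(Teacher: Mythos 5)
Your proposal is correct and follows essentially the same route as the paper: the same density bookkeeping and Mellin--Plancherel argument for $s=0$, and for general $s$ the same device of testing with an elliptic 3b-operator that is dilation-invariant near $\supp\chi$, whose Mellin-transformed normal operator family is elliptic in $\Psib^s(\cD)$ for bounded $\lambda$ and in $\Psich^{s,0,0,s}(\cD)$ in the high-frequency regime, followed by duality for $s<0$. The only cosmetic difference is that the paper outsources the $s\neq 0$ step to Proposition~\ref{PropAebSobMT} (whose proof is exactly your argument) and obtains the dilation-invariant elliptic operator by extending an edge-b-operator from ${}^+N_\tbop\cD$ rather than constructing it directly in $\Psitb^s(M)$.
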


Note here that $\chi\in\cA^{(0,0)}(\cU)$ is, a fortiori, a bounded conormal function on $\wt{{}^+N_\tbop\cD}$ with support disjoint from the lift of ${}^+N_\fp\pa M_0$, and its pushforward to ${}^+N_\tbop\cD$ is thus a bounded conormal function, i.e.\ $\chi\in\cA^{(0,0)}({}^+N_\tbop\cD)$, with support contained in the image of $\cU$ under the blow-down map $\wt{{}^+N_\tbop\cD}\to{}^+N_\tbop\cD$. See Figure~\usref{Fig3SobMT}.

\begin{figure}[!ht]
\centering
\includegraphics{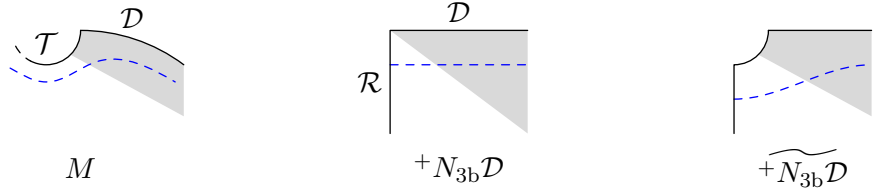}
\caption{Illustration of Proposition~\ref{Prop3SobMT}. \textit{From left to right:} the manifold $M$, the $\cD$-model space ${}^+N_\tbop\cD$, and its resolution $\wt{{}^+N_\tbop\cD}$. The support of the cutoff function $\chi$ is the shaded area; the dashed blue lines are level sets of the function $\rho_0$ in which we take the Mellin transform.}
\label{Fig3SobMT}
\end{figure}

\begin{proof}[Proof of Proposition~\usref{Prop3SobMT}]
  Near $\cD\cap\cT$ and in the coordinates $\rho_\cD=\frac{T}{R}$, $\rho_\cT=R$, $\omega\in\R^{n-2}$, the 3b-density $\nu_\tbop$ is a smooth positive multiple of $|\frac{\dd\rho_\cD}{\rho_\cD}\frac{\dd\rho_\cT}{\rho_\cT^2}\dd\omega|=|\frac{\dd T}{T}\frac{\dd R}{R^2}\dd\omega|$, and therefore $\nu$ is a smooth positive multiple of
  \[
    T^{\mu_\cD}R^{\mu_\cT-\mu_\cD-1}\Bigl|\frac{\dd T}{T}\frac{\dd R}{R}\dd\omega\Bigr|.
  \]
  Thus, $\nu=\rho_\cD^{\mu_\cD}\rho_\cT^{\mu_\cT-\mu_\cD-1}\nu_\bop$ where $\nu_\bop$ is a smooth positive b-density on ${}^+N_\tbop\cD$. Note then that
  \begin{align*}
    \|\chi u\|_{\Htb^{0,\alpha_\cD,\alpha_\cT}(M,\nu)}^2 &= \bigl\| \rho_\cD^{-\alpha_\cD+\frac{\mu_\cD}{2}}\rho_\cT^{-\alpha_\cT+\frac{\mu_\cT-\mu_\cD-1}{2}}\chi u\bigr\|_{L^2(M,\nu_\bop)}^2 \\
      &= \bigl\| (\rho_\cD\rho_\cT)^{-\alpha_\cD+\frac{\mu_\cD}{2}} \rho_\cT^{-\alpha_\cT+\alpha_\cD+\frac{\mu_\cT-1}{2}-\mu_\cD}\chi u\bigr\|_{L^2(M,\nu_\bop)}^2 \\
      &\sim \int_\R\ \Bigl\|\rho_\cT^{-\alpha_\cT+\alpha_\cD-\frac{\hat\mu+1}{2}}\wh{\chi u}\Bigl(\lambda_0-i\Bigl(\alpha_\cD-\frac{\mu_\cD}{2}\Bigr),-\Bigr)\Bigr\|_{L^2(\cD,\rho_\cT^{\mu_\cT-2\mu_\cD+\hat\mu}\nu_\bop)}^2\,\dd\lambda_0;
  \end{align*}
  this is the case $s=0$ of~\eqref{Eq3SobMT}. For general $s$, the equivalence~\eqref{Eq3SobMT} follows from Proposition~\ref{PropAebSobMT} in conjunction with Proposition~\ref{Prop3DProp}. (The connection between 3b-Sobolev spaces on $M$ and edge-b-Sobolev spaces on ${}^+N_\tbop\cD$ is given explicitly via Definition~\ref{Def3DNorm}: we can extend an elliptic edge-b-operator on ${}^+N_\tbop\cD$ by dilation-invariance in $\rho_0$ (in both factors on the Schwartz kernel level), followed by cutting off to a collar neighborhood of $\cD$, to a 3b-operator which then has an elliptic principal symbol near $\cD$. Such an operator can then be used to measure 3b-regularity on $M$ and edge-b-regularity on ${}^+N_\tbop\cD$ at the same time.)
\end{proof}

\subsection{Operators and Sobolev spaces with variable order}
\label{Ss3V}

While not used in the present paper, ps.d.o.s and Sobolev spaces with variable orders play important roles in hyperbolic problems, in particular in settings in which scattering theory enters, cf.\ the radial point estimates in \cite{MelroseEuclideanSpectralTheory} as described in \cite[Proposition~4.13]{VasyMinicourse}, \cite[\S5]{BaskinVasyWunschRadMink} or \cite[\S4]{HintzConicProp}. The 3b-framework is used in~\cite{HintzNonstat} to analyze the propagation of 3b-regularity for waves on appropriate asymptotically flat spacetimes through $\cT$, and scattering behavior occurs at certain conic submanifolds (radial sets) of $\Ttb^*_{\pa\cT}M$. We thus indicate here the minor modifications needed to deal with ps.d.o.s or weighted Sobolev spaces with variable 3b-differential orders; this is analogous to the discussion in~\cite[Appendix~A]{BaskinVasyWunschRadMink}, and goes back to \cite{UnterbergerVariable}.

We need to use the symbol class $S^s_{1-\delta,\delta}(\Ttb^*M)$ where $\delta\in(0,\half)$. Letting $\rho_\infty\in\CI(\ol{\Ttb^*}M)$ denote a boundary defining function of $\Stb^*M$ (so $\rho_\infty$, resp.\ $\rho_\infty^{-1}$ is a classical symbol on $\Ttb^*M$ of order $-1$, resp.\ $+1$), this is equal to the space $\cA^s_\delta(\ol{\Ttb^*}M)$ of functions $u\in\CI(\Ttb^*M)$ for which $V_1\ldots V_m u\in\rho_\infty^{-s-m\delta}L^\infty(\ol{\Ttb^*}M)$ for all $m\in\N_0$ and all vector fields $V_j\in\cV(\ol{\Ttb^*}M)$ which are tangent to $\Stb^*M$. For $\sfs\in\CI(\ol{\Ttb^*}M)$ then, we then define the space $S^\sfs(\Ttb^*M)$ to consist of all $\rho_\infty^{-\sfs}a_0$ where $a_0\in \bigcap_{\delta>0}S^0_{1-\delta,\delta}(\Ttb^*M)$. We remark that when $\sfs|_{\Stb^*M}=0$, then $\rho_\infty^{\pm\sfs}\in \bigcap_{\delta>0}S^0_{1-\delta,\delta}(\Ttb^*M)$, and therefore the class $S^\sfs(\Ttb^*M)$ only depends on the restriction of $\sfs$ to $\Stb^*M$. Moreover, if we let $s_0=\min_{\Stb^*M}\sfs$, then $S^\sfs(\Ttb^*M)\subset \bigcap_{\delta>0}S^{s_0}_{1-\delta,\delta}(\Ttb^*M)$.

We then define, for $\delta\in(0,\half)$ and $s\in\R$, the class $\Psi_{\tbop,1-\delta,\delta}^s(M)$ of 3b-ps.d.o.s as in Definition~\ref{Def3Psdo}, except we only demand that their Schwartz kernels be conormal distributions of class $(1-\delta,\delta)$ (see \cite{HormanderFIO1}). The principal symbol of $P\in\Psi_{\tbop,1-\delta,\delta}^s(M)$ is then an element $\sigma_{\tbop,1-\delta,\delta}^s(P)\in(S^s/S^{s-1+2\delta})(\Ttb^*M)$, and the normal operators are elements of $\Psi_{\tbop,1-\delta,\delta,I}^s(N_\tbop\cT)$ and $\Psi_{\ebop,1-\delta,\delta,I}^s({}^+N_\tbop\cD)$ (defined analogously). Given a variable order function $\sfs\in\CI(\ol{\Ttb^*}M)$ and $s_0=\min_{\Stb^*M}\sfs$, we then define
\[
  \Psitb^\sfs(M) \subset \bigcap_{\delta>0} \Psi_{\tbop,1-\delta,\delta}^{s_0}(M)
\]
as the space of operators whose Schwartz kernels are conormal distributions of variable order $\sfs$ (identified with a variable order function on $N^*\diag_\tbop$), i.e.\ in local coordinates near $\diag_\tbop$ they are given as quantizations of elements of $S^\sfs(\Ttb^*M)$. Directly from the definitions, we have:

\begin{lemma}[Symbols and normal operators of variable order ps.d.o.s]
\label{Lemma3VSymb}
  Denote by $\sfs\in\CI(\ol{\Ttb^*}M)$ a variable order function. Then the principal symbol gives a short exact sequence
  \[
    0 \to \bigcap_{\delta>0}\Psitb^{\sfs-1+2\delta}(M) \hra \Psitb^\sfs(M) \xra{\sigmatb^\sfs} \biggl(S^\sfs/\bigcap_{\delta>0}S^{\sfs-1+2\delta}\biggr)(\Ttb^*M) \to 0.
  \]
  Moreover, the $\cT$- and $\cD$-normal operators give rise to short exact sequences
  \begin{alignat*}{5}
    0 &\to \rho_\cT\Psitb^\sfs(M) &\ \hra\ & \Psitb^\sfs(M) &\ \xra{N_\cT}\ & \Psi_{\tbop,I}^{\sfs_\cT}(N_\tbop\cT) &\ \to\ & 0, \\
    0 &\to \rho_\cD\Psitb^\sfs(M) &\ \hra\ & \Psitb^\sfs(M) &\ \xra{N_\cD}\ & \Psi_{\ebop,I}^{\sfs_\cD}({}^+N_\tbop\cD) &\ \to\ & 0,
  \end{alignat*}
  where $\sfs_\cT$ is the translation-invariant extension of $\sfs|_{\Ttb^*_\cT M}$, and $\sfs_\cD$ is the dilation-invariant extension of $\sfs|_{\Ttb^*_\cD M}\in\CI(\ol{{}^\ebop T^*_\cD}({}^+N_\tbop\cD))$.
\end{lemma}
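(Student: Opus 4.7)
The plan is to reduce each of the three claims to the fixed order case treated earlier in this section (the principal symbol exact sequence after Definition~\ref{Def3Psdo}, and Propositions~\ref{Prop3TNorm} and~\ref{Prop3DProp}), exploiting the fact that by construction $\Psitb^\sfs(M)$ is a subspace of the variable-$(1-\delta,\delta)$-order calculus and is characterized locally by quantization of symbols in $S^\sfs(\Ttb^*M)$. The main point is that on the Schwartz kernel level, the only change compared to the fixed order case is the behavior at the conormal singularity at $\diag_\tbop$; all arguments concerning behavior at $\ff_{\cT,\flat}$ and $\ff_{\cD,\flat}$ remain unchanged, since those arguments only use the fact that Schwartz kernels are conormal and smooth down to the relevant front faces.

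For the principal symbol sequence, work in local coordinates adapted to the 3b-diagonal as provided by Lemma~\ref{Lemma3Tiny}. In such coordinates, an element $P \in \Psitb^\sfs(M)$ is a quantization of a symbol in $S^\sfs(\Ttb^*M)$ modulo a quantization of a symbol in $\bigcap_{\delta>0} S^{\sfs-1+2\delta}(\Ttb^*M)$ (the latter arising from commutators and coordinate changes, since $\rho_\infty^{\pm\sfs}$ lies in $\bigcap_{\delta>0}S^0_{1-\delta,\delta}$, the usual asymptotic summation arguments apply). The principal symbol $\sigmatb^\sfs(P)$ is then defined as the equivalence class of the full symbol, and surjectivity follows by quantization. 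For the kernel: if $\sigmatb^\sfs(P) = 0$, then the local full symbol of $P$ lies in $\bigcap_{\delta>0}S^{\sfs-1+2\delta}$, whence $P \in \bigcap_{\delta>0} \Psitb^{\sfs-1+2\delta}(M)$.

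For the $\cT$-normal operator, the construction of Definition~\ref{Def3TNorm} proceeds via restriction of the Schwartz kernel to $\ff_{\cT,\flat}$ followed by a partial convolution kernel interpretation; nothing in this procedure is sensitive to the variable order, only to the conormality of the Schwartz kernel and the structure at $\ff_{\cT,\flat}$ established in Lemma~\ref{Lemma3TStruct}. The image lies in $\Psi_{\tbop,I}^{\sfs_\cT}(N_\tbop\cT)$ because the order at fiber infinity of the resulting conormal distribution, read off from the symbol near the diagonal over $\cT$, is precisely the restriction of $\sfs$ to $\Ttb^*_\cT M \cong \Ttb^*_{\hat\cT}(N_\tbop\cT)$, extended translation-invariantly. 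Multiplicativity and surjectivity follow verbatim from the arguments in Proposition~\ref{Prop3TNorm}, and the kernel is $\rho_\cT \Psitb^\sfs(M)$ since the variable-order refinement does not affect the weight at $\ff_{\cT,\flat}$. The $\cD$-normal operator case is analogous, using Definition~\ref{Def3DNorm}, Proposition~\ref{Prop3DRel}, and the dilation-invariant extension of $\sfs|_{\Ttb^*_\cD M}$ under the identification $\Ttb^*_\cD M \cong {}^{\ebop}T^*_\cD({}^+N_\tbop\cD)$ from Corollary~\ref{CorGDPhase}.

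The main subtlety to verify is that $\sfs_\cT$ (respectively $\sfs_\cD$) is a well-defined variable order function on the model space, which reduces to checking that $\sfs|_{\Stb^*_\cT M}$ (respectively $\sfs|_{\Stb^*_\cD M}$) is invariant enough that its translation-invariant (respectively dilation-invariant) extension is smooth on $\ol{\Ttb^*}(N_\tbop\cT)$ (respectively $\ol{{}^\ebop T^*}({}^+N_\tbop\cD))$. But this is automatic from the definition: any smooth function on $\Stb^*_\cT M$ extends uniquely to a translation-invariant smooth function on $\Stb^*(N_\tbop\cT)$ under the identification $\Stb^*_{\hat\cT}(N_\tbop\cT) \cong \Stb^*_\cT M$, and likewise for $\cD$ via the canonical isomorphism of phase spaces.
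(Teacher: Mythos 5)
Your proposal is correct and takes essentially the same route as the paper, which in fact gives no written proof at all — the lemma is introduced with the words ``Directly from the definitions, we have:'' — so your reduction to the fixed-order statements (the symbol sequence after Definition~\ref{Def3Psdo}, Propositions~\ref{Prop3TNorm} and~\ref{Prop3DProp}), together with the observation that the variable order only affects the conormal singularity at $\diag_\tbop$ and not the behavior at $\ff_{\cT,\flat}$ or $\ff_{\cD,\flat}$, is exactly the intended argument. The check that $\sfs_\cT$ and $\sfs_\cD$ are well-defined smooth order functions on the model phase spaces via the canonical identifications is a reasonable detail to make explicit.
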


Defining variable order versions of all other model ps.d.o.\ algebras in an analogous fashion to the 3b-case just discussed, we also have:

\begin{lemma}[Spectral family and Mellin-transformed normal operator family]
\label{Lemma3VFam}
  Let $\sfs\in\CI(\ol{\Ttb^*}M)$ and $P\in\Psitb^\sfs(M)$. Then:
  \begin{enumerate}
  \item\label{It3VFamscbt} Let $\sigma_0>0$. The family $\pm[0,\sigma_0)\ni\sigma\mapsto\wh{N_\cT}(P,\sigma)$ is an element of $\Psiscbt^{\sfs_\infty,\sfs_\scop,0,0}(\cT)$. Here, the variable order functions $\sfs_\infty\in\CI(\Sscbt^*\cT)$ and $\sfs_\scop\in\CI(\ol{\Tscbt^*_\scface}\cT)$ are the restrictions (to the stated boundary hypersurfaces of $\ol{\Tsc^*}\cT$) of the pullback of $\sfs$ along the family of maps $(\iota_\sigma)_{\sigma\in\pm[0,\sigma_0)}$ from Lemma~\usref{LemmaGTPhase}.
  \item\label{It3VFamsch} The family $(0,1)\ni h\mapsto\wh{N_\cT}(P,\pm h^{-1})$ is an element of $\Psisch^{\sfs_\infty,\sfs_\scop,\sfs_\semi}(\cT)$, where $\sfs_\infty\in\CI({}^\schop S^*\cT)$, $\sfs_\scop\in\CI(\ol{\Tsch^*_{[0,1)\times\pa\cT}}\cT)$, and $\sfs_\semi\in\CI(\ol{\Tsch^*_{\{h=0\}}}\cT)$ are the restrictions (from $\ol{\Tsch^*}\cT$) of the pullback of $\sfs$ along the family of maps $\iota_{\pm h^{-1}}$.
  \item\label{It3VFamb} For $\lambda\in\C$, the operator $\wh{N_\cD}(P,\lambda)$ is an element of $\Psib^{\sfs_\infty}(\cD)$, where $\sfs_\infty\in\CI(\Sb^*\cD)$ is the restriction to $\Sb^*\cD\hra{}^\ebop S^*_\cD({}^+N_\tbop\cD)$ of $\sfs|_{\Stb^*_\cD M}$ under the identification given in Corollary~\usref{CorGDPhase}.
  \item\label{It3VFamch} For $\mu\in\R$, the family $(0,1)\ni h\mapsto\wh{N_\cD}(P,-i\mu\pm h^{-1})$ is an element of $\Psich^{\sfs_\infty,0,0,\sfs_\semi}(\cD)$ where $\sfs_\infty\in\CI({}^\chop S^*\cD)$ and $\sfs_\semi\in\CI(\ol{{}^\chop T^*_\sface}\cD)$ are restrictions (from $\ol{{}^\chop T^*}\cD$) of $\sfs|_{\Stb^*_\cD M}$ in the manner described in Lemma~\usref{LemmaAebSymbol}\eqref{ItAebSymbolch}.
  \end{enumerate}
  The principal symbols of $\wh{N_\cT}(P,-)$ and $\wh{N_\cD}(P,-)$ in the stated algebras are given in terms of that of $P$ via analogous pullbacks, as discussed in~\S\usref{SsGRel}.
\end{lemma}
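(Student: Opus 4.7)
The plan is to reduce each claim to the constant order case already established in Proposition~\usref{Prop3TNormMem}, Proposition~\usref{Prop3TNormMemRough}, and Proposition~\usref{PropAebMT}, then layer on the variable order bookkeeping. By writing $P\in\Psitb^\sfs(M)$ as the sum of an operator supported near $\diag_\tbop$ and a residual remainder $P'\in\bigcap_{\delta>0}\Psi_{\tbop,1-\delta,\delta}^{-\infty}(M)$, it suffices to treat each piece separately: the residual piece falls under the constant-order results with any loss $s_0=\min_{\Stb^*M}\sfs-\eps$ for $\eps>0$, and so its contributions to $\wh{N_\cT}(P,\sigma)$, $\wh{N_\cD}(P,\lambda)$, and their high-frequency rescalings automatically lie in the claimed classes (since variable order classes with $\sfs>s_0$ contain the constant order class of any order $\leq s_0$ near the relevant boundary hypersurface).

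For the piece supported near $\diag_\tbop$, I would express its Schwartz kernel as an oscillatory integral of the form~\eqref{Eq3TNormMemOscFull} with full symbol $a=\rho_\infty^{-\sfs}a_0$, where $a_0$ is bounded conormal (of class $(1-\delta,\delta)$ for every $\delta>0$) in the fiber variables and smooth down to $\ff_{\cT,\flat}\cup\ff_{\cD,\flat}$. Then the computations for the spectral family in~\eqref{Eq3TNormMemSc}, \eqref{Eq3TNormMemzf} and for the high frequency regime (proof of Proposition~\usref{Prop3TNormMemRough}\eqref{It3TNormMemRoughScl}) go through verbatim after substituting this new symbol, provided one keeps track of the rescaling of the fiber variables. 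In those rescalings, the factor $\rho_\infty^{-\sfs}$ gets pulled back along the very maps $\iota_\sigma$, $\iota_{\pm h^{-1}}$, etc., appearing in Lemma~\usref{LemmaGTPhase} and in Lemma~\usref{LemmaAebSymbol}\eqref{ItAebSymbolch}. The new (variable) orders at fiber infinity and at the small parameter faces of the target algebras are therefore precisely the restrictions of $\iota_\bullet^*\sfs$ to the relevant boundary hypersurfaces, which is the content of the variable order functions $\sfs_\infty$, $\sfs_\scop$, $\sfs_\semi$ in each of the four statements. For parts~\eqref{It3VFamb} and~\eqref{It3VFamch} one uses the identification $\Stb^*_\cD M\cong{}^\ebop S^*_\cD({}^+N_\tbop\cD)$ from Corollary~\usref{CorGDPhase}, and then Lemma~\usref{LemmaAebSymbol}\eqref{ItAebSymbolch} gives the appropriate restriction for the $\chop$ case.

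The main technical obstacle is verifying that the variable-order class of type $(1-\delta,\delta)$ is preserved under the change of variables that produces the sc-b-transition, semiclassical scattering, and semiclassical cone normal families; that is, one must check that if $a(\rho_\tot,\omega;\sigma_\tbop,\xi,\eta)\in S^\sfs$, then, for example, the rescaled symbol $\hat\rho_\tot^m a(\sigma\hat\rho_\tot,\omega;1/\hat\rho_\tot,\xi_\scop/\hat\rho_\tot,\eta_\scop/\hat\rho_\tot)$ appearing in~\eqref{Eq3TNormMemSc} defines a variable order sc-b-transition symbol with the claimed order function. This requires noting that under these dilations, $\rho_\infty^{-1}\sim|(\sigma_\tbop,\xi,\eta)|$ transforms to the appropriate defining function of fiber infinity in the target phase space, while the extra conormal regularity of the symbol (tested by the vector fields tangent to fiber infinity) is inherited from $\sfs$ being smooth on $\ol{\Ttb^*}M$ together with the smoothness of $\iota_\bullet$ as maps between the compactified cotangent bundles. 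Once this is confirmed, all the estimates in the constant-order proofs apply uniformly in the $(1-\delta,\delta)$ class, yielding the claimed memberships.

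Finally, the statements about principal symbols follow by restriction of the above oscillatory integral computations to the respective leading parts, exactly as in the constant-order case treated in Lemma~\usref{Lemma3DEll}; the pullbacks $\iota_\sigma^*$, $\iota_{\pm h^{-1}}^*$ of $\sigmatb^\sfs(P)$ yield the principal symbols of the normal operator families in the variable-order model algebras.
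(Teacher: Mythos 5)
Your proposal is correct and follows essentially the same route as the paper: localize into a residual piece (handled by the constant-order results) plus a near-diagonal piece, rerun the explicit oscillatory-integral computations of Propositions~\ref{Prop3TNormMem}, \ref{Prop3TNormMemRough}, and \ref{PropAebMT} with a variable-order symbol, and read off the order functions from the pullbacks along the phase-space maps. The only difference is that you spell out the verification that the $(1-\delta,\delta)$ symbol class is preserved under the fiber rescalings, a point the paper explicitly leaves to the reader as a local coordinate computation.
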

\begin{proof}
  Upon localization of Schwartz kernels away from $\diag_\tbop$, any 3b-ps.d.o.\ becomes a residual operator, whose normal operator families are thus already controlled. Therefore, it suffices to study the near-diagonal contributions to the normal operator families. Part~\eqref{It3VFamscbt} then follows as in the proof of Proposition~\ref{Prop3TNormMem}. Concretely, one starts with the oscillatory integral expression~\eqref{Eq3TNormMemOscFull}, where now $a$ is a variable order symbol in $(\sigma_\tbop,\xi,\eta)$; the explicit expressions~\eqref{Eq3TNormMemOsc}, \eqref{Eq3TNormMemSc}, and \eqref{Eq3TNormMemzf} of the spectral family in coordinate charts covering the support of its Schwartz kernel inside of $\cT_\scbtop^2$ then imply the claimed relationships between $P$ and $\wh{N_\cT}(P,-)$ as regards the variable orders as well as the principal symbols. Part~\eqref{It3VFamsch} similarly follows by inspecting the proof of Proposition~\ref{Prop3TNormMemRough}\eqref{It3TNormMemRoughScl}. The remaining parts likewise follow by an inspection of the part of the proof of Proposition~\ref{PropAebMT} concerned with the near-diagonal behavior, applied to the operator $N_\cD(P)$. The smoothness of the variable orders induced by $\sfs$ is a consequence of these arguments as well; we leave it to the reader to check this via direct computations in local coordinates.
\end{proof}

We next define variable order 3b-Sobolev spaces
\[
  \Htb^{\sfs,\alpha_\cD,\alpha_\cT}(M) = \rho_\cD^{\alpha_\cD}\rho_\cT^{\alpha_\cT}\Htb^\sfs(M);
\]
here, given $\sfs\in\CI(\ol{\Ttb^*}M)$, we fix any $A\in\Psitb^\sfs(M)$ with elliptic principal symbol, and set, for $s_0\leq\min_{\Stb^*M}\sfs$,
\[
  \Htb^\sfs(M) = \bigl\{ u\in\Htb^{s_0}(M) \colon A u\in L^2(M) \bigl\}.
\]
Using the 3b-symbol calculus, it then standard to show that variable order 3b-ps.d.o.s are bounded linear maps between such weighted variable order 3b-Sobolev spaces. Moreover, Lemma~\ref{Lemma3SobCpt} can be generalized to the statement that the inclusion
\[
  \Htb^{\sfs,\alpha_\cD,\alpha_\cT}(M) \hra \Htb^{\sfs',\alpha_\cD',\alpha_\cT'}(M)
\]
is compact if $\sfs'<\sfs$ (both of which can be variable) holds in the pointwise sense, $\alpha_\cD'<\alpha_\cD$, and $\alpha_\cT'<\alpha_\cT$. (This can be proved similarly to Lemma~\ref{Lemma3SobCpt}; indeed one reduce it locally to a constant order result by using that for a sufficiently small radius $\eps>0$ of the balls $B(p_i,3\eps)$ used to cover $M^\circ$, one can squeeze two constant orders $\sup_{B(p_i,2\eps)}\sfs'<s'_i<s_i<\inf_{B(p_i,2\eps)}\sfs$ in between $\sfs$ and $\sfs'$. We leave the details to the reader.) We finally record the following variable order analogues of Propositions~\ref{Prop3SobFT} and \ref{Prop3SobMT}:

\begin{prop}[Variable order 3b-Sobolev spaces and the Fourier and Mellin transform]
\label{Prop3VTrans}
  Let $\sfs\in\CI(\ol{\Ttb^*}M)$ denote a variable order function, and let $\eps>0$.
  \begin{enumerate}
  \item\label{It3VTransFT} We use the notation $\rho_0=T$, $X$, $t=T^{-1}$, $x=X/T$, and we fix densities on $M$ and $\cT$ as in Proposition~\usref{Prop3SobFT}. Then there exists $\delta>0$ so that for $\chi\in\CIc([0,\infty)_T\times\R^{n-1}_X)$ with $T+|X|<\delta$ on $\supp\chi$, and for any $\alpha_\cD\in\R$, there exists a constant $C>0$ so that
    \begin{equation}
    \label{Eq3VTransFT}
      C^{-1}\cI^{\sfs-\eps}(\chi u) \leq \|\chi u\|_{\Htb^{\sfs,\alpha_\cD,0}(M)} \leq C\cI^{\sfs+\eps}(\chi u),
    \end{equation}
    where we set
    \begin{align*}
      \cI^\sfs(u) &:= \sum_\pm \int_{\pm[0,1]} \|\wh{\chi u}(\sigma,-)\|_{H_{\scbtop,\sigma}^{\sfs_\infty,\sfs_\scop+\alpha_\cD,\alpha_\cD,0}(\cT)}^2\,\dd\sigma \\
        &\quad\hspace{4em} + \int_{\pm[1,\infty)} \|\wh{\chi u}(\sigma,-)\|_{H_{\scop,|\sigma|^{-1}}^{\sfs_\infty,\sfs_\scop+\alpha_\cD,\sfs_\semi}(\cT)}^2\,\dd\sigma.
    \end{align*}
    Here, $\sfs_\infty$, $\sfs_\scop$ in the first, and $\sfs_\infty$, $\sfs_\scop$, $\sfs_\semi$ in the second line are defined as in Lemma~\usref{Lemma3VFam}\eqref{It3VFamscbt} and \eqref{It3VFamsch}, respectively.
  \item\label{It3VTransMT} We use the notation $\rho_0$, $\cU=[0,1)_{\rho_\cD}\times\cD$, and the weights and densities from Proposition~\usref{Prop3SobMT}. Then there exists $\delta>0$ so that for $\chi\in\CIc(\cU)$ with $\rho_\cD<\delta$ on $\supp\chi$, and for any $\alpha_\cD,\alpha_\cT\in\R$, there exists a constant $C>0$ so that
    \[
      C^{-1}\cJ^{\sfs-\eps}(\chi u) \leq \|\chi u\|_{\Htb^{\sfs,\alpha_\cD,\alpha_\cT}(M)} \leq C\cJ^{\sfs+\eps}(\chi u),
    \]
    where we set
    \begin{align*}
      \cJ^\sfs(u) &:= \int_{[-1,1]} \Bigl\|\wh{\chi u}\Bigl(\lambda_0-i\Bigl(\alpha_\cD-\frac{\mu_\cD}{2}\Bigr),-)\Bigr\|_{\Hb^{\sfs_\infty,\alpha_\cT-\alpha_\cD+\frac{\hat\mu+1}{2}}(\cD,\hat\nu)}^2\,\dd\lambda_0 \\
    &\qquad + \sum_\pm\int_{\pm[1,\infty)} \Bigl\|\wh{\chi u}\Bigl(\lambda_0-i\Bigl(\alpha_\cD-\frac{\mu_\cD}{2}\Bigr),-)\Bigr\|_{H_{\cop,|\lambda_0|^{-1}}^{\sfs_\infty,\alpha_\cT-\alpha_\cD+\frac{\hat\mu+1}{2},\alpha_\cT-\alpha_\cD+\frac{\hat\mu+1}{2},\sfs_\semi}(\cD,\hat\nu)}^2\,\dd\lambda_0.
    \end{align*}
    Here, $\sfs_\infty$ in the first, and $\sfs_\infty$, $\sfs_\semi$ in the second line are defined as in Lemma~\usref{Lemma3VSymb}\eqref{It3VFamb} and \eqref{It3VFamch}, respectively.
  \end{enumerate}
  If $\sfs$ is translation-invariant (with respect to $t$-translations in the $(t,x)$-coordinates) in a collar neighborhood $\cU$ of $\cT$, resp.\ dilation-invariant (with respect to dilations in $\rho_0$ in some collar neighborhood of $\pa M_0$) in a collar neighborhood $\cU$ of $\cD$, then one can take $\eps=0$ in part~\eqref{It3VTransFT}, resp.\ \eqref{It3VTransMT} provided $\supp\chi\subset\cU$.
\end{prop}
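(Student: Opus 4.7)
I focus on part \eqref{It3VTransFT}; part \eqref{It3VTransMT} proceeds analogously, substituting the Mellin transform and the statements of Lemma~\ref{Lemma3VFam}\eqref{It3VFamb}--\eqref{It3VFamch} for those of Lemma~\ref{Lemma3VFam}\eqref{It3VFamscbt}--\eqref{It3VFamsch}, and using Proposition~\ref{PropAebSobMT} in place of Plancherel for the base case. The strategy is to mirror the proof of Proposition~\ref{Prop3SobFT}, but since variable-order symbols are only defined modulo a relative gain of one order (Lemma~\ref{Lemma3VSymb}), the testing argument now produces a loss in orders, which we control by sandwiching $\sfs$ between invariant orders.

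I would first establish the exact ($\eps = 0$) equivalence under the hypothesis that $\sfs$ is $t$-translation-invariant in a collar neighborhood $\cU$ of $\cT$ containing $\supp\chi$. In this case, starting from an elliptic symbol in the translation-invariant variable-order class $S^{\sfs|_{\Ttb^*_\cT M}}(\Ttb^*_\cT M)$, one can construct an elliptic $A \in \Psi_{\tbop,I}^{\sfs}(N_\tbop\cT)$ and transplant (and cut off near the diagonal) its Schwartz kernel to obtain an operator $A \in \Psitb^\sfs(M)$ with elliptic 3b-principal symbol whose Schwartz kernel is translation-invariant near $\supp\chi \times \supp\chi$. Just as in~\eqref{Eq3SobFTPf}, translation-invariance gives $\|A(\chi u)\|_{\Htb^{0,\alpha_\cD,0}}^2 \sim \int_\R \|\wh{N_\cT}(A,\sigma) \wh{\chi u}(\sigma,-)\|_{L^2(\cT)}^2 \, d\sigma$. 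Testing with an additional elliptic element of $\Psitb^{s_0}(M)$ with $s_0 \leq \min_{\Stb^*M} \sfs$ (to control the low-order part via Proposition~\ref{Prop3SobFT}), and splitting the $\sigma$-integral into $|\sigma| \leq 1$ and $|\sigma| \geq 1$, one concludes via Lemma~\ref{Lemma3VFam}\eqref{It3VFamscbt}--\eqref{It3VFamsch}: the family $\wh{N_\cT}(A,-)$ is elliptic in the variable-order $\scbtop$-calculus with orders $(\sfs_\infty,\sfs_\scop,0,0)$ on the low-frequency piece, and in the variable-order $\Psisch$-calculus with orders $(\sfs_\infty,\sfs_\scop,\sfs_\semi)$ on the high-frequency piece. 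By definition of the Sobolev norms of $\Psiscbt$ and $\Psisch$, testing with these elliptic families computes exactly the right-hand side $\cI^\sfs(\chi u)$.

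For the general case, I would construct, for any given $\eps > 0$, two variable orders $\sfs^\pm \in \CI(\ol{\Ttb^*}M)$ which are translation-invariant in a collar neighborhood $\cU \supset \supp\chi$ of $\cT$, coincide with the $t$-translation-invariant extension of $\sfs|_{\Ttb^*_\cT M}$ modulo shifts in $(-\eps,\eps)$, and satisfy $\sfs^- \leq \sfs \leq \sfs^+$ pointwise on $\Stb^*M$. Concretely, fix a cutoff $\phi \in \CIc(\cU)$ equal to $1$ near $\supp\chi$, let $\sfs_\cT$ be the $t$-translation-invariant extension of $\sfs|_{\Ttb^*_\cT M}$ to a neighborhood of $\ol{\Ttb^*_\cT}M$, and set $\sfs^\pm := \phi(\sfs_\cT \pm \eps/2) + (1-\phi)(\sfs \pm \eps/2)$; after a small perturbation of $\phi$ and noting that $\sfs - \sfs_\cT$ vanishes at $\cT$, the sandwich $\sfs^- \leq \sfs \leq \sfs^+$ holds on $\Stb^*M$. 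The inclusions $\Htb^{\sfs^+,\alpha_\cD,0}(M) \hookrightarrow \Htb^{\sfs,\alpha_\cD,0}(M) \hookrightarrow \Htb^{\sfs^-,\alpha_\cD,0}(M)$ (for distributions supported in $\cU$) and the analogous monotonicity of the $\scbtop$- and $\schop$-norms in the variable order—valid since the phase-space maps $\iota_\sigma$ of Lemma~\ref{LemmaGTPhase} preserve the sandwich on the restricted orders $\sfs_\infty^\pm, \sfs_\scop^\pm, \sfs_\semi^\pm$—then give
\[
  \cI^{\sfs - \eps}(\chi u) \lesssim \cI^{\sfs^-}(\chi u) \sim \|\chi u\|_{\Htb^{\sfs^-,\alpha_\cD,0}}^2 \leq \|\chi u\|_{\Htb^{\sfs,\alpha_\cD,0}}^2 \leq \|\chi u\|_{\Htb^{\sfs^+,\alpha_\cD,0}}^2 \sim \cI^{\sfs^+}(\chi u) \lesssim \cI^{\sfs + \eps}(\chi u),
\]
which is~\eqref{Eq3VTransFT}.

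The main obstacle is the construction of the invariant approximants $\sfs^\pm$ together with the verification that the restricted orders $\sfs_\infty^\pm, \sfs_\scop^\pm, \sfs_\semi^\pm$ on the various boundary faces of $\Tscbt^*\cT$ and $\Tsch^*\cT$ indeed sandwich $\sfs_\infty, \sfs_\scop, \sfs_\semi$. This is essentially a book-keeping exercise, but requires keeping track of how the phase-space maps $\iota_\sigma$ embed the model cotangent bundles into $\ol{\Ttb^*_\cT}M$, and of the interfaces $|\sigma| = 1$ between the $\scbtop$- and $\schop$-regimes; the latter is unproblematic because both reduce to the standard scattering calculus on compact subsets of $\sigma \neq 0$. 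The construction of an elliptic translation-invariant $A$ in the invariant case is a straightforward variable-order analogue of the standard elliptic symbol construction.
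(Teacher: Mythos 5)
Your proof is correct and follows essentially the same route as the paper's (very terse) argument: first the translation-/dilation-invariant case with $\eps=0$ by repeating the proof of Proposition~\ref{Prop3SobFT} with variable-order invariant elliptic test operators, then the general case by comparing $\sfs$ with invariant orders within $\eps$ near $\supp\chi$ and using monotonicity of the norms in the order. Your explicit construction of the sandwiching orders $\sfs^\pm$ via a cutoff is just a more detailed version of the paper's choice of $\delta$ small enough that $|\sfs-\sfs_I|<\eps$ over the support.
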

\begin{proof}
   When $\sfs$ is translation-invariant, part~\eqref{It3VTransFT}, with $\eps=0$ (and thus a fortiori for $\eps>0$), follows by the same proof as in for Proposition~\ref{Prop3SobFT}. For general $\sfs$, one chooses $\delta>0$ so small that the pointwise difference between $\sfs$ and the translation-invariant extension $\sfs_I$ of $\sfs|_{\Ttb^*_\cT M}$ is less than $\eps$ for $T+|X|<\delta$. One can then apply~\eqref{Eq3VTransFT} with $\sfs_I,0$ in place of $\sfs,\eps$; the estimate~\eqref{Eq3VTransFT} as stated then follows from $\cI^{\sfs-\eps}(\chi u)\lesssim\cI^{\sfs_I}(\chi u)\lesssim\cI^{\sfs+\eps}(\chi u)$. The proof of part~\eqref{It3VTransMT} is completely analogous.
\end{proof}

\section{The large 3b-calculus}
\label{SL}

We continue using the notation $M_0$, $\fp$, $M=[M_0;\{\fp\}]$ of~\S\ref{SG}. The small 3b-calculus introduced in~\S\ref{S3} is sufficient for the (symbolic) phase space analysis of 3b-(pseudo)differential operators; moreover, in combination with the function space isomorphisms related to the Fourier transform at $\cT$ and the Mellin transform at $\cD$ developed in~\S\S\ref{Ss3H}--\ref{Ss3V}, the small calculus is a sufficiently powerful tool for asymptotic analysis at $\cT$ and $\cD$; see~\S\ref{SF} for a demonstration in the elliptic setting. However, as is already familiar from the b- \cite{MelroseAPS}, edge \cite{MazzeoEdge}, or 0-calculi \cite{MazzeoMelroseHyp}, precise parametrices or (generalized) inverses of fully elliptic 3b-operators (a notion we will define in~\S\ref{SE}) do not lie in the small 3b-calculus, but in an appropriate \emph{large 3b-calculus} which incorporates boundary terms. In this section, we define this large 3b-calculus and prove its basic mapping and composition properties.

\subsection{Basic properties of the large 3b-calculus}
\label{SsL3}

We begin with the definition of the resolution of $M\times M$ which will carry the Schwartz kernels of elements of the large 3b-calculus:

\begin{definition}[3b-double space]
\label{DefL3Space}
  Denote by $\tilde\fp_L$ and $\tilde\fp_R$ the lifts of $\{\fp\}\times M_0$ and $M_0\times\{\fp\}$ to the small 3b-double space $M^2_{\tbop,\flat}$ from Definition~\ref{Def3Double}. Then the \emph{(large) 3b-double space} is
  \begin{equation}
  \label{EqL3Space}
    M^2_\tbop := [ M_{\tbop,\flat}^2; \tilde\fp_L, \tilde\fp_R ] = [ (M_0)_\bop^2; \fp_\tbop; \fp_{L\cap R}; \fp_L,\fp_R; \tilde\fp_L, \tilde\fp_R ],
  \end{equation}
  where $\fp_\tbop$, $\fp_{L\cap R}$, $\fp_L$, and $\fp_R$ are as in Definition~\ref{Def3Double}. We denote the boundary hypersurfaces of $M^2_\tbop$ as follows:
  \begin{itemize}
  \item $\ff_\cT$ is the lift of $\ff_{\cT,\flat}\subset M^2_{\tbop,\flat}$, i.e.\ the lift of $\fp_\tbop\subset(M_0)_\bop^2$;
  \item $\ff_\cD$ is the lift of $\ff_{\cD,\flat}\subset M^2_{\tbop,\flat}$, i.e.\ of the front face of $(M_0)_\bop^2$;
  \item $\lface$, resp.\ $\rface$ is the lift of $\fp_L$, resp.\ $\fp_R$;
  \item $\lb_\cD$, resp.\ $\rb_\cD$ is the lift of the left, resp.\ right boundary of $(M_0)_\bop^2$;
  \item $\lb_\cT$, resp.\ $\rb_\cT$ is the lift of $\tilde\fp_L$, resp.\ $\tilde\fp_R$;
  \item $\iface_L$, resp.\ $\iface_R$ is the connected component of the lift of $\fp_{L\cap R}$ which intersects $\lb_\cD$, resp.\ $\rb_\cD$ nontrivially. We shall also write $\iface:=\iface_L\cup\iface_R$.
  \end{itemize}
  Finally, $\diag_\tbop$ denotes the lift of the diagonal in $(M_0)^2$ to $M^2_\tbop$.
\end{definition}

\begin{lemma}[Relationship of $M^2_\tbop$ and $M^2$]
\label{LemmaL3Prod}
  The space $M^2_\tbop$ is a resolution (iterated blow-up) of $M^2$.
\end{lemma}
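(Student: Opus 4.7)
The strategy is to rearrange the iterated blow-ups in~\eqref{EqL3Space} so that the initial ones realize $M^2 = [M_0^2; \Phi_L; \Phi_R']$, where $\Phi_L = \{\fp\} \times M_0$, $\Phi_R = M_0 \times \{\fp\}$, and $\Phi_R'$ is the lift of $\Phi_R$ after the first blow-up. Once this rearrangement is carried out, the remaining blow-up centers lift to p-submanifolds of $M^2$, exhibiting $M^2_\tbop$ as an iterated blow-up of $M^2$.

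The key observation organizing the argument is that the successive blow-up centers in~\eqref{EqL3Space}, namely $(\pa M_0)^2$, $\fp_\tbop$, $\fp_{L\cap R}$, $\fp_L$, $\fp_R$, $\tilde\fp_L$, $\tilde\fp_R$, are exactly those needed for a coherent commutation. Indeed, $\tilde\fp_L$ and $\tilde\fp_R$ are the iterated lifts of $\Phi_L$ and $\Phi_R$; $\fp_{L\cap R}$ (and its further lift $\fp_\tbop$ to $\diag_\bop$) is the iterated lift of $\Phi_L \cap \Phi_R = \{(\fp,\fp)\}$; and $\fp_L$, $\fp_R$ are the lifts of $\Phi_L \cap (\pa M_0)^2 = \{\fp\}\times\pa M_0$ and $\Phi_R \cap (\pa M_0)^2 = \pa M_0\times\{\fp\}$.

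The plan is then to commute the blow-ups of $\tilde\fp_L, \tilde\fp_R$ to the front of the sequence by repeated application of~\cite[Proposition 5.11.2]{MelroseDiffOnMwc}. At each commutation step, the hypothesis of that proposition is satisfied because either the two centers are nested or their intersection has been blown up earlier. For instance, pushing $\tilde\fp_L$ past $(\pa M_0)^2$ requires the prior blow-up of $\Phi_L \cap (\pa M_0)^2$, which is precisely $\fp_L$; after $\fp_L$ is blown up, the lifts of $\Phi_L$ and $(\pa M_0)^2$ are cleanly separated, permitting the commutation. Analogous considerations apply for the other commutations, with $\fp_{L\cap R}$ and $\fp_\tbop$ handling the intersection $\Phi_L\cap\Phi_R$. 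After all commutations one obtains
\[
  M^2_\tbop \cong [M_0^2; \Phi_L; \Phi_R'; \text{lifts of } (\pa M_0)^2, \fp_\tbop, \fp_{L\cap R}, \fp_L, \fp_R],
\]
where each remaining center is a p-submanifold of the appropriate intermediate space inside $M^2$. This gives the desired resolution map $M^2_\tbop \to M^2$.

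The main obstacle is the careful bookkeeping of commutations, which is routine but intricate; one must identify, at each commutation step, the relevant intersection locus and verify it has already been blown up. The intermediate centers $\fp_\tbop, \fp_{L\cap R}, \fp_L, \fp_R$ in Definitions~\ref{Def3Double} and~\ref{DefL3Space} are included precisely to render each such commutation valid.
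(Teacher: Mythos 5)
Your proposal is essentially the paper's argument run in the opposite direction: the paper starts from $M^2=[M_0^2;\{\fp\}\times M_0;M_0\times\{\fp\}]$, inserts the remaining blow-ups (the lift of $\{(\fp,\fp)\}$, then of $\{\fp\}\times\pa M_0$ and $\pa M_0\times\{\fp\}$, then of $(\pa M_0)^2$, then of $\fp_\tbop$) at the tail of the sequence, and commutes each one forward into the position it occupies in Definition~\ref{DefL3Space}, using exactly the commutation criteria from \cite[Proposition~5.11.2]{MelroseDiffOnMwc} and the same intersection loci you identify; so the strategy and the tool coincide. One caution about your worked example, however: to commute $S_2$ through $S_1$ via a previously blown-up intersection, the center $S_0\supset S_1\cap S_2$ must have been blown up \emph{before $S_1$}, not merely before $S_2$. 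In the sequence~\eqref{EqL3Space} nothing precedes $(\pa M_0)^2$, and $\{\fp\}\times M_0$ and $(\pa M_0)^2$ are neither nested nor transversal (their intersection $\{\fp\}\times\pa M_0$ has codimension $n+1$ rather than $n+2$), so $\tilde\fp_L$ cannot be pushed past $(\pa M_0)^2$ directly; the fact that $\fp_L$ appears earlier than $\tilde\fp_L$ in the sequence is not by itself sufficient. The repair is to first move $\fp_L$ (and likewise $\fp_\tbop$, $\fp_{L\cap R}$, $\fp_R$) in front of $(\pa M_0)^2$ via the nested commutation $[M_0^2;(\pa M_0)^2;\fp_L;\ldots]\cong[M_0^2;\{\fp\}\times\pa M_0;\beta^*\bigl((\pa M_0)^2\bigr);\ldots]$, after which your separation argument does apply. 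The paper's direction of travel---adding blow-ups at the tail and commuting them forward---sidesteps this extra layer, which is worth keeping in mind when you carry out the bookkeeping you have deferred.
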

\begin{proof}
  In $M^2=[M_0^2;\{\fp\}\times M_0;M_0\times\{\fp\}]$, we blow up (the lift of) $\{(\fp,\fp)\}$; this can be commuted through $M_0\times\{\fp\}$ ($\supset$; $\{\fp\}\times M_0$) and $\{\fp\}\times M_0$ ($\supset$). Next, we blow up the lift of $\{\fp\}\times\pa M_0$; this can be commuted through $M_0\times\{\fp\}$ (intersection $\subset\{(\fp,\fp)\}$) and $\{\fp\}\times M_0$ ($\supset$); similarly, we can blow up $\pa M_0\times\{\fp\}$. Thus, $M^2$ can be blown up to
  \[
    [M_0^2; \{(\fp,\fp)\}; \{\fp\}\times\pa M_0, \pa M_0\times\{\fp\}; \{\fp\}\times M_0, M_0\times\{\fp\} ].
  \]
  Next, we blow up $(\pa M_0)^2$; this can be commuted through $M_0\times\{\fp\}$ (intersection $\subset\pa M_0\times\{\fp\}$) and $\{\fp\}\times M_0$ (intersection $\subset\{\fp\}\times\pa M_0$), as well as through $\pa M_0\times\{\fp\}$, $\{\fp\}\times\pa M_0$, and $\{(\fp,\fp)\}$ ($\subset$ for all three). Using the notation of Definition~\ref{DefL3Space}, we have thus shown that $M^2$ can be blown up to
  \begin{align*}
    &\bigl[M_0^2; (\pa M_0)^2; \{(\fp,\fp)\}; \{\fp\}\times\pa M_0, \pa M_0\times\{\fp\}; \{\fp\}\times M_0, M_0\times\{\fp\} \bigr] \\
    &\qquad = \bigl[ (M_0)^2_\bop; \fp_{L\cap R}; \fp_L,\fp_R; \tilde\fp_L,\tilde\fp_R \bigr].
  \end{align*}
  Finally, we can blow up $\fp_\tbop$ and commute it through $\tilde\fp_L$ and $\tilde\fp_R$ since $\fp_\tbop$ is disjoint from them, and we can then further commute it through $\fp_L$ and $\fp_R$ (intersection $\subset\fp_{L\cap R}$) and through $\fp_{L\cap R}$ ($\supset$). This completes the proof.
\end{proof}

\begin{rmk}[Small 3b-calculus and bundles]
\label{RmkL3Spaces}
  In view of Lemma~\ref{LemmaL3Prod}, we can define spaces $\Psitb^s(M;E,F)$ of 3b-ps.d.o.s acting between sections of smooth bundles $E,F\to M$ via tensoring the space of Schwartz kernels of elements of $\Psitb^s(M)$ with $\CI(M^2_\tbop,\pi_R^*F\otimes\pi_L^*E^*)$, where $\pi_L$ and $\pi_R\colon M^2_\tbop\to M$ are the left and right projection, respectively. This is only a minor generalization of the setting of Definition~\ref{Def3Psdo} since any vector bundle $E\to M$ is isomorphic (albeit not in a canonical manner) to the pullback of a vector bundle over $M_0$; this follows from the fact that $E|_\cT\to\cT$ is trivial (the base $\cT\cong\ol{\R^{n-1}}$ being contractible).
\end{rmk}

Figure~\ref{FigL3Space} shows two slices of $M^2_\tbop$ given by level sets of the coordinate $X'\in\R^{n-1}$, where we denote by $T,X$ and $T',X'$ local coordinates on the left, resp.\ right factor of $(M_0)_\bop^2$.

\begin{figure}[!ht]
\centering
\includegraphics{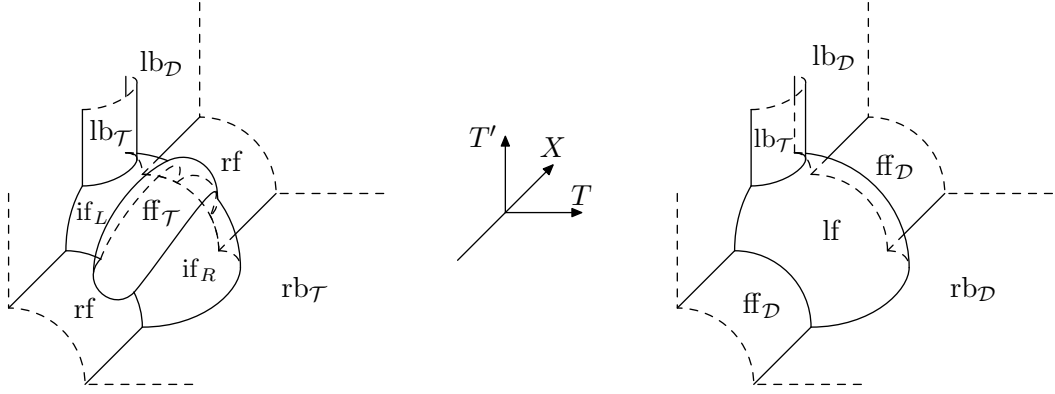}
\caption{\textit{On the left:} the lift of $X'=0$ inside of $M^2_\tbop$. \textit{On the right:} the lift of $X'=X'_0\neq 0$ inside of $M^2_\tbop$.}
\label{FigL3Space}
\end{figure}

\begin{definition}[Large 3b-calculus]
\label{DefL3}
  Let $\cE=(\cE_{\ff_\cD},\cE_{\ff_\cT},\cE_\lface,\cE_\rface,\cE_{\lb_\cD},\cE_{\rb_\cD},\cE_{\lb_\cT},\cE_{\rb_\cT},\cE_\iface)$ be a collection of index sets. With $\pi_R\colon M^2_\tbop\to M$ denoting the right projection, the space of \emph{residual 3b-ps.d.o.s} is
  \[
    \Psitb^{-\infty,\cE}(M) := \cA_\phg^\cE(M^2_\tbop;\pi_R^*\,\Omegatb M),
  \]
  where the index set $\cE_H$ is associated to the hypersurface $H$ (in the case $H=\iface$ to both $\iface_L$ and $\iface_R$). The \emph{large 3b-calculus} consists of all operators in $\Psitb^m(M)+\Psitb^{-\infty,\cE}(M)$. We also define for $\cE=(\cE_{L,\cD},\cE_{L,\cT},\cE_{R,\cD},\cE_{R,\cT})$ the space of \emph{fully residual operators}\footnote{We use right b-densities here for simpler bookkeeping; see Lemma~\ref{LemmaLDensities} below regarding the relationship between the b- and 3b-density bundles used here.}
  \begin{equation}
  \label{EqL3FullyRes}
    \Psi^{-\infty,\cE}(M) = \cA_\phg^\cE(M^2;\pi_R^*\,\Omegab M) = \cA_\phg^{\cE+(0,0,0,1)}(M^2;\pi_R^*\Omegatb M)
  \end{equation}
  which are polyhomogeneous kernels on the (unresolved) product space $M\times M$ with index set $\cE_{L,\cD}$ at $\cD\times M$, $\cE_{L,\cT}$ at $\cT\times M$, $\cE_{R,\cD}$ at $M\times\cD$, and $\cE_{R,\cT}$ at $M\times\cT$.
\end{definition}

In particular, if all index sets in $\cE$ are trivial (i.e.\ the empty set) with the exception of $\cE_{\ff_\cD}=-\beta_\cD+\N_0$ and $\cE_{\ff_\cT}=-\beta_\cT+\N_0$, then $\Psitb^{-\infty,\cE}(M)=\rho_\cD^{-\beta_\cD}\rho_\cT^{-\beta_\cT}\Psitb^{-\infty}(M)$ lies in the small weighted 3b-algebra.

\begin{notation}[Index sets]
\label{NotL3Ind}
  To assist the reader with keeping track of which index set is assigned to which boundary hypersurface of $M^2_\tbop$, we shall also write
  \begin{align*}
    \Psitb^{-\infty,\cE}(M) = \Psitb^{-\infty}(M;\,&\ff_\cD[\cE_{\ff_\cD}], \ff_\cT[\cE_{\ff_\cT}], \lface[\cE_\lface], \rface[\cE_\rface], \\
      & \lb_\cD[\cE_{\lb_\cD}], \rb_\cD[\cE_{\rb_\cD}], \lb_\cT[\cE_{\lb_\cT}], \rb_\cT[\cE_{\rb_\cT}], \iface[\cE_\iface]).
  \end{align*}
\end{notation}

We also note that by~\eqref{EqL3FullyRes} and Lemma~\ref{LemmaL3Prod}, we have
\begin{equation}
\label{EqL3FullyResRel}
\begin{split}
  \Psi^{-\infty,\cE}(M) &\subset \Psitb^{-\infty}\bigl(M; \ff_\cD[\cE_{L,\cD}+\cE_{R,\cD}], \ff_\cT[\cE_{L,\cT}+\cE_{R,\cT}+1], \\
  &\quad\hspace{5em} \lface[\cE_{L,\cT}+\cE_{R,\cD}], \rface[\cE_{L,\cD}+\cE_{R,\cT}+1], \lb_\cD[\cE_{L,\cD}], \rb_\cD[\cE_{R,\cD}], \\
  &\quad\hspace{11em} \lb_\cT[\cE_{L,\cT}], \rb_\cT[\cE_{R,\cT}+1], \iface[\cE_{L,\cT}+\cE_{R,\cT}+1] \bigr).
\end{split}
\end{equation}

\begin{prop}[Basic mapping properties]
\label{PropLMap}
  Let $\cE$ be a collection of index sets as in Definition~\usref{DefL3}.
  \begin{enumerate}
  \item\label{ItLMapPhg} Let $P\in\Psitb^m(M)+\Psitb^{-\infty,\cE}(M)$. Let $\cF=(\cF_\cD,\cF_\cT)$ be a pair of index sets. Suppose that $\Re(\cE_{\rb_\cD}+\cF_\cD)>0$ and $\Re(\cE_{\rb_\cT}+\cF_\cT)>1$. Then
    \begin{equation}
    \label{EqLMapPhg}
      P \colon \cA_\phg^\cF(M) \to \cA_\phg^\cG(M)
    \end{equation}
    where $\cG=(\cG_\cD,\cG_\cT)$ is given by
    \begin{equation}
    \label{EqLMapInd}
    \begin{split}
      \cG_\cD &= \cE_{\lb_\cD} \extcup(\cE_{\ff_\cD}+\cF_\cD)\extcup(\cE_\rface+\cF_\cT-1), \\
      \cG_\cT &= \cE_{\lb_\cT} \extcup(\cE_\lface+\cF_\cD)\extcup(\cE_\iface+\cF_\cT-1)\extcup(\cE_{\ff_\cT}+\cF_\cT).
    \end{split}
    \end{equation}
    In particular, if $P\in\Psitb^m(M)$ is an element of the small 3b-algebra, then $\cG=\cF$.
  \item\label{ItLMap3b} Let $\alpha_\cD,\alpha_\cT\in\R$, and define weighted 3b-Sobolev spaces on $M$ with respect to a positive b-density. Suppose that $\Re(\cE_{\rb_\cD}+\alpha_\cD)>0$ and $\Re(\cE_{\rb_\cT}+\alpha_\cT)>1$. Then
    \[
      P \colon \Htb^{s,\alpha_\cD,\alpha_\cT}(M) \to \cA^{\gamma_\cD,\gamma_\cT}(M),\qquad
      P \colon \Hb^{s,\alpha_\cD,\alpha_\cT}(M) \to \cA^{\gamma_\cD,\gamma_\cT}(M),
    \]
    for any
    \begin{align*}
      \gamma_\cD&<\gamma_\cD^0:=\min(\Re\cE_{\lb_\cD},\Re\cE_{\ff_\cD}+\alpha_\cD,\Re\cE_\rface+\alpha_\cT-1), \\
      \gamma_\cT&<\gamma_\cT^0:=\min(\Re\cE_{\lb_\cT},\Re\cE_\lface+\alpha_\cD,\Re\cE_\iface+\alpha_\cT-1,\Re\cE_{\ff_\cT}+\alpha_\cT).
    \end{align*}
  \end{enumerate}
\end{prop}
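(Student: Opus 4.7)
The strategy for part~\eqref{ItLMapPhg} is to write $P u=(\pi_L)_*(K_P\cdot\pi_R^*u)$, where $\pi_L,\pi_R\colon M^2_\tbop\to M$ are the lifts of the projections and $K_P$ is the Schwartz kernel of $P$, and then invoke Melrose's pushforward theorem~\cite{MelrosePushfwd}. First I would verify that $\pi_L$ and $\pi_R$ are simple b-fibrations, using Lemma~\ref{LemmaL3Prod} together with the fact that each center of blow-up defining $M^2_\tbop$ projects, under the corresponding projection $M^2\to M$, into a single boundary hypersurface of $M$ (e.g.\ $\pa M_0\times\{\fp\}$ projects under the left projection into $\pa M_0$, which lifts to $\cD$). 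I would then tabulate the face correspondences: under $\pi_R$, the hypersurfaces with image in $\cD$ are $\rb_\cD,\ff_\cD,\lface$, while those with image in $\cT$ are $\rb_\cT,\ff_\cT,\rface,\iface$; under $\pi_L$ the corresponding lists are $\lb_\cD,\ff_\cD,\rface$ and $\lb_\cT,\ff_\cT,\lface,\iface$. Pullback via $\pi_R^*$ turns $u\in\cA_\phg^\cF(M)$ into a polyhomogeneous function on $M^2_\tbop$ with index set $\cF_\cD$, resp.\ $\cF_\cT$, at each face projecting into $\cD$, resp.\ $\cT$, and trivial elsewhere; multiplying by $K_P$ then adds $\cE_H$ at each face~$H$.

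Applying the pushforward theorem, $(\pi_L)_*$ produces polyhomogeneous output on $M$ whose index set at $\cD$, resp.\ $\cT$, is obtained by taking $\extcup$ over the three, resp.\ four, faces of $M^2_\tbop$ that $\pi_L$ maps onto $\cD$, resp.\ $\cT$, provided the integrand is absolutely integrable along the fibers of $\pi_L$ at the faces $\rb_\cD$ and $\rb_\cT$ (which are pushed into $M^\circ$ by $\pi_L$). These integrability conditions translate precisely into $\Re(\cE_{\rb_\cD}+\cF_\cD)>0$ and $\Re(\cE_{\rb_\cT}+\cF_\cT)>1$. The shift by $-1$ at $\rface$ and $\iface$ appearing in~\eqref{EqLMapInd}, as well as the analogous $+1$ shift in the hypothesis at $\rb_\cT$, both reflect the discrepancy between the right 3b-density bundle carrying $K_P$ and the right b-density with respect to which the fiber integral is naturally expressed, precisely as encoded in the embedding~\eqref{EqL3FullyResRel} at the $\cT$-related faces. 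For $P\in\Psitb^m(M)$ in the small calculus, the only new feature is the conormal singularity of $K_P$ at $\diag_\tbop$ (disjoint from $\pa M^2_\tbop$ except where it meets the interiors of $\ff_\cD\cup\ff_\cT$), and a standard oscillatory-integral argument shows that a 3b-ps.d.o.\ of order $m$ preserves polyhomogeneity with the same index sets, giving $\cG=\cF$ in this case.

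For part~\eqref{ItLMap3b}, decompose $P=P_1+P_2$ with $P_1\in\Psitb^m(M)$ and $P_2\in\Psitb^{-\infty,\cE}(M)$. The operator $P_1$ maps $\Htb^{s,\alpha_\cD,\alpha_\cT}(M)\to\Htb^{s-m,\alpha_\cD,\alpha_\cT}(M)$ by the standard boundedness of 3b-ps.d.o.s on weighted 3b-Sobolev spaces; the bounded-geometry Sobolev embedding of Remark~\ref{Rmk3SobBdd} then includes this continuously into $\cA^{\alpha_\cD-\delta,\alpha_\cT-\delta}(M)$ for arbitrary $\delta>0$, and since $\gamma_\cD^0\leq\Re\cE_{\ff_\cD}+\alpha_\cD$ and $\gamma_\cT^0\leq\Re\cE_{\ff_\cT}+\alpha_\cT$, this suffices. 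The b-Sobolev case is immediate from $\Hb^{s,\alpha_\cD,\alpha_\cT}\hookrightarrow\Htb^{s,\alpha_\cD,\alpha_\cT}$. For $P_2$, I would apply Cauchy--Schwarz to the fiber integral,
\[
  |(P_2u)(z)| \leq \bigl\|K_{P_2}(z,\cdot)\,\rho_\cD^{\alpha_\cD}\rho_\cT^{\alpha_\cT}\bigr\|_{L^2_\tbop(M)} \cdot \bigl\|\rho_\cD^{-\alpha_\cD}\rho_\cT^{-\alpha_\cT}u\bigr\|_{L^2_\tbop(M)},
\]
reducing the case $s<0$ via $P_2=(P_2 A^{-1})A$ for an elliptic $A\in\Psitb^{-s}(M)$ (absorbing the composition back into the large calculus by moving derivatives onto the kernel). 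The explicit polyhomogeneous form of $K_{P_2}$ on $M^2_\tbop$ lets me evaluate the first $L^2$-norm as a conormal function of $z$ by face-wise integration; the hypotheses $\Re(\cE_{\rb_\cD}+\alpha_\cD)>0$ and $\Re(\cE_{\rb_\cT}+\alpha_\cT)>1$ guarantee convergence of the fiber integrals, producing the conormal bounds at $\cD$ and $\cT$ with the stated weights. Conormality of the output, rather than just pointwise bounds, then follows by iterating this argument with $b$-derivatives commuted through $P_2$ (each commutator being again of the same class).

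The main obstacle is the careful tracking of the density shifts at the $\cT$-type faces (where $\pi_R^*\Omegatb M$ acquires an extra defining-function factor relative to a right b-density) together with the detailed verification of the face-by-face correspondences under $\pi_L$ and $\pi_R$ after the nested sequence of blow-ups~\eqref{EqL3Space}. Once this combinatorial bookkeeping is in place, both parts reduce to Melrose's pushforward theorem and routine Sobolev-space estimates.
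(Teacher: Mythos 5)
Your part~\eqref{ItLMapPhg} follows essentially the same route as the paper: lift to $M^2_\tbop$, verify that $\pi_L,\pi_R$ are b-fibrations (the paper's Lemma~\ref{LemmaLProj}), record the face correspondences (yours match exactly), and apply the pushforward theorem, with the integrability conditions at $\rb_\cD$, $\rb_\cT$ giving the hypotheses and the $\pm1$ shifts coming from the density bookkeeping. The one imprecision is that the shifts are computed in the paper via Lemma~\ref{LemmaLDensities} (the comparison of $\pi_L^*\Omegatb M\otimes\pi_R^*\Omegatb M$ with $\Omegab M^2_\tbop$, applied to $K_P\cdot\pi_R^*u\cdot\pi_L^*\nu_\tbop$, followed by division by $\nu_\tbop\in\rho_\cT^{-1}\CI(M;\Omegab M)$), not via~\eqref{EqL3FullyResRel}, which concerns fully residual kernels on the unresolved $M^2$; but the mechanism you identify is the right one.

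For part~\eqref{ItLMap3b} your treatment of the residual piece $P_2$ is a legitimate variant: the paper conjugates away the weights, reduces to $L^2$-boundedness via Schur's lemma, and upgrades to conormality by observing that $A P_2 B$ stays in the class for $A,B\in\Diffb(M)$ (resp.\ $B\in\Difftb(M)$); your Cauchy--Schwarz bound on the kernel slices plus commuting b-derivatives through achieves the same thing, at the cost of having to evaluate the weighted $L^2$-norms of the slices face by face. However, your treatment of the small-calculus piece $P_1\in\Psitb^m(M)$ contains a false step: $\Htb^{s-m,\alpha_\cD,\alpha_\cT}(M)$ does \emph{not} embed into $\cA^{\alpha_\cD-\delta,\alpha_\cT-\delta}(M)$. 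Conormality requires uniform bounds on \emph{all} b-derivatives, which a finite-order Sobolev space cannot supply; Remark~\ref{Rmk3SobBdd} only gives an $L^\infty$-type embedding for sufficiently large finite regularity, not membership in $\cA^{\gamma_\cD,\gamma_\cT}$. The paper sidesteps this by restricting the proof of part~\eqref{ItLMap3b} to $m=-\infty$: the small-calculus contribution only maps between weighted 3b-Sobolev spaces (as recalled immediately before the proposition), and the conormal target is meaningful only for the residual part. You should either drop $P_1$ from part~\eqref{ItLMap3b} or state its conclusion as Sobolev boundedness rather than conormality.
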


The boundedness of $\Psitb^m(M)$ as a map between weighted 3b-Sobolev spaces was already noted in~\S\ref{Ss3H}, and hence we do not repeat it here. We shall prove Proposition~\ref{PropLMap} using pullback and pushforward results for polyhomogeneous distributions. The key geometric input is:

\begin{lemma}[Projection to the single space]
\label{LemmaLProj}
  The left, resp.\ right projection $M_0^2\to M_0$ lifts to a smooth map $\pi_L\colon M^2_\tbop\to M$, resp.\ $\pi_R\colon M^2_\tbop\to M$ which is a b-fibration. The preimage under $\pi_L$ of $\cD$, resp.\ $\cT$ is $\lb_\cD\cup\ff_\cD\cup\rface$, resp.\ $\lb_\cT\cup\lface\cup\iface\cup\ff_\cT$. The preimage under $\pi_R$ of $\cD$, resp.\ $\cT$ is $\rb_\cD\cup\ff_\cD\cup\lface$, resp.\ $\rb_\cT\cup\rface\cup\iface\cup\ff_\cT$.
\end{lemma}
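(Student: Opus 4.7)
The plan is to deduce the lemma from the standard fact that the left projection $(M_0)^2_\bop \to M_0$ is a b-fibration (see \cite{MelroseDiffOnMwc}), by iteratively applying the commutation theorems for b-fibrations and blow-ups (as in \cite[Proposition~5.12]{MelroseDiffOnMwc}) at each of the blow-ups appearing in the definition~\eqref{EqL3Space} of $M^2_\tbop$ together with the single blow-up $\{\fp\}$ used to define $M = [M_0;\{\fp\}]$. I focus on $\pi_L$; the statement for $\pi_R$ follows from the left--right symmetry $(q,q')\mapsto(q',q)$ of the construction.

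\textbf{Steps.} First I would sort the six source blow-ups into two groups according to the image of their centers under $\pi_{0,L}$: the centers $\fp_\tbop$, $\fp_{L\cap R}$, $\fp_L$, and $\tilde\fp_L$ all map into the single point $\fp\in M_0$, while $\fp_R$ and $\tilde\fp_R$ map surjectively onto $\pa M_0$ and $M_0$ respectively. For the first group, the plan is to commute these blow-ups (using the techniques recorded after~\eqref{EqAItBlowup}) so that $\tilde\fp_L$, being the full preimage of $\fp$ in $(M_0)^2_\bop$, can be paired with the blow-up of $\{\fp\}$ in $M_0$; this is a b-fibration compatible blow-up (see \cite[Proposition~5.12.1]{MelroseDiffOnMwc}). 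The remaining blow-ups $\fp_\tbop$, $\fp_{L\cap R}$, $\fp_L$ in the source, which live over (the lift of) $\{\fp\}$ inside of $\tilde\fp_L$, are then simply interior blow-ups relative to the target, and each preserves the b-fibration property. For the second group, $\fp_R$ and $\tilde\fp_R$ are b-transversal to the fibers of the lifted map, so again blow-ups preserve the b-fibration property. Finally, tracking each boundary hypersurface of $M^2_\tbop$ through the blow-down to $(M_0)^2_\bop$ and then through $\pi_{0,L}$ yields the stated descriptions of $\pi_L^{-1}(\cD)$ and $\pi_L^{-1}(\cT)$: the faces $\lb_\cD, \ff_\cD, \rface$ lie over $\pa M_0 \setminus\{\fp\}$, and $\lb_\cT, \lface, \iface, \ff_\cT$ lie over $\fp$, while $\rb_\cD$ and $\rb_\cT$ (arising from $M_0\times\pa M_0$ and $\tilde\fp_R$) are mapped onto the full $M$ rather than into $\pa M$.

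\textbf{Main obstacle.} The principal difficulty is verifying that the envisioned commutation of blow-ups is actually legitimate at every stage: both checking the containment hypotheses of the commutation lemma for each adjacent pair of centers, and checking that at the pairing of $\tilde\fp_L$ with $\{\fp\}$ the restriction $\pi_{0,L}|_{\tilde\fp_L}\colon\tilde\fp_L\to\{\fp\}$ indeed satisfies the condition for simultaneous blow-up on both sides to give a b-fibration. Although the combinatorics is bookkeeping-heavy, the geometric content is straightforward in each coordinate chart described in the proof of Lemma~\ref{Lemma3Tiny}; as a fallback one can abandon the commutation route and verify the b-fibration property directly in each of these charts by writing down $\pi_L$ in local coordinates and reading off the exponents $e(H,H')$. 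The latter approach has the advantage of simultaneously yielding the stated description of $\pi_L^{-1}(\cD)$ and $\pi_L^{-1}(\cT)$ without further work.
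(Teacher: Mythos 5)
Your overall strategy (iterate Melrose's commutation results for b-fibrations and blow-ups, treating the centers over $\fp$ differently from the b-transversal ones) is the right one and is essentially what the paper does, but there is a genuine error at the heart of your first group. The submanifold $\tilde\fp_L$ is \emph{not} the full preimage of $\fp$ under the lifted left projection $(M_0)^2_\bop\to M_0$: by definition $\tilde\fp_L$ is only the \emph{lift} of $\{\fp\}\times M_0$, i.e.\ the closure of the preimage of $\{\fp\}\times M_0^\circ$, whereas the full preimage of $\fp$ also contains $\fp_L=\beta^{-1}(\{\fp\}\times\pa M_0)$, the entire fiber of the b-front face $\ff_\bop$ over $\{\fp\}\times\pa M_0$ (of which $\tilde\fp_L\cap\ff_\bop$ is only a corner). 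Consequently, pairing the blow-up of $\{\fp\}$ in the target with the blow-up of $\tilde\fp_L$ alone in the source does not produce a b-fibration — it does not even produce an interior b-map, since the pullback of a defining function of $\cT$ would vanish on $\fp_L\setminus\tilde\fp_L$, which is not a boundary face of $[(M_0)^2_\bop;\tilde\fp_L]$. For the same reason, your claim that $\fp_L$, $\fp_{L\cap R}$, $\fp_\tbop$ "live over $\{\fp\}$ inside of $\tilde\fp_L$" and are therefore harmless interior blow-ups is false: they are not contained in $\tilde\fp_L$, and blowing up a center in the source preserves the b-fibration property only under a b-transversality hypothesis that you would still have to verify for each of them.

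The correct bookkeeping — and what the paper does — is: first blow up $\fp_R$ (which is b-transversal and maps diffeomorphically onto $\pa M_0$); observe that the preimage of $\{\fp\}$ is then the union of the lifts of $\fp_{L\cap R}$, $\fp_L$, \emph{and} $\tilde\fp_L$, and blow up all three (in that order) simultaneously with $\{\fp\}$ in the target via \cite[Proposition~5.12.1]{MelroseDiffOnMwc}; then adjoin $\fp_\tbop$ and $\tilde\fp_R$ by b-transversality; and finally commute blow-ups to reach the order in the definition of $M^2_\tbop$. Your fallback of verifying the b-fibration property directly in the coordinate charts of Lemma~\ref{Lemma3Tiny} would work in principle and would indeed also give the preimage statements (which are correct as you state them), but as written it is not carried out, so the proposal does not yet constitute a proof.
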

\begin{proof}
  We use the lifting results of \cite[Chapter~5]{MelroseDiffOnMwc}. Consider the left projection; the case of the right projection is completely analogous. We start with the lifted left projection $(M_0)^2_\bop\to M_0$, which is a b-fibration. This map is b-transversal to $\fp_R$ (which gets mapped diffeomorphically to $\pa M_0$), and hence lifts to a b-fibration
  \[
    [ (M_0)^2_\bop; \fp_R ] \to M_0.
  \]
  The preimage of $\{\fp\}$ under this map is the union of the lifts of $\fp_{L\cap R}$, $\fp_L$, and $\tilde\fp_L$, and by \cite[Proposition~5.12.1]{MelroseDiffOnMwc} this map lifts to a b-fibration
  \begin{equation}
  \label{EqLProjPf}
    \bigl[ (M_0)^2_\bop; \fp_R; \fp_{L\cap R}; \fp_L; \tilde\fp_L \bigr] \to [M_0;\{\fp\}]=M.
  \end{equation}
  Since $\fp_R\supset\fp_{L\cap R}$, we can commute the first two blow-ups on the left. Moreover, the map~\eqref{EqLProjPf} is b-transversal to the lifts of $\fp_\tbop$ and $\tilde\fp_R$, and hence lifts to a b-fibration
  \begin{equation}
  \label{EqLProjPf2}
    \bigl[ (M_0)^2_\bop; \fp_{L\cap R}; \fp_L,\fp_R; \tilde\fp_L, \tilde\fp_R; \fp_\tbop \bigr] \to M.
  \end{equation}
  The blow-up of $\fp_\tbop$ can be commuted all the way to the front since $\fp_\tbop$ is disjoint from $\tilde\fp_L$ and $\tilde\fp_R$, and since $\fp_\tbop\cap\fp_\bullet\subset\fp_{L\cap R}$ for $\bullet=L,R$ (so \cite[Proposition~5.11.2]{MelroseDiffOnMwc} applies). Thus, the domain of the map~\eqref{EqLProjPf2} is $M^2_\tbop$, and the proof is complete.
\end{proof}

Pushforward and pullback results \cite{MelrosePushfwd} are most conveniently applied to b-densities; hence, we record:
\begin{lemma}[3b- and b-densities]
\label{LemmaLDensities}
  We have $\Omegatb M=\rho_\cT^{-1}\,\Omegab M=\rho_\cT^{-n}\upbeta^*\Omegab M_0$ where $\rho_\cT\in\CI(M)$ is a defining function of $\cT$. Moreover,
  \begin{equation}
  \label{EqLDensities2}
    \pi_L^*\Omegatb M \otimes \pi_R^*\Omegatb M = (\rho_{\ff_\cT}\rho_\lface\rho_\rface\rho_{\lb_\cT}\rho_{\rb_\cT}\rho_\iface^2)^{-1}\,\Omegab M^2_\tbop,
  \end{equation}
  where $\rho_H\in\CI(M^2_\tbop)$ is a defining function of $H\subset M^2_\tbop$.
\end{lemma}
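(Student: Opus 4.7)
The first identity reduces to~\eqref{EqADensity} applied to the blow-down $\upbeta\colon M=[M_0;\{\fp\}]\to M_0$: since the smallest boundary face of $M_0$ containing $\{\fp\}$ is $\pa M_0$, of codimension $n-1$, one gets $\upbeta^*\Omegab M_0=\rho_\cT^{n-1}\Omegab M$. To then obtain $\Omegatb M=\rho_\cT^{-1}\Omegab M$, I would compare $3$b- and b-densities in the local coordinates~\eqref{EqGV3CornerCoord}: the frame~\eqref{EqGV3CornerVF} of $\Vtb(M)$ gives a positive 3b-density $\rho_\cT^{-2}\rho_\cD^{-1}|\dd\rho_\cD\,\dd\rho_\cT\,\dd\omega|$, while the b-frame $\rho_\cD\pa_{\rho_\cD},\rho_\cT\pa_{\rho_\cT},\pa_\omega$ gives $\rho_\cT^{-1}\rho_\cD^{-1}|\dd\rho_\cD\,\dd\rho_\cT\,\dd\omega|$, differing by $\rho_\cT^{-1}$; chaining the two identities yields $\Omegatb M=\rho_\cT^{-n}\upbeta^*\Omegab M_0$.

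For the second identity, the plan is to combine the first identity with an iterative application of~\eqref{EqADensity} along the blow-up sequence~\eqref{EqL3Space}. Using the factorizations $\upbeta\circ\pi_L=\pi_{0,L}\circ\upbeta_2$ and $\upbeta\circ\pi_R=\pi_{0,R}\circ\upbeta_2$ (with $\upbeta_2\colon M^2_\tbop\to M_0^2$ the total blow-down and $\pi_{0,L},\pi_{0,R}\colon M_0^2\to M_0$ the projections), the first identity gives
\begin{equation*}
  \pi_L^*\Omegatb M\otimes\pi_R^*\Omegatb M=(\rho_\cT^L\rho_\cT^R)^{-n}\,\upbeta_2^*\Omegab M_0^2,\qquad \rho_\cT^\bullet:=\pi_\bullet^*\rho_\cT.
\end{equation*}
By Lemma~\ref{LemmaLProj}, $\pi_L,\pi_R$ are simple b-fibrations with $\pi_L^{-1}(\cT)=\lb_\cT\cup\lface\cup\iface\cup\ff_\cT$ and $\pi_R^{-1}(\cT)=\rb_\cT\cup\rface\cup\iface\cup\ff_\cT$, so $\rho_\cT^L$ and $\rho_\cT^R$ are positive smooth multiples of $\rho_{\lb_\cT}\rho_\lface\rho_{\iface_L}\rho_{\iface_R}\rho_{\ff_\cT}$ and $\rho_{\rb_\cT}\rho_\rface\rho_{\iface_L}\rho_{\iface_R}\rho_{\ff_\cT}$, respectively, where I adopt the convention $\rho_\iface:=\rho_{\iface_L}\rho_{\iface_R}$.

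What remains is the codimension bookkeeping for $\upbeta_2^*\Omegab M_0^2$. The initial blow-up of $(\pa M_0)^2$ contributes no factor, since $(\pa M_0)^2$ is already a boundary face of $M_0^2$. The remaining centers I would identify as follows: $\fp_\tbop$ is a single point of codimension $2n-1$ in $\ff_\bop$, contributing $\rho_{\ff_\cT}^{2n-1}$; $\fp_{L\cap R}$ is a $1$-dimensional p-submanifold of $\ff_\bop$ which, after blowing up the interior point $\fp_\tbop\in\fp_{L\cap R}$, splits into two connected components, each of codimension $2n-2$ in $\ff_\cD$, contributing $\rho_{\iface_L}^{2n-2}\rho_{\iface_R}^{2n-2}$; $\fp_L,\fp_R$ are $n$-dimensional p-submanifolds of $\ff_\cD$ of codimension $n-1$ (being preimages under the front-face fibration of the $(n-1)$-dimensional submanifolds $\{\fp\}\times\pa M_0$ and $\pa M_0\times\{\fp\}$ of $(\pa M_0)^2$), contributing $\rho_\lface^{n-1}\rho_\rface^{n-1}$; and $\tilde\fp_L,\tilde\fp_R$ are $n$-dimensional p-submanifolds of $\lb_\cD,\rb_\cD$ of codimension $n-1$, contributing $\rho_{\lb_\cT}^{n-1}\rho_{\rb_\cT}^{n-1}$. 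Iterated application of~\eqref{EqADensity} then yields
\begin{equation*}
  \upbeta_2^*\Omegab M_0^2=\rho_{\ff_\cT}^{2n-1}\rho_{\iface_L}^{2n-2}\rho_{\iface_R}^{2n-2}\rho_\lface^{n-1}\rho_\rface^{n-1}\rho_{\lb_\cT}^{n-1}\rho_{\rb_\cT}^{n-1}\,\Omegab M^2_\tbop,
\end{equation*}
and multiplying by $(\rho_\cT^L\rho_\cT^R)^{-n}$ expanded as above collapses the exponents at $\ff_\cT,\iface_L,\iface_R,\lface,\rface,\lb_\cT,\rb_\cT$ (relative to $\Omegab M^2_\tbop$) to $-1,-2,-2,-1,-1,-1,-1$, producing $(\rho_{\ff_\cT}\rho_\lface\rho_\rface\rho_{\lb_\cT}\rho_{\rb_\cT}\rho_\iface^2)^{-1}\Omegab M^2_\tbop$ as claimed. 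The main subtlety is verifying that each codimension is computed in the correct ambient boundary face after the prior blow-ups, which follows from the commutation properties of blow-ups already exploited in Lemma~\ref{LemmaL3Prod}.
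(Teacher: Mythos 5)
Your argument is correct and matches the paper's proof essentially step for step: the single-space identity via the corner-frame determinant together with \eqref{EqADensity} and $\codim_{\pa M_0}\{\fp\}=n-1$, the reduction of \eqref{EqLDensities2} to $(\rho_\cT^L\rho_\cT^R)^{-n}\upbeta_2^*\Omegab M_0^2$, the expansion of $\rho_\cT^L,\rho_\cT^R$ via Lemma~\ref{LemmaLProj}, and the codimension count $2n-1,\ 2n-2,\ 2n-2,\ n-1,\ n-1,\ n-1,\ n-1$ are exactly those of the paper. The only (minor) omission is that the verification of $\Omegatb M=\rho_\cT^{-1}\Omegab M$ must also cover $\cT^\circ$ away from the corner, where one computes directly that $|\dd t\,\dd x|=T^{-1}|\tfrac{\dd T}{T}\dd x|$ (or invokes the fact, noted after \eqref{EqGV3CornerVF}, that translated corner charts cover a full neighborhood of $\cT$).
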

\begin{proof}
  Away from $\cT$, b- and 3b-densities are the same, and near $\cT^\circ$ and in the coordinates $T,X$ and $t,x$ from~\eqref{EqGCoordsTX} and \eqref{EqGCoordstx}, a positive section of $\Omegatb M$ is $|\dd t\,\dd x|=|\frac{\dd T}{T^2}\dd x|=T^{-1}|\frac{\dd T}{T}\dd x|$; near the corner $\cT\cap\cD$, the claim follows from the fact that $|\frac{\dd T}{R T}\frac{\dd R}{R}\dd\omega|$ (in the coordinates~\eqref{EqGV3beb}) is a positive 3b-density, with $R$ a local defining function of $\cT$. This establishes $\Omegatb M=\rho_\cT^{-1}\,\Omegab M$. The second equality follows from the general observation~\eqref{EqADensity} since $\{\fp\}$ has codimension $n-1$ inside of $\pa M_0$.

  For the proof of~\eqref{EqLDensities2}, we note that the bundle on the left is
  \begin{equation}
  \label{EqLDensities2Pf}
    \rho_\cT^{-n}(\rho_\cT')^{-n}\upbeta_2^*\Omegab M_0^2
  \end{equation}
  where $\upbeta_2\colon M^2_\tbop\to M_0^2$ is the blow-down map and $\rho_\cT$ and $\rho'_\cT$ are the left and right lifts of a defining function of $\cT\subset M$. Repeated application of~\eqref{EqADensity} gives
  \[
    \upbeta_2^*\Omegab M_0^2 = \rho_{\ff_\cT}^{2 n-1}\rho_\iface^{2 n-2}\rho_\lface^{n-1}\rho_\rface^{n-1}\rho_{\lb_\cT}^{n-1}\rho_{\rb_\cT}^{n-1}\,\Omegab M^2_\tbop,
  \]
  while Lemma~\ref{LemmaLProj} implies $\rho_\cT=\rho_{\ff_\cT}\rho_\iface\rho_{\lb_\cT}\rho_\lface$ and $\rho_\cT'=\rho_{\ff_\cT}\rho_\iface\rho_\rface\rho_{\rb_\cT}$. Plugged into~\eqref{EqLDensities2Pf}, this gives~\eqref{EqLDensities2}.
\end{proof}

\begin{proof}[Proof of Proposition~\usref{PropLMap}]
  We only consider the case $m=-\infty$. (Since $\pi_L$ is transversal to the 3b-diagonal, the diagonal singularity for finite $m$ is easily handled.) For part~\eqref{ItLMapPhg}, fix a b-density $0<\nu_\tbop\in\CI(M;\Omegatb M)$, and denote the Schwartz kernel of $P$ by $K_P$. For $u\in\cA_\phg^\cF(M)$, we then have
  \[
    P u = \nu_\tbop^{-1} (\pi_L)_* \bigl( K_P\cdot \pi_R^*u\cdot \pi_L^*\nu_\tbop \bigr).
  \]
  The distribution in parentheses is a section of $\pi_L^*\,\Omegatb M \otimes \pi_R^*\,\Omegatb M$. By Lemma~\ref{LemmaLDensities}, we thus have
  \begin{align*}
    &K_P\cdot\pi_R^*u\cdot\pi_L^*\nu_\tbop \in \cA_\phg^\cH(M^2_\tbop;\Omegab M^2_\tbop), \\
    &\qquad\qquad \cH = \bigl(\cE_{\ff_\cD}+\cF_\cD,\cE_{\ff_\cT}+\cF_\cT-1,\cE_\lface+\cF_\cD-1,\cE_\rface+\cF_\cT-1, \\
    &\qquad\qquad\qquad\qquad \cE_{\lb_\cD},\cE_{\rb_\cD}+\cF_\cD,\cE_{\lb_\cT}-1,\cE_{\rb_\cT}+\cF_\cT-1,\cE_\iface+\cF_\cT-2\bigr).
  \end{align*}
  The pushforward along $(\pi_L)_*$ is well-defined provided the index sets $\cE_{\rb_\cD}+\cF_\cD$ and $\cE_{\rb_\cT}+\cF_\cT-1$ at the hypersurfaces $\rb_\cD$ and $\rb_\cT$ (the image of which intersects the interior of $M$) are positive; and using Lemma~\ref{LemmaLProj}, the pushforward then lies in
  \[
    \cA_\phg^{(\cG_\cD,\cG_\cT-1)}(X;\Omegab M)
  \]
  where $\cG_\cD,\cG_\cT$ are defined by~\eqref{EqLMapInd}. Division by $\nu_\tbop\in\rho_\cT^{-1}\CI(M;\Omegab M)$ increases the $\cT$-index set by $1$; this gives~\eqref{EqLMapPhg}.

  For part~\eqref{ItLMap3b}, we conjugate $P$ by $\rho_\cD^{-\alpha_\cD}\rho_\cT^{-\alpha_\cT}$ to reduce to the case $\alpha_\cD=\alpha_\cT=0$, and then divide $P$ on the left by $\rho_\cD^{\gamma_\cD^0-\eps}\rho_\cT^{\gamma_\cT^0-\eps}$ where $0<\eps<\half\min(\gamma_\cD^0-\gamma_\cD,\gamma_\cT^0-\gamma_\cT)$. This reduces our task to the proof of the boundedness of $P\colon\Htb^s(M)\to\cA^{-\eps,-\eps}(M)$ for $P\in\Psitb^{-\infty,\cE}(M)$ under the assumptions $\Re\cE_{\rb_\cD}>0$, $\Re\cE_{\rb_\cT}>1$, and $\Re\cE_{\ff_\cD}$, $\Re\cE_{\ff_\cT}$, $\Re\cE_{\lb_\cD}$, $\Re\cE_{\lb_\cT}>0$ and $\Re\cE_\rface,\Re\cE_\iface>1$. Under these assumptions, the boundedness $P\colon\Htb^0(M)\to\Hb^0(M)$, i.e.\ the $L^2$-boundedness of $P$, follows from Schur's Lemma. The desired result is then a consequence of the fact that also $A P B\in\Psitb^{-\infty,\cE}(M)$ for any $A\in\Diffb(M)$ and $B\in\Diffb(M)$ (or $B\in\Difftb(M)$), since every element of $\Hb^s(M)$ (or $\Htb^s(M)$) is a finite sum of derivatives of suitable elements of $L^2(M)$ along b- (or 3b-)differential operators.
\end{proof}

We end this section with a description of the boundary hypersurfaces of $M^2_\tbop$:

\begin{lemma}[Structure of the boundary hypersurfaces of $M^2_\tbop$]
\label{LemmaLStruct}
  Fix a boundary defining function $\rho_0\in\CI(M_0)$, and denote by $\rho_0=T$ and $\rho'_0=T'$ its lifts to the left and right factor of $M^2_\tbop$, respectively. Introduce local coordinates $T\geq 0$, $X\in\R^{n-1}$, resp.\ $T'\geq 0$, $X'\in\R^{n-1}$ on the left, resp.\ right factor of $M_0\times M_0$ near $\pa M_0$; put $s=T/T'\in[0,\infty]$. If $(T,X)$ are local coordinates near $\fp$, with $\fp$ given by $(T,X)=(0,0)$, then put $(t,x)=(\frac{1}{T},\frac{X}{T})$, likewise for the primed coordinates. When $\fp=(0,0)$ in both coordinate systems $(T,X)$ and $(T',X')$, set $\tau=t-t'$. Then:
  \begin{enumerate}
  \item\label{ItLStructffT} $\ff_\cT\cong\ol\R\times\cT_\bop^2$, with the diffeomorphism given in local coordinates by continuous extension of $(\tau,x,x')\mapsto(\frac{\tau}{\la(x,x')\ra},x,x')$. (See Figure~\usref{FigLTStruct}.)
  \item\label{ItLStructffD} $\ff_\cD\cong [[0,\infty]\times\cD_\bop^2;\{1\}\times\ff_{\cD,\bop}]$, with the diffeomorphism given by $(s,X,X')$. (See Figure~\usref{FigLDStruct}.)
  \item\label{ItLStructlbDrbD} $\lb_\cD\cong[\cD\times M; \pa\cD\times\cT; \pa\cD\times\cD]$, with the diffeomorphism given by $(X,T',X')$. Similarly, $\rb_\cD\cong[M\times\cD; \cT\times\pa\cD; \cD\times\pa\cD]$, with diffeomorphism given by $(T,X,X')$. (See Figure~\usref{FigLlbD}.)
  \item\label{ItLStructlbTrbT} $\lb_\cT\cong\cT\times M$, with the diffeomorphism given by $(x,T',X')$. Similarly, $\rb_\cT\cong M\times\cT$ via $(T,X,x')$. (See Figure~\usref{FigLlbT}.)
  \item\label{ItLStructlfrf} $\lface\cong[[0,\infty]\times\cT\times\cD;\{1\}\times\pa\cT\times\cD;\{0\}\times\cT\times\pa\cD]$, with the diffeomorphism given by $(s,x,X')$. Similarly, $\rface\cong[[0,\infty]\times\cD\times\cT;\{1\}\times\pa\cD\times\cT;\{\infty\}\times\cD\times\pa\cT]$ via the coordinates $(s,X,x')$.
  \item\label{ItLStructif} $\iface_L\cong[[0,1]\times\cT^2;\{0\}\times\pa\cT\times\cT;[0,1]\times(\pa\cT)^2]$, with the diffeomorphism given by $(s,x,x')$. Similarly, $\iface_R\cong[[1,\infty]\times\cT^2;\{\infty\}\times\cT\times\pa\cT;[0,1]\times(\pa\cT)^2]$.
  \end{enumerate}
\end{lemma}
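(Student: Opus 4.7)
The strategy is to analyze each boundary hypersurface $H\subset M^2_\tbop$ by reorganizing the blow-up sequence in Definition \ref{DefL3Space} so that $H$ arises as a front face of an explicit, short sub-sequence of blow-ups; the remaining blow-ups are then verified either to be disjoint from $H$ or to act on it in a well-controlled way, producing at most the additional blow-ups appearing on the right-hand sides of (1)--(6). The main tool is \cite[Proposition~5.11.2]{MelroseDiffOnMwc}, together with the nesting and disjointness relations
\[
  \fp_\tbop\subset\fp_{L\cap R}\subset\fp_L\cap\fp_R\subset\tilde\fp_L\cap\tilde\fp_R,
\]
and the fact that $\fp_\tbop$ is disjoint from $\tilde\fp_L$ and $\tilde\fp_R$ (visible in the coordinates $T'\geq 0$, $s=T/T'$, $X$, $X'$ near the front face of $(M_0)^2_\bop$, in which $\fp_\tbop=\{T'=X=X'=0,\,s=1\}$ while $\tilde\fp_L=\{s=0,X=0\}$ and $\tilde\fp_R=\{s=\infty,X'=0\}$).

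For parts \eqref{ItLStructffT} and \eqref{ItLStructffD}, the starting point is Lemmas \ref{Lemma3TStruct} and \ref{Lemma3DStruct}, which describe $\ff_{\cT,\flat}$ and $\ff_{\cD,\flat}$ in $M^2_{\tbop,\flat}$. I would verify in local coordinates that the lifts of $\tilde\fp_L,\tilde\fp_R$ to $M^2_{\tbop,\flat}$ are disjoint from $\ff_{\cT,\flat}$ (so that $\ff_\cT=\ff_{\cT,\flat}$ as a manifold with corners), whereas they meet $\ff_{\cD,\flat}$ precisely in the lifts of $\{\fp\}\times\pa M_0$ and $\pa M_0\times\{\fp\}$, which correspond to $\{0\}\times\ff_{\cD,\bop}$ and $\{\infty\}\times\ff_{\cD,\bop}$ under the identification $\ff_{\cD,\flat}\cong [[0,\infty]\times\cD^2_\bop;\{1\}\times\ff_{\cD,\bop}]$; since these are already resolved (the $\cD$-factors have been blown up to $\cD_\bop^2$), blowing up $\tilde\fp_L,\tilde\fp_R$ in $M^2_\tbop$ does not alter $\ff_\cD$.

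For parts \eqref{ItLStructlbDrbD}--\eqref{ItLStructif}, my plan is, for each target hypersurface $H$, to commute the centers in Definition \ref{DefL3Space} so that the center $C_H$ whose lift is $H$ is blown up last (or second-to-last), and then to blow down all blow-ups whose centers are disjoint from $H$. This leaves $H$ expressed as an iterated blow-up of $C_H$ itself. Concretely:
\begin{itemize}
\item For $\lb_\cD$: the lift of $\pa M_0\times M_0$ to $(M_0)^2_\bop$ is $\cD\times M_0$; subsequently blowing up $\fp_L=\{\fp\}\times\pa M_0$ and $\tilde\fp_L=\{\fp\}\times M_0$ (which are the only remaining centers meeting the left boundary nontrivially, after commutations using the nesting above) resolves $\pa\cD\times\pa M_0$ and then $\pa\cD\times M_0$ in $\cD\times M_0$, yielding $[\cD\times M;\pa\cD\times\cT;\pa\cD\times\cD]$ after identifying $M=[M_0;\{\fp\}]$.
\item For $\lb_\cT$ and $\rb_\cT$: similarly, the lift of $\tilde\fp_L=\{\fp\}\times M_0$ (after resolving its intersections with the $\cT$-side of the fibers of $\ff_\cD$ and $\lface$) is naturally $\cT\times M$, with no further blow-ups required.
\item For $\lface$: the lift of $\fp_L=\{\fp\}\times\pa M_0$ to $[(M_0)^2_\bop;\fp_{L\cap R}]$ becomes $[0,\infty]_s\times\{\fp\}\times\pa M_0$; blowing up $\tilde\fp_L$ and $\tilde\fp_R$ further resolves $\{\fp\}\times\pa M_0$ along the fiber $s=0$ to $\cT$-direction and the fiber $s=\infty$ to $\cD$-direction. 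After commuting the remaining blow-ups away, one identifies $\lface$ with $[[0,\infty]\times\cT\times\cD;\{1\}\times\pa\cT\times\cD;\{0\}\times\cT\times\pa\cD]$.
\item For $\iface_L$: the lift of $\fp_{L\cap R}$ (a copy of $[0,\infty]_s\times\{(\fp,\fp)\}$) is further resolved by the subsequent blow-ups of $\fp_L,\fp_R,\tilde\fp_L,\tilde\fp_R$ at $s=0,\infty$; restricting to the component $s\in[0,1]$ (which is $\iface_L$) and identifying the copies of $\cT$ on the two factors yields the claimed $[[0,1]\times\cT^2;\{0\}\times\pa\cT\times\cT;[0,1]\times(\pa\cT)^2]$.
\end{itemize}
The analogous descriptions of $\rb_\cD$, $\rface$, and $\iface_R$ follow by symmetry.

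The main obstacle is bookkeeping: justifying every commutation via \cite[Proposition~5.11.2]{MelroseDiffOnMwc} by verifying the required inclusion/transversality of centers at each step, and tracking which points of the model centers $\cD\times M$, $\cT\times M$, $[0,\infty]\times\cT\times\cD$ etc.\ correspond to the various lifted blow-up centers. I would organize this by producing, for each hypersurface, a small table listing the remaining centers after commutations together with their intersections with the "base" center $C_H$, and then carrying out a local coordinate check in the two standard charts (the $s$-chart $(T',s,X,X')$ near $\ff$ and the $(t,x,t',x')$ chart near $\ff_\cT$) to confirm the identifications.
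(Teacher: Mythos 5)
Your overall strategy --- commute the blow-ups so that the center producing a given hypersurface $H$ comes early, then track the remaining centers restricted to the resulting front face via local coordinate checks --- is exactly the method of the paper's proof, and your treatment of parts \eqref{ItLStructffT}--\eqref{ItLStructffD} reaches the right conclusion (though in fact the lifts of $\tilde\fp_L,\tilde\fp_R$ to $M^2_{\tbop,\flat}$ are \emph{disjoint} from $\ff_{\cD,\flat}$, not merely meeting it in already-resolved faces: a sequence $T'\to 0$ with $s=0$, $X=0$ limits onto the interior of a fiber of $\lface$, not onto $\ff_{\cD,\flat}$). The problem is that several of the intersection relations on which your commutations and your per-hypersurface accounting rest are false, so carrying out the plan as written would produce the wrong resolutions. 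To begin with, the chain $\fp_{L\cap R}\subset\fp_L\cap\fp_R\subset\tilde\fp_L\cap\tilde\fp_R$ cannot hold: in your own coordinates $\tilde\fp_L=\{s=0,X=0\}$ and $\tilde\fp_R=\{s=\infty,X'=0\}$ are disjoint, whereas $\fp_L\cap\fp_R=\fp_{L\cap R}=[0,\infty]_s\times\{(\fp,\fp)\}$ is not empty.

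The per-face bookkeeping is then off in ways that change the answer. For $\lb_\cD$: all four of $\fp_{L\cap R},\fp_L,\fp_R,\tilde\fp_L$ meet the left boundary of $(M_0)^2_\bop$ (in $\{(\fp,\fp)\}$, $\{\fp\}\times\pa M_0$, $\pa M_0\times\{\fp\}$, $\{\fp\}\times M_0$, respectively); your account uses only $\fp_L$ and $\tilde\fp_L$, which cannot produce the three centers $\cD\times\{\fp\}$ (from $\fp_R$, needed to turn the second factor $M_0$ into $M$), $\pa\cD\times\cT$ (from $\fp_{L\cap R}$), and $\pa\cD\times\cD$ (from $\fp_L$). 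For $\lface$: the lift of $\tilde\fp_R$ is \emph{disjoint} from $\lface$ --- note there is no blow-up at $s=\infty$ in part \eqref{ItLStructlfrf} --- while the $\cD$-factor in the $X'$-variable comes from $\fp_{L\cap R}$, the center at $s=1$ from $\fp_\tbop$, and the center at $s=0$ from $\tilde\fp_L$; your sketch omits $\fp_{L\cap R}$ and $\fp_\tbop$ entirely. For $\iface_L$: $\fp_L$ and $\fp_R$ are not located at $s=0,\infty$; they meet the front face of the $\fp_{L\cap R}$-blow-up along all of $[0,1]_s$ and are responsible for the center $[0,1]\times(\pa\cT)^2$, while only $\tilde\fp_L$ sits at $s=0$. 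Finally, a step you leave implicit in \eqref{ItLStructlbTrbT}--\eqref{ItLStructif} is genuinely nontrivial: the natural affine coordinate on the fibers of the new front faces is $X/T=x$ rather than $X/(T+T')$ or $X/T'$, and identifying, e.g., the lift of $\{0\}\times\cD\times\{0\}$ inside $\lface$ with the product $\cD\times\cT$ (rather than some nontrivial $\cT$-bundle) requires this change of fiber coordinate. So what remains is not merely bookkeeping; the intersection pattern of the centers has to be recomputed correctly before any of the commutations can be justified.
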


Parts~\eqref{ItLStructffT}--\eqref{ItLStructlbDrbD} have been used in the definition of model operators in~\S\ref{S3} or will be used in the parametrix construction in~\S\ref{SE}. Parts~\eqref{ItLStructlbTrbT}--\eqref{ItLStructif} are included only for completeness.

\begin{figure}[!ht]
\centering
\includegraphics{FigLTStruct}
\caption{Structure of $\ff_\cT$; this is a variant of Figure~\ref{Fig3TStruct}. The intersection of $\ff_\cT$ with a boundary hypersurface $*\subset M^2_\tbop$ is labeled $*$. We note that the family left, resp.\ right boundary $\ol\R\times\lb_\bop$, resp.\ $\ol\R\times\rb_\bop$ of $\ol\R\times\cT^2_\bop$ is given by the lift of $x^{-1}=0$, resp.\ $x'{}^{-1}=0$, which is $\rface$, resp.\ $\lface$.}
\label{FigLTStruct}
\end{figure}

\begin{figure}[!ht]
\centering
\includegraphics{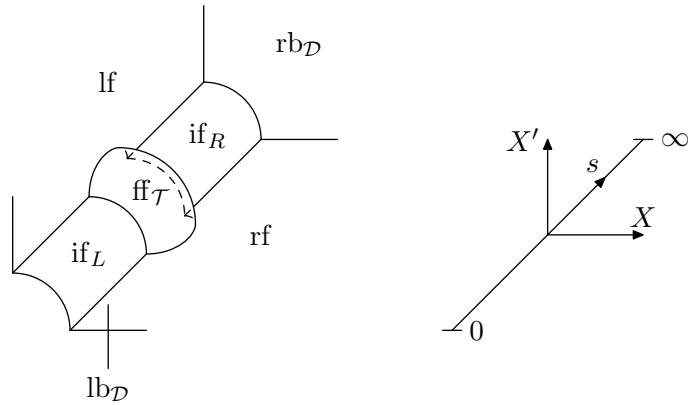}
\caption{Structure of $\ff_\cD$ in the case $\dim\pa M_0=1$; this is a variant of Figure~\ref{Fig3DStruct}. We only show the part of $\ff_\cD$ on which $X,X'\geq 0$.}
\label{FigLDStruct}
\end{figure}

\begin{figure}[!ht]
\centering
\includegraphics{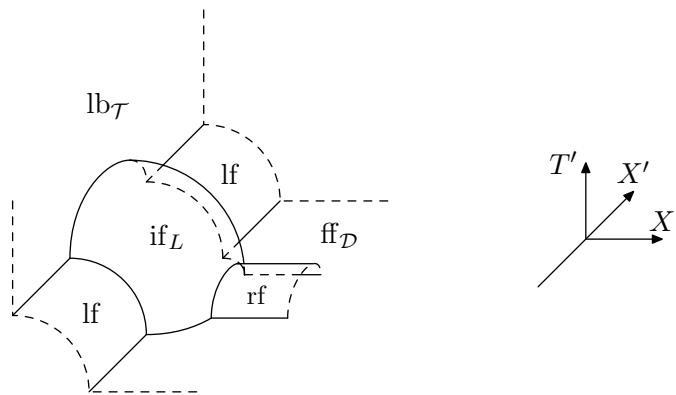}
\caption{Structure of $\lb_\cD$ in the case $\dim\pa M_0=1$; we only show the part of $\lb_\cD$ on which $X\geq 0$.}
\label{FigLlbD}
\end{figure}

\begin{figure}[!ht]
\centering
\includegraphics{FigLlbT}
\caption{Structure of $\lb_\cT$.}
\label{FigLlbT}
\end{figure}

\begin{proof}[Proof of Lemma~\usref{LemmaLStruct}]
  We introduce the functions $\rho=T+T'$ as well as $\hat X=\frac{X}{\rho}$ (when $X=0$ at $\fp$) and $\hat X'=\frac{X'}{\rho}$ (when $X'=0$ at $\fp$).

  Since $\tilde\fp_L$ and $\tilde\fp_R$ lift to be disjoint from $\ff_{\cT,\flat}$ and $\ff_{\cD,\flat}$, we have $\ff_\cT=\ff_{\cT,\flat}$ and $\ff_\cD=\ff_{\cD,\flat}$. Part~\eqref{ItLStructffT} is therefore the same as Lemma~\ref{Lemma3TStruct}, and part~\eqref{ItLStructffD} is the same as Lemma~\ref{Lemma3DStruct}.

  Turning to part~\eqref{ItLStructlbDrbD}, the left boundary of $(M_0)_\bop^2$ is the lift of $\pa M_0\times M_0$, and indeed (naturally) diffeomorphic to it; local coordinates are $X,T',X'$. Among the submanifolds blown up in~\eqref{EqL3Space}, only $\fp_{L\cap R}$, $\fp_L$, $\fp_R$, and $\tilde\fp_L$ lift to be not disjoint from it, and indeed their lifts intersect $\pa M_0\times M_0$ in $\{(\fp,\fp)\}$, $\{\fp\}\times\pa M_0$, $\pa M_0\times\{\fp\}$, and $\{\fp\}\times M_0$, respectively. Since the lifts of $\pa M_0\times\{\fp\}$ and $\{\fp\}\times M_0$ to $[\pa M_0\times M_0;\{(\fp,\fp)\}]$ are disjoint, we can blow up $\{\fp\}\times M_0$ before $\pa M_0\times\{\fp\}$; and then we can move the blow-up of $\{\fp\}\times M_0$ all the way to the front (since $\{(\fp,\fp)\}$, $\{\fp\}\times\pa M_0$, $\{\fp\}\times M_0$ is a chain of p-submanifolds). Thus, $\lb_\cD$ is the blow-up of $[\pa M_0\times M_0; \{\fp\}\times M_0]=\cD\times M_0$ at $\pa\cD\times\{\fp\}$, $\pa\cD\times\pa M_0$, $\cD\times\{\fp\}$. The lift of the latter two manifolds to the blow-up of the first are disjoint, and we can then move the blow-up of $\cD\times\{\fp\}$ to the front, obtaining
  \[
    \lb_\cD = [\cD\times M_0;\cD\times\{\fp\};\pa\cD\times\{\fp\};\pa\cD\times\pa M_0] = [\cD\times M; \pa\cD\times\cT; \pa\cD\times\cD ],
  \]
  as claimed. The case of $\rb_\cD$ is completely analogous.

  For part~\eqref{ItLStructlbTrbT}, let us work in local coordinates $s\geq 0$, $X'$, $T'\geq 0$, $X$ near the left boundary of $(M_0)^2_\bop$; we only consider a neighborhood of $\fp$ in each factor, i.e.\ with $\fp$ given in the left and right factor by $X=0$ and $X'=0$, respectively. Then upon blowing up $\fp_{L\cap R}=\{(s,0,0,0)\}$ and $\fp_L=\{(s,X',0,0)\}$ (which can be done in either order), the lift of $\fp_R=\{(s,0,0,X)\}$ is disjoint from the lift of $\tilde\fp_L=\{(0,X',T',0)\}$. Now, blowing up $\fp_L$ produces, locally,
   \[
     [0,1)_s\times\R^{n-1}_{X'}\times M',\qquad M':=\bigl[[0,1)_{T'}\times\R^{n-1}_X;\{(0,0)\}\bigr].
   \]
   Near the interior of the front face, we thus have smooth coordinates $s$, $X'$, $T'$, $\tilde X:=\frac{X}{T'}\in\R^{n-1}$, and the lift of $\fp_{L\cap R}$ and $\tilde\fp_L$ is given by $[0,1)\times\{0\}\times\{0\}\times\R^{n-1}$ and $\{0\}\times\R^{n-1}\times[0,1)\times\{0\}$, respectively. Blowing up $\fp_{L\cap R}$ thus gives $[0,1)_s\times M_{T',X'}\times\R^{n-1}_{\tilde X}$. The front face of the blow-up of the lift of $\tilde\fp_L$, i.e.\ of $s=\tilde X=0$, is diffeomorphic to $\cT\times M$, with local coordinates $\tilde X/s=X/T=x$ and $T',X'$, as claimed.

  For part~\eqref{ItLStructlfrf}, note that the blow-up of $\fp_L$ in~\eqref{EqL3Space} can be commuted to the first place. Since in local coordinates $s,\rho,X,X'$ on $(M_0)_\bop^2$, with $X=0$ at $\fp$, we have $\fp_L=\{(s,0,0,X')\}$, the front face of $[(M_0)_\bop^2;\fp_L]$ is diffeomorphic to $[0,\infty]_s\times\ol{\R^{n-1}_{\hat X}}\times\R^{n-1}_{X'}$. Blowing up its intersection with the lifts of $\fp_{L\cap R}$ (given by $[0,\infty]\times\ol{\R^{n-1}}\times\{0\}$) and $\fp_\tbop$ (given by $\{1\}\times\ol{\R^{n-1}}\times\{0\}$) in either order, and subsequently blowing up the lift of $\tilde\fp_L$ (given by $\{0\}\times\{0\}\times\R^{n-1}$) gives $[[0,\infty]\times\cD;\{1\}\times\pa\cD]\times\ol{\R^{n-1}_{\hat X}}$ (with local coordinates $s,X',\hat X$) blown up at $\{0\}\times\cD\times\{0\}$. The lift of $\tilde\fp_R$ is disjoint from $\lface$. Thus, the coordinates $s,X',\hat X$ provide a local coordinate description (in the interior of $\lface$) of the diffeomorphism
  \begin{equation}
  \label{EqLStructlfPre}
    \lface \cong \bigl[ [0,\infty]\times\cD\times\ol{\R^{n-1}}; \{1\}\times\pa\cD\times\ol{\R^{n-1}}; \{0\}\times\cD\times\{0\} \bigr]
  \end{equation}
  Now for $s>1$, where $\rho$ and $T$ are equivalent (in the sense that $\rho/T$ is bounded away from $0$ and $\infty$), we can use $X/T=x$ instead of $\hat X=X/\rho$; for $s<2$, we can use $X/T'=x s$. On the fibers of the map blowing down $\{0\}\times\cD\times\{0\}$ in~\eqref{EqLStructlfPre}, we therefore have affine coordinates given by $(x s)/s=x$, and hence the lift of $\{0\}\times\cD\times\{0\}$ is diffeomorphic to $\cD\times\cT$ (with local coordinates $X',x$). But this means that we have a diffeomorphism
  \[
    \lface \cong \bigl[ [0,\infty] \times \cD \times \cT; \{1\}\times\pa\cD\times\cT; \{0\}\times\cD\times\pa\cT \bigr],
  \]
  in local coordinates given by $(s,X',x)$. Switching the order of $\cD$ and $\cT$ (i.e.\ of $X'$ and $x$) gives the description in the statement of the Lemma.

  Finally, to prove part~\eqref{ItLStructif}, we start with the front face of $[(M_0)_\bop^2;\fp_{L\cap R}]$; it is diffeomorphic to $[0,\infty]\times\ol{\R^{2(n-1)}}$ via the coordinates $s\in[0,\infty]$ and $Z:=(X,X')/\rho\in\ol{\R^{2(n-1)}}$. The blow-up of $\fp_\tbop$ creates two connected components; $\iface_L$ is a resolution of the component with $s\in[0,1]$. There, we may replace $\rho$ by $T'$, i.e.\ use $(X,X')/T'=(x s,x')=:(\tilde x,x')$ as affine coordinates on the interior of $[0,1]_s\times\ol{\R^{n-1}_{\tilde x}\times\R^{n-1}_{x'}}$. The lift of $\fp_L$ and $\fp_R$ is given by $[0,1]\times\{0\}\times\pa\ol{\R^{n-1}}$ and $[0,1]\times\pa\ol{\R^{n-1}}\times\{0\}$, respectively. Blowing these up thus produces $[0,1]\times[\ol{\R^{n-1}_{\tilde x}}\times\ol{\R^{n-1}_{x'}};\pa\ol{\R^{n-1}}\times\pa\ol{\R^{n-1}}]$, cf.\ \eqref{Eq3TStructLocff}. Upon blowing up the lift of $\tilde\fp_L$, which is given by $\{0\}\times\ol{\{\tilde x=0\}}$, we obtain $\iface_L$. In $\iface_L$, near the interior of the front face of this final blow-up, we have local coordinates $s$, $\tilde x/s=x$, $x'$. Thus, we can equivalently describe $\iface_L$ as the resolution of $[0,1]\times\cT\times\cT$ (with coordinates $(s,x,x')$) at $\{0\}\times\pa\cT\times\cT$ (with $s/(1/x)=\tilde x$ a smooth coordinate on the interior of the resulting front face) and then at the lift of $[0,1]\times(\pa\cT)^2$. This finishes the proof.
\end{proof}

\subsection{Composition}
\label{SsLC}

Our goal is to show that the large 3b-calculus is indeed a calculus, i.e.\ closed under composition of appropriate pairs of operators:

\begin{prop}[Compositions]
\label{PropLComp}
  Let $\cE,\cF\subset(\C\times\N_0)^9$ denote two collections of index sets, and write $\cE=(\cE_{\ff_\cD},\ldots)$, $\cF=(\cF_{\ff_\cD},\ldots)$ as in Definition~\usref{DefL3}. Let $A\in\Psitb^s(M)+\Psitb^{-\infty,\cE}(M)$ and $B\in\Psitb^{s'}(M)+\Psitb^{-\infty,\cF}(M)$, and suppose that $\Re(\cE_{\rb_\cD}+\cF_{\lb_\cD})>0$ and $\Re(\cE_{\rb_\cT}+\cF_{\lb_\cT})>1$. Then $A\circ B$ is well-defined, and we have
  \[
    A\circ B \in \Psitb^{s+s'}(M) + \Psitb^{-\infty,\cG}(M),
  \]
  where $\cG=(\cG_{\ff_\cD},\cG_{\ff_\cT},\cG_\lface,\cG_\rface,\cG_{\lb_\cD},\cG_{\rb_\cD},\cG_{\lb_\cT},\cG_{\rb_\cT},\cG_\iface)$ is given by
  \begin{equation}
  \label{EqLCompInd}
  \begin{split}
    \cG_{\ff_\cD} &= (\cE_{\ff_\cD}+\cF_{\ff_\cD}) \extcup(\cE_{\lb_\cD}+\cF_{\rb_\cD})\extcup(\cE_\rface+\cF_\lface-1), \\
    \cG_{\ff_\cT} &= (\cE_{\ff_\cT}+\cF_{\ff_\cT}) \extcup(\cE_\iface+\cF_\iface-1)\extcup(\cE_\lface+\cF_\rface)\extcup(\cE_{\lb_\cT}+\cF_{\rb_\cT}), \\
    \cG_\lface &= (\cE_{\ff_\cT}+\cF_\lface)\extcup(\cE_\iface+\cF_\lface-1)\extcup(\cE_\lface+\cF_{\ff_\cD})\extcup(\cE_{\lb_\cT}+\cF_{\rb_\cD}), \\
    \cG_\rface &= (\cE_\rface+\cF_{\ff_\cT})\extcup(\cE_\rface+\cF_\iface-1)\extcup(\cE_{\ff_\cD}+\cF_\rface)\extcup(\cE_{\lb_\cD}+\cF_{\rb_\cT}), \\
    \cG_{\lb_\cD} &= \cE_{\lb_\cD}\extcup(\cE_{\ff_\cD}+\cF_{\lb_\cD})\extcup(\cE_\rface+\cF_{\lb_\cT}-1), \\
    \cG_{\rb_\cD} &= (\cE_{\rb_\cD}+\cF_{\ff_\cD})\extcup\cF_{\rb_\cD}\extcup(\cE_{\rb_\cT}+\cF_\lface-1), \\
    \cG_{\lb_\cT} &= \cE_{\lb_\cT}\extcup(\cE_{\ff_\cT}+\cF_{\lb_\cT})\extcup(\cE_\iface+\cF_{\lb_\cT}-1)\extcup(\cE_\lface+\cF_{\lb_\cD}), \\
    \cG_{\rb_\cT} &= (\cE_{\rb_\cT}+\cF_{\ff_\cT})\extcup\cF_{\rb_\cT}\extcup(\cE_{\rb_\cT}+\cF_\iface-1)\extcup(\cE_{\rb_\cD}+\cF_\rface), \\
    \cG_\iface &= (\cE_\iface+\cF_\iface-1)\extcup(\cE_{\ff_\cT}+\cF_\iface)\extcup(\cE_\iface+\cF_{\ff_\cT})\extcup(\cE_\lface+\cF_\rface)\extcup(\cE_{\lb_\cT}+\cF_{\rb_\cT}).
  \end{split}
  \end{equation}
\end{prop}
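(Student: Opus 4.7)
\medskip

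\noindent\textbf{Proof proposal.} The strategy is the standard triple space method of \cite{MelroseDiffOnMwc,MelrosePushfwd}: construct a resolution $M^3_\tbop$ of $M^3$ together with three b-fibrations $\pi_F \colon M^3_\tbop \to M^2_\tbop$ (for $F \in \{S, C, O\}$, corresponding to the projections $M^3 \to M^2$ forgetting the right, center, and left factors, respectively), so that for $A$ and $B$ in the large 3b-calculus the lifts $\pi_O^* K_A$ and $\pi_S^* K_B$ can be multiplied as polyhomogeneous conormal distributions on $M^3_\tbop$, and then
\[
  K_{A \circ B} = (\pi_C)_*\bigl(\pi_O^* K_A \cdot \pi_S^* K_B\bigr)
\]
is polyhomogeneous with the index sets dictated by the pushforward theorem of Melrose \cite{MelrosePushfwd}. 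The assumption $\Re(\cE_{\rb_\cD}+\cF_{\lb_\cD})>0$ and $\Re(\cE_{\rb_\cT}+\cF_{\lb_\cT})>1$ (with the shift by $1$ coming from Lemma~\ref{LemmaLDensities}, since we wish to use b-densities on the triple space) exactly ensures integrability of the product density along the fibers of $\pi_C$ in the interior of $M$.

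The triple space will be built in analogy with~\eqref{EqL3Space}. First construct $(M_0)^3_\bop$ by blowing up $(\pa M_0)^3$, then $(\pa M_0)^2 \times M_0$ and its two permutations. Next blow up the six submanifolds that arise from $\fp$ and the three projections $M_0^3 \to M_0^2$: the triple point $\fp^3$, then the three lifts of $\fp_{L\cap R}$ along each of the three pairs, then the three lifts of $\fp_L,\fp_R$ in each pair, and finally the three pairs of lifts of $\tilde\fp_L,\tilde\fp_R$. The ordering is chosen so that each projection $M^3_\tbop \to M^2_\tbop$ is a b-fibration, which follows (using \cite[\S5.11]{MelroseDiffOnMwc}) once one verifies that the sequence of blow-ups can be rearranged, for each of the three projections, to first perform those blow-ups that live over the corresponding double-space blow-ups (thereby realizing the projection as an iterated lift of the simple b-fibration $(M_0)^3_\bop \to (M_0)^2_\bop$), and then commute the remaining blow-ups past. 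Near the triple diagonal the transversality arguments of Lemma~\ref{Lemma3Tiny} apply verbatim and guarantee that $\pi_O^{-1}(\diag_\tbop) \cap \pi_S^{-1}(\diag_\tbop) = \pi_C^{-1}(\diag_\tbop) = \diag_{\tbop,3}$, yielding the symbolic part of the composition and the principal-symbol multiplicativity.

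Granted such a triple space, the index set bookkeeping is bookkeeping only: each boundary hypersurface $H \subset M^3_\tbop$ maps under $\pi_F$ either into the interior of $M^2_\tbop$ (in which case it contributes an integrability constraint and gets integrated out) or onto a boundary hypersurface of $M^2_\tbop$. Enumerating the boundary hypersurfaces of $M^3_\tbop$ and computing the exponents of $\pi_O^* K_A \cdot \pi_S^* K_B$ at each—using the standard recipe that a hypersurface mapping under $\pi_O$, resp.\ $\pi_S$ to the hypersurface $H \subset M^2_\tbop$ contributes $\cE_H$, resp.\ $\cF_H$, while one mapping into the interior contributes the index set of the relevant far-boundary of $A$ or $B$—and then reading off under $\pi_C$ which boundary hypersurface of $M^2_\tbop$ it hits, produces precisely the index sets \eqref{EqLCompInd} (including the $-1$ shifts arising from the density conversion in Lemma~\ref{LemmaLDensities}). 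The extended union $\extcup$ appears whenever two hypersurfaces of $M^3_\tbop$ map to the same hypersurface of $M^2_\tbop$ with coincident leading exponents, via the standard logarithmic accumulation in the pushforward theorem.

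The main obstacle is the triple space construction itself, specifically checking that all the required blow-ups commute in the ways needed for (i) all three projections to be b-fibrations simultaneously, and (ii) each boundary hypersurface of $M^3_\tbop$ to have a clear interpretation in terms of the hypersurfaces of $M^2_\tbop$ under each of $\pi_O, \pi_S, \pi_C$. In particular, one must carefully handle the three types of ``mixed'' faces in $M^3_\tbop$ arising from the submanifolds $\fp \times \{\fp\} \times M_0$, $\fp \times M_0 \times \{\fp\}$, and so on, which are the 3b-analogues of the front face arising from $\fp_{L\cap R}$ in the double space but now come in permutation-coupled triples; once the combinatorics of these faces is pinned down, the proof reduces to a direct application of \cite[Theorem~5]{MelrosePushfwd}. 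The associated density bookkeeping on $M^3_\tbop$, a triple-space analogue of Lemma~\ref{LemmaLDensities}, is routine but must be tabulated carefully to obtain the precise shifts in~\eqref{EqLCompInd}.
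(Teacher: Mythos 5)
Your proposal follows essentially the same route as the paper's proof: a 3b-triple space resolving $(M_0)^3_\bop$, verification via the lifting results of \cite[\S 5]{MelroseDiffOnMwc} that the three projections lift to b-fibrations, a density-conversion lemma on the triple space analogous to Lemma~\ref{LemmaLDensities}, and then the pushforward theorem, with the hypotheses on $\cE_{\rb_\cD}+\cF_{\lb_\cD}$ and $\cE_{\rb_\cT}+\cF_{\lb_\cT}$ ensuring integrability at the faces mapping to the interior of $M^2_\tbop$. The one substantive under-count in your sketch is the list of blow-up centers: the preimage under each lifted projection $(M_0)^3_\bop\to(M_0)^2_\bop$ of each double-space center is a \emph{union of two} p-submanifolds (one meeting the total front face $\ff_{\bop,3}$, the other a partial front face $\wt\ff_{\bop,O}$ or a face $\mface_{\bop,O}$), and the deeper corners $\fp_{\tbop,3}$, $\fp_{L\cap R,3}$, and the three $\fp_{L\cap R,3,O}$ must be blown up first for the commutation arguments to go through, so Definition~\ref{DefLCTriple} requires considerably more centers than you enumerate.
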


\begin{definition}[Composition of 3b-index sets]
\label{DefLComp}
  Given collections $\cE,\cF$ of index sets as in Proposition~\usref{PropLComp}, we write $\cG=\cE\circ\cF$ for the 9-tuple consisting of the index sets~\eqref{EqLCompInd}.
\end{definition}

This yields the composition result in the small 3b-calculus:

\begin{proof}[Proof of Proposition~\usref{Prop3MapComp}]
  The only difference between Proposition~\ref{Prop3MapComp} and Proposition~\ref{PropLComp} is that in the latter, 3b-ps.d.o.s are defined via their Schwartz kernels on $M^2_\tbop$ rather than on $M^2_{\tbop,\flat}$. But the Schwartz kernels of elements of $\Psitb^s(M)$ are rapidly vanishing at the left and right boundaries, i.e.\ the lifts of $\pa M_0\times M_0$ and $M_0\times\pa M_0$; hence they can equivalently be characterized via their lifts to $M^2_\tbop$ as being rapidly vanishing at $\lb_\cD$, $\lb_\cT$, $\rb_\cD$, and $\rb_\cT$.
\end{proof}

The remainder of this section is concerned with the proof of Proposition~\ref{PropLComp}. We proceed geometrically via pullback and pushforward theorems involving a suitable 3b-triple space of $M$. This 3b-triple space will be a resolution of the b-triple space
\[
  (M_0)^3_\bop = \bigl[ M_0^3; (\pa M_0)^3; \pa M_0\times\pa M_0\times M_0; \pa M_0\times M_0\times\pa M_0; M_0\times\pa M_0\times\pa M_0 \bigr]
\]
of $M_0$; we denote the front faces by $\ff_{\bop,3}$, $\wt\ff_{\bop,F}$, $\wt\ff_{\bop,C}$, $\wt\ff_{\bop,S}$ in this order (with `F', `C', `S' standing for `First', `Composition', `Second', respectively), and we moreover denote by $\mface_{\bop,F}$, $\mface_{\bop,S}$, and $\mface_{\bop,C}$ the lift of $M_0\times M_0\times\pa M_0$, $\pa M_0\times M_0\times M_0$, and $M_0\times\pa M_0\times M_0$, respectively. We write $\pi_{\bop,F}$, $\pi_{\bop,S}$, and $\pi_{\bop,C}$ for the lifts, as maps $(M_0)^3_\bop\to(M_0)^2_\bop=[M_0^2;(\pa M_0)^2]$, of the projections $M_0^3\to M_0^2$ to the first two, last two, and first and last factors of $M_0^3$, respectively. See Figure~\ref{FigLCb3}.

\begin{figure}[!ht]
\centering
\includegraphics{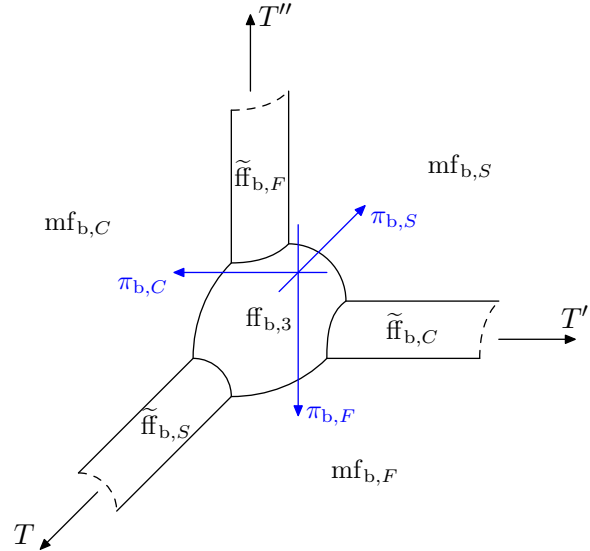}
\caption{The b-triple space $(M_0)^3_\bop$ and its boundary hypersurfaces; we only show the lifts of a boundary defining function of $M_0$ to the first ($T$), second ($T'$), and third factor ($T''$). We also indicate the three lifted projections $(M_0)^3_\bop\to(M_0)^2_\bop$ by blue arrows.}
\label{FigLCb3}
\end{figure}

For $O=F,S,C$, the preimages under $\pi_{\bop,O}$ of the submanifolds blown up in the definition~\eqref{EqL3Space} of $M^2_\tbop$ are unions of two p-submanifolds of $(M_0)^3_\bop$ each; we introduce the following notation for them:
\begin{alignat*}{3}
  \pi_{\bop,O}^{-1}(\fp_\tbop) &= \fp_{\tbop,O} \cup \tilde\fp_{\tbop,O}, &\qquad
    \fp_{\tbop,O}&\subset\ff_{\bop,3}, &\quad
    \tilde\fp_{\tbop,O}&\subset\wt\ff_{\bop,O}, \\
  \pi_{\bop,O}^{-1}(\fp_{L\cap R}) &= \fp_{L\cap R,O} \cup \tilde\fp_{L\cap R,O}, &\qquad
    \fp_{L\cap R,O}&\subset\ff_{\bop,3}, &\quad
    \tilde\fp_{L\cap R,O}&\subset\wt\ff_{\bop,O}, \\
  \pi_{\bop,O}^{-1}(\fp_L) &= \fp_{L,O} \cup \tilde\fp_{L,O}, &\qquad
    \fp_{L,O}&\subset\ff_{\bop,3}, &\quad
    \tilde\fp_{L,O}&\subset\wt\ff_{\bop,O}, \\
  \pi_{\bop,O}^{-1}(\fp_R) &= \fp_{R,O} \cup \tilde\fp_{R,O}, &\qquad
    \fp_{R,O}&\subset\ff_{\bop,3}, &\quad
    \tilde\fp_{R,O}&\subset\wt\ff_{\bop,O}, \\
  \pi_{\bop,O}^{-1}(\tilde\fp_L) &= \tilde\fp_{L,O}^\flat \cup \tilde\fp_{L,O}^\sharp, &\qquad
    \tilde\fp_{L,O}^\flat &\subset\wt\ff_{\bop,O'}, &\quad
    \tilde\fp_{L,O}^\sharp &\subset\mface_{\bop,O''}, \\
  \pi_{\bop,O}^{-1}(\tilde\fp_R) &= \tilde\fp_{R,O}^\flat \cup \tilde\fp_{R,O}^\sharp, &\qquad
    \tilde\fp_{R,O}^\flat &\subset\wt\ff_{\bop,O''}, &\quad
    \tilde\fp_{R,O}^\sharp &\subset\mface_{\bop,O'},
\end{alignat*}
where for $O=F,S,C$, we write $O'=C,F,F$ and $O''=S,C,S$. There are some redundancies in these definitions, since
\begin{equation}
\label{EqLCEq}
\begin{alignedat}{3}
  \fp_{L,F}&=\fp_{L,C}, &\qquad
  \fp_{L,S}&=\fp_{R,F}, &\qquad
  \fp_{R,S}&=\fp_{R,C}; \\[0.5em]
  \tilde\fp_{L,F}&=\tilde\fp_{L,C}^\flat, &\qquad
  \tilde\fp_{L,S}&=\tilde\fp_{R,F}^\flat, &\qquad
  \tilde\fp_{L,C}&=\tilde\fp_{L,F}^\flat, \\
  \tilde\fp_{R,F}&=\tilde\fp_{L,S}^\flat, &\qquad
  \tilde\fp_{R,S}&=\tilde\fp_{R,C}^\flat, &\qquad
  \tilde\fp_{R,C}&=\tilde\fp_{R,S}^\flat; \\[0.5em]
  \tilde\fp_{L,F}^\sharp&=\tilde\fp_{L,C}^\sharp, &\qquad
  \tilde\fp_{L,S}^\sharp&=\tilde\fp_{R,F}^\sharp, &\qquad
  \tilde\fp_{R,S}^\sharp&=\tilde\fp_{R,C}^\sharp.
\end{alignedat}
\end{equation}
With $O$ still ranging over $F,S,C$, we finally set
\[
  \fp_{\tbop,3} := \bigcap_O \fp_{\tbop,O}, \qquad
  \fp_{L\cap R,3} := \bigcap_O \fp_{L\cap R,O},\qquad
  \fp_{L\cap R,3,O} := \fp_{L\cap R,3}\cap\fp_{\tbop,O}.
\]
(It suffices to take the intersection over two distinct values of $O$.) We introduce the short-hand notation
\[
  \fp_{\tbop,F/S} := \{ \fp_{\tbop,F}, \fp_{\tbop,S} \},
\]
similarly $\fp_{\tbop,S/C}$, etc. As a special case, $\fp_{L,F/C}$ is the singleton set $\{\fp_{L,F}\}$ by~\eqref{EqLCEq}.

\begin{definition}[3b-triple space]
\label{DefLCTriple}
  The \emph{3b-triple space} of $M$ is the resolution
  \begin{equation}
  \label{EqLCTriple}
  \begin{split}
    M^3_\tbop &:= \bigl[ (M_0)^3_\bop; \fp_{\tbop,3}; \fp_{L\cap R,3}; \{\fp_{L\cap R,3,O}\}; \{\fp_{\tbop,O}\}; \{\fp_{L\cap R,O}\}; \{\fp_{L,O},\fp_{R,O}\}; \\
      &\quad\qquad \{\tilde\fp_{\tbop,O}\}; \{\tilde\fp_{L\cap R,O}\}; \{\tilde\fp_{L,O},\tilde\fp_{R,O}\}; \{\tilde\fp_{L,O}^\sharp,\tilde\fp_{R,O}^\sharp\} \bigr].
  \end{split}
  \end{equation}
  We write $\rho_{\tbop,3}\in\CI(M^3_\tbop)$ for a defining function of the lift of $\fp_{\tbop,3}$, similarly $\rho_{L\cap R,3}$, $\rho_{L\cap R,3,O}$ ($O=F,S,C$), \ldots, $\tilde\rho_{R,O}^\sharp$; and we write $\rho_{\bop,3}$, $\tilde\rho_{\bop,O}$, and $\rho_{\bop,O}\in\CI(M^3_\tbop)$ for defining functions of the lifts of $\ff_{\bop,3}$, $\wt\ff_{\bop,O}$, and $\mface_{\bop,O}\subset(M_0)^3_\bop$, respectively.
\end{definition}

\begin{lemma}[Projections to 3b-double spaces]
\label{LemmaLCProj}
  The three projection maps $M_0^3\to M_0^2$ given by $(q,q',q'')\mapsto(q,q')$, $(q,q',q'')\mapsto(q,q'')$, and $(q,q',q'')\mapsto(q',q'')$ lift to b-fibrations $\pi_F,\pi_C,\pi_S\colon M^3_\tbop\to M^2_\tbop$.
\end{lemma}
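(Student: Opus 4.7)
The starting point is that the three projections $M_0^3 \to M_0^2$ lift to b-fibrations $\pi_{\bop,O} \colon (M_0)_\bop^3 \to (M_0)_\bop^2$ for $O \in \{F,S,C\}$; this is classical and follows from iterated application of \cite[Chapter~5]{MelroseDiffOnMwc}, compare the b-triple construction~\eqref{EqAbCompTriple}. Fixing one of the three projections, say $\pi_{\bop,O}$, the goal is to extend it, through the blow-up sequences~\eqref{EqL3Space} in the codomain and~\eqref{EqLCTriple} in the domain, to a b-fibration $\pi_O \colon M^3_\tbop \to M^2_\tbop$. Two standard lifting principles from \cite[Propositions~5.11.2, 5.12.1]{MelroseDiffOnMwc} will be used repeatedly: \textbf{(P1)} a b-fibration $f\colon X\to Y$ extends to a b-fibration $[X;f^{-1}(S)]\to[Y;S]$ whenever $S\subset Y$ and its preimage are p-submanifolds; and \textbf{(P2)} it extends to a b-fibration $[X;T]\to Y$ whenever $T\subset X$ is a p-submanifold b-transversal to $f$.

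I would march through the blow-ups of $\fp_\tbop$, $\fp_{L\cap R}$, $\{\fp_L,\fp_R\}$, and $\{\tilde\fp_L,\tilde\fp_R\}$ in~\eqref{EqL3Space}. At each stage, the preimage under the partially lifted map decomposes into p-submanifolds lying in disjoint boundary hypersurfaces of the partially resolved triple space: for instance $\pi_{\bop,O}^{-1}(\fp_\tbop) = \fp_{\tbop,O}\cup\tilde\fp_{\tbop,O}$, the two pieces sitting in the lifts of $\ff_{\bop,3}$ and $\wt\ff_{\bop,O}$ respectively. I would blow up these preimages simultaneously with the corresponding base blow-up via \textbf{(P1)}, interleaving them with preparatory b-transversal blow-ups handled by \textbf{(P2)}. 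To match the order of~\eqref{EqLCTriple} to this scheme, I would commute blow-ups using the containment-based commutation rules of \cite[Proposition~5.11.2]{MelroseDiffOnMwc}: the containments $\fp_{\tbop,3}\subset\fp_{\tbop,O'}$, $\fp_{L\cap R,3}\subset\fp_{L\cap R,O'}$, and $\fp_{L\cap R,3,O'}\subset\fp_{\tbop,O'}\cap\fp_{L\cap R,O''}$ force the triple-intersection blow-ups to be preparatory (b-transversal) from the point of view of each individual $\pi_O$. The identifications~\eqref{EqLCEq} ensure that a single collection of blow-ups in $M^3_\tbop$ produces the preimages needed under all three projections, so that $M^3_\tbop$ serves simultaneously as the common domain of $\pi_F$, $\pi_S$, and $\pi_C$.

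The main obstacle will be bookkeeping. For each of the three projections, it must be checked that every blow-up in the sequence~\eqref{EqLCTriple} is either b-transversal to the current partially lifted map or corresponds exactly to a preimage of a base blow-up, and that the commutations used to rearrange the order are legal. The conceptual organization that tames this is the stratification inherent in~\eqref{EqLCTriple}: first triple intersections ($\fp_{\tbop,3}$, $\fp_{L\cap R,3}$, $\fp_{L\cap R,3,O'}$), then submanifolds of $\ff_{\bop,3}$ (with subscript $O'$), then of the secondary b-faces $\wt\ff_{\bop,O'}$ (tilded), and finally of the outer b-faces $\mface_{\bop,O'}$ (with $\sharp$). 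Under the fixed projection $\pi_O$, the $\ff_{\bop,3}$- and $\wt\ff_{\bop,O}$-blow-ups together produce, via~\eqref{EqLCEq}, precisely the preimages of the base blow-ups; the remaining blow-ups in the other secondary and outer faces are b-transversal to the lifted map, so~\textbf{(P2)} applies. The routine verifications in local coordinates complete the argument.
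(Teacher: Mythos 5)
Your strategy coincides with the paper's: reduce by the cyclic symmetry to a single projection, start from the b-fibration $\pi_{\bop,F}\colon(M_0)^3_\bop\to(M_0)^2_\bop$, and lift it through the blow-ups of \eqref{EqL3Space} and \eqref{EqLCTriple} by alternating the two lifting results of \cite[Propositions~5.11.2, 5.12.1]{MelroseDiffOnMwc} with commutation of blow-ups, using \eqref{EqLCEq} to see that one resolution serves all three projections at once.

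Two of your structural claims are inaccurate, however, and they sit exactly where the content of the proof lies. First, the components of a preimage are neither disjoint nor contained in disjoint hypersurfaces: $\fp_{\tbop,F}$ and $\tilde\fp_{\tbop,F}$ intersect, and once $\fp_{R,S}$ has been blown up in the domain, the preimage of $\fp_{L\cap R}$ acquires the additional component $\fp_{L\cap R,F}\cap\fp_{R,S}=\fp_{L\cap R,3}$. This is the reason the triple-intersection centers appear in \eqref{EqLCTriple} at all, and your blanket description of them as ``preparatory (b-transversal)'' is wrong: for a fixed $\pi_O$, the center $\fp_{\tbop,3}$ is indeed handled by b-transversality, but $\fp_{L\cap R,3}$ and $\fp_{L\cap R,3,O}$ arise as components of preimages of base centers and must be blown up \emph{before} $\fp_{\tbop,O}$, $\fp_{L\cap R,O}$, etc., for Proposition~5.12.1 to apply. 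Second, the last group of blow-ups (e.g.\ $\tilde\fp_{R,S}$, $\tilde\fp_{R,C}$ from the point of view of $\pi_F$) is not simply b-transversal to an interior submanifold; these centers map onto boundary hypersurfaces of the codomain, and one must check both b-transversality and that no boundary hypersurface of the resulting domain is sent into a codimension-two corner. None of this breaks your scheme, but the ``routine verifications'' you defer---identifying, for each center of \eqref{EqLCTriple} and each projection, whether it is a preimage component or a b-transversal center, and legitimizing each commutation by exhibiting the required containments---constitute essentially the entire proof.
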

\begin{proof}
  Since any two projections can be intertwined by a cyclic permutation of the factors of $M_0^3$---which induces a diffeomorphism of $M^3_\tbop$---it suffices to prove the claim for $\pi_F$. The reader may find a local coordinate description of the various submanifolds blown up in~\eqref{EqLCTriple} helpful: we write $T,X,T',X',T'',X''$ for the three lifts of the coordinates~\eqref{EqGCoordsTX} to $M_0^3$; then in the coordinates $\hat T=\frac{T}{T''}$, $\hat T'=\frac{T'}{T}$, and $T''$, $X$, $X'$, $X''$ on $(M_0)^3_\bop\setminus(\mface_{\bop,F}\cup\mface_{\bop,S}\cup\wt\ff_{\bop,S}\cup\wt\ff_{\bop,C})$---so $T=\hat T=\hat T'=0$ is the corner $\ff_{\bop,3}\cap\mface_{\bop,C}\cap\wt\ff_{\bop,F}$---, we have
  \begin{alignat*}{2}
    \pi_F &\colon (\hat T,\hat T',T'';X,X',X'') \mapsto (T,\hat T',X,X')&& \qquad (T=\hat T T''), \\
    \pi_S &\colon (\hat T,\hat T',T'';X,X',X'') \mapsto (\check T',T'',X',X'')&& \qquad (\check T'=\hat T\hat T'), \\
    \pi_C &\colon (\hat T,\hat T',T'';X,X',X'') \mapsto (\hat T,T'',X,X''),&&
  \end{alignat*}
  where on $(M_0)^2_\bop$ and for $\pi_F$, we use coordinates $T,\frac{T'}{T}$ away from the left boundary $\lb_\bop$ of $(M_0)^2_\bop$; for $\pi_S$ we use $\frac{T'}{T''},T''$ away from $\rb_\bop$; and for $\pi_C$, we use $\hat T,T''$ away from $\rb_\bop$. Correspondingly,
  \begin{equation}
  \label{EqLCProjExpl}
  \begin{alignedat}{3}
    \fp_{\tbop,3} &= \{(1,1,0;0,0,0)\}, &\qquad
    \fp_{L\cap R,3} &= \{(\hat T,\hat T',0;0,0,0)\}, \\
    \fp_{\tbop,F} &= \{(\hat T,1,0;0,0,X'')\}, &\qquad
    \tilde\fp_{\tbop,F} &= \{(0,1,T'';0,0,X'')\}, \\
    \fp_{\tbop,S} &= \{(\tfrac{1}{\hat T'},\hat T',0;X,0,0)\}, &\qquad
    \fp_{\tbop,C} &= \{(1,\hat T',0;0,X',0)\}, \\
    \fp_{L\cap R,F} &= \{(\hat T,\hat T',0;0,0,X'')\}, &\qquad
    \tilde\fp_{L\cap R,F} &= \{(0,\hat T',T'';0,0,X'')\}, \\
    \fp_{L\cap R,S} &= \{(\hat T,\hat T',0;X,0,0)\}, &\qquad
    \fp_{L\cap R,C} &= \{(\hat T,\hat T',0;0,X',0)\}, \\
    \fp_{L,F}=\fp_{L,C} &= \{(\hat T,\hat T',0;0,X',X'')\}, &\qquad
    \tilde\fp_{L,F}=\tilde\fp_{L,C}^\flat &= \{(0,\hat T',T'';0,X',X'')\}, \\
    \fp_{L,S}=\fp_{R,F} &= \{(\hat T,\hat T',0;X,0,X'')\}, &\qquad
    \tilde\fp_{R,F}=\tilde\fp_{L,S}^\flat &= \{(0,\hat T',T'';X,0,X'')\}, \\
    \fp_{R,S}=\fp_{R,C} &= \{(\hat T,\hat T',0;X,X',0)\}, \\
    \tilde\fp_{L,S}^\sharp=\tilde\fp_{R,F}^\sharp &= \{(\hat T,0,T'';X,0,X'')\},
  \end{alignedat}
  \end{equation}
  and furthermore
  \begin{align*}
    \fp_{L\cap R,3,F} &= \{ (\hat T,1,0;0,0,0) \}, \\
    \fp_{L\cap R,3,S} &= \{ (\tfrac{1}{\hat T'},\hat T',0;0,0,0) \}, \\
    \fp_{L\cap R,3,C} &= \{ (1,\hat T',0;0,0,0) \}.
  \end{align*}
  (The submanifolds not listed here are not contained in the local coordinate chart.)

  We begin the proof with the b-fibration $\pi_{\bop,F}\colon(M_0)^3_\bop\to(M_0)^2_\bop$. We immediately blow up $\fp_{R,S}=\fp_{R,C}$ in the domain; the map $\pi_{\bop,F}$ restricts to a fibration $\fp_{R,S}\to\ff_\bop$ (the front face of $(M_0)^2_\bop$), and therefore $\pi_{\bop,F}$ lifts to a b-fibration
  \begin{equation}
  \label{EqLCProj0}
    [ (M_0)^3_\bop; \fp_{R,S/C} ] \to (M_0)^2_\bop.
  \end{equation}
  The preimage of $\fp_\tbop$ is the union of the lifts of $\fp_{\tbop,F}$, $\tilde\fp_{\tbop,F}$, and $\fp_{\tbop,F}\cap\fp_{R,S}=\fp_{\tbop,F}\cap\fp_{L\cap R,3}=\fp_{L\cap R,3,F}$. Using~\cite[Proposition~5.12.1]{MelroseDiffOnMwc}, the map~\eqref{EqLCProj0} thus lifts to a b-fibration
  \begin{equation}
  \label{EqLCProj1}
    \bigl[ (M_0)^3_\bop; \fp_{R,S/C}; \fp_{L\cap R,3,F}; \fp_{\tbop,F}; \tilde\fp_{\tbop,F} \bigr] \to [(M_0)^2_\bop;\fp_\tbop].
  \end{equation}
  Recalling the terminology regarding the commutation of blow-ups from~\S\ref{SA}, we can commute $\fp_{R,S/C}$ through $\fp_{L\cap R,3,F}$ ($\subset$), and then further through $\fp_{\tbop,F}$ (intersection $\subset\fp_{L\cap R,3,F}$), thus in total moving the blow-up of $\fp_{R,S/C}$ in~\eqref{EqLCProj0} from first to third place. Furthermore, the map~\eqref{EqLCProj1} is b-transversal to the lift of $\fp_{\tbop,3}$, and therefore we can blow up $\fp_{\tbop,3}$ in the domain, and commute its blow-up all the way to the front: through $\tilde\fp_{\tbop,F}$ (disjoint), $\fp_{R,S/C}$ and $\fp_{\tbop,F}$ ($\supset$; $\fp_{L\cap R,3,F}$ for both), and finally through $\fp_{L\cap R,3,F}$ ($\supset$). Thus, we obtain a b-fibration
  \begin{equation}
  \label{EqLCProj2}
    \bigl[ (M_0)^3_\bop; \fp_{\tbop,3}; \fp_{L\cap R,3,F}; \fp_{\tbop,F}; \fp_{R,S/C}; \tilde\fp_{\tbop,F} \bigr] \to [(M_0)^2_\bop;\fp_\tbop].
  \end{equation}

  Next, we blow up $\fp_{L\cap R}$ in the range of~\eqref{EqLCProj2}; its preimage under the map~\eqref{EqLCProj0} is the union of the lifts of $\fp_{L\cap R,F}\cap\fp_{R,S}=\fp_{L\cap R,3}$, $\fp_{L\cap R,F}$, and $\tilde\fp_{L\cap R,F}$, and we blow these up in this order. We commute $\fp_{L\cap R,3}$ through $\tilde\fp_{\tbop,F}$ (intersection $\subset$ $\fp_{L\cap R,3,F}$), $\fp_{R,S/C}$ ($\supset$), $\fp_{\tbop,F}$ (intersection $\subset\fp_{L\cap R,3,F}$), and $\fp_{L\cap R,3,F}$ ($\subset$) to the second spot. We furthermore commute $\fp_{L\cap R,F}$ through $\tilde\fp_{\tbop,F}$ (intersection $\subset$ $\fp_{L\cap R,3,F}$) and $\fp_{R,S/C}$ (intersection $\subset$ $\fp_{L\cap R,3}$). Thus,
  \[
    \bigl[ (M_0)^3_\bop; \fp_{\tbop,3}; \fp_{L\cap R,3}; \fp_{L\cap R,3,F}; \fp_{\tbop,F}; \fp_{L\cap R,F}; \fp_{R,S/C}; \tilde\fp_{\tbop,F}; \tilde\fp_{L\cap R,F} \bigr] \to [(M_0)^2_\bop;\fp_\tbop;\fp_{L\cap R}].
  \]
  is a b-fibration. We may now restore some symmetry in the domain by blowing up the lifts of $\fp_{L\cap R,3,S}$ and $\fp_{L\cap R,3,C}$ (which $\pi_{\bop,F}$ maps diffeomorphically to $\fp_{L\cap R}$); we can then commute $\fp_{L\cap R,3,S/C}$ through $\tilde\fp_{L\cap R,F}$ and $\tilde\fp_{\tbop,F}$ (disjoint), $\fp_{R,S/C}$ and $\fp_{L\cap R,F}$ ($\supset$; $\fp_{L\cap R,3}$ for both), and $\fp_{\tbop,F}$ (intersection $\subset\fp_{\tbop,3}$), and we obtain the b-fibration
  \begin{align*}
    &\bigl[ (M_0)^3_\bop; \fp_{\tbop,3}; \fp_{L\cap R,3}; \{\fp_{L\cap R,3,O}\}; \fp_{\tbop,F}; \fp_{L\cap R,F}; \fp_{R,S/C}; \tilde\fp_{\tbop,F}; \tilde\fp_{L\cap R,F} \bigr] \\
    &\qquad \to [(M_0)^2_\bop;\fp_\tbop;\fp_{L\cap R}]
  \end{align*}
  where $O=F,S,C$ as usual.

  The next step is the blow-up of $\fp_L$ in the codomain, and correspondingly the lifts of $\fp_{L,F}\cap\fp_{R,S/C}=\fp_{L\cap R,C}$, $\fp_{L,F}=\fp_{L,C}$, and $\tilde\fp_{L,F}$ in the domain. We can commute $\fp_{L\cap R,C}$ through $\tilde\fp_{L\cap R,F}$ and $\tilde\fp_{\tbop,F}$ (intersection $\subset\fp_{L\cap R,3}$ for both) and $\fp_{R,S/C}$ ($\supset$); and we can commute $\fp_{L,F/C}$ through $\tilde\fp_{L\cap R,F}$ (intersection $\subset\fp_{L\cap R,F}$) and $\tilde\fp_{\tbop,F}$ (intersection $\subset\fp_{\tbop,F}$). We obtain a b-fibration
  \begin{equation}
  \label{EqLCProj3}
  \begin{split}
    &\bigl[ (M_0)^3_\bop; \fp_{\tbop,3}; \fp_{L\cap R,3}; \{\fp_{L\cap R,3,O}\}; \fp_{\tbop,F}; \fp_{L\cap R,F/C}; \fp_{L,F/C},\fp_{R,S/C}; \tilde\fp_{\tbop,F}; \tilde\fp_{L\cap R,F}; \tilde\fp_{L,F} \bigr] \\
    &\qquad \to [(M_0)^2_\bop;\fp_\tbop;\fp_{L\cap R};\fp_L].
  \end{split}
  \end{equation}
  Note that the order of $\fp_{R,S/C}$ and $\fp_{L,F/C}$ is arbitrary (intersection $\subset\fp_{L\cap R,C}$). Now, since $\pi_{\bop,F}$ maps $\fp_{\tbop,C}$ diffeomorphically to $\fp_L$, the map~\eqref{EqLCProj3} lifts to a b-fibration
 \begin{align*}
    &\bigl[ (M_0)^3_\bop; \fp_{\tbop,3}; \fp_{L\cap R,3}; \{\fp_{L\cap R,3,O}\}; \fp_{\tbop,F/C}; \fp_{L\cap R,F/C}; \fp_{L,F/C},\fp_{R,S/C}; \tilde\fp_{\tbop,F}; \tilde\fp_{L\cap R,F}; \tilde\fp_{L,F} \bigr] \\
    &\qquad \to [(M_0)^2_\bop;\fp_\tbop;\fp_{L\cap R};\fp_L].
  \end{align*}
  Here, we commuted $\fp_{\tbop,C}$ through $\tilde\fp_{L,F}$, $\tilde\fp_{L\cap R,F}$, $\tilde\fp_{\tbop,F}$ (disjoint), $\fp_{L,F/C}$ and $\fp_{R,S/C}$ ($\supset$; $\fp_{L\cap R,F/C}$ for both), $\fp_{L\cap R,F}$ (intersection $\subset\fp_{L\cap R,3,C}$), and $\fp_{L\cap R,C}$ ($\supset$). By completely analogous arguments, we can blow up $\fp_R$ in the codomain and $\fp_{R,F}\cap\fp_{R,S/C}=\fp_{L\cap R,S}$, $\fp_{R,F}=\fp_{L,S}$, and $\tilde\fp_{R,F}$ in the domain, and we can then also blow up $\fp_{\tbop,S}$ in the domain and obtain, after commuting blow-ups, the b-fibration
 \begin{align*}
    &\bigl[ (M_0)^3_\bop; \fp_{\tbop,3}; \fp_{L\cap R,3}; \{\fp_{L\cap R,3,O}\}; \{\fp_{\tbop,O}\}; \{\fp_{L\cap R,O}\}; \{\fp_{L,O},\fp_{R,O}\}; \tilde\fp_{\tbop,F}; \tilde\fp_{L\cap R,F}; \tilde\fp_{L,F}; \tilde\fp_{R,F} \bigr] \\
    &\qquad \to [(M_0)^2_\bop;\fp_\tbop;\fp_{L\cap R};\fp_L;\fp_R].
  \end{align*}

  Using \cite[Proposition~5.11.2]{MelroseDiffOnMwc} again, we next blow up the disjoint submanifolds $\tilde\fp_L$ and $\tilde\fp_R$ in the codomain (which gives $M^2_\tbop$) and their preimages $\tilde\fp_{L,F}^\flat$, $\tilde\fp_{L,F}^\sharp$, $\tilde\fp_{R,F}^\flat$, and $\tilde\fp_{R,F}^\sharp$ in the domain, giving the b-fibration
 \begin{align*}
    &\bigl[ (M_0)^3_\bop; \fp_{\tbop,3}; \fp_{L\cap R,3}; \{\fp_{L\cap R,3,O}\}; \{\fp_{\tbop,O}\}; \{\fp_{L\cap R,O}\}; \{\fp_{L,O},\fp_{R,O}\}; \\
    &\hspace{8em} \tilde\fp_{\tbop,F}; \tilde\fp_{L\cap R,F}; \ \tilde\fp_{L,F}, \tilde\fp_{R,F}, \tilde\fp_{L,F}^\flat,\tilde\fp_{R,F}^\flat;\ \tilde\fp_{L,F}^\sharp, \tilde\fp_{R,F}^\sharp \bigr] \to M^2_\tbop.
  \end{align*}
  The order of the blow-ups $\tilde\fp_{L,F}$, $\tilde\fp_{R,F}$, $\tilde\fp_{L,F}^\flat$, and $\tilde\fp_{R,F}^\flat$ is arbitrary: the only two of these four submanifolds that intersect nontrivially are $\tilde\fp_{L,F}$ and $\tilde\fp_{R,F}$, whose intersection $\tilde\fp_{L\cap R,F}$ is blown up earlier.

  Now, the projection $\pi_{\bop,F}$ maps $\tilde\fp_{\tbop,S}$ diffeomorphically to $\tilde\fp_R$; we can thus blow up $\tilde\fp_{\tbop,S}$ in the domain, and commute it through $\tilde\fp_{R,F}^\sharp$, $\tilde\fp_{L,F}^\sharp$ (disjoint), $\tilde\fp_{R,F}^\flat$ ($\supset$), and $\tilde\fp_{L,F}^\flat$, $\tilde\fp_{R,F}$, $\tilde\fp_{L,F}$, $\tilde\fp_{L\cap R,F}$ (disjoint). Analogous arguments apply to the blow-up of $\tilde\fp_{\tbop,C}$. Note next that $\pi_{\bop,F}$ restricts to a fibration $\tilde\fp_{L\cap R,S}\to\tilde\fp_R$, and the blow-up of $\tilde\fp_{L\cap R,S}$ in the domain can then be commuted through $\tilde\fp_{R,F}^\sharp=\tilde\fp_{L,S}^\sharp$ (intersection $\subset\tilde\fp_{L,S}=\tilde\fp_{R,F}^\flat$), $\tilde\fp_{L,F}^\sharp$ (disjoint), $\tilde\fp_{R,F}^\flat=\tilde\fp_{L,S}$ ($\supset$), $\tilde\fp_{L,F}^\flat$ (disjoint), and $\tilde\fp_{R,F}=\tilde\fp_{L,S}^\flat$ and $\tilde\fp_{L,F}$ (disjoint). Arguing similarly for the blow-up of $\tilde\fp_{L\cap R,C}$, we have a b-fibration
  \begin{equation}
  \label{EqLCProj4}
  \begin{split}
    &\bigl[ (M_0)^3_\bop; \fp_{\tbop,3}; \fp_{L\cap R,3}; \{\fp_{L\cap R,3,O}\}; \{\fp_{\tbop,O}\}; \{\fp_{L\cap R,O}\}; \{\fp_{L,O},\fp_{R,O}\}; \\
    &\hspace{8em} \{\tilde\fp_{\tbop,O}\}; \{\tilde\fp_{L\cap R,O}\}; \tilde\fp_{L,F}, \tilde\fp_{R,F}, \tilde\fp_{L,F}^\flat,\tilde\fp_{R,F}^\flat; \tilde\fp_{L,F}^\sharp, \tilde\fp_{R,F}^\sharp \bigr] \to M^2_\tbop.
  \end{split}
  \end{equation}

  In view of~\eqref{EqLCEq}, it remains to blow up $\tilde\fp_{R,S}$, $\tilde\fp_{R,C}$, and $\tilde\fp_{R,S}^\sharp$ in the domain. Note that these submanifolds get mapped by $\pi_{\bop,F}$ to $\rb_\bop$, $\lb_\bop$, and $(M_0)^2_\bop$, respectively; and $\tilde\fp_{R,S}$ can be commuted through $\tilde\fp_{R,F}^\sharp$ (intersection $\subset\tilde\fp_{L\cap R,S}$) and $\tilde\fp_{L,F}^\sharp$ (disjoint), similarly for $\tilde\fp_{R,C}$. Thus, the map~\eqref{EqLCProj4} lifts to a b-fibration which is the desired map $M^3_\tbop\to M^2_\tbop$.
\end{proof}

We also need the following variant of Lemma~\ref{LemmaLDensities}:

\begin{lemma}[3b- and b-densities on the triple space]
\label{LemmaLCDensities}
  Denote by $\pi_1,\pi_2$, and $\pi_3\colon M^3_\tbop\to M$ the lifts of the projections $M^3\to M$ to the first, second, and third factor, respectively. Then
  \begin{equation}
  \label{EqLCDensities}
  \begin{split}
    &\pi_1^*\Omegatb M \otimes \pi_2^*\Omegatb M \otimes \pi_3^*\Omegatb M \\
    &\quad = \biggl( \rho_{\tbop,3}\rho_{L\cap R,3}^3\prod_{O=F,S,C} \rho_{L\cap R,3,O}^2\rho_{\tbop,O}\rho_{L\cap R,O}^2\rho_{L,O}^{\frac12}\rho_{R,O}^{\frac12} \\
    &\quad\hspace{11em} \times \tilde\rho_{\tbop,O}\tilde\rho_{L\cap R,O}^2\tilde\rho_{L,O}\tilde\rho_{R,O}(\tilde\rho^\sharp_{L,O})^{\frac12}(\tilde\rho^\sharp_{R,O})^{\frac12} \biggr)^{-1} \ \Omegab M^3_\tbop.
  \end{split}
  \end{equation}
\end{lemma}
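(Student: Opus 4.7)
My plan follows the strategy of Lemma~\ref{LemmaLDensities}, iterated to three factors. Write $\upbeta_3 \colon M^3_\tbop \to M_0^3$ for the total blow-down map, and $\pi_j \colon M^3_\tbop \to M$ for the lifted $j$-th factor projection (so $\upbeta \circ \pi_j$ is the composition of $\upbeta_3$ with the ordinary $j$-th projection $M_0^3\to M_0$).

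First, using $\Omegatb M = \rho_\cT^{-n}\upbeta^*\Omegab M_0$ from Lemma~\ref{LemmaLDensities}, pulling back along each $\pi_j$ and tensoring yields
\[
  \pi_1^*\Omegatb M \otimes \pi_2^*\Omegatb M \otimes \pi_3^*\Omegatb M = \Bigl(\prod_{j=1}^3 \pi_j^*\rho_\cT\Bigr)^{-n} \upbeta_3^*\Omegab M_0^3.
\]
So the task reduces to computing $\upbeta_3^*\Omegab M_0^3$ as a weighted $\Omegab M^3_\tbop$ and expressing each $\pi_j^*\rho_\cT$ as a product of boundary defining functions.

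Second, I compute $\upbeta_3^*\Omegab M_0^3 = w_\bop \cdot \Omegab M^3_\tbop$ by applying~\eqref{EqADensity} at every stage of the sequence of blow-ups taking $M_0^3$ to $(M_0)^3_\bop$ and then, via~\eqref{EqLCTriple}, to $M^3_\tbop$. Each blow-up of a p-submanifold $S$ contributes a factor $\rho_\ff^{\codim_F S}$ at its front face, where $F$ is the smallest boundary face of the \emph{then-current} space containing (the lift of) $S$. The local coordinates~\eqref{EqLCProjExpl} from the proof of Lemma~\ref{LemmaLCProj} give a ready-made setting to read off the relevant codimensions.

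Third, I express each $\pi_j^*\rho_\cT$. Since $\pi_j$ is a (simple) b-fibration by Lemma~\ref{LemmaLCProj}, one has $\pi_j^*\rho_\cT = u_j \prod_{H} \rho_H$ (with $u_j>0$ smooth) where the product ranges over the boundary hypersurfaces $H \subset M^3_\tbop$ with $\pi_j(H) \subseteq \cT$. These $H$ are determined by combining Lemma~\ref{LemmaLProj} (which lists the preimages of $\cT$ under $\pi_L, \pi_R \colon M^2_\tbop \to M$) with the explicit lifts $\pi_F, \pi_S, \pi_C$ described in the proof of Lemma~\ref{LemmaLCProj}. The identifications~\eqref{EqLCEq} are responsible for the half-integer exponents in the statement: each of $\fp_{L,O}, \fp_{R,O}, \tilde\fp_{L,O}^\sharp, \tilde\fp_{R,O}^\sharp$ is counted twice in the symmetric product $\prod_{O=F,S,C}$, and the $\tfrac12$ notation is simply a compact way to record this without splitting cases.

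Combining the two computations, the exponent at a boundary hypersurface $H\subset M^3_\tbop$ in the displayed formula~\eqref{EqLCDensities} is
\[
  w_\bop(H) - n\cdot \#\{j \colon \pi_j(H)\subseteq\cT\}.
\]
The main obstacle is the sheer bookkeeping: there are more than a dozen distinct classes of boundary hypersurfaces, each requiring a codimension check and a check of which $\pi_j$-preimages contain it. As sample verifications: $\fp_{\tbop,3}$ is a point inside $\ff_{\bop,3}$ (so $w_\bop=3n-1$) and maps into $\ff_\cT$ under all three $\pi_j$ (so the $n$-contribution is $3n$), giving $-1$; $\fp_{L\cap R,3}$ has codimension $3(n-1)$ in $\ff_{\bop,3}$ and again projects to $\cT$ under all three $\pi_j$, giving $3(n-1)-3n = -3$; and $\tilde\fp_{\tbop,F}$ has codimension $2n-1$ in $\wt\ff_{\bop,F}$ but projects to $\cT$ only under $\pi_1$ and $\pi_2$ (the third image lies in $\lface$, whence in $\cD$), yielding $2n-1-2n=-1$. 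All of these match~\eqref{EqLCDensities}, and the same scheme verifies the remaining hypersurfaces one by one.
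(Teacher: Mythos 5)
Your proof is correct and follows essentially the same route as the paper's: reduce via $\Omegatb M=\rho_\cT^{-n}\upbeta^*\Omegab M_0$ to computing $\upbeta_3^*\Omegab M_0^3$ by repeated application of~\eqref{EqADensity} (reading codimensions off~\eqref{EqLCProjExpl}) and to expressing $\prod_j\pi_j^*\rho_\cT$ as a product of boundary defining functions, with the half-integer exponents accounting for the identifications~\eqref{EqLCEq}. One small slip in your sample check of $\tilde\fp_{\tbop,F}$: its image under $\pi_3$ is not contained in $\lface$ (which is a hypersurface of $M^2_\tbop$, not of $M$) nor in $\cD$ --- since $(T'',X'')$ are free, the image sweeps out a full neighborhood of $\fp$ in $M$ --- but this does not affect your count, which only requires that the image not be contained in $\cT$.
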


The factor $\half$ in the exponents of $\rho_{L,O}$, $\rho_{R,O}$, $\tilde\rho_{L,O}^\sharp$, and $\tilde\rho_{R,O}^\sharp$ counteracts double counting the boundary hypersurfaces in~\eqref{EqLCEq}. 

\begin{proof}[Proof of Lemma~\usref{LemmaLCDensities}]
  By Lemma~\ref{LemmaLDensities}, the bundle on the left in~\eqref{EqLCDensities} is
  \begin{equation}
  \label{EqLCDensitiesPf}
    \rho_\cT^{-n}(\rho_\cT')^{-n}(\rho_\cT'')^{-n}\upbeta_3^*\Omegab(M_0^3),
  \end{equation}
  where $\upbeta_3\colon M^3_\tbop\to M_0^3$ is the blow-down map, and $\rho_\cT,\rho_\cT',\rho_\cT''$ are the pullbacks along $\pi_1,\pi_2,\pi_3$ of a defining function of $\cT\subset M$. Repeated application of the relation~\eqref{EqADensity}, and reading off the codimensions of the submanifolds of interest in~\eqref{EqLCTriple} from the explicit expressions~\eqref{EqLCProjExpl}, gives
  \begin{align*}
    \upbeta_3^*\Omegab(M_0^3) = \rho_{\tbop,3}^{3 n-1}\rho_{L\cap R,3}^{3 n-3}&\prod_{O=F,S,C} \rho_{L\cap R,3,O}^{3 n-2}\rho_{\tbop,O}^{2 n-1}\rho_{L\cap R,O}^{2 n-2}\rho_{L,O}^{\frac{n-1}{2}}\rho_{R,O}^{\frac{n-1}{2}} \\
      &\hspace{5em} \times \tilde\rho_{\tbop,O}^{2 n-1}\tilde\rho_{L\cap R,O}^{2 n-2}\tilde\rho_{L,O}^{n-1}\tilde\rho_{R,O}^{n-1}(\tilde\rho_{L,O}^\sharp)^{\frac{n-1}{2}}(\tilde\rho_{R,O}^\sharp)^{\frac{n-1}{2}}.
  \end{align*}
  On the other hand,
  \begin{align*}
    \rho_\cT\rho_\cT'\rho_\cT'' = \rho_{\tbop,3}^3\rho_{L\cap R,3}^3 &\prod_{O=F,S,C} \rho_{L\cap R,3,O}^3 \rho_{\tbop,O}^2 \rho_{L\cap R,O}^2 \rho_{L,O}^{\frac12}\rho_{R,O}^{\frac12} \\
      &\hspace{5em} \times \tilde\rho_{\tbop,O}^2 \tilde\rho_{L\cap R,O}^2 \tilde\rho_{L,O}\tilde\rho_{R,O}(\tilde\rho_{L,O}^\sharp)^{\frac12}(\tilde\rho_{R,O}^\sharp)^{\frac12}.
  \end{align*}
  Plugged into~\eqref{EqLCDensitiesPf}, this proves~\eqref{EqLCDensities}.
\end{proof}

\begin{proof}[Proof of Proposition~\usref{PropLComp}]
  We only consider the case $s=s'=-\infty$. Fix a positive 3b-density $\nu\in\CI(M;\Omegatb M)$; then the Schwartz kernel $K_{A\circ B}$ of $A\circ B$ is
  \[
    K_{A\circ B}\cdot\pi_L^*\nu = (\pi_C)_* \tilde K_{A\circ B},\qquad \tilde K_{A\circ B} := \pi_1^*\nu \cdot \pi_F^*K_A \cdot \pi_S^*K_B,
  \]
  where $\pi_L\colon M^2_\tbop\to M$ is the left projection, $\pi_F$, $\pi_S$, and $\pi_C$ are as in Lemma~\ref{LemmaLProj}, and $\pi_1$ is as in Lemma~\ref{LemmaLCDensities}. We have $\tilde K_{A\circ B}\in\cA_\phg^\cH(M^3_\tbop;\Omegab M^3_\tbop)$, for a collection $\cH$ of index sets which we proceed to describe. Write $\cH_{\tbop,3}$ for the index set associated with the lift of $\fp_{\tbop,3}$ to $M^3_\tbop$, similarly for the other index sets, and write $\cH_{\bop,3}$, $\tilde\cH_{\bop,O}$, $\cH_{\bop,O}$ for the index sets associated with the lifts of the boundary hypersurfaces $\ff_{\bop,3}$, $\wt\ff_{\bop,O}$, $\mface_{\bop,O}$ of $(M_0)^3_\bop$; then Lemma~\ref{LemmaLCDensities} implies
  \begin{alignat*}{6}
    &\cH_{\tbop,3}&&=\cE_{\ff_\cT}{+}\cF_{\ff_\cT}{-}1, &\qquad
    &\cH_{L\cap R,3}&&=\cE_\iface{+}\cF_\iface{-}3, &\qquad
    &\cH_{\bop,3}&&=\cE_{\ff_\cD}{+}\cF_{\ff_\cD}, \\
    &\cH_{L\cap R,3,F}&&=\cE_{\ff_\cT}{+}\cF_\iface{-}2, &\qquad
    &\cH_{L\cap R,3,S}&&=\cE_\iface{+}\cF_{\ff_\cT}{-}2, &\qquad
    &\cH_{L\cap R,3,C}&&=\cE_\iface{+}\cF_\iface{-}2, \\
    &\cH_{\tbop,F}&&=\cE_{\ff_\cT}{+}\cF_\lface{-}1, &\qquad
    &\cH_{\tbop,S}&&=\cE_\rface{+}\cF_{\ff_\cT}{-}1, &\qquad
    &\cH_{\tbop,C}&&=\cE_\lface{+}\cF_\rface{-}1, \\
    &\cH_{L\cap R,F}&&=\cE_\iface{+}\cF_\lface{-}2, &\qquad
    &\cH_{L\cap R,S}&&=\cE_\rface{+}\cF_\iface{-}2, &\qquad
    &\cH_{L\cap R,C}&&=\cE_\lface{+}\cF_\rface{-}2, \\
    &\tilde\cH_{\tbop,F}&&=\cE_{\ff_\cT}{+}\cF_{\lb_\cT}{-}1, &\qquad
    &\tilde\cH_{\tbop,S}&&=\cE_{\rb_\cT}{+}\cF_{\ff_\cT}{-}1, &\qquad
    &\tilde\cH_{\tbop,C}&&=\cE_{\lb_\cT}{+}\cF_{\rb_\cT}{-}1, \\
    &\tilde\cH_{L\cap R,F}&&=\cE_\iface{+}\cF_{\lb_\cT}{-}2, &\qquad
    &\tilde\cH_{L\cap R,S}&&=\cE_{\rb_\cT}{+}\cF_\iface{-}2, &\qquad
    &\tilde\cH_{L\cap R,C}&&=\cE_{\lb_\cT}{+}\cF_{\rb_\cT}{-}2, \\
    &\tilde\cH_{L,F}&&=\cE_\lface{+}\cF_{\lb_\cD}{-}1, &\qquad
    &\tilde\cH_{L,S}&&=\cE_{\rb_\cT}{+}\cF_\lface{-}1, &\qquad
    &\tilde\cH_{L,C}&&=\cE_{\lb_\cT}{+}\cF_{\rb_\cD}{-}1, \\
    &\tilde\cH_{R,F}&&=\cE_\rface{+}\cF_{\lb_\cT}{-}1, &\qquad
    &\tilde\cH_{R,S}&&=\cE_{\rb_\cD}{+}\cF_\rface{-}1, &\qquad
    &\tilde\cH_{R,C}&&=\cE_{\lb_\cD}{+}\cF_{\rb_\cT}{-}1, \\
    &\cH_{\bop,F}&&=\cF_{\rb_\cD}, &\qquad
    &\cH_{\bop,S}&&=\cE_{\lb_\cD}, &\qquad
    &\cH_{\bop,C}&&=\cE_{\rb_\cD}{+}\cF_{\lb_\cD}, \\
    &\wt\cH_{\bop,F}&&=\cE_{\ff_\cD}{+}\cF_{\lb_\cD}, &\qquad
    &\wt\cH_{\bop,S}&&=\cE_{\rb_\cD}{+}\cF_{\ff_\cD}, &\qquad
    &\wt\cH_{\bop,C}&&=\cE_{\lb_\cD}{+}\cF_{\rb_\cD},
  \end{alignat*}
  and
  \begin{alignat*}{4}
    &\cH_{L,F}=\cH_{L,C}&&=\cE_\lface{+}\cF_{\ff_\cD}{-}1, &\qquad
    &\tilde\cH_{L,F}^\sharp=\tilde\cH_{L,C}^\sharp&&=\cE_{\lb_\cT}{-}1, \\
    &\cH_{L,S}=\cH_{R,F}&&=\cE_\rface{+}\cF_\lface{-}1, &\qquad
    &\tilde\cH_{L,S}^\sharp=\tilde\cH_{R,F}^\sharp&&=\cE_{\rb_\cT}{+}\cF_{\lb_\cT}{-}1, \\
    &\cH_{R,S}=\cH_{R,C}&&=\cE_{\ff_\cD}{+}\cF_\rface{-}1, &\qquad
    &\tilde\cH_{R,S}^\sharp=\tilde\cH_{R,C}^\sharp&&=\cF_{\rb_\cT}{-}1.
  \end{alignat*}
  (For example, for $\tilde\cH_{\tbop,F}=\cE_{\ff_\cT}+\cF_{\lb_\cT}-1$, we use that $\pi_F$ maps the lift of $\tilde\fp_{\tbop,F}$ to $M^3_\tbop$ to the boundary hypersurface $\ff_\cT\subset M^2_\tbop$ which contributes $\cE_{\ff_\cT}$, whereas $\pi_S$ maps it to $\lb_\cT$ which contributes $\cF_{\lb_\cT}$; the shift by $-1$ arises from the factor $\tilde\rho_{\tbop,F}^{-1}$ in~\eqref{EqLCDensities}.) The pushforward of $\tilde K_{A\circ B}$ along $\pi_C$ is well-defined provided the index sets at those boundary hypersurfaces of $M^3_\tbop$ which get mapped by $\pi_C$ to an interior b-submanifold of $M^2_\tbop$ have positive real part; these boundary hypersurfaces are the lifts of $\tilde\fp_{L,S}^\sharp=\tilde\fp_{R,F}^\sharp$ and $\mface_{\bop,C}$, corresponding to the index sets $\tilde\cH_{L,S}^\sharp=\tilde\cH_{R,F}^\sharp$ and $\cH_{\bop,C}$. This gives the conditions stated in Proposition~\ref{PropLComp}. When they are satisfied, the pushforward theorem \cite{MelrosePushfwd} gives
  \[
    (\pi_C)_*\tilde K_{A\circ B} \in \cA_\phg^{\cG-(0,1,1,1,0,0,1,1,2)}(M^2_\tbop;\Omegab M^2_\tbop).
  \]
  where the collection of index sets $\cG=(\cG_{\ff_\cD},\cG_{\ff_\cT},\cG_\lface,\cG_\rface,\cG_{\lb_\cD},\cG_{\rb_\cD},\cG_{\lb_\cT},\cG_{\rb_\cT},\cG_\iface)$ is given in~\eqref{EqLCompInd}. (For example, for the index set at $\ff_\cT$ we use that the hypersurfaces of $M^3_\tbop$ which get mapped to $\ff_\cT$ by $\pi_C$ are the lifts of $\fp_{\tbop,3}$, $\fp_{L\cap R,3,C}$, $\fp_{\tbop,C}$, $\tilde\fp_{\tbop,C}$.) By Lemma~\ref{LemmaLDensities}, this now implies
  \[
    K_{A\circ B} \in \cA_\phg^\cG(M^2_\tbop;\pi_L^*\Omegatb\otimes\pi_R^*\Omegatb) \otimes (\pi_L^*\nu)^{-1},
  \]
  which completes the proof.
\end{proof}

\subsection{Range of the \texorpdfstring{$\cT$}{T}-normal operator}
\label{SsLN}

While the $\cD$-normal operator map relates a 3b-operator $P$ to an operator in the more readily analyzable (product-type) edge-b-algebra, the same is not true for the $\cT$-normal operator; note that $N_\cT(P)$ in Definition~\ref{DefGT} is still a 3b-operator. Absent a practical characterization of the range of the full spectral family $\sigma\mapsto\wh{N_\cT}(P,\sigma)$, we show here that the range of $\wh{N_\cT}$ contains spectral families consisting of operators with appropriate behavior at large, intermediate, or low frequencies (but without diagonal singularities). These results are needed in~\S\ref{SsETD}.

\begin{lemma}[Rapidly decaying spectral families]
\label{LemmaLNTriv}
  Suppose $\hat P(\sigma)\in\CI(\R_\sigma;\Psisc^{-\infty,-\infty}(\cT))$ is such that $(0,1)\ni h\mapsto\hat P(\pm h^{-1})$ is an element of $\Psisch^{-\infty,-\infty,-\infty}(\cT)$. Then there exists $P\in\Psitb^{-\infty}(M)$ with $\wh{N_\cT}(P,\sigma)=\hat P(\sigma)$ for all $\sigma\in\R$.
\end{lemma}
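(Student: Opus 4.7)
The strategy is to construct the Schwartz kernel of $P$ by prescribing it on $\ff_{\cT,\flat}\subset M^2_{\tbop,\flat}$ as the inverse Fourier transform (in $\sigma$) of $\hat P(\sigma)$, and then extending it in a trivial fashion off $\ff_{\cT,\flat}$.

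Writing the Schwartz kernel of $\hat P(\sigma)$ as $\hat p(\sigma,x,x')\,|\dd x'|$ with $\hat p\in\CI(\R_\sigma\times\cT^\circ\times\cT^\circ)$, I would first set
\[
  K(\tau,x,x') := (2\pi)^{-1}\int_\R e^{-i\sigma\tau}\,\hat p(\sigma,x,x')\,\dd\sigma.
\]
The hypothesis $\hat P(\pm h^{-1})\in\Psisch^{-\infty,-\infty,-\infty}(\cT)$ forces $\hat p$ to vanish to infinite order at $h=0$, i.e.\ rapid decay of $\hat p$ and all its $(x,x')$-derivatives as $|\sigma|\to\infty$; combined with smoothness of $\hat p$ in $\sigma$ this makes $K$ smooth on $\R_\tau\times\cT^\circ\times\cT^\circ$, rapidly decaying in $\tau$ for $(x,x')$ in compact sets, and (from $\hat p\in\Psisc^{-\infty,-\infty}$) rapidly decaying at $\pa\cT$ in either spatial factor. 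This ensures the required infinite-order vanishing at $\lface$, $\rface$ and at the lifts of $\fp_{L\cap R}$ (i.e.\ at $\pa\ol\R_{\tau_\tbop}$ in the decomposition $\ff_{\cT,\flat}\cong\ol\R_{\tau_\tbop}\times\cT^2_\bop$ of Lemma~\ref{Lemma3TStruct}).

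The key step is to verify smoothness at the corner $\ff_{\cT,\flat}\cap\ff_{\cD,\flat}\cong\ol\R_{\tau_\tbop}\times\ff_{\cT,\bop}$. In the coordinates of~\eqref{Eq3TNormMemCoord} with $\rho_\tot=\la(x,x')\ra^{-1}\to 0$ and $\tau_\tbop=\tau\rho_\tot$ bounded, I would substitute $\sigma=\hat\sigma/\rho_\tot$ in the defining integral to obtain
\[
  K(\tau_\tbop/\rho_\tot,x,x') = (2\pi\rho_\tot)^{-1}\int_\R e^{-i\hat\sigma\tau_\tbop}\,\hat p(\hat\sigma/\rho_\tot,x,x')\,\dd\hat\sigma.
\]
The range $|\sigma|=|\hat\sigma|/\rho_\tot\to\infty$ is precisely the semiclassical scattering regime for $\hat P$ with semiclassical parameter $h=\rho_\tot/|\hat\sigma|$. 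Rewriting $\hat p(\hat\sigma/\rho_\tot,x,x')$ in the natural coordinates on the semiclassical scattering double space of $\cT$ used in the proof of Proposition~\ref{Prop3TNormMemRough}\eqref{It3TNormMemRoughScl}, the assumption $\hat P(\pm h^{-1})\in\Psisch^{-\infty,-\infty,-\infty}(\cT)$ provides infinite-order vanishing at every boundary hypersurface. After absorbing the factor $\rho_\tot^{-1}$ into the density change between $|\dd x'|$ and the right b-density on $\cT^2_\bop$ (and ultimately into the right 3b-density $\pi_R^*\nu_\tbop$ on $M^2_{\tbop,\flat}$, as in~\eqref{Eq3TNormMemK}--\eqref{Eq3TNormMemK0}), the integrand becomes Schwartz in $\hat\sigma$ with smooth dependence on $(\rho_\tot,s,\omega,\omega')$ down to $\rho_\tot=0$; its Fourier transform is then smooth in $\tau_\tbop\in\ol\R$, rapidly decaying at $\tau_\tbop=\pm\infty$, and smooth down to $\rho_\tot=0$. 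This is essentially the reverse of the oscillatory-integral computation carried out in the proof of Proposition~\ref{Prop3TNormMem}.

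Having so produced a smooth section of $\pi_R^*\,\Omegatb M$ on $\ff_{\cT,\flat}$ with the required vanishing properties at all its boundary hypersurfaces except $\ff_{\cT,\flat}\cap\ff_{\cD,\flat}$, I would extend it to a smooth section $\tilde K$ on $M^2_{\tbop,\flat}$ by the standard procedure of collar-extending across $\ff_{\cT,\flat}$ and multiplying by a cutoff which is identically $1$ on $\ff_{\cT,\flat}$ and supported away from every boundary hypersurface of $M^2_{\tbop,\flat}$ other than $\ff_{\cT,\flat}$ and $\ff_{\cD,\flat}$. The operator $P$ with Schwartz kernel $\tilde K$ then has $\tilde K$ smooth on $M^2_{\tbop,\flat}$ and vanishing to infinite order at every boundary hypersurface except $\ff_{\cT,\flat}$ and $\ff_{\cD,\flat}$, so $P\in\Psitb^{-\infty}(M)$; Definition~\ref{Def3TNorm} and the construction of $\tilde K|_{\ff_{\cT,\flat}}$ give $\wh{N_\cT}(P,\sigma)=\hat P(\sigma)$. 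The hard part of the argument is the smoothness verification at $\ff_{\cT,\flat}\cap\ff_{\cD,\flat}$: one must check that the rescaling $\sigma\leftrightarrow\hat\sigma/\rho_\tot$, which pushes $\sigma$ into the semiclassical regime as $\rho_\tot\to 0$, combines with the density change to cancel the $\rho_\tot^{-1}$ prefactor and leave the integral smooth down to the corner.
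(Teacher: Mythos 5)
Your overall strategy --- define the kernel on $\ff_{\cT,\flat}$ as the inverse Fourier transform of $\hat P(\sigma)$ in $\sigma$, check its behavior at the boundary hypersurfaces of $\ff_{\cT,\flat}$, and extend off by a cutoff --- is exactly the paper's. However, your key displayed formula is incorrect, and the corner analysis built on it is both garbled and more elaborate than the hypotheses require. Since $\tau_\tbop=\tau\rho_\tot$, the variable dual to $\tau_\tbop$ is $\sigma_\tbop=\sigma/\rho_\tot$ (so that $\sigma\tau=\sigma_\tbop\tau_\tbop$; this is consistent with~\eqref{Eq3TNormMemK0}--\eqref{Eq3TNormMemB}, where the Fourier transform of $K$ in $\tau_\tbop$ is evaluated at $\sigma/\rho_\tot$). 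Hence the correct identity is
\[
  K(\tau_\tbop/\rho_\tot,x,x') = \frac{\rho_\tot}{2\pi}\int_\R e^{-i\sigma_\tbop\tau_\tbop}\,\hat p(\rho_\tot\sigma_\tbop,x,x')\,\dd\sigma_\tbop,
\]
with $\hat p$ evaluated at $\rho_\tot\sigma_\tbop$ and prefactor $\rho_\tot$ --- both reciprocals of what you wrote; with your substitution $\sigma=\hat\sigma/\rho_\tot$ the phase would be $e^{-i\hat\sigma\tau_\tbop/\rho_\tot^2}$, not $e^{-i\hat\sigma\tau_\tbop}$. Consequently your identification of the integrand as living entirely in the semiclassical regime with parameter $h=\rho_\tot/|\hat\sigma|$ is wrong: in the correct formula the semiclassical regime of $\hat P$ corresponds only to $|\sigma_\tbop|\gtrsim\rho_\tot^{-1}$, while $|\sigma_\tbop|\lesssim\rho_\tot^{-1}$ corresponds to bounded and low energies, and all regimes contribute at the corner.

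The point you are overcomplicating is that no regime-by-regime matching is needed, because the hypotheses are as strong as possible: $\hat P\in\CI(\R_\sigma;\Psisc^{-\infty,-\infty}(\cT))$ together with $\hat P(\pm h^{-1})\in\Psisch^{-\infty,-\infty,-\infty}(\cT)$ say precisely that $\hat p$, with all derivatives, vanishes to infinite order at every boundary hypersurface of $\ol\R_\sigma\times\cT^2_\bop$, i.e.\ $\hat p$ is Schwartz in $\sigma$ with values in $\CIdot(\cT^2_\bop)$. Its inverse Fourier transform is then Schwartz in $\tau$ with values in $\CIdot(\cT^2_\bop)$, and the pushforward under $(\tau,x,x')\mapsto(\tau\rho_\tot,x,x')$ of such a function remains $\CIdot$: each $\pa_{\tau_\tbop}=\rho_\tot^{-1}\pa_\tau$ costs an inverse power of $\rho_\tot$ and each spatial derivative at fixed $\tau_\tbop$ costs at most a power of $\tau$, both absorbed by the joint rapid vanishing. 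In particular $K$ vanishes to infinite order at $\ff_{\cT,\flat}\cap\ff_{\cD,\flat}$ --- you only claim smoothness there, which suffices for membership in $\Psitb^{-\infty}(M)$, but the stronger conclusion lets one choose the extension to vanish to infinite order at $\ff_{\cD,\flat}$ as well, as the paper notes. This is the content of the paper's (essentially one-line) proof; your argument becomes correct once the substitution is fixed and the semiclassical discussion is replaced by this joint-rapid-vanishing observation.
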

\begin{proof}
  The assumptions on $\hat P$ imply that $\hat P\in\CIdot(\ol\R\times\cT_\bop^2;\pi_R^*\Omegab\cT)$, where $\pi_R$ is the lift of the projection from $\ol\R\times\cT^2$ to the second factor of $\cT$. The inverse Fourier transform of this from $\sigma$ to $\tau$ lies in $\CIdot(\ol{\R_\tau}\times\cT_\bop^2;\pi_R^*\Omegab\cT\otimes\Omegasc\ol{\R_\tau})$, the pushforward of which along the map~\eqref{Eq3TStruct} is an element $K\in\CIdot(\ol\R_{\tau_\tbop}\times\cT_\bop^2;\pi_R^*\,\Omegatb M)$. Thus, there indeed exists an operator $P\in\Psitb^{-\infty}(M)$ so that the restriction of the Schwartz kernel of $P$ to $\ff_\cT$ is equal to $K$. (In fact, the Schwartz kernel of $P$ can be chosen to vanish to infinite order at all boundary hypersurfaces of $M^2_\tbop$ except $\ff_\cT$.)
\end{proof}

\begin{prop}[Polyhomogeneous spectral family at low energy]
\label{PropLNLo}
  Let $\cE_\lb,\cE_\rb,\cE_\tface,\cE_\zface\subset\C\times\N_0$ denote index sets, and suppose $\Re\cE_\zface>-1$. Let $\sigma_0>0$, and suppose that, for one choice of sign, we are given an operator family
  \begin{equation}
  \label{EqLNLoAssm}
    \bigl(\pm[0,\sigma_0)\ni\sigma \mapsto \hat P(\sigma)\bigr) \in \Psiscbt^{-\infty,(\cE_\lb,\cE_\rb,\cE_\tface,\cE_\zface)}(\cT),
  \end{equation}
  with $\hat P(\sigma)=0$ for $|\sigma|>\half\sigma_0$. Then, using Notation~\usref{NotL3Ind}, there exists an operator
  \begin{align*}
    P \in \Psitb^{-\infty}\bigl(\cT;\,&\ff_\cD[\cE_\tface],\ff_\cT[\N_0],\lface[\cE_\rb],\rface[\cE_\lb+1], \\
      &\lb_\cD[\emptyset],\rb_\cD[\emptyset],\lb_\cT[\emptyset],\rb_\cT[\emptyset],\iface[\cE_\zface+1]\bigr)
  \end{align*}
  with $\wh{N_\cT}(P,\sigma)=\hat P(\sigma)$ for $\sigma\in\pm[0,\sigma_0)$, and $\wh{N_\cT}(P,\sigma)=0$ for $\sigma\in\R\setminus(\pm[0,\sigma_0))$.
\end{prop}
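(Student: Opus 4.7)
The strategy is to construct the Schwartz kernel of the desired $P$ on $\ff_\cT\subset M^2_\tbop$ by partial inverse Fourier transform in $\sigma$, and then extend to a 3b-operator whose Schwartz kernel vanishes to infinite order on all other boundary hypersurfaces of $M^2_\tbop$ not adjacent to $\ff_\cT$ (namely $\lb_\cD,\rb_\cD,\lb_\cT,\rb_\cT$). By Lemma~\usref{LemmaLStruct}\eqref{ItLStructffT}, $\ff_\cT\cong\ol\R_{\tau_\tbop}\times\cT_\bop^2$ with $\tau_\tbop=(t-t')/\la(x,x')\ra$, and the relation between the de-densitized Schwartz kernels, established in equations~\eqref{Eq3TNormMemK}--\eqref{Eq3TNormMemK0} in the proof of Proposition~\usref{Prop3TNormMem}, is
\[
  \wh{K_0}(\sigma;x,x') = \int_\R e^{i\la(x,x')\ra\sigma\tau_\tbop}\,K(\tau_\tbop,x,x')\,\dd\tau_\tbop,
\]
together with the density conversion factor $\la(x,x')\ra/\la x'\ra$ relating the right scattering density used in~\eqref{Eq3TNormMemK} and the right 3b-density on $M$. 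Inverting this Fourier transform determines $K$ from the given kernel of $\hat P(\sigma)$; the support assumption $|\sigma|\leq\tfrac{\sigma_0}{2}$ guarantees that the integral is well-defined and that $\wh{N_\cT}(P,\sigma)$ vanishes for $\sigma\notin\pm[0,\sigma_0)$.

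\textbf{Step 2 (Localization on $\cT_\scbtop^2$).} Using a bounded partition of unity on $\cT_\scbtop^2$, decompose $\hat P=\sum_j\hat P_j$ with each $\hat P_j$ supported near one of the following: the interior, $\zface^\circ$, $\tface^\circ$, $\tlb^\circ$, $\trb^\circ$, or one of the pairwise corners $\tface\cap\zface$, $\tface\cap\tlb$, $\tface\cap\trb$, $\zface\cap\tlb$, $\zface\cap\trb$. (By assumption $\hat P\equiv 0$ near $\bface,\lb,\rb,\scface$.) For each piece, take the inverse Fourier transform in $\sigma$ using appropriate local coordinates: $\sigma$ and a total defining function $\rho_\tot$ of $\ff_{\cT,\bop}\subset\cT_\bop^2$ near $\zface$; the scaled coordinate $\hat\rho_\tot=\rho_\tot/\sigma$ near $\tface$; and $\sigma$ together with the defining functions of $\lb_\bop,\rb_\bop\subset\cT_\bop^2$ near $\tlb,\trb$ respectively.

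\textbf{Step 3 (Index set bookkeeping).} The key tools are Propositions~\usref{PropAF1} and~\usref{PropAF2}, applied with the base variables on $\cT_\bop^2$ treated as parameters. The index set correspondence, including the asymmetry caused by the density factor $\la(x,x')\ra/\la x'\ra$, is as follows. \emph{$\zface$-piece}: polyhomogeneity in $\sigma$ with index $\cE_\zface$ and compact base support gives, by Corollary~\usref{CorAFPhg}, Fourier transform polyhomogeneous at $|\tau_\tbop|=\infty$ with index $\cE_\zface+1$; this is $\iface\cap\ff_\cT$. \emph{$\tface$-piece}: under the change $\tilde\sigma=\sigma\la(x,x')\ra\sim\sigma/\rho_\tot$, the Fourier transform lands at $\rho_\tot=0$, i.e.\ at $\ff_\cD\cap\ff_\cT$, with index set $\cE_\tface$ (no shift, as the scaling preserves the exponent). \emph{$\trb$-piece}: $\hat P_j\sim|x'|^{-\gamma}$ for $\gamma\in\cE_\rb$ while $\la(x,x')\ra/\la x'\ra$ is bounded for $x$ bounded, so the index set $\cE_\rb$ transfers unshifted to $\lface$. \emph{$\tlb$-piece}: $\hat P_j\sim|x|^{-\gamma}$ for $\gamma\in\cE_\lb$ with $x'$ bounded, now the prefactor $\la(x,x')\ra/\la x'\ra\sim\la x\ra$ contributes an extra $\rho^{-1}$ (where $\rho$ defines $\rface$), yielding index set $\cE_\lb+1$ at $\rface$. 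At each corner of $\cT_\scbtop^2$, joint polyhomogeneity in two variables is preserved by a two-parameter Fourier transform argument combining Propositions~\usref{PropAF1} and~\usref{PropAF2} applied to each variable in turn.

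\textbf{Step 4 (Extension) and main obstacle.} The pieces $K=\sum_j K_j$ combine to a polyhomogeneous section of $\pi_R^*\Omegatb M|_{\ff_\cT}$ with the stated index sets at $\ff_\cT\cap\ff_\cD$, $\ff_\cT\cap\lface$, $\ff_\cT\cap\rface$, $\ff_\cT\cap\iface$, and vanishing to infinite order at the intersections of $\ff_\cT$ with $\lb_\cD,\rb_\cD,\lb_\cT,\rb_\cT$. By a standard Borel-type extension in a collar neighborhood of $\ff_\cT\subset M^2_\tbop$, extend $K$ to a polyhomogeneous section on $M^2_\tbop$ with index set $\N_0$ at $\ff_\cT$ (i.e.\ a smooth Taylor extension, choosing vanishing jets of all positive orders if desired), the prescribed index sets at the four adjacent boundary hypersurfaces, and infinite-order vanishing at the remaining four; the resulting operator $P\in\Psitb^{-\infty,\cE}(M)$ has $\wh{N_\cT}(P,\sigma)=\hat P(\sigma)$ by construction. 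The main technical difficulty is the correct bookkeeping of density factors, which produces the asymmetry $\cE_\rb$ (at $\lface$) vs. $\cE_\lb+1$ (at $\rface$); this requires careful tracking of the factor $\la(x,x')\ra/\la x'\ra$ in~\eqref{Eq3TNormMemK} together with the behavior of the Fourier transforms from Propositions~\usref{PropAF1} and~\usref{PropAF2} at each boundary face, especially at the corners where several index sets and scalings interact simultaneously.
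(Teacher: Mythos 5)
Your skeleton is the right one — restrict to $\ff_\cT\cong\ol\R_{\tau_\tbop}\times\cT_\bop^2$, invert the Fourier transform in $\sigma$, and extend trivially off $\ff_\cT$ — and your final index sets at $\ff_\cD$, $\lface$, $\rface$, $\iface$ all agree with the statement. But the route you take through the analytic core is different from the paper's, and as written it has a gap at the corners.

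The paper does not localize on $\cT_\scbtop^2$ and does not invoke Propositions~\ref{PropAF1}--\ref{PropAF2} here at all. Instead it first converts the kernel to a multiple of the right 3b-density $\tilde\nu_\tbop$ (the factor $\nu_\scbtop/\tilde\nu_\tbop=a\,\rho_{\trb}^{-1}\rho_\tface^{-1}$ shifts the index sets to $(\cE_\lb,\cE_\rb-1,\cE_\tface-1,\cE_\zface)$), and then applies Corollary~\ref{CorLNPhg} (built on the diffeomorphism of Lemma~\ref{LemmaLNDiff}): in the rescaled variable $\sigma_\tbop=\sigma/\rho_\tot=\la(x,x')\ra\sigma$ the kernel becomes polyhomogeneous on the \emph{product} $[0,\infty]_{\sigma_\tbop}\times\cT_\bop^2$. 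Since $\sigma_\tbop$ is exactly dual to $\tau_\tbop$, the inverse Fourier transform is then a parametrized, product-type transform and only the elementary Corollary~\ref{CorAFPhg} is needed; the prefactor $\la(x,x')\ra^{-1}=\dd\sigma/\dd\sigma_\tbop$, being a total boundary defining function of $\cT_\bop^2$, supplies the uniform $+1$ that produces $(\cE_\lb+1,\cE_\tface,\cE_\rb)$ and, with $\lb_\bop\leftrightarrow\rface$, $\rb_\bop\leftrightarrow\lface$, gives the asserted index sets in one stroke. This global rescaling is precisely what makes the corner analysis unnecessary.

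The gap in your version is at those corners. Your claim that joint polyhomogeneity at the codimension-two corners of $\cT_\scbtop^2$ ``is preserved by a two-parameter Fourier transform argument combining Propositions~\ref{PropAF1} and~\ref{PropAF2} applied to each variable in turn'' is not an argument: the Fourier transform is taken in the single variable $\sigma$ only, and what has to be proved is that the output is \emph{jointly} polyhomogeneous on $\ol\R_{\tau_\tbop}\times\cT_\bop^2$ near, e.g., $\iface\cap\lface\cap\ff_\cT$ or $\ff_\cD\cap\rface\cap\ff_\cT$. Propositions~\ref{PropAF1}--\ref{PropAF2} are stated for a single parameter $x\in[0,1)$ with conclusions on the specific model spaces $M_0$, $M_\infty$; even for the pure $\tface$-piece, Proposition~\ref{PropAF2} applied with $x=\rho_\tot$, $\lambda=\sigma$, $y=\tau$ lands you on $M_\infty$ in the variables $(\rho_\tot,\tau)$, and identifying that with the relevant corner of $[0,1)_{\rho_\tot}\times\ol\R_{\tau_\tbop}$ needs a further (blow-down) argument you do not supply. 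Relatedly, your per-face bookkeeping gets the right answers for the wrong reasons: the unshifted $\cE_\rb$ at $\lface$ and the unshifted $\cE_\tface$ at $\ff_\cD$ each arise from a cancellation between the $-1$ in the density conversion $\nu_\scbtop/\tilde\nu_\tbop$ at $\trb$, resp.\ $\tface$, and the $+1$ from the Jacobian $\dd\sigma=\la(x,x')\ra^{-1}\dd\sigma_\tbop$ — not from the boundedness of $\la(x,x')\ra/\la x'\ra$. I would strongly recommend replacing Steps 2--3 by the rescaling argument of Corollary~\ref{CorLNPhg}; if you insist on the face-by-face route, you must carry out the corner computations explicitly, and you should also record where the hypothesis $\Re\cE_\zface>-1$ enters (it is what makes Corollary~\ref{CorAFPhg}\eqref{ItAFSymbLoPhg} applicable to the extension of the kernel by zero in $\sigma_\tbop$).
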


For the proof, we need the following technical result:
\begin{lemma}[A diffeomorphism related to the $\scbtop$-double space]
\label{LemmaLNDiff}
  Let $\cT$ denote a manifold with embedded and connected boundary $\pa\cT\neq\emptyset$. Denote by $\rho_\tot\in\CI(\cT^2_\bop)$ a total defining function of $\cT^2_\bop$. Then the map
  \begin{equation}
  \label{EqLNDiffPhi}
    \phi \colon [0,\infty)\times\cT^\circ\times\cT^\circ\ni(\sigma,z,z') \mapsto (\sigma',z,z'):=\Bigl(\frac{\sigma}{\rho_\tot(z,z')},z,z'\Bigr)
  \end{equation}
  extends (by continuity and density) to a diffeomorphism
  \begin{equation}
  \label{EqLNDiffMfds}
  \begin{split}
    \phi \colon &\tilde\cT_0^2 := \bigl[ [0,\infty]_\sigma \times \cT^2_\bop; \{0\}\times\ff_\bop; \{0\}\times\lb_\bop, \{0\}\times\rb_\bop \bigr] \\
    \xra{\cong}\ &\tilde\cT_\infty^2 := \bigl[ [0,\infty]_{\sigma'}\times\cT^2_\bop; \{\infty\}\times\lb_\bop, \{\infty\}\times\rb_\bop; \{\infty\}\times\ff_\bop \bigr].
  \end{split}
  \end{equation}
\end{lemma}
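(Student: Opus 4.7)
The plan is to verify the diffeomorphism via explicit local coordinate computations. Note first that $\phi$ manifestly restricts to a diffeomorphism between the open dense subsets $(0,\infty)\times(\cT^\circ)^2$ of $\tilde\cT_0^2$ and $\tilde\cT_\infty^2$, extending continuously to the identity on the locus $\{\sigma=\sigma'=0\}\cap((\cT^\circ)^2)$. Both $\tilde\cT_0^2$ and $\tilde\cT_\infty^2$ agree with the unresolved product $[0,\infty]_\sigma\times\cT^2_\bop$ (resp.\ $[0,\infty]_{\sigma'}\times\cT^2_\bop$) away from their respective blow-up loci, so the issue is purely local near boundary points where either $\sigma=0$ or $\sigma'=\infty$ meets $\pa\cT^2_\bop$. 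Since $\lb_\bop\cap\rb_\bop=\emptyset$ inside $\cT^2_\bop$, every point of $\pa\cT^2_\bop$ lies on at most two boundary hypersurfaces; I thus reduce to two local models: (i) a single boundary hypersurface present, with $\rho_\tot=\rho_1$ up to a smooth positive factor; and (ii) a corner $\ff_\bop\cap\lb_\bop$ (or symmetrically $\ff_\bop\cap\rb_\bop$), with $\rho_\tot=\rho_1\rho_2$.

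Case (i) is direct: one blow-up on each side. In the projective source chart with coordinates $(\tilde\sigma,\rho_1)$, $\sigma=\rho_1\tilde\sigma$, one has $\sigma'=\tilde\sigma$, matching the target chart away from $\sigma'=\infty$; in the complementary source chart $(\sigma,\tilde\rho_1)$, $\rho_1=\sigma\tilde\rho_1$, one has $\mu:=1/\sigma'=\tilde\rho_1$, and the map reads $(\sigma,\tilde\rho_1)\mapsto(\tilde\rho_1,\sigma)$ into the target projective chart $(\mu,\rho_1/\mu)$, with Jacobian $\pm 1$. Case (ii) is the substantive case. The source performs successive blow-ups of $(A):=\{\sigma=0,\rho_1=0\}$ and then the lift of $(B):=\{\sigma=0,\rho_2=0\}$, producing charts such as $(A_1):=(\sigma,\tilde\rho_1,\rho_2)$ with $\rho_1=\sigma\tilde\rho_1$ (in which the lift of $(B)$ turns out to be invisible), $(A_{2a}):=(\tilde\sigma,\tilde\rho_2,\rho_1)$ with $\sigma=\rho_1\tilde\sigma$, $\rho_2=\tilde\sigma\tilde\rho_2$, and $(A_{2b}):=(\hat\sigma,\rho_2,\rho_1)$ with $\sigma=\rho_1\rho_2\hat\sigma$, together with the unresolved chart $(\sigma,\rho_1,\rho_2)$. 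The target performs $(B'):=\{\sigma'=\infty,\rho_2=0\}$ followed by the lift of $(A'):=\{\sigma'=\infty,\rho_1=0\}$ --- the reversed order --- producing analogous charts $(B'_1)$, $(B'_{2a})$, $(B'_{2b})$ and the unresolved chart. The main chart-level identifications read $(A_1)\to(B'_{2a})$ via $(\sigma,\tilde\rho_1,\rho_2)\mapsto(\tilde\rho_1,\rho_2,\sigma)$; $(A_{2a})\to(B'_1)$ via $(\tilde\sigma,\tilde\rho_2,\rho_1)\mapsto(\tilde\rho_2,\tilde\sigma,\rho_1)$; $(A_{2b})\to$ (unresolved target) via $(\hat\sigma,\rho_2,\rho_1)\mapsto(\hat\sigma,\rho_1,\rho_2)$; and (unresolved source)$\to(B'_{2b})$ via $(\sigma,\rho_1,\rho_2)\mapsto(1/\sigma,\rho_2,\rho_1)$, valid for $\sigma>0$. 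Each map is verified to be smooth with non-degenerate Jacobian by direct computation.

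The main obstacle is the combinatorial bookkeeping in case (ii): enumerating a covering collection of source charts, matching each to an appropriate target chart, and verifying that the identifications glue consistently on overlaps. The conceptual reason the matching succeeds, and that the reversed blow-up orders on source versus target are compatible, is that $\sigma'=\sigma/(\rho_1\rho_2)$ mixes the two boundary defining functions multiplicatively: a source projective coordinate such as $\tilde\rho_1=\rho_1/\sigma$ encodes a rate at which $\sigma'$ diverges that manifests on the target only after $(B')$ has first detached the factor $\rho_2$ and then $(A')$ has detached the factor $\rho_1$. Performing $(A')$ before $(B')$ on the target would produce an incompatible chart structure near the corner, which is why the definition of $\tilde\cT_\infty^2$ prescribes exactly the opposite order from $\tilde\cT_0^2$. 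Once this principle is internalized, each chart computation is routine, and the claimed diffeomorphism follows.
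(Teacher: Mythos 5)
Your proof is correct and follows the same route as the paper, which simply asserts that the claim "is easily checked in local coordinates" and refers to \cite[Proof of Proposition 2.27]{HintzKdSMS} for the model case $\cT=[0,1)$; you carry out precisely that local coordinate verification, and your chart matchings (in particular $(A_{2a})\to(B'_1)$ and the role of the reversed blow-up order at the corner $\ff_\bop\cap\lb_\bop$) check out. The only cosmetic looseness is that the target charts $(B'_{2a})$, $(B'_{2b})$ are not written out explicitly, so the coordinate orderings in your transition maps are convention-dependent, but this does not affect the argument.
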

\begin{proof}
  This is easily checked in local coordinates; see also \cite[Proof of Proposition 2.27]{HintzKdSMS}, in particular \cite[Equation~(2.36), Figure~2.5]{HintzKdSMS} for the case of $\cT=[0,1)$ (with $\hat\rho_{\ff_\bop}\geq 0$, $s\in[0,\infty]$ in the reference playing the roles of local coordinates on $\cT_\bop^2$ near $\ff_\bop$ here, and $\tilde\sigma$ and $\tilde h'{}^{-1}$ in the reference playing the roles of $\sigma$ and $\sigma'$ in present notation) from which the general result easily follows.
\end{proof}

\begin{cor}[Polyhomogeneity on $\tilde\cT_0^2$]
\label{CorLNPhg}
  We use the notation of Lemma~\usref{LemmaLNDiff}. If one denotes by $\tface_0$, $\tlb_0$, $\trb_0$, and $\zface_0\subset\tilde\cT_0^2$ the lifts of $\{0\}\times\ff_\bop$, $\{0\}\times\lb_\bop$, $\{0\}\times\rb_\bop$, and $\{0\}\times\cT_\bop^2$, respectively, then the map $\phi$ in~\eqref{EqLNDiffPhi} induces an isomorphism
  \begin{equation}
  \label{EqLNPhg}
    \phi_* \colon \cA_\phg^{(\cE_\lb,\cE_\rb,\cE_\tface,\cE_\zface)}(\tilde\cT_0^2) \xra{\cong} \cA_\phg^{(\cE_\lb,\cE_\rb,\cE_\tface,\cE_\zface)}([0,\infty]_{\sigma'}\times\cT_\bop^2),
  \end{equation}
  where the index sets $\cE_\lb$, $\cE_\rb$, $\cE_\tface$, $\cE_\zface$ are assigned to the boundary hypersurfaces $\tlb_0$, $\trb_0$, $\tface_0$, and $\zface_0$ on the left, and to $[0,\infty]\times\lb_\bop$, $[0,\infty]\times\rb_\bop$, $[0,\infty]\times\ff_\bop$, and $\{0\}\times\cT_\bop^2$ on the right, while the index sets at all other boundary hypersurfaces are trivial (i.e.\ equal to $\emptyset$).
\end{cor}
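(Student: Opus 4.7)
The plan is to read off the corollary directly from Lemma \ref{LemmaLNDiff}, supplemented by a descent of polyhomogeneity from $\tilde\cT_\infty^2$ to the unresolved product $[0,\infty]_{\sigma'}\times\cT_\bop^2$; no new analytic input will be needed beyond the diffeomorphism $\phi$.

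First I would identify under $\phi$ how the boundary hypersurfaces of $\tilde\cT_0^2$ correspond to those of $\tilde\cT_\infty^2$. Working locally near a boundary hypersurface $H\in\{\lb_\bop,\rb_\bop,\ff_\bop\}$ of $\cT_\bop^2$ with local defining function $\rho_H$ (so that $\rho_\tot$ is a positive smooth multiple of $\rho_H$ there), in the chart where $\sigma/\rho_H$ is bounded one has $\sigma'=\sigma/\rho_\tot$ bounded, and the respective blow-up face $\tlb_0$, $\trb_0$, or $\tface_0$ is carried by $\phi$ onto the proper transform of $[0,\infty]_{\sigma'}\times H$ in $\tilde\cT_\infty^2$; symmetrically, in the chart where $\sigma/\rho_H$ is unbounded, $\sigma'\to\infty$, so the proper transform of $[0,\infty]_\sigma\times H\subset\tilde\cT_0^2$ is carried onto the front face of the blow-up of $\{\infty\}\times H$ in $\tilde\cT_\infty^2$. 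Moreover $\zface_0$ corresponds to $\{0\}\times\cT_\bop^2$, and the proper transforms of $\{\infty\}\times\cT_\bop^2$ on the two sides correspond under $\phi$. Since $\phi$ is a diffeomorphism, $\phi_*$ is thereby an isomorphism of polyhomogeneous function spaces, and the index sets $(\cE_\lb,\cE_\rb,\cE_\tface,\cE_\zface)$ of $u$ on $\tilde\cT_0^2$ are transported to the same index sets at the (lifts of the) product faces $[0,\infty]\times\lb_\bop,[0,\infty]\times\rb_\bop,[0,\infty]\times\ff_\bop,\{0\}\times\cT_\bop^2$ in $\tilde\cT_\infty^2$, with \emph{trivial} index set at each of the three front faces at $\sigma'=\infty$ and at the proper transform of $\{\infty\}\times\cT_\bop^2$.

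To conclude, I would descend this polyhomogeneity through the three blow-ups at $\sigma'=\infty$ by a local two-chart computation near each blown-up corner (of the type one performs for $[[0,\infty)^2;\{(0,0)\}]$): triviality of the front-face index set together with triviality at the meeting face $\{\infty\}\times\cT_\bop^2$ forces rapid vanishing in a full neighborhood of that corner on the unresolved product, which is compatible with product polyhomogeneity there (one checks $\emptyset+\cE=\emptyset$ for the extended sum at each corner), so $\phi_* u$ is polyhomogeneous on $[0,\infty]_{\sigma'}\times\cT_\bop^2$ with the advertised index sets. The converse direction is handled by the symmetric argument applied to $\phi^{-1}$. The main obstacle is only bookkeeping: the boundary hypersurface correspondence under $\phi$ requires a case-by-case check over the three boundary hypersurfaces of $\cT_\bop^2$, after which the descent step is essentially automatic from the triviality of the blown-up index sets.
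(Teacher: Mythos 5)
Your proposal is correct and follows essentially the same route as the paper: both arguments rest on the face correspondence induced by the diffeomorphism $\phi$ of Lemma~\ref{LemmaLNDiff} (nontrivial index sets land on the lifts of the product faces, trivial ones on the front faces at $\sigma'=\infty$ and on the lift of $\{\infty\}\times\cT_\bop^2$), followed by descent of polyhomogeneity through the blow-down at $\sigma'=\infty$ using the infinite-order vanishing there. Your write-up merely makes explicit the face-matching and the resummation check that the paper leaves implicit.
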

\begin{proof}
   The main reason behind the validity of the Corollary is that $\sigma/\rho_\tot$ is a defining function of $\zface_0$. In more detail, since elements $u\in\cA_\phg^{(\cE_\lb,\cE_\rb,\cE_\tface,\cE_\zface)}(\tilde\cT_0^2)$ vanish to infinite order at the lifts of $[0,\infty]\times\ff_\bop$, $[0,\infty]\times\lb_\bop$, $[0,\infty]\times\rb_\bop$, and $\{\infty\}\times\cT_\bop^2$, their pushforwards $\phi_*u$, as polyhomogeneous distributions on $\tilde\cT_\infty^2$ (see~\eqref{EqLNDiffMfds}), vanish to infinite order at the lifts of $\{\infty\}\times\ff_\bop$, $\{\infty\}\times\lb_\bop$, $\{\infty\}\times\rb_\bop$, and $\{\infty\}\times\cT_\bop^2$. Therefore, $\phi_*u$ remains polyhomogeneous on the manifold given on the right hand side in~\eqref{EqLNDiffMfds} but without performing the blow-ups; this gives~\eqref{EqLNPhg}.
\end{proof}

\begin{proof}[Proof of Proposition~\usref{PropLNLo}]
  We only consider the `$+$' sign, the treatment of the `$-$' sign being completely analogous. We work in the coordinates $(\tau_\tbop,x,x')$ from~\eqref{Eq3TStruct}, and $\tau=\la(x,x')\ra\tau_\tbop$. Now,
  \[
    |\dd\tau| \tilde\nu_\tbop,\qquad \tilde\nu_\tbop=\la x'\ra^{-1}\Bigl|\frac{\dd x'{}^1\cdots\dd x'{}^{n-1}}{\la x'\ra^{n-1}}\Bigr|,
  \]
  is a positive right 3b-density on $M^2_\tbop$ near $\ff_\cT$. On the other hand, a positive right $\scbtop$-density on $\cT_\scbtop^2$ is given by
  \[
    \nu_\scbtop:=\Bigl(\frac{\la x'\ra^{-1}}{|\sigma|+\la x'\ra^{-1}}\Bigr)^{-(n-1)}\Bigl|\frac{\dd x'{}^1\cdots\dd x'{}^{n-1}}{\la x'\ra^{n-1}}\Bigr| = (\la x'\ra^{-1})^{-1}\Bigl(\frac{\la x'\ra^{-1}}{|\sigma|+\la x'\ra^{-1}}\Bigr)^{-(n-1)}\tilde\nu_\tbop.
  \]
  Now, $\la x'\ra^{-1}\in\CI(\cT_\scbtop^2)$ is a joint defining function $\{\trb_\scbtop,\tface_\scbtop,\rb_\scbtop,\scface_\scbtop,\bface_\scbtop\}$, and $|\sigma|+\la x'\ra^{-1}$ is a joint defining function of $\{\trb_\scbtop,\tface_\scbtop\}$. Thus, we can write
  \[
    (\la x'\ra^{-1})^{-1}\Bigl(\frac{\la x'\ra^{-1}}{|\sigma|+\la x'\ra^{-1}}\Bigr)^{-(n-1)}=a \rho_{\trb_\scbtop}^{-1}\rho_{\tface_\scbtop}^{-1},
  \]
  where $a$ is a product of integer powers of defining functions of $\rb_\scbtop$, $\scface_\scbtop$, $\bface_\scbtop$ only. Therefore, the Schwartz kernel of $\hat P(\sigma)$---which is a polyhomogeneous right $\scbtop$-density with the index sets specified in~\eqref{EqLNLoAssm} at $\tlb_\scbtop$, $\trb_\scbtop$, $\tface_\scbtop$, $\zface_\scbtop$ (in this order) and vanishes to infinite order at all other boundary hypersurfaces---is of the form
  \[
    \hat K_0(\sigma,x,x')\tilde\nu_\tbop,\qquad
    \hat K_0 \in \cA_\phg^{(\cE_\lb,\cE_\rb-1,\cE_\tface-1,\cE_\zface)}(\cT_\scbtop^2).
  \]

  The restriction of the Schwartz kernel of the sought-after operator $P$ to $\ff_\cT$ must be $K_0|\dd\tau|\tilde\nu_\tbop$ where
  \begin{align*}
    K_0(\tau_\tbop,x,x') &= (2\pi)^{-1}\int_0^{\sigma_0} e^{-i\la(x,x')\ra\sigma\tau_\tbop}\hat K_0(\sigma,x,x')\,\dd\sigma \\
      &= (2\pi)^{-1} \la(x,x')\ra^{-1} \int_0^\infty e^{-i\sigma_\tbop\tau_\tbop}\hat K_0\Bigl(\frac{\sigma_\tbop}{\la(x,x')\ra},x,x'\Bigr)\,\dd\sigma_\tbop.
  \end{align*}
  (This in particular ensures that $\wh{N_\cT}(P,\sigma)=0$ for $\sigma<0$.) Since $\la(x,x')\ra^{-1}\in\CI(\cT^2_\bop)$ is a total boundary defining function, Corollary~\ref{CorLNPhg} implies that the function
  \[
    \hat K_{0,\tbop} \colon (\sigma_\tbop,x,x') \mapsto \hat K_0\Bigl(\frac{\sigma_\tbop}{\la(x,x')\ra},x,x'\Bigr)
  \]
  is an element of $\cA_\phg^{(\cE_\lb,\cE_\rb-1,\cE_\tface-1,\cE_\zface)}([0,\infty]_{\sigma_\tbop}\times\cT_\bop^2)$; equivalently put,
  \[
    \hat K_{0,\tbop} \in \cA_\phg^{(\cE_\zface,\emptyset)}\bigl([0,\infty]_{\sigma_\tbop};\cA_\phg^{(\cE_\lb,\cE_\tface-1,\cE_\rb-1)}(\cT_\bop^2)\bigr),
  \]
  where the boundary hypersurfaces of $[0,\infty]$ are ordered $\{0\}$, $\{\infty\}$, and those of $\cT^2_\bop$ are ordered in the usual manner (left boundary, front face, right boundary). By Corollary~\ref{CorAFPhg}, 
  \begin{align*}
    K_0(\tau_\tbop,x,x') &= (2\pi)^{-1}\la(x,x')\ra^{-1}\int_0^\infty e^{-i\sigma_\tbop\tau_\tbop}\hat K_{0,\tbop}(\sigma_\tbop,x,x')\,\dd\sigma_\tbop \\
      &\in \cA_\phg^{\cE_\zface+1}\bigl(\ol{\R_{\tau_\tbop}};\cA_\phg^{(\cE_\lb+1,\cE_\tface,\cE_\rb)}(\cT_\bop^2)\bigr).
  \end{align*}
  The proof is complete. (See also Figure~\ref{FigLTStruct}.)
\end{proof}

\section{Fully elliptic 3b-operators and their parametrices}
\label{SE}

In this section, we discuss the notion of full ellipticity for 3b-(pseudo)differential operators; besides the ellipticity of the principal symbol, this involves the invertibility of various normal operators which were introduced in~\S\ref{SG}, resp.\ \S\ref{S3} in the case of differential, resp.\ pseudodifferential 3b-operators. The main theorem of this work is the existence of precise parametrices of fully elliptic 3b-ps.d.o.s in the large 3b-calculus, see Theorem~\ref{ThmEPx}; after some preparations in~\S\ref{SsETD}, the proof of Theorem~\ref{ThmEPx} is completed in~\S\ref{SsEP}. Applications of the parametrix construction are collected in~\S\ref{SsEF}; these are the Fredholm property of fully elliptic 3b-ps.d.o.s, a precise description of their generalized inverses, the polyhomogeneity of elements of the (co)kernel, and a relative index theorem. An alternative proof of the Fredholm property, which only uses the (small) 3b-algebra, is given in~\S\ref{SF}.

\begin{notation}[Densities]
\label{NotDensity}
  We shall denote fixed smooth positive sections of the density bundle corresponding to a Lie algebra $\cV_*$ of vector fields on some manifold with corners by $\nu_*$. Thus, when working on $\cT$, the symbol $\nu_\scop$ denotes a smooth positive scattering density (such as $|\dd x|$ in the coordinates~\eqref{EqGCoordstx}, or a smooth positive multiple thereof); when working on $\cT_\scbtop$, the symbol $\nu_\scbtop$ is, in terms of local coordinates $|\sigma|$, $\rho=|x|^{-1}\geq 0$, $\omega=\frac{x}{|x|}\in\Sph^{-2}$, a smooth (on $\cT_\scbtop$) positive multiple of $(\frac{\rho}{\rho+|\sigma|})^{-(n-1)}|\frac{\dd\rho}{\rho}\dd\omega|$; and so on. We use for the underlying $L^2$-space of Sobolev spaces $H_*$ the density $\nu_*$, unless otherwise specified. For example, we write $\Hb^{s,\alpha}(\cT)=\rho_\cD^\alpha\Hb^s(\cT,\nu_\bop)$.
\end{notation}

\subsection{Full ellipticity}
\label{SsE}

Utilizing Lemma~\ref{Lemma3DEll}, we first record a number of consequences of the ellipticity of the 3b-principal symbol.

\begin{lemma}[$\wh{N_\cT}(P,\sigma)$ at nonzero energies]
\label{LemmaEInv}
  Let $P\in\Psitb^m(M)$ be elliptic. Then for $\sigma\neq 0$, the operator $\wh{N_\cT}(P,\sigma)\colon\Hsc^{s,r}(\cT)\to\Hsc^{s-m,r-m}(\cT)$ is Fredholm of index $0$ for all $s,r\in\R$, with kernel and cokernel ($L^2$-orthogonal complement of the range) contained in $\CIdot(\cT)$. There exists $\sigma_0>0$ so that $\wh{N_\cT}(P,\sigma)$ is invertible for $|\sigma|>\sigma_0$.
\end{lemma}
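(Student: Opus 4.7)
The plan is to deduce all three statements from the scattering-pseudodifferential structure of $\wh{N_\cT}(P,\sigma)$ established earlier, together with standard elliptic theory in the scattering and semiclassical scattering calculi.

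First I would handle the Fredholm property at fixed $\sigma\neq 0$. By Proposition~\ref{Prop3TNormMemRough}\eqref{It3TNormMemRoughFixed}, $\wh{N_\cT}(P,\sigma)\in\Psisc^{m,m}(\cT)$, and by Lemma~\ref{Lemma3DEll} it is elliptic in the scattering calculus. The standard elliptic parametrix construction in $\Psisc(\cT)$ produces $Q(\sigma)\in\Psisc^{-m,-m}(\cT)$ so that $\wh{N_\cT}(P,\sigma)Q(\sigma)-I$ and $Q(\sigma)\wh{N_\cT}(P,\sigma)-I$ lie in $\Psisc^{-\infty,-\infty}(\cT)$. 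Since $\Psisc^{-\infty,-\infty}(\cT)$ maps any weighted scattering Sobolev space into $\CIdot(\cT)$ and thus compactly into any weighted scattering Sobolev space, the map $\wh{N_\cT}(P,\sigma)\colon\Hsc^{s,r}(\cT)\to\Hsc^{s-m,r-m}(\cT)$ is Fredholm. Elliptic regularity (apply $Q(\sigma)$ to the equation $\wh{N_\cT}(P,\sigma)u=0$) shows that the kernel lies in $\bigcap_{s,r}\Hsc^{s,r}(\cT)=\CIdot(\cT)$; the same argument applied to the formal adjoint $\wh{N_\cT}(P,\sigma)^*\in\Psisc^{m,m}(\cT)$ (which is elliptic as well) bounds the cokernel.

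Next I would obtain high-frequency invertibility. By Proposition~\ref{Prop3TNormMemRough}\eqref{It3TNormMemRoughScl}, the rescaled family $\wh{N_{\cT,h}^\pm}(P)=\wh{N_\cT}(P,\pm h^{-1})$ is an element of $\Psisch^{m,m,m}(\cT)$, and its semiclassical scattering principal symbol is elliptic by Lemma~\ref{Lemma3DEll}. Lemma~\ref{LemmaAschInv} then produces $h_0>0$ so that $\wh{N_{\cT,h}^\pm}(P)$ is invertible as a map $\Hsc^{s,r}(\cT)\to\Hsc^{s-m,r-m}(\cT)$ for all $0<h<h_0$; taking $\sigma_0:=h_0^{-1}$ gives invertibility for $|\sigma|>\sigma_0$.

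Finally I would prove that the index is zero for every $\sigma\neq 0$. The family $(\R\setminus\{0\})\ni\sigma\mapsto\wh{N_\cT}(P,\sigma)\in\cL(\Hsc^{s,r}(\cT),\Hsc^{s-m,r-m}(\cT))$ is continuous (by the smoothness in $\sigma\neq 0$ from Proposition~\ref{Prop3TNormMemRough}\eqref{It3TNormMemRoughFixed}) and Fredholm throughout, so its index is locally constant on each connected component of $\R\setminus\{0\}$. Each of the two components $\{\sigma>0\}$ and $\{\sigma<0\}$ contains values with $|\sigma|>\sigma_0$, where the operator is invertible and hence has index zero; therefore the index vanishes on all of $\R\setminus\{0\}$.

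The only mildly nontrivial point is the constancy-of-index argument across the full ray $\sigma\in(0,\infty)$ (and similarly $(-\infty,0)$); this is straightforward given continuity in $\sigma$ and the uniform functional-analytic framework of the fixed scattering Sobolev spaces, so I do not anticipate serious difficulty. All other ingredients are direct citations of results already established in Sections~\ref{SA} and~\ref{S3}.
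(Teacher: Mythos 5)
Your proof is correct and follows essentially the same route as the paper's: symbolic ellipticity in the scattering calculus gives the Fredholm property and the regularity of kernel and cokernel, Lemma~\ref{LemmaAschInv} applied to the semiclassical rescaling gives invertibility for large $|\sigma|$, and local constancy of the Fredholm index along each ray $\pm(0,\infty)$ forces the index to vanish everywhere. The paper's proof is just a terser version of the same argument.
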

\begin{proof}
  For fixed nonzero $\sigma$, the Fredholm property of $\wh{N_\cT}(P,\sigma)$ follows from its symbolic ellipticity in the scattering calculus; that it has index $0$ is then a consequence of the fact that the Fredholm index is independent of $\sigma$, and equal to $0$ for large $|\sigma|$ since $\wh{N_{\cT,h}^\pm}(P)$ is invertible for sufficiently small $h>0$ by Lemma~\ref{LemmaAschInv}.
\end{proof}

Next, the normal operators $N_{\pa\cT}(P)$ and $N_{\pa\cD}(P)$ are elliptic b-operators. By Proposition~\ref{Prop3DT2}, their boundary spectra (see Definition~\ref{DefAbSpecb}) are related via
\[
  \Specb(N_{\pa\cD}(P)) = \bigl\{(-z,k) \colon (z,k)\in\Specb(N_{\pa\cT}(P)) \bigr\}.
\]
The sign switch arises from the fact that the isomorphism $\phi\circ\psi$ in Proposition~\ref{Prop3DT2} is homogeneous of degree $-1$. Elliptic b-theory then implies that for $\beta\in\R$ so that $\beta\notin\Re\Specb(N_{\pa\cT}(P))$, the operators
\begin{align}
\label{EqEOp0}
  \wh{N_\cT}(P,0) &\colon \Hb^{s,\beta}(\cT) \to \Hb^{s-m,\beta}(\cT), \\
\label{EqEOpDtf}
\begin{split}
  N_{\cT,\tface}^\pm(P) &\colon H_{\scop,\bop}^{s,r,-\beta}(\ol{{}^+N}\pa\cT) \to H_{\scop,\bop}^{s-m,r-m,-\beta}(\ol{{}^+N}\pa\cT), \\
  N_{\cD,\tface}^\pm(P) &\colon H_{\bop,\scop}^{s,-\beta,r}(\ol{{}^+N}\pa\cD) \to H_{\bop,\scop}^{s-m,-\beta,r-m}(\ol{{}^+N}\pa\cD)
\end{split}
\end{align}
are Fredholm for any $s,r\in\R$ (with index, or invertibility if it holds, independent of $s,r$). In the case that the operators~\eqref{EqEOpDtf} are invertible (in view of Proposition~\ref{Prop3DT}, the invertibility of one is equivalent to the invertibility of the other), Theorem~\ref{ThmAebDInv} implies that
\begin{equation}
\label{EqEOpDNorm}
  \wh{N_\cD}(P,\lambda) \colon \Hb^{s,-\beta}(\cD) \to \Hb^{s-m,-\beta}(\cD),\qquad \lambda\in\C,
\end{equation}
is an analytic family of Fredholm operators of index $0$ which is invertible outside a discrete set, and the boundary spectrum $\Specb(N_\cD(P))$ is then well-defined via equation~\eqref{EqAebDInvSpecb}. (Recall from Remark~\ref{RmkAebInterval} that the invertibility of~\eqref{EqEOpDNorm} holds for an open and connected interval of values of $\beta$ which, by Theorem~\ref{ThmAebDInv}, is non-empty if $N_{\cD,\tface}^\pm(P)$ is invertible.)

With this context, we can now introduce:

\begin{definition}[Full ellipticity]
\label{DefE}
  Let $P\in\Psitb^m(M)$ have elliptic principal symbol. Let $\alpha_\cD,\alpha_\cT\in\R$. We say that $P$ is \emph{fully elliptic with weights $\alpha_\cD,\alpha_\cT$} if the following conditions are satisfied for $\beta:=\alpha_\cD-\alpha_\cT$:
  \begin{enumerate}
  \item\label{ItESpec0} $\beta\notin\Re\Specb(N_{\pa\cT}(P))$---equivalently, $-\beta=\alpha_\cT-\alpha_\cD\notin\Re\Specb(N_{\pa\cD}(P))$;
  \item\label{ItENtf} one of the operators in~\eqref{EqEOpDtf} is invertible (and thus both are);
  \item\label{ItESpecD} $\alpha_\cD\notin\Re\Specb(N_\cD(P))$;
  \item\label{ItEZero} $\wh{N_\cT}(P,0)\colon\Hb^{s,\beta}(\cT)\to\Hb^{s-m,\beta}(\cT)$ is invertible for some $s\in\R$;
  \item\label{ItENonzero} for all $\sigma\neq 0$, the operator $\wh{N_\cT}(P,\sigma)\colon\Hsc^{s,r}(\cT)\to\Hsc^{s-m,r-m}(\cT)$ is injective for some $s,r\in\R$.
  \end{enumerate}
  Furthermore, we denote by $(\beta_\cT^-,\beta_\cT^+)$ the largest interval of values of $\beta$ for which condition~\eqref{ItESpec0} is satisfied. We write $\beta_\cT^\pm(P)=\beta_\cT^\pm$ when we need to make the dependence of these quantities on $P$ explicit.
\end{definition}

Conditions~\eqref{ItENtf} and \eqref{ItEZero} are independent of $\beta$ in the interval $(\beta_\cT^-,\beta_\cT^+)$; moreover, invertibility for some $s\in\R$ implies invertibility for all $s$ by ellipticity. Next, by Lemma~\ref{LemmaEInv}, condition~\eqref{ItENonzero} is equivalent to the invertibility of $\wh{N_\cT}(P,\sigma)$ for all $\sigma\neq 0$ and $s,r\in\R$. Moreover, conditions~\eqref{ItENtf} and \eqref{ItEZero} together imply the invertibility of $\wh{N_\cT}(P,\sigma)$ in~\eqref{ItENonzero} for small $|\sigma|$ by Theorem~\ref{ThmAscbtEll}; in view of the invertibility for large $|\sigma|$ proved in Lemma~\ref{LemmaEInv}, the purpose of condition~\eqref{ItENonzero} is thus to exclude the possibility that $\wh{N_\cT}(P,\sigma)$ has non-trivial nullspace for the remaining set of bounded nonzero $\sigma$ which are not covered by the automatic high and low energy invertibility results.

The main result of this paper in the elliptic setting concerns the construction of very precise parametrices of fully elliptic 3b-operators in the large 3b-calculus:

\begin{thm}[Parametrices of fully elliptic 3b-operators with smooth coefficients]
\label{ThmEPx}
  Let $P\in\Psitb^m(M)$ be a 3b-pseudodifferential operator which is fully elliptic with weights $\alpha_\cD,\alpha_\cT$. Then there exist a right parametrix $Q\in\Psitb^{-m}(M)+\Psitb^{-\infty,\cE}(M)$ and a left parametrix $Q'\in\Psitb^{-m}(M)+\Psitb^{-\infty,\cE'}(M)$ with
  \begin{equation}
  \label{EqEPx}
  \begin{alignedat}{2}
    P Q &= I-R, &\qquad 
      R&\in\Psi^{-\infty,(\emptyset,\emptyset,\cE_{\rb_\cD},\cE_{\rb_\cT}-1)}(M), \\
    Q'P &= I-R', &\qquad
      R'&\in\Psi^{-\infty,(\cE'_{\lb_\cD},\cE'_{\lb_\cT},\emptyset,\emptyset)}(M),
  \end{alignedat}
  \end{equation}
  where the index sets comprising $\cE$ obey the lower bounds
  \begin{equation}
  \label{EqEPxInd}
  \begin{alignedat}{4}
    &\Re(\cE_{\ff_\cD}\setminus\{(0,0)\}) &&\geq\eps, &\qquad
    &\Re(\cE_{\ff_\cT}\setminus\{(0,0)\}) &&\geq 1, \\
    &\Re\cE_\lface &&\geq -\beta_\cT^-, &\qquad
    &\Re\cE_\rface &&\geq 1+\beta_\cT^+, \\
    &\Re\cE_{\lb_\cD} &&> \alpha_\cD, &\qquad
    &\Re\cE_{\rb_\cD} &&> -\alpha_\cD, \\
    &\Re\cE_{\lb_\cT} &&\geq \alpha_\cD-\beta_\cT^-, &\qquad
    &\Re\cE_{\rb_\cT} &&\geq -\alpha_\cD+\beta_\cT^++1-\eps, \\
    &\Re\cE_\iface &&\geq 1+\eps,
  \end{alignedat}
  \end{equation}
  for some $\eps>0$ which satisfies $\eps<\half\beta_\cT^\Delta:=\half\min(\beta_\cT^+-\beta_\cT^-,1)$ and $\eps<b:=\min(\beta_\cT^+-\beta,\beta-\beta_\cT^-)$. The index sets comprising $\cE'$ obey the same lower bounds, except $\Re\cE'_{\lb_\cT}\geq\alpha_\cD-\beta_\cT^--\eps$ and $\Re\cE'_{\rb_\cT}\geq-\alpha_\cD+\beta_\cT^++1$.
\end{thm}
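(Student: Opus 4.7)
The plan is to follow the standard iterative microlocal parametrix procedure, exploiting each piece of the full ellipticity assumption in turn to improve the error at successive boundary hypersurfaces of $M^2_\tbop$. First, the ellipticity of $\sigmatb^m(P)$ produces a symbolic parametrix $Q_0\in\Psitb^{-m}(M)$ with $R_0:=I-P Q_0\in\Psitb^{-\infty}(M)$. To improve at $\ff_\cD$, I use Theorem~\ref{ThmAebDInv}: conditions~\eqref{ItENtf} and~\eqref{ItESpecD} of Definition~\ref{DefE} give that $N_\cD(P)$ admits an inverse in the large edge-b-calculus with index sets $\cE_\cD^{\pm,(0)}$ at $\lb_\eop,\rb_\eop$ and $\cE_\cR^{\pm,(0)}$ at $\lb_\bop,\rb_\bop$, computed from $\Specb(N_\cD(P))$ and $\Specb(N_{\pa\cD}(P))$. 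Using the identification $\ff_{\cD,\flat}\cong\ff_{\bop,\sharp}$ of Proposition~\ref{Prop3DRel}, this inverse lifts to a kernel on $\ff_\cD\subset M^2_\tbop$; extending off $\ff_\cD$ and applying the composition property of Proposition~\ref{PropLComp}, I obtain $Q_1$ with $R_1:=I-P(Q_0+Q_1)$ vanishing at $\ff_\cD$ to leading order (and with index sets at $\lface,\rface,\lb_\cD,\rb_\cD$ dictated by the edge-b-parametrix).

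Next, I improve at $\ff_\cT$ via the spectral family. The restriction $\wh{R_1}(\sigma)$ of the Fourier transform of $N_\cT(R_1)$ is, after the reductions of Propositions~\ref{PropGTscbt} and \ref{PropGTHigh}, a $\scbtop$-residual family on $\pm[0,\sigma_0)$ which, on the high-energy end, is a semiclassical scattering residual family. Full ellipticity (conditions~\eqref{ItEZero}, \eqref{ItENonzero} and the already-used~\eqref{ItENtf}) together with Theorem~\ref{ThmAscbtEll} and Lemma~\ref{LemmaAschInv} yield that $\wh{N_\cT}(P,\sigma)^{-1}$ exists uniformly in $\sigma$ and lies in the expected large $\scbtop$-calculus. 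Proposition~\ref{PropLNLo} then produces an operator $Q_2^\low\in\Psitb^{-\infty,\cdot}(M)$ whose $\cT$-spectral family is the localization of $\wh{N_\cT}(P,\sigma)^{-1}\wh{R_1}(\sigma)$ to $|\sigma|<\sigma_0$, while Lemma~\ref{LemmaLNTriv} provides $Q_2^\high\in\Psitb^{-\infty}(M)$ matching the rapidly decaying high-frequency piece. Setting $Q_2=Q_2^\low+Q_2^\high$ and using the multiplicativity of $\wh{N_\cT}(-,\sigma)$ (Proposition~\ref{Prop3TNorm}), the new error $R_2=I-P(Q_0+Q_1+Q_2)$ vanishes to the claimed order at both $\ff_\cD$ and $\ff_\cT$.

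It remains to solve away the errors at the side faces $\lface,\rface,\lb_\cD,\lb_\cT,\rb_\cD,\rb_\cT,\iface$. For the faces on the left ($\lface,\lb_\cD,\lb_\cT$), I follow the robust iterative scheme sketched in Remark~\ref{RmkAbPxlb}: reapply the $\ff_\cD$- and $\ff_\cT$-inversion steps above at successively shifted weights $\alpha_\cD+j,\,j\in\N$ (mirrored on the $\cT$-side by increments informed by $\beta_\cT^\pm$), and asymptotically sum the corrections only at the left faces. The resulting error $R$ then vanishes to infinite order at all left faces and at $\iface$, with residual behavior only at $\rb_\cD,\rb_\cT$; a Neumann series within the large 3b-calculus (whose absolute convergence is guaranteed by the smoothing nature of the remainder and Proposition~\ref{PropLComp}) absorbs this into the right parametrix, yielding $Q$ with $PQ=I-R$ of the form~\eqref{EqEPx}. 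The left parametrix $Q'$ is constructed as the adjoint of a right parametrix for $P^*$, which is fully elliptic with weights $-\alpha_\cD,-\alpha_\cT$ relative to the dual density. The main obstacle throughout is the bookkeeping of index sets under Proposition~\ref{PropLComp}, especially at the corner $\cD\cap\cT$ where the interplay of $N_{\pa\cD}(P)$ and $N_{\pa\cT}(P)$ (linked via Proposition~\ref{PropGRel2}) and the interval $(\beta_\cT^-,\beta_\cT^+)$ control the sharp weights $\alpha_\cD-\beta_\cT^-$ and $-\alpha_\cD+\beta_\cT^++1$ appearing in~\eqref{EqEPxInd}; the iterated extended union operations $\extcup$ arising from repeated composition must be shown to be dominated by the stated lower bounds, which requires tracking the asymptotic summation carefully.
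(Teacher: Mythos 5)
Your overall architecture is the same as the paper's: symbolic parametrix, inversion of the two normal operators to kill the errors at $\ff_\cD$ and $\ff_\cT$, removal of the left-boundary errors by re-applying the parametrix at shifted $\cD$-weights in the spirit of Remark~\ref{RmkAbPxlb}, an asymptotic Neumann series for the remaining error, and the left parametrix via the adjoint. Two points in your write-up, however, would fail as stated and need repair.

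First, your weight-shifting scheme uses increments $\alpha_\cD+j$, $j\in\N$. The rough parametrix $Q_1(\alpha'_\cD)$ only exists for $\alpha'_\cD\notin\Re\Specb(N_\cD(P))$, and nothing prevents integer translates of $\alpha_\cD$ from landing in this set; the paper instead uses increments $j\eps$ with $\eps\in(0,\tfrac12\beta_\cT^\Delta]$ chosen generically so that $\alpha_\cD+j\eps\notin\Re\Specb(N_\cD(P))$ for all $j$ (Lemma~\ref{LemmaEPImpr}). The smallness of $\eps$ also matters for the induction that controls the index sets of the iterated errors: the per-step gain at $\ff_\cD$, $\lface$, $\iface$ is capped by quantities like $\beta_\cT^\Delta$ independently of the weight shift, so the bookkeeping must be run with a uniform small increment, and one must cut off away from $\rb_\cD\cup\rb_\cT$ before asymptotically summing, since the index sets there do not improve with $j$ and their naive union need not be an index set. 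Second, your final Neumann series does not converge absolutely: the remainder is smoothing in the differential sense but is not small in any operator norm, so $\sum_j R^j$ must be an \emph{asymptotic} sum, and one has to verify (as in Lemma~\ref{LemmaEPNeumann}) that the real parts of the index sets of $R^j$ at $\ff_\cD,\ff_\cT,\lface,\rface,\iface$ tend to $+\infty$ while the unions of the index sets at $\rb_\cD,\rb_\cT$ over all $j$ remain index sets. Finally, a smaller organizational remark: you invert $N_\cD(P)$ and $N_\cT(P)$ sequentially, whereas the paper glues the two inverses into a single correction using the fact (Lemma~\ref{LemmaEDTeq}) that their Schwartz kernels agree on $\ff_\cD\cap\ff_\cT$; your sequential route can be made to work, but it forces an extra round of index-set tracking at the corner via Proposition~\ref{PropLComp} that the simultaneous construction avoids.
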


The index sets of the parametrices in Theorem~\ref{ThmEPx} are defined in the course of the proof, see~\eqref{EqPEQRight}--\eqref{EqPEQRightInd} for the case of the right parametrix and the subsequent discussion for the case of the left parametrix. The lower bounds~\eqref{EqEPxInd} can likely be sharpened at $\lb_\cD$, resp.\ $\rb_\cD$ to $a_\cD^+(\alpha_\cD)$, resp.\ $-a_\cD^-(\alpha_\cD)$, and at $\lb_\cT$, resp.\ $\rb_\cT$ to $a_\cD^+(\alpha_\cD)-\beta_\cT^-$, resp.\ $-a_\cD^-(\alpha_\cD)+\beta_\cT^++1$ in the notation of Definition~\ref{DefEInd} below, via more careful accounting of index sets in~\S\ref{SsEP}; we shall not pursue this here.

\begin{rmk}[More general 3b-operators]
\label{RmkEGeneral}
  Theorem~\ref{ThmEPx} remains true with purely notational changes for fully elliptic operators acting between sections of vector bundles over $M$. With modifications to the index sets, it also remains true when the coefficients of $P$ are polyhomogeneous down to $\cT$ and $\cD$ (i.e.\ the Schwartz kernel of $P$ is polyhomogeneous conormal down to $\ff_\cD$ and $\ff_\cT$) with the index sets minus $(0,0)$ having positive real parts. When the subleading terms (in the sense of decay) of $P$ at $\cD$ and $\cT$ are merely conormal, then Schwartz kernels of parametrices or generalized inverses have only conormal lower order terms themselves; see e.g.\ \cite{LauterPsdoConfComp} and \cite[\S3.2]{Hintz0Px} for such results in the uniformly degenerate (0-)setting. We leave the detailed statements and proofs to the interested reader.
\end{rmk}

The starting point of the proof of Theorem~\ref{ThmEPx} is to take $Q_0\in\Psitb^{-m}(M)$ to be a symbolic parametrix of $P$, so
\begin{equation}
\label{EqEQ0}
  P Q_0 = I - R_0,\qquad R_0\in\Psitb^{-\infty}(M).
\end{equation}
Improving the error term $R_0$ requires the inversion of the $\cD$- and $\cT$-normal operators (see~\S\ref{SsETD}). The conclusion of the parametrix construction, and thereby the proof of Theorem~\ref{ThmEPx}, is given in~\S\ref{SsEP}.

For later use, we record three results regarding the choice of weights for which full ellipticity holds.

\begin{lemma}[Weights for full ellipticity]
\label{LemmaEWeights}
  Suppose $P\in\Psitb^m(M)$ is fully elliptic with weights $\alpha_\cD$, $\alpha_\cT$. Let $\alpha_\cD'\in\R\setminus\Re\Specb(N_\cD(P))$. Then $P$ is fully elliptic with weights $\alpha_\cD'$, $\alpha_\cT':=\alpha_\cT+(\alpha_\cD'-\alpha_\cD)$.
\end{lemma}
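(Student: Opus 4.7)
The plan is to verify the five conditions of Definition~\usref{DefE} for the new pair of weights $(\alpha_\cD', \alpha_\cT')$ in turn. The key observation driving the argument is that the choice $\alpha_\cT' = \alpha_\cT + (\alpha_\cD' - \alpha_\cD)$ ensures that the weight difference $\beta' := \alpha_\cD' - \alpha_\cT'$ equals $\beta := \alpha_\cD - \alpha_\cT$. Consequently, any condition in Definition~\usref{DefE} that depends on the weights only through $\beta$ will transfer from the hypothesis on $(\alpha_\cD,\alpha_\cT)$ directly to $(\alpha_\cD',\alpha_\cT')$ with no work.

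Inspecting the conditions, I would observe: conditions~\usref{ItESpec0} and~\usref{ItENtf} are phrased entirely in terms of $\beta$ (either through the requirement $\beta \notin \Re\Specb(N_{\pa\cT}(P))$ or through the weights $\mp\beta$ on the $\tface$-normal operator spaces in~\eqref{EqEOpDtf}); since $\beta' = \beta$, they carry over immediately. Likewise, condition~\usref{ItEZero}, asserting invertibility of $\wh{N_\cT}(P,0)$ on $\Hb^{s,\beta}(\cT)$, involves only $\beta$ and so is preserved. For condition~\usref{ItENonzero}, I would invoke Lemma~\usref{LemmaEInv}: the operators $\wh{N_\cT}(P,\sigma)$ for $\sigma\neq 0$ are Fredholm of index $0$ between \emph{unweighted} scattering Sobolev spaces with kernel and cokernel contained in $\CIdot(\cT)$, so the injectivity statement does not reference $\alpha_\cD$ or $\alpha_\cT$ at all and transfers tautologically.

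It remains to verify condition~\usref{ItESpecD} for the new weights, which reads $\alpha_\cD' \notin \Re\Specb(N_\cD(P))$; this is precisely the hypothesis imposed on $\alpha_\cD'$. Here I would add the small check that the set $\Specb(N_\cD(P))$ is itself well-defined and independent of the specific choice of $\beta \in (\beta_\cT^-, \beta_\cT^+)$: conditions~\usref{ItESpec0} and~\usref{ItENtf} hold in this entire interval (which is non-empty by the full ellipticity of $P$ at $(\alpha_\cD,\alpha_\cT)$), so Theorem~\usref{ThmAebDInv} applies and produces the meromorphic family $\wh{N_\cD}(P,\lambda)^{-1}$, whose poles define $\Specb(N_\cD(P))$ independently of $\beta$ by Remark~\usref{RmkAebInterval}. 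No step in this argument presents a genuine obstacle: the lemma is essentially a bookkeeping statement expressing that only one of the five conditions---namely~\usref{ItESpecD}---is sensitive to the individual value of $\alpha_\cD$ (as opposed to the difference $\beta$), with the remaining four depending only on $\beta$ or on no weight at all.
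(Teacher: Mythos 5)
Your proposal is correct and follows the same route as the paper: the new weight difference $\beta'=\alpha_\cD'-\alpha_\cT'$ equals $\beta$, so conditions~\eqref{ItESpec0}, \eqref{ItENtf}, and \eqref{ItEZero} transfer unchanged, condition~\eqref{ItENonzero} is weight-independent, and condition~\eqref{ItESpecD} is exactly the hypothesis on $\alpha_\cD'$. The added remark that $\Specb(N_\cD(P))$ is independent of $\beta$ in $(\beta_\cT^-,\beta_\cT^+)$ is a harmless elaboration of what the paper already records in Remark~\ref{RmkAebInterval}.
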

\begin{proof}
  The assumption on $\alpha'_\cD$ ensures part~\eqref{ItESpecD} of Definition~\ref{DefE}; parts~\eqref{ItESpec0}, \eqref{ItENtf}, and \eqref{ItEZero} only depend on the difference $\alpha'_\cD-\alpha'_\cT=\alpha_\cD-\alpha_\cT=\beta$ and thus remain valid; and part~\eqref{ItENonzero} does not depend on the weights at all.
\end{proof}

\begin{lemma}[Full ellipticity and conjugation]
\label{LemmaEConj}
  Let $\rho_\cD,\rho_\cT\in\CI(M)$ denote defining functions of $\cD,\cT$. Let $P\in\Psitb^m(M)$ and $\alpha_\cD,\alpha_\cT\in\R$, and let $\gamma_\cD,\gamma_\cT\in\R$. Then $P$ is fully elliptic with weights $\alpha_\cD$, $\alpha_\cT$ if and only if $P_{\gamma_\cD,\gamma_\cT}:=\rho_\cD^{-\gamma_\cD}\rho_\cT^{-\gamma_\cT}P\rho_\cT^{\gamma_\cT}\rho_\cD^{\gamma_\cD}\in\Psitb^m(M)$ is fully elliptic with weights $\alpha_\cD-\gamma_\cD$, $\alpha_\cT-\gamma_\cT$.
\end{lemma}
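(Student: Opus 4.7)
The plan is to verify that each of the five conditions in Definition~\ref{DefE}, together with symbolic ellipticity, transforms consistently under the substitution $(P,\alpha_\cD,\alpha_\cT)\mapsto(P_{\gamma_\cD,\gamma_\cT},\alpha_\cD-\gamma_\cD,\alpha_\cT-\gamma_\cT)$. Since $(P_{\gamma_\cD,\gamma_\cT})_{-\gamma_\cD,-\gamma_\cT}=P$, only one implication needs to be checked. That $P_{\gamma_\cD,\gamma_\cT}\in\Psitb^m(M)$ with $\sigmatb^m(P_{\gamma_\cD,\gamma_\cT})=\sigmatb^m(P)$ is the standard fact that conjugation by a positive conormal function preserves the 3b-calculus and the principal symbol.

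The transformation of the normal operators is cleanest at the Schwartz kernel level. The kernel of $P_{\gamma_\cD,\gamma_\cT}$ on $M^2_{\tbop,\flat}$ is $K_P\cdot(\rho_{\cT,L}/\rho_{\cT,R})^{-\gamma_\cT}(\rho_{\cD,L}/\rho_{\cD,R})^{-\gamma_\cD}$, with $L,R$ denoting left/right lifts. Choosing $\rho_\cT,\rho_\cD$ so that $\rho_0:=\rho_\cT\rho_\cD$ is a total boundary defining function, in the coordinates of Lemma~\ref{Lemma3TStruct} one computes $\rho_{0,L}/\rho_{0,R}|_{\ff_{\cT,\flat}}=1$ and $\rho_{\cD,L}/\rho_{\cD,R}|_{\ff_{\cT,\flat}}=\tilde\rho_{\cD,L}/\tilde\rho_{\cD,R}$, where $\tilde\rho_\cD$ denotes the translation-invariant extension of $\rho_\cD|_\cT$ to $N_\tbop\cT$; setting $\mu:=\gamma_\cD-\gamma_\cT$, the kernel factor on $\ff_{\cT,\flat}$ reduces to $(\tilde\rho_{\cD,L}/\tilde\rho_{\cD,R})^{-\mu}$, so that
\[
  N_\cT(P_{\gamma_\cD,\gamma_\cT})=\tilde\rho_\cD^{-\mu}\,N_\cT(P)\,\tilde\rho_\cD^\mu.
\]
Similarly, restricting the kernel factor to $\ff_{\cD,\flat}$ and Mellin-transforming in the fiber coordinate yields
\[
  \wh{N_\cD}(P_{\gamma_\cD,\gamma_\cT},\lambda)=\rho_\cR^\mu\,\wh{N_\cD}(P,\lambda-i\gamma_\cD)\,\rho_\cR^{-\mu},
\]
the Mellin shift $\lambda\to\lambda-i\gamma_\cD$ being a direct consequence of definition~\eqref{EqGDNormMT}.

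With $\beta:=\alpha_\cD-\alpha_\cT$ and $\beta':=\beta-\mu$, each of the five conditions is then verified by a weight-shift argument: Mellin-transforming the two conjugation identities yields $\Specb(N_{\pa\cT}(P_{\gamma_\cD,\gamma_\cT}))=\Specb(N_{\pa\cT}(P))-\mu$ and $\Specb(N_\cD(P_{\gamma_\cD,\gamma_\cT}))=\Specb(N_\cD(P))-\gamma_\cD$, so conditions~\eqref{ItESpec0} and~\eqref{ItESpecD} are preserved under the shifts $\beta\to\beta-\mu$, $\alpha_\cD\to\alpha_\cD-\gamma_\cD$; the b-conjugation identity makes invertibility of $\wh{N_\cT}(P_{\gamma_\cD,\gamma_\cT},0)$ on $\Hb^{s,\beta'}(\cT)$ equivalent (via $\tilde\rho_\cD^\mu\Hb^{s,\beta'}=\Hb^{s,\beta}$) to that of $\wh{N_\cT}(P,0)$ on $\Hb^{s,\beta}(\cT)$, giving~\eqref{ItEZero}; the parallel $\tface$ argument, combined with Proposition~\ref{Prop3DT} to pass between the $\cT$- and $\cD$-side $\tface$-normal operators, handles~\eqref{ItENtf}; and~\eqref{ItENonzero} is immediate since multiplication by $\tilde\rho_\cD^{\pm\mu}$ is an isomorphism on Schwartz-type spaces. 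The main technical obstacle is carrying out the kernel restriction identities at $\ff_{\cT,\flat}$ and $\ff_{\cD,\flat}$ cleanly---particularly the interplay of the various defining functions near the corner---but each such identity reduces to direct computation in coordinates along the lines of the proof of Lemma~\ref{Lemma3Tiny}.
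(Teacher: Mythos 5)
Your proposal is correct and follows essentially the same route as the paper: both reduce to the case where $\rho_\cD\rho_\cT$ is the total defining function used to define the normal operators, observe that the ratio of left and right lifts of $\rho_0$ is $1$ on the $\cT$-front face (so $N_\cT$ sees only $\mu=\gamma_\cD-\gamma_\cT$), derive the same conjugation identities $\wh{N_\cT}(P_{\gamma_\cD,\gamma_\cT},\sigma)=\rho_\cD^{-\mu}\wh{N_\cT}(P,\sigma)\rho_\cD^{\mu}$ and $\wh{N_\cD}(P_{\gamma_\cD,\gamma_\cT},\lambda)=\rho_\cR^{\mu}\wh{N_\cD}(P,\lambda-i\gamma_\cD)\rho_\cR^{-\mu}$, and then check the five conditions by the same weight-shift arguments. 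The only difference is presentational: you work at the level of Schwartz kernels restricted to $\ff_{\cT,\flat}$ and $\ff_{\cD,\flat}$, whereas the paper uses the algebraic decompositions $P_{\gamma_\cD,\gamma_\cT}=\rho_\cD^{-\mu}\rho_0^{-\gamma_\cT}P\rho_0^{\gamma_\cT}\rho_\cD^{\mu}$ and its $\cD$-counterpart together with the testing definitions of the normal operators.
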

\begin{proof}
  Fix a total defining function $\rho_0\in\CI(M_0)$ to define spectral families and Mellin-transformed normal operators. Since conjugation by a positive smooth function on $M$ preserves full ellipticity (for the same weights), we may assume that $\rho_0=\rho_\cD\rho_\cT$. Set $\beta=\alpha_\cD-\alpha_\cT$, and note that $\beta-(\gamma_\cD-\gamma_\cT)=(\alpha_\cD-\gamma_\cD)-(\alpha_\cT-\gamma_\cT)$. The principal symbols of $P$ and
  \[
    P_{\gamma_\cD,\gamma_\cT} = \rho_\cD^{-(\gamma_\cD-\gamma_\cT)}\rho_0^{-\gamma_\cT} P \rho_0^{\gamma_\cT}\rho_\cD^{\gamma_\cD-\gamma_\cT}
  \]
  are equal. The $\cT$-normal operator of $P_{\gamma_\cD,\gamma_\cT}$ depends on $(\gamma_\cD,\gamma_\cT)$ only through $\gamma_\cD-\gamma_\cT$; indeed, writing $T$, resp.\ $T'$ for the lift of $\rho_\cT$ to the left, resp.\ right factor of $M^2_\tbop$, the function $(T'/T)^{\gamma_\cT}$ is equal to the constant function $1$ on $\ff_\cT$. Thus,
  \begin{equation}
  \label{EqEConj}
    \wh{N_\cT}(P_{\gamma_\cD,\gamma_\cT},\sigma)=\rho_\cD^{-(\gamma_\cD-\gamma_\cT)}\wh{N_\cT}(P,\sigma)\rho_\cD^{\gamma_\cD-\gamma_\cT}.
  \end{equation}
  This gives
  \[
    \Specb(N_{\pa\cT}(P_{\gamma_\cD,\gamma_\cT}))=\bigl\{(z-(\gamma_\cD-\gamma_\cT),k)\colon(z,k)\in\Specb(N_{\pa\cT}(P))\bigr\};
  \]
  thus, $\beta-(\gamma_\cD-\gamma_\cT)\notin\Re\Specb(N_{\pa\cT}(P_{\gamma_\cD,\gamma_\cT}))$ if and only if $\beta\notin\Re\Specb(N_{\pa\cT}(P))$. (This takes care of Definition~\ref{DefE}\eqref{ItESpec0}.) Moreover, the invertibility of
  \begin{align*}
    N_{\cT,\tface}^\pm(P_{\gamma_\cD,\gamma_\cT}) = \hat\rho_\cD^{-(\gamma_\cD-\gamma_\cT)}N_{\cT,\tface}^\pm(P)\hat\rho_\cD^{\gamma_\cD-\gamma_\cT} &\colon H_{\scop,\bop}^{s,r-(\gamma_\cD-\gamma_\cT),-\beta+\gamma_\cD-\gamma_\cT}(\ol{{}^+N}\pa\cT) \\
      &\quad \to H_{\scop,\bop}^{s-m,r+m-(\gamma_\cD-\gamma_\cT),-\beta+\gamma_\cD-\gamma_\cT}(\ol{{}^+N}\pa\cT)
  \end{align*}
  (where $\hat\rho_\cD=\rho_\cD/|\sigma|$) is equivalent to that of~\eqref{EqEOpDtf} since multiplication by $\hat\rho_\cD^{\gamma_\cD-\gamma_\cT}$ is an isomorphism $H_{\scop,\bop}^{s,r-(\gamma_\cD-\gamma_\cT),-\beta+\gamma_\cD-\gamma_\cT}(\ol{{}^+N}\pa\cT)\to H_{\scop,\bop}^{s,r,-\beta}(\ol{{}^+N}\pa\cT)$. (This takes care of Definition~\ref{DefE}\eqref{ItENtf}.) Similarly,~\eqref{EqEConj} implies that the invertibility of the zero energy operator $\wh{N_\cT}(P_{\gamma_\cD,\gamma_\cT},0)\colon\Hb^{s,\beta-(\gamma_\cD-\gamma_\cT)}(\cT)\to\Hb^{s-m,\beta-(\gamma_\cD-\gamma_\cT)}(\cT)$ is equivalent to that of $\wh{N_\cT}(P,0)\colon\Hb^{s,\beta}(\cT)\to\Hb^{s-m,\beta}(\cT)$. For nonzero $\sigma$, the invertibility of $\wh{N_\cT}(P_{\gamma_\cD,\gamma_\cT},\sigma)$ on scattering Sobolev spaces is independent of $\gamma_\cD$, $\gamma_\cT$ (cf.\ the independence of Definition~\ref{DefE}\eqref{ItENonzero} on the values of $s,r$). (This takes care of Definition~\ref{DefE}\eqref{ItEZero} and \eqref{ItENonzero}.)

  Finally, upon writing 
  \[
    P_{\gamma_\cD,\gamma_\cT} = \rho_\cT^{-(\gamma_\cT-\gamma_\cD)}\rho_0^{-\gamma_\cD} P \rho_0^{\gamma_\cD}\rho_\cT^{\gamma_\cT-\gamma_\cD},
  \]
  we see that
  \[
    \wh{N_\cD}(P_{\gamma_\cD,\gamma_\cT},\lambda) = \rho_\cR^{-(\gamma_\cT-\gamma_\cD)}\wh{N_\cD}(P,\lambda-i\gamma_\cD)\rho_\cR^{\gamma_\cT-\gamma_\cD}
  \]
  where $\rho_\cR:=\rho_\cT|_\cD\in\CI(\cD)$ is a defining function of $\pa\cD$. Therefore, $\alpha_\cD-\gamma_\cD\neq\Re z$ for all $z$ so that $\wh{N_\cD}(P_{\gamma_\cD,\gamma_\cT},-i z)$ is not invertible if and only if $\alpha_\cD\neq\Re z$ for all $z$ so that $\wh{N_\cD}(P,-i z)$ is not invertible. (This takes care of Definition~\ref{DefE}\eqref{ItESpecD}.) The proof is complete.
\end{proof}

\begin{lemma}[Full ellipticity and adjoints]
\label{LemmaEAdj}
  Use a positive smooth 3b-density $\nu_\tbop$ on $M$ to define formal adjoints. Let $P\in\Psitb^m(M)$ be fully elliptic with weights $\alpha_\cD,\alpha_\cT$. Then $P^*\in\Psitb^m(M)$ is fully elliptic with weights $-\alpha_\cD,-(\alpha_\cT-1)$. Moreover, we have $\beta_\cT^-(P^*)=-\beta_\cT^+(P^*)-1$ and $\beta_\cT^+(P^*)=-\beta_\cT^-(P)-1$. If instead we define formal adjoints with respect to a positive b-density, then $P^*$ is fully elliptic with weights $-\alpha_\cD,-\alpha_\cT$.
\end{lemma}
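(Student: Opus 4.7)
The proof is a careful exercise in adjoint calculus. The starting point is $\sigmatb^m(P^*)=\overline{\sigmatb^m(P)}$, which is elliptic, together with the observation that each normal operator of $P^*$ equals the adjoint (with respect to the induced density) of the corresponding normal operator of $P$. Concretely, since the $\cT$- and $\cD$-normal operators are defined via the restrictions of the Schwartz kernel of $P$ on $M^2_\tbop$ to $\ff_\cT$ and $\ff_\cD$ (Definitions~\ref{Def3TNorm} and~\ref{Def3DNorm}), and since taking formal adjoints corresponds on the kernel level to swapping the two factors and complex conjugating, one obtains $N_\cT(P^*)=N_\cT(P)^\star$ and $N_\cD(P^*)=N_\cD(P)^\star$, which upon passing to the spectral family and to the Mellin-transformed normal operator family read $\wh{N_\cT}(P^*,\sigma)=\wh{N_\cT}(P,\sigma)^\star$ and $\wh{N_\cD}(P^*,\lambda)=\wh{N_\cD}(P,\bar\lambda)^\star$; analogous identities hold for all of $N_{\pa\cT}$, $N_{\pa\cD}$, $N_{\cT,\tface}^\pm$, $N_{\cD,\tface}^\pm$.

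The key calculation is the identification of the induced densities on $\cT$ and $\cD$. In the coordinates~\eqref{EqGCoordstx} near $\cT^\circ$, a positive 3b-density is $\la x\ra^{-n}|\dd t\,\dd x|$ (see~\eqref{EqGOmegatb}), and integration in $\tau=t-t'$ gives $\la x\ra^{-n}|\dd x|$ on $\cT$, which in inverse polar coordinates $\rho_{\pa\cT}=|x|^{-1}$, $\omega$ near $\pa\cT$ equals $\rho_{\pa\cT}$ times a smooth positive b-density $\nu_\bop^\cT$. Similarly, in coordinates $(\rho_0,\rho_\cT,\omega)$ near $\cD\cap\cT$ a 3b-density is $\rho_\cT^{-1}|\frac{\dd\rho_0}{\rho_0}\frac{\dd\rho_\cT}{\rho_\cT}\,\dd\omega|$, so the density on $\cD$ produced by Mellin transform in $\rho_0$ is $\rho_\cR^{-1}\nu_\bop^\cD$. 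The asymmetric shifts---$+1$ at $\cT$, $-1$ at $\cD$---of these induced densities relative to smooth b-densities account for the asymmetric $-1$ shift in the $\cT$-weight (but not the $\cD$-weight) in the statement. For a b-density $\nu_\bop^M=\rho_\cT\nu_\tbop$ on $M$, both shifts disappear and the induced densities are just $\nu_\bop^\cT$ and $\nu_\bop^\cD$.

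The verification of Definition~\ref{DefE} for $P^*$ at weights $-\alpha_\cD,-(\alpha_\cT-1)$ then reduces to the elementary fact that invertibility is preserved by taking adjoints, together with the shift formula $\Re\Specb(A^*)=-\Re\Specb(A)-\gamma$ for the adjoint of an elliptic b-operator $A$ with respect to a weighted b-density $\rho^\gamma\nu_0$, which follows from the identity $\wh N(\rho^{-\gamma}A^\dagger\rho^\gamma,\lambda)=\wh N(A,\bar\lambda+i\gamma)^*$. At $\cT$, the shift $\gamma=1$ gives $\Re\Specb(N_{\pa\cT}(P^*))=-\Re\Specb(N_{\pa\cT}(P))-1$, hence both condition~\eqref{ItESpec0} at $\beta'=-\beta-1=\alpha_\cD'-\alpha_\cT'$ and the advertised formulas $\beta_\cT^\pm(P^*)=-\beta_\cT^\mp(P)-1$. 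At $\cD$ the intrinsic b-spectrum $\Specb(N_\cD)$ is independent of which valid Sobolev weight is used to define it (Remark~\ref{RmkAebInterval}), so $\Re\Specb(N_\cD(P^*))=-\Re\Specb(N_\cD(P))$ with no shift, yielding condition~\eqref{ItESpecD}. Conditions~\eqref{ItENtf}, \eqref{ItEZero}, and \eqref{ItENonzero} follow by taking adjoints on the relevant Sobolev spaces, with Lemma~\ref{LemmaEInv} used to upgrade injectivity to invertibility for~\eqref{ItENonzero}. The b-density case is identical, but with all density shifts set to zero, giving $\alpha_\cT'=-\alpha_\cT$. The only real obstacle is consistent bookkeeping: one must distinguish the Mellin transform in $\rho_0$ (defining $\wh{N_\cD}$) from the Mellin transform in $\rho_\cR$ (defining $\Specb$ of a b-operator on $\cD$), and propagate each density weight correctly through the full chain of normal operator identifications.
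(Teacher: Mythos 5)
Your proposal is correct and follows essentially the same route as the paper's proof: compute the densities induced on $\cT$ and $\cD$ by $\nu_\tbop$ (a $\rho_{\pa\cT}^{+1}$-weighted b-density on $\cT$, an unweighted b-density in the Mellin direction at $\cD$ with a $\rho_\cR^{-1}$ weight at $\pa\cD$), and push these shifts through the adjoint identities for each normal operator to verify Definition~\ref{DefE} at the weights $-\alpha_\cD$, $-(\alpha_\cT-1)$. The only cosmetic difference is at the end: the paper deduces the b-density case from the 3b-density case via $P^{*,\rho_\cT\nu_\tbop}=\rho_\cT^{-1}P^{*,\nu_\tbop}\rho_\cT$ and Lemma~\ref{LemmaEConj} (with $\gamma_\cD=0$, $\gamma_\cT=1$) rather than rerunning the computation with zero shifts.
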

\begin{proof}
  Near $\cT$ and in the coordinates $t,x$ from~\eqref{EqGCoordsTX}, we can write $\nu_\tbop=a\la x\ra^{-n}|\dd t\,\dd x|$ where $0<a\in\CI(M)$. Thus, $\wh{N_\cT}(P^*,0)=\wh{N_\cT}(P,0)^*$, where the adjoint on the right hand side is taken with respect to the volume density $a|_\cT\la x\ra^{-n}|\dd x|$ on $\cT$; this density is equal to $\la x\ra^{-1}\nu_\bop$ where $\nu_\bop=a|_\cT\la x\ra^{-(n-1)}|\dd x|$ is a positive b-density on $\cT$. Making the density with respect to which adjoints are defined explicit, we then note that
  \[
    \wh{N_\cT}(P,0)^{*,\la x\ra^{-1}\nu_\bop} = \la x\ra \wh{N_\cT}(P,0)^{*,\nu_\bop}\la x\ra^{-1} = \rho_\cD^{-1}\wh{N_\cT}(P,0)^{*,\nu_\bop}\rho_\cD,\qquad \rho_\cD:=\la x\ra^{-1}.
  \]
  Now $N_{\pa\cT}(\wh{N_\cT}(P,0)^{*,\nu_\bop},z)=N_{\pa\cT}(\wh{N_\cT}(P,0),\bar z)^{*,\nu_\pa}$, where $0<\nu_\pa\in\CI(\pa\cT;\Omega\pa\cT)$ is defined via $\nu_\bop=|\frac{\dd\rho_\cD}{\rho_\cD}|\nu_\pa$ at $\pa\cT$ in a collar neighborhood of $\pa\cT\subset\cT$. Altogether, we conclude that
  \[
    \Specb(N_{\pa\cT}(P^*)) = \bigl\{ (-\bar z-1,k) \colon (z,k)\in\Specb(N_{\pa\cT}(P)) \bigr\},
  \]
  and $\wh{N_\cT}(P^*,0)\colon\Hb^{-s+m,-\beta-1}(\cT)\to\Hb^{-s,-\beta-1}(\cT)$, with $\beta=\alpha_\cD-\alpha_\cT$, is invertible (assuming Definition~\ref{DefE}\eqref{ItEZero}).

  Similarly then, the invertibility of $\wh{N_\cD}(P,\lambda)\colon\Hb^{s,-\beta}(\cD)\to\Hb^{s-m,-\beta}(\cD)$ implies that of $\wh{N_\cD}(P^*,\bar\lambda)\colon\Hb^{-s+m,\beta+1}(\cD)\to\Hb^{-s,\beta+1}(\cD)$, and
  \[
    \Specb(N_\cD(P^*)) = \bigl\{ (-\bar z,k) \colon (z,k)\in\Specb(N_\cD(P)) \bigr\}.
  \]
  (There is no shift here since $\nu_\tbop$ is, away from $\cT$, an unweighted positive b-density.) We conclude that $P^*$ is fully elliptic with weights $-\alpha_\cD$ and $(-\alpha_\cD)-(-\beta-1)=-(\alpha_\cT-1)$.

  The final claim follows from $P^{*,\rho_\cT\nu_\tbop}=\rho_\cT^{-1}P^{*,\nu_\tbop}\rho_\cT$ and Lemma~\ref{LemmaEConj} with $\gamma_\cD=0$, $\gamma_\cT=1$.
\end{proof}

We use the following notation for index sets arising in the parametrix construction:

\begin{definition}[Index sets]
\label{DefEInd}
  Let $P\in\Psitb^m(M)$ be fully elliptic with weights $\alpha_\cD,\alpha_\cT$. Put $\beta=\alpha_\cD-\alpha_\cT$. In the notation of Definition~\usref{DefAbSpecb}, let
  \[
    \cE_\cT^\pm = \cE^\pm(N_{\pa\cT}(P),\beta)=\cE^\mp(N_{\pa\cD}(P),-\beta),
  \]
  define $\cE_\cT^{\pm,(0)}$ using Definition~\ref{DefAbIndexSets}, and set
  \[
    \cE_\cT^{(0)} := \N_0 \cup \bigl( (\cE_\cT^{+,(0)}+\cE_\cT^{-,(0)})\extcup(\N_0+1) \bigr).
  \]
  Define $\cE_\cT^{\pm,(2)}$ and $\cE_\cT^{(2)}$ as in Definition~\ref{DefAscbtInd2} (with respect to $\cE_\cT^+,\cE_\cT^-$), and put $\cE_\cT^{(2)\prime}:=\cE_\cT^{(2)}\setminus\{(0,0)\}$. For $\alpha\notin\Re\Specb(N_\cD(P))$, define moreover
  \[
    \cE_\cD^\pm(\alpha)=\cE^\pm(N_\cD(P),\alpha),\qquad
    a_\cD^\pm(\alpha) := \pm\min\Re\cE_\cD^\pm(\alpha).
  \]
\end{definition}

Thus, $\min\Re\cE_\cT^\pm=\pm\beta_\cT^\pm$ and $\min\Re\cE_\cD^\pm(\alpha)\geq\pm a_\cD^\pm(\alpha)$. We also recall from Remark~\ref{RmkAscbtEllInd} that $\cE_\cT^{(2)}=\N_0\cup\cE_\cT^{(2)\prime}$ with
\begin{equation}
\label{EqEIndecT2prime}
  \Re\cE_\cT^{(2)\prime}\geq\beta_\cT^\Delta:=\min(\beta_\cT^+-\beta_\cT^-,1)>0;
\end{equation}
and $\Re\cE_\cT^{\pm,(2)}\geq\pm\beta_\cT^\pm$.

\subsection{Inversion of the \texorpdfstring{$\cT$-}{T-} and \texorpdfstring{$\cD$-}{D-}normal operators}
\label{SsETD}

For the inversion of the $\cT$-normal operator, we only need conditions~\eqref{ItESpec0}, \eqref{ItENtf}, \eqref{ItEZero}, and \eqref{ItENonzero} of Definition~\ref{DefE}.

\begin{prop}[$\cT$-normal operator inverse]
\label{PropET}
  Let $P\in\Psitb^m(M)$ be fully elliptic with weights $\alpha_\cD,\alpha_\cT$. Put
  \begin{align*}
    \cE^Q_\cT &:= \bigl(\cE_\cT^{(2)},\N_0,\cE_\cT^{-,(2)},\cE_\cT^{+,(2)}+1,\emptyset,\emptyset,\emptyset,\emptyset,\cE_\cT^{(2)\prime}+1\bigr), \\
    \cE^R_\cT &:= \cE^Q_\cT + (0,1,0,0,0,0,0,0,0).
  \end{align*}
  Then there exists an operator 
  \begin{equation}
  \label{EqETQ}
  \begin{split}
    Q_\cT &\in \Psitb^{-m}(M) + \Psitb^{-\infty,\cE_\cT^Q}(M) \\
      &=\Psitb^{-m}(M) + \Psitb^{-\infty}\bigl(M;\ff_\cD[\cE_\cT^{(2)}], \ff_\cT[\N_0], \lface[\cE_\cT^{-,(2)}], \rface[\cE_\cT^{+,(2)}+1], \\
      & \hspace{11em} \lb_\cD[\emptyset],\rb_\cD[\emptyset],\lb_\cT[\emptyset],\rb_\cT[\emptyset],\iface[\cE_\cT^{(2)\prime}+1]\bigr)
  \end{split}
  \end{equation}
  with $\wh{N_\cT}(Q_\cT,\sigma)=\wh{N_\cT}(P,\sigma)^{-1}$ for all $\sigma\in\R$, and so that
  \begin{equation}
  \label{EqETR}
    R_\cT := I - P Q_\cT \in \Psitb^{-\infty,\cE^R_\cT}(M).
  \end{equation}
  (In particular, the Schwartz kernel of $R_\cT$ vanishes at $\ff_\cT$.)
\end{prop}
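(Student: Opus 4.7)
The plan is to invert the spectral family $\sigma\mapsto\wh{N_\cT}(P,\sigma)$ and realize the inverse as $\wh{N_\cT}(Q_\cT,\sigma)$ for a 3b-operator $Q_\cT$. By Lemma~\ref{Lemma3DEll}, full ellipticity of $P$ makes $\wh{N_\cT}(P,-)$ an elliptic element of $\Psiscbt^{m,m,0,0}(\cT)$; combined with the invertibility of its $\tface$- and $\zface$-normal operators from conditions~\eqref{ItENtf} and~\eqref{ItEZero} of Definition~\ref{DefE}, Theorem~\ref{ThmAscbtEll} yields $\sigma_0>0$ and
\begin{equation*}
  \wh{N_\cT}(P,\sigma)^{-1}\in\Psiscbt^{-m,-m,0,0}(\cT)+\Psiscbt^{-\infty,(\cE_\cT^{+,(2)},\cE_\cT^{-,(2)},\cE_\cT^{(2)},\cE_\cT^{(2)})}(\cT)
\end{equation*}
on $\pm[0,\sigma_0)$. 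Invertibility at intermediate frequencies follows from condition~\eqref{ItENonzero} combined with Lemma~\ref{LemmaEInv}; for large $|\sigma|$, Lemma~\ref{LemmaAschInv} applied to $\wh{N_{\cT,h}^\pm}(P)\in\Psisch^{m,m,m}(\cT)$ (Proposition~\ref{Prop3TNormMemRough}\eqref{It3TNormMemRoughScl}) gives invertibility with inverse in the semiclassical scattering calculus.

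To build $Q_\cT$ from this family, first pick a symbolic parametrix $Q_{\mathrm{sym}}\in\Psitb^{-m}(M)$ with $PQ_{\mathrm{sym}}=I-R_0$, $R_0\in\Psitb^{-\infty}(M)$. Writing $\wh{N_\cT}(P,\sigma)^{-1}=\wh{N_\cT}(Q_{\mathrm{sym}},\sigma)+\hat E(\sigma)$ with $\hat E(\sigma):=\wh{N_\cT}(P,\sigma)^{-1}\wh{N_\cT}(R_0,\sigma)$, the task reduces to realizing $\hat E$ as the spectral family of a residual 3b-operator. By Proposition~\ref{Prop3TNormMem}, $\wh{N_\cT}(R_0,-)\in\Psiscbt^{-\infty,(\emptyset,\emptyset,\N_0,\N_0)}(\cT)$, so on $\pm[0,\sigma_0)$ the composition law of Lemma~\ref{LemmaAscbtComp} places $\hat E$ into a residual $\scbtop$-space, while for $|\sigma|\geq\sigma_0/2$ the family $\hat E$ is a smooth, Schwartz-in-$\sigma$ family of residual scattering ps.d.o.s that remains semiclassical at infinity. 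Splitting $\hat E=\chi\hat E+(1-\chi)\hat E$ via $\chi\in\CIc((-\sigma_0,\sigma_0))$ equal to $1$ near $0$, Proposition~\ref{PropLNLo} yields $Q_{\mathrm{low}}\in\Psitb^{-\infty,\cE^Q_\cT}(M)$ realizing $\chi\hat E$, while Lemma~\ref{LemmaLNTriv} yields $Q_{\mathrm{hi}}\in\Psitb^{-\infty}(M)$ (with trivial index sets except at $\ff_\cT$) realizing $(1-\chi)\hat E$; then $Q_\cT:=Q_{\mathrm{sym}}+Q_{\mathrm{low}}+Q_{\mathrm{hi}}$ satisfies $\wh{N_\cT}(Q_\cT,\sigma)=\wh{N_\cT}(P,\sigma)^{-1}$ for every $\sigma\in\R$.

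The error $R_\cT=I-PQ_\cT$ then has $\wh{N_\cT}(R_\cT,\sigma)=0$ by construction, so the short exact sequence in Proposition~\ref{Prop3TNorm} puts $R_\cT\in\rho_\cT\Psitb^{-\infty,\cE^Q_\cT}(M)\subset\Psitb^{-\infty,\cE^R_\cT}(M)$, where the inclusion uses closure of $\cE_\cT^{-,(2)}$ and $\cE_\cT^{(2)\prime}$ under addition of $\N_0$ to absorb the extra $\rho_\cT$-shifts at $\lface$ and $\iface$. The main obstacle will be the precise tracking of index sets through the inversion and inverse Fourier transform: in particular, arriving at the claimed $\iface$-index set $\cE_\cT^{(2)\prime}+1$, rather than the $\cE_\cT^{(2)}+1$ that a naive application of Proposition~\ref{PropLNLo} would yield, requires showing that the $(0,0)$-component of $\cE_\cT^{(2)}$ at $\zface$—which stems from the b-principal behavior of $\wh{N_\cT}(P,0)^{-1}$ at $\ff_\bop\subset\cT_\bop^2\cong\zface_\scbtop$—is absorbed by the symbolic parametrix $Q_{\mathrm{sym}}$, so that only $\cE_\cT^{(2)\prime}$ contributes to the residual $\hat E$; coordinating the three model regimes (low-energy $\scbtop$, bounded $\sigma$, semiclassical scattering at infinity) throughout this bookkeeping is the main technical burden.
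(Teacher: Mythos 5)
Your overall route coincides with the paper's: symbolic parametrix, inversion of the spectral family in the three regimes (Theorem~\ref{ThmAscbtEll} at low energy, Lemma~\ref{LemmaEInv} at bounded nonzero energies, Lemma~\ref{LemmaAschInv} semiclassically), and realization of the resulting correction via Proposition~\ref{PropLNLo} near $\sigma=0$ and Lemma~\ref{LemmaLNTriv} away from it. Your decomposition $\hat E=\chi\hat E+(1-\chi)\hat E$ is literally the same as the paper's two-step construction ($Q_1$ then $Q_2$), since the paper's intermediate error satisfies $\wh{N_\cT}(R_1,\sigma)=\chi(\sigma)\wh{N_\cT}(R_0,\sigma)$.

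However, your justification of the key index-set improvement at $\iface$ is incorrect. The $(0,0)$-component of $\cE_\cT^{(2)}$ at $\zface$ is \emph{not} absorbed by $Q_{\mathrm{sym}}$: the $\sigma^0$-coefficient of $\chi\hat E$ at $\zface$ is $\wh{N_\cT}(P,0)^{-1}\wh{N_\cT}(R_0,0)$, a generically nonzero fully residual operator which is part of the correction term, not of the symbolic parametrix. (You also seem to conflate the index set at $\zface\subset\cT^2_\scbtop$, which measures decay in $\sigma$, with behavior at $\ff_\bop\subset\cT^2_\bop\cong\zface$.) The correct mechanism is a matching across $\sigma=0$: the restrictions to $\zface$ of the two families~\eqref{EqETscbt} (for $\sigma\searrow 0$ and $\sigma\nearrow 0$) agree, both being $\wh{N_\cT}(P,0)^{-1}$, so the $(0,0)$-terms of $\chi\hat E$ from the two sides combine to a single term which is smooth and compactly supported in $\sigma\in\R$; its inverse Fourier transform is therefore Schwartz in $\tau_\tbop$ and contributes the trivial index set at $\iface$ (and $\N_0$ at $\ff_\cT$). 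Only the remainder, with $\zface$-index set $\cE_\cT^{(2)\prime}$, is fed into Proposition~\ref{PropLNLo}, yielding $\iface[\cE_\cT^{(2)\prime}+1]$. Without this observation you only get $\iface[\cE_\cT^{(2)}+1]$, whose real part is merely $\geq 1$ rather than $\geq 1+\beta_\cT^\Delta$, and the gain $\beta_\cT^\Delta>0$ at $\iface$ is used in the subsequent iteration (cf.~\eqref{EqEPImprQR}). A further minor imprecision: $\wh{N_\cT}(R_\cT,\cdot)\equiv 0$ improves the index set only at $\ff_\cT$ (by one), not at $\lface$, $\iface$, $\lb_\cT$; so $R_\cT$ lies in $\Psitb^{-\infty,\cE_\cT^R}(M)$ directly, but not in $\rho_\cT\Psitb^{-\infty,\cE_\cT^Q}(M)$ with $\rho_\cT$ the full left lift of a defining function of $\cT$.
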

\begin{proof}
  The main task is to show that the individual inverses $\wh{N_\cT}(P,\sigma)^{-1}$, which exist by the full ellipticity assumption, can be assembled to the spectral family of an element in the range of the $\cT$-normal operator map. To get started, recall~\eqref{EqEQ0} and pass to spectral families. Then
  \[
    \wh{N_\cT}(P,\sigma) \wh{N_\cT}(Q_0,\sigma) = I - \wh{N_\cT}(R_0,\sigma).
  \]
  Here, the normal operators are described by Proposition~\ref{Prop3TNormMem} for low and bounded frequencies, and by Proposition~\ref{Prop3TNormMemRough}\eqref{It3TNormMemRoughScl} for high frequencies.

 By Lemma~\ref{LemmaAschInv} and the definition~\eqref{Eq3TNormMemRoughScl} of $\wh{N_{\cT,h}^\pm}(P)$, we have
  \begin{equation}
  \label{EqETHInv}
    \bigl(\wh{N_\cT}(P,\pm h^{-1})^{-1}\bigr)_{h\in(0,h_0)} \in \Psisch^{-m,-m,-m}(\cT)
  \end{equation}
  for some $h_0>0$. In view of the invertibility of $\wh{N_\cT}(P,\sigma)$ for all nonzero $\sigma$, the membership~\eqref{EqETHInv} in fact remains valid for \emph{any} $h_0<\infty$, and for fixed finite values of $h$ it simply states $\wh{N_\cT}(P,\pm h^{-1})^{-1}\in\Psisc^{-m,-m}(\cT)$. Note moreover that $\wh{N_\cT}(P,\sigma)^{-1}$ is smooth in $\sigma\neq 0$ as an element of $\Psisc^{-m,-m}(\cT)$; indeed this follows by direct differentiation of $\wh{N_\cT}(P,\sigma)\circ\wh{N_\cT}(P,\sigma)^{-1}=I$ and using that $\wh{N_\cT}(P,\sigma)$ is smooth in $\sigma\neq 0$ as an element of $\Psisc^{m,m}(\cT)$. Applying the operator~\eqref{EqETHInv} to $\wh{N_\cT}(R_0,\pm h^{-1})\in\Psisch^{-\infty,-\infty,-\infty}(\cT)$ thus produces
  \[
    \tilde Q_1 \in \CI\bigl(\R_\sigma\setminus\{0\};\Psisc^{-\infty,-\infty}(\cT)\bigr)
  \]
  with the property that $\tilde Q_1(\pm h^{-1})\in\Psisch^{-\infty,-\infty,-\infty}(\cT)$. Fix any $\sigma_0>0$. If $\chi\in\CIc((-\sigma_0,\sigma_0))$ is identically $1$ on $[-\frac{\sigma_0}{2},\frac{\sigma_0}{2}]$, then we can apply Lemma~\ref{LemmaLNTriv} to $(1-\chi(\sigma))\tilde Q_1(\sigma)$ to conclude that there exists $Q_1\in\Psitb^{-\infty}(M)$ so that
  \[
    \wh{N_\cT}(P,\sigma)\wh{N_\cT}(Q_1,\sigma) = \wh{N_\cT}(R_0,\sigma),\qquad |\sigma|>\half\sigma_0.
  \]
  Thus, for $Q_0+Q_1\in\Psitb^{-m}(M)$ we have
  \begin{equation}
  \label{EqETHErr}
    P(Q_0+Q_1) = I - R_1,\qquad
    R_1\in\Psitb^{-\infty}(M),\quad \wh{N_\cT}(R_1,\sigma)=0\ \ \forall\, \sigma,\ |\sigma|>\half\sigma_0.
  \end{equation}

  In order to solve away the remaining error $\wh{N_\cT}(R_1,\sigma)$ for $|\sigma|\leq\half\sigma_0$, we use Theorem~\ref{ThmAscbtEll}; this gives 
  \begin{equation}
  \label{EqETscbt}
    \bigl( \pm(0,\sigma_0] \ni \sigma \mapsto \wh{N_\cT}(P,\sigma)^{-1} \bigr) \in \Psiscbt^{-m,-m,0,0}(\cT) + \Psiscbt^{-\infty,(\cE_\cT^{+,(2)},\cE_\cT^{-,(2)},\cE_\cT^{(2)},\cE_\cT^{(2)})}(\cT).
  \end{equation}
  Applying this to $\wh{N_\cT}(R_1,-)\in\Psiscbt^{-\infty,-\infty,0,0}(\cT)=\Psiscbt^{-\infty,(\emptyset,\emptyset,\N_0,\N_0)}(\cT)$ using Lemma~\ref{LemmaAscbtComp} produces an operator family
  \[
    \tilde Q_2(\sigma) := \wh{N_\cT}(P,\sigma)^{-1}\wh{N_\cT}(R_1,\sigma),\quad \bigl(\pm(0,\sigma_0]\in\sigma\mapsto\tilde Q_2(\sigma)\bigr) \in \Psiscbt^{-\infty,(\cE_\cT^{+,(2)},\cE_\cT^{-,(2)},\cE_\cT^{(2)},\cE_\cT^{(2)})}(\cT),
  \]
  with $\tilde Q_2(\sigma)=0$ for $|\sigma|>\half\sigma_0$. An application of Proposition~\ref{PropLNLo} shows that $\tilde Q_2=\wh{N_\cT}(Q_2,\sigma)$ for an appropriate operator $Q_2$, with $Q_\cT=Q_0+Q_1+Q_2$ being the desired inverse of class~\eqref{EqETQ}. Regarding the index set at $\iface=\iface_L\cup\iface_R$, the reason why one can exclude $(0,0)$ from $\cE_\cT^{(2)}$ is the following: the restrictions to $\zface$ of the operators~\eqref{EqETscbt} to $\sigma=0$ agree: they are both equal to $\wh{N_\cT}(P,0)^{-1}$. Thus, the element $(0,0)\in\cE_\cT^{(2)}$ of the $\zface$-index set corresponds to a smooth (across $\sigma=0$) term when combining the contributions for positive and negative $\sigma$, and the inverse Fourier transform in $\sigma$ of this term therefore gives a rapidly decaying contribution.
\end{proof}

Next, for the inversion of the $\cD$-normal operator, we only use conditions~\eqref{ItESpec0}, \eqref{ItENtf}, and \eqref{ItESpecD} of Definition~\ref{DefE}.

\begin{prop}[$\cD$-normal operator inverse]
\label{PropED}
  Let $P\in\Psitb^m(M)$ be fully elliptic with weights $\alpha_\cD,\alpha_\cT$. Put
  \begin{align*}
    \cE_\cD^Q(\alpha_\cD) &:= \bigl( \N_0,\N_0\extcup(\cE_\cT^{(0)}+1),\cE_\cT^{-,(0)},\cE_\cT^{+,(0)}+1, \\
       & \hspace{6em} \cE_\cD^+(\alpha_\cD),\cE_\cD^-(\alpha_\cD),\cE_\cT^{-,(0)}+\cE_\cD^+(\alpha_\cD), \emptyset, \cE_\cT^{+,(0)}+\cE_\cT^{-,(0)}+1 \bigr), \\
    \cE_\cD^R(\alpha_\cD) &:= \cE_\cD^Q(\alpha_\cD) + (1,0,0,0,1,0,0,0,0).
  \end{align*}
  Then there exists an operator
  \begin{equation}
  \label{EqEDQ}
    Q_\cD(\alpha_\cD) \in \Psitb^{-m}(M) + \Psitb^{-\infty,\cE_\cD^Q(\alpha_\cD)}(M)
  \end{equation}
  so that $\wh{N_\cD}(Q_\cD(\alpha_\cD),\lambda)=\wh{N_\cD}(P,\lambda)^{-1}$ for all $\lambda\in\C$ with $\lambda\notin\Re\Specb(N_\cD(P))$, and so that
  \begin{align*}
    R_\cD(\alpha_\cD) := I - P Q_\cD(\alpha_\cD) \in \Psitb^{-\infty,\cE_\cD^R(\alpha_\cD)}(M).
  \end{align*}
  (In particular, the Schwartz kernel of $R_\cD(\alpha_\cD)$ vanishes at $\ff_\cD$.)
\end{prop}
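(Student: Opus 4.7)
The approach parallels the proof of Proposition~\ref{PropET}, but inverts the $\cD$-normal operator using Theorem~\ref{ThmAebDInv} in the edge-b-calculus, and then transfers the result back to a 3b-operator via the identification $\ff_{\cD,\flat}\cong\ff_{\bop,\sharp}$ from Proposition~\ref{Prop3DRel}. I start with a symbolic parametrix $Q_0\in\Psitb^{-m}(M)$ satisfying $P Q_0=I-R_0$, $R_0\in\Psitb^{-\infty}(M)$, and pass to $\cD$-normal operators (Proposition~\ref{Prop3DProp}) to obtain $N_\cD(P)N_\cD(Q_0)=I-N_\cD(R_0)$ in the edge-b-calculus on ${}^+N_\tbop\cD$. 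Full ellipticity of $P$ in the 3b-sense combined with Lemma~\ref{Lemma3DEll} and Proposition~\ref{PropGRel2} (which identifies $N_{\pa\cD}(P)$ with $N_{\pa\cT}(P)$ via the degree $-1$ map $\phi\circ\psi$) gives full ellipticity of $N_\cD(P)$ as an edge-b-operator with weights $\alpha_\cD,-\beta$: conditions \eqref{ItESpec0}, \eqref{ItENtf}, and \eqref{ItESpecD} of Definition~\ref{DefE} translate into the hypotheses \eqref{ItAebDInvNpaD}, \eqref{ItAebDInvtf}, and \eqref{ItAebDInvND} of Theorem~\ref{ThmAebDInv}, respectively, and the degree $-1$ identification yields $\cE^\pm(N_{\pa\cD}(P),-\beta)=\cE^\mp(N_{\pa\cT}(P),\beta)=\cE_\cT^\mp$, hence $\cE_\cR^{\pm,(0)}=\cE_\cT^{\mp,(0)}$ and $\cE_\cR^{(0)}=\cE_\cT^{(0)}$.

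By Theorem~\ref{ThmAebDInv}, there exists $\wt Q\in\Psi_\ebop^{-m}({}^+N_\tbop\cD)+\Psi_\ebop^{-\infty,\wt\cE}({}^+N_\tbop\cD)$ with $\wh{N_\cD}(\wt Q,\lambda)=\wh{N_\cD}(P,\lambda)^{-1}$, where $\wt\cE=(\cE_\cD^+(\alpha_\cD),\N_0,\cE_\cD^-(\alpha_\cD),\cE_\cT^{-,(0)},\N_0\extcup(\cE_\cT^{(0)}+1),\cE_\cT^{+,(0)}+1)$ at the boundary hypersurfaces $(\lb_\bop,\ff_\bop,\rb_\bop,\lb_\eop,\ff_\eop,\rb_\eop)$. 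Concretely, writing $\wh{Q_1}(\lambda):=\wh{N_\cD}(P,\lambda)^{-1}\wh{N_\cD}(R_0,\lambda)$, this is a meromorphic family of b-ps.d.o.s on $\cD$ whose inverse Mellin transform along $\Im\lambda=-\alpha_\cD$ gives a polyhomogeneous distribution on $[0,\infty]_s\times\cD^2_\bop$ (identified with a neighborhood of $\ff_{\bop,\sharp}$), with shifts of contours across poles of $\wh{N_\cD}(P,\cdot)^{-1}$ producing the index sets $\cE_\cD^+(\alpha_\cD)$ at $s=0$ and $\cE_\cD^-(\alpha_\cD)$ at $s=\infty$. Since $N_\cD(R_0)$ is fully residual in the edge-b-sense (smooth at $\ff_\bop$ and $\ff_\eop$, trivial at all other boundary hypersurfaces), no loss of index set behavior occurs.

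I now lift this to a 3b-operator $Q_1$ with $N_\cD(Q_1)=\wt Q_1:=\wt Q\cdot N_\cD(R_0)$. Via Proposition~\ref{Prop3DRel}, the Schwartz kernel of $\wt Q_1$ restricted to $\ff_{\bop,\sharp}$ is identified with a distribution on $\ff_\cD\subset M^2_\tbop$; I extend it to a collar neighborhood of $\ff_\cD$ by cutting off with a function that is identically $1$ near $\ff_\cD$ and vanishing outside. Using Lemma~\ref{LemmaLStruct}\eqref{ItLStructffD}, the correspondence between the boundary hypersurfaces of $\ff_{\bop,\sharp}$ and those of $\ff_\cD$ in $M^2_\tbop$ reads off the index sets of $Q_1$: $\lb_\cD$ inherits $\cE_\cD^+(\alpha_\cD)$ (from $\lb_\bop$, via inverse Mellin transform at $s=0$); $\rb_\cD$ inherits $\cE_\cD^-(\alpha_\cD)$ (from $\rb_\bop$); $\lface$ inherits $\cE_\cT^{-,(0)}$ (from $\lb_\eop$); $\rface$ inherits $\cE_\cT^{+,(0)}+1$ (from $\rb_\eop$); $\ff_\cT$ inherits $\N_0\extcup(\cE_\cT^{(0)}+1)$ (from $\ff_\eop$, through $\ff_\sharp$); and $\ff_\cD$ itself is smooth ($\N_0$). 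At the higher-codimension-type hypersurfaces $\lb_\cT$ and $\iface_{L,R}$, which arise from resolution of multiple submanifolds in $M^2_\tbop$, the Fourier/Mellin transform analysis of \S\ref{SsAF} (combining b-type expansions with edge-type expansions) yields the sum-form index sets $\cE_\cT^{-,(0)}+\cE_\cD^+(\alpha_\cD)$ at $\lb_\cT$ and $\cE_\cT^{+,(0)}+\cE_\cT^{-,(0)}+1$ at $\iface$. The $\rb_\cT$ hypersurface lies outside the support of the cut-off extension, giving the $\emptyset$ index set.

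Setting $Q_\cD(\alpha_\cD):=Q_0+Q_1$, multiplicativity of $N_\cD$ yields $N_\cD(P)N_\cD(Q_\cD(\alpha_\cD))=I-N_\cD(R_0)+N_\cD(R_0)=I$, hence $\wh{N_\cD}(Q_\cD(\alpha_\cD),\lambda)=\wh{N_\cD}(P,\lambda)^{-1}$. Consequently $R_\cD(\alpha_\cD):=I-P Q_\cD(\alpha_\cD)$ has vanishing $\cD$-normal operator, so its Schwartz kernel vanishes at $\ff_\cD$; the additional unit shifts $\cE_\cD^R(\alpha_\cD)=\cE_\cD^Q(\alpha_\cD)+(1,0,0,0,1,0,0,0,0)$ (at $\ff_\cD$ and $\lb_\cD$) come from Proposition~\ref{PropLComp} applied to $P\circ Q_1$, reflecting the extra order of vanishing produced by the $\cD$-normal operator being killed. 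The main technical difficulty is paragraph three, namely the careful accounting of index sets at the non-trivial corner hypersurfaces $\lb_\cT$ and $\iface$: these receive combined contributions from the b-direction (inverse Mellin transform) and the edge direction, and the sum-form of the index sets is essential for matching the known behavior of parametrices of fully elliptic b- and edge-b-operators.
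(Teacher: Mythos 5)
Your overall route is the paper's: symbolic parametrix, inversion of $N_\cD(P)$ via Theorem~\ref{ThmAebDInv} in the edge-b-calculus (with the correct translation of hypotheses and the identification $\cE_\cR^{\pm,(0)}=\cE_\cT^{\mp,(0)}$), and transfer back to $M^2_\tbop$ via Proposition~\ref{Prop3DRel}, with the dictionary $\lb_\bop\leftrightarrow\lb_\cD$, $\rb_\bop\leftrightarrow\rb_\cD$, $\lb_\eop\leftrightarrow\lface$, $\rb_\eop\leftrightarrow\rface$, $\ff_\eop\leftrightarrow\ff_\cT$, $\ff_\sharp\leftrightarrow\iface$ correctly read off. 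Up to that point the argument is sound.

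There is, however, a genuine gap at the extension step, and it is exactly the step the proposition is really about. You extend the restriction $\tilde K=Q_\cD(\alpha_\cD)|_{\ff_\cD}$ off $\ff_\cD$ by ``cutting off with a function that is identically $1$ near $\ff_\cD$,'' and then claim the unit shift at $\lb_\cD$ in $\cE_\cD^R(\alpha_\cD)$ follows from Proposition~\ref{PropLComp} together with the vanishing of $N_\cD(R_\cD(\alpha_\cD))$. Neither mechanism delivers it. Killing the $\cD$-normal operator only forces the Schwartz kernel of the error to vanish at $\ff_\cD$ (the short exact sequence for $N_\cD$ controls nothing at $\lb_\cD$), and the composition formula~\eqref{EqLCompInd} gives $\cG_{\lb_\cD}=\cE_{\lb_\cD}\extcup(\cE_{\ff_\cD}+\cF_{\lb_\cD})\extcup(\cE_\rface+\cF_{\lb_\cT}-1)$, which for $P$ in the small calculus composed with $Q_1$ having index set $\cE_\cD^+(\alpha_\cD)$ at $\lb_\cD$ returns $\cE_\cD^+(\alpha_\cD)$ again --- no improvement. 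For an arbitrary cutoff extension the error term genuinely has index set $\cE_\cD^+(\alpha_\cD)$ at $\lb_\cD$, not $\cE_\cD^+(\alpha_\cD)+1$. The missing idea is a \emph{specific} choice of extension near $\lb_\cD$: one expands $\tilde K$ at $\ff_\cD\cap\lb_\cD$ into terms $a_z=\sum_j(T/T')^z|\log(T/T')|^j a_{(z,j)}$ with $(z,k)\in\cE_\cD^+(\alpha_\cD)$ and $N_\cD(P)a_z=0$, and extends each coefficient $a_{(z,j)}$ (a function on $\cD\times\cD$) into $\lb_\cD$ by pulling it back along the projection of (a resolution of) $\lb_\cD\cong[\cD\times M;\dots]$ onto the left $\cD$-factor, cut off in the right variables near $\cD$. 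With this choice, writing $I-P Q_\cD(\alpha_\cD)=I-\chi N_\cD(P)(\chi Q_\cD(\alpha_\cD))-(P-\chi N_\cD(P)\chi)Q_\cD(\alpha_\cD)$, the middle term vanishes identically near $\lb_\cD$ because $N_\cD(P)$ annihilates each extended expansion term there (not merely at $\ff_\cD$), while the last term gains the power of $\rho_\cD$ from $P-\chi N_\cD(P)\chi\in\rho_\cD\Psitb^m(M)$. This same extension, not the pushforward analysis of~\S\ref{SsAF}, is also the source of the index set $\cE_\cT^{-,(0)}+\cE_\cD^+(\alpha_\cD)$ at $\lb_\cT$ in $\cE_\cD^Q(\alpha_\cD)$: the pulled-back coefficients contribute $\cE_\cT^{-,(0)}$ and the prefactors $(T/T')^z|\log(T/T')|^j$ contribute $\cE_\cD^+(\alpha_\cD)$ at $\lb_\cT$. (Note also that the output of Theorem~\ref{ThmAebDInv} itself has trivial index set at $\lb_\cT$; the nontrivial one appears only after this extension.)
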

\begin{proof}
  We first claim that we can find
  \begin{align*}
    \tilde Q_\cD(\alpha_\cD) &\in \Psitb^{-m}(M) + \Psitb^{-\infty}\bigl(M;\ff_\cD[\N_0], \ff_\cT[\N_0\extcup(\cE_\cT^{(0)}+1)], \lface[\cE_\cT^{-,(0)}], \rface[\cE_\cT^{+,(0)}+1], \\
      & \hspace{6em} \lb_\cD[\cE_\cD^+(\alpha_\cD)], \rb_\cD[\cE_\cD^-(\alpha_\cD)], \lb_\cT[\emptyset], \rb_\cT[\emptyset], \iface[\cE_\cT^{+,(0)}+\cE_\cT^{-,(0)}+1] \bigr)
  \end{align*}
  with $\wh{N_\cD}(\tilde Q_\cD(\alpha_\cD),\lambda)=\wh{N_\cD}(P,\lambda)^{-1}$. Indeed, this is a consequence of the full ellipticity of $N_\cD(P)$ as an edge-b-operator with weights $\alpha_\cD$ and $\alpha_\cR:=\alpha_\cT-\alpha_\cD=-\beta$ (in the notation of Definition~\ref{DefE}) and Theorem~\ref{ThmAebDInv}. Specifically, conditions~\eqref{ItAebDInvNpaD}, \eqref{ItAebDInvND}, and \eqref{ItAebDInvtf} of Theorem~\ref{ThmAebDInv} are satisfied in view of conditions~\eqref{ItESpec0}, \eqref{ItESpecD}, and \eqref{ItENtf} in Definition~\ref{DefE}, respectively. Moreover, the sets $\cE_\cR^\pm$ in Theorem~\ref{ThmAebDInv} are equal to $\cE_\cT^\mp$ in the notation of Definition~\ref{DefEInd}. Finally, the relationship between the small edge-b-result~\eqref{EqAebDInvQ} and the extended edge-b-double space (cf.\ Proposition~\ref{Prop3DRel}) is given in~\eqref{EqAebLargeVsExt}. (For an illustration of the boundary hypersurfaces of the b-front face $\ff_\cD\subset M^2_\tbop$ and the b-front face of the extended edge-b-double space of ${}^+N_\tbop\cD$, recall Figures~\ref{FigLDStruct} and~\ref{FigAebffb}, respectively.)

  The remainder term $I-P\tilde Q_\cD(\alpha_\cD)$ does vanish to leading order at $\ff_\cD$, but its index set at $\lb_\cD$ is only equal to $\cE_\cD^+(\alpha_\cD)$ unless we exercise more care. Thus, in order to construct $Q_\cD(\alpha_\cD)$, we need to make an appropriate choice of extension of $\tilde K:=\tilde Q_\cD(\alpha_\cD)|_{\ff_\cD}$ (i.e.\ the restriction of the Schwartz kernel of $\tilde Q_\cD(\alpha_\cD)$ to $\ff_\cD$) to a neighborhood of $\lb_\cD$. To wit, with $T$, resp.\ $T'$ denoting the left, resp.\ right lift of a boundary defining function of $M_0$, the distribution $\tilde K$ has, at the left boundary $\ff_\cD\cap\lb_\cD$, a polyhomogeneous expansion into terms of the form
  \begin{equation}
  \label{EqEDaz}
    a_z=\sum_{j=0}^k\Bigl(\frac{T}{T'}\Bigr)^z\Bigl|\log\frac{T}{T'}\Bigr|^j a_{(z,j)},
  \end{equation}
  where $(z,k)\in\cE_\cD^+(\alpha_\cD)$ and
  \[
    a_{(z,j)}\in\cA_\phg^{(\cE_\cT^{-,(0)},\cE_\cT^{+,(0)}+1)}(\cD\times\cD),
  \]
  where the index sets refer to $\pa\cD\times\cD$ and $\cD\times\pa\cD$ in this order; this follows from~\eqref{EqAebDInvQ} and the above identifications, see also Figure~\ref{FigAebffb}. Furthermore, we have $N_\cD(P)a_z=0$. Recalling from Lemma~\ref{LemmaLStruct}\eqref{ItLStructlbDrbD} that $\lb_\cD$ is a resolution of $\cD\times M$ and thus of $\cD\times M_0$, fix now a collar neighborhood $[0,\eps)_{\rho_0}\times\pa M_0$ of $\pa M_0\subset M_0$ and define the projection maps $\pi\colon\cD\times[0,\eps)\times(\pa M_0\setminus\{\fp\})\ni(q,T',q')\mapsto q\in\cD$ (where we write $q,q'$ for points on $\cD$ and $\pa M_0\setminus\{\fp\}=\cD^\circ$) and $\pi'\colon\cD\times M\to M$; denote furthermore by $\chi\in\CIc([[0,\eps)\times\pa M_0;\{(0,\fp)\}])$ (the domain here being a neighborhood of $\cD\cup\cT\subset M$) a cutoff function with support in a collar neighborhood of the lift $\cD$ of $\{0\}\times\pa M_0$, and identically $1$ near $\cD$. Define then
  \[
    b_{(z,j)} := ((\pi')^*\chi) \cdot (\pi^*a_{(z,j)}).
  \]
  (In terms of Figure~\ref{FigLlbD} and the local coordinates used there, the function $b_{(z,j)}$ is obtained by extending $a_{(z,j)}=a_{(z,j)}(X,X')$ to be $T'$-independent, followed by cutting it off with a cutoff depending only on $(T',X')$; one may think of a smooth version of the characteristic function of $\{T'/|X'|<1\}$.) Thus, $b_{(z,j)}$ extends from $\cD^\circ\times M^\circ$ as a polyhomogeneous function on $\lb_\cD$, with index set $\cE_\cT^{-,(0)}$ at $\lface$ and $\lb_\cT$, with index set $\cE_\cT^{+,(0)}+1$ at $\rface$, and with index set $\cE_\cT^{+,(0)}+\cE_\cT^{-,(0)}+1$ at $\iface_L$. On the other hand, the prefactor $(\frac{T}{T'})^z|\log\frac{T}{T'}|^j$ in~\eqref{EqEDaz} lifts to $M^2_\tbop$ to a polyhomogeneous function with index set $\cE_\cD^+(\alpha_\cD)$ at $\lb_\cD\cup\lb_\cT$. We can then take the Schwartz kernel of $Q_\cD(\alpha_\cD)$ to be equal to $\tilde K$ at $\ff_\cD$ and to have a polyhomogeneous expansion at $\lb_\cD$ into the terms~\eqref{EqEDaz} but with $a_{(z,j)}$ replaced with $b_{(z,j)}$; this can be done consistently with the membership~\eqref{EqEDQ}.

  It remains to show that for such $Q_\cD(\alpha_\cD)$, the Schwartz kernel of the error $R_\cD(\alpha_\cD)=I-P Q_\cD(\alpha_\cD)$ not only vanishes at $\ff_\cD$ (by construction), but also gains one power at $\lb_\cD$ relative to $Q_\cD(\alpha_\cD)$. To prove this, let $\chi\in\CI(M)$ denote a cutoff to a collar neighborhood of $\cD$, with $\chi\equiv 1$ near $\cD$; then $P-\chi N_\cD(P)\chi\in\rho_\cD\Psitb^m(M)$. Write
  \[
    I - P Q_\cD(\alpha_\cD) = I - \chi N_\cD(P) \circ \bigl(\chi Q_\cD(\alpha_\cD)\bigr) - (P-\chi N_\cD(P)\chi) Q_\cD(\alpha_\cD).
  \]
  Near $\lb_\cD$ the Schwartz kernel of the second term on the right vanishes by construction, and so does the Schwartz kernel of the identity operator. Since the third term has index set $\cE_\cD^+(\alpha_\cD)+1$ at $\lb_\cD$, we are done.
\end{proof}

The operator $Q_\cT$, resp.\ $Q_\cD(\alpha_\cD)$ is unique modulo the space of operators with vanishing $\cT$-, resp.\ $\cD$-normal operator. Furthermore:

\begin{lemma}[Equality of normal operator inverses at $\ff_\cD\cap\ff_\cT$]
\label{LemmaEDTeq}
  The restrictions of the Schwartz kernels of $Q_\cT$ in~\eqref{EqETQ} and of $Q_\cD(\alpha_\cD)$ in~\eqref{EqEDQ} to $\ff_\cD\cap\ff_\cT$ are equal.
\end{lemma}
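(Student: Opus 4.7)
The task is to verify that the restrictions of $K_{Q_\cT}$ and $K_{Q_\cD(\alpha_\cD)}$ to $\ff_\cD\cap\ff_\cT$ coincide under the natural identification of this corner as a boundary hypersurface of both $\ff_\cT$ and $\ff_\cD$. My plan is to identify each restriction with the Schwartz kernel of an inverse of a common $(\scop,\bop)$-operator on $\ol{{}^+N}\pa\cT\cong\ol{{}^+N}\pa\cD$, the two perspectives being related via the homogeneous-degree-$(-1)$ isomorphism $\phi$ of Lemma~\ref{LemmaAId} and Proposition~\ref{Prop3DT}.

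For the $\cT$-side: by Definition~\ref{Def3TNorm}, $K_{Q_\cT}|_{\ff_\cT}$ is (up to the translation-invariant extension in $\tau$) the Schwartz kernel of $N_\cT(Q_\cT)$, whose spectral family equals $\wh{N_\cT}(P,\sigma)^{-1}$ by Proposition~\ref{PropET}. The construction there shows that this family is the inverse of $\wh{N_\cT}(P,\sigma)$ \emph{within} the $\scbtop$-calculus (via Theorem~\ref{ThmAscbtEll}); multiplicativity of the $\tface$-normal operator map in the short exact sequence~\eqref{EqAscbtNtfzf} then yields $N_{\cT,\tface}^\pm(Q_\cT)=N_{\cT,\tface}^\pm(P)^{-1}$. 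Under the identification $\ff_\cT\cong\ol\R\times\cT^2_\bop$ of Lemma~\ref{Lemma3TStruct}, the corner $\ff_\cD\cap\ff_\cT$ is $\ol\R\times\ff_{\cT,\bop}$, and the two components of $\pa\ol\R$ correspond precisely (via inverse Fourier transform) to the two $\tface$-normal operators $N_{\cT,\tface}^\pm(Q_\cT)$; thus the restriction $K_{Q_\cT}|_{\ff_\cD\cap\ff_\cT}$ is exactly (the Schwartz kernel of) $N_{\cT,\tface}^\pm(P)^{-1}$. The symmetric argument on the $\cD$-side uses Proposition~\ref{PropED}: $\wh{N_\cD}(Q_\cD(\alpha_\cD),\lambda)=\wh{N_\cD}(P,\lambda)^{-1}$ as $\chop$-operators in the high-frequency regime~\eqref{Eq3DNormMTHi}, and multiplicativity of the $\tface$-normal operator in $\Psich$ (cf.\ Theorem~\ref{ThmAchInv}) yields $N_{\cD,\tface}^\pm(Q_\cD(\alpha_\cD))=N_{\cD,\tface}^\pm(P)^{-1}$, which---via the description of $\ff_\cD$ in Lemma~\ref{Lemma3DStruct} and the identification of its blown-up corner as $\ol{{}^+N}\pa\cD$---is the restriction $K_{Q_\cD(\alpha_\cD)}|_{\ff_\cD\cap\ff_\cT}$.

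The proof then concludes by invoking Proposition~\ref{Prop3DT}: $\phi^*N_{\cD,\tface}^\pm(P)=N_{\cT,\tface}^\pm(P)$, and taking inverses gives the desired equality of Schwartz kernel restrictions under $\phi$. The main obstacle is purely bookkeeping: one must check that the two natural identifications of $\ff_\cD\cap\ff_\cT$---from the $\ol\R\times\cT^2_\bop$-picture of $\ff_\cT$ (where one parameterizes by a rescaled $\sigma$) and from the $[[0,\infty]\times\cD^2_\bop;\{1\}\times\ff_{\cD,\bop}]$-picture of $\ff_\cD$ (where one parameterizes by a rescaled $\lambda^{-1}$)---are intertwined by exactly the degree-$(-1)$ map $\phi$. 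This compatibility is already established in the explicit local-coordinate computation carried out in the proof of Proposition~\ref{Prop3DT} (see equations~\eqref{Eq3DTtfExpr} and~\eqref{Eq3DtfExpr} there), so no additional work is required beyond invoking that result.
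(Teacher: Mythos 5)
Your proposal is correct and follows essentially the same route as the paper's proof: both restrictions are identified, via Fourier transform in $\tau_\tbop$ with the sign of the frequency variable selecting the $+$ or $-$ case, with the Schwartz kernels of $N_{\cT,\tface}^\pm(P)^{-1}$ and $N_{\cD,\tface}^\pm(P)^{-1}$ respectively, and these are then matched using Proposition~\ref{Prop3DT} and the explicit coordinate computations~\eqref{Eq3DTtfExpr}, \eqref{Eq3DtfExpr}. The only caveat is a slight imprecision of phrasing: the restriction $K_{Q_\cT}|_{\ff_\cD\cap\ff_\cT}$ is not itself the Schwartz kernel of $N_{\cT,\tface}^\pm(P)^{-1}$ but its inverse Fourier transform in the variable dual to $\tau_\tbop$ (whose absolute value is the reciprocal of the fiber-linear coordinate on ${}^+N\pa\cT$), as you indicate parenthetically.
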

\begin{proof}
  Denote by $K_\cT$ and $K_\cD$ the restrictions of the Schwartz kernels of $Q_\cT$ and $Q_\cD(\alpha_\cD)$ to $\ff_\cD\cap\ff_\cT$. We then claim that their Fourier transforms in the coordinate $\tau_\tbop$ from~\eqref{Eq3TStruct}, restricted to positive or negative frequencies, are the Schwartz kernels of the inverses of $N_{\cT,\tface}^\pm(P)$ and $N_{\cD,\tface}^\pm(P)$, respectively (with the absolute value of the frequency variable being the reciprocal of a fiber-linear coordinate on ${}^+N\pa\cT$ and ${}^+N\pa\cD$, respectively). Recall here the expressions~\eqref{Eq3DTtfExpr} and \eqref{Eq3DtfExpr}, which relate the Schwartz kernels of $N_{\cT,\tface}^\pm(P)$ and $N_{\cD,\tface}^\pm(P)$ to that of $P$ in a similar manner. The Lemma then follows from the identification of the two $\tface$-normal operators via Proposition~\ref{Prop3DT}.

  The claim follows for the Fourier transform of $K_\cT$ directly from the construction of the $\cT$-normal operator of $Q_\cT$ via an inverse Fourier transform, with $K_\cT$ being comprised of the inverse Fourier transforms of the Schwartz kernels (a distribution on $\tface_\scbtop\subset\cT_\scbtop^2$) of the inverses of $N_{\cT,\tface}^+(P)$ and $N_{\cT,\tface}^-(P)$; see also the proof of Proposition~\ref{PropLNLo}. For $K_\cD$, one can argue similarly: by an inspection of the first part of the proof of Theorem~\ref{ThmAebDInv} (and of Proposition~\ref{PropAF1}), $K_\cD$ is comprised of the inverse Fourier transforms of the Schwartz kernels of the inverses of $N_{\cD,\tface}^+(P)$ and $N_{\cD,\tface}^-(P)$ (with respect to the fiber-linear coordinate on $(\ol{{}^+N\pa\cD})^2_\bop$, the blow-up of which at the boundary of the b-diagonal at fiber infinity is $\tface_\chop\subset\cD_\chop^2$). (Equivalently, one can use Proposition~\ref{PropAF2} to show that the Mellin transform of the restriction of the Schwartz kernel of $Q_\cD(\alpha_\cD)$ to $\ff_\cD$ is polyhomogeneous, with its leading order term at $\tface_\chop\subset\cD_\chop^2$---cf.\ Proposition~\ref{PropAebMT}\eqref{ItAebMTch}---one the one hand necessarily being the inverse of $N_{\cD,\tface}^\pm(P)$, and on the other hand being the Fourier transform of $K_\cD$ restricted to positive or negative frequencies.)
\end{proof}

\subsection{Parametrix construction: proof of Theorem~\ref{ThmEPx}}
\label{SsEP}

It will be useful to have rough right parametrices available not just for the $\cD$-weight $\alpha_\cD$, but for a range of $\cD$-weights. Recall here that if $\alpha_\cD'\notin\Re\Specb(N_\cD(P))$, then $P$ is fully elliptic with weights $\alpha_\cD'$ and $\alpha_\cT'=\alpha_\cT+(\alpha_\cD'-\alpha_\cD)$ by Lemma~\ref{LemmaEWeights}.

Using Lemma~\ref{LemmaEDTeq}, we can construct a symbolic parametrix and invert the $\cT$- and $\cD$-normal operators in one go: we denote by
\begin{equation}
\label{EqEPQ1}
\begin{split}
  Q_1(\alpha'_\cD) \in \Psitb^{-m}(M) &+ \Psitb^{-\infty,\cE^Q(\alpha'_\cD)}(M), \\
  \cE^Q(\alpha'_\cD)&:=\cE^Q_\cT\cup\cE^Q_\cD(\alpha'_\cD) \\
    &= \bigl( \cE_\cT^{(2)}, \N_0\extcup(\cE_\cT^{(0)}+1), \cE_\cT^{-,(2)},\cE_\cT^{+,(2)}+1, \\
    &\hspace{5em} \cE_\cD^+(\alpha'_\cD),\cE_\cD^-(\alpha_\cD'),\cE_\cT^{-,(0)}+\cE_\cD^+(\alpha'_\cD),\emptyset,\cE_\cT^{(2)\prime}+1\bigr)
\end{split}
\end{equation}
an operator whose Schwartz kernel is equal to that of $Q_\cT$ (from Proposition~\ref{PropET}) at $\ff_\cT$, to that of $Q_\cD(\alpha'_\cD)$ (from Proposition~\ref{PropED}) at $\ff_\cD$ and in a neighborhood of $\lb_\cD$, and to that of a symbolic parametrix of $P$ in a neighborhood of $\diag_\tbop$. We can make this choice so that the index set of
\begin{equation}
\label{EqEPR1}
\begin{split}
  R_1(\alpha'_\cD) := I\,-&\,P Q_1(\alpha'_\cD) \in \Psitb^{-\infty,\cE^R(\alpha'_\cD)}(M), \\
  \cE^R(\alpha'_\cD)&:=\bigl( \cE_\cT^{(2)\prime}, (\N_0+1)\extcup(\cE_\cT^{(0)}+1), \cE_\cT^{-,(2)},\cE_\cT^{+,(2)}+1, \\
    &\quad\hspace{4em} \cE_\cD^+(\alpha'_\cD)+1,\cE_\cD^-(\alpha'_\cD),\cE_\cT^{-,(0)}+\cE_\cD^+(\alpha'_\cD),\emptyset,\cE_\cT^{(2)\prime}+1 \bigr)
\end{split}
\end{equation}
inherits the improvements of both Proposition~\ref{PropET} and Proposition~\ref{PropED} (i.e.\ the index sets at $\ff_\cT$ and $\ff_\cD$ do not contain $(0,0)$, and the index set at $\lb_\cD$ is one better than that of $Q_1(\alpha'_\cD)$).

Specializing to the case $\alpha'_\cD=\alpha_\cD$, the next step is to solve away the error $R_1(\alpha_\cD)$ at $\lb_\cD\cup\lb_\cT$ to infinite order. This is not straightforward for a number of reasons; for example, $\lb_\cD$ does not fiber smoothly over $\cD$ via the left projection (see also Figure~\ref{FigLlbD}), and moreover for 3b-pseudodifferential operators $P$ it is difficult to interpret the operator $\wh{N_\cT}(P,0)$ as the b-normal operator of $P$ at $\cT$ due to an incompatibility of ps.d.o.\ algebras (putting aside the mild complication that $\wh{N_\cT}(P,0)$ acts on the leaves only of the \emph{singular} fibration of a neighborhood of $\cT$ by level sets of $T$). Thus, rather than solving $R_1(\alpha_\cD)$ away by hand, we take full advantage of the large 3b-calculus and exploit the fact that $Q_1(\alpha'_\cD)$ (for suitable choices of $\alpha'_\cD$) is already a sufficiently precise parametrix to aid in solving away the $\lb_\cD$- and $\lb_\cT$-error terms; see also Remark~\ref{RmkAbPxlb}.

\begin{lemma}[Solving away the error at the left boundary]
\label{LemmaEPImpr}
  Set $Q_1^{(0)}:=Q_1(\alpha_\cD)$ and $R_1^{(0)}=R_1(\alpha_\cD)$. With $\beta_\cT^\Delta\in(0,1]$ defined in~\eqref{EqEIndecT2prime}, let $\eps\in(0,\half\beta_\cT^\Delta]$ be such that $\alpha_\cD+j\eps\notin\Re\Specb(N_\cD(P))$ for all $j\in\N_0$.\footnote{Such $\eps$ exist since the set of $\eps\in(0,\beta_\cT^\Delta)$ for which this condition is not satisfied is countable.} Then we can inductively define two sequences of 3b-ps.d.o.s by setting
  \begin{equation}
  \label{EqEPImprj}
  \begin{split}
    Q_1^{(j)}&:=Q_1^{(j-1)} + Q_1(\alpha_\cD+j\eps)R_1^{(j-1)}, \\
    R_1^{(j)}&:=I-P Q_1^{(j)}=R_1(\alpha_\cD+j\eps)R_1^{(j-1)},
  \end{split}
  \end{equation}
  for $j\in\N$. Moreover, if, in the notation of Definition~\usref{DefLComp}, we set $\cE^{R(0)}:=\cE^R(\alpha_\cD)$ and
  \begin{equation}
  \label{EqEPImprInd}
  \begin{split}
    \cE^{R(j)}&:=\cE^R(\alpha_\cD+j\eps)\circ\cE^{R(j-1)}, \\
    \cE^{Q(j),\Delta}&:=\cE^Q(\alpha_\cD+j\eps)\circ\cE^{R(j-1)},
  \end{split}
  \end{equation}
  then $Q_1^{(j)}-Q_1^{(j-1)}\in\Psitb^{-\infty,\cE^{Q(j),\Delta}}(M)$ and $R_1^{(j)}\in\Psitb^{-\infty,\cE^{R(j)}}(M)$, and
  \[
    \min\Re(\cE^{Q(j),\Delta})_\bullet,\ \min\Re(\cE^{R(j)})_\bullet \to \infty,\qquad j\to\infty,\quad \bullet=\ff_\cD,\ff_\cT,\lface,\rface,\lb_\cD,\lb_\cT,\iface.
  \]
\end{lemma}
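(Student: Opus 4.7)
The plan is to verify the identity $R_1^{(j)} = R_1(\alpha_\cD + j\eps) R_1^{(j-1)}$ and the claimed index set memberships by induction on $j$, with the main work lying in checking that the compositions in~\eqref{EqEPImprj} are well-defined. The algebraic identity is a direct calculation:
\[
  I - P Q_1^{(j)} = (I - P Q_1^{(j-1)}) - P Q_1(\alpha_\cD + j\eps) R_1^{(j-1)} = \bigl( I - P Q_1(\alpha_\cD + j\eps) \bigr) R_1^{(j-1)} = R_1(\alpha_\cD + j\eps) R_1^{(j-1)}.
\]
The existence of $Q_1(\alpha_\cD + j\eps) \in \Psitb^{-m}(M) + \Psitb^{-\infty,\cE^Q(\alpha_\cD + j\eps)}(M)$ for every $j\in\N$ is guaranteed by the hypothesis $\alpha_\cD + j\eps \notin \Re\Specb(N_\cD(P))$ together with Lemma~\ref{LemmaEWeights} and the construction in~\eqref{EqEPQ1}--\eqref{EqEPR1}. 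Given this, the index set memberships of $Q_1^{(j)} - Q_1^{(j-1)}$ and $R_1^{(j)}$ will follow from Proposition~\ref{PropLComp} and the definitions~\eqref{EqEPImprInd}.

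To invoke Proposition~\ref{PropLComp} at each step, we must verify the positivity conditions $\Re( (\cE^R(\alpha_\cD + j\eps))_{\rb_\cD} + (\cE^{R(j-1)})_{\lb_\cD} ) > 0$ and $\Re( (\cE^R(\alpha_\cD + j\eps))_{\rb_\cT} + (\cE^{R(j-1)})_{\lb_\cT} ) > 1$. The second is immediate since $(\cE^R(\alpha_\cD + j\eps))_{\rb_\cT} = \emptyset$ by inspection of~\eqref{EqEPR1}. For the first, I would track $\lambda_j := \min\Re(\cE^{R(j)})_{\lb_\cD}$, with base case $\lambda_0 > \alpha_\cD + 1$. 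Applying the composition formula $\cG_{\lb_\cD} = \cE_{\lb_\cD} \extcup (\cE_{\ff_\cD} + \cF_{\lb_\cD}) \extcup (\cE_\rface + \cF_{\lb_\cT} - 1)$ from~\eqref{EqLCompInd}, together with the bounds $\min\Re(\cE^R(\alpha_\cD + j\eps))_{\ff_\cD} \geq \beta_\cT^\Delta$ and $\min\Re(\cE^R(\alpha_\cD + j\eps))_{\lb_\cD} > \alpha_\cD + j\eps + 1$ from~\eqref{EqEPR1}, the induction yields $\lambda_j \geq \lambda_0 + j\beta_\cT^\Delta$, coupled with a parallel lower bound on $\mu_j := \min\Re(\cE^{R(j)})_{\lb_\cT}$. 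Since $\min\Re(\cE_\cD^-(\alpha_\cD + j\eps)) > -(\alpha_\cD + j\eps)$, the condition $\eps \leq \tfrac12\beta_\cT^\Delta$ then ensures $\lambda_{j-1} - (\alpha_\cD + j\eps) > 0$, closing the induction.

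Once the compositions are justified, the asymptotic divergence $\min\Re(\cE^{R(j)})_H \to \infty$ for $H \in \{\ff_\cD, \ff_\cT, \lface, \rface, \lb_\cD, \lb_\cT, \iface\}$ follows from the same composition formulas: each listed face $H$ receives at every iteration a contribution of the form $(\cE^R(\alpha_\cD+j\eps))_\bullet + (\cE^{R(j-1)})_{H'}$ in which $\bullet$ is one of $\ff_\cD, \ff_\cT$, whose minimum real parts are bounded below by $\min(\beta_\cT^\Delta, 1) > 0$. Thus each step improves the minimum real part at every listed face by at least this amount, and the same reasoning applies verbatim to $\cE^{Q(j),\Delta}$. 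The principal obstacle is the coupled nature of the induction at $\lb_\cD$ and $\lb_\cT$ (the formula for $\cG_{\lb_\cD}$ involves $\cF_{\lb_\cT}$ and vice versa), which requires tracking both lower bounds simultaneously and verifying that their combined growth rate strictly exceeds $\eps$; this is where the hypothesis $\eps \leq \tfrac12\beta_\cT^\Delta$ is used sharply, as a larger weight shift would allow $\alpha_\cD + j\eps$ to overtake the gain rate $\beta_\cT^\Delta$ per step and destroy composability at later stages.
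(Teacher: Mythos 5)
Your overall strategy is the paper's: induct on $j$, reduce composability to the single condition $\Re\bigl((\cE^Q(\alpha_\cD+j\eps))_{\rb_\cD}+(\cE^{R(j-1)})_{\lb_\cD}\bigr)>0$ (the $\rb_\cT$/$\lb_\cT$ condition being vacuous since the $\rb_\cT$-index sets of $Q_1(\alpha'_\cD)$ and $R_1(\alpha'_\cD)$ are $\emptyset$), and verify it via the strict inequality $-a_\cD^-(\alpha_\cD+j\eps)>-(\alpha_\cD+j\eps)$ combined with an inductive lower bound on $\min\Re(\cE^{R(j-1)})_{\lb_\cD}$. This is exactly how the paper closes the induction.

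However, your quantitative inductive bound is wrong, and as written the induction does not close. You claim $\lambda_j\geq\lambda_0+j\beta_\cT^\Delta$, i.e.\ a gain of $\beta_\cT^\Delta$ per step at $\lb_\cD$. But the extended union $\cG_{\lb_\cD}=\cE_{\lb_\cD}\extcup(\cE_{\ff_\cD}+\cF_{\lb_\cD})\extcup(\cE_\rface+\cF_{\lb_\cT}-1)$ contains the \emph{fresh} term $(\cE^R(\alpha_\cD+j\eps))_{\lb_\cD}=\cE_\cD^+(\alpha_\cD+j\eps)+1$, which does not involve $\cE^{R(j-1)}$ at all; its minimum real part is $a_\cD^+(\alpha_\cD+j\eps)+1$, which need only exceed $\alpha_\cD+j\eps+1$ and can be arbitrarily close to that value (elements of $\Re\Specb(N_\cD(P))$ may sit just above $\alpha_\cD+j\eps$). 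Hence $\lambda_j$ can be as small as roughly $\alpha_\cD+j\eps+1$, which for large $j$ lies strictly below $\lambda_0+j\beta_\cT^\Delta$ since $\eps<\beta_\cT^\Delta$. The same issue invalidates your closing assertion that each step improves the minimum real part at every listed face by at least $\min(\beta_\cT^\Delta,1)$: at $\lb_\cD$ and $\lb_\cT$ (and, via the cross term $\cE_{\lb_\cD}+\cF_{\rb_\cD}$, also at $\ff_\cD$) the genuine gain per step is only $\eps$. The repair is to replace your ansatz by the correct one, namely $\min\Re(\cE^{R(j)})_{\lb_\cD}\geq\alpha_\cD+(j+1)\eps$ and $\min\Re(\cE^{R(j)})_{\lb_\cT}\geq\alpha_\cD-\beta_\cT^-+j\eps$, coupled with bounds of the form $(j+1)\eps$, $1+j\eps$, $-\beta_\cT^-+j\eps$, $\beta_\cT^++1+j\eps$, $1+(j+1)\eps$ at $\ff_\cD$, $\ff_\cT$, $\lface$, $\rface$, $\iface$ and $j$-independent bounds at $\rb_\cD$, $\rb_\cT$; one then checks term by term that every piece of each extended union in \eqref{EqLCompInd} respects these bounds, using $\beta_\cT^+-\beta_\cT^-\geq\beta_\cT^\Delta\geq 2\eps$ and $\eps\leq\half$. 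This weaker rate still yields $\lambda_{j-1}\geq\alpha_\cD+j\eps$, which is precisely what the composability condition requires, and still forces divergence at all listed faces.
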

\begin{proof}
  From Definition~\ref{DefEInd} and the comments following it, we have
  \begin{equation}
  \label{EqEPImprQR}
  \begin{alignedat}{10}
    \Re\cE^Q(\alpha'_\cD) &\geq \bigl(0,\,&&0,\,&&{-}\beta_\cT^-,\,&&\beta_\cT^++1,\,&&a_\cD^+(\alpha'_\cD),\,&&{-}a_\cD^-(\alpha'_\cD),\,&&a_\cD^+(\alpha'_\cD)-\beta_\cT^-,\,&&\infty,\,&&1+\beta^\Delta_\cT\bigr),&& \\
    \Re\cE^R(\alpha'_\cD) &\geq \bigl(\beta_\cT^\Delta,\,&& 1,\,&&{-}\beta_\cT^-,\,&&\beta_\cT^++1,\,&&a_\cD^+(\alpha'_\cD)+1,\,&&{-}a_\cD^-(\alpha'_\cD),\,&&a_\cD^+(\alpha'_\cD)-\beta_\cT^-,\,&&\infty,\,&&1+\beta^\Delta_\cT\bigr);
  \end{alignedat}
  \end{equation}
  by this we mean that $\Re\cE^Q(\alpha'_\cD)_{\ff_\cD}\geq 0$, $\Re\cE^Q(\alpha'_\cD)_{\ff_\cT}\geq 0$, $\Re\cE^Q(\alpha'_\cD)_\lface\geq-\beta_\cT^-$, etc., and $\cE^Q(\alpha'_\cD)_{\rb_\cT}=\emptyset$, similarly for $\cE^R(\alpha'_\cD)$. We shall prove by induction that
  \begin{equation}
  \label{EqEPImprIndR}
  \begin{split}
    &R_1^{(j)}\in\Psitb^{-\infty,\cE^{R(j)}}(M), \\
    &\qquad \Re\cE^{R(j)}\geq \bigl( (j+1)\eps,1+j\eps,-\beta_\cT^-+j\eps,\beta_\cT^++j\eps+1, \\
    &\qquad \hspace{7em} \alpha_\cD+(j+1)\eps,-\alpha_\cD, \\
    &\qquad \hspace{7em} \alpha_\cD-\beta_\cT^-+j\eps,-\alpha_\cD+\beta_\cT^++1-\eps,1+(j+1)\eps\bigr),
  \end{split}
  \end{equation}
  with~\eqref{EqEPImprQR} implying the base case $j=0$. Now, if~\eqref{EqEPImprIndR} holds for $j-1$ in place of $j\in\N$, then by Proposition~\ref{PropLComp}, the compositions $Q_1(\alpha_\cD+j\eps)R_1^{(j-1)}$ and $R_1(\alpha_\cD+j\eps)R_1^{(j-1)}$ in~\eqref{EqEPImprj} are well-defined since
  \begin{align*}
    \Re\bigl(\cE^Q(\alpha_\cD+j\eps)_{\rb_\cD}+(\cE^{R(j-1)})_{\lb_\cD}\bigr)&\geq-a_\cD^-(\alpha_\cD+j\eps)+(\alpha_\cD+j\eps) \\
      &> -(\alpha_\cD+j\eps) + (\alpha_\cD+j\eps) = 0,
  \end{align*}
  where we used that $\alpha_\cD+j\eps\notin\Re\Specb(N_\cD(P))$ to get the strict inequality. From the expressions~\eqref{EqLCompInd}, using~\eqref{EqEPImprQR} for $\alpha'_\cD=\alpha_\cD+j\eps$ as well as $a_\cD^\pm(\alpha'_\cD)>\pm(\alpha_\cD+j\eps)$, and noting that $\beta_\cT^+-\beta_\cT^-\geq\beta_\cT^\Delta\geq 2\eps$, we find that $R_1^{(j)}\in\Psitb^{-\infty,\cE}(M)$ with
  \begin{align*}
    \Re\cE_{\ff_\cD} &\geq \min\bigl(\beta_\cT^\Delta+j\eps,j\eps+1,\beta_\cT^+-\beta_\cT^-+(j-1)\eps\bigr) \geq (j+1)\eps, \\
    \Re\cE_{\ff_\cT} &\geq \min\bigl(2+(j-1)\eps,\beta_\cT^\Delta+1+j\eps,-\beta_\cT^-+\beta_\cT^++(j-1)\eps+1, \\
      &\qquad\qquad j\eps-\beta_\cT^-+\beta_\cT^++1-\eps\bigr)\geq 1+j\eps, \\
    \Re\cE_\lface &\geq \min\bigl(1-\beta_\cT^-+(j-1)\eps,\beta_\cT^\Delta-\beta_\cT^-+(j-1)\eps,-\beta_\cT^-+j\eps,j\eps-\beta_\cT^-\bigr) = -\beta_\cT^-+j\eps, \\
    \Re\cE_\rface &\geq \min\bigl(\beta_\cT^++2+(j-1)\eps,\beta_\cT^++j\eps+1,\beta_\cT^\Delta+\beta_\cT^++(j-1)\eps+1, \\
      &\qquad\qquad j\eps+1+\beta_\cT^++1-\eps\bigr)=\beta_\cT^++j\eps+1, \\
    \Re\cE_{\lb_\cD} &\geq \min\bigl(\alpha_\cD+j\eps+1,\beta_\cT^\Delta+\alpha_\cD+j\eps,\beta_\cT^++\alpha_\cD-\beta_\cT^-+(j-1)\eps\bigr) \geq \alpha_\cD+(j+1)\eps, \\
    \Re\cE_{\rb_\cD} &\geq \min\bigl(-\alpha_\cD-j\eps+j\eps,-\alpha_\cD\bigr) = -\alpha_\cD, \\
    \Re\cE_{\lb_\cT} &\geq \min\bigl(\alpha_\cD+j\eps-\beta_\cT^-,1+\alpha_\cD-\beta_\cT^-+(j-1)\eps, \\
      &\qquad\qquad \beta_\cT^\Delta+\alpha_\cD-\beta_\cT^-+(j-1)\eps,-\beta_\cT^-+\alpha_\cD+j\eps\bigr)=\alpha_\cD-\beta_\cT^-+j\eps, \\
    \Re\cE_{\rb_\cT} &\geq \min\bigl(-\alpha_\cD+\beta_\cT^++1-\eps,-\alpha_\cD-j\eps+\beta_\cT^++(j-1)\eps+1\bigr) = -\alpha_\cD+\beta_\cT^++1-\eps, \\
    \Re\cE_\iface &\geq \min\bigl(\beta_\cT^\Delta+1+j\eps,2+j\eps,1+\beta_\cT^\Delta+1+(j-1)\eps, \\
      &\qquad\qquad -\beta_\cT^-+\beta_\cT^++(j-1)\eps+1,j\eps-\beta_\cT^-+\beta_\cT^++1-\eps\bigr) \geq 1+(j+1)\eps.
  \end{align*}
  This implies~\eqref{EqEPImprIndR}.

  In particular, the infima of the real parts of the index sets of $R_1^{(j)}$ at $\ff_\cD$, $\ff_\cT$, $\lface$, $\rface$, $\lb_\cD$, $\lb_\cT$, and $\iface$ tend to $+\infty$ as $j\to\infty$.

  Recall also from~\eqref{EqEPImprQR} that the infima of the real parts of the index sets of $Q_1(\alpha_\cD+j\eps)$ at $\lb_\cD$ and $\lb_\cT$ tend to $+\infty$ as $\alpha_\cD+j\eps\to\infty$. Moreover, the real parts of the index sets of $R_1^{(j)}$ at $\rb_\cD$ and $\rb_\cT$ are uniformly bounded from below, as are the real parts of the index sets of $Q_1(\alpha_\cD+j\eps)$ at $\ff_\cD$, $\ff_\cT$, $\lface$, $\rface$, and $\iface$. The claim about the index set collection $\cE^{Q(j),\Delta}$ now follows from Proposition~\ref{PropLComp}.
\end{proof}

We continue using the notation of Lemma~\ref{LemmaEPImpr}; we impose on $\eps$ the additional constraint $\eps<\min(\beta-\beta_\cT^-,\beta_\cT^+-\beta)$. While the index sets of $Q_1^{(j)}-Q_1^{(j-1)}$ have uniform lower bounds at the right boundaries ($\rb_\cD$ and $\rb_\cT$), the rough accounting of index sets afforded by Proposition~\ref{PropLComp} cannot exclude the possibility that the total set $\bigcup_{j\in\N_0}(\cE^{Q(j),\Delta})_{\rb_\cD}$ is no longer an index set (e.g.\ due to an accumulation of elements with real part near $-a_\cD^-(\alpha_\cD)$), similarly at $\rb_\cT$. Thus, we only use the $Q_1^{(j)}$ away from $\rb_\cD\cup\rb_\cT$: if $\chi\in\CI(M^2_\tbop)$ denotes a cutoff which vanishes near $\rb_\cD\cup\rb_\cT$ but is identically $1$ near $\lb_\cD\cup\lb_\cT$, then we can take
\[
  Q_2 \in \Psitb^{-\infty,\cE^Q_2}(M),\qquad Q_2\sim\sum_{j=1}^\infty \chi\cdot(Q_1^{(j)}-Q_1^{(j-1)}),
\]
where the asymptotic sum is taken at $\lb_\cD\cup\lb_\cT$, and
\begin{equation}
\label{EqEPQ2Ind}
  \cE^Q_2 = \bigl( \tilde\cE_{\ff_\cD},\tilde\cE_{\ff_\cT},\tilde\cE_\lface,\tilde\cE_\rface,\tilde\cE_{\lb_\cD},\emptyset,\tilde\cE_{\lb_\cT},\emptyset,\tilde\cE_\iface\bigr), \qquad
  \tilde\cE_\bullet := \bigcup_{j\in\N} (\cE^{Q(j),\Delta})_\bullet.
\end{equation}
Proposition~\ref{PropLComp}, applied to $Q_1^{(j)}-Q_1^{(j-1)}=Q_1(\alpha_\cD+j\eps)R_1^{(j-1)}$ ($j\in\N$) using~\eqref{EqEPImprQR} (for $\alpha'_\cD=\alpha_\cD+j\eps$) and \eqref{EqEPImprIndR} (for $j-1$ in place of $j$), implies that for $(e_2^Q)_\bullet:=\min\Re(\cE_2^Q)_\bullet$, we have
\begin{equation}
\label{EqEPQ2IndBd}
\begin{alignedat}{4}
  (e_2^Q)_{\ff_\cD}&\geq\eps,&\quad
  (e_2^Q)_{\ff_\cT}&\geq 1,&\quad
  (e_2^Q)_\lface&\geq-\beta_\cT^-,&\quad
  (e_2^Q)_\rface&\geq 1+\beta_\cT^+, \\
  (e_2^Q)_{\lb_\cD}&>\alpha_\cD+\eps, &\quad
  (e_2^Q)_{\lb_\cT}&\geq\alpha_\cD-\beta_\cT^-, &&&\quad
  (e_2^Q)_\iface&\geq 1+\eps.
\end{alignedat}
\end{equation}
Let now $J\in\N$ and define the partial sum $Q_2^{[J]}:=\sum_{j=1}^J\chi\cdot(Q_1^{(j)}-Q_1^{(j-1)})=\chi\cdot(Q_1^{(J)}-Q_1^{(0)})$; then the index sets of $Q_2-Q_2^{[J]}$ are contained in those of $Q_2$, but with the minimum of the real parts of the index sets at $\lb_\cD$ and $\lb_\cT$ tending to $\infty$ as $j\to\infty$. Consider then
\begin{align*}
  R_2 &:= I - P\bigl(Q_1(\alpha_\cD) + Q_2\bigr) \\
    &= R_1(\alpha_\cD) - P Q_2 \\
    &= I - P\bigl( Q_1^{(0)} + Q_2^{[J]} + (Q_2-Q_2^{[J]})\bigr) \\
    &= I - P Q_1^{(J)} + P\bigl((1-\chi)(Q_1^{(J)}-Q_1^{(0)})\bigr) - P(Q_2-Q_2^{[J]}) \\
    &= R_1^{(J)} + P\bigl((1-\chi)(Q_1^{(J)}-Q_1^{(0)})\bigr) - P(Q_2-Q_2^{[J]}).
\end{align*}
The minima of the real parts of the index sets at $\lb_\cD$ and $\lb_\cT$ of the first and third term on the right tend to $\infty$ as $J\to\infty$; for the second term, these index sets are trivial ($\emptyset$). On the other hand, from the expression for $R_2$ in the second line, the index sets of $R_2$ at the other boundary hypersurfaces of $M^2_\tbop$ can be bounded by the union of those of $R_1(\alpha_\cD)$ and $Q_2$. Recalling the notation~\eqref{EqEPR1}, we have thus shown that
\begin{equation}
\label{EqEPR2Ind}
\begin{split}
  &R_2 \in \Psitb^{-\infty,\cE^R_2}(M), \\
  &\qquad \cE_2^R = \bigl( \cE^R(\alpha_\cD)_{\ff_\cD}\cup(\cE_2^Q)_{\ff_\cD}, \cE^R(\alpha_\cD)_{\ff_\cT}\cup(\cE_2^Q)_{\ff_\cT}, \\
  &\qquad\hspace{6em} \cE^R(\alpha_\cD)_\lface\cup(\cE_2^Q)_\lface, \cE^R(\alpha_\cD)_\rface\cup(\cE_2^Q)_\rface, \\
  &\qquad\hspace{6em} \emptyset, \cE^R(\alpha_\cD)_{\rb_\cD}, \emptyset, \emptyset, \cE^R(\alpha_\cD)_\iface\cup(\cE_2^Q)_\iface\bigr).
\end{split}
\end{equation}
In view of~\eqref{EqEPQ1} and~\eqref{EqEPQ2IndBd}, the quantities $(e_2^R)_\bullet:=\min\Re(\cE^R_2)_\bullet$ for $\bullet\neq\lb_\cD,\lb_\cT,\rb_\cT$ (for which one may define them as $+\infty$) satisfy
\begin{equation}
\label{EqEPR2IndBd}
\begin{alignedat}{4}
  (e_2^R)_{\ff_\cD}&\geq\eps,&\quad
  (e_2^R)_{\ff_\cT}&\geq 1,&\quad
  (e_2^R)_\lface&\geq-\beta_\cT^-,&\quad
  (e_2^R)_\rface&\geq 1+\beta_\cT^+, \\
  (e_2^R)_{\rb_\cD}&>-\alpha_\cD, &\quad
  (e_2^R)_\iface&\geq 1+\eps.
\end{alignedat}
\end{equation}

Noting that
\begin{equation}
\label{EqEPR2Eqn}
  P(Q_1(\alpha_\cD)+Q_2)=I-R_2,
\end{equation}
we now solve away the error $R_2$, which is trivial at the left boundary $\lb_\cD\cup\lb_\cT$, using an asymptotic Neumann series as in the b-setting (see the proof of Theorem~\ref{ThmAbPx}).

\begin{lemma}[Asymptotic Neumann series]
\label{LemmaEPNeumann}
  Let $R_2$ be as in~\eqref{EqEPR2Ind}--\eqref{EqEPR2IndBd}. Then the $j$-fold composition $R_2^j$ is well-defined for all $j\in\N$, and we have $R_2^j\in\Psitb^{-\infty,\cE_2^{R(j)}}(M)$ with
  \begin{equation}
  \label{EqEPNeumannInd}
    \cE_2^{R(1)} := \cE_2^R,\qquad \cE_2^{R(j)}:=\cE_2^R\circ\cE_2^{R(j-1)}.
  \end{equation}
  The values $\min\Re(\cE_2^{R(j)})_\bullet$ for $\bullet=\ff_\cD,\ff_\cT,\lface,\rface,\rb_\cD,\iface$ satisfy the bounds~\eqref{EqEPR2IndBd}, and we have $\Re(\cE_2^{R(j)})_{\rb_\cT}\geq-\alpha_\cD+\beta_\cT^++1$ and $(\cE_2^{R(j)})_\bullet=\emptyset$ for $\bullet=\lb_\cD,\lb_\cT$. Moreover, for $\bullet=\ff_\cD,\ff_\cT,\lface,\rface,\iface$, we have $\Re(\cE_2^{R(j)})_\bullet\to\infty$ as $j\to\infty$. Finally,
  \begin{equation}
  \label{EqEPNeumannE3}
    (\cE^R_3)_\bullet := \bigcup_{j\in\N} (\cE_2^{R(j)})_\bullet
  \end{equation}
  is an index set for all boundary hypersurfaces $\bullet$ of $M^2_\tbop$, and it satisfies the bounds~\eqref{EqEPR2IndBd} and $\Re(\cE^R_3)_{\rb_\cT}\geq-\alpha_\cD+\beta_\cT^++1$.
\end{lemma}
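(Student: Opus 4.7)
The argument is an induction on $j$, with the composition formula of Proposition~\ref{PropLComp} providing the iterative step. First I would check well-definedness of $R_2^j=R_2\circ R_2^{j-1}$. By inductive hypothesis $(\cE_2^{R(j-1)})_{\lb_\cD}=(\cE_2^{R(j-1)})_{\lb_\cT}=\emptyset$, so the conditions $\Re((\cE_2^R)_{\rb_\cD}+(\cE_2^{R(j-1)})_{\lb_\cD})>0$ and $\Re((\cE_2^R)_{\rb_\cT}+(\cE_2^{R(j-1)})_{\lb_\cT})>1$ of Proposition~\ref{PropLComp} hold vacuously (extended unions with $\emptyset$ are empty), and the composition is defined with index set $\cE_2^{R(j)}=\cE_2^R\circ\cE_2^{R(j-1)}$ as in~\eqref{EqEPNeumannInd}. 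Inspection of the formulas~\eqref{EqLCompInd} for $\cG_{\lb_\cD}$ and $\cG_{\lb_\cT}$ shows that every contributing term involves $\cE_{\lb_\cD}$, $\cE_{\lb_\cT}$, $\cF_{\lb_\cD}$, or $\cF_{\lb_\cT}$, all of which are empty; thus $(\cE_2^{R(j)})_{\lb_\cD}=(\cE_2^{R(j)})_{\lb_\cT}=\emptyset$, closing the first inductive claim.

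Next I would track the real parts at the remaining hypersurfaces. The sharper inductive hypothesis I would carry is
\begin{align*}
  \Re(\cE_2^{R(j)})_{\ff_\cD}&\geq j\eps,\quad
  \Re(\cE_2^{R(j)})_{\ff_\cT}\geq 1+(j-1)\eps,\quad
  \Re(\cE_2^{R(j)})_\lface\geq -\beta_\cT^-+(j-1)\eps,\\
  \Re(\cE_2^{R(j)})_\rface&\geq 1+\beta_\cT^++(j-1)\eps,\quad
  \Re(\cE_2^{R(j)})_\iface\geq 1+j\eps,
\end{align*}
while $(\cE_2^{R(j)})_{\rb_\cD}$ and $(\cE_2^{R(j)})_{\rb_\cT}$ retain the bounds in~\eqref{EqEPR2IndBd} and $\Re(\cE_2^{R(j)})_{\rb_\cT}\geq -\alpha_\cD+\beta_\cT^++1$. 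Each of these is verified by plugging the bounds on $\cE_2^R$ (from~\eqref{EqEPR2IndBd} and $(\cE_2^R)_{\rb_\cT}=\emptyset$) and the hypothesis for $\cF=\cE_2^{R(j-1)}$ into the explicit formulas~\eqref{EqLCompInd}. The constraint $\eps\leq\tfrac12\beta_\cT^\Delta$, i.e.\ $\beta_\cT^+-\beta_\cT^-\geq 2\eps$, is exactly what ensures that the ``cross'' terms such as $\cE_\rface+\cF_\lface-1\geq\beta_\cT^+-\beta_\cT^-+(j-1)\eps$ do not spoil the growth by $\eps$ per iteration at the five interior hypersurfaces. For the right boundaries, the formulas for $\cG_{\rb_\cD}$ and $\cG_{\rb_\cT}$ retain the summands $\cF_{\rb_\cD}$ and $\cF_{\rb_\cT}$ (explaining why no growth is obtained there), and the additional cross terms are bounded below by the claimed bounds using $(\cE_2^R)_{\rb_\cT}=\emptyset$ at the initial step and the inductive lower bound thereafter.

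Finally I would address that $(\cE^R_3)_\bullet=\bigcup_j (\cE_2^{R(j)})_\bullet$ is an index set. For $\bullet\in\{\ff_\cD,\ff_\cT,\lface,\rface,\iface\}$ this is immediate from the growth $\Re(\cE_2^{R(j)})_\bullet\to\infty$: for any $C$, only finitely many $j$ contribute elements with $\Re z<C$, and each such $\cE_2^{R(j)}$ is itself an index set. For $\bullet=\rb_\cD,\rb_\cT$ no such growth is available, but unfolding the recurrence $\cG_{\rb_\cD}=(\cE_{\rb_\cD}+\cF_{\ff_\cD})\extcup\cF_{\rb_\cD}\extcup(\cE_{\rb_\cT}+\cF_\lface-1)$ (the last summand being empty) shows that $(\cE_2^{R(j)})_{\rb_\cD}$ is the extended union of $(\cE_2^R)_{\rb_\cD}$ with translates $(\cE_2^R)_{\rb_\cD}+(\cE_2^{R(i)})_{\ff_\cD}$, $i<j$, whose real parts tend to $+\infty$; hence below any $C$ only finitely many new elements appear, and the union is an index set of real part $>-\alpha_\cD$. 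The same analysis for $\rb_\cT$, using $(\cE_2^R)_{\rb_\cT}=\emptyset$ so that the only contributions come from $\cE_{\rb_\cD}+\cF_\rface$ (with growing $\cF_\rface$) and $\cE_{\rb_\cT}+(\cdots)=\emptyset$, yields an index set with $\Re\geq -\alpha_\cD+\beta_\cT^++1$. The bookkeeping under~\eqref{EqLCompInd}, especially for the $\rb_\cT$ hypersurface where one must carefully exploit $(\cE_2^R)_{\rb_\cT}=\emptyset$ at the first step, is the main technical obstacle; it is not conceptually difficult but requires checking all nine composition formulas case by case.
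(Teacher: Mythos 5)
Your proposal is correct and follows essentially the same route as the paper: the same strengthened inductive hypothesis (growth by $\eps$ per iteration at $\ff_\cD,\ff_\cT,\lface,\rface,\iface$, stationary bounds at $\rb_\cD,\rb_\cT$, empty index sets at $\lb_\cD,\lb_\cT$), verified case by case via~\eqref{EqLCompInd}, with the index-set property at $\rb_\cD,\rb_\cT$ obtained by unfolding the recurrence into extended unions of translates whose real parts tend to infinity. The only cosmetic difference is that the paper records the uniform $\rb_\cD$-bound as $-\alpha_\cD+\delta$ for a fixed $\delta>0$ inside the induction, which you recover implicitly when you observe that the union at $\rb_\cD$ is built from the fixed index set $(\cE_2^R)_{\rb_\cD}$ plus terms escaping to infinity.
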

\begin{proof}
  We show by induction that
  \begin{equation}
  \label{ItEPNeumannPf}
  \begin{split}
    \Re\cE_2^{R(j)} &\geq \bigl( j\eps,1+(j-1)\eps,-\beta_\cT^-+(j-1)\eps,1+\beta_\cT^++(j-1)\eps, \\
      &\qquad\qquad \infty, -\alpha_\cD+\delta, \infty, -\alpha_\cD+\beta_\cT^++1, 1+j\eps\bigr)
  \end{split}
  \end{equation}
  for some $\delta>0$. For $j=1$, this follows from~\eqref{EqEPR2IndBd}, and we in fact have $(\cE_2^{R(1)})_{\rb_\cT}=\emptyset$. Assuming~\eqref{ItEPNeumannPf} for $j-1$, the definition~\eqref{EqEPNeumannInd} and Proposition~\ref{PropLComp} imply (for the same $\delta>0$)
  \begin{align*}
    \Re(\cE_2^{R(j)})_{\ff_\cD} &\geq \min\bigl(\eps{+}(j{-}1)\eps,\beta_\cT^+{-}\beta_\cT^-{+}(j{-}2)\eps\bigr) = j\eps, \\
    \Re(\cE_2^{R(j)})_{\ff_\cT} &\geq \min\bigl(2{+}(j{-}2)\eps,\eps{+}1{+}(j{-}1)\eps,{-}\beta_\cT^-{+}1{+}\beta_\cT^+{+}(j{-}2)\eps\bigr) \geq 1{+}(j{-}1)\eps, \\
    \Re(\cE_2^{R(j)})_\lface &\geq \min\bigl(1{-}\beta_\cT^-{+}(j{-}2)\eps,\eps{-}\beta_\cT^-{+}(j{-}2)\eps,{-}\beta_\cT^-{+}(j{-}1)\eps\bigr) = {-}\beta_\cT^-{+}(j{-}1)\eps, \\
    \Re(\cE_2^{R(j)})_\rface &\geq \min\bigl(1{+}\beta_\cT^+{+}1{+}(j{-}2)\eps,1{+}\beta_\cT^+{+}(j{-}1)\eps,\eps{+}1{+}\beta_\cT^+{+}(j{-}2)\eps\bigr) \\
      & = 1{+}\beta_\cT^+{+}(j{-}1)\eps, \\
    \cE_2^{R(j)})_{\lb_\cD} &= \emptyset, \\
    \Re(\cE_2^{R(j)})_{\rb_\cD} &\geq \min\bigl({-}\alpha_\cD{+}\delta,{-}\alpha_\cD{+}\beta_\cT^+{-}\beta_\cT^-{+}(j{-}2)\eps\bigr) = {-}\alpha_\cD{+}\delta, \\
    (\cE_2^{R(j)})_{\lb_\cT} &= \emptyset, \\
    \Re(\cE_2^{R(j)})_{\rb_\cT} &\geq \min\bigl({-}\alpha_\cD{+}\beta_\cT^+{+}1{+}1{+}(j{-}2)\eps,{-}\alpha_\cD{+}\beta_\cT^+{+}1\bigr) = {-}\alpha_\cD{+}\beta_\cT^+{+}1, \\
    \Re(\cE_2^{R(j)})_\iface &\geq \min\bigl(\eps{+}1{+}(j{-}1)\eps,1{+}1{+}(j{-}1)\eps,1{+}\eps{+}1{+}(j{-}2)\eps, \\
      &\qquad\qquad {-}\beta_\cT^-{+}1{+}\beta_\cT^+{+}(j{-}2)\eps\bigr) = 1{+}j\eps,
  \end{align*}
  which completes the inductive step. The lower bound~\eqref{ItEPNeumannPf} implies the bounds~\eqref{EqEPR2IndBd} for $\min\Re(\cE_2^{R(j)})_\bullet$, $\bullet=\ff_\cD,\ff_\cT,\lface,\rface,\rb_\cD,\iface$, as well as the fact that $\min\Re(\cE_2^{R(j)})_\bullet\to\infty$ as $j\to\infty$ for $\bullet=\ff_\cD,\ff_\cT,\lface,\rface,\iface$. For these $\bullet$, this also implies that $(\cE^R_3)_\bullet$ is an index set.

  It remains to show that $(\cE^R_3)_{\rb_\cD}$ and $(\cE^R_3)_{\rb_\cT}$ are index sets. To this end, it suffices to note that
  \begin{align*}
    (\cE_2^{R(j)})_{\rb_\cD} &= (\cE_2^{R(j-1)})_{\rb_\cD} \extcup\bigl((\cE_2^R)_{\rb_\cD} + (\cE_2^{R(j-1)})_{\ff_\cD}\bigr), \\
    (\cE_2^{R(j)})_{\rb_\cT} &= (\cE_2^{R(j-1)})_{\rb_\cT} \extcup\bigl((\cE_2^R)_{\rb_\cD}+(\cE_2^{R(j-1)})_\rface\bigr),
  \end{align*}
  with $\min\Re\bigl((\cE_2^R)_{\rb_\cD} + (\cE_2^{R(j-1)})_{\ff_\cD}\bigr)\to\infty$ and $\min\Re\bigl((\cE_2^R)_{\rb_\cD}+(\cE_2^{R(j-1)})_\rface\bigr)\to\infty$ as $j\to\infty$ by what we have already shown.
\end{proof}

With $\cE^R_3$ given by~\eqref{EqEPNeumannE3}, let now $R_3\in\Psitb^{-\infty,\cE^R_3}(M)$, with Schwartz kernel equal to the asymptotic sum (at $\lb_\cD\cup\lb_\cT\cup\ff_\cD\cup\ff_\cT\cup\lface\cup\rface\cup\iface$) of $R_2^j$ over $j\in\N$. Then we can compose~\eqref{EqEPR2Eqn} with $I+R_3$. We obtain
\begin{equation}
\label{EqPEQRight}
\begin{split}
  P Q &= I - R, \\
    &\quad Q:=(Q_1(\alpha_\cD)+Q_2)(I+R_3)\in\Psitb^{-m}+\Psitb^{-\infty,\cE_Q}(M), \\
    &\quad R:=I-(I-R_2)(I+R_3)\in\Psitb^{-\infty,\cE_R}(M),
\end{split}
\end{equation}
where using the notation~\eqref{EqEPQ1}, \eqref{EqEPQ2Ind} (based in turn on Lemma~\ref{LemmaEPImpr}), and~\eqref{EqEPNeumannE3}, we have
\begin{equation}
\label{EqPEQRightInd}
\begin{split}
  \cE_Q &= (\cE^Q(\alpha_\cD) \cup \cE^Q_2) \cup \bigl( (\cE^Q(\alpha_\cD) \cup \cE^Q_2)\circ\cE^R_3 \bigr), \\
  \cE_R &= \bigl(\emptyset,\emptyset,\emptyset,\emptyset,\emptyset,(\cE_3^R)_{\rb_\cD},\emptyset,(\cE_3^R)_{\rb_\cT},\emptyset\bigr).
\end{split}
\end{equation}
For the index set collection of $R$, we use that for any $J$, and setting $R_3^{[J]}=\sum_{j=1}^J R_2^j$, we have
\begin{align*}
  R &= I-(I-R_2)(I+R_3^{[J]}) - (I-R_2)(R_3-R_3^{[J]}) \\
    &= R_2^{J+1} - (I-R_2)(R_3-R_3^{[J]}),
\end{align*}
where the index sets $(R_2^{J+1})_\bullet$ at $\lb_\cD,\lb_\cT$ are trivial and at $\ff_\cD,\ff_\cT,\lface,\rface,\iface$ have real part exceeding any fixed number when $J$ is large enough, while $(R_2^{J+1})_\bullet\subset(\cE_3^R)_\bullet$ for $\bullet=\rb_\cD,\rb_\cT$. The same is true for the index sets of $R_3-R_3^{[J]}$, and therefore also the real parts of the index sets $\cF$ of $(I-R_2)(R_3-R_3^{[J]})=(R_3-R_3^{[J]})-R_2(R_3-R_3^{[J]})$ at $\ff_\cD,\ff_\cT,\lface,\rface,\lb_\cD,\lb_\cT,\iface$ exceed any fixed number, while Proposition~\ref{PropLComp} shows that the subsets of the index sets at $\rb_\cD$, resp.\ $\rb_\cT$ with real part less than any fixed number are contained in $(\cE_3^R)_{\rb_\cD}$, resp.\ $(\cE_3^R)_{\rb_\cT}$ for sufficiently large $J$. This gives~\eqref{EqPEQRightInd} and finishes the proof of the construction of the right parametrix of Theorem~\ref{ThmEPx}; note indeed that
\[
  \Psitb^{-\infty,\cE_R}(M) = \Psi^{-\infty,(\emptyset,\emptyset,(\cE_R)_{\rb_\cD},(\cE_R)_{\rb_\cT}-1)}(M),
\]
the shift by $-1$ arising from~\eqref{EqL3FullyRes}. The bounds~\eqref{EqEPxInd} on the index sets follow from~\eqref{EqEPQ1}, \eqref{EqEPQ2IndBd}, and from the bounds for~\eqref{EqEPNeumannE3} stated in Lemma~\ref{LemmaEPNeumann}.

We construct a left parametrix for $P$ as follows: fix a positive 3b-density $0<\nu_\tbop\in\CI(M;\Omegatb M)$ on $M$; by Lemma~\ref{LemmaEAdj}, the adjoint $P^*$ is fully elliptic with weights $-\alpha_\cD$, $-(\alpha_\cT-1)$. Denote by $Q_\sharp\in\Psitb^{-m}(M)+\Psitb^{-\infty,\cF}(M)$ a right parametrix of $P^*$; the index sets comprising $\cF$ satisfy the lower bounds~\eqref{EqEPxInd} with $\beta_\cT^\pm$ and $\alpha_\cD$ replaced by $-\beta_\cT^\mp-1$ and $-\alpha_\cD$, respectively (again by Lemma~\ref{LemmaEAdj}). But then $Q_\sharp^*\in\Psitb^{-m}(M)+\Psitb^{-\infty,\cF^*}(M)$ is the desired left parametrix of $P$, where
\begin{alignat*}{2}
  \cF^*_{\ff_\cD} &= \ol{\cF_{\ff_\cD}}, &\qquad
  \cF^*_{\ff_\cT} &= \ol{\cF_{\ff_\cT}}, \\
  \cF^*_\lface &= \ol{\cF_\rface}, &\qquad
  \cF^*_\rface &= \ol{\cF_\lface}, \\
  \cF^*_{\lb_\cD} &= \ol{\cF_{\rb_\cD}}, &\qquad
  \cF^*_{\rb_\cD} &= \ol{\cF_{\lb_\cD}}, \\
  \cF^*_{\lb_\cT} &= \ol{\cF_{\rb_\cT}}, &\qquad
  \cF^*_{\rb_\cT} &= \ol{\cF_{\lb_\cT}}, \\
  \cF^*_\iface &= \ol{\cF_\iface};
\end{alignat*}
here we write $\ol{\cF_{\ff_\cD}}=\{(\bar z,k)\colon(z,k)\in\cF_{\ff_\cD}\}$, etc. The proof of Theorem~\ref{ThmEPx} is complete.

\subsection{Consequences: Fredholm theory and generalized inverses}
\label{SsEF}

The existence of the parametrices in Theorem~\ref{ThmEPx} gives precise information on the mapping properties of fully elliptic 3b-operators:

\begin{thm}[Fredholm theory and (generalized) inverses]
\label{ThmPFred}
  Let $P\in\Psitb^m(M)$ be fully elliptic with weights $\alpha_\cD,\alpha_\cT$; see Definition~\usref{DefE}. Define 3b-Sobolev spaces on $M$ with respect to a positive b-density $\nu_\bop$, so $\Htb^{s,\alpha_\cD,\alpha_\cT}(M)=\Htb^{s,\alpha_\cD,\alpha_\cT}(M,\nu_\bop)$ in terms of Notation~\usref{NotDensity}. Then:
  \begin{enumerate}
  \item\label{ItPFred} For all $s\in\R$, the map
    \begin{equation}
    \label{EqPFredMap}
      P\colon\Htb^{s,\alpha_\cD,\alpha_\cT}(M)\to\Htb^{s-m,\alpha_\cD,\alpha_\cT}(M)
    \end{equation}
    is Fredholm. We have $\ker P\subset\Htb^{\infty,\alpha_\cD,\alpha_\cT}(M)$ and $\ker P^*\subset\Htb^{\infty,-\alpha_\cD,-\alpha_\cT}(M)$.
  \item\label{ItPFredInv} Denote the orthogonal projection (with respect to $\Htb^{0,\alpha_\cD,\alpha_\cT}(M)$) to $\ker P$ by $\Pi$, and the orthogonal projection to $(\ran P)^\perp$ by $\Pi'$. Write $G\colon\Htb^{s-m,\alpha_\cD,\alpha_\cT}(M)\to\Htb^{s,\alpha_\cD,\alpha_\cT}(M)$ for the generalized inverse of $P$ (i.e.\ $G f=u$ when $f\in\ran P$ and $P u=f$, $u\perp\ker P$, and $G f=0$ when $f\perp\ran P$). Then
    \[
      \Pi \in \Psi^{-\infty,\cE^\Pi}(M),\quad
      \Pi' \in \Psi^{-\infty,\cE^{\Pi'}}(M),\quad
      G \in \Psitb^{-m}(M) + \Psitb^{-\infty,\cG}(M),
    \]
    where, in the notation of Theorem~\usref{ThmEPx},
    \begin{equation}
    \label{EqPFredIndProj}
    \begin{alignedat}{4}
      \cE^\Pi &= \bigl( \cE'_{\lb_\cD},\ && \cE'_{\lb_\cT},\ && \cE'_{\lb_\cD}-2\alpha_\cD,\ && \cE'_{\lb_\cT}-2\alpha_\cT \bigr), \\
      \cE^{\Pi'} &= \bigl( \cE_{\rb_\cD}+2\alpha_\cD,\ && \cE_{\rb_\cT}-1+2\alpha_\cT,\ &&\cE_{\rb_\cD},\ && \cE_{\rb_\cT}-1 \bigr),
    \end{alignedat}
    \end{equation}
    and $\cG=(\cG_{\ff_\cD},\cG_{\ff_\cT},\cG_\lface,\cG_\rface,\cG_{\lb_\cD},\cG_{\rb_\cD},\cG_{\lb_\cT},\cG_{\rb_\cT},\cG_\iface)$ is a collection of index sets with
    \begin{equation}
    \label{EqPFredIndG}
    \begin{alignedat}{2}
      \Re(\cG_{\ff_\cD}\setminus\{(0,0)\}) &> 0, &\qquad
      \Re(\cG_{\ff_\cT}\setminus\{(0,0)\}) &\geq 1, \\
      \Re\cG_\lface &> -\beta+b-\eps, &\qquad
      \Re\cG_\rface &> \beta+b+1-\eps, \\
      \Re\cG_{\lb_\cD} &> \alpha_\cD, &\qquad
      \Re\cG_{\rb_\cD} &> -\alpha_\cD, \\
      \Re\cG_{\lb_\cT} &> \alpha_\cT+b-\eps, &\qquad
      \Re\cG_{\rb_\cT} &> -\alpha_\cT+b+1-\eps, \\
      \Re\cG_\iface &> 1;
    \end{alignedat}
    \end{equation}
    here, we recall $\beta=\alpha_\cD-\alpha_\cT$, $b=\min(\beta-\beta_\cT^-,\beta_\cT^+-\beta)>0$, and $\eps\in(0,\min(\half\beta_\cT^\Delta,b))$.
  \item\label{ItPFredKer} The kernel and cokernel of $P$ consist of polyhomogeneous distributions on $M$, with $\ker P\subset\cA_\phg^{(\cE'_{\lb_\cD},\cE'_{\lb_\cT})}(M)$ and $(\ran P)^\perp=\ker P^*\subset\cA_\phg^{(\cE_{\rb_\cD},\cE_{\rb_\cT}-1)}(M)$.
  \end{enumerate}
\end{thm}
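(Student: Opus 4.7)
The plan is to derive all three parts from Theorem~\ref{ThmEPx}, which provides a right parametrix $Q$ with $PQ=I-R$, $R\in\Psi^{-\infty,(\emptyset,\emptyset,\cE_{\rb_\cD},\cE_{\rb_\cT}-1)}(M)$, and a left parametrix $Q'$ with $Q'P=I-R'$, $R'\in\Psi^{-\infty,(\cE'_{\lb_\cD},\cE'_{\lb_\cT},\emptyset,\emptyset)}(M)$. For part~\eqref{ItPFred}, the lower bounds in~\eqref{EqEPxInd} yield $\Re\cE_{\rb_\cD}>-\alpha_\cD$ and $\Re(\cE_{\rb_\cT}-1)>-\alpha_\cT$, and symmetric inequalities for $\cE'_{\lb_\cD},\cE'_{\lb_\cT}$. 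Proposition~\ref{PropLMap}\eqref{ItLMap3b} then places $R,R'\colon\Htb^{s,\alpha_\cD,\alpha_\cT}(M)\to\cA^{\gamma_\cD,\gamma_\cT}(M)\cap\Htb^{\infty,\alpha_\cD,\alpha_\cT}(M)$ for some $\gamma_\cD>\alpha_\cD$ and $\gamma_\cT>\alpha_\cT$, and Rellich compactness (Lemma~\ref{Lemma3SobCpt}) makes them compact on $\Htb^{s,\alpha_\cD,\alpha_\cT}(M)$; this gives the Fredholm property. For part~\eqref{ItPFredKer}, I would iterate $u=R'u$ for $u\in\ker P$: since $R'$ has trivial right index sets, Proposition~\ref{PropLMap}\eqref{ItLMapPhg} applies directly to any such $u$ and yields $u\in\cA_\phg^{(\cE'_{\lb_\cD},\cE'_{\lb_\cT})}(M)$. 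Applying the same argument to $P^*$, which is fully elliptic at weights $(-\alpha_\cD,-\alpha_\cT)$ by Lemma~\ref{LemmaEAdj}, gives $\ker P^*\subset\cA_\phg^{(\cE_{\rb_\cD},\cE_{\rb_\cT}-1)}(M)$, the $-1$ shift reflecting the right-b-density convention in~\eqref{EqL3FullyRes}.

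For part~\eqref{ItPFredInv}, the projection $\Pi$ is finite-rank onto $\ker P$. Choosing an $\Htb^{0,\alpha_\cD,\alpha_\cT}$-orthonormal basis $\{\phi_i\}\subset\cA_\phg^{(\cE'_{\lb_\cD},\cE'_{\lb_\cT})}(M)$ and writing the projection against the b-density $\nu_\bop$ gives
\[
  K_\Pi(z,z')=\sum_i\phi_i(z)\overline{\phi_i(z')}\rho_\cD(z')^{-2\alpha_\cD}\rho_\cT(z')^{-2\alpha_\cT},
\]
with left indices $(\cE'_{\lb_\cD},\cE'_{\lb_\cT})$ from $\phi_i(z)$ and right indices shifted by $-2\alpha_\cD,-2\alpha_\cT$ from the weight factor, matching $\cE^\Pi$. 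For $\Pi'$, the complement of $\ran P$ in $\Htb^{s-m,\alpha_\cD,\alpha_\cT}$ computed via the $L^2(\nu_\bop)$-pairing is $\rho_\cD^{2\alpha_\cD}\rho_\cT^{2\alpha_\cT}\ker P^*$, and the resulting kernel $K_{\Pi'}(z,z')=\sum_j\eta_j(z)\overline{\tilde\eta_j(z')}$ with $\eta_j\in\rho_\cD^{2\alpha_\cD}\rho_\cT^{2\alpha_\cT}\cA_\phg^{(\cE_{\rb_\cD},\cE_{\rb_\cT}-1)}$ and $\tilde\eta_j\in\cA_\phg^{(\cE_{\rb_\cD},\cE_{\rb_\cT}-1)}$ gives exactly $\cE^{\Pi'}$.

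The generalized inverse $G$ satisfies the identity
\[
  G = Q'+R'Q-Q'\Pi'-R'\Pi Q+R'GR,
\]
derived by combining $GP=I-\Pi$ and $PG=I-\Pi'$ with $PQ=I-R$ and $Q'P=I-R'$, in direct analogy with~\eqref{EqAbPxInverse}. The first four terms lie in the large 3b-calculus by Theorem~\ref{ThmEPx}, Proposition~\ref{PropLComp}, and the membership of $\Pi,\Pi'$ just established; index set bookkeeping against~\eqref{EqPFredIndG} is routine. The main obstacle is the term $R'GR$, since $G$ is a priori only Sobolev-bounded. I would resolve this by observing that $R$ has a Schwartz kernel that is polyhomogeneous in the left variable and vanishes to infinite order at the left boundaries $\lb_\cD,\lb_\cT$; reading $GR$ columnwise reduces the question to the polyhomogeneity of solutions $u=Gf$ to $Pu=f$ for polyhomogeneous $f\perp\ker P^*$, which itself follows from the parametrix bootstrap $u=Qf-\Pi Qf+Ru$ applied iteratively, with index sets improving at each step just as in Lemma~\ref{LemmaEPImpr}. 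Composing on the left with $R'$ via Proposition~\ref{PropLComp}, and matching the resulting shifts against the fixed parameters $b=\min(\beta-\beta_\cT^-,\beta_\cT^+-\beta)$ and $\eps$ using the bounds already noted in Theorem~\ref{ThmEPx}, places $R'GR$ in the large 3b-calculus with index sets satisfying~\eqref{EqPFredIndG}, completing the proof.
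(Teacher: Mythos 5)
Your overall strategy coincides with the paper's: everything is derived from the two parametrices of Theorem~\ref{ThmEPx}, and parts~\eqref{ItPFred}, \eqref{ItPFredKer}, the descriptions of $\Pi$ and $\Pi'$, and your identity for $G$ (correctly derived from $P G=I-\Pi'$, $G P=I-\Pi$, $P Q=I-R$, $Q'P=I-R'$) are all sound and match the paper's argument up to which of $G P Q$ or $Q'P G$ one expands first. The one step that does not work as written is your treatment of $R'G R$. The identity $u=Q f-\Pi Q f+R u$ is false: from $P Q=I-R$ one gets $u=G f=Q f-\Pi Q f+G(R f)$, not $+R u$; and iterating the corrected identity only replaces the remainder by $G(R^N f)$, which does not improve with $N$ (the right-boundary index sets of $R^N$ do not increase), so there is no convergent bootstrap "as in Lemma~\ref{LemmaEPImpr}" --- that lemma gains orders at the \emph{left} boundary by re-running the parametrix construction at shifted $\cD$-weights, a mechanism not available here. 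The usable bootstrap would be the left-parametrix identity $u=Q'f+R'u$, which terminates in one step because $R'$ maps weighted 3b-Sobolev spaces into $\cA_\phg^{(\cE'_{\lb_\cD},\cE'_{\lb_\cT})}(M)$.

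More to the point, the entire detour through polyhomogeneity of $G f$ is unnecessary. The columns $K_R(\cdot,z')$ of $R$ lie in $\CIdot(M)$, polyhomogeneously in $z'$ with index sets $\cE_{\rb_\cD}$, $\cE_{\rb_\cT}-1$; hence by the Sobolev boundedness of $G$ alone, the columns of $G R$ lie in $\Htb^{\infty,\alpha_\cD,\alpha_\cT}(M)$, still polyhomogeneous in $z'$. Applying $R'$ fiber-wise in the left variable then produces polyhomogeneity in $z$ with index sets $\cE'_{\lb_\cD}$, $\cE'_{\lb_\cT}$, so that $R'G R\in\Psi^{-\infty,(\cE'_{\lb_\cD},\cE'_{\lb_\cT},\cE_{\rb_\cD},\cE_{\rb_\cT}-1)}(M)$ is fully residual. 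This is precisely the argument indicated for $R_L P^{-1}R_R$ in the proof of Theorem~\ref{ThmAbPx}, and it is how the paper concludes; with this replacement your proof is complete.
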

\begin{proof}
  In the notation of Theorem~\ref{ThmEPx}, part~\eqref{ItPFred} follows from the compactness properties of the errors $R,R'$. To wit, $R$ is a compact operator on $\Htb^{s-m,\alpha_\cD,\alpha_\cT}(M)$; note that it is bounded on this space since $\Re\cE_{\rb_\cD}+\alpha_\cD>0$ and $\Re\cE_{\rb_\cT}-1+\alpha_\cT>\alpha_\cT-\alpha_\cD+\beta_\cT^+-\eps=(\beta_\cT^+-\beta)-\eps>0$, and the range of $R$ consists of elements of $\CIdot(M)$, which includes compactly into $\Htb^{s-m,\alpha_\cD,\alpha_\cT}(M)$. Similarly, $R'$ is a compact operator on $\Htb^{s,\alpha_\cD,\alpha_\cT}(M)$, since its range is contained in $\cA^{\alpha_\cD+\delta,\alpha_\cD-\beta_\cT^-}(M)\subset\cA^{\alpha_\cD+\delta,\alpha_\cT+\delta}(M)$ for some small $\delta>0$, and this space embeds compactly into $\Htb^{s,\alpha_\cD,\alpha_\cT}(M)$.

  For part~\eqref{ItPFredInv}, denote by $u_1,\ldots,u_N\in\Htb^{0,\alpha_\cD,\alpha_\cT}(M)$ an orthonormal basis of $\ker P$. Then $u_j=(Q'P+R')u_j=R'u_j\in\cA_\phg^{(\cE'_{\lb_\cD},\cE'_{\lb_\cT})}(M)$, and therefore
  \[
    \Pi = \sum_{j=1}^N u_j \la -,u_j\ra_{\Htb^{0,\alpha_\cD,\alpha_\cT}(M)} = \sum_{j=1}^N u_j \otimes \rho_\cD^{-2\alpha_\cD}\rho_\cT^{-2\alpha_\cT}\ol{u_j}
  \]
  is of the stated class. The claim for $\Pi'$ follows from an analogous description of the $L^2(M)$-adjoint $(\Pi')^*$, using the fact that any $v\in\ker P^*\cap\Htb^{0,-\alpha_\cD,-\alpha_\cT}(M)$ satisfies $v=(Q^*P^*+R^*)v=R^*v\in\cA_\phg^{(\cE_{\rb_\cD},\cE_{\rb_\cT}-1)}(M)$. Finally, the generalized inverse $G$ satisfies $P G=I-\Pi$ and $G P=I-\Pi'$, and hence
  \begin{align}
    G &= G(P Q+R) \nonumber\\
      &= (I-\Pi')Q + (Q'P+R')G R \nonumber\\
  \label{EqPFredPf}
      &= Q - \Pi' Q + Q' R - Q'\Pi R + R' G R.
  \end{align}
  Note then that the boundedness of $G\colon\Htb^{s-m,\alpha_\cD,\alpha_\cT}(M)\to\Htb^{s,\alpha_\cD,\alpha_\cT}(M)$ implies $R' G R\in\Psi^{-\infty,(\cE'_{\lb_\cD},\cE'_{\lb_\cT},\cE_{\rb_\cD},\cE_{\rb_\cT}-1)}(M)$. The relationship~\eqref{EqL3FullyResRel} moreover gives $\Pi\in\Psitb^{-\infty,\cE^\Pi_\tbop}(M)$ and $\Pi'\in\Psitb^{-\infty,\cE^{\Pi'}_\tbop}(M)$ (with $(\cE_\tbop^\Pi)_{\ff_\cD}=2(\cE'_{\lb_\cD}-\alpha_\cD)$ etc.\ from~\eqref{EqPFredIndProj}), with the lower bounds recorded in Theorem~\ref{ThmEPx} implying
  \begin{alignat*}{2}
    \Re(\cE^\Pi_\tbop)_{\ff_\cD} &>0, &\qquad
    \Re(\cE^\Pi_\tbop)_{\ff_\cT} &>2(\beta-\beta_\cT^--\eps)+1, \\
    \Re(\cE^\Pi_\tbop)_\lface &>-\beta_\cT^--\eps, &\qquad
    \Re(\cE^\Pi_\tbop)_\rface &>-\beta_\cT^-+2\beta+1-\eps, \\
    \Re(\cE^\Pi_\tbop)_{\lb_\cD} &>\alpha_\cD, &\qquad
    \Re(\cE^\Pi_\tbop)_{\rb_\cD} &>-\alpha_\cD, \\
    \Re(\cE^\Pi_\tbop)_{\lb_\cT} &\geq \alpha_\cD-\beta_\cT^--\eps, &\qquad
    \Re(\cE^\Pi_\tbop)_{\rb_\cT} &\geq -\alpha_\cT+\beta-\beta_\cT^-+1-\eps, \\
    \Re(\cE^\Pi_\tbop)_\iface &\geq 2(\beta-\beta_\cT^--\eps)+1,
  \end{alignat*}
  and also
  \begin{alignat*}{2}
    \Re(\cE^{\Pi'}_\tbop)_{\ff_\cD} &>0, &\qquad
    \Re(\cE^{\Pi'}_\tbop)_{\ff_\cT} &>2(\beta_\cT^+-\beta-\eps)+1, \\
    \Re(\cE^{\Pi'}_\tbop)_\lface &>-2\beta+\beta_\cT^+-\eps, &\qquad
    \Re(\cE^{\Pi'}_\tbop)_\rface &>\beta_\cT^++1-\eps, \\
    \Re(\cE^{\Pi'}_\tbop)_{\lb_\cD} &>\alpha_\cD, &\qquad
    \Re(\cE^{\Pi'}_\tbop)_{\rb_\cD} &>-\alpha_\cD, \\
    \Re(\cE^{\Pi'}_\tbop)_{\lb_\cT} &\geq \alpha_\cT+\beta_\cT^+-\beta-\eps, &\qquad
    \Re(\cE^{\Pi'}_\tbop)_{\rb_\cT} &\geq-\alpha_\cD+\beta_\cT^++1-\eps, \\
    \Re(\cE^{\Pi'}_\tbop)_\iface &\geq 2(\beta_\cT^+-\beta-\eps)+1,
  \end{alignat*}
  Thus, we have $G\in\Psitb^{-m}(M)+\Psitb^{-\infty,\cG}(M)$, where the index set $\cG$ can be computed from~\eqref{EqPFredPf} by means of Proposition~\ref{PropLComp}; and the lower bounds~\eqref{EqPFredIndG} follow from~\eqref{EqLCompInd} using the lower bounds on the index sets for $\Pi,\Pi'$ (recorded above) and $R,Q,Q'$ (recorded in Theorem~\ref{ThmEPx}).

  Part~\eqref{ItPFredKer} follows from the description of $\Pi$ and $\Pi'$.
\end{proof}

\begin{cor}[Tempered nullspace]
\label{CorPKer}
  Suppose $u\in\ol{\sD'}(M^\circ)$ satisfies $P u=0$. Then $u$ is polyhomogeneous.
\end{cor}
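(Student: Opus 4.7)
My plan is to reduce the polyhomogeneity statement to Theorem~\ref{ThmPFred}\eqref{ItPFredKer} by choosing weights at which $P$ remains fully elliptic and for which $u$ lies in the corresponding weighted 3b-Sobolev space.

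First I would observe that any extendible distribution $u\in\ol{\sD'}(M^\circ)$ lies in some weighted 3b-Sobolev space $\Htb^{s_0,\alpha_\cD^0,\alpha_\cT^0}(M)$: the intersection $\bigcap_{s,\alpha_\cD,\alpha_\cT}\Htb^{s,\alpha_\cD,\alpha_\cT}(M)$ equals $\CIdot(M)$, whose dual is precisely $\ol{\sD'}(M^\circ)$, so by duality the union of the weighted 3b-Sobolev spaces exhausts $\ol{\sD'}(M^\circ)$. By the ellipticity of the principal symbol of $P$, a symbolic parametrix $Q_0\in\Psitb^{-m}(M)$ satisfies $Q_0 P=I-R_0'$ with $R_0'\in\Psitb^{-\infty}(M)$; applying this to $u$ and using $Pu=0$ gives $u=R_0'u$. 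Since conjugation by $\rho_\cD^{\alpha_\cD^0}\rho_\cT^{\alpha_\cT^0}$ preserves the class $\Psitb^{-\infty}(M)$, the operator $R_0'$ maps $\Htb^{s_0,\alpha_\cD^0,\alpha_\cT^0}(M)$ into $\Htb^{\infty,\alpha_\cD^0,\alpha_\cT^0}(M)$, and I conclude $u\in\Htb^{\infty,\alpha_\cD^0,\alpha_\cT^0}(M)$.

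Next I would choose weights $\alpha_\cD',\alpha_\cT'\in\R$ at which $P$ is still fully elliptic and with $\alpha_\cD'\leq\alpha_\cD^0$ and $\alpha_\cT'\leq\alpha_\cT^0$. By Lemma~\ref{LemmaEWeights}, starting from the original full ellipticity weights $(\alpha_\cD,\alpha_\cT)$, the operator $P$ is fully elliptic with weights $(\alpha_\cD',\alpha_\cT+(\alpha_\cD'-\alpha_\cD))$ for every $\alpha_\cD'\in\R\setminus\Re\Specb(N_\cD(P))$. Since $\Re\Specb(N_\cD(P))$ is discrete by Theorem~\ref{ThmAebDInv} applied to $N_\cD(P)$, I can take $\alpha_\cD'$ arbitrarily negative while avoiding this discrete set, so small that both $\alpha_\cD'\leq\alpha_\cD^0$ and $\alpha_\cT':=\alpha_\cT+(\alpha_\cD'-\alpha_\cD)\leq\alpha_\cT^0$ hold. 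Then the inclusion $\Htb^{\infty,\alpha_\cD^0,\alpha_\cT^0}(M)\subset\Htb^{\infty,\alpha_\cD',\alpha_\cT'}(M)$ places $u$ in the latter space.

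Finally, applying Theorem~\ref{ThmPFred}\eqref{ItPFredKer} to $P$ at the weights $(\alpha_\cD',\alpha_\cT')$ yields $u\in\ker P\subset\cA_\phg^{(\cE'_{\lb_\cD},\cE'_{\lb_\cT})}(M)$, so $u$ is polyhomogeneous on $M$. The main obstacle I anticipate is essentially bookkeeping: ensuring that the weights $(\alpha_\cD^0,\alpha_\cT^0)$ in which the tempered distribution $u$ initially lives can be matched against the one-parameter family of admissible full-ellipticity weights furnished by Lemma~\ref{LemmaEWeights}. This reduces to the discreteness of $\Re\Specb(N_\cD(P))$ and is thus not a genuine difficulty; the proof is a clean assembly of symbolic 3b-elliptic regularity, the flexibility of weights, and the precise kernel description of Theorem~\ref{ThmPFred}.
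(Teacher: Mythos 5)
Your proposal is correct and follows essentially the same route as the paper: place $u$ in some weighted 3b-Sobolev space by duality, shift the weights downward while preserving full ellipticity (the paper keeps $\alpha_\cD-\alpha_\cT$ in $(\beta_\cT^-,\beta_\cT^+)$ and avoids $\Re\Specb(N_\cD(P))$, which is exactly what Lemma~\ref{LemmaEWeights} packages), and then invoke Theorem~\ref{ThmPFred}\eqref{ItPFredKer}. The intermediate elliptic-regularity upgrade to $\Htb^{\infty,\alpha_\cD^0,\alpha_\cT^0}(M)$ is valid but not needed, since Theorem~\ref{ThmPFred}\eqref{ItPFredKer} applies at any Sobolev order.
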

\begin{proof}
  The intersection of all weighted 3b-Sobolev spaces on $M$ with values in densities is equal to $\CIdot(M;\Omega M)$. By duality, the union of all weighted 3b-Sobolev spaces on $M$ is therefore equal to the dual space $\ol{\sD'}(M^\circ)$. (This is H\"ormander's notation \cite[Appendix~B]{HormanderAnalysisPDE3}; another common notation for this space is $\cC^{-\infty}(M)$ \cite{MelroseDiffOnMwc}. In the case $M_0=\ol{\R^n}$, this is the space $\sS'(\R^n)$ of tempered distributions.) Therefore, $u\in\Htb^{-N,(-N,-N)}(M)$ for some $N$. Since the full ellipticity assumption for $P$ is verified for any weights $\alpha_\cD,\alpha_\cT$ provided the difference $\alpha_\cD-\alpha_\cT$ lies in the fixed interval $(\beta_\cT^-,\beta_\cT^+)$ and $\alpha_\cD$ avoids the discrete set $\Re\Specb(N_\cD(P))\subset\R$, we can choose weights $\alpha_\cD,\alpha_\cT$ so that $u\in\Htb^{-N,\alpha_\cD,\alpha_\cT}$ and $P$ is fully elliptic with weights $\alpha_\cD,\alpha_\cT$. The polyhomogeneity of $u$ then follows from Theorem~\ref{ThmPFred}\eqref{ItPFredKer}.
\end{proof}

While we do not develop an index formula for fully elliptic 3b-operators, we do record the following relative index theorem, which is the 3b-analogue of \cite[\S6.2]{MelroseAPS}:

\begin{thm}[Relative index theorem]
\label{ThmPRel}
  Let $P\in\Psitb^m(M)$, and suppose $P$ is fully elliptic with weights $\alpha_\cD,\alpha_\cT$ and also with weights $\alpha'_\cD,\alpha'_\cT$, where $\alpha_\cD<\alpha_\cD'$. Put $\beta=\alpha_\cD-\alpha_\cT$. Write $\ind(\alpha_\cD,\alpha_\cT)$ for the Fredholm index of $P\colon\Htb^{s,\alpha_\cD,\alpha_\cT}(M)\to\Htb^{s-m,\alpha_\cD,\alpha_\cT}(M)$ (which is independent of $s\in\R$), and define $\ind(\alpha'_\cD,\alpha'_\cT)$ analogously. Then
  \[
    \ind(\alpha'_\cD,\alpha'_\cT) - \ind(\alpha_\cD,\alpha_\cT) = \sum_{\alpha_\cD<\Re\lambda<\alpha_\cD'} m_\cD(\lambda),
  \]
  where $m_\cD(\lambda)$ is the rank of a pole $\wh{N_\cD}(P,\zeta)^{-1}$ at $\zeta=-i\lambda$: that is, $m_\cD(\lambda)=0$ unless $(\lambda,0)\in\Specb(N_\cD(P))$, in which case $m_\cD(\lambda)=\dim F_\cD(P,\lambda)$, where
  \begin{equation}
  \label{EqPRelFormalSol}
    F_\cD(P,\lambda) = \Biggl\{ u = \sum_{j=0}^J \rho_0^\lambda(\log\rho_0)^j u_j \colon J\in\N_0,\ u_j\in\Hb^{\infty,-\beta}(\cD),\ N_\cD(P)u=0 \Biggr\}.
  \end{equation}
\end{thm}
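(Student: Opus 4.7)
The plan is to follow the classical strategy for relative index theorems in geometric singular analysis, dating back to Melrose's proof in the b-calculus~\cite[\S6.2]{MelroseAPS}. The main idea is to view $\alpha_\cD\mapsto\ind(\alpha_\cD,\alpha_\cT)$ as piecewise-constant in $\alpha_\cD$, jumping only when $\alpha_\cD$ crosses a value in $\Re\Specb(N_\cD(P))$, and to compute each jump as the rank of a residue of $\wh{N_\cD}(P,\zeta)^{-1}$.

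\textit{Step 1: Reduction to a single crossing.} Using Lemma~\ref{LemmaEConj} with $\gamma_\cD=\gamma_\cT=\gamma$, the conjugated operator $P_\gamma:=\rho_0^{-\gamma}P\rho_0^\gamma\in\Psitb^m(M)$ is fully elliptic with weights $(\alpha_\cD-\gamma,\alpha_\cT-\gamma)$ whenever $\alpha_\cD-\gamma\notin\Re\Specb(N_\cD(P))$, and it has the same Fredholm index. Since $\Re\Specb(N_\cD(P))$ is discrete (by Theorem~\ref{ThmAebDInv} applied to the edge-b-operator $N_\cD(P)$), only finitely many values of $\gamma\in[0,\alpha'_\cD-\alpha_\cD]$ are excluded. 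Homotopy invariance of the Fredholm index on the open complement, combined with the additivity of both sides of the claimed formula over consecutive subintervals, reduces the proof to the case where exactly one $\lambda\in\Re\Specb(N_\cD(P))$ lies in $(\alpha_\cD,\alpha'_\cD)$, with the interval chosen arbitrarily small.

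\textit{Step 2: Contour deformation of the generalized inverse.} The generalized inverse $G(\alpha)$ produced by Theorem~\ref{ThmPFred} lies in the large 3b-calculus, and its Schwartz kernel on $\ff_\cD$ is (by the construction in Proposition~\ref{PropED}, which imports the b-calculus recipe of Theorem~\ref{ThmAbPx}) the inverse Mellin transform of $\wh{N_\cD}(P,\zeta)^{-1}$ along the contour $\Im\zeta=-\alpha_\cD$; the further refinements of \S\ref{SsEP} do not alter the pole structure of the kernel on $\ff_\cD$. Comparing $G(\alpha'_\cD)$ and $G(\alpha_\cD)$ thus amounts to deforming the Mellin contour past the pole at $\zeta=-i\lambda$, picking up a finite-rank residue $F_\lambda$ whose Schwartz kernel near $\lb_\cD\cup\rb_\cD$ is polyhomogeneous of the form $\sum_{j=0}^J\rho_0^\lambda(\log\rho_0)^j k_j$ with $k_j$ smooth on $\cD\times\cD$. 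The key identification is that $\rank F_\lambda=m_\cD(\lambda)$: standard meromorphic Fredholm theory for the analytic family $\wh{N_\cD}(P,\cdot)$ on weighted b-Sobolev spaces (Lemma~\ref{LemmaAbEll}, Theorem~\ref{ThmAbPx}) gives a finite-rank principal part in the Laurent expansion, and the generating-function identity $\rho_0^{i\zeta}=\rho_0^\lambda\sum_{j\geq 0}\frac{(i(\zeta+i\lambda))^j}{j!}(\log\rho_0)^j$ puts Jordan chains at $\zeta=-i\lambda$ into canonical bijection with elements of $F_\cD(P,\lambda)$ (the chain relations at $-i\lambda$ are equivalent to $N_\cD(P)u=0$ for the corresponding polyhomogeneous $u$).

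\textit{Step 3: From residue rank to index jump, and the main obstacle.} Using $P G(\alpha)=I-\Pi'(\alpha)$ and $G(\alpha)P=I-\Pi(\alpha)$ (Theorem~\ref{ThmPFred}), applying $P$ on the left (respectively right) to the identity $G(\alpha'_\cD)-G(\alpha_\cD)=F_\lambda+R$ (with $R$ polyhomogeneously subleading at $\lb_\cD\cup\rb_\cD$) converts the rank of $F_\lambda$ into a change in the ranks of the finite-rank projections $\Pi$ and $\Pi'$. A trace computation on these polyhomogeneous finite-rank operators then yields the index jump. The principal obstacle is the final sign-and-coefficient bookkeeping: one must carefully track the orientation of the Mellin contour deformation, the duality $\Specb(N_\cD(P^*))=\{(-\bar z,k)\colon(z,k)\in\Specb(N_\cD(P))\}$ from Lemma~\ref{LemmaEAdj}, and the correlated changes in $\ker P$ and $\ker P^*$ as $\alpha_\cD$ crosses $\lambda$ (so that the kernel and cokernel contributions, together with the orientation of the contour shift, combine into the single stated contribution $m_\cD(\lambda)$). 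Modulo this bookkeeping, which parallels~\cite[Proposition~6.2]{MelroseAPS}, the residue identification of Step~2 delivers the formula.
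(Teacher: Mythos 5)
Your Step~1 matches the paper's reduction (the paper simply subdivides $(\alpha_\cD,\alpha'_\cD)$ and uses local constancy of the index; your conjugation via Lemma~\ref{LemmaEConj} achieves the same thing). From there, however, the paper does \emph{not} form $G(\alpha'_\cD)-G(\alpha_\cD)$ or deform a Mellin contour at all: its proof is the boundary-pairing argument of \cite[\S6.2]{MelroseAPS}. One introduces the Green's pairing $(u,v)\mapsto i^{-1}\int_\cD P(\chi u)\overline{\chi v}-(\chi u)\overline{P^*(\chi v)}\,\dd\nu_\bop$ on $F_\cD(P,\lambda')\times F_\cD(P^*,-\bar\lambda)$, checks it converges (using the a priori $\pa\cD$-decay of the coefficients $u_j$), shows it vanishes for $\lambda\neq\lambda'$ and is nondegenerate for $\lambda=\lambda'$, and then proves that the subspace of $F_\cD(P,\lambda)$ consisting of leading terms actually achieved by elements of $\ker P$ is the annihilator of the corresponding achieved subspace of $F_\cD(P^*,-\bar\lambda)$. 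This yields $\dim E+\dim E^*=\dim F_\cD(P,\lambda)=m_\cD(\lambda)$, and the index jump is exactly $\pm(\dim E+\dim E^*)$ because $\dim\ker P$ and $\dim\ker P^*$ change by $\dim E$ and $\dim E^*$ respectively as the weight crosses $\Re\lambda$.

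The genuine gap in your argument is Step~3. Writing $D=G(\alpha'_\cD)-G(\alpha_\cD)$, the identities $P G=I-\Pi'$ and $G P=I-\Pi$ give
\begin{equation*}
  \ind(\alpha'_\cD,\alpha'_\cT)-\ind(\alpha_\cD,\alpha_\cT) \;=\; \tr\bigl(P D\bigr)-\tr\bigl(D P\bigr),
\end{equation*}
where both $P D$ and $D P$ are finite rank. But this is precisely a \emph{trace defect}: $D$ is not trace class and $P$ is not bounded, so cyclicity of the trace fails, and its failure \emph{is} the relative index. Knowing that the $\ff_\cD$-normal operator of $D$ is the contour-shift residue, of rank $m_\cD(\lambda)$, does not by itself evaluate $\tr(P D)-\tr(D P)$; to do so you need either a regularized (b-type) trace formula for the 3b-calculus — which the paper does not develop — or the Green's-formula/boundary-pairing identity together with the annihilator duality (Melrose's Lemma~6.4), which is exactly the content you defer as ``sign-and-coefficient bookkeeping.'' In particular, the index jump is a \emph{sum} of a kernel contribution $\dim E$ and a cokernel contribution $\dim E^*$, and identifying $\dim E+\dim E^*$ (rather than, say, only their difference) with $m_\cD(\lambda)$ requires the nondegeneracy and annihilator statements; no residue-rank count alone delivers this. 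A secondary issue: the decomposition $D=F_\lambda+R$ with $R$ ``polyhomogeneously subleading'' needs justification, since $G(\alpha'_\cD)$ and $G(\alpha_\cD)$ are defined via orthogonal projections in different inner products and differ by more than their $\ff_\cD$-normal operators (cf.~\eqref{EqPFredPf}); $F_\lambda$ as a global operator is only defined up to terms that are not obviously negligible in the trace computation.
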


\begin{rmk}[Fredholm property and weights at $\cD$ and $\cT$]
\fakephantomsection
\label{RmkPWeights}
  \begin{enumerate}
  \item Theorem~\ref{ThmPRel} implies that Theorem~\ref{ThmPFred} is sharp as far as the $\cD$-weight is concerned: $P$ is not Fredholm as a map~\eqref{EqPFredMap} for $\alpha_\cD\in\Re\Specb(N_\cD(P))$ since the index is not constant when $\alpha_\cD$ crosses $\Re\Specb(N_\cD(P))$.
  \item As far as the relative weight $\beta=\alpha_\cD-\alpha_\cT$ is concerned, note that Theorem~\ref{ThmPFred} applies whenever $\beta\in(\beta_\cT^-,\beta_\cT^+)$; we claim that this condition is also almost necessary, in the following sense. If $P\in\Psitb^m(M)$ has an elliptic principal symbol and is Fredholm as a map~\eqref{EqPFredMap}, then necessarily $\alpha_\cD-\alpha_\cT\in I:=[B_\cT^-,B_\cT^+]$ where $B_\cT^-$, resp.\ $B_\cT^+$ is the infimum, resp.\ supremum of all weights $\beta\in\R$ for which $\wh{N_\cT}(P,0)\colon\Hb^{s,\beta}(\cT)\to\Hb^{s-m,\beta}(\cT)$ is injective, resp.\ surjective; if $B_\cT^->B_\cT^+$, we set $I=\emptyset$. Ignoring the borderline case when $B_\cT^-=B_\cT^+$, the interval $[B_\cT^-,B_\cT^+]$ is the closure of the (possibly empty) largest open interval $(\beta_\cT^-,\beta_\cT^+)$ of weights $\beta$ for which $\wh{N_\cT}(P,0)$ is invertible. To prove the claim, we use material from~\S\ref{SF} below: the Fredholm property of $P$ implies the validity of an estimate~\eqref{EqF}. One then plugs $u_\delta=\rho_0^{\alpha_\cT+\delta}u_0$ into~\eqref{EqF} where $u_0\in\cA^{\alpha_\cD-\alpha_\cT,0}(M)$ is supported near $\cT$ and smooth down to $\cT$, with $u_0|_\cT\in\cA^{\alpha_\cD-\alpha_\cT}(\cT)$; using the testing definition (Proposition~\ref{PropGT0}) of $\wh{N_\cT}(P,0)$, simple bounds for both sides of~\eqref{EqF} imply, upon taking $\delta\searrow 0$, that $\wh{N_\cT}(P,0)(u_0|_\cT)$ cannot vanish unless $u_0|_\cT$ does. Thus, $\alpha_\cD-\alpha_\cT\geq B_\cT^-$. One similarly shows $\alpha_\cD-\alpha_\cT\leq B_\cT^+$ via consideration of the adjoint $P^*$.
  \end{enumerate}
\end{rmk}

\begin{proof}[Proof of Theorem~\usref{ThmPRel}]
  We may split the interval $(\alpha_\cD,\alpha_\cD')$ into a finite number of subintervals so that each interval contains only one element of $\Re\Specb(N_\cD(P))$; we may thus assume that $(\alpha_\cD,\alpha_\cD')\cap\Re\Specb(N_\cD(P))$ consists of a single real number $\alpha_0$. When $\alpha_\cD$ varies in $\R\setminus\Specb(N_\cD(P))$, the Fredholm index $\ind(\alpha_\cD,\alpha_\cT)$ remains constant; therefore, we may then further assume that $\alpha_\cD=\alpha_0-\delta$ and $\alpha'_\cD=\alpha_0+\delta$ for an arbitrarily small $\delta>0$. Finally, the full ellipticity condition is open in the relative $\cD$- and $\cT$-weights; therefore, upon taking $\delta>0$ sufficiently small, we may assume that $P$ is fully elliptic with weights $\alpha_\cD=\alpha_0-\delta$, $\alpha_\cT$ and $\alpha_\cD'=\alpha_0+\delta$, $\alpha_\cT$; in particular, $(\alpha_0\pm\delta)-\alpha_\cT\in(\beta_\cT^-,\beta_\cT^+)$ in the notation of Definition~\ref{DefE}.

  The proof is now a largely notational adaptation of the arguments in \cite[\S\S6.1--6.2]{MelroseAPS}. The main ingredient is, for $\lambda,\lambda'\in\specb(N_\cD(P))$ with $\Re\lambda=\Re\lambda'=\alpha_0$, the sesquilinear pairing
  \begin{equation}
  \label{EqPRelPairing}
    F_\cD(p,\lambda') \times F_\cD(P^*,-\bar\lambda) \ni (u,v) \mapsto i^{-1} \int_\cD P(\chi u)\ol{\chi v} - (\chi u)\ol{P^*(\chi v)}\,\dd\nu_\bop,
  \end{equation}
  where we extend $u,v$ to a collar neighborhood $\cU\subset M$ of $\cD$, and $\chi\in\CIc(\cU)$ is a cutoff which is identically $1$ near $\cD$; furthermore, $F_\cD(P^*,-)$ is defined as in~\eqref{EqPRelFormalSol} but using the weight $+\beta$ at $\pa\cD$. The integral over $\cD$ converges since the coefficients $u_j$ in~\eqref{EqPRelFormalSol} necessarily lie in $\Hb^{\infty,-\beta_\cT^--\eps}(\cD)$ for all $\eps>0$ by elliptic b-theory for $\wh{N_\cD}(P,\lambda)$ near $\pa\cD$, and similarly the coefficients of elements of $F_\cD(P^*,\bar\lambda)$ lie in $\Hb^{\infty,\beta_\cT^+-\eps}(\cD)$ for all $\eps>0$. Following the proof of \cite[Proposition~6.2]{MelroseAPS} then, the pairing~\eqref{EqPRelPairing} is identically $0$ for $\lambda\neq\lambda'$, and nondegenerate when $\lambda=\lambda'$. The proof of \cite[Lemma~6.4]{MelroseAPS} goes through as well, and shows that the subspace of $F_\cD(P,\lambda)$ consisting of those elements which are the leading order terms of elements of $\ker P$ is the annihilator of the subspace of $F_\cD(P^*,-\bar\lambda)$ consisting of those elements which are the leading order terms of elements of $\ker P^*$. This implies the Theorem by the same arguments as in \cite[Proof of Theorem~6.5]{MelroseAPS}.
\end{proof}

\subsection{An example}
\label{SsEX}

We consider again the operator from Theorem~\ref{ThmIEx}, rescaled as in~\S\ref{SsGEx}; thus,
\begin{align*}
  &P_0=\la x\ra^2\bigl(D_t^2+D_x^2+V(t,x)+V_\cT(x)\bigr), \\
  &\qquad t\in\R,\ x\in\R^{n-1},\quad V\in\la(t,x)\ra^{-2}\CI(\ol{\R^n}),\quad V_\cT\in\la x\ra^{-3}\CI(\ol{\R^{n-1}}),
\end{align*}
satisfies $P_0\in\Difftb^2(M)$, where $M=[\ol{\R^n};\{(-\infty,0),(+\infty,0)\}]$, and has an elliptic 3b-principal symbol. We moreover let $\tilde P\in\Difftb^{2,(-1,-1)}(M)=\la(t,x)\ra^{-1}\Difftb^2(M)$ and assume that
\[
  P := P_0 + \tilde P \in \Difftb^2(M)
\]
has an elliptic 3b-principal symbol. (This is true for any $\tilde P$ in a sufficiently small neighborhood of $\pa M$; this assumption thus excludes the possibility of characteristic set in $M^\circ$.) We work with positive b-densities on $M$, $\cD$ (the lift of $\pa\ol{\R^n}$) and $\cT$ (the union of the two front faces), unless otherwise noted.

\begin{lemma}[Properties of $P$]
\label{LemmaEX}
  Let $n=\dim M\geq 4$. Then
  \[
    \Specb(N_{\pa\cT}(P)) = \bigcup_{l\in\N_0} \{ (-l,0), (l+n-3,0) \},
  \]
  and the $\cT$-$\tface$-normal operators in~\eqref{EqEOpDtf} are invertible for any $\beta=(\beta_\cT^-,\beta_\cT^+):=(0,n-3)$. Let $V_\cD:=(\la(t,x)\ra^2 V)|_{\pa\ol{\R^n}}\in\CI(\Sph^{n-1})$, and let $D\subset\R$ be the (discrete) set consisting of all $a\in\R$ for which there exists $\lambda\in\C$, $\Im\lambda=-a$, so that $\lambda^2+i(n-2)\lambda+\Delta_{\Sph^{n-1}}+V_\cD$ is not invertible on $\CI(\Sph^{n-1})$. Then conditions~\eqref{ItESpec0}--\eqref{ItESpecD} of the Definition~\usref{DefE} of full ellipticity (with weights $\alpha_\cD,\alpha_\cT$) are satisfied for $P$ if and only if
  \begin{equation}
  \label{EqEXWeights}
    \alpha_\cD \notin D, \qquad
    \alpha_\cD-\alpha_\cT \in (0,n-3).
  \end{equation}
  For $V=0$, we have $D=\{-l,l+n-2\colon l\in\N_0\}$, and $\Specb(N_\cD(P)) = \bigcup_{l\in\N_0} \{ (-l,0), (l+n-2,0) \}$.
\end{lemma}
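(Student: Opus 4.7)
My plan is to reduce the computation to standard explicit model operators by exploiting the fact that the perturbations $\tilde P$ and $V_\cT$ contribute nothing to the relevant normal operators. Specifically, since $\tilde P \in \la(t,x)\ra^{-1}\Difftb^2(M) \subset \rho_\cD\rho_\cT\Difftb^2(M)$, it vanishes at both $\cD$ and $\cT$ as a 3b-operator and therefore drops out of $N_{\pa\cT}(P)$, $N_{\cT,\tface}^\pm(P)$, and $N_\cD(P)$. Moreover, $\la x\ra^2 V_\cT \in \la x\ra^{-1}\CI(\ol{\R^{n-1}})$ vanishes at $\pa\cT$, so it contributes to $\wh{N_\cT}(P,0)$ but not to its b-normal operator at $\pa\cT$, nor (being independent of $t$) to $N_\cD(P)$. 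Likewise, $\la x\ra^2 V$ restricts to $V_\cD$ at $\cD$ and vanishes at $\pa\cD$, so it contributes only to $N_\cD(P)$. Thus all three normal operators of $P$ coincide with those of $P_0$ (resp.\ those of $\Delta + V_\cD$ for the $\cD$-normal operator), as recorded in \eqref{EqGExNpaT}--\eqref{EqGExND}.

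For the first claim, I take the Mellin transform of $N_{\pa\cT}(P) = (\rho D_\rho)^2 + i(n-3)\rho D_\rho + \Delta_{\Sph^{n-2}}$, which is $\wh{N_{\pa\cT}}(P,\lambda) = \lambda^2 + i(n-3)\lambda + \Delta_{\Sph^{n-2}}$. Diagonalizing using eigenvalues $l(l+n-3)$ of $\Delta_{\Sph^{n-2}}$ and solving the resulting quadratic, the discriminant simplifies to $(2l+n-3)^2$, yielding $\lambda \in \{il, -i(l+n-3)\}$. Converting via the convention $z = i\lambda$ in Definition~\ref{DefAbSpecb} gives $\Specb(N_{\pa\cT}(P)) = \bigcup_{l\in\N_0}\{(-l,0), (l+n-3,0)\}$, and therefore $(\beta_\cT^-,\beta_\cT^+) = (0,n-3)$ (nonempty thanks to $n \geq 4$). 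The same calculation applied to $\wh{N_\cD}(P,\lambda) = \lambda^2 + i(n-2)\lambda + \Delta_{\Sph^{n-1}} + V_\cD$ gives the last claim for $V_\cD = 0$ (with $n-3$ replaced by $n-2$ because the cross-section is $\Sph^{n-1}$ of dimension $n-1$).

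For the invertibility of $N_{\cT,\tface}^\pm(P)$, I use Proposition~\ref{PropGRel} to identify it with $N_{\cD,\tface}^\pm(P)$, which by \eqref{EqGExNcD} is the Helmholtz operator $\hat\Delta + 1$ on the exact Euclidean cone of dimension $n-1$. Since $\hat\Delta$ is self-adjoint and non-negative on $L^2$, the operator $\hat\Delta + 1$ is injective on the scattering-b $L^2$ space for any weight $\alpha_\cR$ in the b-gap, by integration by parts; as it is elliptic (hence Fredholm with index zero) in the $(\scop,\bop)$-calculus, invertibility follows. The relevant gap at the b-end is precisely $\Re\Specb(N_{\pa\cT}(P))^c \supset (0,n-3)$, matching step one.

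For the Mellin family $\wh{N_\cD}(P,\lambda) = \lambda^2 + i(n-2)\lambda + \Delta_{\Sph^{n-1}} + V_\cD$, analytic Fredholm theory on the closed manifold $\Sph^{n-1}$ shows that the set of $\lambda$ where it fails to be invertible on $\CI(\Sph^{n-1})$ is discrete; hence $D$ is discrete, and Definition~\ref{DefE}\eqref{ItESpecD} amounts exactly to $\alpha_\cD \notin D$. Combining these with condition \eqref{ItESpec0} yielding $\alpha_\cD - \alpha_\cT \in (0,n-3)$ and \eqref{ItENtf} being automatic from the invertibility established above completes the characterization \eqref{EqEXWeights}. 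The main technical point, and the only non-algebraic step, is the $L^2$-uniqueness argument for the cone Helmholtz operator; everything else reduces to spherical harmonics and quadratic formulas.
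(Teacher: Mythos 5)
Your computation of $\Specb(N_{\pa\cT}(P))$ and of $\Specb(N_\cD(P))$ via spherical harmonics and the quadratic formula, your observation that $\tilde P$ and the potentials drop out of the relevant normal operators, and your treatment of $N_{\cD,\tface}^\pm(P)$ as the resolvent of the exact cone Laplacian at $-1$ all match the paper's argument (the paper simply cites \cite{HintzConicPowers} for the cone invertibility, so your sketch via self-adjointness plus index zero is an acceptable substitute, though you should still say why a kernel element of $H_{\bop,\scop}^{s,-\beta,r}$ with $-\beta\in(-(n-3),0)$ actually lies in the metric $L^2$ space near the cone tip before invoking self-adjointness).

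There is, however, a genuine gap at the step where you write that ``Definition~\ref{DefE}\eqref{ItESpecD} amounts exactly to $\alpha_\cD\notin D$.'' Condition~\eqref{ItESpecD} concerns the invertibility of $\wh{N_\cD}(P,\lambda)\in\Psib^2(\cD)$ as a map $\Hb^{s,-\beta}(\cD)\to\Hb^{s-2,-\beta}(\cD)$ on the \emph{blown-up} sphere $\cD=[\Sph^{n-1};\{N,S\}]$, where the operator degenerates conically at $\pa\cD$; the set $D$ is defined via invertibility of the \emph{smooth} operator $\lambda^2+i(n-2)\lambda+\Delta_{\Sph^{n-1}}+V_\cD$ on $\CI(\Sph^{n-1})$. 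These are a priori different conditions, and identifying them is the one substantive analytic point of the lemma (analytic Fredholm theory only gives discreteness of $D$, not the equivalence). The paper's proof closes this by a removable-singularity argument: both operators are Fredholm of index $0$, so invertibility reduces to injectivity on either side; and since $-\beta>-(n-3)$, any element of $\ker\wh{N_\cD}(P,\lambda)$ in $\Hb^{s,-\beta}(\cD)$ has a conormal expansion at $\pa\cD$ with exponents in $\cE^+(N_{\pa\cD}(P),-\beta)\subset\N_0\cup(n-3+\N_0)$, hence is bounded, and by interior elliptic regularity for the second-order elliptic operator on $\Sph^{n-1}$ it is the lift of a smooth function on $\Sph^{n-1}$; conversely smooth kernel elements on $\Sph^{n-1}$ lift into $\Hb^{s,-\beta}(\cD)$. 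This is precisely where the hypotheses $n\geq 4$ and $\alpha_\cD-\alpha_\cT\in(0,n-3)$ are used, and without it the ``if and only if'' in \eqref{EqEXWeights} is not established. You should add this identification to your argument.
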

\begin{proof}
  The normal operators
  \[
    N_\cD(P)=N_\cD(P_0),\qquad
    N_\cT(P)=N_\cT(P_0)
  \]
  are independent of $\tilde P$. Since the spectrum of $\Delta_{\Sph^{n-2}}$ is equal to $\{\ell(\ell+n-3)\colon\ell\in\N_0\}$, the operator $\wh{N_{\pa\cT}}(P,-i\xi)=-\xi^2+(n-3)\xi+\Delta_{\Sph^{n-2}}$ on $\Sph^{n-2}$ (see~\eqref{EqGExNpaT}) is invertible unless $\xi\in\{-\ell,\ell+n-3\colon\ell\in\N_0\}$, and at these values of $\xi$ its inverse has a pole of order $1$. Consider next the operator $N_{\cD,\tface}^\pm(P)$ from~\eqref{EqGExNcD}; its invertibility as a map~\eqref{EqEOpDtf} for $\beta\in(0,n-3)$ is standard, see e.g.\ \cite[Proof of Theorem~6.1]{HintzConicPowers} (where the dimension is shifted by $1$ relative to here). Next, for $V_\cD=0$, the boundary spectrum of the rescaled Laplacian $\la(t,x)\ra^2(D_t^2+D_x^2)$ on $\ol{\R^n}$ is the set $\{(-l,0),(l+n-2,0)\colon l\in\N_0\}$, as follows via separation into spherical harmonics on $\Sph^{n-1}\subset\R^n$; the Mellin-transformed normal operator family of $N_\cD(P)$ is related to this via~\eqref{EqGExND}. But since the weight at $\pa\cD$ of the b-Sobolev space on which we need to study the invertibility of $\wh{N_\cD}(P,\lambda)$ (see~\eqref{EqEOpDNorm}) is $-\beta>-n+3$, every element of the nullspace of $\wh{N_\cD}(P,\lambda)$ is necessarily bounded at $\pa\cD$ and thus has a removable singularity at $\pa\cD$ (i.e.\ it is the lift from $\pa\ol{\R^n}=\Sph^{n-1}$ of a smooth function on $\Sph^{n-1}$). The same arguments (except for the explicit calculation of $D$ and $\Specb(N_\cD(P))$) apply also when $V_\cD\neq 0$. The proof is complete.
\end{proof}

Whether or not conditions~\eqref{ItEZero} and \eqref{ItENonzero} hold depends on the potential $V_\cT$. For real-valued $V_\cT$, these conditions are directly related to classical spectral theory:

\begin{cor}[$P$ as a fully elliptic 3b-operator]
\label{CorEX}
  Suppose $V_\cT$ is real-valued. For $n\geq 4$ and $\alpha_\cD,\alpha_\cT$ as in~\eqref{EqEXWeights}, the operator $P$ is fully elliptic with weights $\alpha_\cD,\alpha_\cT$ if and only if $\Delta_{\R^{n-1}}+V_\cT$ has no negative $L^2$-eigenvalues and $0$ is neither an $L^2$-eigenvalue nor a resonance (the latter only being a possibility for $n=4,5$); here, we use the standard norm on $L^2(\R^{n-1})$. In particular, $P$ is fully elliptic when $V_\cT\geq 0$.
\end{cor}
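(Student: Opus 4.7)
The plan is to verify conditions (4) and (5) of Definition~\ref{DefE} by translating them into classical spectral statements about $\Delta_{\R^{n-1}}+V_\cT$. Since $\tilde P\in\rho_\cD\rho_\cT\Difftb^2(M)$, the normal operators of $P$ coincide with those of $P_0$, so I would work throughout with $P_0$. Lemma~\ref{LemmaEX} handles conditions~(1)--(3) under the hypothesis~\eqref{EqEXWeights}, which is exactly the corollary's assumption on weights. For condition~(5), from~\eqref{EqGExNonzero} we have $\wh{N_\cT}(P_0,\sigma)=\la x\ra^2(\Delta_{\R^{n-1}}+V_\cT+\sigma^2)$, and Lemma~\ref{LemmaEInv} forces its nullspace on any weighted scattering Sobolev space to lie in $\CIdot(\cT)=\sS(\R^{n-1})$---hence, for real $V_\cT$, to consist precisely of $L^2$-eigenfunctions of the self-adjoint operator $\Delta+V_\cT$ with eigenvalue $-\sigma^2<0$. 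I would thereby conclude that (5) holds for all $\sigma\neq 0$ iff $\Delta+V_\cT$ has no negative $L^2$-eigenvalues.

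For (4), the operator $\wh{N_\cT}(P_0,0)=\la x\ra^2(\Delta_{\R^{n-1}}+V_\cT)\in\Diffb^2(\cT)$, see~\eqref{EqGExZero}, is elliptic with boundary spectrum computed in Lemma~\ref{LemmaEX}. Theorem~\ref{ThmAbPx} then yields a b-parametrix from which I would extract that kernel elements on $\Hb^{s,\beta}(\cT)$, $\beta\in(0,n-3)$, are polyhomogeneous on $\ol{\R^{n-1}}$ with leading exponents in $\{l+n-3\colon l\in\N_0\}$. A direct calculation of the $L^2_\bop$-adjoint with respect to the b-density $\la x\ra^{-(n-1)}|\dd x|$, using reality of $V_\cT$ and the formal self-adjointness of $\Delta+V_\cT$ on $|\dd x|$, gives
\[
  \wh{N_\cT}(P_0,0)^{*,L^2_\bop}=\la x\ra^{n-3}\wh{N_\cT}(P_0,0)\la x\ra^{-(n-3)},
\]
so the cokernel at weight $\beta$ is in bijection with the kernel at the reflected weight $(n-3)-\beta\in(0,n-3)$; the polyhomogeneous decay analysis shows this kernel is independent of the admissible weight, reducing (4) to triviality of a single such kernel. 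Since $|x|^{-(n-3)}$ lies in $L^2(\R^{n-1})$ at infinity iff $n\geq 6$, kernel elements are $L^2$-zero-eigenfunctions for $n\geq 6$, whereas for $n\in\{4,5\}$ they may decay only as $|x|^{-(n-3)}$ and thus constitute a zero resonance of $\Delta+V_\cT$. Combining with the previous paragraph yields the stated equivalence.

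For $V_\cT\geq 0$: writing $\Delta=\sum D_{x^j}^2=-\sum\pa_{x^j}^2\geq 0$ in the paper's convention, $\Delta+V_\cT\geq 0$, which rules out negative eigenvalues. For a polyhomogeneous zero mode $u$ with leading decay $u=O(|x|^{-(n-3)})$ (so $\pa_r u=O(|x|^{-(n-2)})$), I would pair $(\Delta+V_\cT)u=0$ with $\bar u$ and integrate by parts on $|x|<R$, obtaining
\[
  \int_{|x|<R}\bigl(|\nabla u|^2+V_\cT|u|^2\bigr)\,\dd x=\Re\int_{|x|=R}\bar u\,\pa_r u\,\dd S=O(R^{-(n-3)})\to 0,
\]
where the boundary estimate uses $|\pa_r u\,\bar u|=O(|x|^{-(2n-5)})$ and area scaling $R^{n-2}$. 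Hence $\nabla u\equiv 0$, and the decay at infinity forces $u=0$. The main technical point will be the cokernel-versus-kernel identification via the weight reflection $\beta\mapsto(n-3)-\beta$, together with the delicate dimension-dependent matching of the minimal polyhomogeneous decay rate $|x|^{-(n-3)}$ against $L^2$-membership that singles out genuine resonances exactly in the low dimensions $n\in\{4,5\}$.
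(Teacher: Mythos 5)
Your argument is correct and follows essentially the same route as the paper's proof: condition~(5) of Definition~\ref{DefE} is reduced to the absence of negative $L^2$-eigenvalues via the Schwartz rapid decay of the nullspace of $\wh{N_\cT}(P,\sigma)$, condition~(4) is reduced to injectivity by the index-zero/weight-reflection argument $\beta\mapsto(n-3)-\beta$ (you phrase the adjoint with respect to the b-density, the paper with respect to $|\dd x|$, but the conjugation identity is the same), and the conormal decay rate $\la x\ra^{-(n-3)}$ of kernel elements is matched against $L^2$-membership exactly as in the paper. Your explicit integration by parts for $V_\cT\geq 0$ fills in a step the paper only sketches, and your boundary-term estimate $O(R^{-(n-3)})$ is correct.
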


This produces a class of examples of operators $P$ to which Theorems~\ref{ThmEPx} (precise parametrices), \ref{ThmPFred} (Fredholm properties, structure of nullspace, and structure of generalized inverses), and \ref{ThmPRel} (relative index theorem) apply. For a more general result, see Theorem~\ref{ThmIEx}, and also Remark~\ref{RmkEXIntro} below.

\begin{proof}[Proof of Corollary~\usref{CorEX}]
  The absence of negative $L^2$-eigenvalues of $\Delta_{\R^{n-1}}+V_\cT$ is equivalent to condition~\eqref{ItENonzero} of Definition~\ref{DefE}. The zero energy operator
  \begin{equation}
  \label{EqEX0}
    \wh{N_\cT}(P,0)=\la x\ra^2\bigl(\Delta_{\R^{n-1}}+V_\cT\bigr)\colon\Hb^{s,\beta}(\cT)\to\Hb^{s-2,\beta}(\cT)
  \end{equation}
  is Fredholm (as discussed in~\S\ref{SsE}). Since the standard density on $\R^{n-1}$ is $\la x\ra^{n-1}$ times a positive b-density, the $L^2(\R^{n-1})$-adjoint of~\eqref{EqEX0} is (up to conjugation by a positive smooth function) given by $(\Delta_{\R^{n-1}}+V_\cT)\la x\ra^2\colon\Hb^{-s+2,-\beta+n-1}(\cT)\to\Hb^{-s,-\beta+n-1}(\cT)$, the conjugation of which by $\la x\ra^2$ is the operator $\wh{N_\cT}(P,0)$ as a map $\Hb^{-s+2,-\beta+n-3}(\cT)\to\Hb^{-s,-\beta+n-3}(\cT)$, with $-\beta+n-3$ lying in the same interval $(0,n-3)$ as $\beta$ itself; but since its nullspace is independent of the choice of $\beta$ within this interval (and also on the choice of $s$), we conclude that~\eqref{EqEX0} has index $0$. Any element $u\in\Hb^{s,\beta}(\cT)\cap\ker\wh{N_\cT}(P,0)$ automatically lies in $\cA^{n-3}(\ol{\R^{n-1}})$; for $n=4,5$, such $u$ are thus decaying, and for $n\geq 6$, such $u$ automatically lie in $L^2(\R^{n-1})$.

  For $V_\cT\geq 0$, the absence of negative $L^2$-eigenvalues and of a zero energy resonance or bound state follows via an integration by parts argument.
\end{proof}

\begin{rmk}
\label{RmkEXIntro}
  For general $V_\cT$, the full ellipticity of $P$ with weights $\alpha_\cD,\alpha_\cT$ as in~\eqref{EqEXWeights} is equivalent to condition~\eqref{ItIExNT} of Theorem~\ref{ThmIEx}. Indeed, the condition on the zero energy operator in Theorem~\ref{ThmIEx} ensures the absence of a kernel and cokernel of $\la x\ra^2(\Delta_{\R^{n-1}}+V_\cT)$ on $\Hb^{0,\beta}(\cT)$ and $\Hb^{0,-\beta+n-3}(\cT)$, respectively, for some (and thus all) $\beta\in(0,n-3)$. We also note that the weights in Theorem~\ref{ThmIEx} which for clarity we denote $\tilde\alpha_\cD,\tilde\alpha_\cT$ here correspond, upon passing from the density $|\dd t\,\dd x|$ there to a b-density $\rho_\cT\rho_\cD^n|\dd t\,\dd x|$ (where $\rho_\cT=\frac{\la x\ra}{\la(t,x)\ra}$ and $\rho_\cD=\la x\ra^{-1}$), to the weights $\alpha_\cD=\tilde\alpha_\cD+\frac{n}{2}$ and $\alpha_\cT=\tilde\alpha_\cT+\frac12$ in present notation. Thus, the conditions $\tilde\alpha_\cD-\tilde\alpha_\cT\in(-\frac{n-1}{2},\frac{n-1}{2}-2)$ and $\tilde\alpha_\cD+\frac{n}{2}\notin D=\Re\Specb(N_\cD(P))$ (which is condition~\eqref{ItIExND} in Theorem~\ref{ThmIEx}) are equivalent to the conditions~\eqref{EqEXWeights}.
\end{rmk}

\section{Fully elliptic 3b-operators: Fredholm theory via estimates}
\label{SF}

While the parametrix construction in~\S\ref{SE} gives very precise information about fully elliptic 3b-operators $P$ (see Definition~\ref{DefE}) and their (approximate, generalized, or true) inverses, it is rather involved, and rests on similarly precise descriptions of inverses of elliptic operators in the various model calculi that were discussed in~\S\ref{SA}. In this section, we show how to prove the Fredholm property (Theorem~\ref{ThmPFred}\eqref{ItPFred}) only using small ps.d.o.\ calculi (i.e.\ without boundary terms), by exploiting the spectral characterizations of 3b-function spaces given in Propositions~\ref{Prop3SobFT} and \ref{Prop3SobMT}.

\begin{rmk}[Outlook and motivation: non-elliptic theory]
\label{RmkFNonell}
  The main reason for including this section is that it allows us to demonstrate how to use the 3b-algebra as a \emph{tool}, which is a more flexible point of view when studying non-elliptic equations. This is discussed in detail in~\cite{HintzNonstat}.
\end{rmk}

\begin{thm}[Semi-Fredholm estimate]
\label{ThmF}
  Let $P\in\Psitb^m(M)$ be fully elliptic with weights $\alpha_\cD,\alpha_\cT$. Let $s,N\in\R$ with $-N<s$. Then there exist $\eps>0$ and $C>0$ so that
  \begin{equation}
  \label{EqF}
    \|u\|_{\Htb^{s,\alpha_\cD,\alpha_\cT}(M)} \leq C\bigl( \|P u\|_{\Htb^{s-m,\alpha_\cD,\alpha_\cT}(M)} + \|u\|_{\Htb^{-N,\alpha_\cD-\eps,\alpha_\cT-\eps}(M)}\bigr).
  \end{equation}
  Here, the 3b-Sobolev spaces on $M$ are defined with respect to a positive b-density.
\end{thm}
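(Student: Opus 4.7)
\textbf{Proof plan for Theorem~\ref{ThmF}.}

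The plan is to reduce the proof to elliptic estimates for the spectral family $\wh{N_\cT}(P,\sigma)$ and the Mellin-transformed normal operator family $\wh{N_\cD}(P,\lambda)$ in their respective (small) model calculi, combined with a standard symbolic elliptic parametrix in the small 3b-calculus. Fix defining functions $\rho_\cD,\rho_\cT\in\CI(M)$ and a partition of unity $1=\chi_\cT+\chi_\cD+\chi_\circ$ on $M$, where $\chi_\cT$ is supported in a collar neighborhood of $\cT$ of the type used in Proposition~\ref{Prop3SobFT}, $\chi_\cD$ is supported in a collar neighborhood of $\cD$ disjoint from $\cT$, and $\chi_\circ\in\CIc(M^\circ)$. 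A symbolic parametrix $Q_0\in\Psitb^{-m}(M)$ for $P$, satisfying $Q_0 P=I-R_0$ with $R_0\in\Psitb^{-\infty}(M)$, gives after taking $\chi_\star$ ($\star\in\{\cT,\cD,\circ\}$) the usual elliptic regularity estimate
\[
  \|\chi_\star u\|_{\Htb^{s,\alpha_\cD,\alpha_\cT}} \leq C\bigl(\|\chi_\star Pu\|_{\Htb^{s-m,\alpha_\cD,\alpha_\cT}} + \|[P,\chi_\star]u\|_{\Htb^{s-m,\alpha_\cD,\alpha_\cT}} + \|\chi_\star u\|_{\Htb^{-N,\alpha_\cD,\alpha_\cT}}\bigr),
\]
so the task reduces to controlling the residual (smoothing) term $\|\chi_\star u\|_{\Htb^{-N,\alpha_\cD,\alpha_\cT}}$ at the same weights by $\|Pu\|+$ a term with the allowed $\eps$-loss in weights. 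For $\chi_\circ$, this is immediate from Lemma~\ref{Lemma3SobCpt} (Rellich), since the support is compact in $M^\circ$.

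For $\chi_\cT u$, I would apply the Fourier transform in $t$ and use Proposition~\ref{Prop3SobFT} (with $\alpha_\cT=0$ after conjugating by $\rho_\cT^{\alpha_\cT}$, which is allowed since $\rho_\cT\in\CI(M)$ is smooth down to $\cT$) to reduce the $\Htb$-norm of $\chi_\cT u$ to an $L^2$-integral over $\sigma\in\R$ of fiber norms
\[
  \|\wh{\chi_\cT u}(\sigma,\cdot)\|_{H_{\scbtop,\sigma}^{s,s+\alpha_\cD,\alpha_\cD,0}(\cT)}^2\quad (|\sigma|\leq 1),\qquad \|\wh{\chi_\cT u}(\sigma,\cdot)\|_{H_{\scop,|\sigma|^{-1}}^{s,s+\alpha_\cD,s}(\cT)}^2\quad (|\sigma|\geq 1).
\]
By Lemma~\ref{Lemma3DEll} and full ellipticity (conditions \eqref{ItESpec0}, \eqref{ItENtf}, \eqref{ItEZero} of Definition~\ref{DefE}), the family $\pm[0,1]\ni\sigma\mapsto\wh{N_\cT}(P,\sigma)\in\Psiscbt^{m,m,0,0}(\cT)$ is fully elliptic in the $\scbtop$-calculus at weight $\alpha_\cD$, so by Theorem~\ref{ThmAscbtEll} its inverse exists and is bounded uniformly in $\sigma$ between the relevant $\scbtop$-Sobolev spaces; for $|\sigma|\geq 1$, condition \eqref{ItENonzero} together with Lemma~\ref{LemmaAschInv} gives a uniform bound for the inverse of $\wh{N_{\cT,h}^\pm}(P)\in\Psisch^{m,m,m}(\cT)$. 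This yields the estimate
\[
  \|\wh{\chi_\cT u}(\sigma,\cdot)\| \lesssim \|\wh{N_\cT}(P,\sigma)\wh{\chi_\cT u}(\sigma,\cdot)\|,
\]
uniformly in $\sigma$, and writing $\wh{N_\cT}(P,\sigma)\wh{\chi_\cT u}=\wh{\chi_\cT Pu}+\wh{[P,\chi_\cT]u}+\wh{(N_\cT(P)-P)\chi_\cT u}$ and reassembling via the inverse Fourier transform controls $\chi_\cT u$ at the full $\Htb^{s,\alpha_\cD,\alpha_\cT}$ norm by $\|Pu\|_{\Htb^{s-m,\alpha_\cD,\alpha_\cT}}$ plus the commutator/error terms, which have support disjoint from $\cT$ (hence fall under the $\chi_\cD$ or $\chi_\circ$ analysis) or gain a factor of $\rho_\cT$ relative to $P$ since $N_\cT(P)-P\in\rho_\cT\Psitb^m(M)$ by the short exact sequence~\eqref{EqGTSES}, yielding the $\eps$-loss at $\cT$.

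For $\chi_\cD u$ I would run the analogous argument with the Mellin transform in $\rho_0$ using Proposition~\ref{Prop3SobMT}: at bounded $|\lambda_0|$ invertibility of $\wh{N_\cD}(P,\lambda)\colon\Hb^{s,\alpha_\cT-\alpha_\cD+\ldots}(\cD)\to\Hb^{s-m,\ldots}(\cD)$ follows from condition~\eqref{ItESpecD} together with Theorem~\ref{ThmAebDInv}, and for $|\lambda_0|\gg 1$ the elliptic semiclassical cone estimate of Lemma~\ref{LemmaAebEll}/Theorem~\ref{ThmAchInv} (applied to $\wh{N_\cD}(P,-i\mu\pm h^{-1})\in\Psich^{m,0,0,m}(\cD)$, which is fully elliptic with weight $\alpha_\cT-\alpha_\cD$ by \eqref{ItESpec0} and \eqref{ItENtf}) provides a uniform inverse. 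Reassembling via the inverse Mellin transform, the errors are again of two types: a commutator $[P,\chi_\cD]$ supported away from $\cD$, and the discrepancy $(N_\cD(P)-P)\chi_\cD\in\rho_\cD\Psitb^m(M)$, which gives the $\eps$-loss at $\cD$. The main obstacle is precisely this last bookkeeping step: one has to choose the cutoffs so that the commutators and normal-operator discrepancies can consistently be absorbed into the low-weight term on the right-hand side of~\eqref{EqF} for a single, uniform $\eps>0$. This is handled by iterating the estimates and using Lemma~\ref{Lemma3SobCpt} to compactly embed the $\Htb^{s-\eps,\alpha_\cD-\eps,\alpha_\cT-\eps}$ error into $\Htb^{-N,\alpha_\cD-\eps,\alpha_\cT-\eps}$ once $s-\eps>-N$, which is possible after finitely many iterations.
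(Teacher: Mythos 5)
Your proposal is correct and follows essentially the same route as the paper's proof: reduce to $\cT$-weight zero by conjugation (cf.\ Lemma~\ref{LemmaEConj}), combine a symbolic elliptic estimate with the Fourier and Mellin characterizations of Propositions~\ref{Prop3SobFT} and \ref{Prop3SobMT}, and invoke uniform estimates for the spectral family and the Mellin-transformed normal operator family in the $\scbtop$-, semiclassical scattering, b-, and semiclassical cone regimes, absorbing the $\rho_\cT$- resp.\ $\rho_\cD$-improved errors into the weak term. The one place needing a bit more care is the regime of bounded nonzero $\sigma$ (and bounded $\lambda$): Lemma~\ref{LemmaAschInv} only covers $|\sigma|>\sigma_0$ for some possibly large $\sigma_0$, so on a compact frequency interval you must upgrade the pointwise injectivity from Definition~\ref{DefE}\eqref{ItENonzero} (resp.\ \eqref{ItESpecD}) to a uniform bound via the standard compactness/contradiction argument, as in the paper's Lemmas~\ref{LemmaFBd} and \ref{LemmaFDBd}.
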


(The final, error, term in~\eqref{EqF} can be weakened to $\|u\|_{\Htb^{-N,-N,-N}(M)}$ using an interpolation inequality.) At the end of~\S\ref{SsFPf}, we show how Theorem~\ref{ThmF} and an analogous estimate for $P^*$ imply the Fredholm property of $P$ acting between weighted 3b-Sobolev spaces, cf.\ Theorem~\ref{ThmPFred}\eqref{ItPFred}.

The estimate-based proof of Theorem~\ref{ThmF} requires estimates for the various models, which we proceed to state and prove only using the various small calculi. \emph{We use b-densities on $\cT$, $\cD$, and $M$ throughout, unless otherwise noted.}

\subsection{Estimates for the spectral family}
\label{SsFT}

With $P$ as in Theorem~\ref{ThmF}, we record estimates for $\wh{N_\cT}(P,\sigma)$ in all frequency regimes: high (Lemma~\ref{LemmaFHi}), bounded (Lemma~\ref{LemmaFBd}), and low (Lemma~\ref{LemmaFLo}).

\begin{lemma}[Uniform bounds at high frequencies]
\label{LemmaFHi}
  Let $s,r,b\in\R$. There exist $\sigma_0>0$ and $C>0$ so that
  \begin{equation}
  \label{EqFHi}
    \|u\|_{H_{\scop,|\sigma|^{-1}}^{s,r,b}(\cT)} \leq C\|\wh{N_\cT}(P,\sigma)u\|_{H_{\scop,|\sigma|^{-1}}^{s-m,r-m,b-m}(\cT)},\qquad |\sigma|>\sigma_0.
  \end{equation}
\end{lemma}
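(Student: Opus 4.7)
The plan is to reduce the claimed estimate to the invertibility of an elliptic semiclassical scattering operator, which is an immediate consequence of the full ellipticity of $P$. First, I would recall from Proposition~\ref{Prop3TNormMemRough}\eqref{It3TNormMemRoughScl} that the rescaling
\[
  \wh{N_{\cT,h}^\pm}(P) = \wh{N_\cT}(P,\pm h^{-1}) \in \Psisch^{m,m,m}(\cT),\qquad h=|\sigma|^{-1}\in(0,1),
\]
is a semiclassical scattering operator of order $(m,m,m)$, with conormal dependence on $h$ down to $h=0$. By Lemma~\ref{Lemma3DEll}, the ellipticity of $\sigmatb^m(P)$ implies that this family is elliptic in $\Psisch^{m,m,m}(\cT)$.

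Next, I would invoke Lemma~\ref{LemmaAschInv} on the inversion of elliptic semiclassical scattering operators: there exists $h_0>0$ so that for $h\in(0,h_0)$ the operator $\wh{N_{\cT,h}^\pm}(P)$ is invertible as a map between the appropriate semiclassical scattering Sobolev spaces, and moreover $(\wh{N_{\cT,h}^\pm}(P))^{-1} \in \Psisch^{-m,-m,-m}(\cT)$. Setting $\sigma_0 := h_0^{-1}$, this provides, for $|\sigma|>\sigma_0$, a uniformly (in $\sigma$) bounded family of operators
\[
  \wh{N_\cT}(P,\sigma)^{-1} \colon H_{\scop,|\sigma|^{-1}}^{s-m,r-m,b-m}(\cT) \to H_{\scop,|\sigma|^{-1}}^{s,r,b}(\cT).
\]

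Finally, given any $u$ in $H_{\scop,|\sigma|^{-1}}^{s,r,b}(\cT)$ (and $|\sigma|>\sigma_0$), I would write $u = \wh{N_\cT}(P,\sigma)^{-1}\bigl(\wh{N_\cT}(P,\sigma)u\bigr)$ and apply the uniform operator bound from the previous step; this yields~\eqref{EqFHi} with constant $C$ equal to the supremum over $|\sigma|>\sigma_0$ of the operator norms of $\wh{N_\cT}(P,\sigma)^{-1}$, which is finite by the semiclassical mapping property of $\Psisch^{-m,-m,-m}(\cT)$. There is no real obstacle here: the work has been done in setting up the semiclassical scattering calculus and checking that the full ellipticity of $P$ implies the ellipticity of its high-frequency model; the estimate is then extracted by applying the inverse on the right.
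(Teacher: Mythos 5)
Your proof is correct and follows essentially the same route as the paper: both rest on the ellipticity of the semiclassical rescaling $\wh{N_{\cT,h}^\pm}(P)\in\Psisch^{m,m,m}(\cT)$ (Proposition~\ref{Prop3TNormMemRough}, Lemma~\ref{Lemma3DEll}) together with the uniform mapping properties of the semiclassical scattering calculus. The only cosmetic difference is that you invoke the packaged invertibility statement (Lemma~\ref{LemmaAschInv}) and apply the inverse directly, whereas the paper writes out the symbolic left parametrix and absorbs the residual error term into the left-hand side for small $h$.
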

\begin{proof}
  Recall the semiclassical rescaling $\wh{N_\cT}(P,\pm h^{-1})$ of the spectral family from~\eqref{Eq3TNormMemRoughScl}. This is an elliptic element of $\Psisch^{m,m,m}(\cT)$. Pick a parametrix $Q\in\Psisch^{-m,-m,-m}(\cT)$ with $Q_h\wh{N_\cT}(P,\pm h^{-1})=I-R_h$, $R=(R_h)_{h\in(0,1)}\in\Psisch^{-\infty,-\infty,-\infty}(\cT)$; then
  \[
    \|u\|_{H_{\scop,h}^{s,r,b}(\cT)} \leq C\bigl(\|Q_h P_h u\|_{H_{\scop,h}^{s,r,b}(\cT)} + \|u\|_{H_{\scop,h}^{-N,-N,-N}(\cT)}\bigr)
  \]
  for any fixed $N$ and some constant $C$. Since $Q_h\colon H_{\scop,h}^{s-m,r-m,b-m}(\cT)\to H_{\scop,h}^{s,r,b}(\cT)$ is uniformly bounded, and using that for $N>\max(-s,-r,-b)$ we have $C\|u\|_{H_{\scop,h}^{-N,-N,-N}(\cT)}\leq\half\|u\|_{H_{\scop,h}^{s,r,b}(\cT)}$ for all sufficiently small $h>0$, we obtain~\eqref{EqFHi}.
\end{proof}

\begin{lemma}[Uniform bounds at bounded frequencies]
\label{LemmaFBd}
  Let $c\in(0,1)$ and $s,r\in\R$. Then there exists $C>0$ so that
  \begin{equation}
  \label{EqFBd}
    \|u\|_{\Hsc^{s,r}(\cT)} \leq C\|\wh{N_\cT}(P,\sigma)u\|_{\Hsc^{s-m,r-m}(\cT)},\qquad c\leq|\sigma|\leq c^{-1}.
  \end{equation}
\end{lemma}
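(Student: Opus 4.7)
The plan is to combine the fact that $\wh{N_\cT}(P,\sigma)$ is invertible for each $\sigma \neq 0$ with a compactness argument on the range $c \leq |\sigma| \leq c^{-1}$. For each fixed $\sigma_0$ in this range, condition \eqref{ItENonzero} of Definition~\ref{DefE} together with Lemma~\ref{LemmaEInv} implies that $\wh{N_\cT}(P,\sigma_0) : \Hsc^{s,r}(\cT) \to \Hsc^{s-m,r-m}(\cT)$ is invertible (for all $s,r \in \R$), and moreover an elliptic symbolic parametrix construction in $\Psisc^{-m,-m}(\cT)$ together with an asymptotic Neumann series argument for the resulting residual error shows that $\wh{N_\cT}(P,\sigma_0)^{-1} \in \Psisc^{-m,-m}(\cT)$.

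Next, I would leverage Proposition~\ref{Prop3TNormMemRough}\eqref{It3TNormMemRoughFixed}, which asserts that $\sigma \mapsto \wh{N_\cT}(P,\sigma) \in \Psisc^{m,m}(\cT)$ depends smoothly on $\sigma \in \R \setminus \{0\}$. In particular, as a map into bounded linear operators $\Hsc^{s,r}(\cT) \to \Hsc^{s-m,r-m}(\cT)$, the family is continuous (indeed smooth). Since the set of invertible bounded operators is open in the operator norm topology and inversion is continuous there, it follows that $\sigma \mapsto \wh{N_\cT}(P,\sigma)^{-1}$ exists and is continuous in a neighborhood of each $\sigma_0 \in \R \setminus \{0\}$ as a family of bounded operators $\Hsc^{s-m,r-m}(\cT) \to \Hsc^{s,r}(\cT)$.

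To conclude, the operator norm
\[
  N(\sigma) := \bigl\|\wh{N_\cT}(P,\sigma)^{-1}\bigr\|_{\Hsc^{s-m,r-m}(\cT) \to \Hsc^{s,r}(\cT)}
\]
is a continuous function of $\sigma$ on the compact set $\{\sigma \in \R : c \leq |\sigma| \leq c^{-1}\}$, hence attains a finite maximum $C$ there. Applying the inverse to both sides of the equation $\wh{N_\cT}(P,\sigma) u = f$ then yields \eqref{EqFBd} with this constant $C$.

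The only nontrivial step is justifying the continuity of the inverse in $\sigma$ in the operator norm topology, but this is standard: if $A_0 = \wh{N_\cT}(P,\sigma_0)$ is invertible and $A_\sigma = \wh{N_\cT}(P,\sigma)$ satisfies $\|A_\sigma - A_0\| < \|A_0^{-1}\|^{-1}$, then $A_\sigma^{-1} = (I + A_0^{-1}(A_\sigma - A_0))^{-1} A_0^{-1}$ exists by a Neumann series and depends continuously on $A_\sigma$. Everything else is routine, so I do not anticipate any real obstacle.
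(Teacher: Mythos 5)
Your argument is correct, but it takes a different route from the paper's. The paper first writes down the symbolic elliptic estimate $\|u\|_{\Hsc^{s,r}(\cT)}\leq C(\|\wh{N_\cT}(P,\sigma)u\|_{\Hsc^{s-m,r-m}(\cT)}+\|u\|_{\Hsc^{-N,-N}(\cT)})$ uniformly for $c\leq|\sigma|\leq c^{-1}$, and then removes the error term by contradiction: a normalized sequence $u_j$ with $\wh{N_\cT}(P,\sigma_j)u_j\to 0$ and $\sigma_j\to\sigma_\infty$ would, by compactness of $\Hsc^{s,r}(\cT)\hra\Hsc^{-N,-N}(\cT)$, produce a nonzero $u_\infty\in\ker\wh{N_\cT}(P,\sigma_\infty)$, contradicting Definition~\ref{DefE}\eqref{ItENonzero}. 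That template uses only \emph{injectivity} of the normal operators and is reused verbatim for the Mellin-transformed family in Lemma~\ref{LemmaFDBd}. You instead invoke full \emph{invertibility} at each $\sigma$ (injectivity plus the index-zero statement of Lemma~\ref{LemmaEInv}), upgrade the smooth dependence of $\sigma\mapsto\wh{N_\cT}(P,\sigma)\in\Psisc^{m,m}(\cT)$ from Proposition~\ref{Prop3TNormMemRough}\eqref{It3TNormMemRoughFixed} to operator-norm continuity $\Hsc^{s,r}(\cT)\to\Hsc^{s-m,r-m}(\cT)$ (valid, since the operator norm is controlled by finitely many symbol seminorms), and then use continuity of inversion plus compactness of $\{c\leq|\sigma|\leq c^{-1}\}$. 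This is a perfectly sound and arguably more elementary functional-analytic argument; its cost is that it needs the stronger pointwise input (invertibility rather than injectivity), so it would not adapt to settings where one only has semi-Fredholm information. Two small remarks: the claim $\wh{N_\cT}(P,\sigma_0)^{-1}\in\Psisc^{-m,-m}(\cT)$ is true but superfluous for your argument (boundedness of the inverse already follows from the open mapping theorem), and the phrase ``asymptotic Neumann series'' is not quite the right description there, since the residual error is already in $\Psisc^{-\infty,-\infty}(\cT)$ and nothing needs to be asymptotically summed.
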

\begin{proof}
  Exploiting the ellipticity of the principal symbol of $\wh{N_\cT}(P,\sigma)$ for nonzero $\sigma$, we obtain
  \begin{equation}
  \label{EqFBdPf}
    \|u\|_{\Hsc^{s,r}(\cT)} \leq C\bigl(\|\wh{N_\cT}(P,\sigma)u\|_{\Hsc^{s-m,r-m}(\cT)} + \|u\|_{\Hsc^{-N,-N}(\cT)}\bigr),\qquad c\leq|\sigma|\leq c^{-1},
  \end{equation}
  for any fixed $N$ which we take to be larger than $\max(-s,-r)$. We can drop the error term in this estimate, upon enlarging $C$, as a consequence of the full ellipticity and the compactness of $\Hsc^{s,r}(\cT)\hra\Hsc^{-N,-N}(\cT)$. Indeed, if this were not possible, then we could find a sequence $u_j\in\Hsc^{s,r}(\cT)$ with $\|u_j\|_{\Hsc^{s,r}(\cT)}=1$ and $\wh{N_\cT}(P,\sigma_j)u_j\to 0$ in $\Hsc^{s-m,r-m}(\cT)$, where $|\sigma_j|\in[c,c^{-1}]$; upon passing to a subsequence, we can assume that the limit $\sigma_\infty:=\lim_{j\to\infty}\sigma_j$ exists. Applying the estimate~\eqref{EqFBdPf} to this subsequence, one obtains $\liminf_{j\to\infty}\|u_j\|_{\Hsc^{-N,-N}(\cT)}\geq C^{-1}>0$. Therefore, any subsequential weak limit $u_\infty\in\Hsc^{s,r}(\cT)$ of $u_j$, which is a strong limit with respect to the norm topology on $\Hsc^{-N,-N}(\cT)$, is nonzero; but $\wh{N_\cT}(P,\sigma_\infty)u_\infty=0$, contradicting the full ellipticity assumption (concretely, Definition~\ref{DefE}\eqref{ItENonzero}).
\end{proof}

\begin{lemma}[Uniform bounds at low frequencies]
\label{LemmaFLo}
  Put $\beta=\alpha_\cD-\alpha_\cT$. Let $s,r\in\R$. Then there exist $\sigma_0>0$ and $C>0$ so that
  \begin{equation}
  \label{EqFLo}
    \|u\|_{H_{\scbtop,\sigma}^{s,r,\beta,0}(\cT)} \leq C \|\wh{N_\cT}(P,\sigma)u\|_{H_{\scbtop,\sigma}^{s-m,r-m,\beta,0}(\cT)}.
  \end{equation}
  for all $\sigma\in\pm[0,\sigma_0)$.
\end{lemma}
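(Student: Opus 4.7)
The plan is to invoke Theorem~\ref{ThmAscbtEll} for the spectral family, which lives in the $\scbtop$-calculus, and then to read the claimed uniform bound off the resulting $\scbtop$-mapping properties of its inverse. By Proposition~\ref{Prop3TNormMem}, the family $\pm[0,\sigma_0')\ni\sigma\mapsto\wh{N_\cT}(P,\sigma)$ is an element of $\Psiscbt^{m,m,0,0}(\cT)$, and Lemma~\ref{Lemma3DEll} shows that this family is elliptic as a $\scbtop$-operator (all three relevant principal symbols are pulled back from $\sigmatb^m(P)$). Translating the full-ellipticity conditions~\eqref{ItESpec0}, \eqref{ItENtf}, \eqref{ItEZero} of Definition~\ref{DefE} via the identifications of~\S\ref{Ss3Summ}, one sees that the $\zface$-normal operator $\wh{N_\cT}(P,0)$ is invertible on $\Hb^{s',\beta}(\cT)$, the $\tface$-normal operator $N_{\cT,\tface}^\pm(P)$ is invertible on $H_{\scop,\bop}^{s',r',-\beta}(\ol{{}^+N}\pa\cT)$, and $\beta\notin\Re\Specb(N_{\pa\cT}(P))$. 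These are precisely the hypotheses of Theorem~\ref{ThmAscbtEll} at weight $\alpha=\beta$.

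Theorem~\ref{ThmAscbtEll} then yields $\sigma_0>0$ so that for $\sigma\in\pm(0,\sigma_0]$ the operator $\wh{N_\cT}(P,\sigma)$ is invertible between scattering Sobolev spaces, with inverse family
\[
  G=(G_\sigma)=\wh{N_\cT}(P,\sigma)^{-1}\in\Psiscbt^{-m,-m,0,0}(\cT)+\Psiscbt^{-\infty,\cE}(\cT),
\]
where $\cE=(\cE_\cT^{+,(2)},\cE_\cT^{-,(2)},\cE_\cT^{(2)},\cE_\cT^{(2)})$ is given by Definition~\ref{DefAscbtInd2} applied to $\cE_\cT^\pm=\cE^\pm(N_{\pa\cT}(P),\beta)$. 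By Remark~\ref{RmkAscbtEllInd}, these satisfy $\Re\cE_\cT^{\pm,(2)}\gtreqless\pm\beta$ (strictly, by the choice $\beta\notin\Re\Specb(N_{\pa\cT}(P))$) and $\Re\cE_\cT^{(2)}\geq 0$.

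Next I would argue that $G$ is uniformly bounded as a map $H_{\scbtop,\sigma}^{s-m,r-m,\beta,0}(\cT)\to H_{\scbtop,\sigma}^{s,r,\beta,0}(\cT)$ for $\sigma\in\pm(0,\sigma_0]$. The smooth summand is uniformly bounded by the standard mapping property of $\scbtop$-pseudodifferential operators on weighted $\scbtop$-Sobolev spaces. For the residual summand, the uniform boundedness reduces, via the norm equivalences in Proposition~\ref{PropAscbtRel} (the chosen weights $(r,\beta,0)$ rescale to an $\Hb^{s,\beta}$-norm near $\zface$ and, after the rescaling $\phi_\sigma$, to an $H_{\scop,\bop}^{s,r,-\beta}$-norm near $\tface$) and a Schur-type $L^2$ estimate on the $\scbtop$-double space, to exactly the lower bounds $\Re\cE_\cT^{\pm,(2)}\gtreqless\pm\beta$ and $\Re\cE_\cT^{(2)}\geq 0$ just noted. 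Applying $G_\sigma$ to $f:=\wh{N_\cT}(P,\sigma)u$ then yields $\|u\|_{H_{\scbtop,\sigma}^{s,r,\beta,0}}\lesssim\|f\|_{H_{\scbtop,\sigma}^{s-m,r-m,\beta,0}}$ uniformly. For $\sigma=0$, the estimate is literally Definition~\ref{DefE}\eqref{ItEZero} combined with the identification of the $\sigma=0$ norm with $\|\cdot\|_{\Hb^{s,\beta}(\cT)}$.

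The principal obstacle is the uniform boundedness of the residual part across the corner $\zface\cap\tface$, where neither formula of Proposition~\ref{PropAscbtRel} applies globally and one must patch the two descriptions together. Here the compatibility of weights is automatic from Proposition~\ref{Prop3DT2}, which identifies $N_{\pa\cD}(P)$ with $N_{\pa\cT}(P)$ under the degree $-1$ map $\phi\circ\psi$; consequently the index sets produced at the b-corners of the $\scbtop$-double space by Theorem~\ref{ThmAscbtEll} are simultaneously compatible with the $\beta$-weight at $\zface$ (via $\wh{N_\cT}(P,0)$) and with the $(-\beta)$-weight at the b-end of $\tface$ (via $N_{\cT,\tface}^\pm(P)$), making the Schur bound uniform up to and including the corner.
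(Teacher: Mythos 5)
Your argument is essentially correct but takes a genuinely different route from the paper's. You verify the hypotheses of Theorem~\ref{ThmAscbtEll} correctly (conditions~\eqref{ItESpec0}, \eqref{ItENtf}, \eqref{ItEZero} of Definition~\ref{DefE} are exactly what is needed at weight $\alpha=\beta$), and the index-set bounds you quote from Remark~\ref{RmkAscbtEllInd} are the right ones. The paper, however, deliberately does \emph{not} go through the large $\scbtop$-calculus here: the stated purpose of~\S\ref{SF} is to obtain the Fredholm property using only small calculi and normal-operator \emph{estimates}. The paper's proof is a three-step absorption argument: a symbolic $\scbtop$-parametrix gives~\eqref{EqFLo} with an error $\|u\|_{H_{\scbtop,\sigma}^{-N,-N,\beta,0}}$; this error is improved at $\tface$ by one order using the quantitative form of the invertibility of $N_{\cT,\tface}^\pm(P)$ together with Proposition~\ref{PropAscbtRel}\eqref{ItAscbtReltf}; it is then improved at $\zface$ using the invertibility of $\wh{N_\cT}(P,0)$ at the slightly smaller weight $\beta-\eps$ (possible since $(\beta_\cT^-,\beta_\cT^+)$ is open) together with Proposition~\ref{PropAscbtRel}\eqref{ItAscbtRelzf}; the resulting error carries a factor $\sigma^\eps$ and is absorbed for small $\sigma_0$. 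That route buys robustness (it transfers to non-elliptic problems, cf.\ Remark~\ref{RmkFNonell}) and avoids the parametrix machinery entirely; yours buys brevity at the price of invoking the full large-calculus construction.

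The one real gap in your write-up is the step you describe as "a Schur-type $L^2$ estimate on the $\scbtop$-double space": the uniform boundedness of $\Psiscbt^{-\infty,(\cE_\cT^{+,(2)},\cE_\cT^{-,(2)},\cE_\cT^{(2)},\cE_\cT^{(2)})}(\cT)$ as a family of maps $H_{\scbtop,\sigma}^{s-m,r-m,\beta,0}\to H_{\scbtop,\sigma}^{s,r,\beta,0}$ is nowhere stated or proved in the paper (no analogue of Proposition~\ref{PropLMap} is given for the large $\scbtop$-calculus), and it is the technical heart of your argument. It is true under the bounds $\Re\cE_\cT^{+,(2)}>\beta$, $\Re\cE_\cT^{-,(2)}>-\beta$, $\Re(\cE_\cT^{(2)}\setminus\{(0,0)\})>0$, but you would have to prove it, including the bookkeeping of the conjugating weight $(\rho_\cD+|\sigma|)^{-\beta}$ across the corners of $\cT^2_\scbtop$. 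Also, your appeal to Proposition~\ref{Prop3DT2} in the last paragraph is misplaced: the corner compatibility relevant to $\wh{N_\cT}(P,\cdot)$ is between $N_{\zface\cap\tface}(N_{\cT,\tface}^\pm(P))$ and $N_{\pa\cT}(P)$, which is Lemma~\ref{LemmaAscbtNorm} and is already internal to Theorem~\ref{ThmAscbtEll}; Proposition~\ref{Prop3DT2} concerns the $\cD$-side and plays no role in this lemma.
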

\begin{proof}[Proof of Lemma~\usref{LemmaFLo}]
  The proof is conceptually analogous to (but due to the elliptic nature of the problem simpler than) the uniform low energy estimates on the spectrum proved by Vasy \cite{VasyLowEnergyLag}; see also \cite[\S3.5]{HintzKdSMS}. We work in $\sigma\geq 0$, the case $\sigma\leq 0$ being completely analogous. Since $\wh{N_\cT}(P,-)$ is elliptic as a $\scbtop$-operator, there exists a symbolic parametrix $Q\in\Psiscbt^{-m,-m,0,0}(\cT)$ with $I=Q\wh{N_\cT}(P,-)+R$ where $R\in\Psiscbt^{-\infty,-\infty,0,0}(\cT)$. This gives for any fixed $N>\max(-s,-r)$ a constant $C>0$ so that
  \begin{equation}
  \label{EqFLoStart}
    \|u\|_{H_{\scbtop,\sigma}^{s,r,\beta,0}(\cT)} \leq C\Bigl(\|\wh{N_\cT}(P,\sigma)u\|_{H_{\scbtop,\sigma}^{s-m,r-m,\beta,0}(\cT)} + \|u\|_{H_{\scbtop,\sigma}^{-N,-N,\beta,0}(\cT)}\Bigr).
  \end{equation}

  \pfstep{Improving the error at $\tface$.} Let now $[0,1)_{\rho_\cD}\times\pa\cT$ be a collar neighborhood of $\pa\cT$ inside of $\cT$. Let $\chi\in\CIc([0,1)_\sigma\times[0,1)_{\rho_\cD}\times\pa\cT)$ be identically $1$ near $\sigma=\rho_\cD=0$.
  Aiming to improve the error term in~\eqref{EqFLoStart} at $\tface\subset\cT_\scbtop$, we write
  \begin{align*}
    \|u\|_{H_{\scbtop,\sigma}^{-N,-N,\beta,0}(\cT)} &\leq \|\chi u\|_{H_{\scbtop,\sigma}^{-N,-N,\beta,0}(\cT)} + \|(1-\chi)u\|_{H_{\scbtop,\sigma}^{-N,-N,\beta,0}(\cT)} \\
      &\leq \|\chi u\|_{H_{\scbtop,\sigma}^{-N,-N,\beta,0}(\cT)} + C\|u\|_{H_{\scbtop,\sigma}^{-N,-N,-N,0}(\cT)}
  \end{align*}
  for some $C$ (depending on $\chi$ and $N$). Write $\phi_\sigma\colon(\hat\rho,\omega)\mapsto(\sigma\hat\rho,\omega)\in\cT$ for $\sigma\in(0,1)$. By Proposition~\ref{PropAscbtRel}\eqref{ItAscbtReltf}, we can then estimate the first term by
  \begin{align*}
    \|\chi u\|_{H_{\scbtop,\sigma}^{-N,-N,\beta,0}(\cT)} &= \Bigl\|\Bigl(\frac{\rho_\cD}{\rho_\cD+\sigma}\Bigr)^N(\rho_\cD+\sigma)^{-\beta}\chi u\Bigr\|_{H_{\scbtop,\sigma}^{-N,0,0,0}(\cT,|\frac{\dd\rho_\cD}{\rho_\cD}\dd\omega|)} \\
    &= \sigma^{-\beta} \Bigl\|\Bigl(\frac{\hat\rho_\cD}{\hat\rho_\cD+1}\Bigr)^N(1+\hat\rho_\cD)^{-\beta}\phi_\sigma^*(\chi u)\Bigr\|_{H_{\scop,\bop}^{-N,0,0}(\tface,|\frac{\dd\hat\rho_\cD}{\hat\rho_\cD}\dd\omega|)} \\
    &= \sigma^{-\beta}\|\phi_\sigma^*(\chi u)\|_{H_{\scop,\bop}^{-N,-N,-\beta}(\tface)}.
  \end{align*}
  Using Definition~\ref{DefE}\eqref{ItENtf} (turned into a quantitative estimate in a manner completely analogous to the proof of Lemma~\ref{LemmaFBd}), this is bounded from above by a constant times
  \[
    \sigma^{-\beta}\|N_{\cT,\tface}^+(P)\phi_\sigma^*(\chi u)\|_{H_{\scop,\bop}^{-N-m,-N-m,-\beta}(\tface)}.
  \]
  (Note that spaces of smooth $\scbtop$- and $\bop$-densities on $\tface$ coincide away from $\tface\cap\scface$.) Since $\wh{N_\cT}(P,-)-\tilde\chi N_{\cT,\tface}^+(P)\chi\in\Psiscbt^{m,m,-1,0}(\cT)$ for any cutoff $\tilde\chi\in\CIc([0,1)_\sigma\times[0,1)_{\rho_\cD}\times\pa\cT)$ which is identically $1$ near $\supp\chi$, we then further have
  \begin{align*}
    &\sigma^{-\beta}\|N_{\cT,\tface}^+(P)\phi_\sigma^*(\chi u)\|_{H_{\scop,\bop}^{-N-m,-N-m,-\beta}(\tface)} \\
    &\qquad \leq \|\chi \wh{N_\cT}(P,\sigma)u\|_{H_{\scbtop,\sigma}^{-N-m,-N-m,\beta,0}(\cT)} + C\|u\|_{H_{\scbtop,\sigma}^{-N,-N,\beta-1,0}(\cT)},
  \end{align*}
  where the second term on the right bounds the contributions from $\wh{N_\cT}(P,-)-\tilde\chi N_{\cT,\tface}^+(P)\chi$ and the commutator $\|[\wh{N_\cT}(P,\sigma),\chi]u\|_{H_{\scbtop,\sigma}^{-N-m,-N-m,\beta}(\cT)}$. Plugging these estimates into~\eqref{EqFLoStart} gives (with a new constant $C$)
  \begin{equation}
  \label{EqFLoMid}
    \|u\|_{H_{\scbtop,\sigma}^{s,r,\beta,0}(\cT)} \leq C\Bigl(\|\wh{N_\cT}(P,\sigma)u\|_{H_{\scbtop,\sigma}^{s-m,r-m,\beta,0}(\cT)} + \|u\|_{H_{\scbtop,\sigma}^{-N,-N,\beta-1,0}(\cT)}\Bigr).
  \end{equation}

  \pfstep{Improving the error at $\zface$.} Next, we improve the error term at $\zface\subset\cT_\scbtop$ by using the invertibility of the zero energy operator. Thus, let $\chi,\tilde\chi\in\CIc(\cT_\scbtop\setminus\scface)$ be two cutoff which are identically $1$ near $\zface$, and with $\tilde\chi=1$ near $\supp\chi$. Then Proposition~\ref{PropAscbtRel}\eqref{ItAscbtRelzf} gives
  \begin{align*}
    \|u\|_{H_{\scbtop,\sigma}^{-N,-N,\beta-1,0}(\cT)} &\leq \|\chi u\|_{H_{\scbtop,\sigma}^{-N,-N,\beta-1,0}(\cT)} + \|(1-\chi)u\|_{H_{\scbtop,\sigma}^{-N,-N,\beta-1,0}(\cT)} \\
      &\leq C\bigl(\|(\chi u)(\sigma)\|_{\Hb^{-N,\beta-1}(\cT)} + \|u\|_{H_{\scbtop,\sigma}^{-N,-N,\beta-1,-N}(\cT)}\bigr).
  \end{align*}
  We increase $\beta-1$ to $\beta-\eps$, where $\eps\in(0,1]$ is so small that $\beta-\eps\in(\beta_\cT^-,\beta_\cT^+)$ still, i.e.\ the invertibility of $\wh{N_\cT}(P,0)$ in Definition~\ref{DefE}\eqref{ItEZero} also holds with $\beta-\eps$ in place of $\beta$; then we can estimate
  \begin{align*}
    \|(\chi u)(\sigma)\|_{\Hb^{-N,\beta-\eps}(\cT)} &\leq C\bigl\|\wh{N_\cT}(P,0)((\chi u)(\sigma))\bigr\|_{\Hb^{-N-m,\beta-\eps}(\cT)} \\
      &\leq C\bigl(\|\chi\wh{N_\cT}(P,\sigma)u\|_{H_{\scbtop,\sigma}^{-N-m,-N-m,\beta-\eps,0}(\cT)} + \|u\|_{H_{\scbtop,\sigma}^{-N,-N,\beta-\eps,-1}(\cT)}\bigr)
  \end{align*}
  since $\wh{N_\cT}(P,\sigma)-\tilde\chi\wh{N_\cT}(P,0)\chi$ vanishes simply at $\zface$, and a fortiori $[\wh{N_\cT}(P,\sigma),\chi]$ does, too. (The weights at $\scface$ in the final line are arbitrary, but chosen to match the weights appearing earlier.) Altogether, we can now improve~\eqref{EqFLoMid} to
  \begin{equation}
  \label{EqFLoEnd}
    \|u\|_{H_{\scbtop,\sigma}^{s,r,\beta,0}(\cT)} \leq C\Bigl(\|\wh{N_\cT}(P,\sigma)u\|_{H_{\scbtop,\sigma}^{s-m,r-m,\beta,0}(\cT)} + \|u\|_{H_{\scbtop,\sigma}^{-N,-N,\beta-\eps,-1}(\cT)}\Bigr).
  \end{equation}
  Since $N>\max(-s,-r)$, note then that the error term
  \[
    \bigl\|(\rho_\cD+\sigma)^\eps\tfrac{\sigma}{\rho_\cD+\sigma}u\bigr\|_{H_{\scbtop,\sigma}^{-N,-N,\beta,0}(\cT)} \leq C\sigma^\eps\|u\|_{H_{\scbtop,\sigma}^{-N,-N,\beta,0}(\cT)}
  \]
  is \emph{small} and can therefore be absorbed into the left hand side of~\eqref{EqFLoEnd} for all $\sigma\in[0,\sigma_0)$ when $\sigma_0>0$ is sufficiently small. This gives~\eqref{EqFLo} and completes the proof.
\end{proof}

\subsection{Estimates for the Mellin-transformed normal operator}
\label{SsFD}

With $P$ as in Theorem~\ref{ThmF}, we next turn to estimates for $\wh{N_\cD}(P,\lambda)$ when $\lambda\in\C$, $\Im\lambda=-\alpha_\cD$. We put $\beta=\alpha_\cD-\alpha_\cT$ as usual.

\begin{lemma}[Uniform bounds for bounded $\lambda$]
\label{LemmaFDBd}
  Let $s\in\R$ and $\lambda_0>0$. Then there exists $C>0$ so that
  \[
    \|u\|_{\Hb^{s,-\beta}(\cD)} \leq C\|\wh{N_\cD}(P,\lambda)u\|_{\Hb^{s-m,-\beta}(\cD)},\qquad \Im\lambda=-\alpha_\cD,\ |\Re\lambda|\leq\lambda_0.
  \]
\end{lemma}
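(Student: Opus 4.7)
The plan is to argue directly from the holomorphy of $\wh{N_\cD}(P,\lambda)$ in $\lambda$ and the invertibility supplied by full ellipticity, which makes the estimate essentially automatic once uniform boundedness of the inverse is established on a compact piece of the line $\Im\lambda=-\alpha_\cD$. First I would invoke Lemma~\ref{LemmaAbNorm} (or Proposition~\ref{PropAebMT}) to recall that $\C\ni\lambda\mapsto\wh{N_\cD}(P,\lambda)\in\Psib^m(\cD)$ is holomorphic, and in particular defines a holomorphic family of bounded linear maps
\[
  \wh{N_\cD}(P,\lambda)\colon\Hb^{s,-\beta}(\cD)\to\Hb^{s-m,-\beta}(\cD),\qquad s\in\R.
\]
Next, I would use full ellipticity of $P$: condition~\eqref{ItESpec0} of Definition~\ref{DefE} gives $-\beta\notin\Re\Specb(N_{\pa\cD}(P))$, so each $\wh{N_\cD}(P,\lambda)$ is Fredholm of index~$0$ on the above spaces; and condition~\eqref{ItESpecD} together with the interpretation~\eqref{EqAebDInvSpecb} of $\Specb(N_\cD(P))$ implies that $\wh{N_\cD}(P,\lambda)$ is \emph{invertible} for every $\lambda$ with $\Im\lambda=-\alpha_\cD$.

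The key step is then to promote the pointwise invertibility to a uniform bound on the inverse over $\{\lambda:\Im\lambda=-\alpha_\cD,\ |\Re\lambda|\leq\lambda_0\}$. Since inversion is continuous (in fact, holomorphic) on the open set of invertible bounded operators between two Hilbert spaces, the map
\[
  \{\lambda\in\C:\Im\lambda=-\alpha_\cD\}\ni\lambda\mapsto\wh{N_\cD}(P,\lambda)^{-1}\in\mathcal{B}\bigl(\Hb^{s-m,-\beta}(\cD),\Hb^{s,-\beta}(\cD)\bigr)
\]
is continuous, and by compactness of $\{|\Re\lambda|\leq\lambda_0\}$ its operator norm attains a finite maximum $C$. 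Writing $u=\wh{N_\cD}(P,\lambda)^{-1}\wh{N_\cD}(P,\lambda)u$ then yields the desired bound.

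I do not anticipate any real obstacle here, since the hard analytic input --- the invertibility of $\wh{N_\cD}(P,\lambda)$ on weighted b-Sobolev spaces --- has already been packaged into the full ellipticity hypothesis. A parallel argument in the style of Lemma~\ref{LemmaFBd}, using a symbolic b-parametrix plus the b-normal operator $N_{\pa\cD}(P)$ via Theorem~\ref{ThmAbPx} to produce an elliptic estimate with compact error, and then removing the error by a subsequential weak-limit/contradiction argument (exploiting compactness of $\Hb^{s,-\beta}(\cD)\hra\Hb^{s',-\beta'}(\cD)$ for $s'<s$, $\beta'>\beta$, which holds since $\cD$ is compact), would give the same conclusion; but the direct route via continuous invertibility is shorter and cleaner, so I would adopt it.
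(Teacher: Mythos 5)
Your proof is correct, but it takes a genuinely different (and softer) route than the paper. The paper does not form the inverse at all: it first runs the symbolic b-parametrix to get an elliptic estimate with error $\|u\|_{\Hb^{-N,-\beta}(\cD)}$, then — since that error has the \emph{same} weight and is therefore not compact relative to $\Hb^{s,-\beta}(\cD)$ — improves the weight near $\pa\cD$ by passing to the Mellin transform in $\rho_\cT$ and using the invertibility of $\wh{N_{\pa\cD}}(P,\xi)$ on the line $\Im\xi=\beta$ (Definition~\ref{DefE}\eqref{ItESpec0} together with Lemma~\ref{LemmaAbNorm}), arriving at a compact error $\|u\|_{\Hb^{-N,-\beta-1}(\cD)}$; this is then removed by the same weak-limit/contradiction argument as in Lemma~\ref{LemmaFBd}, using only the \emph{injectivity} of $\wh{N_\cD}(P,\lambda)$ on the line $\Im\lambda=-\alpha_\cD$. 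Your two-step observation — pointwise invertibility on exactly the spaces $\Hb^{s,-\beta}(\cD)\to\Hb^{s-m,-\beta}(\cD)$ is part of Definition~\ref{DefE}\eqref{ItESpecD} via the definition~\eqref{EqAebDInvSpecb} of $\Specb(N_\cD(P))$, and holomorphy of the family plus continuity of inversion plus compactness of $\{|\Re\lambda|\leq\lambda_0\}$ gives a uniform bound on the inverse — is legitimate and shorter. What the paper's route buys is consistency with the stated purpose of this section: the quantitative content is isolated in symbolic and boundary-normal-operator estimates plus an injectivity input, which is the template that survives in non-elliptic settings where one cannot simply quote invertibility of the normal operator family; it also makes transparent exactly which parts of full ellipticity are used where (only~\eqref{ItESpec0} and the injectivity part of~\eqref{ItESpecD} enter). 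Your alternative sketch at the end is essentially the paper's proof, with the one caveat already noted: the compact error is obtained only \emph{after} the $N_{\pa\cD}(P)$ step improves the weight, not from the symbolic parametrix alone.
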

\begin{proof}
  This is standard elliptic b-theory. The details are as follows: the symbolic ellipticity of $\wh{N_\cD}(P,\lambda)$ implies the estimate
  \begin{equation}
  \label{EqFDBd1}
    \|u\|_{\Hb^{s,-\beta}(\cD)} \leq C\bigl(\|\wh{N_\cD}(P,\lambda)u\|_{\Hb^{s-m,-\beta}(\cD)} + \|u\|_{\Hb^{-N,-\beta}(\cD)}\bigr),\qquad \Im\lambda=-\alpha_\cD,\ |\Re\lambda|\leq\lambda_0.
  \end{equation}
  Fix a collar neighborhood $[0,1)_{\rho_\cT}\times\pa\cD$ of $\pa\cD\subset\cD$ and cutoffs $\chi,\tilde\chi\in\CIc([0,1)_{\rho_\cT}\times\pa\cD)$ which are identically $1$ near $\rho_\cT=0$, and with $\tilde\chi=1$ near $\supp\chi$; then
  \begin{equation}
  \label{EqFDBd2}
    \|u\|_{\Hb^{-N,-\beta}(\cD)} \leq \|\chi u\|_{\Hb^{-N,-\beta}(\cD)} + C\|(1-\chi)u\|_{\Hb^{-N,-N}(\cD)}.
  \end{equation}
  Denote the Mellin transform in $\rho_\cT$ by a hat, and the Mellin-dual variable by $\xi$; then
  \[
    \|\chi u\|_{\Hb^{-N,-\beta}(\cD)}^2 \leq C\int_{\Im\xi=\beta} \|\wh{\chi u}(\xi,-)\|_{H_{\la\xi\ra^{-1}}^{-N,-N}(\pa\cD)}\,\dd\xi
  \]
  by~\eqref{EqAbEquiv}. But by assumption (see Definition~\ref{DefE}\eqref{ItESpec0}), the Mellin-transformed normal operator family $\wh{N_{\pa\cD}}(P,\xi)$ of $N_{\pa\cD}$ is invertible for $\Im\xi=-(-\beta)=\beta$, and we have elliptic estimates (including at large $|\Re\xi|$)
  \[
    \|\wh{\chi u}(\xi,-)\|_{H_{\la\xi\ra^{-1}}^{-N,-N}(\pa\cD)} \leq C\|\wh{N_{\pa\cD}}(P,\xi)\wh{\chi u}(\xi,-)\|_{H_{\la\xi\ra^{-1}}^{-N-m,-N-m}(\pa\cD)},\qquad\Im\xi=\beta,
  \]
  cf.\ Lemma~\ref{LemmaAbNorm}. Thus,
  \begin{equation}
  \label{EqFDBd3}
    \|\chi u\|_{\Hb^{-N,-\beta}(\cD)} \leq C\|N_{\pa\cD}(P)(\chi u)\|_{\Hb^{-N-m,-\beta}(\cD)}.
  \end{equation}
  Since $\wh{N_\cD}(P,\lambda)-\tilde\chi N_{\pa\cD}(P)\chi\in\rho_\cT\Psib^m(\cD)$, and since $[\wh{N_\cD}(P,\lambda),\chi]$ a fortiori lies in the same space, we obtain from~\eqref{EqFDBd1}--\eqref{EqFDBd3} the estimate
  \[
    \|u\|_{\Hb^{s,-\beta}(\cD)} \leq C\bigl(\|\wh{N_\cD}(P,\lambda)u\|_{\Hb^{s-m,-\beta}(\cD)} + \|u\|_{\Hb^{-N,-\beta-1}(\cD)}\bigr).
  \]
  Taking $N>-s$, the inclusion $\Hb^{s,-\beta}(\cD)\hra\Hb^{-N,-\beta-1}(\cD)$ is compact, and therefore we can drop the error term here by the same argument as in the proof of Lemma~\ref{LemmaFBd} by virtue of the injectivity of $\wh{N_\cD}(P,\lambda)$ for $\Im\lambda=-\alpha_\cD$.
\end{proof}

\begin{lemma}[Uniform bounds for large $\lambda$] 
\label{LemmaFDHi}
  Let $s\in\R$. There exist $\lambda_0>0$ and $C>0$ so that
  \begin{equation}
  \label{EqFDHi}
    \|u\|_{H_{\cop,|\lambda|^{-1}}^{s,-\beta,-\beta,s}(\cD)} \leq C\|\wh{N_\cD}(P,\lambda)u\|_{H_{\cop,|\lambda|^{-1}}^{s-m,-\beta,-\beta,s-m}(\cD)},\qquad \Im\lambda=-\alpha_\cD,\ |\Re\lambda|>\lambda_0.
  \end{equation}
\end{lemma}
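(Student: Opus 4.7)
The plan is to mirror the proof of Lemma~\ref{LemmaFLo} in the semiclassical cone setting. Setting $h=|\Re\lambda|^{-1}$ and $\mu=\alpha_\cD$, Proposition~\ref{PropAebMT}\eqref{ItAebMTch} shows that the family $P_h^\pm := \wh{N_\cD}(P,-i\alpha_\cD\pm h^{-1})$ defines an element of $\Psich^{m,0,0,m}(\cD)$ with smooth dependence on $\mu$ near $\alpha_\cD$. By Lemma~\ref{Lemma3DEll}, it has an elliptic $\chop$-principal symbol. Its $\cface$-normal operator is the $\lambda$-independent operator $N_{\pa\cD}(P)$ (Proposition~\ref{PropAebMT}\eqref{ItAebMTNpaD}), elliptic at the weight $-\beta$ since $-\beta\notin\Re\Specb(N_{\pa\cD}(P))$ by Definition~\ref{DefE}\eqref{ItESpec0}, and its $\tface$-normal operator $N_{\cD,\tface}^\pm(P)\in\Psi_{\bop,\scop}^{m,0,m}(\ol{{}^+N}\pa\cD)$ is invertible on the relevant spaces by Definition~\ref{DefE}\eqref{ItENtf}.

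First I would take a symbolic $\chop$-parametrix $Q\in\Psich^{-m,0,0,-m}(\cD)$ of $P_h^\pm$ to obtain the base estimate
\[
  \|u\|_{H_{\cop,h}^{s,-\beta,-\beta,s}(\cD)}\leq C\bigl(\|P_h^\pm u\|_{H_{\cop,h}^{s-m,-\beta,-\beta,s-m}(\cD)}+\|u\|_{H_{\cop,h}^{-N,-\beta,-\beta,-N}(\cD)}\bigr)
\]
for any $N>-s$ and all small $h$. I would then split the error via a cutoff $\chi$ localized near $\tface$. On $\supp\chi$, Proposition~\ref{PropAchRel} identifies the $\chop$-Sobolev norm with a suitably $h$-rescaled $\bop,\scop$-Sobolev norm on $\tface\cong\ol{{}^+N}\pa\cD$ via the dilations $\phi_h$, so applying a quantitative form (as in Lemma~\ref{LemmaFBd}) of the invertibility of $N_{\cD,\tface}^\pm(P)$ controls $\chi u$ in terms of $\|\chi P_h^\pm u\|$ modulo commutator terms vanishing one order better at $\tface$. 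For $(1-\chi)u$, supported away from $\tface$ and hence in a region where the $\chop$-structure reduces to a parameterized b-structure near $\cface$, I would Mellin-transform in a defining function of $\pa\cD$ and use the invertibility of $\wh{N_{\pa\cD}}(P,\xi)$ on $\Im\xi=\beta$ exactly as in Lemma~\ref{LemmaFDBd} to bound this by $\|P_h^\pm u\|$ modulo a term one order better at $\cface$. After a small shift of the $\cface$-weight from $-\beta$ to $-\beta+\eps$ (permissible since $\beta-\eps\in(\beta_\cT^-,\beta_\cT^+)$ still), the combined residual error picks up a factor $h^\eps$ via the $\chop$-rescaling that relates the $\tface$ defining function to $h$, and is absorbed on the left for $h<h_0$ small enough.

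The main obstacle will be the semiclassical bookkeeping through these two model-operator reductions: in particular, verifying compatibility between the $\phi_h$-rescaling of Proposition~\ref{PropAchRel} and the choice of $\chop$-weights $(-\beta,-\beta,s)$ appearing here (especially that the $\sface$-order $s$ matches the differential order under the rescaling), and confirming that the region away from $\tface$ is genuinely covered by a uniform-in-$h$ b-elliptic argument rather than requiring the full semiclassical apparatus. An alternative and more economical route is to invoke Theorem~\ref{ThmAchInv} directly: the hypotheses verified in the first paragraph are precisely its requirements, and its conclusion that $P_h^\pm$ is invertible for small $h$ with inverse in $\Psich^{-m,0,0,-m}(\cD)+\Psich^{-\infty,\cE}(\cD)$ immediately yields~\eqref{EqFDHi} via the uniform boundedness of $\chop$-pseudodifferential operators on $\chop$-Sobolev spaces; this bypasses the delicate bookkeeping at the cost of invoking the large $\chop$-calculus rather than only small model calculi.
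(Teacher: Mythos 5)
Your first route is essentially the paper's proof, but two of the points you flag as ``bookkeeping'' risks are worth settling. First, the Mellin-transform step at $\cface$ is superfluous: for small $h$ the region near $\cface$ but away from $\tface$ (i.e.\ $\rho_\cT\ll h$ with $\rho_\cT+h\gtrsim 1$) is empty, so the piece $(1-\chi)u$ supported away from $\tface$ is in fact supported away from $\pa\cD$ altogether, where the $\tface$-weight may be taken arbitrarily negative and the symbolic estimate already controls everything; the paper's proof accordingly uses only the symbolic parametrix followed by the single $\tface$-normal-operator step via Proposition~\ref{PropAchRel} and the invertibility of $N_{\cD,\tface}^\pm(P)$. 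Second, the absorption mechanism is not a factor $h^\eps$ obtained by shifting the $\cface$-weight --- $\rho_\cface=\rho_\cT/(\rho_\cT+h)$ is not small in $h$, so improving the $\cface$-weight buys no smallness. The correct mechanism is the identity $\rho_\tface\rho_\sface=h$: the $\tface$-step improves the $\tface$-weight by a full order, the symbolic step has already improved the $\sface$-order to $-N$ with $N>-s+1$, and together these bound the residual error by $C h\|u\|_{H_{\cop,h}^{s,-\beta,-\beta,s}(\cD)}$, which is absorbed for $h$ small. Your second route via Theorem~\ref{ThmAchInv} is also valid --- its hypotheses are exactly Definition~\ref{DefE}\eqref{ItESpec0} and \eqref{ItENtf}, and the boundary terms of the inverse are uniformly bounded on the $\rho_\cface^{-\beta}$-weighted spaces since $\Re\cE^{\pm,(0)}>\mp\beta$ --- but, as you note, it imports the large $\chop$-calculus, which defeats the purpose of \S\ref{SF}, namely to obtain the Fredholm property using only the small model calculi.
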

\begin{proof}
  The ellipticity of $\wh{N_\cD}(P,-i\alpha_\cD\pm h^{-1})$ as a semiclassical cone operator of order $(m,0,0,m)$ (see Definition~\ref{Def3DNormMT}) gives the estimate~\eqref{EqFDHi} but with an additional term $C\|u\|_{H_{\cop,|\lambda|^{-1}}^{-N,-\beta,-\beta,-N}(\cD)}$ on the right hand side.

  Next, fix a collar neighborhood $[0,1)_{\rho_\cT}\times\pa\cD$ of $\pa\cD\subset\cD$, and fix a cutoff $\chi\in\CIc([0,1)_h\times[0,1)_{\rho_\cT}\times\pa\cD)$ which is identically $1$ near $h=\rho_\cT=0$. Then $\supp(1-\chi)\cap\tface=\emptyset$ where $\tface\subset\cD_\chop$ is the transition face, and therefore, identifying $h=|\lambda|^{-1}$,
  \[
    \|(1-\chi)u\|_{H_{\cop,|\lambda|^{-1}}^{-N,-\beta,-\beta,-N}(\cD)}\leq C\|u\|_{H_{\cop,|\lambda|^{-1}}^{-N,-\beta,-N,-N}(\cD)}.
  \]
  On the other hand, we estimate $\chi u$ using Proposition~\ref{PropAchRel}; to wit, for $\hat\rho_\cD=\frac{\rho_\cD}{|\lambda|^{-1}}$ and $\pm\Re\lambda>0$,
  \begin{align*}
    \|\chi u\|_{H_{\cop,|\lambda|^{-1}}^{-N,-\beta,-\beta,-N}(\cD,|\frac{\dd\rho_\cD}{\rho_\cD}\dd\omega|)} &= \|(\rho_\cD+|\lambda|^{-1})^\beta\chi u\|_{H_{\cop,|\lambda|^{-1}}^{-N,-\beta,0,-N}(\cD,|\frac{\dd\rho_\cD}{\rho_\cD}\dd\omega|)} \\
      & \leq C|\lambda|^{-\beta}\|(\hat\rho_\cD+1)^\beta\chi u\|_{H_{\bop,\scop}^{-N,-\beta,-N}(\tface,|\frac{\dd\hat\rho_\cD}{\hat\rho_\cD}\dd\omega|)} \\
      & = C|\lambda|^{-\beta}\|\chi u\|_{H_{\bop,\scop}^{-N,-\beta,-N-\beta}(\tface,|\frac{\dd\hat\rho_\cD}{\hat\rho_\cD}\dd\omega|)} \\
      &\leq C|\lambda|^{-\beta}\|N_{\cD,\tface}^\pm(\chi u)\|_{H_{\bop,\scop}^{-N-m,-\beta,-N-\beta-m}(\tface,|\frac{\dd\hat\rho_\cD}{\hat\rho_\cD}\dd\omega|)} \\
      &\leq C\|N_{\cD,\tface}^\pm(\chi u)\|_{H_{\cop,|\lambda|^{-1}}^{-N-m,-\beta,-\beta,-N-m}(\cD,|\frac{\dd\rho_\cD}{\rho_\cD}\dd\omega|)}.
  \end{align*}
  Here, by an abuse of notation, we write $N_{\cD,\tface}^\pm$ for any fixed operator in $\Psich^{m,0,0,m}(\cD)$ with the $\tface$-normal operator given by the $\cD$-$\tface$-normal operator of $P$. Since $\wh{N_\cD}(-i\alpha_\cD\pm h^{-1})-N_{\cD,\tface}^\pm\in\Psich^{m,0,-1,m}(\cD)$, we can estimate this further by a constant times
  \[
    \|\chi\wh{N_\cD}(\lambda)u\|_{H_{\cop,|\lambda|^{-1}}^{-N-m,-\beta,-\beta,-N-m}(\cD)} + \|u\|_{H_{\cop,|\lambda|^{-1}}^{-N,-\beta,-\beta-1,-N}(\cD)}.
  \]
  We obtain
  \begin{equation}
  \label{EqFDHiAlmost}
    \|u\|_{H_{\cop,|\lambda|^{-1}}^{s,-\beta,-\beta,s}(\cD)} \leq C\bigl(\|\wh{N_\cD}(P,\lambda)u\|_{H_{\cop,|\lambda|^{-1}}^{s-m,-\beta,-\beta,s-m}(\cD)} + \|u\|_{H_{\cop,|\lambda|^{-1}}^{-N,-\beta,-\beta-1,-N}(\cD)}\bigr).
  \end{equation}

  Since for $N>-s+1$ we have $\|u\|_{H_{\cop,|\lambda|^{-1}}^{-N,-\beta,-\beta-1,-N}(\cD)}\leq C h\|u\|_{H_{\cop,|\lambda|^{-1}}^{s,-\beta,-\beta,s}(\cD)}$, we can, for sufficiently small $h>0$, absorb the final term in~\eqref{EqFDHiAlmost} into the left hand side. The proof is complete.
\end{proof}

\subsection{Proof of Theorem~\ref{ThmF}; Fredholm property}
\label{SsFPf}

We now use the Lemmas proved in~\S\S\ref{SsFT}--\ref{SsFD} in combination with the relationships (Propositions~\ref{Prop3SobFT} and \ref{Prop3SobMT}) between 3b-Sobolev spaces on $M$ and those Sobolev spaces on $\cT$ and $\cD$ which are used in these Lemmas.

\begin{proof}[Proof of Theorem~\usref{ThmF}]
  Denote by $\rho_\cT\in\CI(M)$ a defining function of $\cT$. In terms of $u_0:=\rho_T^{-\alpha_\cT+\frac12}u$, the estimate~\eqref{EqF} is equivalent to
  \[
    \|u_0\|_{\Htb^{s,\alpha_\cD,\frac12}(M)} \leq C\Bigl(\|\rho_\cT^{-\alpha_\cT+\frac12}P\rho_\cT^{\alpha_\cT-\frac12}u_0\|_{\Htb^{s-m,\alpha_\cD,\frac12}(M)} + \|u_0\|_{\Htb^{-N,\alpha_\cD-\eps,\frac12-\eps}(M)}\Bigr).
  \]
  But by Lemma~\ref{LemmaEConj}, the operator $P_0:=\rho_\cT^{-\alpha_\cT+\frac12}P\rho_\cT^{\alpha_\cT-\frac12}\in\Psitb^m(M)$ is fully elliptic with weights $\alpha_\cD,\frac12$. It thus suffices to prove the estimate~\eqref{EqF} for $\alpha_\cD\in\R$ under the assumption that $P$ is fully elliptic with weights $\alpha_\cD,\frac12$. The relevance of $\half$ here is that (using Notation~\ref{NotDensity}) Proposition~\ref{Prop3SobFT} applies to the space
  \[
    \Htb^{s,\alpha_\cD,\frac12}(M,\nu_\bop) = \Htb^{s,\alpha_\cD,0}(M,\nu_\tbop);
  \]
  for this equality, note that $\nu_\tbop:=\rho_\cT^{-1}\nu_\bop$ is a positive 3b-density.

  The proof of~\eqref{EqF} then proceeds via the combination of elliptic estimates with normal operator estimates, much as in the proofs of Lemmas~\ref{LemmaFLo} and \ref{LemmaFDHi} above. The elliptic estimate is
  \begin{equation}
  \label{EqFSymb}
    \|u\|_{\Htb^{s,\alpha_\cD,\frac12}(M)} \leq C\bigl( \|P u\|_{\Htb^{s-m,\alpha_\cD,\frac12}(M)} + \|u\|_{\Htb^{-N,\alpha_\cD,\frac12}(M)}\bigr).
  \end{equation}

  \pfstep{Improving the error at $\cT$.} Fix a cutoff $\chi\in\CI([0,\infty)_T\times\R^{n-1}_X)$ as in Proposition~\ref{Prop3SobFT}, with $\chi=1$ near $(T,X)=(0,0)$. Then for any $N'$,
  \[
    \|u\|_{\Htb^{-N,\alpha_\cD,\frac12}(M)} \leq \|\chi u\|_{\Htb^{-N,\alpha_\cD,\frac12}(M)} + C\|u\|_{\Htb^{-N,\alpha_\cD,-N'}(M)}
  \]
  since $\cT\cap\supp(1-\chi)=\emptyset$. Passing to the weighted 3b-density $\la x\ra\nu_\tbop=\la x\ra^{-(n-1)}|\dd t\,\dd x|$ (which is a positive element of $\rho_\cD^{-1}\rho_\cT^{-1}\CI(M;\Omegab M)$) and correspondingly working with the unweighted b-density $\la x\ra^{-(n-1)}|\dd x|$ on $\cT$, we then have, in terms of $\beta:=\alpha_\cD-\frac12$, and using Lemmas~\ref{LemmaFHi}, \ref{LemmaFBd}, and \ref{LemmaFLo} as well as Proposition~\ref{Prop3SobFT},
  \begin{align*}
    &\|\chi u\|_{\Htb^{-N,\alpha_\cD,\frac12}(M,\nu_\bop)}^2 = \|\chi u\|_{\Htb^{-N,\beta,0}(M,\la x\ra\nu_\tbop)}^2 \\
    &\quad\leq C \sum_\pm \int_{\pm[0,1]} \|\wh{\chi u}(\sigma,-)\|_{H_{\scbtop,\sigma}^{-N,-N+\beta,\beta,0}(\cT)}^2\,\dd\sigma + \int_{\pm[1,\infty)} \|\wh{\chi u}(\sigma,-)\|_{H_{\scop,|\sigma|^{-1}}^{-N,-N+\beta,-N}(\cT)}^2\,\dd\sigma \\
    &\quad\leq C\Biggl( \sum_\pm \int_{\pm[0,1]} \|\wh{N_\cT}(P,\sigma)\wh{\chi u}(\sigma,-)\|_{H_{\scbtop,\sigma}^{-N-m,-N+\beta-m,\beta,0}(\cT)}^2\,\dd\sigma \\
    &\quad\hspace{6em} + \int_{\pm[1,\infty)} \|\wh{N_\cT}(P,\sigma)\wh{\chi u}(\sigma,-)\|_{H_{\scop,|\sigma|^{-1}}^{-N-m,-N+\beta-m,-N-m}(\cT)}^2\,\dd\sigma\Biggr) \\
    &\quad\leq C \|N_\cT(P)(\chi u)\|_{\Htb^{-N-m,\beta,0}(M,\la x\ra\nu_\tbop)}^2 = C\|N_\cT(P)(\chi u)\|_{\Htb^{-N-m,\alpha_\cD,\frac12}(M,\nu_\bop)}^2,
  \end{align*}
  where we identify a neighborhood of $\cT\subset M$ with a neighborhood of $\hat\cT\subset N_\tbop\cT$ (see Definition~\ref{DefGTSpace} and the subsequent discussion), and we write $N_\cT(P)$ also for an operator of class $\Psitb^m(M)$ which has $N_\cT(P)$ as its $\cT$-normal operator. Using that $P-N_\cT(P)\in\rho_\cT\Psitb^m(M)$, and that also $[P,\chi]$ lies in this space, we obtain from~\eqref{EqFSymb} the improved estimate
  \begin{equation}
  \label{EqFImprT}
    \|u\|_{\Htb^{s,\alpha_\cD,\frac12}(M)} \leq C\bigl( \|P u\|_{\Htb^{s-m,\alpha_\cD,\frac12}(M)} + \|u\|_{\Htb^{-N,\alpha_\cD,\frac12-\eps}(M)}\bigr)
  \end{equation}
  for $\eps=1$, and a fortiori also for any smaller $\eps$; we fix $\eps>0$ so that $P$ is fully elliptic with weights $\alpha_\cD$, $\frac12-\eps$ still.

  \pfstep{Improving the error at $\cD$.} We further improve the error term in~\eqref{EqFImprT} at $\cD$. Fix a cutoff $\chi\in\CI(M)$ as in Proposition~\ref{Prop3SobMT}, so $\chi$ has support in a collar neighborhood of $\cD$, and is equal to $1$ near $\cD$. Write $\tilde\alpha_\cT:=\frac12-\eps$. Then
  \[
    \|u\|_{\Htb^{-N,\alpha_\cD,\frac12-\eps}(M)} \leq \|\chi u\|_{\Htb^{-N,\alpha_\cD,\tilde\alpha_\cT}(M)} + C\|u\|_{\Htb^{-N,-N',\tilde\alpha_\cT}(M)}
  \]
  for any fixed $N'$ since $\cD\cap\supp(1-\chi)=\emptyset$. Application of Proposition~\ref{Prop3SobMT} (with positive unweighted b-densities on $M$ and $\cD$, corresponding to $\mu_\cD=0$, $\mu_\cT=1$, $\hat\mu=-1$) gives
  \begin{align*}
    &\|\chi u\|_{\Htb^{-N,\alpha_\cD,\tilde\alpha_\cT}(M)}^2 \\
    &\quad \leq C\Biggl( \int_{[-1,1]} \|\wh{\chi u}(\lambda_0-i\alpha_\cD,-)\|_{\Hb^{s,\tilde\alpha_\cT-\alpha_\cD}(\cD)}^2\,\dd\lambda_0 \\
    &\quad\hspace{4em} + \sum_\pm \int_{\pm[1,\infty)} \|\wh{\chi u}(\lambda_0-i\alpha_\cD,-)\|_{H_{\cop,|\lambda_0|^{-1}}^{-N,\tilde\alpha_\cT-\alpha_\cD,\tilde\alpha_\cT-\alpha_\cD,-N}(\cD)}^2\,\dd\lambda_0\Biggr).
  \end{align*}
  Due to the full ellipticity of $P$ with weights $\alpha_\cD$, $\tilde\alpha_\cT$, we can apply Lemmas~\ref{LemmaFDBd} and \ref{LemmaFDHi} (with $\beta$ in the Lemmas equal to $\alpha_\cD-\tilde\alpha_\cT$) in order to bound the integrands in this expression; applying Proposition~\ref{Prop3SobMT} again, we deduce
  \[
    \|\chi u\|_{\Htb^{-N,\alpha_\cD,\tilde\alpha_\cT}(M)} \leq C\|N_\cD(P)(\chi u)\|_{\Htb^{-N-m,\alpha_\cD,\tilde\alpha_\cT}(M)}.
  \]
  Extending $N_\cD(P)$ to an element of $\Psitb^m(M)$, we have $P-N_\cD(P)\in\rho_\cD\Psitb^m(M)$, and also $[P,\chi]$ is of this class, and therefore we can now improve~\eqref{EqFImprT} to
  \[
    \|u\|_{\Htb^{s,\alpha_\cD,\frac12}(M)} \leq C\bigl( \|P u\|_{\Htb^{s-m,\alpha_\cD,\frac12}(M)} + \|u\|_{\Htb^{-N,\alpha_\cD-1,\frac12-\eps}(M)}\bigr),
  \]
  which is the desired estimate.
\end{proof}

The estimate~\eqref{EqF} (with $N>-s$) implies, in view of the compactness of the inclusion $\Htb^{s,\alpha_\cD,\alpha_\cT}(M)\hra\Htb^{-N,\alpha_\cD-\eps,\alpha_\cT-\eps}(M)$ (see Lemma~\ref{Lemma3SobCpt}), that
\begin{equation}
\label{EqFOp}
  P\colon\Htb^{s,\alpha_\cD,\alpha_\cT}(M) \to \Htb^{s-m,\alpha_\cD,\alpha_\cT}(M)
\end{equation}
has finite-dimensional kernel and closed range. In the same manner, one can prove an analogous estimate for the adjoint $P^*$ (defined with respect to the $L^2$-inner product on $M$ for a positive b-density),
\begin{equation}
\label{EqFOpAdj}
  \|u\|_{\Htb^{-s+m,-\alpha_\cD,-\alpha_\cT}(M)} \leq C\bigl( \|P^* u\|_{\Htb^{-s,-\alpha_\cD,-\alpha_\cT}(M)} + \|u\|_{\Htb^{-N,-\alpha_\cD-\eps,-\alpha_\cT-\eps}(M)}\bigr).
\end{equation}
Here, we use Lemma~\ref{LemmaEAdj}, which shows that $P^*$ is fully elliptic with weights $-\alpha_\cD,-\alpha_\cT$. The estimate~\eqref{EqFOpAdj} implies that $P^*$ has finite-dimensional kernel, and hence $P$ has finite-dimensional cokernel. This completes our estimate-based proof that the operator~\eqref{EqFOp} is Fredholm.

\bibliographystyle{alphaurl}


\end{document}